\def\Re{\mathbb{R}}
\newcommand{\tr}{\mbox{tr}}
\def\IEEEQEDclosed{\mbox{\rule[0pt]{1.3ex}{1.3ex}}}
\def\qed{\nobreak\hfill\IEEEQEDclosed}
\newcommand{\ud}{\,\mathrm{d}}
\def\half{\frac{1}{2}}
\newcommand{\newP}[1]{\medskip\noindent{\bf #1:}}
\theoremstyle{plain}
\newtheorem{theorem}{Theorem}[chapter]
\newtheorem{proposition}{Proposition}[chapter]
\newtheorem{corollary}{Corollary}[chapter]
\newtheorem{lemma}{Lemma}[chapter]
\theoremstyle{definition} 
\newtheorem{example}{Example}[chapter]
\newtheorem{definition}{Definition}[chapter]
\newtheorem{remark}{Remark}[chapter]
\newtheorem{assumption}{Assumption}[chapter]
\def\clA{{\cal A}}
\def\clB{{\cal B}}
\def\clC{{\cal C}}
\def\clD{{\cal D}}
\def\clE{{\cal E}}
\def\clF{{\cal F}}
\def\clH{{\cal H}}
\def\clI{{\cal I}}
\def\clL{{\cal L}}
\def\clM{{\cal M}}
\def\clN{{\cal N}}
\def\clO{{\cal O}}
\def\clP{{\cal P}}
\def\clS{{\cal S}}
\def\clU{{\cal U}}
\def\clV{{\cal V}}
\def\clX{{\cal X}}
\def\clY{{\cal Y}}
\def\clZ{{\cal Z}}
\def\E{{\sf E}}
\def\bS{\mathbb{S}}
\def\sJ{{\sf J}}
\def\bsJ{{\sf J}}
\def\ones{{\sf 1}}
\def\sP{{\sf P}}
\def\tsP{{\tilde{\sf P}}}
\def\tE{{\tilde{\sf E}}}
\def\tp{{\hbox{\rm\tiny T}}}
\def\Nsp{{\sf N}} %null space
\def\Rsp{{\sf R}} %range space
\def\hE{\tilde{\sf E}}
\def\dv{\operatorname{diag}}
\def\sp{\operatorname{span}}
\def\bmu{{\bar\mu}}
\def\bSig{\bar{\Sigma}}
\def\barX{\bar{X}}
\def\den{\operatorname{enr}}
\def\dvar{\operatorname{var}}
\def\cov{{\cal V}}
\def\chisq{{\chi^2}}
\def\kl{{\sf D}}
\def\tv{{\mathrm{TV}}}
\def\sW{{\sf W}}
\def\sQ{{\sf Q}}
\def\opt{{\text{\rm (opt)}}}
\renewcommand{\limsup}{\mathop{\operatorname{limsup}}}
\renewcommand{\liminf}{\mathop{\operatorname{liminf}}}
\def\weakto{\stackrel{\ast}{\rightharpoonup}}
\def\divg{\nabla\cdot}
\begin{document}

\title{Duality for Nonlinear Filtering}
\author{Jin Won Kim}
\department{Mechanical Engineering}
%\concentration{Dynamics and Controls}
\phdthesis
\degreeyear{2022}
\committee{
    Professor Prashant G.~Mehta, Chair and Director of Research\\
    Professor Bruce Hajek\\
    Associate Professor Maxim Raginsky\\
    Assistant Professor Partha S.~Dey}
\maketitle

\frontmatter

\begin{abstract}
	
	This thesis is concerned with the stochastic filtering problem for a hidden Markov model (HMM) with the white noise observation model. For this filtering problem, we make three types of original contributions: (1) dual controllability characterization of stochastic observability, (2) dual minimum variance optimal control formulation of the stochastic filtering problem, and (3) filter stability analysis using the dual optimal control formulation.
	
	For the first contribution of this thesis, a backward stochastic differential equation (BSDE) is proposed as the dual control system. The observability (detectability) of the HMM is shown to be equivalent to the controllability (stabilizability) of the dual control system. For the linear-Gaussian model, the dual relationship reduces to classical duality in linear systems theory. 
	
	The second contribution is to transform the minimum variance estimation problem into an optimal control problem. The constraint is given by the dual control system. The optimal solution is obtained via two approaches: (1) by an application of maximum principle and (2) by the martingale characterization of the optimal value. The optimal solution is used to derive the nonlinear filter.
		
	The third contribution is to carry out filter stability analysis by studying the dual optimal control problem. Two approaches are presented through  Chapters 7 and 8. In Chapter 7, conditional Poincar\'e inequality (PI) is introduced. Based on conditional PI, various convergence rates are obtained and related to literature. In Chapter 8, the stabilizability of the dual control system is shown to be a necessary and sufficient condition for filter stability on certain finite state space model.

\end{abstract}

%\begin{dedication}
%To My Family
%\end{dedication}

\begin{acknowledgments}
%Thanks to my precious family, wonderful advisor, generous committee members, delightful lab-mates and friends, CSL team members, research colleagues, and anonymous reviewers.

First I appreciate my advisor, Professor Prashant Mehta, who led me to accomplish Ph.D successfully. 
I am grateful for his continuous financial / academic / mental support over 6 years.
He has not only given me academic advice, but also encouraged me when I was having hard time.
I remember the first day I just ask you to be my academic advisor after a class, and it turns out the day was the luckiest day in my grad life.

I was lucky to have my committee members, Professors Bruce Hajek, Maxim Raginsky, and Partha Dey. I appreciate for sharing their wisdom and giving me challenge during prelim and final exams.
I took courses with each of my committee members, and those courses were so great that I had to ask for being a committee member. I appreciate everyone for granting their time on examinations.

My sincere appreciation goes to collaborators of my works. I thank Professor Sean Meyn for giving me great feedback on my papers and sharing his positive energy and passion.
Professor Amirhossein Taghvaei (Amir) deserves a huge appreciation for being a good friend and a wonderful collaborator. His work and attitude have always been a great motivation.
I thank Yagiz Olmez, Shubham Aggrawal, Erik Miehling and his colleagues for allowing me to tag in to their project. 
Although I don't know their names, there was huge help from anonymous reviewers on my conference papers. 

I acknowledge all the helps from administration and staffs in Coordinated Science Laboratory and department of Mechanical Science and Engineering, including but not limited to: Angie Ellis, Stephanie McCullough, and Kathy Smith.
I also thank the IEEE committee who granted me the best student paper award in 2019 IEEE 58th Conference on Decision and Control in Nice, France.

I thank my colleagues in CSL348. A lot of thanks to Amir, Chi Zhang, Ram Sai Gorugantu and Mayank Baranwal for helping me settle down in CSL 348 and for our friendship.
I thank Heng-Sheng Chang, Tixian Wang, Udit Halder, and Anant Joshi for cheering me up and taking all my silly joke and pranks. I thank Yagiz again for making my last day in Champaign.
I thank Prabhat Mishra for great advice and helpful discussions.  
Although our time in CSL 348 does not overlap, I thank Adam Tilton and Shane Ghiotto for sharing industrial inspiration on my early work.
I am also thankful to my friends outside of CSL who shared enjoyable time in Urbana-Champaign.

Most of all, I would like to thank my family. My parents always believe me and give infinite love and support.
I also thank my brother Joowon and his wife Youngrang for having joyful time whenever we meet.

This thesis is dedicated to my loving memory of grandpa. I am sad that he cannot see me graduating, but I believe he would have been proud of me.

\end{acknowledgments}

{
    \hypersetup{linkcolor=black}  % disable link coloring locally
	\setcounter{tocdepth}{1}  %change this to set toc depth
    \tableofcontents
    % the Graduate College doesn't recommend including lot or lof
    % \listoftables
    % \listoffigures
}

\chapter{List of Abbreviations}

\begin{abbrevlist}
	\item[ARE] Algebraic Riccati Equation
	\item[B.M.] Brownian motion
	\item[BSDE] Backward Stochastic Differential Equation
	\item[DRE] Dynamic Riccati Equation
	\item[HMM] Hidden Markov Model
	\item[KL] Kullback–Leibler (divergence)
	\item[LTI] Linear Time-Invaraiant
	\item[LQ] Linear Quadratic
	\item[MAP] Maximum A Posteriori
	\item[MMSE] Minimum Mean-Squared Error
	\item[ODE] Ordinary Differential Equation
	\item[PDE] Partial Differential Equation
	\item[RN] Radon-Nikodym (derivative)
	\item[SDE] Stochastic Differential Equation
	\item[SPDE] Stochastic Partial Differential Equation
%	\item[BSPDE] Backward Stochastic Partial Differential Equation
\end{abbrevlist}

\chapter{List of Symbols}

\begin{symbollist}[0.7in]
\item[$\bS$] State space.
\item[$\Re^d$] Euclidean space of dimension $d$.
\item[$\clA$] Infinitesimal generator of the state process.
\item[$\clA^\dagger$] The adjoint operator of $\clA$.
\item[$\Gamma$] Carr\'e du champ operator.
\item[$C_b(\bS)$] Set of continuous and bounded functions on $\bS$.
\item[$\clM(\bS)$] Set of regular, bounded, finitely additive measures on $\bS$.
\item[$\clP(\bS)$] Set of probability measures on $\bS$.
\item[$\langle \cdot,\cdot\rangle$] Duality pairing of a vector space and its dual space, or the inner product of a Hilbert space.
\item[$\overline{A}$] Closure of $A$.
\item[$\dv(\cdot)$] Vector-to-matrix diagonal operator.
\item[$\dv^\dagger(\cdot)$] Matrix-to-vector diagonal operator.
\item[$\nabla f$] Gradient of $f$.
\item[$\divg f$] Divergence of $f$.
\item[$\Delta f$] Laplacian of $f$.
\item[$N(m_0,\Sigma_0)$] Gaussian density with mean $m_0$ and variance $\Sigma_0$.
\item[$\kl(\mu\mid\nu)$] Kullback–Leibler (KL) divergence of $\mu$ from $\nu$.
\item[$\chisq(\mu\mid\nu)$] Pearson's $\chi^2$ divergence of $\mu$ from $\nu$.
\item[$\clF_t$] The canonical filtration.
\item[$\clZ_t$] The filtration generated by the observation process. 
\item[$\pi_t^\mu$] Nonlinear filter at time $t$ from initial distribution $\mu$ (superscript is often omitted.)
\item[$\sigma_t^\mu$] Un-normalized filter at time $t$ from initial distribution $\mu$ (superscript is often omitted.)
\item[$\Psi_{t,\tau}$] Solution operator of the Zakai equation.
\item[$\ones$] Constant 1 function.
\item[$\ones_A$] Indicator function for the set $A$.
\item[$\clV_t(f)$] Conditional variance at time $t$ of the function $f$.
\item[$\cov_t(f,g)$] Conditional covariance at time $t$ of the functions $f$ and $g$.
%\item[$L^2(D;\ud \sP;S)$] $L^2$ space of functions from $D$ to $S$ with respect to $\sP$.
\end{symbollist}

\mainmatter

\chapter{Introduction}

\nocite{kim2019duality,kim2019observability,kim2021ergodic,kim2021detectable,kim2020smoothing,kim2021dynamicprogramming}

\begin{quote}
	{\it Duality in mathematics is not a theorem, but a ``principle''.}\par\raggedleft---Sir Michael F. Atiyah~\cite{atiyah2007duality}
\end{quote}

\bigskip

%\chapter{Introduction}

The word {\em duality} means a problem can be viewed in two aspects. Duality appears in many different contexts in mathematics: in linear algebra, topology, geometry, analysis, number theory, quantum physics and more~\cite{atiyah2007duality}. Duality is a principle that given a mathematical object, there is a ``dual'' object that provides better understanding of the property of the original object~\cite[Section III.19]{prinston2008}.

\subsubsection{What is duality in this thesis?}

In control theory, estimation and control are viewed as dual problems.
The most basic of these relationships is the duality between controllability and observability of a linear system~\cite{kalman1960general}.
Duality is coeval with the origin of modern systems and control theory:~it appears
in the seminal 1961 paper of Kalman and Bucy~\cite{kalman1961}, where the problem of
optimal (minimum variance) estimation is shown to be dual to a linear
quadratic optimal control problem.  Notably, duality explains why
% modulo a time reversal,
the Riccati equation is the fundamental 
equation for \textit{both} optimal estimation and optimal control (in
the linear Gaussian settings of the problem).

Sixty years have elapsed since the original Kalman-Bucy paper. One would
imagine that duality for the nonlinear stochastic systems (hidden Markov models) is well
understood by now. It is a foundational question at the heart of
modern systems and control theory, and its modern avatars such as reinforcement learning. However, this is not the case! In his
2008 paper~\cite{todorov2008general}, Todorov writes:
\begin{quote}
	``\it{Kalman's duality has been known for half a
		century and has attracted a lot of attention. If a straightforward
		generalization to non-LQG settings was possible it would have been
		discovered long ago. Indeed we will now show that Kalman's duality,
		although mathematically sound, is an artifact of the LQG setting.}''
\end{quote}
Is this to suggest that there is no previous work to extend duality to nonlinear systems? Au contraire! 
As we describe in Chapter~\ref{ch:duality-background}, almost every definition of nonlinear observability, and there have been many throughout the decades, appeals to duality in some manner.
Likewise, Mortensen and related minimum energy algorithms, originally invented in 1960s, are routinely re-discovered.  There have been seminal contributions on the subject from Bene\v{s}, Bensoussan, Fleming, Krener, Mitter, Mortensen, and many others.
In Todorov's paper, several reasons are noted on why the duality described in the prior works of Mitter, Fleming and others (e.g.,~\cite{mortensen1968, fleming1982optimal,mitter2003}) are {\em not} generalizations of the original Kalman-Bucy duality.

\medskip

\subsubsection{How is duality useful?}  
The classical duality between controllability and observability is
useful \emph{both} for analysis and the design of estimation algorithms.  For
example, most proofs of stability of the Kalman filter (see e.g.,~\cite[Ch.~9]{xiong2008introduction}) rely---in direct or indirect
fashion---on duality theory.  Specifically,~(1) Because of duality, asymptotic stability
of the Kalman filter is equivalent to 
asymptotic stability of the (dual) optimal control problem, 
(2) necessary and
sufficient conditions for the same are stabilizability for the 
control problem, and (because of duality) detectability for the
estimation problem,
%\spm{don't like semicolon here}
and (3) analysis of the optimal control problem (e.g., convergence of the value function to its stationary limit) yields
useful  conclusions on asymptotic filter stability.  Even in the
deterministic settings of the estimation problem, the rich literature
on the design of observers and the minimum energy estimators (MEE) is based on duality~\cite{krener2003convergence,mayne2014model}.  The asymptotic analysis of these algorithms rely on input/output-to-state
stability (IOSS) concepts which again have a distinct
control-theoretic
flavor~\cite{sontag1997output,sontag2013mathematical}.

\section{Summary of Original Contributions and its relationship to literature}

In this thesis, we consider the stochastic filtering problem for a hidden Markov model (HMM) with a white noise observation model.  The mathematical model is introduced in Section~\ref{sec:problem-formulation}. For this filtering problem, we make three types of original contributions:
\begin{enumerate}
	\item Dual controllability characterization of stochastic observability.
	\item Dual minimum variance optimal control formulation of the stochastic filtering problem.
	\item Filter stability analysis using the dual optimal control formulation.
\end{enumerate}
Each of the three contribution has a well-established foundational counterpart in linear systems: (1) Classical duality between controllability and observability is reviewed in Section~\ref{ssec:duality-LTI};
(2) Minimum variance optimal control formulation of the linear Gaussian filtering problem is reviewed in Section~\ref{ssec:Kalman-filter}; and
(3) Filter stability analysis of the Kalman filter, including a discussion of the relevance of dual technique for the same, appears in Section~\ref{ssec:Kalman-filter-stability}.

For nonlinear systems, there has been decades of research on the three topics. We provide a quick summary here with pointers to sections where additional details appear.

\paragraph{Observability.}
Generalization of the observability definition to nonlinear deterministic and stochastic systems has been an area of historical and current research interest.
Classical definitions of Krener~\cite{hermann1977nonlinear} and Sontag~\cite{sontag1997output} are reviewed in Section~\ref{ssec:nonlinear-observability}.
Both these definitions are based on duality.
%Notably, the output-to-state stability is the dual notion of the input-to-state stability which is understood as the stabilizability notion of nonlinear systems.
For HMMs, the fundamental definition for stochastic observability is due to van Handel~\cite{van2009observability,van2009uniform}. % Namely, every distinct prior results in distinct probability law on the measurement sequence.
The definition is reviewed in Section~\ref{ssec:observability-hmm} together with a discussion of some recent extensions in Section~\ref{ssec:other-obs-notion}.
%While each of the observability notions for nonlinear deterministic systems admits its dual counterpart of controllability notion,
The stochastic observability definition is entirely probabilistic. Our contribution is to describe a dual control system such that the controllability of the dual control system is equivalent to the stochastic observability of the HMM. (The equivalence is expressed in terms of the closed range theorem.) This is the main topic of Chapter~\ref{ch:observability}.

\paragraph{Duality between optimal control and filtering.}
Duality between observability and controllability suggests that the problem of filter (estimator) design can be re-formulated as a variational problem of optimal control. In classical linear Gaussian settings, the dual formulations are well-understood. These are of two types: (1) minimum variance and (2) minimum energy estimator.
Minimum variance duality is related to the filtering problem while the minimum energy duality is related to the smoothing problem.
%A complete discussion of these two types of duality appears in Section~\ref{sec:duality-filtering-oc} of the thesis.
Minimum energy duality has several counterparts in nonlinear settings. One of the earliest is the Mortensen's maximum likelihood estimator~\cite{mortensen1968}. In the model predictive control (MPC) community, minimum energy estimation~\cite{hijab1980minimum} is widely studied for algorithm design~\cite{krener2003convergence,krener2015minimum,rawlings2017model,mayne2014model}. 
Historically, one of the reason to introduce observability definition is to prove stability of the minimum energy estimator.
For the stochastic filtering and smoothing problem, the most prominent name in duality theory is Sanjoy Mitter~\cite{fleming1982optimal,mitter2003}.
Fleming-Mitter~\cite{fleming1982optimal} is one of the first paper to note that negative log of the posterior is a value function for a certain optimal control problem. Such a relationship is referred to as log transformation~\cite{fleming1978exit}. Although the meaning of the optimal control problem was not clarified in the original paper~\cite{fleming1982optimal}, Mitter-Newton~\cite{mitter2003} introduced a dual optimal control problem based on a completely classical variational interpretation of the Bayes formula.
A chapter length review of Mitter and related work is included in Appendix~\ref{apdx:min-energy} of the thesis.
%The connection of Mitter's work to the minimum energy estimator is described in the Appendix as well as in Section~\ref{sec:historical-remarks}. Specifically, it is shown that the two are equivalent for the linear Gaussian model.
In Appendix~\ref{apdx:min-energy} and also in Section~\ref{sec:historical-remarks}, it is shown that Mitter-Newton optimal control problem reduces to the minimum energy estimator for the linear Gaussian model.

\paragraph{Filter stability.}
Viewed from a certain lens, the story of filter stability
is a story of two parts: (1) stability of the Kalman filter in the
linear Gaussian settings of the problem where dual
definitions and methods are paramount, and (2) stability of the
nonlinear filter where there is no hint of such methods.  The
disconnect is already seen in the earliest works---in the two parts
of the pioneering paper of Ocone and Pardoux~\cite{ocone1996asymptotic} on the topic of
filter stability, or in the two parts of Bensoussan's 
textbook on partially observed Markov decision processes~\cite{bensoussan1992stochastic}.  One notable exception (that really proves the rule) is found in the  PhD thesis of van
Handel~\cite{van2006filtering} where Mitter-Fleming duality is used to obtain results on filter stability.
%A broad review on various approaches for filter stability appears in Chapter~\ref{ch:filter-stability-literature}.
However, these results are not especially strong, in part because the duality employed is for
smoothing (and not filtering) problem. In his later papers, van Handel abandons 
the approach of his PhD thesis in favor of the so called intrinsic (probabilistic) approach to filter stability. 
A review of filter stability literature appears in Chapter~\ref{ch:filter-stability-literature}.
The prior use of dual optimal control based technique for filter stability analysis is discussed in Section~\ref{sec:66} based on van Handel's PhD thesis~\cite{van2006filtering}.

%Explicit sufficient conditions for finite state-space case is studied by~\cite{baxendale2004asymptotic} who are the first to formulate certain ``identifying conditions'' that are shown to be sufficient for the stability of the Wonham filter.  These conditions are formulated in terms of the model parameters (transition matrix and the observation function).
%Chapter~\ref{ch:filter-stability-2} of this thesis revisits their settings to connect the dual optimal control formulation with the stabilizability of the dual control system.

\section{Summary of papers}

%The stochastic filtering problem is considerred for an hidden Markov model (HMM) with a white noise observation model~\eqref{eq:obs-model} introduced in Section~\ref{sec:problem-formulation}. The main contribution is to describe a dual optimal control problem~\eqref{eq:dual-optimal-control} for the nonlinear filtering problem. Based on the dual; we have written four conference papers.

The results in thesis were first reported in the following four conference papers.

\medskip

\paragraph{1. Basic paper on the subject:} The dual optimal control problem
is introduced for the first time in our 2019 paper~\cite{kim2019duality}.
The dual control system is a backward stochastic differential equation (BSDE).
It is shown to be an exact extension of the original Kalman-Bucy duality, in the sense that the
dual optimal control problem has the same minimum variance structure
for \textit{both} linear and nonlinear filtering problems. 
This paper won the \textit{Best Student Paper Award} at the IEEE Conf.~on Decision and Control (CDC) 2019 from a competitive field of 65 nominations for this award.  From
one of the anonymous reviews of the paper:
\begin{quote}
	``{\it The paper is concerned with extending, to the nonlinear
		case,
		classic duality results between control and estimation. There has been previous work on this over a period of
		decades but the 
		particular version of that problem tackled here had been
		thought to be unsolvable.}''
\end{quote}

\paragraph{2. Dual definition for observability:} In a follow-up
paper~\cite{kim2019observability}, stochastic observability of an
HMM is expressed in dual terms:
as controllability of the dual control system.  It is shown that (1) the resulting characterization is equivalent to the stochastic observability definition of van Handel, and
(2) the BSDE is a dual to the Zakai equation of nonlinear filtering.  
%\begin{flalign*}
	%	&\text{(Zakai)}&
	%	\quad \text{(prior at $t=0$)} \quad &\mu\in \clM(\bS) \quad \stackrel{\clL^\dagger}{\longrightarrow} \quad \{\sigma_t^\mu(h):0\leq
	%	t\leq T\} \in \clU \quad &\text{(un-norm. filter)} && \\
	%	&\text{(BSDE)}&
	%	\quad \text{(function at $t=0$)} \quad  &Y_0\in C_b(\mathbb{S}) \quad \stackrel{\clL}{\longleftarrow} \quad \{U_t :0\leq
	%	t\leq T\} \quad \in {\cal U} \quad  &\text{(control input)} &&
	%\end{flalign*}

\paragraph{3. Filter stability of ergodic signals:} The
paper~\cite{kim2021ergodic} is the first of the two papers on the subject of
stochastic filter stability (asymptotic forgetting of the initial condition).  A key 
contribution of the paper is the notion of conditional Poincar\'e inequality
(PI) which is shown to yield filter stability.   Using the dual 
methods, we are able to derive \textit{all} the prior results where
explicit convergence rates are obtained.  From
one of the 
anonymous reviews of the paper:
\begin{quote}
	``{\it The paper is a new take on the stability problem of the Wonham
		filter [...], I find this work highly original and definitely deserving publication. Even
		though the obtained stability conditions were essentially known before, I have a feeling
		that the new perspective on the problem will bear much more fruit in the nearest future.}''
\end{quote}

\paragraph{4. Filter stability of non-ergodic signals:} The
paper~\cite{kim2021detectable} is the second of the two papers on the subject of
stochastic filter stability.  The contribution
of this paper is to introduce the definition for stabilizability of
the BSDE~\eqref{eq:dual-bsde}, and establish that it is necessary
and sufficient for filter stability (for the case when $\bS$ is finite).  
This theory is entirely parallel to the Kalman-Bucy filter stability theory in
the linear Gaussian settings of the problem.

\section{Outline of this thesis}

The thesis contains nine chapters and two appendices.  Chapters 2, 3 and 6 largely contain the background
information and literature survey.  The remaining chapters 4, 5, 7, 8, and 9 contain
original results.  A short summary of each of the chapters is as
follows:

\begin{itemize}
	\item {\bf Chapter~\ref{ch:background}} introduces the mathematical problem of stochastic
	filtering and provides a summary of the prominent solution approaches to 
	derive the basic equations of stochastic filtering.
	
	\item {\bf Chapter~\ref{ch:duality-background}} is a review of basic duality theory in systems and control.  Two types of dualities are discussed: (1) duality
	between controllability and observability for linear systems, and
	(2) duality between linear Gaussian (Kalman) filter and linear
	quadratic optimal control.
	The chapter also includes a review of prior work on extending these to nonlinear deterministic and stochastic systems.
	%	For nonlinear systems, observability definitions of Krener, Sontag, and van Handel are described. van Handel's definition is for HMMs and referred to as stochastic observability.
	%	The final section of this chapter
	%	summarizes the prior work in this area to extend the
	%	latter to nonlinear non-Gaussian settings.
	%	Additional details on the same appears in Appendix~\ref{apdx:min-energy} where a chapter length exposition of Mitter-Newton duality is presented.
	
	\item {\bf Chapter~\ref{ch:observability}} presents our original work on extending the
	classical duality between controllability and observability to the
	stochastic filtering model.  Specifically, a BSDE model for the dual
	control system is introduced.  For this system, controllability
	and stabilizability are
	defined and shown to be dual to the stochastic observability and detectability, respectively.    
	
	\item {\bf Chapter~\ref{ch:duality-principle}} contains the main contribution of this thesis, namely, the dual optimal control problem. Its solution is described using two approaches: (1) via an application of the maximum principle; and (2) through a martingale characterization. Each of these approaches is shown to yield an explicit feedback form of the optimal control law. The feedback form is used to obtain a novel derivation of the equation of stochastic filtering.
	
	\item {\bf Chapter~\ref{ch:filter-stability-literature}} is a review of the filter stability results.
	The chapter begins with a discussion of the stability theory of the Kalman
	filter, drawing mainly on Ocone and Pardoux' classical paper on the
	subject.  The remainder of this chapter is devoted to a discussion
	of the main techniques and results for analysis of the nonlinear
	filter.  % For the general HMM, the famous counter-example of the
	% filtering theory is described to illustrate some of the difficulties
	% and challenges. 
	
	\item {\bf Chapter~\ref{ch:filter-stability}} contains the first set of results on filter stability
	analysis using the dual optimal control problem.  The definition of
	conditional Poincar\'e inequality (PI) is introduced and shown to be the
	simplest sufficient condition to obtain filter stability.
	Based on conditional PI, convergence rates are obtained for several examples. These are related to literature.
	
	\item {\bf Chapter~\ref{ch:filter-stability-2}} is also on the subject of filter stability but
	focussed on the finite state-space case.  For this case,
	stabilizability of the dual control system is shown to be necessary
	and sufficient to detect the correct ergodic class. 
	
	\item {\bf Chapter~\ref{ch:future}} contains a discussion of some open problems.   
	
	\item {\bf Appendix~\ref{apdx:bsde}} contains backgound results on existence uniqueness and optimal control theory for BSDEs.
	
	\item {\bf Appendix~\ref{apdx:min-energy}} provides a self-contained exposition of minimum energy dual optimal control formulation and its connection to nonlinear smoothing equation.  
\end{itemize}  

%
%\begin{figure}[h]
	%	\centering
	%	\includegraphics[width=0.9\linewidth]{./}
	%	\caption{Structure of the thesis.}
	%\end{figure}

\newpage

%%%%%%%%%%%%%%%%%%%%%%%%%%%%%%%%%%%%%%%%%%%%%%%%%%%%%%%%%%%%%%%%%%%%%%%%%%%%%%%%%%%%%%%%%%%%

\chapter{Nonlinear filtering}\label{ch:background}

%\chapter{Nonlinear filtering}\label{ch:background}

In this chapter, we introduce the mathematical model for the nonlinear filtering problem in Section~\ref{sec:problem-formulation} and describe the main solution approaches in Section~\ref{sec:Girsanov}.

% refer to textbooks e.g.~\cite{oksendal2003stochastic,rogers2000diffusions, le2016brownian}. For detailed expositions on stochastic filtering, we follow~\cite{xiong2008introduction, van2007lecture}.

\subsubsection{Notation}

For a locally compact Polish space $S$, the following notation is adopted:
\begin{itemize}
	\item $\clB(S)$ is the Borel $\sigma$-algebra on $S$.
	\item $\clM(S)$ is the space of regular, bounded and finitely additive 
	signed measures (rba measures) on $\clB(S)$.
	\item $\clP(S)$ is the subset of $\clM(S)$ comprising of probability 
	measures.
	\item $C_b(S)$ is the space of continuous and bounded real-valued functions on $S$.
	\item For measure space $(S;\clB(S);\lambda)$, $L^2(\lambda)= L^2(S;\clB(S);\lambda)$ is the Hilbert space of real-valued functions on $S$ equipped with the inner product
	\[
	\langle f, g\rangle_{L^2(\lambda)} = \int_S f(x)g(x)\ud \lambda(x)
	\]
\end{itemize}

For functions $f:S\to \Re$ and $g:S\to \Re$, the notation $fg$ is used to denote element-wise 
product of $f$ and $g$, namely,
\[
(fg)(x) := f(x)g(x),\quad x\in S
\]
In particular, $f^2 = ff$. The constant function is denoted by $\ones$ ($\ones(x) = 1$ for all $x\in S$). 
%\item The space of continuous and bounded $\Re^m$-valued functions on $\bS$ by 
%$C_b(\bS)^m$. For $f\in C_b(\bS)$ and $g\in C_b(\bS)^m$, $fg \in C_b(\bS)^m$ 
%and defined by
%\[
%fg(x) = [f(x)g_1(x),\ldots, f(x)g_m(x)]
%\]
%where $g(x) = [g_1(x),\ldots,g_m(x)]$. 

For $\mu\in \clM(S)$ and $f\in C_b(S)$,
\[
\mu(f) := \int_S f(x) \ud \mu(x)
\]
and for $\mu,\nu\in \clM(S)$ such that $\mu$ is absolutely continuous with 
respect to $\nu$ (denoted $\mu\ll\nu$), the Radon-Nikodym (RN) derivative is 
denoted by $\dfrac{\ud \mu}{\ud \nu}$. 

\medskip

For a vector $a\in \Re^d$, $A = \dv(a)$ is a $d\times d$ matrix with $A(i,i) = a_i$ and $A(i,j) = 0$ for $i\neq j$. For a $d\times d$ matrix $A$, $a=\dv^\dagger(A)$ is $d$-dimentional vector with $a_i = A(i,i)$.

\section{Nonlinear filtering problem}\label{sec:problem-formulation}

%(Historical remarks on nonlinear filtering -- Kallianpur, Wonham, 
%Stratonovich, Kushner, Shiryaev, etc / Innovation process: Kailath-Frost, 
%Allinger-Mitter) 

Throughout the thesis, we consider continuous time processes on a finite time horizon $[0,T]$ with $T<\infty$.
Fix the probability space $(\Omega, \clF_T, \sP)$ along with the filtration $\{\clF_t:0\le t \le T\}$ with respect to which all the stochastic processes are adapted. Of special interest are a pair of continuous-time stochastic processes $(X,Z)$ defined as follows:
\begin{itemize}
	\item The \emph{state process} $X = \{X_t\in\bS:0\le t \le T\}$ is a Feller-Markov 
	process on the state-space $\bS$. Its initial measure (prior) is denoted by 
	$\mu \in \clP(\bS)$ and $X_0\sim \mu$. The infinitesimal generator of the 
	Markov process is denoted by $\clA$. In terms of $\clA$, the \emph{carr\'e 
		du champ} operator $\Gamma$ is defined as follows:
	\[
	(\Gamma f)(x) = (\clA f^2)(x) - 2f(x)(\clA f)(x),\quad x\in \bS
	\]
	for a suitable subset of test functions $f \in C_b(\bS)$.
	A sample path $t \mapsto X_t(\omega)$ is a $\bS$-valued c\'adl\'ag function (that is right continuous with left limits).  The space of such functions is denoted by $D\big([0,T];\bS\big)$. In particular, $X(\omega)\in D\big([0,T];\bS\big)$ for $\omega \in \Omega$.
	%	where $f:\bS \to \Re$. Definition 1.4.2 in Bakry
	
	\item  The \emph{observation process} $Z = \{Z_t \in \Re^m:0\le t \le T\}$ satisfies the following stochastic differential equation (SDE):
	\begin{equation}\label{eq:obs-model}
		Z_t = \int_0^t h(X_s) \ud s + W_t,\quad t \ge 0
	\end{equation}
	where $h:\bS\to \Re^m$ is a continuous function and $W = \{W_t:0\le t \le T\}$ is an $m$-dimensional Brownian motion (B.M.). We say $W$ is $\sP$-B.M. % with covariance matrix $R \succ 0$.
	It is assumed that $W$ is independent of $X$.
	A sample path $t\mapsto Z_t(\omega)$ is a $\Re^m$-valued continuous function. The space is denoted by $C\big([0,T];\Re^m\big)$.
\end{itemize}
The above is referred to as the \emph{white noise observation model} of 
nonlinear filtering. In the remainder of this thesis, the model is denoted by 
$(\clA,h)$.  In the case where $\bS$ is not finite, additional assumptions are 
typically necessary to ensure that the model is well-posed.

% $C\big([0,T];S\big)$ is the space of 
%continuous paths on $S$, and $D\big([0,T];S\big)$ is the space of right 
%continuous with left limits paths.

The canonical filtration $\clF_t = \sigma\big(\{(X_s,W_s):0\le s \le t\}\big)$. The filtration generated by the observation is denoted by $\clZ :=\{\clZ_t:0\le t\le T\}$  where $\clZ_t = \sigma\big(\{Z_s:0\le s\le t\}\big)$. %The entire information up to time $t$ is denoted by $\{\clF_t:0\le t \le T\}$.
The \emph{filtering problem} is to compute the conditional expectation for a given function $f\in C_b(\bS)$:
\[
\pi_t(f) := \E\big(f(X_t)\mid \clZ_t\big),\quad 0\le t \le T
\]
The measure-valued process $\pi = \{\pi_t\in \clP(\bS):0\le t \le T\}$ is referred to as the nonlinear filter.

Clearly, $\clZ_T\subset \clF_T$. We denote the restriction of $\sP$ to $\clZ_T$ by $\sP|_{\clZ_T}$. It is obtained using the defining relation
\[
\sP|_{\clZ_T} (A) := \sP(A),\quad A\in \clZ_T
\]

In problems concerned with observability of the model $(\clA,h)$ or filter stability, there are reasons to consider more than one prior $\mu$. We reserve the notation $\mu$ to denote the true but possibly unknown prior and the notation $\nu$ to denote the prior that is used to compute the filter. If $\mu$ is exactly known then $\mu = \nu$. In all other cases, it is assumed that $\mu\ll\nu$.

To stress the dependence on the initial measure $\mu$, we use the superscript 
notation $\sP^\mu$ to denote the probability measure $\sP$ when $X_0\sim \mu$. 
The expectation operator is denoted by $\E^\mu(\cdot)$ and the nonlinear filter 
$\pi_t^\mu(f) = \E^\mu\big(f(X_t)\mid \clZ_t\big)$. On the common measurable 
space $(\Omega, \clF_T)$, $\sP^\nu$ is used to denote another probability measure 
such that the transition law of $(X,Z)$ are identical but $X_0\sim \nu$. The 
associated expectation operator is denoted by $\E^\nu(\cdot)$ and 
$\pi_t^\nu(f) = \E^\nu\big(f(X_t)\mid \clZ_t\big)$.
The precise definition of $\sP^\mu$ and $\sP^\nu$ appears 
in~\cite[Section 2.2]{clark1999relative} where the following relationship between the two is 
also established:
%The following lemma provides a connection between $\sP^\mu$ and $\sP^\nu$.

\medskip

\begin{lemma}[Lemma 2.1 in \cite{clark1999relative}] \label{lm:change-of-Pmu-Pnu}
	Suppose $\mu\ll \nu$. Then 
	\begin{itemize}
		\item $\sP^\mu\ll\sP^\nu$, and the change of measure is given by
		\begin{equation*}\label{eq:P-mu-P-nu}
			\frac{\ud \sP^\mu}{\ud \sP^\nu}(\omega) = \frac{\ud \mu}{\ud \nu}\big(X_0(\omega)\big)\quad \sP^\nu\text{-a.s.}
			%\E^\mu\big(\phi(X,Z)\big) = \E^\nu\Big(\phi(X,Z)\frac{\ud \mu}{\ud \nu}(X_0)\Big)
		\end{equation*}
		%for any given functional $\phi:C\big([0,T];\bS\big)\times C\big([0,T];\Re^m\big)\to \Re$.
		\item For all $t > 0$, $\pi_t^\mu \ll \pi_t^\nu$, $\sP^\mu|_{\clZ_t}$-almost surely.
	\end{itemize} 
	
\end{lemma}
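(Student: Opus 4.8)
The plan is to treat the two assertions separately, since the first concerns the full path measure on $\clF_T$ while the second concerns the $\clZ_t$-conditional laws. For the \emph{first assertion}, I would exploit that, by construction, the pair $(X,W)$ has the same dynamics under $\sP^\mu$ and $\sP^\nu$ and the two measures differ only in the law of $X_0$. Since $W$ is independent of $X$ and the transition law of $X$ does not depend on the prior, the regular conditional distribution of the whole sample path $(X_s,W_s)_{0\le s\le T}$ given $X_0=x$ is one and the same Markov kernel $K(x,\cdot)$ under both measures. Establishing this disintegration gives, for every bounded $\clF_T$-measurable functional $F$,
\[
\E^\mu(F) = \int_{\bS}\Big(\int F\,\ud K(x,\cdot)\Big)\ud\mu(x),\qquad \E^\nu(F)=\int_{\bS}\Big(\int F\,\ud K(x,\cdot)\Big)\ud\nu(x).
\]
Writing $\ud\mu = \tfrac{\ud\mu}{\ud\nu}\,\ud\nu$ in the first integral and pulling the density through the kernel (it depends only on $x=X_0$) then yields $\E^\mu(F)=\E^\nu\big(F\cdot\tfrac{\ud\mu}{\ud\nu}(X_0)\big)$, which is exactly $\sP^\mu\ll\sP^\nu$ with the claimed density $\Lambda:=\tfrac{\ud\mu}{\ud\nu}(X_0)$.

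For the \emph{second assertion}, I would combine $\Lambda$ with the conditional (abstract Bayes) formula. Since $\Lambda$ is $\clF_0$-measurable and $\clZ_t\subset\clF_T$, for any bounded $f$,
\[
\pi_t^\mu(f) = \E^\mu\big(f(X_t)\mid\clZ_t\big) = \frac{\E^\nu\big(f(X_t)\,\Lambda\mid\clZ_t\big)}{\E^\nu\big(\Lambda\mid\clZ_t\big)},\qquad\sP^\mu\text{-a.s.},
\]
valid on $\{\E^\nu(\Lambda\mid\clZ_t)>0\}$, which carries full $\sP^\mu$-measure because $\sP^\mu\big(\E^\nu(\Lambda\mid\clZ_t)=0\big)=\E^\nu\big(\Lambda\,\ones_{\{\E^\nu(\Lambda\mid\clZ_t)=0\}}\big)=0$ (the indicator is $\clZ_t$-measurable). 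To convert this ratio into a statement about measures, I would introduce the regular conditional joint law $\Pi_t(\ud x_0,\ud x_t)$ of $(X_0,X_t)$ given $\clZ_t$ under $\sP^\nu$: its second marginal is $\pi_t^\nu$, while the displayed formula exhibits $\pi_t^\mu$ as the normalized second marginal of the reweighted measure $\tfrac{1}{c}\,\tfrac{\ud\mu}{\ud\nu}(x_0)\,\Pi_t(\ud x_0,\ud x_t)$, with $c=\E^\nu(\Lambda\mid\clZ_t)$.

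Finally I would invoke the elementary fact that reweighting by a density preserves absolute continuity and passes to marginals: the reweighted measure is $\ll\Pi_t$, so for any $A\in\clB(\bS)$ with $\pi_t^\nu(A)=\Pi_t(\bS\times A)=0$ one gets $\pi_t^\mu(A)=0$; as this implication is a property of the two \emph{realized} measures it holds simultaneously for all Borel $A$, giving $\pi_t^\mu\ll\pi_t^\nu$ off a single $\sP^\mu|_{\clZ_t}$-null set. I expect the main obstacle to be bookkeeping rather than any sharp estimate: for the first part, making the disintegration $K(x,\cdot)$ rigorous (existence and joint measurability of the regular conditional kernel on the c\`adl\`ag path space, which is exactly where the Polish/Feller structure is used); and for the second part, routing the $\sP^\mu$-a.s.\ Bayes identity, the positivity of the normalizer $c$, and the ``for all $A$'' absolute continuity through a single common $\sP^\mu|_{\clZ_t}$-null set so that the almost-sure qualifier in the statement is correctly earned.
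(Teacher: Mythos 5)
Your proof is correct. Note that the thesis itself does not prove this lemma: it is quoted as Lemma~2.1 of~\cite{clark1999relative}, with the construction of $\sP^\mu$ and $\sP^\nu$ deferred to that reference, so there is no in-text argument to compare against line by line. That said, your two steps are the standard ones and they match what the thesis does implicitly elsewhere. The disintegration through the kernel $K(x,\cdot)$ is exactly how $\sP^\mu$ and $\sP^\nu$ are set up on the common measurable space (identical transition law for $(X,Z)$, different prior), and it immediately yields $\frac{\ud\sP^\mu}{\ud\sP^\nu}=\frac{\ud\mu}{\ud\nu}(X_0)$. Your reweighting of the conditional joint law of $(X_0,X_t)$ given $\clZ_t$ is the same computation the thesis carries out in Section~\ref{sec:65}, where it goes one step further and identifies the density explicitly as
\[
\frac{\ud \pi_T^\mu}{\ud \pi_T^\nu}(x)=\frac{\E^\nu\big(\frac{\ud\mu}{\ud\nu}(X_0)\mid\clZ_T,\,X_T=x\big)}{\E^\nu\big(\frac{\ud\mu}{\ud\nu}(X_0)\mid\clZ_T\big)},
\]
cf.\ Eq.~\eqref{eq:gamma_T-explicit}; your marginal-of-the-reweighted-kernel argument proves absolute continuity without needing that explicit formula. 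Your treatment of the two genuinely delicate points --- positivity of the normalizer $\E^\nu(\Lambda\mid\clZ_t)$ under $\sP^\mu$, and collecting the $f$-dependent null sets into a single one via regular conditional distributions so that $\pi_t^\mu\ll\pi_t^\nu$ holds as a statement about realized measures --- is exactly where the care is needed, and you have handled both correctly.
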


%For the purpose of the filter stability analysis, let $\mu$ be the true prior, and $\nu$ be an incorrect initial measure to compute the filter. 
%%To emphasize the initial prior, we denote the original probability measure by $\sP^\mu$, and $\tsP^\mu$ denotes the reference measure obtained by Girsanov transformation~\eqref{eq:change-of-measure-t}.
%
%
%
%%Consider another probability measure $\sP^\nu$ on $(\Omega, \clF)$, such that the transition law of $(X,Z)$ are identical, but $X_0 \sim \nu$.
%One can note that under $\sP^\nu$, $\pi_t^\nu$ is the optimal filter and $\pi_t^\mu$ is sub-optimal.

\subsection{Guiding examples for the Markov processes}\label{ssec:guiding}

The most important examples are (1) the state space $\bS$ is finite, and (2) the state space $\bS$ is Euclidean. In the Euclidean case, the linear Gaussian Markov process is of historical interest.
In the following, we introduce notation and additional assumptions for these examples. An important objective is to describe the explicit form of the carr\'e du champ operator for these various examples.

\subsubsection{Finite state space} The state-space $\bS$ is finite, namely, $\bS= \{1,2,\ldots,d\}$. 
In this case, the space $C_b(\bS)$ and $\clM(\bS)$ are both isomorphic to $\Re^d$: a real-valued function $f$ (or a finite measure $\mu$) is identified with a vector in $\Re^d$, where the $i^{\text{th}}$ element of the vector represents $f(i)$ (or $\mu(i)$). In this manner, the observation function $h$ is also identified with a matrix $H\in\Re^{d\times m}$.

The generator $\clA$ of the Markov process is identified with a row-stochastic rate matrix $A\in\Re^{d\times d}$ (the non-diagonal elements of $A$ are non-negative and the row sum is zero). $A$ acts on
a function $f \in \Re^d$ through right-multiplication:
\[
\clA: f\mapsto A f
\]
Its adjoint, denoted $\clA^\dagger$, acts on measures $\clA^\dagger:\mu \mapsto A^\tp \mu$.
The carr\'e du champ operator $\Gamma:\Re^d\to \Re^d$ is as follows:
\begin{equation}\label{eq:Gamma-finite}
	(\Gamma f)(i) = \sum_{j \in \mathbb{S}} A(i,j) (f(i) - f(j))^2,\quad i\in\bS
\end{equation}
For notational ease, we define a matrix-valued function $Q:\bS \to \Re^{d\times d}$:
\begin{equation}\label{eq:Q-matrix}
	Q(i):= \sum_{j \in \mathbb{S}} A(i,j) (e_i - e_j)(e_i-e_j)^\tp, \quad i\in\bS
\end{equation}
where $\{e_1,e_2,\ldots,e_d\}$ is the standard basis in $\Re^d$. With this definition, $(\Gamma f)(x) = f^\tp Q(x) f$ for $x\in\bS$.

%$\sigma_t^\mu$ and $\pi_t^\mu$ take values in $\Re^d$, and~\eqref{eq:Zakai} becomes
%\[
%f^\tp \sigma_t^\mu = f^\tp \mu + \int_0^t f^\tp \dv(\sigma_s^\mu) H\ud Z_s + \int_0^t (Af)^\tp \sigma_s^\mu \ud s
%\]
%where $\dv(\sigma_s^\mu)$ is a diagonal matrix whose diagonal values are $\sigma_s^\mu$. Since $f$ is arbitrary, we have
%\[
%\ud \sigma_t^\mu = A^\tp \sigma_t^\mu \ud t + \dv(\sigma_t^\mu)H\ud Z_t,\quad \sigma_0^\mu = \mu
%\]
%The nonlinear filter is similarly obtained. In this case, the nonlinear filter is called \emph{Wonham filter}:
%\[
%\ud \pi_t^\mu = A^\tp \pi_t^\mu \ud t + \big(\dv(\pi_t^\mu)-\pi_t^\mu\pi_t^{\mu\tp}\big)H(\ud Z_t - H^\tp \pi_t^\mu\ud t)
%\]

\begin{remark}
	%	The formula~\eqref{eq:Gamma-finite} applies also when the state space is countable.
	The notation for the finite state space case is readily extended to the countable state space $\bS = \{1,2,\ldots\}$. In this case, $A = \{A(i,j): i,j\in \bS\}$ and the carr\'e du champ is also given by the equation~\eqref{eq:Gamma-finite}.
	Typically, additional assumptions are needed to ensure that the Markov process $X$ is well-defined over the time horizon $[0,T]$. The simplest condition is that $A$ has bounded rates, i.e., $\sup_{i\in \bS} \sum_{j\neq i} A(i,j) < \infty$. Additional conditions are noted as needed to obtain various results in the thesis. 
	%for all $T$. Rather than provide them here, these are described as needed to obtain various results.
\end{remark}

\subsubsection{Euclidean state space} The state space $\mathbb{S}=\Re^d$. 
We restrict $\clM(\bS)$ to measures which are absolutely continuous with respect to the Lebesgue measure. With a slight abuse of notation, we use the same notation $\rho$ to denote the measure and its density, writing
\[
\rho(f) = \int_{\Re^d} f(x)\rho(x)\ud x
\]
The Markov process $X$ is an It\^o diffusion modeled using a stochastic differential equation (SDE):
\begin{equation*}\label{eq:dyn_sde}
	\ud X_t = a (X_t) \ud t + \sigma(X_t) \ud B_t, \quad X_0\sim \mu
\end{equation*}
where $\mu$ is now a probability density on $\Re^d$, $a\in C^1(\Re^d; \Re^d)$ and $\sigma\in C^2(\Re^d; \Re^{d\times p})$ satisfy appropriate technical conditions such that a strong solution exists for $[0,T]$, and $B=\{B_t:0\le t \le T\}$ is a standard B.M.~assumed to be independent of $X_0$ and $W$.
The observation function $h\in C^1(\Re^d;\Re^m)$.

The infinitesimal generator $\clA$ acts on $C^2(\Re^d;\Re)$ functions in its 
domain according to~\cite[Thm. 7.3.3]{oksendal2003stochastic}
\[
(\clA f)(x):= a^\tp(x) \nabla f(x) + \half  \tr\big(\sigma\sigma^\tp(x)(D^2f)(x)\big),\quad x\in\Re^d
\]
where $\nabla f$ is the gradient vector and $D^2 f$ is
the Hessian matrix. The adjoint operator acts on density $\rho \in C^2(\Re^d;\Re)$ according to
\[
({\cal A}^\dagger \rho)(x) = -\divg(a\rho)(x) + \half  \sum_{i,j=1}^d \frac{\partial^2}{\partial x_i \partial x_j}\big([\sigma\sigma^\tp]_{ij} \rho\big)(x),\quad x\in \Re^d
\]
where $\divg(\cdot)$ is the divergence operator.
For $f\in C^1(\Re^d;\Re)$, the carr\'e du champ operator is given by 
\begin{equation}\label{eq:Gamma-Euclidean}
	(\Gamma f) (x) = \big|\sigma^\tp(x) \nabla f(x) \big|^2,\quad x\in\Re^d
\end{equation}

%Suppose the probability density exists, then~\eqref{eq:Zakai} is expressed by:
%\begin{align*}
	%	\langle f,\sigma_t^\mu\rangle &= \langle f,\mu\rangle + \int_0^t \langle fh,\sigma_s^\mu \rangle \ud Z_s + \int_0^t \langle \clA f,\sigma_s^\mu\rangle \ud s\\
	%	&= \langle f,\mu\rangle + \int_0^t \langle f,\sigma_s^\mu h \rangle \ud Z_s + \int_0^t \langle f, \clA^\dagger \sigma_s^\mu\rangle \ud s
	%\end{align*}
%Since this is true for any $f$, $\sigma_t^\mu$ holds the stochastic PDE:
%\[
%\ud \sigma_t^\mu(x) = \clA^\dagger \sigma_t^\mu(x) \ud t + \sigma_t^\mu(x)h(x)\ud Z_t,\quad \sigma_0^\mu(x) = \mu(x)
%\]

\subsubsection{Linear-Gaussian model}
The linear-Gaussian model is a historically important example on the Euclidean state-space. 
In linear Gaussian settings, the functions $a(\cdot)$ and $h(\cdot)$ are 
linear, $\sigma$ is a constant matrix, and $\mu$ is Gaussian. Explicitly,
\begin{subequations}\label{eq:linear-Gaussian-model}
	\begin{align}
		\ud X_t &= A^\tp X_t \ud t + \sigma \ud B_t,\quad X_0\sim N(m_0,\Sigma_0) \label{eq:linear-Gaussian-model-a}\\
		\ud Z_t &= H^\tp X_t \ud t + \ud W_t \label{eq:linear-Gaussian-model-b}
	\end{align}
\end{subequations}
where $N(m_0,\Sigma_0)$ denotes the Gaussian density with mean $m_0\in \Re^d$ 
and variance $\Sigma_0 \succ 0$.
The model parameters $A\in\Re^{d\times d}$, $H\in\Re^{d\times m}$, and $\sigma\in\Re^{d\times p}$.
%It is also a straightforward extension to consider time-varying parameters.
%The generator and carr\'e du champ operator are identical to the ones in Euclidean case. In particular, for a linear function $f(x) = \tilde{f}^\tp x$ for some $\tilde{f}\in \Re^d$, 
%\[
%(\clA f)(x) = (A\tilde{f})^\tp x
%\]
%and the carr\'e du champ operator always results in a constant function
%\[
%(\Gamma f)(x) = |\sigma^\tp \tilde{f}|^2 = \tilde{f}^\tp Q \tilde{f} %,\quad \forall x\in \Re^d
%\]
%where $Q = \sigma\sigma^\tp$. 
%
%It is known that $X_t$ and $Z_t$ are Gaussian for all $0\le t \le T$. Moreover, the 
%conditional distribution $\sP^\mu(X_t\in\cdot\mid\clZ_t)$ is also 
%Gaussian~\cite{kailath2000linear}. Henceforth, the problem reduces to obtain 
%the mean and variance of the conditional distribution.
With a slight abuse of notation, we express a linear function as \[f(x) = f^\tp 
x,\quad x\in\Re^d\] 
where on the right-hand side $f\in \Re^d$. Then $\clA f$ is a linear function 
given by
\[
\big(\clA f\big)(x) = (Af)^\tp x,\quad x\in\Re^d
\]
and $\Gamma f$ is a constant function given by the following quadratic form:
\[
\big(\Gamma f\big)(x) = f^\tp \big(\sigma \sigma^\tp\big) f,\quad x\in\Re^d %= f^\tp Q 
%f
\]
In the remainder of this thesis, the model~\eqref{eq:linear-Gaussian-model} is referred to as the linear-Gaussian filtering problem. Its solution is given by the celebrated Kalman-Bucy filter.

\section{Equations of nonlinear filtering}\label{sec:Girsanov}

The solution to the nonlinear filtering problem is obtained by first deriving 
the equation for the nonlinear filter $\pi$.
There are two classical solution approaches to derive this equation: (1) Based on Girsanov change of measure; and (2) based on the innovation method. These are briefly reviewed in the following two sections.
The first section is based on~\cite[Chapter 5]{xiong2008introduction} 
and~\cite[Chapter 1]{van2006filtering}, and the second section follows~\cite[Chapter VI.8]{rogers2000diffusions}.

\subsection{Girsanov change of measure} \label{ssec:Zakai}

To motivate this approach, first consider the trivial case when $h = 0$. In this 
case, $Z = W$ is a Brownian motion. Therefore, $X$ and $\clZ$ are 
independent and
%Therefore, the probability measure is decomposed by $\sP = \sP_X \times \sP_W$ where $\sP_X$ is the marginal of $\sP$ on $X$, and $\sP_W$ is the Wiener measure that generates $W$.
the conditional law is simply the marginal. In 
particular, %any $\clF_T$-measurable random variable can be expressed as 
%$\phi(X,Z)$.
for any given bounded functional $\phi:D\big([0,T];\bS\big)\times C\big([0,T];\Re^m\big) \to \Re$, 
define
\[
f_\phi(z) := \E\big(\phi(X,z)\big)
\]
Then the conditional expectation is obtained as % the following integral~\cite[Eq.~1.3]{van2006filtering}
\begin{equation}\label{eq:trivial_case}
	% \E\big(f(X,Z)\mid \clZ_T\big) = \int f(x,Z)\ud \sP_X(x)
	\E\big(\phi(X,Z)\mid \clZ_T\big) = f_\phi(Z)
\end{equation}
The idea is extended to the general case in the following steps:
\begin{itemize}
	\item Find a new measure $\tsP$ on $(\Omega,\clF_T)$ such that the 
	probability law for $X$ is unchanged but $Z$ is a $\tsP$-B.M.~that is 
	independent of $X$.
	\item Evaluate the conditional expectation with respect to $\tsP$ as 
	in~\eqref{eq:trivial_case}.
	\item Compute the conditional expectation with respect to the original 
	measure $\sP$ by using the change of measure (Bayes) formula for conditional 
	expectation. %in terms of the conditional expectation with respect to the 
	%new measure.
\end{itemize}

%The reference measure is achieved by the Girsanov change of measures theorem. Define $\{D_t^{-1}:0\le t \le T\}$ as follows:
For the first step, the new measure $\tsP \ll \sP$ is obtained by setting
\[
\frac{\ud \tsP}{\ud \sP} = \exp\Big(-\int_0^T 
h^\tp(X_t) \ud W_t - \half \int_0^T |h(X_t)|^2\ud t\Big) 
\]
The expectation with respect to $\tsP$ is denoted by $\tE(\cdot)$. 
The following proposition is a consequence of Girsanv theorem~\cite[Theorem 5.22]{le2016brownian} and it is the key result in nonlinear filtering. The proof appears in Section~\ref{ssec:pf-girs}. % Proposition is essentially the Girsanov theorem:

%Then $\{D_t^{-1}:0\le t \le T\}$ is a martingale. Let $\tsP$ be a measure on $(\Omega, \clF)$ such that the Radon-Nikodym derivative with respect to $\sP$ on $(\Omega,\clF_t)$ is~\cite[Chapter 5.2]{xiong2008introduction}
%\begin{equation}\label{eq:change-of-measure-t}
	%	\frac{\ud \tsP}{\ud \sP}\Big|_{\clF_t} = D_t^{-1} %,\quad \forall t \ge 0
	%\end{equation}
%It is a standard technique to consider a new probability measure on $(\Omega, \clF)$, such that the observation process $Z$ becomes a Brownian motion under the new probability measure. This is achieved by the Girsanov's theorem~\cite[Ch. 5]{xiong2008introduction}:

\medskip

\begin{proposition}[Girsanov, Lemma 1.1.5 in~\cite{van2006filtering}] \label{prop:Girsanov}
	Assume the Novikov's condition: %~\cite[Theorem 5.23 (i)]{le2016brownian} 
	\begin{equation}\label{eq:Novikov}
		\E\Big[\exp\Big(\half \int_0^T |h(X_t)|^2\ud t\Big)\Big] < \infty
	\end{equation}
	Then the following holds:
	\begin{enumerate}
		\item $Z$ is a $\tsP$-B.M.
		\item The probability law for $X$ is identical under $\sP$ and $\tsP$.
		\item $X$ and $Z$ are independent under $\tsP$.
		\item $\sP\ll\tsP$ with
		\[
		\frac{\ud \sP}{\ud \tsP} = \exp\Big(\int_0^T 
		h^\tp(X_t) \ud Z_t - \half \int_0^T |h(X_t)|^2\ud t\Big) =:D_T
		\]
	\end{enumerate}
\end{proposition}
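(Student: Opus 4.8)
The plan is to recognize the stated change of measure as a standard exponential martingale, to read off claims 1 and 4 directly from the classical Girsanov theorem, and to prove claims 2 and 3 together by a single conditioning computation that exploits the independence of $W$ and $X$ under $\sP$. First I would introduce the process
\[
\Lambda_t = \exp\Big(-\int_0^t h^\tp(X_s)\ud W_s - \half\int_0^t|h(X_s)|^2\ud s\Big),\quad 0\le t\le T,
\]
and note that Novikov's condition~\eqref{eq:Novikov} is precisely the hypothesis guaranteeing that $\Lambda$ is a genuine $\sP$-martingale (not merely a local martingale) with $\E[\Lambda_T]=1$. Hence $\tsP$ defined by $\ud\tsP/\ud\sP=\Lambda_T$ is a probability measure with $\tsP\ll\sP$, and since $\Lambda_T>0$ $\sP$-a.s., the reverse absolute continuity $\sP\ll\tsP$ holds with $\ud\sP/\ud\tsP=\Lambda_T^{-1}$. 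Applying the classical Girsanov theorem with integrand $\theta_s=h(X_s)$ shows that $W_t+\int_0^t h(X_s)\ud s$ is a $\tsP$-Brownian motion; by~\eqref{eq:obs-model} this process is exactly $Z_t$, giving claim 1.

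For claim 4 I would rewrite the reciprocal density in terms of the observation path. Substituting $\ud W_t=\ud Z_t-h(X_t)\ud t$ into $\Lambda_T^{-1}=\exp(\int_0^T h^\tp(X_t)\ud W_t+\half\int_0^T|h(X_t)|^2\ud t)$ and collecting the $|h|^2$ terms yields exactly $D_T=\exp(\int_0^T h^\tp(X_t)\ud Z_t-\half\int_0^T|h(X_t)|^2\ud t)$, so that $\ud\sP/\ud\tsP=D_T$.

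The substance of the argument lies in claims 2 and 3, which I would establish simultaneously by one Fubini/conditioning identity. For bounded measurable functionals $F$ on $D([0,T];\bS)$ and $G$ on $C([0,T];\Re^m)$, I would write
\[
\tE\big[F(X)G(Z)\big]=\E\Big[F(X)\,\E\big[\Lambda_T\,G(W+\textstyle\int_0^\cdot h(X_s)\ud s)\,\big|\,X\big]\Big].
\]
The key point is that under $\sP$, conditioning on the entire path of $X$ leaves $W$ a Brownian motion (by the assumed independence of $W$ and $X$). Thus, with $X$ frozen as a deterministic path, the inner expectation is a pathwise Girsanov expectation: $\Lambda_T$ is the exponential density that converts $W$ into the shifted process $W+\int_0^\cdot h(X_s)\ud s$, whose law is again Wiener measure. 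Consequently the inner conditional expectation equals $\E[G(W)]$, the expectation of $G$ of a Brownian motion, which does not depend on the frozen path. The identity then collapses to $\tE[F(X)G(Z)]=\E[F(X)]\cdot\E[G(W)]$. Taking $G\equiv 1$ gives claim 2 (the law of $X$ is unchanged), taking $F\equiv 1$ recovers claim 1 in the form $\tE[G(Z)]=\E[G(W)]$, and the product structure yields claim 3 (independence of $X$ and $Z$ under $\tsP$).

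The main obstacle is the rigorous justification of the pathwise Girsanov step inside the conditional expectation. One must check that for $\sP$-a.e.\ frozen path $x$ the deterministic-integrand density $\exp(-\int_0^T h^\tp(x_s)\ud W_s-\half\int_0^T|h(x_s)|^2\ud s)$ has unit $W$-expectation, which reduces to finiteness of the energy $\int_0^T|h(x_s)|^2\ud s$, guaranteed $\sP$-a.s.\ by~\eqref{eq:Novikov}; and one must supply the measurability needed to apply Fubini and to read $\E[\,\cdot\mid X]$ as integration in the $W$-variable alone. Everything else is bookkeeping.
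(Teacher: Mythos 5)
Your proposal is correct and follows essentially the same route as the paper's proof: item 4 from strict positivity of the density, and items 1--3 from the single product identity $\tE[F(X)G(Z)]=\E[F(X)]\,\E[G(W)]$, obtained by disintegrating over the independent $X$ and $W$ under $\sP$ and applying the Girsanov theorem pathwise with the $X$-path frozen (the paper writes this as $\int g(w(x,z))D_T^{-1}(x,z)\,\ud\lambda_w(z)=\int g(w)\,\ud\lambda_w(w)$). Your extra observation that the frozen-path Novikov condition reduces to $\int_0^T|h(x_s)|^2\ud s<\infty$ a.s.\ is the correct justification of that step.
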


\medskip

We define a process $D=\{D_t: 0\le t\le T\}$ as follows:
\begin{equation}\label{eq:D-t}
	D_t := %\tE(D_T\mid \clF_t)=
	\exp\Big(\int_0^t 
	h^\tp(X_s) \ud Z_s - \half \int_0^t |h(X_s)|^2\ud s\Big),\quad 0\le t \le T
\end{equation}
A simple application of It\^o formula shows that
\[
\ud D_t = D_t h^\tp(X_t)\ud Z_t
\]
and therefore $D$ is a $\tsP$-martingale whereby $D_t = \tE(D_T\mid \clF_t)$ for $0\le t \le T$. The change of measure formula for conditional expectation is given in the following proposition whose proof appears in Section~\ref{ssec:pf-KS}.
%and therefore it is the change of measure from $\sP$ to $\tsP$, restricted to $\clF_t$. We state the change of measure formula for conditional expectation

\medskip

\begin{proposition}[Bayes formula, Theorem 3.22 in~\cite{xiong2008introduction}] \label{prop:Kallianpur-Striebel}
	For any $f\in C_b(\bS)$,
	\begin{equation}\label{eq:Bayes-formula}
		\E\big(f(X_t)|\clZ_t\big) =  
		\frac{\tE\big(D_tf(X_t)|\clZ_t\big)}{\tE\big(D_t|\clZ_t\big)}
	\end{equation}	
\end{proposition}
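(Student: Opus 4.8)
The plan is to recognize \eqref{eq:Bayes-formula} as a concrete instance of the abstract Bayes (change-of-measure) formula for conditional expectations, applied with the Radon--Nikodym derivative $\frac{\ud\sP}{\ud\tsP}=D_T$ furnished by \Proposition{prop:Girsanov} and with the sub-$\sigma$-algebra $\clG=\clZ_t$. The only work beyond the abstract formula is to replace the terminal density $D_T$ by its time-$t$ value $D_t$ inside the conditional expectations; this is precisely where the $\tsP$-martingale property $D_t=\tE(D_T\mid\clF_t)$ and the $\clF_t$-measurability of $f(X_t)$ are used.

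First I would establish the abstract formula in the form: if $\sP\ll\tsP$ on $(\Omega,\clF_T)$ with $\frac{\ud\sP}{\ud\tsP}=D_T$, then for any bounded $\clF_T$-measurable $Y$ and any sub-$\sigma$-algebra $\clG\subset\clF_T$,
\[
\E(Y\mid\clG)\,\tE(D_T\mid\clG)=\tE(D_T Y\mid\clG)\quad\text{a.s.}
\]
Both sides are $\clG$-measurable, so it suffices to test them against an arbitrary bounded $\clG$-measurable random variable $U$. Using the defining property of the density, the tower property, and the $\clG$-measurability of $U\,\E(Y\mid\clG)$, I would compute
\[
\tE\big(U\,\tE(D_T Y\mid\clG)\big)=\tE(U D_T Y)=\E(UY)=\E\big(U\,\E(Y\mid\clG)\big)=\tE\big(U\,\E(Y\mid\clG)\,\tE(D_T\mid\clG)\big).
\]
Since $U$ is arbitrary, the two $\clG$-measurable random variables agree $\tsP$-a.s., and dividing by $\tE(D_T\mid\clG)$ (which is strictly positive since $D_T>0$ $\tsP$-a.s.) gives the stated form.

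Next I would specialize to $Y=f(X_t)$ and $\clG=\clZ_t$ and reduce $D_T$ to $D_t$. Because $\clZ_t\subset\clF_t$ and $f(X_t)$ is $\clF_t$-measurable, the tower property yields
\[
\tE\big(D_T f(X_t)\mid\clZ_t\big)=\tE\big(\tE(D_T f(X_t)\mid\clF_t)\mid\clZ_t\big)=\tE\big(f(X_t)\,\tE(D_T\mid\clF_t)\mid\clZ_t\big)=\tE\big(D_t f(X_t)\mid\clZ_t\big),
\]
where the middle step pulls out the $\clF_t$-measurable factor $f(X_t)$ and the last step uses $\tE(D_T\mid\clF_t)=D_t$. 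Taking $f\equiv\ones$ gives the companion identity $\tE(D_T\mid\clZ_t)=\tE(D_t\mid\clZ_t)$. Substituting both identities into the abstract formula of the previous paragraph produces \eqref{eq:Bayes-formula}.

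The point requiring the most care is the well-posedness of the change of measure rather than any delicate estimate: Novikov's condition \eqref{eq:Novikov} guarantees that $D$ is a genuine $\tsP$-martingale with $\tE(D_T)=1$, so that $\sP$ is a bona fide probability measure, $D_T>0$ $\tsP$-a.s., and all conditional expectations above are finite for bounded $f$. With that in hand, the argument is purely a matter of the tower property and the defining relation of the Radon--Nikodym derivative, and no further structure of the filtering model is needed. A secondary bookkeeping point is to confirm positivity of $\tE(D_t\mid\clZ_t)$ so that the final division is legitimate, which again follows from $D_t>0$.
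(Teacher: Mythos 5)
Your proof is correct and follows essentially the same route as the paper: both arguments characterize $\E(f(X_t)\mid\clZ_t)$ by its defining integral property against $\clZ_t$-measurable test variables, transfer to $\tsP$ via the density $D_T$, and pull the $\clZ_t$-measurable estimate out of the conditional expectation before dividing. Your version is slightly more explicit than the paper's in justifying the replacement of $D_T$ by $D_t$ (via the tower property and the $\tsP$-martingale property of $D$) and in checking positivity of the denominator, but these are bookkeeping refinements of the same argument, not a different approach.
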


\medskip

In order to express the formula on the right-hand side of~\eqref{eq:Bayes-formula} succinctly, we define a 
measure-valued process $\sigma = \{\sigma_t\in\clM(\bS):0\le t \le T\}$ by
\[
\sigma_t(f) := \tE\big(D_tf(X_t)|\clZ_t\big),\quad 0\le t \le T, \; f\in 
C_b(\bS)
\]
The process $\sigma$ is referred to as the \emph{un-normalized filter}. Expressing~\eqref{eq:Bayes-formula} using this notation, we obtain the following result:

\medskip

\begin{corollary}[Kallianpur-Striebel formula, Theorem 5.3 
	in~\cite{xiong2008introduction}]
	For any $f\in C_b(\bS)$,
	\begin{equation}\label{eq:normalize-Zakai}
		\pi_t(f) = \frac{\sigma_t(f)}{\sigma_t(\ones)} %%,\quad t\ge 0
	\end{equation}
\end{corollary}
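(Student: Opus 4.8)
The plan is to read off the result directly from the Bayes formula~\eqref{eq:Bayes-formula}, since Proposition~\ref{prop:Kallianpur-Striebel} has already done all of the analytic work. First I would recall that by definition $\pi_t(f) = \E\big(f(X_t)\mid \clZ_t\big)$, so the claim is nothing but~\eqref{eq:Bayes-formula} after both sides have been rewritten in the $\sigma_t$ notation. The numerator needs no manipulation whatsoever: the defining relation $\sigma_t(f) := \tE\big(D_t f(X_t)\mid \clZ_t\big)$ is exactly the quantity appearing in the numerator on the right-hand side of~\eqref{eq:Bayes-formula}.

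The only genuine step is to identify the denominator $\tE\big(D_t\mid\clZ_t\big)$ with $\sigma_t(\ones)$. For this I would substitute the constant function $f = \ones$ into the definition of the un-normalized filter. Since $\ones(X_t) = 1$ pointwise, we have $D_t\,\ones(X_t) = D_t$, and hence $\sigma_t(\ones) = \tE\big(D_t\,\ones(X_t)\mid\clZ_t\big) = \tE\big(D_t\mid\clZ_t\big)$. Substituting the two identifications into~\eqref{eq:Bayes-formula} gives $\pi_t(f) = \sigma_t(f)/\sigma_t(\ones)$, which is~\eqref{eq:normalize-Zakai}.

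There is no real obstacle here; the corollary is a notational repackaging of the Bayes formula. The one point worth verifying is well-definedness of the quotient, i.e.\ that $\sigma_t(\ones) > 0$ almost surely so that the division is legitimate. This is immediate: by~\eqref{eq:D-t} the process $D_t$ is an exponential and hence strictly positive, so $\tE\big(D_t\mid\clZ_t\big) > 0$ holds $\tsP$-a.s.; and since $\sP\ll\tsP$ by Proposition~\ref{prop:Girsanov}, the same positivity holds $\sP$-a.s. Thus~\eqref{eq:normalize-Zakai} is meaningful under the original measure $\sP$, and the corollary follows.
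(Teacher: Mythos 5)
Your proof is correct and matches the paper's treatment: the corollary is obtained there simply by rewriting the Bayes formula~\eqref{eq:Bayes-formula} in the $\sigma_t$ notation, with $\sigma_t(\ones)=\tE(D_t\mid\clZ_t)$ following from $\ones(X_t)=1$. The extra remark on positivity of $\sigma_t(\ones)$ is a sensible (if implicit in the paper) sanity check and does not change the argument.
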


\medskip
%
%%It remains to compute $\tE\big(D_Tf(X_T)|\clZ_T\big)$. With this 
%new notation, Lemma~\ref{lemma:Kallianpur-Striebel} becomes

Since $Z$ is an independent B.M., the un-normalized filter is computed simply by applying~\eqref{eq:trivial_case}.The derivation for the same appears in Section~\ref{ssec:pf-Zakai}.
The derivation is novel and utilizes techniques that will be expanded upon and revisited latter chapters of this thesis.

\medskip

\begin{proposition}[Zakai equation, Theorem 5.5 
	in~\cite{xiong2008introduction}] \label{prop:Zakai} The un-normalized filter 
	for $f\in C_b(\bS)$ satisfies the following stochastic differential equation 
	(SDE): 
	\begin{equation}\label{eq:Zakai}
		\sigma_t(f) = \mu(f) + \int_0^t \sigma_s(hf)^\tp \ud Z_t + 
		\int_0^t \sigma_s(\clA f)\ud s
	\end{equation}
	%with initial condition $\sigma_0(f) = \mu(f)$.
\end{proposition}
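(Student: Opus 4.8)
The plan is to obtain an It\^o (semimartingale) decomposition of the process $D_t f(X_t)$ under $\tsP$, and then to apply the conditional expectation $\tE(\,\cdot\mid\clZ_t)$ term by term. The whole argument rests on the independence of $X$ and $Z$ under $\tsP$ established in \Proposition{prop:Girsanov}, which makes several cross terms and conditional expectations collapse to the desired form. Because $Z$ is a $\tsP$-Brownian motion independent of $X$, this is morally an application of the trivial-case identity~\eqref{eq:trivial_case}, now carried out at the level of the dynamics rather than the static conditional law.

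First I would record the two building blocks. Since the law of $X$ is unchanged under $\tsP$ (item 2 of \Proposition{prop:Girsanov}), the martingale-problem characterization of the generator gives that
\[
N_t^f := f(X_t) - f(X_0) - \int_0^t (\clA f)(X_s)\ud s
\]
is a $\tsP$-martingale with respect to $\sigma(X_{[0,s]})$. From~\eqref{eq:D-t} we already have $\ud D_t = D_t\, h^\tp(X_t)\ud Z_t$. Applying the It\^o product rule to $D_t f(X_t)$ and using that the quadratic covariation between the $Z$-driven part of $D$ and the $X$-driven martingale part of $f(X)$ vanishes (again by $X\perp Z$ under $\tsP$), I obtain
\[
D_t f(X_t) = f(X_0) + \int_0^t D_s (\clA f)(X_s)\ud s + \int_0^t D_s f(X_s) h^\tp(X_s)\ud Z_s + \int_0^t D_s\, \ud N_s^f .
\]

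Next I would take $\tE(\,\cdot\mid\clZ_t)$ of each of the four terms. The constant term gives $\tE\big(f(X_0)\big)=\mu(f)$, since $X_0$ is independent of $\clZ_t$ and its law is unchanged. For the drift term, ordinary conditional Fubini reduces it to $\int_0^t \tE\big(D_s(\clA f)(X_s)\mid\clZ_t\big)\ud s$; because $D_s(\clA f)(X_s)$ is $\sigma(X,Z_{[0,s]})$-measurable while the increments $Z_u-Z_s$ ($u>s$) are independent of that $\sigma$-algebra, I may replace $\clZ_t$ by $\clZ_s$, yielding $\int_0^t \sigma_s(\clA f)\ud s$. The $\ud Z_s$ integral is the crux: I claim a \emph{conditional stochastic Fubini} identity,
\[
\tE\Big(\int_0^t D_s f(X_s) h^\tp(X_s)\ud Z_s \;\Big|\; \clZ_t\Big) = \int_0^t \sigma_s(hf)^\tp \ud Z_s,
\]
which I would prove by approximating the integrand by simple processes $\sum_i G_{t_i}\ones_{(t_i,t_{i+1}]}$, pulling the $\clZ_t$-measurable increments $Z_{t_{i+1}}-Z_{t_i}$ out of the conditional expectation, replacing $\tE(G_{t_i}\mid\clZ_t)$ by $\sigma_{t_i}(hf)$ via the same independent-increment argument, and passing to the $L^2(\tsP)$ limit using that $\tE(\,\cdot\mid\clZ_t)$ is an $L^2$-contraction. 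Finally, the $\ud N^f$ term must vanish under conditioning: conditioning first on the full observation path $\clG:=\sigma(Z_{[0,T]})\supset\clZ_t$, an integration by parts rewrites $D_s$ as a functional of $X$ for fixed $Z$, so that $\int_0^t D_s\,\ud N_s^f$ has zero $\clG$-conditional mean, and the tower property finishes.

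\emph{The main obstacle} I expect is the conditional stochastic Fubini step together with the vanishing of the $\ud N^f$ term: both require interchanging a conditional expectation with a stochastic integral, which is not automatic and must be justified through the simple-process approximation and the independence structure of $(X,Z)$ under $\tsP$ rather than by a naive appeal to linearity. Collecting the four limits gives exactly~\eqref{eq:Zakai}.\qed
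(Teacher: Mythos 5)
Your proof is correct in outline, but it is a genuinely different derivation from the one in the paper. You take the classical route: apply the It\^o product rule to $D_tf(X_t)$, then condition each term on $\clZ_t$, which forces you to prove a conditional stochastic Fubini identity $\tE\big(\int_0^t G_s\ud Z_s\mid\clZ_t\big)=\int_0^t\tE(G_s\mid\clZ_s)\ud Z_s$ by simple-process approximation, and to show the $\int D_s\,\ud N_s^f$ term has zero $\clZ_t$-conditional mean via an independent enlargement of the $X$-filtration by $\sigma(Z_{[0,T]})$. Both of these steps are legitimate and are exactly where the technical weight of this route sits; your identification of them as the main obstacles is accurate. The paper instead avoids conditioning on stochastic integrals altogether: it characterizes $\sigma_T(f)=\tE(D_Tf(X_T)\mid\clZ_T)$ as the $L^2(\tsP)$-projection onto $\clZ_T$-measurable variables, parametrizes all candidate estimators via the It\^o representation theorem as $S_T=\tE(S_T)+\int_0^T U_t^\tp\ud Z_t$, and introduces the deterministic backward equation $-\partial_t y_t=\clA y_t$, $y_T=f$ so that $D_Tf(X_T)=y_0(X_0)+\int_0^T D_t(hy_t)(X_t)\ud Z_t+\int_0^T D_t\ud N_t(y_t)$ decomposes the error into mutually orthogonal pieces; minimizing the resulting quadratic gives $U_t=\sigma_t(hy_t)$ directly, and the SDE~\eqref{eq:Zakai} follows by differentiating $\sigma_T(f)=\mu(y_0)+\int_0^T\sigma_t(hy_t)\ud Z_t$ in $T$ through the transition operator of the backward equation. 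What your approach buys is directness and no optimization machinery; what the paper's approach buys is that orthogonality replaces the two delicate interchange arguments, and it previews the minimum-variance duality structure (the backward equation is the $U=0$ specialization of the dual BSDE~\eqref{eq:dual-bsde}) that the rest of the thesis is built on.
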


Once the equation for un-normalized filter is known, the nonlinear filter is obtained by using It\^o formula for the ratio $\frac{\sigma_t(f)}{\sigma_t(\ones)}$.

\medskip

\begin{proposition}[Kushner-Stratonovich equation, Theorem 5.7 
	in~\cite{xiong2008introduction}] 
	\label{prop:Kushner}
	The nonlinear filter satisfies the following SDE:
	\begin{equation} \label{eq:nonlinear-filter}
		\ud \pi_t(f) = \pi_t(\clA f) \ud t + 
		\big(\pi_t(hf)-\pi_t(h)\pi_t(f)\big)^\tp \big(\ud Z_t - \pi_t(h)\ud t\big)
	\end{equation}
	with $\pi_0(f) = \mu(f)$.
\end{proposition}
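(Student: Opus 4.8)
The plan is to apply Itô's formula (the quotient rule) to the ratio $\pi_t(f) = \sigma_t(f)/\sigma_t(\ones)$, feeding in the Zakai equation of Proposition~\ref{prop:Zakai} for both the numerator and the denominator. All computations are carried out under the reference measure $\tsP$, under which $Z$ is an $m$-dimensional Brownian motion (Proposition~\ref{prop:Girsanov}); since the measure-valued process $\pi$ is pathwise the same object regardless of the measure, the semimartingale decomposition obtained this way is the desired one.

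First I would record the two driving SDEs. Writing $N_t := \sigma_t(f)$ and $M_t := \sigma_t(\ones)$, the Zakai equation gives $\ud N_t = \sigma_t(hf)^\tp\ud Z_t + \sigma_t(\clA f)\ud t$ and $\ud M_t = \sigma_t(h)^\tp\ud Z_t$, where the drift of $M$ vanishes because $\clA\ones = 0$. I would also note that $M_t = \tE(D_t\mid\clZ_t) > 0$ almost surely (as $D_t>0$), so that $(n,m)\mapsto n/m$ is smooth along the trajectory and Itô's formula applies.

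Next I would apply the quotient rule. With $g(n,m)=n/m$ one has $g_n=1/m$, $g_m=-n/m^2$, $g_{nn}=0$, $g_{nm}=-1/m^2$, $g_{mm}=2n/m^3$, so that
\[
\ud\pi_t(f) = \frac{1}{M_t}\ud N_t - \frac{N_t}{M_t^2}\ud M_t + \frac{N_t}{M_t^3}\ud\langle M\rangle_t - \frac{1}{M_t^2}\ud\langle N,M\rangle_t.
\]
Because $Z$ is a $\tsP$-Brownian motion, $\ud\langle Z^i,Z^j\rangle_t=\delta_{ij}\ud t$, which yields $\ud\langle M\rangle_t=|\sigma_t(h)|^2\ud t$ and $\ud\langle N,M\rangle_t=\sigma_t(hf)^\tp\sigma_t(h)\ud t$. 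Substituting the two SDEs and dividing every occurrence of $\sigma_t(\cdot)$ by $M_t=\sigma_t(\ones)$ turns each term into the normalized filter via $N_t/M_t=\pi_t(f)$, $\sigma_t(hf)/M_t=\pi_t(hf)$, $\sigma_t(h)/M_t=\pi_t(h)$, and $\sigma_t(\clA f)/M_t=\pi_t(\clA f)$.

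Collecting the martingale and drift parts after normalization gives
\[
\ud\pi_t(f) = \pi_t(\clA f)\ud t + \big(\pi_t(hf)-\pi_t(h)\pi_t(f)\big)^\tp\ud Z_t + \big(\pi_t(f)|\pi_t(h)|^2 - \pi_t(h)^\tp\pi_t(hf)\big)\ud t,
\]
and I would then recognize the last drift term as $-\big(\pi_t(hf)-\pi_t(h)\pi_t(f)\big)^\tp\pi_t(h)$, which merges with the stochastic integral to produce the innovation form $\big(\pi_t(hf)-\pi_t(h)\pi_t(f)\big)^\tp(\ud Z_t-\pi_t(h)\ud t)$ of~\eqref{eq:nonlinear-filter}; the initial condition $\pi_0(f)=\mu(f)$ descends from $\sigma_0(f)=\mu(f)$ and $\sigma_0(\ones)=1$. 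The only genuinely delicate point is the bookkeeping of the second-order terms $\ud\langle M\rangle$ and $\ud\langle N,M\rangle$, and checking that after normalization the two leftover drift contributions reassemble exactly into $-\big(\pi_t(hf)-\pi_t(h)\pi_t(f)\big)^\tp\pi_t(h)$. Everything else is the mechanical quotient rule, and notably no appeal to the innovations theorem is required, since the passage to innovation form is a purely algebraic rearrangement of the drift.
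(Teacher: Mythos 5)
Your proposal is correct: the quotient-rule computation, the cross-variation bookkeeping, and the recombination of the leftover drift into $-\big(\pi_t(hf)-\pi_t(h)\pi_t(f)\big)^\tp\pi_t(h)$ all check out, and the positivity of $\sigma_t(\ones)$ (a strictly positive continuous $\tsP$-martingale) justifies applying It\^o's formula to the ratio. However, it is not the proof the paper writes out. The text introduces Proposition~\ref{prop:Kushner} with exactly your route in mind (``the nonlinear filter is obtained by using It\^o formula for the ratio $\sigma_t(f)/\sigma_t(\ones)$'', deferring to Xiong's Theorem 5.7 for details), but the proof it actually supplies, in Section~\ref{ssec:pf-kushner}, is the innovation-method argument: it invokes the representation result of Proposition~\ref{prop:innov-representation} to write $\pi_t(f)-\int_0^t\pi_s(\clA f)\,\ud s=\int_0^t\varphi_s\,\ud I_s$, then identifies $\varphi_t=\pi_t(hf)-\pi_t(h)\pi_t(f)$ by computing $\ud\big(f(X_t)Z_t\big)$ and $\ud\big(\pi_t(f)Z_t\big)$ and matching the finite-variation parts. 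The trade-off is as follows. Your derivation is purely mechanical once Propositions~\ref{prop:Girsanov}--\ref{prop:Zakai} are in place, and, as you note, it bypasses the innovation representation entirely (hence also the Allinger--Mitter resolution of Kailath's conjecture or the Rogers--Williams alternative); its cost is that it is downstream of the whole change-of-measure apparatus and of the Zakai equation. The paper's innovation proof needs none of that machinery --- only that $I$ is a $\sP$-Brownian motion and that square-integrable $\clZ$-martingales are stochastic integrals against $I$ --- and it exhibits the filter intrinsically as drift plus an $\clI$-martingale, which is the structure exploited later in the thesis; its cost is precisely the nontrivial representation theorem your route avoids. Both are complete; yours is the more elementary given what precedes the statement in the chapter.
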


\medskip
\subsubsection{Solution operator of the Zakai equation}

The Zakai equation~\eqref{eq:Zakai} is a linear time-invariant stochastic partial 
differential equation (SPDE). Although the un-normalized filter is meaningful for $\mu\in\clP(\bS)$,~\eqref{eq:Zakai} is well defined for $\mu\in\clM(\bS)$. Upon such an extension,
the resulting linear solution operator $\Psi_t:\clM(\bS)\times \Omega\to\clM(\bS)$
defined by
\begin{equation}\label{eq:Zakai-soln-operator}
	\Psi_t (\mu) = \sigma_t,\quad \mu\in\clM(\bS)
\end{equation}
%is referred to as the \emph{Zakai semigroup}~\cite{atar1997exponential}.

\medskip

\begin{example}\label{ex:finite-Zakai-operator}
	For finite state-space case, $\Psi_t$ is a $d \times d$ random matrix which is given by the solution of the following SDE:
	\[
	\ud \Psi_t = A^\tp \Psi_t \ud t + \sum_{j=1}^m\dv(H^j)\Psi_t\ud Z_t^j,\quad \Psi_0 = I
	\]
	where $H^j$ denotes the $j^{\text{th}}$ column of $H$, and $Z_t^j$ is the  $j^{\text{th}}$ element of $Z_t$. 
\end{example}

\medskip

\begin{remark}
	By suitably defining $\Omega$ as the space of sample paths of $(X,Z)$, Atar and Zeitouni~\cite[Eq.~13]{atar1997exponential} define a shift operator $\theta_\tau$ on $\Omega$. Using this shift operator, they define $\Psi_{t,\tau} := \Psi_{t-\tau}\circ \theta_\tau$ where $0\le \tau \le t$. Thus, $\Psi_{t,\tau}:\clM(\bS)\times \Omega \to \clM(\bS)$ and one obtains a semigroup like property
	\[
	\Psi_{t,0}\mu = \Psi_{t,\tau}\Psi_{\tau,0}\,\mu,\quad 0\le \tau \le t
	\]
	The early work on filter stability is based on analysis of the contraction properties of this map~\cite{atar1997exponential} (see Section~\ref{sec:Lyapunov-analysis}).
	
\end{remark}

\subsection{Innovation method}

The \emph{innovation process} $I:=\{I_t\in \Re^m:0\le t \le T\}$ is defined as follows:
\begin{equation}\label{eq:innovation-def}
	I_t = Z_t - \int_0^t \pi_s(h)\ud s,\quad 0\le t \le T
\end{equation}
The innovation process plays an important role in the theory of nonlinear 
filtering.
For example, it appears as the driving term in the Kushner-Stratonovich 
equation~\eqref{eq:nonlinear-filter}. The innovation is understood as the fresh
information brought by the observation process~\cite{kailath1970innovations}
because of the following result:
%Moreover, the following Proposition is of interest:

\medskip
%The Girsanov change of measure provides a reference measure $\tsP$ where the observation becomes an independent Brownian motion.
%Another approach on nonlinear filtering is the \emph{innovation method}.
%This approach considers a Brownian motion under the original measure.

\begin{proposition}[Theorem VI.8.4(i) in \cite{rogers2000diffusions}]
	The innovation process $I$ is a $\sP$-B.M.
\end{proposition}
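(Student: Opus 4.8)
The plan is to verify the defining conditions of Lévy's characterization of Brownian motion: I will show that $I$ is a continuous $\clZ_t$-martingale under $\sP$ with $I_0 = 0$ and quadratic covariation $[I^i, I^j]_t = \delta_{ij} t$, whence $I$ is an $m$-dimensional $\sP$-Brownian motion with respect to the observation filtration $\clZ$. Continuity and $I_0 = 0$ are immediate from the definition~\eqref{eq:innovation-def}, since $Z$ has continuous paths and $\int_0^t \pi_s(h)\ud s$ is continuous; adaptedness to $\clZ_t$ holds because $Z_t$ is $\clZ_t$-measurable and $s\mapsto \pi_s(h)$ is $\clZ_s$-measurable (using a progressively measurable version, so that the integral is $\clZ_t$-measurable).

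The heart of the argument is the martingale property. For $0\le s\le t$, I would use $Z_t - Z_s = \int_s^t h(X_u)\ud u + (W_t - W_s)$ to write
\[
\E\big(I_t - I_s \mid \clZ_s\big) = \E\Big(\int_s^t h(X_u)\ud u - \int_s^t \pi_u(h)\ud u \,\Big|\, \clZ_s\Big) + \E\big(W_t - W_s \mid \clZ_s\big).
\]
By the conditional Fubini theorem and the tower property over the nested filtration $\clZ_s\subset\clZ_u$ (for $u\ge s$), both integral terms equal $\int_s^t \E(h(X_u)\mid\clZ_s)\ud u$ and therefore cancel: indeed $\E(\pi_u(h)\mid\clZ_s) = \E\big(\E(h(X_u)\mid\clZ_u)\mid\clZ_s\big) = \E(h(X_u)\mid\clZ_s)$. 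For the remaining term I would use that $\clZ_s\subset\clF_s$ together with the fact that $W$ is an $\clF_t$-Brownian motion, so $\E(W_t - W_s \mid \clF_s) = 0$ and hence, by the tower property, $\E(W_t - W_s \mid \clZ_s) = 0$. This gives $\E(I_t - I_s\mid\clZ_s) = 0$.

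For the quadratic variation I would exploit the decomposition $I_t = W_t + \int_0^t \big(h(X_s) - \pi_s(h)\big)\ud s$. The drift term is absolutely continuous, hence continuous and of finite variation, so it contributes nothing to the quadratic covariation; consequently $[I^i, I^j]_t = [W^i, W^j]_t = \delta_{ij} t$. With continuity, the martingale property, and this quadratic covariation in hand, Lévy's characterization completes the proof.

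The main obstacle I anticipate is the martingale step, specifically the cancellation of the two drift integrals: the subtlety is that $\int_s^t h(X_u)\ud u$ is not $\clZ_s$-measurable, so one must pass to conditional expectations and recognize, via the tower property over $\clZ_s\subset\clZ_u$, that conditioning $\pi_u(h) = \E(h(X_u)\mid\clZ_u)$ further on $\clZ_s$ collapses it to $\E(h(X_u)\mid\clZ_s)$, matching the first term. A secondary technical point is integrability (justifying the conditional expectations and Fubini), which I would control using the standing assumption that $\E\int_0^T |h(X_s)|^2\ud s < \infty$; this is guaranteed under Novikov's condition~\eqref{eq:Novikov}.
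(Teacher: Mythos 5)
Your proof is correct and is essentially the standard Fujisaki--Kallianpur--Kunita argument that the paper defers to via its citation of~\cite{rogers2000diffusions}: L\'evy's characterization, with the martingale property obtained by the tower-property cancellation $\E(\pi_u(h)\mid\clZ_s)=\E(h(X_u)\mid\clZ_s)$ and $\E(W_t-W_s\mid\clZ_s)=0$ from $\clZ_s\subset\clF_s$, and the quadratic covariation inherited from $W$ since the drift is of finite variation. The integrability point is handled correctly, since Novikov's condition~\eqref{eq:Novikov} gives $\E\int_0^T|h(X_t)|^2\ud t<\infty$ and hence, by Cauchy--Schwarz, the integrability needed for the conditional Fubini step.
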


\medskip

The filtration generated by $I$ is denoted by $\clI:=\{\clI_t:0\le t \le T\}$ where 
$\clI_t = \sigma\big(\{I_s:0\le s\le t\}\big)$. Because $I$ is a $\clZ$-adapted 
process, it is true that $\clI_t\subset \clZ_t$. It was a famous conjecture of 
Kailath~\cite[Remark IV.1]{kailath1970innovations} that
\[
\clZ_t = \clI_t\quad \forall \, 0\le t \le T
\]
up to $\sP$-null sets. For the white noise observation model considered in this thesis, the conjecture was proved by Allinger and Mitter~\cite{allinger1981new}.
Consequently, the martingale representation theorem~\cite[Theorem 
4.3.4]{oksendal2003stochastic} yields the following representation result. (For 
an alternative proof that does not requires the Kailath's conjecture, 
see~\cite[Chapter VI.8]{rogers2000diffusions}.)

\medskip

\begin{proposition}[Theorem VI.8.4(ii) in \cite{rogers2000diffusions}] \label{prop:innov-representation}
	Let $M = \{M_t\in \Re: 0\le t \le T\}$ be a $\sP$-martingale such that
	\[
	\E\Big(\int_0^T |M_t|^2\ud t\Big) < \infty
	\]
	Then there exists a $\clZ$-adapted process $\varphi = \{\varphi_t\in \Re^m: 0\le t \le T\}$ such that
	%	\[
	%	\E\Big(\int_0^T |\varphi_t|^2 \ud t\Big) < \infty
	%	\]
	%	and
	\[
	M_t = M_0 + \int_0^t \varphi_s^\tp \ud I_s,\quad \sP\text{-a.s.}\;\;0\le t\le T
	\]
\end{proposition}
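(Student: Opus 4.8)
The plan is to deduce this representation from the classical martingale representation theorem for Brownian motion, after identifying the innovation filtration $\clI$ with the observation filtration $\clZ$. Two ingredients are already available: (i) from the preceding proposition, the innovation process $I$ of \eqref{eq:innovation-def} is a $\sP$-B.M.; and (ii) Kailath's conjecture $\clZ_t = \clI_t$ (up to $\sP$-null sets) holds for the white noise observation model by the theorem of Allinger and Mitter. Granting these, the square-integrable $\clZ$-martingale $M$ is in fact a square-integrable martingale with respect to the augmented natural filtration of $I$, so the desired representation follows by a direct application of the standard theorem.

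Concretely, I would proceed in four steps. First, recall that $I$ is a $\sP$-B.M.\ and that, since $I$ is $\clZ$-adapted, its natural (completed) filtration satisfies the trivial inclusion $\clI_t \subseteq \clZ_t$. Second, invoke Allinger and Mitter to upgrade this inclusion to the equality $\clZ_t = \clI_t$ for every $t$, modulo $\sP$-null sets; this is the step that makes $\clZ$ the Brownian filtration of $I$ rather than a strictly larger filtration. Third, interpret the hypothesis that $M$ is a $\sP$-martingale (necessarily $\clZ$-adapted, so that the stochastic integral against $I$ makes sense) with $\E\big(\int_0^T |M_t|^2 \ud t\big) < \infty$ as saying exactly that $M$ is an $L^2$-bounded $\clI$-martingale. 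Finally, apply the martingale representation theorem for Brownian motion to $I$ and its filtration $\clI$, which produces a predictable, hence $\clZ$-adapted, process $\varphi$ with $M_t = M_0 + \int_0^t \varphi_s^\tp \ud I_s$, $\sP$-a.s.

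The main obstacle is the second step, the equality $\clZ_t = \clI_t$. The innovation $I_t = Z_t - \int_0^t \pi_s(h)\,\ud s$ is built from $Z$ together with the filter, so a priori $\clI_t$ could be strictly coarser than $\clZ_t$; were that the case, a $\clZ$-martingale need not be representable against $I$ and the argument would collapse. This is precisely Kailath's conjecture, whose resolution by Allinger and Mitter for the white noise model is the non-trivial input here. A secondary technical point is the usual completion/augmentation of the filtrations with $\sP$-null sets, needed so that the hypotheses of the martingale representation theorem apply verbatim; this accounts for the ``up to $\sP$-null sets'' qualification. If one wished to bypass Kailath's conjecture entirely, the alternative route noted in the excerpt establishes the representation directly, but that argument is more involved.
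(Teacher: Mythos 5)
Your proposal is correct and follows essentially the same route the paper takes: the text preceding the proposition justifies it exactly by noting that $I$ is a $\sP$-B.M., invoking Allinger--Mitter's resolution of Kailath's conjecture to identify $\clZ_t=\clI_t$ up to null sets, and then applying the standard martingale representation theorem, with the Rogers--Williams argument cited as the alternative that avoids the conjecture. Your writeup just makes these steps, and the role of filtration augmentation, more explicit.
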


\medskip

The representation result provides an alternative proof of the Proposition~\ref{prop:Kushner}. The proof appears in Section~\ref{ssec:pf-kushner}.

%Another important utility of the innovation process is the following 
%expression for the Girsanov change of measure between $\sP|_{\clZ_T}$ and 
%$\tsP|_{\clZ_T}$. 

\section{Proofs of the statements}

\subsection{Proof of Proposition~\ref{prop:Girsanov}}\label{ssec:pf-girs}

Item 4 is direct from the fact that the change of measure is strictly positive. For the rest of claims, we follow proof of~\cite[Lemma 1.1.5]{van2006filtering}.

Consider non-negative bounded functions $f:D\big([0,T];\bS\big)\to \Re$ and $g: C\big([0,T];\Re^m\big)\to \Re$.
%Observe that
%\[
%\tE\big(f(X)g(W)D_T^{-1}\big) = \tE\big(f(X)\tE(g(W)D_T^{-1}\mid X)\big)
%\]
%Note that the change of measure $D_T^{-1}$ is the Dol\'eans exponential of $\int_0^T h(X_t)\ud W_t$. Therefore by the Girsanov theorem~\cite[Theorem 5.22]{le2016brownian},
%\[
%\tE(g(W)D_T^{-1}\mid X) = \E(g(W)),\quad \sP|_X\text{-a.s.}
%\]
%By suitably defining $\Omega$ as the space of sample paths of $(X,Z) \in D\big([0,T];\bS\big)\times C\big([0,T];\Re^m\big)$.
Denote $\sP|_{X}$ be the restriction of $\sP$ on the state process and $\lambda_w$ be the Wiener measure on $C\big([0,T];\Re^m\big)$. We want to show that $\ud \tsP(x,z) = \ud \sP|_X(x) \ud \lambda_w(z)$.
It suffices to show that
\[
\E\big(f(X)g(W)\big) = \int_{D([0,T];\bS)} \int_{C([0,T];\Re^m)}f(x) g(w)D_T^{-1}(x,z) \ud \lambda_w(z) \ud \sP|_{X}(x)
\]
where $w = \{w_t:0\le t \le T\}$ denotes the sample path of the measurement noise $W$ and $z:=\{z_t:0\le t \le T\}$ denotes the sample path of the observation. $w$ and $z$ are related by the model~\eqref{eq:obs-model}:
\[
w_t = z_t - \int_0^t h(x_s)\ud s
\]
Since $X$ and $W$ are independent under $\sP$, the left-hand side becomes
\[
\E\big(f(X)g(W)\big) = \int_{D([0,T];\bS)} f(x)\ud \sP|_X(x) \int_{C([0,T];\Re^m)} g(w)\ud \lambda_w(w) 
\]
Meanwhile the right-hand side is expressed by
\begin{align*}
	\int_{D([0,T];\bS)} \int_{C([0,T];\Re^m)}&f(x) g(w)D_T^{-1}(x,z) \ud \lambda_w(z) \ud \sP|_{X}(x)\\
	&=\int_{D([0,T];\bS)} f(x) \bigg[\int_{C([0,T];\Re^m)}g(w(x,z))D_T^{-1}(x,z) \ud \lambda_w(z)\bigg] \ud \sP|_{X}(x)
\end{align*}
Note that the change of measure $D_T^{-1}$ is the Dol\'eans exponential of $\int_0^T h(X_t)\ud W_t$. Therefore by the Girsanov theorem~\cite[Theorem 5.22]{le2016brownian},
\[
\int_{C([0,T];\Re^m)}g(w(x,z))D_T^{-1}(x,z) \ud \lambda_w(z) = \int_{C([0,T];\Re^m)}g(w) \ud \lambda_w(w) 
\]
Thus the proof is complete. \qed

%$Z$ becomes a continuous martingale under $\tsP$. Since the quadratic variation of $Z$ is $t$, it must be a Brownian motion by Levy's characterization. Therefore, 
%
%Meanwhile, $W$ and $X$ are independent under $\sP$, hence the martingale that drives $X$ is unchanged and independent of $Z$ under $\tsP$.
%
%The last claim is obtained because the change of measure is strictly positive.

\subsection{Proof of Proposition~\ref{prop:Kallianpur-Striebel}}\label{ssec:pf-KS}

The conditional expectation $\E\big(f(X_t)|\clZ_t\big)$ is defined by a 
$\clZ_t$-measurable random variable $S_T$ such that
\[
\E\big(\ones_A S_t\big) = \E\big(\ones_A f(X_t)\big),\quad \forall\,A\in\clZ_t
\]
Equivalently under $\tsP$, 
\[
\tE \big(D_t\ones_A S_t\big) = \tE \big(D_t \ones_Af(X_t)\big),\;\; 
\forall\, A \in \clZ_t\quad 
\Longleftrightarrow\ \quad \tE\big(D_tS_t|\clZ_t\big) = \tE\big(D_tf(X_t)|\clZ_t\big)
\]
Since $S_t$ is $\clZ_t$-measurable, $S_t$ is pulled out and we obtain
\begin{equation*}
	S_t = \frac{\tE\big(D_tf(X_t)|\clZ_T\big)}{\tE\big(D_t|\clZ_t\big)}
\end{equation*}
\qed

\subsection{Proof of Proposition~\ref{prop:Zakai}}\label{ssec:pf-Zakai}

In this proof, we will consider two function spaces:
\begin{itemize}
	\item $L^2_\clZ\big([0,T];\Re^m\big)$ is the Hilbert space of $\Re^m$-valued $\clZ$-adapted stochastic processes.
	\item $L^2_{\clZ_T}(\Omega;\Re)$ is the Hilbert space of $\clZ_T$-measurable random variables.
\end{itemize}
These function spaces are formally introduced in Section~\ref{sec:function-spaces}.

We use the following properties of conditional expectation:
\begin{itemize}
	\item The conditional mean $\tE\big(D_Tf(X_T)\mid \clZ_T\big)$ is the unique solution of the 
	optimization problem
	\begin{equation}\label{eq:optim-Zakai}
		\min\big\{\tE\big(|D_Tf(X_T)-S_T|^2\big): S_T\in L_{\clZ_T}^2(\Omega;\Re)\big\}
	\end{equation}
	\item Since $Z$ is a $\tsP$-B.M., It\^o representation theorem~\cite[Theorem 
	4.3.3]{oksendal2003stochastic} shows that for any $S_T\in L_{\clZ_T}^2(\Omega;\Re)$, there exists $\{U_t\in \Re^m:0\le t \le T\} \in L^2_{\clZ}\big([0,T];\Re^m\big)$ such that
	\begin{equation}\label{eq:estim-Zakai}
		S_T = \tE(S_T) +\int_0^T U_t^\tp \ud Z_t,\quad \tsP\text{-a.s.}
	\end{equation}
\end{itemize}
%The $L^2$ spaces here are pretty standard ones, but see Section~\ref{sec:function-spaces} for the detailed definitions.

The strategy is to consider the estimator of the form~\eqref{eq:estim-Zakai} 
and obtain the solution by solving the optimization 
problem~\eqref{eq:optim-Zakai}.
For this purpose, let $\{N_t(g):0\le t\le T\}$ be the martingale associated with the 
infinitesimal generator $\clA$ defined by:
\begin{equation}\label{eq:martingale-generator}
	N_t(g) := g(X_t) - \int_0^t \clA g(X_s)\ud s
\end{equation}
Also consider a deterministic backward PDE:
\begin{equation}\label{eq:BKE-pf}
	-\frac{\partial y_t}{\partial t}(x) = (\clA y_t)(x),\quad y_T(x) = f(x)
\end{equation}
By applying It\^o rule on $D_t y_t(X_t)$,
%\[
%\ud \big(D_t y_t(X_t)\big) = D_t(hy_t)(X_t) \ud Z_t + D_t \ud N_t(y_t)
%\]
%Therefore, the right-hand side is a $\tsP$-martingale increment. Consequently, 
%$\tE(D_Tf(X_T)) = \tE(D_0y_0(X_0)) = \mu(y_0)$. The integral form of the same 
%is
%
%%\[
%%S_T = \mu(y_0)+\int_0^T U_t \ud Z_t
%%\]
\[
D_Tf(X_T) = y_0(X_0) + \int_0^T D_t(hy_t)(X_t)\ud Z_t + \int_0^T D_t\ud N_t(y_t)
\]
Consequently, $\tE(D_Tf(X_T)) = \tE(D_0y_0(X_0)) = \mu(y_0)$, and therefore set
\[
S_T = \mu(y_0)+\int_0^T U_t \ud Z_t
\]
Subtract $S_T$ on both sides, and take square and expectation to have
\[
\tE\big(|D_Tf(X_T) - S_T|^2\big) = \tE\Big(|y_0(X_0)-\mu(y_0)|^2 + \int_0^T |D_t(hy_t)(X_t) - U_t|^2 + D_t^2 (\Gamma y_t)(X_t) \ud t\Big)
\]
%By Brownian motion representation theorem, the set of $S_T$ corresponding to 
%all $U$ contains all $L^2_{\clZ_T}(\Re)$ random variable with $\tsP$-mean 
%$\mu(y_0)$. Therefore, the left-hand side is minimized at the projection of 
%$D_Tf(X_T)$ onto $L^2_{\clZ_T}(\Re)$, which is $\sigma_T(f)$.
It is straightforward that the right-hand side is minimized at
\[
U_t = \tE\big(D_t(hy_t)(X_t) \mid \clZ_t\big) = \sigma_t(hy_t)\quad \forall\, t \in [0,T]
\]
%A random $f$ and BSDE can also be considered, then we have additional term: $U_t = \sigma_t(hY_t+ V_t)$.
It is hence concluded that
\[
\sigma_T(f) = \mu(y_0) + \int_0^T \sigma_t(hy_t) \ud Z_t
\]
Let $\Phi(T,s)$ be the transition operator of the system~\eqref{eq:BKE-pf} from time $T$ to $s$, then 
\[
\sigma_T(f) = \mu(\Phi(T,0)f) + \int_0^T \sigma_t(h\Phi(T,t)f) \ud Z_t
\]
By differentiating with respect to $T$, we obtain the Zakai equation
\[
\ud \sigma_t(f) = \sigma_t(\clA f) \ud t + \sigma_t(hf)\ud Z_t
\]
where we write $t$ instead of $T$.
%\medskip
%
%Applying It\^o rule on~\eqref{eq:normalize-Zakai} yields the Proposition~\ref{prop:Kushner}.
\qed

%Apply It\^o product rule on $D_tf(X_t)$; 
%\begin{align*}
	%	\ud \big(D_tf(X_t)\big) &= f(X_t)\ud D_t + D_t \ud \big(f(X_t)\big)\\
	%	&= D_t(hf)(X_t)\ud Z_t + D_t \clA f(X_t) \ud t + D_t \big[\underbrace{\ud \big(f(X_t)\big) - \clA f(X_t) \ud t}_{=: \ud N_t(f)}\big]
	%\end{align*}
%where $N_t(f)$ is a martingale problem associated with the generator $\clA$ (cf.~\cite{}). 
%Therefore,
%\[
%D_t f(X_t) = D_0f(X_0) + \int_0^t D_s(hf)(X_s)\ud Z_s + \int_0^t D_s\clA_s f(X_s)\ud s + \int_0^t D_s\ud N_s(f)
%\]
%Take conditional expectation $\tE^\mu\big(\,\cdot\mid \clZ_t\big)$ on each term. Since $X$, $Z$ are independent under $\tsP$, we have the following equations:
%\begin{align*}
	%	&\tE^\mu\big(D_0f(X_0)|\clZ_t\big)= \tE^\mu\big(D_0f(X_0)|\clZ_0\big)\\
	%	&\tE^\mu\big(\int_0^t D_s(hf)(X_s)\ud Z_s|\clZ_t\big) = \int_0^t \tE^\mu\big(D_s(hf)(X_s)|\clZ_s\big)\ud Z_s\\
	%	&\tE^\mu\big(\int_0^t D_s\clA_s f(X_s)\ud s|\clZ_t\big) = \int_0^t \tE^\mu\big(D_s\clA_s f(X_s)|\clZ_s\big)\ud s\\
	%	&\tE^\mu\big(\int_0^t D_s\ud N_s(f)|\clZ_t\big) = 0
	%\end{align*}
%Collecting results yields~\eqref{eq:Zakai}.
%

\subsection{Proof of Proposition~\ref{prop:Kushner} using innovation method}\label{ssec:pf-kushner}

%Observe that $\E\big(N_t(f)\mid \clZ_t\big)$ is a $\clZ$-adapted martingale. 
By the Proposition~\ref{prop:innov-representation}, there exists $\{\varphi_t: t \ge 0\}$ such that
\begin{equation}\label{eq:representation}
	\pi_t(f) - \int_0^t \pi_s(\clA_s f) \ud s = \int_0^t\varphi_s\ud I_s
\end{equation}
In order to compute $\varphi_t$, apply It\^{o} product rule on $f(X_t)Z_t$:
\begin{align*}
	\ud \big(f(X_t) Z_t\big) &= f(X_t)\ud Z_t + Z_t \ud \big(f(X_t)\big)\\
	&=f(X_t)\big(h(X_t)\ud t + \ud W_t\big) + Z_t\big(\ud N_t(f) + \clA f(X_t)\big)\ud t\\
	&=\big(hf(X_t) + Z_t\clA f(X_t)\big)\ud t + f(X_t)\ud W_t + Z_t \ud N_t(f)
\end{align*}
Take conditional expectation on both sides to show the following:
\[
\ud \big(\pi_t(f)Z_t\big) = \big(\pi_t(hf)+Z_t\pi_t(\clA f)\big)\ud t + (\text{martingale})
\]
Meanwhile, the It\^{o} product rule on $\pi_t(f) Z_t$ using~\eqref{eq:representation} yields
\begin{align*}
	\ud \big(\pi_t(f)Z_t\big) &= \pi_t(f)\ud Z_t + Z_t \ud \pi_t(f) + \varphi_t \ud t\\
	&= \pi_t(f)\big(\ud I_t + \pi_t(h)\ud t\big) + Z_t\big(\varphi_t \ud I_t + \pi_t(\clA f)\ud t\big)\\
	&=\big(\pi_t(f)\pi_t(h) + Z_t\pi_t(\clA f) + \varphi_t\big) \ud t + (\text{martingale})
\end{align*}
We subtract the two, and then we have a martingale with finite variation, which must be 0 almost surely. Thus we conclude
\[
\varphi_t = \pi_t(hf) - \pi_t(f)\pi_t(h)
\]
Putting this back to~\eqref{eq:representation} yields the filter~\eqref{eq:nonlinear-filter}.

\newpage

%%%%%%%%%%%%%%%%%%%%%%%%%%%%%%%%%%%%%%%%%%%%%%%%%%%%%%%%%%%%%%%%%%%%%%%%%%%%%%%%%%%%%%%%%%%%

\chapter{Duality in control literature}\label{ch:duality-background}

%\chapter{Duality in control literature}\label{ch:duality-background}

There is a fundamental dual relationship between estimation and control. The dual relationship is expressed in two inter-related manners:
\begin{itemize}
	\item Duality between controllability and observability.
	\item Duality between optimal control and optimal filtering. This means expressing one type of problem as another type of problem. Of particular interest is to express a filtering problem as an optimal control problem.
\end{itemize}
%In this chapter, an overview is provided for these two types of duality.
Section~\ref{ssec:duality-LTI} is a survey of the first and Section~\ref{sec:duality-filtering-oc} of the second. Much of the survey is focused on the linear systems where duality is best understood.

Concerning the duality between controllability and observability, the
survey includes a discussion of nonlinear deterministic and stochastic
observability.  For the deterministic case, both the classical work of
Hermann and Krener~\cite{hermann1977nonlinear} and the output to state stability (OSS) definitions of Wang and Sontag~\cite{sontag1997output} are reviewed in Section~\ref{ssec:nonlinear-observability}. 
For the stochastic case, the original definition is due to van Handel~\cite{van2009observability} which is introduced briefly in Section~\ref{ssec:observability-hmm}.  In the latter chapters, several refinements of the basic definition are described to help relate it to our work. 

Concerning the dual optimal control formulation, the focus in Section~\ref{sec:duality-filtering-oc}
is on the linear Gaussian case.  For this case, the two types of dual
constructions, namely, the minimum variance and the minimum energy
optimal control problems, are described. 

The final Section~\ref{sec:historical-remarks} of this chapter includes a historical survey of the optimal control formulation of the nonlinear filtering and
smoothing problems.  Although the section contains a self-contained
summary of the main aspects, a more complete discussion appears in
Appendix~\ref{apdx:min-energy} which includes details for both log transformation and
Mitter-Newton duality.

\medskip

\section{Observability and controllability}

\subsection{Dual vector spaces}\label{ssec:dual-space}

In this section, we briefly review dual vector spaces.
The discussion closely follows~\cite[Chapter 5, 6]{luenberger1997optimization}.

\begin{definition}
	Let $\clX$ be a Banach space equipped with norm $\|\cdot\|_\clX$. The {\em dual space}, denoted by $\clX^\dagger$, is the space of bounded linear functionals on $\clX$.  For $x^\dagger\in\clX^\dagger$, the notation 
	\[
	\langle x,x^\dagger\rangle = x^\dagger(x)
	\]
	is used to denote the evaluation at $x\in\clX$. The bilinear map $\langle \cdot ,\cdot\rangle:\clX\times \clX^\dagger \to \Re$ is called the {\em duality pairing}.
	
\end{definition}

The dual space $\clX^\dagger$ is also a Banach space with norm~(see~\cite[Theorem 5.3.1]{luenberger1997optimization})
\[
\|x^\dagger\|_{\clX^\dagger} = \sup_{\|x\|_{\clX}\le 1} \big|\langle x,x^\dagger\rangle\big|
\]
The following subspace of the dual space is of particular interest:

\begin{definition}
	Let $S\subset\clX$. The \emph{annihilator} of $S$, denoted by $S^\bot$, is the subspace
	\[
	S^\bot = \{x^\dagger \in \clX^\dagger: \langle x,x^\dagger\rangle = 0 \text{ for all }x\in S \}
	\]
\end{definition}

\medskip

\begin{example}\label{ex:dual-function-measure}
	The dual of $C_b(\bS)$ is $\clM(\bS)$~\cite[Theorem IV.6.2]{dunford1958linear}. For $f\in C_b(\bS)$, the norm is
	\[
	\|f\|_\infty = \sup_{x\in\bS} |f(x)|
	\]
	The dual norm for $\mu\in\clM(\bS)$ is the \emph{total variation} norm:
	\[
	\|\mu\|_\tv = \sup\big\{\mu(f): f\in C_b(\bS),\; \|f\|_\infty \le 1\big\}
	\]	
\end{example}

\medskip

\begin{example}[Riesz representation theorem, Theorem 5.3.2 in~\cite{luenberger1997optimization}]\label{ex:dual-Hilbert}
	Let $\clX$ be a Hilbert space equipped with inner product $\langle \cdot,\cdot\rangle$. $\clX$ is self-dual in the sense that for any linear functional $x^\dagger \in\clX^\dagger$, there exists $y\in \clX$ such that
	\[
	x^\dagger (x) = \langle x, y\rangle_\clX,\quad \|x^\dagger \|_{\clX^\dagger} = \|y\|_{\clX}
	\]
	%(see~\cite[Theorem 5.3.2]{luenberger1997optimization})
	In this chapter, two important examples of Hilbert spaces are as follows:
	
	\begin{itemize}
		\item Euclidean space $\Re^d$ equipped with the inner-product 
		\[
		\langle \eta,\xi\rangle_{\Re^d} = \eta^\tp \xi,\quad \eta,\xi \in \Re^d
		\]
		\item The function space of square integrable $\Re^m$-valued signals
		\[
		\clU := L^2\big([0,T];\Re^m\big)
		\]
		equipped with the inner product
		\[
		\langle u,v\rangle_{\clU} = \int_0^T u_t^\tp v_t \ud t,\quad u,v\in \clU
		\]
		
	\end{itemize}
	
	Another important example of Hilbert space is the space of $\clZ$-adapted stochastic processes  $L^2_\clZ\big(\Omega\times[0,T];\Re^m\big)$. The notation for the same will be introduced in the next chapter.
\end{example}

%If $\clX = \Re^d$ equipped with Euclidean norm $\|x\|_{\clX} = \big(\sum_{i=1}^dx_i^2\big)^{1/2}$, then any linear function $f \in \clX^\dagger$ is of the form
%\[
%f(x) = \sum_{i=1}^d y_i x_i,\quad y_1,y_2,\ldots,y_d \in \Re
%\]
%Moreover, by Cauchy-Schwarz inequality,
%\[
%|f(x)|\le \big(\sum_{i=1}^dy_i^2\big)^{1/2} \|x\|_{\clX}
%\]
%with equality holds if $x= y$. Therefore, $\clX^\dagger$ is isomorphic to $\clX$ itself. 
%
%the space $L^2\big([0,T];\Re^m\big)$ is the Hilbert space of $\Re^m$-valued deterministic signals on $[0,T]$. An element of this space is denoted by $u=\{u_t\in\Re^m:0\le t\le T\}$. The space is equipped with inner product
%Similar to the previous example, $L^2\big([0,T];\Re^m\big)$ is self-dual.

\medskip

Consider another Banach space $\clY$ and let $\clL:\clX\to\clY$ be a bounded linear operator. The adjoint operator $\clL^\dagger:\clY^\dagger\to \clX^\dagger$ is obtained from the defining relation:
\[
\langle x, \clL^\dagger y^\dagger \rangle = \langle \clL x, y^\dagger\rangle
\]
A detailed explanation showing that $\clL^\dagger$ is well-defined, linear and bounded appears in~\cite[Chapter 6.5]{luenberger1997optimization}.

In finite-dimensional settings, it is an elementary fact that the range space of a matrix is orthogonal to the null space of its transpose~\cite{strang1993fundamental}. The following theorem provides a generalization of this fact for an operator $\clL$ and its adjoint $\clL^\dagger$. The proof appears in Section~\ref{ss:pf-thm31}.

\medskip

%(TODO: check if this requires separable space --- OK. It only requires normed space)
\begin{theorem}[Theorem 6.6.1 in~\cite{luenberger1997optimization}]\label{thm:dual-adjoint}
	Let $\clX$ and $\clY$ be Banach spaces and $\clL:\clX\to\clY$ is a bounded linear operator. Then
	\[
	\Rsp(\clL)^\bot = \Nsp(\clL^\dagger)
	\]
\end{theorem}

\medskip

In particular, $\Nsp(\clL^\dagger)$ is trivial if and only if $\Rsp(\clL)$ is dense in $\clY$. This property is important to discuss duality between the controllability and observability.

\subsection{Controllability and observability of linear systems}\label{ssec:duality-LTI}

%In linear systems theory, duality between controllability and observability is elementary~\cite{kalman1960general}.

%The controllability property means each initial condition can be transferred to zero using control. 
In this section, we review the classical duality between controllability and observability by utilizing the tools from the previous section.

In the study of deterministic linear time-invariant (LTI) systems, the function spaces are the Hilbert spaces: $\clX = \clU = L^2\big([0,T];\Re^m\big)$ and $\clY = \Re^d$. There are two properies of interest:
\begin{enumerate}
	\item Controllability is a property of a linear operator $\clL: \clU \to \Re^d$.
	\item Observability is a property of its adjoint $\clL^\dagger: \Re^d\to \clU$. 
\end{enumerate}
In the following, we describe each of these properties. % where we denote $\clU = L^2\big([0,T];\Re^m\big)$.

\subsubsection{Controllability of an LTI system}

For given matrices $A\in\Re^{d\times d}$ and
$H\in\Re^{d\times m}$ consider a linear state-input system:
\begin{equation}\label{eq:LTI-ctrl}
	-\frac{\ud y_t}{\ud t} = A y_t + H u_t ,\quad y_T = 0
\end{equation}
where $u = \{u_t\in\Re^m:0\le t \le T\}$ is referred to as the control input.

The basic control problem is to design a control input $u$ that steers the system from 
a given initial condition $y_0 = \eta$ to $y_T = 0$. Regarding the solution of 
this problem, the following definition naturally arises.

%
%%where we put the minus sign intentionally.
%An intuitive question on \emph{controllability} is as follows~\cite{evans1983introduction}: For an initial condition $y_0$ and a target set $S \subset \Re^d$, is there a control that steers the system from $y_0$ to $S$? A typical choice of the target set is $S = \{0\}$ for linear systems.
%The following definition naturally arises.

\medskip

\begin{definition}
	For the linear system~\eqref{eq:LTI-ctrl}, the \emph{controllable subspace} is defined by:
	%	The range space $\Rsp(\clL)$ is referred to as the \emph{controllable subspace}. Explicitly,
	\[
	\clC = \big\{ \eta \in \Re^d: \exists u \in \clU \text{ such that the solution to~\eqref{eq:LTI-ctrl} satisfies } y_0 = \eta \text{ and } y_T= 0\big\}
	\]
	The linear system~\eqref{eq:LTI-ctrl} is \emph{controllable} if $\clC = \Re^d$
\end{definition}

In words, the controllable subspace $\clC$ is the set of initial conditions that can be driven to 0. %The stabilizability is interpreted as that any initial condition can be asymptotically driven to zero as $T$ increases.
Therefore, the system~\eqref{eq:LTI-ctrl} is controllable if every initial condition can be driven to zero in a given finite time $T$. %It is often overlooked that this problem is in fact backward-in-time, as the terminal condition is fixed.

\subsubsection{Observability of an LTI system}

Consider the following linear state-output system:
\begin{subequations}\label{eq:LTI-obs}
	\begin{align}
		\frac{\ud x_t}{\ud t} &= A^\tp x_t,\quad x_0 = \xi \label{eq:LTI-obs-a}\\
		z_t &= H^\tp x_t \label{eq:LTI-obs-b}
	\end{align}
\end{subequations}
Over a fixed time interval $[0,T]$, the output is denoted by $z = \{z_t\in\Re^m:0\le t\le T\}$. 
Clearly, the output depends upon the initial condition $\xi$. This dependence is indicated by using the superscript, whereby we write $z = z^{\xi}$.

The basic problem is to determine the initial condition $\xi$ from the output 
$z^{\xi}$. Regarding the solution of this problem, the following definition 
naturally arises.

\medskip

\begin{definition}\label{def:LTI-obs-def}
	The linear system~\eqref{eq:LTI-obs} is \emph{observable} if:
	%	The range space $\Rsp(\clL)$ is referred to as the \emph{controllable 
		%subspace}. Explicitly,
	\[
	z^{\xi_1} = z^{\xi_2} \quad\Longrightarrow\quad \xi_1 = \xi_2,\quad \forall\,\xi_1,\xi_2\in\Re^d
	\]
\end{definition}

%Because the system is linear, $z^{\xi_1}-z^{\xi_2} = z^{\xi_1-\xi_2}$, and the observability is equivalent to that $z^{\xi} = 0$ implies $\xi = 0$.

\subsubsection{Dual relationship}

For the state-output system~\eqref{eq:LTI-ctrl} , the solution map $u\mapsto y_0$ is used to define a linear operator $\clL:\clU \to \Re^d$ as follows:
\begin{equation*}\label{eq:LTI-adjoint}
	\clL u := y_0 = \int_0^T e^{At}Hu_t \ud t
\end{equation*}
Note the controllable subspace $\clC = \Rsp(\clL)$. Its adjoint is given by
\[
(\clL^\dagger \xi)(t) = H^\tp e^{A^\tp t}\xi,\quad 0\le t \le T
\]
and represents the solution map from initial condition $\xi\mapsto z^\xi$ for the state-output system~\eqref{eq:LTI-obs}.
%By definition, the controllable subspace $\clC = \Rsp(\clL)$ and therefore the 
%the system~\eqref{eq:LTI-ctrl} is controllable if and only if
%\[
%\Rsp(\clL) = \Re^d
%\]
%
%The adjoint operator $\clL^\dagger: \Re^d \to \clU$ is defined as
%\[
%\big(\clL^\dagger x_0\big)(t) := H^\tp e^{A^\tp t}x_0,\quad 0\le t \le T
%\] 
%This is in fact the map from a given initial condition $x_0\in \Re^d$ to the 
%output  for the linear system
%
%\begin{definition}\label{def:LTI-obs-def}
	%	The linear system~\eqref{eq:LTI-obs} is \emph{observable} if 
	%	\[
	%	\Nsp(\clL^\dagger) = \{0\}
	%	\]
	%\end{definition}
%
%The observability property means distinct initial conditions produce distinct 
%outputs.
%Therefore, for an observable model, it is possible to uniquely identify the 
%initial condition $x_0$ from the output $z$.
The duality relationship is expressed as
\[
\langle \xi,\clL u\rangle_{\Re^d} = \langle \clL^\dagger \xi, u \rangle_\clU,\quad \forall\, \xi \in \Re^d,\; u\in \clU
\]
We say the state-input system~\eqref{eq:LTI-ctrl} is \emph{dual} to the 
state-output system~\eqref{eq:LTI-obs}.
The following proposition is a simple consequence of the 
Theorem~\ref{thm:dual-adjoint} (see also Fig.~\ref{fig:LTI-duality}).

\medskip

\begin{proposition}
	T.F.A.E.
	\begin{enumerate}
		\item The system~\eqref{eq:LTI-obs} is observable.
		\item $\Nsp(\clL^\dagger) = \{0\}$.
		\item $\Rsp(\clL) = \Re^d$.
		\item The system~\eqref{eq:LTI-ctrl} is controllable.
	\end{enumerate}
\end{proposition}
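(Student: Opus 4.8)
The plan is to establish a cycle of implications $(1)\Rightarrow(2)\Rightarrow(3)\Rightarrow(4)\Rightarrow(1)$, leaning on \Theorem{thm:dual-adjoint} to connect the two middle (algebraic) conditions and using the explicit forms of $\clL$ and $\clL^\dagger$ recorded just above the statement to connect each of them to its system-theoretic counterpart. The heart of the argument is a single observation: since $\clL^\dagger$ is the solution map $\xi\mapsto z^\xi$ for the state-output system~\eqref{eq:LTI-obs}, the map $\xi\mapsto z^\xi$ is \emph{linear}, so that $z^{\xi_1}=z^{\xi_2}$ if and only if $z^{\xi_1-\xi_2}=0$, i.e. if and only if $\xi_1-\xi_2\in\Nsp(\clL^\dagger)$.

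First I would prove $(1)\Leftrightarrow(2)$ directly from this linearity. Observability says $z^{\xi_1}=z^{\xi_2}$ forces $\xi_1=\xi_2$; by the displayed equivalence this is exactly the statement that $\Nsp(\clL^\dagger)=\{0\}$. Next, $(2)\Leftrightarrow(3)$ is an immediate application of \Theorem{thm:dual-adjoint}: that theorem gives $\Rsp(\clL)^\bot=\Nsp(\clL^\dagger)$, and as remarked right after the theorem, $\Nsp(\clL^\dagger)$ is trivial if and only if $\Rsp(\clL)$ is dense in $\clY=\Re^d$. Here I would use that the codomain is finite-dimensional: a dense subspace of $\Re^d$ is all of $\Re^d$, so density upgrades to equality $\Rsp(\clL)=\Re^d$, and conversely $\Rsp(\clL)=\Re^d$ trivially has trivial annihilator. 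Finally, $(3)\Leftrightarrow(4)$ follows from the identification $\clC=\Rsp(\clL)$ noted beneath the definition of $\clL$, since controllability is by definition the statement $\clC=\Re^d$.

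I expect the one genuine subtlety — as opposed to routine bookkeeping — to be the passage from \emph{dense range} to \emph{full range} in the step $(2)\Leftrightarrow(3)$. \Theorem{thm:dual-adjoint} is stated for general Banach spaces and only delivers $\Rsp(\clL)^\bot=\Nsp(\clL^\dagger)$, which controls the \emph{closure} of the range rather than the range itself; in infinite dimensions a dense proper subspace can have trivial annihilator. The reason the equivalence nonetheless holds cleanly here is precisely that $\clY=\Re^d$ is finite-dimensional, so every subspace is closed and dense implies equal. I would flag this use of finite-dimensionality explicitly, since it is the only place the argument departs from pure adjoint formalism and is exactly the point that fails to generalize verbatim to the infinite-dimensional (nonlinear-filtering) setting treated later in the thesis.

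Assembling the four equivalences yields the chain $(1)\Leftrightarrow(2)\Leftrightarrow(3)\Leftrightarrow(4)$, which proves all four statements equivalent and completes the proof.
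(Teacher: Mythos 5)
Your proposal is correct and follows exactly the route the paper intends: the paper gives no explicit proof, stating only that the proposition is ``a simple consequence of Theorem~\ref{thm:dual-adjoint},'' and your argument is precisely that consequence spelled out — $(1)\Leftrightarrow(2)$ by linearity of $\xi\mapsto z^\xi=\clL^\dagger\xi$, $(2)\Leftrightarrow(3)$ by $\Rsp(\clL)^\bot=\Nsp(\clL^\dagger)$, and $(3)\Leftrightarrow(4)$ by the identification $\clC=\Rsp(\clL)$. Your explicit flag that finite-dimensionality of $\Re^d$ is what upgrades dense range to full range is a worthwhile addition, consistent with the paper's own remark that in general $\Nsp(\clL^\dagger)$ is trivial iff $\Rsp(\clL)$ is merely dense.
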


\begin{figure}
	\centering
	\includegraphics[width=0.6\linewidth]{./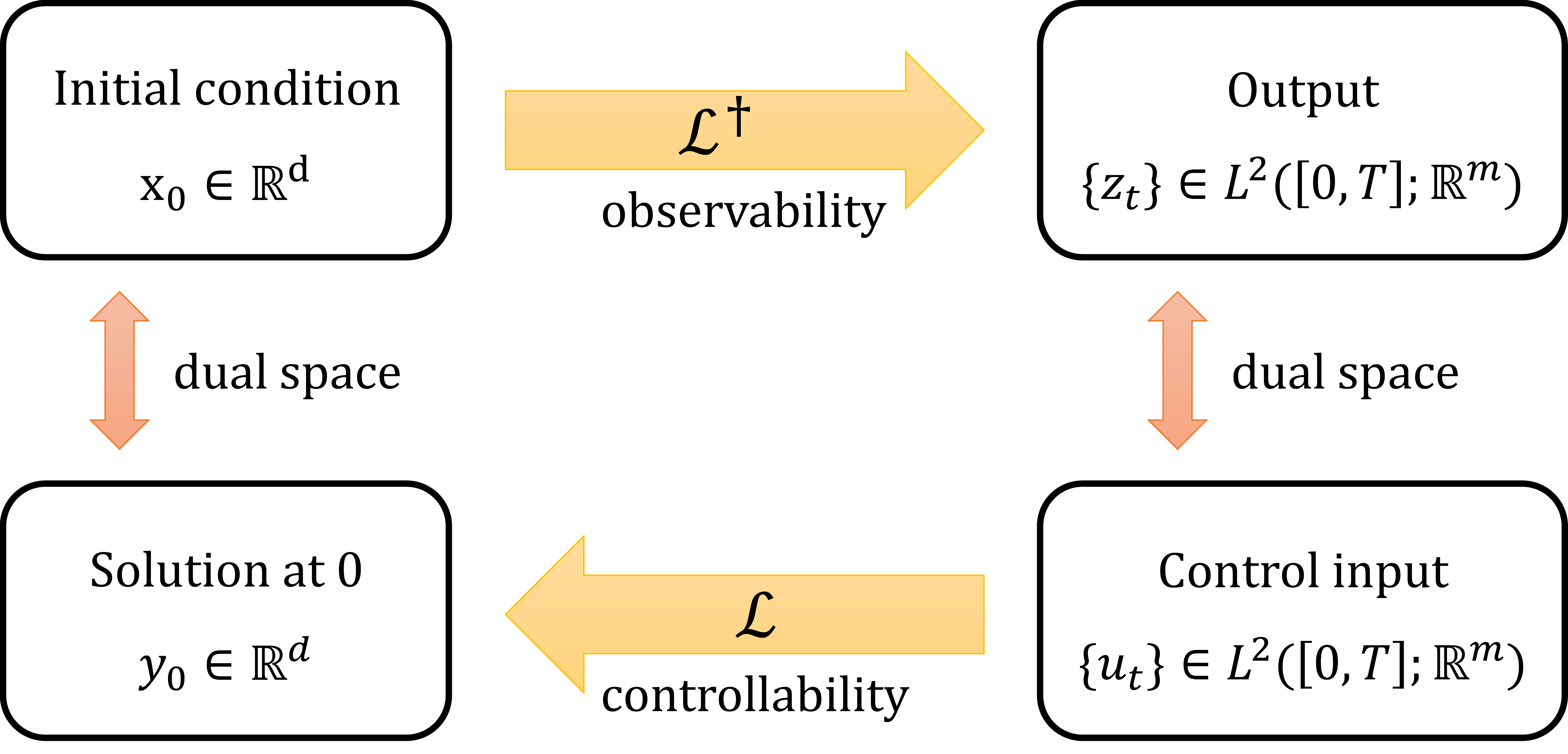}
	\caption{An illustration of the dual relationship between observability and 
		controllability}
	\label{fig:LTI-duality}
\end{figure}

%\begin{flalign*}
	%	&\text{(observability)}&
	%	\quad \text{(initial cond.)} \quad &x_0\in \Re^d \quad \stackrel{\clL^\dagger}{\longrightarrow} \quad \{z_t\} \in L^2([0,T];\Re^m) \quad &\text{(output)} && \\
	%	&\text{(controllability)}&
	%	\quad \text{(solution at 0)} \quad  &y_0\in \Re^d \quad \stackrel{\clL}{\longleftarrow} \quad \{u_t\} \in L^2([0,T];\Re^m) \quad  &\text{(control)} &&
	%\end{flalign*}

\subsubsection{Controllable subspace and controllability gramian}

%For the linear model~\eqref{eq:LTI-obs}, the duality provides a straightforward test to observability stated in the following proposition.

Using the Cayley-Hamilton theorem~\cite[Theorem 2.4.2]{horn_johnson_1985}, explicit formula for the controllable subspace $\Rsp(\clL)$ is obtained as 
\begin{equation}\label{eq:ctrl-subspace-LTI}
	\Rsp(\clL) = \sp\big\{H, A H , \ldots, A^{d-1} H \big\}
\end{equation}
%	Therefore, the observability (controllability of the dual) is equivalent to that the right-hand side is full rank.
%Another way to test observability is using the \emph{controllability gramian} of the system~\eqref{eq:LTI-ctrl}. Consider a symmetric matrix:
The \emph{controllability gramian} is a $d\times d$ matrix defined as follows:
\[
\sW := \clL \clL^\dagger = \int_0^T e^{At}HH^\tp e^{A^\tp t}\ud t
\]
It is readily shown that the system is controllable if and only if $\Rsp(\sW)\succ 0$~\cite[Appendix C.3]{kailath2000linear}. The gramian is useful to obtain an explicit formula for the control that achieves the transfer $y_0=\eta \mapsto y_T=0$. This is described in the following proposition whose proof appears in Section~\ref{ss:pf-prop32}.

\medskip

\begin{proposition}\label{prop:LTI-gramian}
	Suppose $\eta \in \Rsp(\sW)$, so there exists $\xi \in \Re^d$ such that $\eta = \sW\xi$. Then the control
	\[
	u_t = H^\tp e^{A^\tp t} \xi,\quad 0\le t \le T
	\]
	transfers the system~\eqref{eq:LTI-ctrl} from $y_0 = \eta$ to $y_T = 0$.
	Suppose $v$ is another control input which also achieves the same transfer, then
	\[
	%	\text{\rm (minimum norm)}\qquad
	\int_0^T |v_t|^2 \ud t \ge \int_0^T 
	|u_t|^2 \ud t
	\]
\end{proposition}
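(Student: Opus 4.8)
The plan is to prove this in two parts: first verify that the proposed control $u_t = H^\tp e^{A^\tp t}\xi$ achieves the desired transfer, and second establish that it is the minimum-energy control among all controls achieving the same transfer.

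For the first part, I would directly compute the initial condition $y_0$ produced by this control. Solving the backward system~\eqref{eq:LTI-ctrl} with terminal condition $y_T = 0$ gives $y_0 = \int_0^T e^{At} H u_t \ud t = \clL u$. Substituting $u_t = H^\tp e^{A^\tp t}\xi$ yields
\[
\clL u = \int_0^T e^{At} H H^\tp e^{A^\tp t}\xi \ud t = \Big(\int_0^T e^{At} H H^\tp e^{A^\tp t}\ud t\Big)\xi = \sW\xi = \eta,
\]
where the last equality uses the hypothesis $\eta = \sW\xi$. This confirms the transfer. Equivalently, this just says $u = \clL^\dagger \xi$ by the formula for the adjoint, so $\clL u = \clL\clL^\dagger \xi = \sW\xi = \eta$.

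For the minimum-energy claim, the key structural observation is that the energy $\int_0^T |u_t|^2\ud t = \langle u, u\rangle_\clU = \|u\|_\clU^2$ and that our candidate $u = \clL^\dagger \xi$ lies in the range of the adjoint. Let $v$ be any other control achieving $\clL v = \eta$. Then $\clL(v - u) = \eta - \eta = 0$, so the difference $v - u \in \Nsp(\clL)$. The crucial fact is that $u = \clL^\dagger \xi \in \Rsp(\clL^\dagger)$, and $\Rsp(\clL^\dagger)$ is orthogonal to $\Nsp(\clL)$: indeed for any $w \in \Nsp(\clL)$ we have $\langle \clL^\dagger\xi, w\rangle_\clU = \langle \xi, \clL w\rangle_{\Re^d} = \langle \xi, 0\rangle_{\Re^d} = 0$ by the defining relation of the adjoint. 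Hence $u \perp (v - u)$ in $\clU$, and by the Pythagorean identity
\[
\|v\|_\clU^2 = \|u + (v-u)\|_\clU^2 = \|u\|_\clU^2 + \|v - u\|_\clU^2 \ge \|u\|_\clU^2,
\]
which is exactly the stated inequality.

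I expect no serious obstacle here; the argument is the standard orthogonal-decomposition proof for minimum-norm solutions of linear equations. The one point requiring mild care is the orthogonality step: it rests entirely on the duality pairing $\langle \xi, \clL w\rangle_{\Re^d} = \langle \clL^\dagger \xi, w\rangle_\clU$ established in Section~\ref{ssec:dual-space}, specialized to the operators $\clL$ and $\clL^\dagger$ of the LTI system. Everything else is a direct computation using the explicit gramian formula and the Hilbert-space inner product on $\clU$.
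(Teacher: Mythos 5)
Your proof is correct and follows essentially the same route as the paper's: identify $u=\clL^\dagger\xi$ so that $\clL u=\sW\xi=\eta$, then use the adjoint relation $\langle \clL(v-u),\xi\rangle=\langle v-u,\clL^\dagger\xi\rangle$ to get orthogonality of $v-u$ and $u$, and conclude by the Pythagorean identity. The only difference is cosmetic: you phrase the orthogonality as $\Rsp(\clL^\dagger)\perp\Nsp(\clL)$ while the paper computes the single inner product directly.
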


%A direct consequence of the Prop.~\ref{prop:LTI-gramian} is $\Rsp(\sW) \subset \clC$, and in fact, $\Rsp(\sW) = \clC$. Therefore the dual system~\eqref{eq:LTI-ctrl} is controllable if and only if the controllability gramian $\sW$ is full rank.

\subsubsection{Stabilizability and detectability}

The stable subspace of the system~\eqref{eq:LTI-obs-a} is defined as follows:
\[
S_{s} := \big\{x_0\in \Re^d: |x_T|\to 0 \text{ as } T\to \infty\text{ where } 
x\text{ is the solution to~\eqref{eq:LTI-obs-a}}\big\}
\]
It is the span of the left generalized eigenvectors of $A$ whose eigenvalues have strictly negative real part. %~\cite{}. 
The unstable subspace of the system is its orthogonal complement $S_s^\bot$.
These definitions are useful for the study of asymptotic convergence as $T\to 
\infty$.
The stabilizability and detectability are defined as follows:

\medskip

\begin{definition}
	The linear system~\eqref{eq:LTI-ctrl} is \emph{stabilizable} if $S_s^\bot \subset \Rsp(\clL)$. %, that is, the unstable subspace is controllable.
\end{definition}

\medskip

\begin{definition}
	The linear system~\eqref{eq:LTI-obs} is \emph{detectable} if $\Nsp(\clL^\dagger) \subset S_s$. %the unobservable space is stable, that is, .
\end{definition}

\medskip

\begin{corollary}
	The system~\eqref{eq:LTI-ctrl} is stabilizable if and only if the 
	system~\eqref{eq:LTI-obs} is detectable.
\end{corollary}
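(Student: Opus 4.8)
The plan is to deduce the equivalence directly from the closed-range identity of \Theorem{thm:dual-adjoint}, namely $\Rsp(\clL)^\bot = \Nsp(\clL^\dagger)$, together with the elementary calculus of annihilators in the finite-dimensional space $\Re^d$. The only real work is bookkeeping: translating each of the two defining inclusions into the other by taking orthogonal complements.

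First I would observe that all four subspaces in play---$S_s$, $S_s^\bot$, $\Rsp(\clL)$, and $\Nsp(\clL^\dagger)$---are subspaces of $\Re^d$. Because $\Re^d$ is finite-dimensional, every subspace is closed, and the annihilator defined through the duality pairing coincides with the ordinary orthogonal complement under the inner product $\langle \eta,\xi\rangle_{\Re^d}=\eta^\tp\xi$ of Example~\ref{ex:dual-Hilbert}. Consequently the two standard facts I rely on hold without qualification: for subspaces $U,V\subset\Re^d$, (i) $U\subset V$ if and only if $V^\bot\subset U^\bot$, so that annihilation reverses inclusions, and (ii) $(U^\bot)^\bot=U$, so that double annihilation is the identity.

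Next I would run the chain of equivalences. Starting from the definition of stabilizability, $S_s^\bot\subset\Rsp(\clL)$, I apply (i) to obtain $\Rsp(\clL)^\bot\subset (S_s^\bot)^\bot$, and then (ii) to simplify the right-hand side to $S_s$. Substituting the identity $\Rsp(\clL)^\bot=\Nsp(\clL^\dagger)$ from \Theorem{thm:dual-adjoint} rewrites the left-hand side as $\Nsp(\clL^\dagger)$, yielding exactly $\Nsp(\clL^\dagger)\subset S_s$, which is the definition of detectability of~\eqref{eq:LTI-obs}. Reading the chain backwards gives the converse, so the two properties are equivalent.

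The argument has no genuine analytic obstacle: the potential pitfall---closedness of $\Rsp(\clL)$, which in an infinite-dimensional setting would be needed both to apply \Theorem{thm:dual-adjoint} meaningfully and to guarantee $(\Rsp(\clL)^\bot)^\bot=\Rsp(\clL)$---is automatic here since the range lives in $\Re^d$. The one point demanding care is the identification of the annihilator with the orthogonal complement, so that facts (i) and (ii) may be invoked; once that is fixed, the proof is a short symmetric manipulation.
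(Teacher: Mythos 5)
Your argument is correct and is exactly the route the paper intends: the corollary is stated without proof as an immediate consequence of Theorem~\ref{thm:dual-adjoint} together with the two definitions, and your chain $S_s^\bot\subset\Rsp(\clL) \iff \Rsp(\clL)^\bot\subset S_s \iff \Nsp(\clL^\dagger)\subset S_s$ fills in precisely the intended bookkeeping. Your remarks on closedness and the identification of annihilator with orthogonal complement in $\Re^d$ are apt but, as you note, automatic here.
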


\subsection{Observability for deterministic nonlinear 
	systems}\label{ssec:nonlinear-observability}

In continuous-time settings, a standard model of a state-output nonlinear system 
is the nonlinear ordinary differential equation (ODE):
\begin{subequations}\label{eq:NL-determin-obs}
	\begin{align}
		\frac{\ud x_t}{\ud t} &= a(x_t),\quad x_0 = \xi \label{eq:NL-determin-obs-a}\\
		z_t &= h(x_t) \label{eq:NL-determin-obs-b}
	\end{align}
\end{subequations}
The generator
\[
(\clA f)(x) = a^\tp(x) \nabla f(x),\quad x\in \Re^d
\]
Without loss of generality, it is assumed that $x= 0$ is an equilibrium point, 
i.e.,~$a(0) = 0$ and $h(0) = 0$.
Broadly speaking, there are two conceptual frameworks for defining observability and detectability for the model~\eqref{eq:NL-determin-obs}:
\begin{itemize}
	\item Local observability about $x = x_0\in\Re^d$. The original 
	paper is by Hermann and Krener~\cite{hermann1977nonlinear}.
	\item Output-to-state stability (OSS). The original paper is by 
	Sontag and Wang~\cite{sontag1996detectability,sontag1997output}
\end{itemize}
Both of these frameworks are based on duality and as such admit dual counterparts for controllability. % (stabilizability). 

\subsubsection{Local observability}

The following quote is from~\cite{hermann1977nonlinear} where basic definitions of controllability and observability are given:
\begin{quote}
	``{\it duality between ``controllability'' and ``observability'' [...] is, mathematically, just the duality between vector fields and differential forms}''
\end{quote}

\medskip

\begin{definition}[Definition 6.1.4 in~\cite{sontag2013mathematical}]
	The system~\eqref{eq:NL-determin-obs} is \emph{observable} if %any two 
	%states are distinguishable.	
	%Namely, 
	\begin{equation}\label{eq:distinguishable}
		z^{\xi_1} = z^{\xi_2} \quad\Longrightarrow\quad \xi_1 = \xi_2,\quad \forall \,\xi_1,\xi_2\in \Re^d
	\end{equation}
\end{definition}

\medskip

While the nonlinear observability is a direct generalization of the 
Def.~\ref{def:LTI-obs-def}, it is untractable to verify for general class of nonlinear system~\eqref{eq:NL-determin-obs} (see~\cite[p.~733]{hermann1977nonlinear}). A local property of the system around a point $x_0\in\Re^d$ is as follows:
%To this end, we introduce the following notion: a pair of initial conditions 
%$\xi_1,\xi_2 \in \Re^d$ are said \emph{distinguishable} if the output 
%$\{z_t:0\le t \le T\}$ via~\eqref{eq:NL-determin-obs} from two initial 
%conditions are different.  They are indistinguishable if the output 
%trajectories are identical.
%Using this notion, the system is observable if any two states are 
%distinguishable. The local observability is defined as follows:

\medskip

\begin{definition}[Definition 6.4.1 in~\cite{sontag2013mathematical}]
	The system~\eqref{eq:NL-determin-obs} is \emph{locally observable} 
	at $x_0\in\Re^d$ if there exists a neighborhood $\clN \ni x_0$ such that~\eqref{eq:distinguishable} holds with $\xi_1 = x_0$ for all $\xi_2\in \clN$.
\end{definition}

\medskip

The local observability admits a rank condition test. %~\cite[Remark 6.4.2]{sontag2013mathematical}:
%the local observability provides a connection to the observability of the 
%linearized system. 

\medskip

\begin{proposition}[Theorem 3.1 in~\cite{hermann1977nonlinear}]  Consider the following  subspace:
	\[
	%	\text{\rm (observability space)}\qquad
	\clO:= \sp\big\{\nabla (\clA^k h) 
	(x_0)\,:\,k\ge 0\big\}
	\]
	If the dimension of $\clO$ is $d$, then the system~\eqref{eq:NL-determin-obs} is locally observable at $x_0$.
\end{proposition}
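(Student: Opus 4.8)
The plan is to reduce local observability to the local injectivity of a finite-dimensional observation map, and then to certify that injectivity using the rank hypothesis on $\clO$.

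First I would differentiate the output along trajectories. Since $z_t = h(x_t)$ and $\dot x_t = a(x_t)$, the chain rule gives $\frac{\ud}{\ud t} z_t = \nabla h(x_t)^\tp a(x_t) = (\clA h)(x_t)$, and by induction $\frac{\ud^k}{\ud t^k} z_t = (\clA^k h)(x_t)$ for every $k \ge 0$, provided $a$ and $h$ are smooth enough that these derivatives exist. Evaluating at $t = 0$, where $x_0 = \xi$, yields $\frac{\ud^k}{\ud t^k} z_t\big|_{t=0} = (\clA^k h)(\xi)$. Consequently, if two initial conditions $\xi_1,\xi_2$ produce identical outputs $z^{\xi_1} = z^{\xi_2}$ on $[0,T]$, then all time derivatives at $t = 0$ agree, so $(\clA^k h)(\xi_1) = (\clA^k h)(\xi_2)$ for all $k \ge 0$.

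Next I would encode this in an observation map. Because $\dim \clO = d$, I can choose a finite order $N$ such that the row vectors $\nabla(\clA^k h)(x_0)$, $0 \le k \le N$, already span $\Re^d$. Define $\Phi:\Re^d \to (\Re^m)^{N+1}$ by $\Phi(\xi) = \big( h(\xi),\, (\clA h)(\xi),\, \ldots,\, (\clA^N h)(\xi) \big)$. Its Jacobian $D\Phi(x_0)$ is precisely the stacked matrix whose rows are the gradients $\nabla(\clA^k h)(x_0)$ for $0 \le k \le N$, so by construction $\operatorname{rank} D\Phi(x_0) = \dim \clO = d$; that is, $\Phi$ is an immersion at $x_0$. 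I would then invoke the rank theorem: a $C^1$ map with full-rank Jacobian $d$ at a point is locally injective there, furnishing a neighborhood $\clN \ni x_0$ on which $\Phi$ is one-to-one. To finish, suppose $\xi_2 \in \clN$ satisfies $z^{\xi_2} = z^{x_0}$; the first step gives $(\clA^k h)(\xi_2) = (\clA^k h)(x_0)$ for $0 \le k \le N$, i.e.\ $\Phi(\xi_2) = \Phi(x_0)$, and local injectivity forces $\xi_2 = x_0$. This is exactly local observability at $x_0$.

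The step I expect to be the main obstacle is the first one: rigorously passing from equality of the output \emph{trajectories} to equality of the iterated-generator values $(\clA^k h)(\xi)$. This requires sufficient regularity of $a$ and $h$ to differentiate the output through order $N$ and to identify the $k$-th derivative with $(\clA^k h)(x_t)$; analyticity, or at least $C^N$ smoothness through the needed order, is the natural hypothesis, and it is the finiteness of $N$ (guaranteed by $\dim\clO = d$) that makes the reduction to a finite-dimensional immersion legitimate. The remaining work — writing down the Jacobian and applying the rank theorem — is routine once the observation map $\Phi$ is in place.
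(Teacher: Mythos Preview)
Your argument is correct and is precisely the classical proof: identify the Taylor coefficients of the output with the iterated Lie derivatives $(\clA^k h)(\xi)$, bundle finitely many of them into an observation map $\Phi$, and invoke the constant-rank (immersion) theorem to obtain a neighborhood on which $\Phi$ is injective. The only caveat is that the paper does not supply its own proof of this proposition; it is quoted as a background result from Hermann and Krener, so there is nothing in the paper to compare against beyond noting that your approach is the standard one from that reference.
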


\medskip

It is also shown that the condition above reduces 
to~\eqref{eq:ctrl-subspace-LTI} for the linear systems~\cite[Example 3.11]{hermann1977nonlinear}.

%\medskip
%
%\begin{remark} 
	%	The observability rank condition in~\cite{hermann1977nonlinear} originally described by a space of differential forms instead of the gradient vectors which are adopted from~\cite[Remark 6.4.2]{sontag2013mathematical}.  Meanwhile, the local controllability of a nonlinear system is related to a rank condition of a vector field generated by Lie derivatives along the controlled dynamics at $x_0$. The duality between controllability and observability is noted by considering the dual relationship between vector fields and differential forms.
	%	We omit mathematical details and notations on this line as they are far beyond the scope of this thesis.  See the original paper by Hermann and Krener~\cite{hermann1977nonlinear} for more detail.
	%\end{remark}

\subsubsection{Output-to-state stability}

The output-to-state stability (OSS) is dual to the input-to-state stability 
(ISS) concept which is central to the stability theory of nonlinear systems 
with input. In the original paper on the subject~\cite{sontag1997output}, Sontag 
and Wang write:
\begin{quote}
	''{\it Given the central role often played in control theory by the duality between input/state and state/output behavior, one may reasonably ask what concept obtains if outputs are used instead of inputs in the [input-to-state stability (ISS)] definition. This corresponds roughly to asking that ``no matter the initial state, if the observed outputs are small, then the state must be eventually small''. For linear systems, the notion that arises is that of \emph{detectability}. Thus, it would appear that this dual property, which we will call \emph{output-to-state stability} (OSS), is a natural candidate as a concept of nonlinear (zero-)detectability.}''
\end{quote}

\medskip

Before stating the OSS definition, we need to define some classes of functions. We 
denote the nonnegative real line by $\Re^+$. ${\cal K}$ is the family of 
monotonically increasing continuous functions $\alpha:\Re^+\to\Re^+$ with 
$\alpha(0) = 0$. ${\cal K}_\infty$ is a subset of ${\cal K}$ comprising of 
functions such that $\alpha(r)\to \infty$ as $r\to \infty$. The class ${\cal 
	KL}$ is a family of functions $\beta:\Re^+\times \Re^+ \to \Re^+$ such that 
$\beta(\cdot,t)\in{\cal K}$ for each $t\in \Re^+$ and $\beta(r,t) \to 0$ as 
$t\to \infty$ for each $r\in \Re^+$. 

\medskip

\begin{definition}[Definition 1 in~\cite{sontag1997output}]
	The system~\eqref{eq:NL-determin-obs} is output-to-state stable if $\exists \, \beta \in {\cal KL}$ and $\gamma \in {\cal K}$ such that
	\[
	|x_t| \le \max \Big\{\beta\big(|\xi|,t\big),\;\gamma\big(\sup_{s\le 
		t}|z_s|\big)\Big\},\quad t \ge 0
	\]
\end{definition}

\medskip

Since the first term decreases over time, eventually the second term dominates. 
According to Sontag~\cite{sontag1997output}, the OSS can be considered as a 
notion of detectability, in the sense that if the output is identically zero, then $x_t \to 0$.
Moreover, if the system is OSS then any initial state $\xi$ is asymptotically distinguishable and from the zero state~\cite[Section 5]{sontag1997output}.
For linear systems, this is equivalent to the detectability~\cite[Excercise 7.3.12]{sontag2013mathematical}.
Several variations of observability definition and their relationship are discussed in~\cite{hespanha2002nonlinear}.

The ISS and OSS definition enjoy a central place in nonlinear control theory in 
part because these are amenable to certain dissipative characterizations. 

\medskip

\begin{definition}
	A function $V:\Re^d \to \Re^+$ is a OSS-Lyapunov function if $\exists\,\underline{\alpha},\, \overline{\alpha}\in {\cal K}_\infty$ such that:
	\[
	\underline{\alpha}\big(|x|\big) \le V(x) \le \overline{\alpha}\big(|x|\big),\quad \forall\, x\in \Re^d
	\]
	and $\exists\,\alpha_1,\, \alpha_2\in {\cal K}_\infty$ such that
	\begin{equation}\label{eq:NL-Lyapunov}
		\clA V(x) \le -\alpha_1\big(|x|\big) + \alpha_2\big(|h(x)|\big),\quad \forall x \in \Re^d
	\end{equation}
	
\end{definition}

\medskip

The following proposition is from~\cite{sontag1997output}:

\begin{proposition}[Theorem 3 in~\cite{sontag1997output}]
	Consider the system~\eqref{eq:NL-determin-obs}. T.F.A.E.:
	\begin{enumerate}
		\item The system is OSS.
		\item The system admits an OSS-Lyapunov function.
	\end{enumerate}
\end{proposition}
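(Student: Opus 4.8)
The plan is to prove the two implications separately, with $(2)\Rightarrow(1)$ a routine comparison argument and the converse $(1)\Rightarrow(2)$ the substantive converse-Lyapunov construction.

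For $(2)\Rightarrow(1)$, assume an OSS-Lyapunov function $V$ exists. Along any trajectory of~\eqref{eq:NL-determin-obs}, the chain rule and~\eqref{eq:NL-Lyapunov} give $\frac{d}{dt}V(x_t) = \clA V(x_t) \le -\alpha_1(|x_t|) + \alpha_2(|z_t|)$, where I have used $z_t = h(x_t)$. The upper sandwich $V(x)\le\overline\alpha(|x|)$ yields $|x_t|\ge\overline\alpha^{-1}(V(x_t))$, so with $\alpha_3 := \alpha_1\circ\overline\alpha^{-1} \in {\cal K}_\infty$ and $v(t) := V(x_t)$ I obtain the scalar differential inequality $\dot v \le -\alpha_3(v) + \alpha_2(\sup_{s\le t}|z_s|)$. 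A standard comparison lemma then produces a ${\cal KL}$ function $\tilde\beta$ with $v(t) \le \max\{\tilde\beta(v(0),t),\, \alpha_3^{-1}(2\alpha_2(\sup_{s\le t}|z_s|))\}$. Translating back through the lower bound $\underline\alpha(|x_t|)\le V(x_t)$ and composing the resulting class functions yields the OSS estimate with suitable $\beta\in{\cal KL}$ and $\gamma\in{\cal K}$.

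For $(1)\Rightarrow(2)$, which is the crux, I would follow the converse-ISS methodology of~\cite{sontag1997output} adapted to outputs. First I reduce OSS to a more workable equivalent form: decompose the $\max$-estimate into a zero-output global asymptotic stability property together with an asymptotic output gain, and invoke Sontag's ${\cal KL}$-lemma to write $\beta(r,t)\le\theta_2(\theta_1(r)e^{-t})$ for some $\theta_1,\theta_2\in{\cal K}_\infty$. Next I construct a candidate storage function by a variational formula that measures the worst-case future excursion of the state, discounted against the observed output, along trajectories starting at $\xi$. The $\underline\alpha,\overline\alpha$ sandwich follows from the OSS bounds, and the dissipation inequality~\eqref{eq:NL-Lyapunov} is read off from the flow (semigroup) property of this variational value. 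Finally I smooth the a priori merely continuous candidate by the Lin--Sontag--Wang procedure to obtain a $C^\infty$ function retaining the sandwich and a slightly weakened dissipation inequality.

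The main obstacle is the converse direction: producing a single function that is simultaneously proper and positive definite (the ${\cal K}_\infty$ sandwich), smooth, and infinitesimally dissipative with the output playing the role of the disturbance. The two delicate points are (i) extracting a usable asymptotic-gain characterization from the mixed $\max$-estimate, since here the output is not a free input but is itself a function of the state, and (ii) the smoothing step, where one must preserve~\eqref{eq:NL-Lyapunov} while upgrading regularity. I would lean directly on the technical lemmas of~\cite{sontag1997output} and the converse-ISS-Lyapunov theorem they build on, as these are precisely what make the construction close.
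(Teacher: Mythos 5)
The thesis does not actually prove this proposition: it is imported verbatim as Theorem~3 of Sontag and Wang, stated after the definition of the OSS-Lyapunov function with no argument attached, so there is no in-paper proof to measure you against. Judged on its own terms, your direction $(2)\Rightarrow(1)$ is essentially complete and correct: the chain $\clA V(x_t)\le-\alpha_1(|x_t|)+\alpha_2(|z_t|)$, the substitution $|x_t|\ge\overline{\alpha}^{-1}(V(x_t))$ giving $\dot v\le-\alpha_3(v)+\alpha_2(\sup_{s\le t}|z_s|)$ with $\alpha_3=\alpha_1\circ\overline{\alpha}^{-1}\in{\cal K}_\infty$, and the comparison lemma followed by the $\underline{\alpha}$ sandwich is exactly the standard route (modulo the usual minor repair of replacing $\alpha_3$ by a locally Lipschitz minorant before invoking the comparison lemma).

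The converse direction, however, is a roadmap rather than a proof, and the one genuinely hard point is the one you label (i) and then leave unresolved. The converse-ISS machinery you invoke is built for systems where the disturbance is a \emph{free input} that can be quantified over; here $z_t=h(x_t)$ is slaved to the trajectory, so the variational candidate ``worst-case future excursion discounted against the observed output'' is a supremum over a single trajectory per initial condition, and the semigroup argument that yields the dissipation inequality~\eqref{eq:NL-Lyapunov} does not go through by mere analogy. Sontag and Wang handle this by first passing from OSS to an equivalent relative-gain/``unboundedness observability'' formulation and then reducing to a converse Lyapunov theorem for an auxiliary constrained (effectively disturbance-driven) system to which the Lin--Sontag--Wang construction applies; that reduction is the actual content of the theorem. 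Since your plan defers precisely this step, together with the smoothing, to the technical lemmas of the very paper whose main theorem you are proving, the argument as written is circular as an independent proof, though acceptable as an annotated citation --- which is all the thesis itself offers.
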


\medskip

\begin{remark}
	Note that $\dfrac{\ud}{\ud t}V(x_t) = (\clA V)(x_t)$, and therefore the 
	integral form of~\eqref{eq:NL-Lyapunov} is as follows:
	\[
	V(x_t) \le V(x_0) + \int_0^t -\alpha_1\big(|x_s|\big) + \alpha_2\big(|h(x_s)|\big)\ud s,\quad \forall\,t\ge 0
	\]
	This is an example of a \emph{dissipation inequality} where the 
	OSS-Lyapunov function serves as a storage function~\cite[Chapter 
	6]{khalil2002nonlinear}.
\end{remark}

%\red{TODO (observer design, asymptotic distinguish of $x_0$ and OSS)}

% %More recently, observability gramian for model reduction is studied in~\cite{vaidya2007observability}.

\subsection{Observability for hidden Markov model}\label{ssec:observability-hmm}

In deterministic settings, observability is defined by the property that every distinct initial condition 
produces distinct output. In~\cite{van2009observability}, the 
idea is extended to define the observability of an HMM with compact state-space. The 
following definitions are introduced in~\cite{van2009observability}.
Although we state these for the model $(\clA,h)$, the definitions are for a general class of HMMs.

\begin{definition}[Definition 2 in~\cite{van2009observability}]\label{def:observability}
	The model $(\clA,h)$ is \emph{observable} if % $\mu \simeq \nu$ implies 
	%$\mu = \nu$. Equivalently, 
	\[
	\quad \sP^\mu|_{\clZ_T} = 
	\sP^\nu|_{\clZ_T} \; \Longrightarrow\; \mu = \nu,\quad \forall\,\mu,\nu\in\clP(\bS)
	\]
\end{definition} 

The condition is used to define an equivalence relation in $\clP(\bS)$ as follows: %~\cite[Definition 1]{van2009observability}:
\[
\mu \simeq \nu\quad \text{if} \quad \sP^\mu|_{\clZ_T} = 
\sP^\nu|_{\clZ_T}
\]
%where $\sP|_{\clZ_T}$ denotes the measure $\sP$ restricted on the 
%$\sigma$-algebra $\clZ_T$.
%The following definitions characterize the observability for hidden Markov 
%model:
The equivalence relation is the counterpart of~\eqref{eq:distinguishable} for the deterministic definition of observability. 
Using this notation, the following definition naturally arises:

\medskip

\begin{definition}[Definition 3 
	in~\cite{van2009observability}]\label{def:un-observable-measures-observable-functions}
	The space of \emph{observable functions} %is defined as
	\[
	\clO = \{f\in C_b(\bS): \mu(f) = \nu(f) \; \forall\, \mu\simeq \nu\}
	\]
	The space of \emph{unobservable measures} %is defined as
	\[
	\clN = \{\alpha\mu - \alpha\nu \in \clM(\bS): \alpha \in \Re,\; \mu,\nu 
	\in \clP(\bS) \text{ such that } \mu\simeq\nu\}
	\]
\end{definition} 

\begin{remark}
	While the observability for deterministic nonlinear system considers a map from initial condition to output trajectory, the stochastic observability considers a map from the initial measure to the probability law of the output process.
\end{remark}

\begin{remark}
	By definition, the model $(\clA, h)$ is observable if and only if $\clN = \{0\}$. Next, $\clO^\bot = \clN$ and therefore the HMM is observable if and only if $\clO$ is dense in $C_b(\bS)$~\cite[p.~42]{van2009observability}. Note that constant functions are trivially in $\clO$ and therefore $\clO$ is non-trivial. 
\end{remark}

%Although the definition is certainly an extension of indistinguishable initial condition to stochastic setting, it does not have an explicit counterpart of dual controllability problem.
%Rather, it arose from probability theoretic approach on the filter stability problem (see Section~\ref{sec:VHobs-and-filter-stability}). 

% van Handel's approach is  In this chapter, we present the dual control system, and it is shown that the natural notion of controllability of the dual system is equivalent to the observability definition.

\subsection{Other contributions on stochastic observability in literature}\label{ssec:other-obs-notion}

In contrast to the fundamental definition (Def.~\ref{def:observability}) of observability, there are a large umber of ``functional'' definitions of observability that have been described in literature.
The functional definition is typically a sufficient condition on the model to obtain a desired conclusion for the estimation and/or control problem. Examples of such definition can be found in~\cite{liu2011stochastic} to investigate asymptotic properties of the minimum energy estimator,~\cite{kawamura2020nonlinear} to investigate model reduction,~\cite{mcdonald2018stability,mcdonald2019cdc} for filter stability, etc.
It must be said that even though it is a fundamental concept in linear systems theory, duality between controllability \& observability for general class of stochastic system has not been widely studied.

In the following, we briefly describe two works that have followed up on van Handel's definition. Both of these work are in discrete time settings.

%Generalization of the observability definition to stochastic systems has been an area of historical and current research interest. 
%However, the duality between controllability -- observability in stochastic 
%setting have not been spotlighted, although it is a fundamental concept in 
%linear systems.
%Rather, various notions of observability have been discussed and their practical consequences---e.g.~design of an asymptotic estimator~\cite{liu2011stochastic}, model reduction~\cite{kawamura2020nonlinear} or filter stability~\cite{van2009observability,mcdonald2019cdc}---are analyzed.
%We briefly remark a couple of stochastic observability notions in discrete time 
%settings.

%In discrete time setting, $X = \{X_t\in \bS: t= 0,1,\ldots\}$ is a discrete-time Markov process. The observation process $Z = \{Z_t\in \clY: t = 0,1,\ldots\}$ takes values in measurement space $\clY$.

%\subsubsection{Information theoretic approach~\cite{liu2011stochastic}}

An information theoretic notion of stochastic observability is presented in~\cite{liu2011stochastic}. 
%Define the entropy of a random variable $X$ by $H(X)$
%and denote the mutual information of $X$ and $Z$ by $I(X,Z) = 
%H(X)+H(Z)-H(X,Z)$. The random variable $X$ is observable from $Z$ if either 
%$H(X)= 0$ or $I(X,Z) > 0$.

\medskip

\begin{definition}[Definition 9 in~\cite{liu2011stochastic}]	
	A discrete-time dynamical system is LB-observable if for any measurable 
	$g:\bS\to \Re$ such that $g(X_0)$ is not deterministic, there exists some $N \in \mathbb{N}$ such the mutual information between $g(X_0)$ and output time sequence $\{Z_t: t=0,1,\ldots,N\}$ is strictly positive. %$I(g(X_0), \{Z_t: t=0,1,\ldots,N\}) > 0$
\end{definition}

\medskip

It is also shown that if the system is LB-observable if and only if it is observable in the sense of Def.~\ref{def:observability}~\cite[Theorem 11]{liu2011thesis}.

\medskip
%\subsubsection{Reconstructability and observability~\cite{mcdonald2019cdc}}

An extension of the observability definition appear in recent papers by 
McDonald and Y\"uksel~\cite{mcdonald2018stability, mcdonald2019cdc}. This approach considers an ability to reconstruct the prior in weak sense.

\medskip

\begin{definition}[Definition 3.1(ii) in~\cite{mcdonald2019cdc}]	
	A discrete-time partially observed Markov process is MY-observable if for every $f\in C_b(\bS)$ and $\epsilon >0$, there exists $N$ and a bounded function $g: \clY^N \to \Re$ such that 
	\[
	\Big\|f(\cdot) - \int g(z) \sP\big(\ud z\,|\,X_0 = \,\cdot\,\big)\Big\|_\infty < \epsilon
	\]
\end{definition}

\medskip

Note that $g$ is provides a inverse representation of $X_0$ from the output time sequence, and therefore this definition is somewhat dual to  Def.~\ref{def:observability}.
In~\cite{mcdonald2018stability}, the definition is used to investigate the finite memory property of the nonlinear filter.

\section{Duality between stochastic filtering and optimal control}\label{sec:duality-filtering-oc}

%The relationship extends also to the linear-Gaussian filtering problem.
In stochastic filtering theory, duality commonly refers to the derivation and analysis of the optimal filter as a solution of an optimal control problem.
In classical linear-Gaussian settings, there are two types of optimal control constructions~\cite[Chapter 7.3]{bensoussan2018estimation}.
These constructions are referred to as minimum variance and minimum energy dualities. We illustrate the two constructions with a simple example before describing the linear Gaussian case later in this section.

\subsection{Simple example}

We begin with a simple example
(adapted from~\cite[Section 3.5]{kailath2000linear}) to illustrate the main ideas.
Consider a linear estimation problem defined by the model:
\begin{equation*}\label{eq:basic-problem}
	Z = H^\tp X + W
\end{equation*}
where $X\sim N(m,\Sigma)$, $W\sim N(0,Q)$ are independent Gaussian random variables of dimension $d$ and $p$, respectively.
% \Re^d$ is a Gaussian random variable with mean $m$ and variance $\Sigma$, $W \in \Re^m$ is a zero mean Gaussian noise with variance $Q$ and  $H\in \Re^{m\times d}$ is a given matrix. It is assumed that $X$ and $W$ are independent.
The goal is to compute the conditional mean $\E(X\mid Z)$.

\subsubsection{Minimum variance construction}

Fix $f\in \Re^d$. The estimation objective is to compute $\E(f^\tp X\mid Z)$. 
Since all random variables are Gaussian, it suffices to consider an estimator 
$S$ of the form
\begin{equation}\label{eq:simple-estimator}
	S= b - u^\tp Z 
\end{equation}
where $b \in \Re$ and $u \in \Re^p$ are deterministic. The minimum variance optimization problem is~\cite[Corollary 1.10]{le2016brownian}
\begin{equation}\label{eq:basic-stochastic-problem}
	\min_{\substack{b\in\Re, u\in\Re^p}}\E\big(|f^\tp X - S|^2\big)
\end{equation}
%Observe that
%\begin{align*}
	%	f^\tp X - S &= f^\tp X - b + u^\tp(HX+W)\\
	%	&=(f + H^\tp u)^\tp X - b + u^\tp W \\
	%	&=(f + H^\tp u)^\tp (X-m) + \big(f^\tp m + u^\tp H m - b\big) + u^\tp W
	%\end{align*}
%Therefore,
With the estimator~\eqref{eq:simple-estimator}, the optimization objective 
becomes
\[
\E\big(|f^\tp X - S|^2\big) =  (f + H u)^\tp \Sigma (f + H u) + u^\tp Q u + 
\big((f + H u)^\tp m - b\big)^2
\]
Set $y = f+H u$ and then it follows that
%Two terms in the right-hand side are independent mean-zero noise. Henceforth, the minimum variance problem is reformulated to:
$b = y^\tp m$ is the optimal choice, and the minimum error 
variance problem becomes a quadratic programming problem: %on $u\in \Re^m$.
\begin{align*}
	\min_{u\in\Re^p}\quad & y^\tp \Sigma y + u^\tp Q u\\
	\text{s.t. } \quad&y = f+H u
\end{align*}
%
%The optimal linear mean-square estimation problem is to find a gain matrix ${\sf K}$ according to 
%\begin{equation*}\label{eq:basic-lin-estimation}
	%	\min_{{\sf K}\in\Re^{m\times d}} \E\big(|(X-m)-{\sf K} (Z-Hm)|^2\big)
	%\end{equation*}
Its solution is given by
\[
u = (H^\tp \Sigma H +Q)^{-1} H^\tp \Sigma f
\]
and the corresponding optimal estimator is
\[
S= f^\tp \big(m + \Sigma H(H^\tp \Sigma H +Q)^{-1}(Z-H^\tp m)\big) 
\]
Since $f$ is arbitrary, %it follows
\begin{equation}\label{eq:solution-simple-example}
	\E(X\mid Z) = m + \Sigma H(H^\tp \Sigma H +Q)^{-1}(Z-H^\tp m)
\end{equation}

\subsubsection{Minimum energy / maximum likelihood construction}

While the previous problem considers the (minimum variance) property of the conditional expectation, the minimum energy problem begins with the Bayes' formula for conditional density:
\[
\rho_{X\mid Z}(x\mid z) = \frac{\rho_{X,Z}(x,z)}{\rho_Z(z)},\quad x\in\Re^d, z\in \Re^m
\]
where $\rho_{X,Z}$ denotes the joint probability density function, $\rho_Z$ is the marginal and $\rho_{X\mid Z}$ denotes the conditional density.
The objective is to compute the maximum-likelihood estimate of $X$.
Since the event $[X=x,Z=z]$ is the same as $[X=x,W=z-Hx]$, we have
\[
-2\log\big(\rho_{X\mid Z}(x\mid z)\big) = (x-m)^\tp \Sigma^{-1} (x-m) + 
(z-H^\tp x)^\tp 
Q^{-1} (z-H^\tp x) + c(z)
\]
where the constant $c(z)$ only depends on $z$. Therefore, the maximum 
likelihood problem is given by 
%
%It is noted in~\cite[Section 3.5]{kailath2000linear} that the optimal linear estimator is also a solution to an equivalent deterministic problem given the value of $Z = z$:
\begin{equation}\label{eq:basic-deterministic-problem}
	\min_{x\in \Re^d}\; (x-m)^\tp \Sigma^{-1} (x-m) + (z-H^\tp x)^\tp Q^{-1} 
	(z-H^\tp x)
\end{equation}
Its optimal solution is obtained as
\[
x = (\Sigma^{-1} + H Q^{-1} H^\tp )^{-1}(\Sigma^{-1} m + H Q^{-1} z)
\]
By an application of the matrix inversion lemma~\cite[Appdx. A.1]{kailath2000linear}, this formula is identical to~\eqref{eq:solution-simple-example} with $Z=z$. 

\medskip

%\begin{remark}
	Kailath~\cite{kailath2000linear} refers to
	the~\eqref{eq:basic-stochastic-problem} 
	and~\eqref{eq:basic-deterministic-problem}
	as the stochastic problem and the deterministic problem, respectively.
	It is noted by Kailath that~\cite[p.~100]{kailath2000linear} that the minimum costs~\eqref{eq:basic-stochastic-problem} and~\eqref{eq:basic-deterministic-problem} are not directly related to each other even though they share the same solution. %despite of the close relationship of the two problems.
	%\begin{quote}
		%	''{\it This equivalence relationship allows us to move back and forth between the deterministic and stochastic frameworks, a very important result that can be exploited to great effect. In particular, it explains the similarity of the recursive least-squares solution \emph{[...]} and the Kalman filter \emph{[...]} It is important to observe that \emph{[...]}  the expression for the minimum costs~\eqref{eq:basic-stochastic-problem} and~\eqref{eq:basic-deterministic-problem} cannot be directly related to each other.}''
		%\end{quote}
	%the optimization objectives cannot be directly related to each other.
	%are not directly related to each other, although their solutions are 
	%identical~\cite[p.100]{kailath2000linear}.
	%\end{remark}

\subsection{Minimum variance duality for Kalman-Bucy filter}\label{ssec:Kalman-filter}

In the remainder of this section, we consider the linear the linear Gaussian filtering problem~\eqref{eq:linear-Gaussian-model} introduced in Chapter~\ref{ch:background}.
%Recall the linear-Gaussian filtering problem~\eqref{eq:linear-Gaussian-model}.
The goal is to compute
\[
\hat{X}_T := \E(X_T\mid \clZ_T)
\]
%	The duality principle converts the $L^2$ error into the optimal control objective.

For the minimum variance duality, 
we follow the treatment in~\cite[Section 7.3.1]{bensoussan2018estimation} and~\cite[Chapter 7.6]{astrom1970}.
The formulation is a direct extension of the simple example. Again, 
fix $f\in \Re^d$ and consider a scalar random variable $f^\tp \hat{X}_T$.
Because all random variables are Gaussian, the conditional expectation $\hat{X}_T$ is also a Gaussian random variable~\cite[Lemma 6.12]{bain2009fundamentals}. Threrefore, it suffices to consider estimator $S_T$ of the form (cf.~\eqref{eq:simple-estimator}):
\begin{equation*}
	S_T := b - \int_0^T u_t^\tp \ud Z_t
\end{equation*}
where $b\in\Re$ and $u \in \clU = L^2\big([0,T];\Re^m\big)$ are both deterministic.
The minimum variance optimization problem is 
\begin{equation}\label{eq:LG-optimal-estimation}
	\min_{\substack{b\in\Re, u\in\clU}}\E\big(|f^\tp X_T - S_T|^2\big) 
\end{equation}
By introducing a suitable dual process, the problem is converted into a linear quadratic (LQ) optimal control problem.

%	can be chosen from $\clZ_T$-measurable Gaussian random variables in the right-hand side of~\eqref{eq:LG-duality-principle}. Meanwhile, the estimator~\eqref{eq:LG-estimator} covers all unbiased Gaussian estimates where $u = \{u_t\in \Re^m:0\le t\le T\}$ acts as a weight for incoming observation. Therefore, the minimum variance estimation problem is exactly transformed into the optimal control problem~\eqref{eq:LG-optimal-control}.

\subsubsection{Minimum variance optimal control problem}

\begin{subequations}\label{eq:LG-optimal-control}
	\begin{align}
		\text{Minimize:}\qquad \bsJ_T(u) &= y_0^\tp \Sigma_0 y_0 + \int_0^T |u_t|^2 + y_t^\tp Q y_t \ud t \label{eq:LG-optimal-control-a}\\
		\text{Subject to:}\quad\;\; -\frac{\ud y_t}{\ud t} &= A y_t + H u_t,\quad y_T = f \label{eq:LG-optimal-control-b}
	\end{align}
\end{subequations}
where $Q = \sigma\sigma^\tp$. The relationship between the dual optimal control problem and the minimum variance problem~\eqref{eq:LG-optimal-estimation} is described in the following proposition whose proof appears in Section~\ref{ss:pf-prop35}.

\begin{proposition}[Duality principle, linear-Gaussian case] \label{prop:duality-KalmanBucy}
	For any admissible $u \in \clU$, consider an estimator
	\begin{equation}\label{eq:LG-estimator}
		S_T:= y_0^\tp m_0 - \int_0^T u_t^\tp \ud Z_t
	\end{equation}
	Then
	\begin{equation}\label{eq:LG-duality-principle}
		\bsJ_T(u) = \E\big(|f^\tp X_T - S_T|^2\big) 
	\end{equation}
\end{proposition}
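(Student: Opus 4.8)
The plan is to derive an explicit expression for the estimation error $f^\tp X_T - S_T$ by exploiting the duality between the forward state dynamics and the backward control dynamics, and then to recognize that its expected square decomposes exactly into the three terms of $\bsJ_T(u)$.

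First I would apply the It\^o product rule to the scalar process $y_t^\tp X_t$. Because $y$ solves the deterministic backward ODE~\eqref{eq:LG-optimal-control-b}, we have $\ud y_t = -(A y_t + H u_t)\ud t$, and substituting the state dynamics~\eqref{eq:linear-Gaussian-model-a} gives
\[
\ud\big(y_t^\tp X_t\big) = -(A y_t + H u_t)^\tp X_t\,\ud t + y_t^\tp\big(A^\tp X_t\,\ud t + \sigma\,\ud B_t\big).
\]
The crucial observation is that the two $A$-terms cancel, since $(A y_t)^\tp X_t = y_t^\tp A^\tp X_t$; this cancellation is precisely the algebraic content of the duality. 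Integrating from $0$ to $T$ and using $y_T = f$ leaves
\[
f^\tp X_T = y_0^\tp X_0 - \int_0^T u_t^\tp H^\tp X_t\,\ud t + \int_0^T y_t^\tp \sigma\,\ud B_t.
\]

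Next I would substitute the observation model $H^\tp X_t\,\ud t = \ud Z_t - \ud W_t$ into the middle integral. The resulting $\int_0^T u_t^\tp\,\ud Z_t$ term cancels exactly against the corresponding term in the estimator $S_T$ from~\eqref{eq:LG-estimator}, while the deterministic offset $y_0^\tp m_0$ combines with $y_0^\tp X_0$. This yields the clean decomposition
\[
f^\tp X_T - S_T = y_0^\tp(X_0 - m_0) + \int_0^T u_t^\tp\,\ud W_t + \int_0^T y_t^\tp\sigma\,\ud B_t.
\]
Each of the three summands has mean zero, and they are mutually uncorrelated because $X_0$, $B$, and $W$ are independent by the standing assumptions on the model.

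Finally I would square and take expectations. The cross terms vanish by the independence just noted, and the three remaining variance terms are evaluated directly: the first gives $y_0^\tp\Sigma_0 y_0$ from the covariance of $X_0$, while the It\^o isometry gives $\int_0^T |u_t|^2\,\ud t$ and $\int_0^T y_t^\tp\sigma\sigma^\tp y_t\,\ud t = \int_0^T y_t^\tp Q y_t\,\ud t$ for the two stochastic integrals (with $Q = \sigma\sigma^\tp$). Summing reproduces $\bsJ_T(u)$ exactly, which is~\eqref{eq:LG-duality-principle}. I do not expect a genuine obstacle: the only steps requiring care are the bookkeeping in the It\^o product rule that forces the $A$-terms to cancel, and the verification that the cross terms vanish, which rests entirely on the independence of $X_0$, $B$, and $W$ and on the fact that $y$ is deterministic, so that $y_0$ and $y_t$ may be pulled out of the expectations.
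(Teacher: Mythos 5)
Your proposal is correct and follows essentially the same route as the paper's proof in Section~\ref{ss:pf-prop35}: It\^o product rule on $y_t^\tp X_t$, cancellation of the $A$-terms, substitution of $\ud Z_t = H^\tp X_t\,\ud t + \ud W_t$, and then squaring the resulting three mutually independent zero-mean terms using the It\^o isometry. No gaps.
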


The \emph{duality principle}~\eqref{eq:LG-duality-principle} transforms the optimal estimation problem~\eqref{eq:LG-optimal-estimation} into the optimal control objective $\bsJ_T(u)$. It is important to note that the constraint is the dual control system~\eqref{eq:LTI-ctrl}.
%	It remains to show that the estimator obtained by~\eqref{eq:LG-optimal-control} and~\eqref{eq:LG-estimator} achieves the optimal solution to~\eqref{eq:LG-optimal-estimation}. 

\subsubsection{Derivation of Kalman-Bucy filter}
The optimal solution to a linear-quadratic (LQ) problem is given in a linear feedback form~\cite[Theorem 3.1]{bensoussan2018estimation}: 
\[
u_t = -H^\tp \Sigma_ty_t
\]
where $\Sigma_t$ is the solution to the (forward-in-time) dynamic Riccati equation (DRE):
\begin{equation}\label{eq:Ricc-LG}
	\frac{\ud }{\ud t}\Sigma_t = A^\tp \Sigma_t + \Sigma_t A + Q - \Sigma_t H H^\tp \Sigma_t,\quad \Sigma_0\text{ given}
\end{equation}
Let $\Phi(T,t)$ be the transition matrix from time $T$ to $t$ of the closed loop system
\[
-\frac{\ud }{\ud t}\Phi(T,t) = (A-HH^\tp \Sigma_t )\Phi(T,t),\quad \Phi(T,T) = I
\]
Substituting the optimal control into~\eqref{eq:LG-estimator},
\begin{align*}
	f^\tp \hat{X}_T &= y_0^\tp m_0 + \int_0^T y_t^\tp \Sigma_t H\ud Z_t\\
	&= f^\tp\Phi^\tp(T,0) m_0 + \int_0^T f^\tp \Phi^\tp(T,t) \Sigma_t H \ud Z_t
\end{align*}
Since $f$ is arbitrary,
\[
\hat{X}_T = \Phi^\tp(T,0)m_0 + \int_0^T \Phi^\tp(T,t)\Sigma_t H \ud Z_t
\]
Because $T$ is arbitrary, we denote it as $t$:
\[
\hat{X}_t = \Phi^\tp(t,0)m_0 + \int_0^t \Phi^\tp(t,s)\Sigma_s H \ud Z_s
\]
Differentiating both sides with respect to $t$ yields the equation of the Kalman-Bucy filter:
\begin{align}
	\ud \hat{X}_t &= (A-HH^\tp\Sigma_t)^\tp\Big( \Phi^\tp(t,0)m_0 + \int_0^t 
	\Phi^\tp(t,s)\Sigma_s H \ud Z_s\Big)\ud t + \Phi^\tp(t,t)\Sigma_t H\ud 
	Z_t \nonumber\\
	&= A^\tp \hat{X}_t \ud t + \Sigma_t H\big(\ud Z_t - H^\tp \hat{X}_t\ud t\big) 
	\label{eq:KF-equation}
\end{align}
%The conditional variance is the optimal value by 
%Prop.~\ref{prop:duality-KalmanBucy}, and the optimal value is $\Sigma_T$. 

\subsection{Minimum energy duality for linear-Gaussian smoothing}\label{ssec:minimum-energy-LG}

This section follows the treatment in~\cite[Section 
7.3.2]{bensoussan2018estimation}.
As with the minimum variance duality, the minimum energy duality is also a direct extension of the calculation described for the simple example~\eqref{eq:basic-deterministic-problem}. The object of interest is 
\[
-2\log \rho_{X\mid Z}(x\mid z)
\]
where $x = \{x_t:0\le t \le T\}$ and $z = \{z_t:0\le t \le T\}$ are state and output trajectories, respectively.
In Section~\ref{ssec:derivation-min-energy}, it is explicitly evaluated based on similar calculations in literature. In carrying out the calculation, we use the model of Mortensen~\cite{mortensen1968} which is somewhat more general than the linear Gaussian model. The following dual optimal control problem is written for the linear Gaussian model.

\subsubsection{Minimum energy optimal control problem}
\begin{subequations}\label{eq:min-energy-optimal-control}
	\begin{align}
		\mathop{\text{Minimize:}}_{\substack{x_0\in\Re^d,u\in L^2([0,T];\Re^p)}}\quad \bsJ_T(u,x_0;\dot{z}) &= (x_0-m_0)^\tp \Sigma_0^{-1}(x_0-m_0) + \int_0^T |u_t|^2 + |\dot{z}_t-H^\tp x_t|^2 \ud t \label{eq:LG-min-energy-cost}\\
		\text{Subject to:}\qquad\qquad\qquad	\frac{\ud x_t}{\ud t} &= A^\tp x_t + \sigma u_t\label{eq:modified-linear-model}
	\end{align}
\end{subequations}

\begin{remark}
	%	The process $\dot{z}$ is effectively the ``derivative'' of the observation process $z$ (which in fact does not exist). Formally, we adopt $\dot{z}_t \ud t = \frac{\ud z_t}$.
	%	Similarly, $u$ may be interpreted as a derivative of the noise $\frac{\ud B_t}{\ud t}$.
	%	The problem is to estimate a sample path that best explains the observations taken over the time horizon. In~\cite[p.180]{bensoussan2018estimation}, Bensoussan describes
	Concerning the dual optimal control problem~\eqref{eq:min-energy-optimal-control}, Bensoussan writes in~\cite[p.~180]{bensoussan2018estimation}:
	\begin{quote}
		''{\it The notation is reminiscent of the probabilistic origin. The function $z$ is a given $L^2\big([0,T];\Re^m\big)$ function. It is reminiscent of the observation process, in fact rather the
			derivative of the observation process (which, as we know, does not exist). Similarly,
			$u$ is reminiscent of the noise that perturbs the system (again its derivative),
			and $x_0$ is the value of the initial condition, which we do not know. The cost
			functional~\eqref{eq:LG-min-energy-cost} contains weights related to the covariance matrices that were part of the initial probabilistic model.
			%		One can compare this functional to that introduced in \emph{[the simple example]}
		}''
	\end{quote}
\end{remark}

The optimal solution is given by a pair of forward and backward ODE given in the following proposition. The derivation appears in Section~\ref{ss:pf-prop36}.

\begin{proposition}\label{prop:min-energy-KalmanBucy}
	Consider the optimal control problem~\eqref{eq:min-energy-optimal-control}.
	Then for any choice of $u$ and $x_0$, 
	\[
	\bsJ_T(u,x_0;\dot{z}) \ge \int_0^T |\dot{z}_t-H^\tp \hat{x}_t|^2 \ud t
	\]
	where the process $\hat{x} = \{\hat{x}_t:0\le t\le T\}$ is the solution to:
	\begin{equation}\label{eq:LG-smoother-forward}
		\frac{\ud \hat{x}_t}{\ud t} = A^\tp \hat{x}_t + \Sigma_tH(\dot{z}_t - H^\tp \hat{x}_t),\quad \hat{x}_0 = m_0
	\end{equation}
	The equality holds with the optimal trajectory given by the backward equation:
	\begin{equation}\label{eq:LG-smoother-backward}
		\frac{\ud x_t}{\ud t} = A^\tp x_t + Q \Sigma_t^{-1}(x_t-\hat{x}_t),\quad x_T = \hat{x}_T
	\end{equation}
	%	where $Q = \sigma\sigma^\tp$.
	
\end{proposition}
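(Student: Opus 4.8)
The plan is to verify the proposition by a completion-of-squares argument organized around a single candidate value function, so that the lower bound and the characterization of the optimal trajectory fall out together. Define
\[
V_t(x) := (x-\hat x_t)^\tp \Sigma_t^{-1}(x-\hat x_t),\quad x\in\Re^d,
\]
where $\hat x_t$ solves the forward equation~\eqref{eq:LG-smoother-forward} and $\Sigma_t$ solves the dynamic Riccati equation~\eqref{eq:Ricc-LG}. Writing $P_t:=\Sigma_t^{-1}$ and $e_t := x_t-\hat x_t$, the first task is to differentiate $V_t(x_t)$ along an arbitrary trajectory of~\eqref{eq:modified-linear-model}. This requires $\tfrac{\ud}{\ud t}\Sigma_t^{-1} = -P_t\dot\Sigma_t P_t$, which by~\eqref{eq:Ricc-LG} equals $-AP_t - P_tA^\tp - P_tQP_t + HH^\tp$, together with the forward dynamics of $\hat x_t$, so that the $A$-terms and the Riccati cross-terms cancel against each other.

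The algebraic heart of the argument is the claim that, after substituting these two identities and symmetrizing, every remaining term reorganizes into a perfect completion of squares,
\[
\frac{\ud}{\ud t}V_t(x_t) = |u_t|^2 + |\dot z_t - H^\tp x_t|^2 - |u_t - u^*_t|^2 - |\dot z_t - H^\tp \hat x_t|^2,
\]
with the candidate optimal feedback $u^*_t := \sigma^\tp \Sigma_t^{-1} e_t$. I expect two nontrivial cancellations to carry the weight here: the term $e_t^\tp P_t Q P_t e_t$ arising from $\tfrac{\ud}{\ud t}\Sigma_t^{-1}$ must match $|u^*_t|^2$ (using $Q=\sigma\sigma^\tp$), and the pair $-2e_t^\tp H(\dot z_t - H^\tp\hat x_t) + e_t^\tp HH^\tp e_t$ must reconstitute exactly $|\dot z_t - H^\tp x_t|^2 - |\dot z_t - H^\tp\hat x_t|^2$ (using $P_t\Sigma_t = I$, so the forward gain $\Sigma_t H$ collapses the cross term). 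This bookkeeping is where the effort lies, and it is also precisely where the specific structure of the Riccati equation and of the forward gain is essential.

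Integrating the identity over $[0,T]$ then telescopes $\int_0^T\tfrac{\ud}{\ud t}V_t(x_t)\ud t = V_T(x_T)-V_0(x_0)$. The boundary term $V_0(x_0) = (x_0-m_0)^\tp\Sigma_0^{-1}(x_0-m_0)$ (using $\hat x_0 = m_0$) is exactly the initial penalty in the cost~\eqref{eq:LG-min-energy-cost}, so it cancels against the $-V_0(x_0)$ produced by the telescoping, leaving
\[
\bsJ_T(u,x_0;\dot z) = \int_0^T|\dot z_t - H^\tp\hat x_t|^2\ud t + (x_T-\hat x_T)^\tp\Sigma_T^{-1}(x_T-\hat x_T) + \int_0^T|u_t-u^*_t|^2\ud t.
\]
Both appended terms are nonnegative (here $\Sigma_T^{-1}\succ 0$), which yields the asserted lower bound immediately.

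Finally, equality forces $x_T = \hat x_T$ and $u_t = u^*_t$ almost everywhere. Substituting $u_t = u^*_t = \sigma^\tp\Sigma_t^{-1}(x_t-\hat x_t)$ into the dynamics~\eqref{eq:modified-linear-model} and using $Q = \sigma\sigma^\tp$ gives $\dot x_t = A^\tp x_t + Q\Sigma_t^{-1}(x_t - \hat x_t)$, which is exactly the backward equation~\eqref{eq:LG-smoother-backward} with terminal condition $x_T=\hat x_T$; integrating it backward determines the optimizing $x_0$ and closes the argument. If one prefers to derive the ansatz for $V_t$ rather than postulate it, an equivalent route is to set up the forward Hamilton--Jacobi equation for the cost-to-arrive, insert a quadratic ansatz $V_t(x)=(x-\hat x_t)^\tp P_t(x-\hat x_t)+c_t$, and read off that $P_t=\Sigma_t^{-1}$ solves the matrix equation, that $\hat x_t$ obeys~\eqref{eq:LG-smoother-forward}, and that $\dot c_t = |\dot z_t-H^\tp\hat x_t|^2$; the completion-of-squares computation above is simply the verification-theorem shortcut that sidesteps having to justify the dynamic programming principle separately.
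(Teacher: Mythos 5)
Your proposal is correct and is essentially the paper's own argument: the paper parameterizes $u_t = \sigma^\tp P_t(x_t-\hat{x}_t)+\tilde{u}_t$ and integrates by parts on $(x_t-\hat{x}_t)^\tp P_t(x_t-\hat{x}_t)$ with $P_t=\Sigma_t^{-1}$ solving the inverse Riccati equation, which is exactly your completion-of-squares identity for $\frac{\ud}{\ud t}V_t(x_t)$ with $|\tilde{u}_t|^2=|u_t-u_t^*|^2$. The resulting residual terms $(x_T-\hat{x}_T)^\tp\Sigma_T^{-1}(x_T-\hat{x}_T)$ and $\int_0^T|u_t-u_t^*|^2\ud t$ and the conclusion are identical.
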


\begin{remark}
	One can note that the dynamics of $\hat{x}$ is similar to the Kalman-Bucy 
	filter where we formally write $\dot{z}_t\ud t = \ud z_t$.
	Since the optimal trajectory agrees with $\hat{x}_T$ at time $T$, the 
	forward equation~\eqref{eq:LG-smoother-forward}	leads to Kalman-Bucy filter.
	In fact, the optimal trajectory~\eqref{eq:LG-smoother-forward}--\eqref{eq:LG-smoother-backward} is identical to the forward-backward optimum smoother by Fraser and Potter~\cite[Eq.~(16)-(17)]{fraser1969optimum}. 
\end{remark}

\section{Historical remarks on duality for nonlinear filtering}\label{sec:historical-remarks}

%Given the historical significance of these problems, a number of extensions have been made for nonlinear systems.

For the problems of nonlinear filtering and smoothing, solution approaches in literature based on duality include the following:
\begin{itemize}
	\item Mortensen's maximum likelihood nonlinear filter~\cite{mortensen1968}.
	\item Minimum energy estimator (MEE) such as the full information estimator (FIE) and the moving horizon estimator (MHE)~\cite[Chapter 4]{rawlings2017model}.
	\item Fleming-Mitter duality, relating Zakai equation and Hamilton-Jacobi-Bellman (HJB) equation of an optimal control problem~\cite{fleming1982optimal}.
	\item Mitter-Newton's variational formulation of nonlinear estimation~\cite{mitter2003}.
\end{itemize}

A common theme connecting all of these prior works is that they are all variation/generalization of the minimum energy estimator~\eqref{eq:min-energy-optimal-control} for the linear Gaussian problem. While there are minor differences in specification of the optimal control objective, the constraint in all these cases is a modified copy of the signal model. Additional details on each of the four approaches appears in the following four subsections.

\subsection{Mortensen's maximum likelihood nonlinear filter}

%\cite{mortensen1968} considered a control-modified state process and the optimal control objective is to maximize the log of the conditional distribution.

In his pioneering paper, Mortensen~\cite{mortensen1968} considered the maximum likelihood smoothing problem for the following model:
%The minimum energy optimal control problem~\eqref{eq:min-energy-optimal-control} is directly extended by R.E.~Mortensen~\cite{mortensen1968} to the nonlinear system of the form
\begin{align*}
	\ud X_t &= a(X_t)\ud t + \sigma\ud B_t,\quad X_0 \sim N(m_0,\Sigma)\\
	\ud Z_t &= h(X_t)\ud t + \ud W_t,\quad Z_0 = 0
\end{align*}
where $B$ and $W$ are mutually independent B.M. As for the linear Gaussian problem, the objective is to compute the maximum likelihood trajectory $x = \{x_t\in\Re^d:0\le t \le T\}$ that maximizes
\[
\rho_{X\mid Z}(x\mid z)
\]
given the output $z = \{z_t\in\Re^m: 0\le t \le T\}$. The calculation for the same appears in Section~\ref{ssec:derivation-min-energy} to obtain the following optimal control problem:
%and the white noise observation model. The optimal control problem is similar to the minimum energy optimal control problem for linear Gaussian smoothing:

\subsubsection{Maximum likelihood estimation (MLE) problem}
\begin{subequations}\label{eq:MLE-problem}
	\begin{align}
		\mathop{\text{Minimize:}}_{x_0\in\Re^d, u\in L^2([0,T];\Re^p)}\bsJ_T(u,x_0;\dot{z}) &= (x_0-m_0)^\tp \Sigma_0^{-1}(x_0-m_0) + \int_0^T |u_t|^2 + |\dot{z}_t-h(x_t)|^2 \ud t \label{eq:MLE-problem-a}\\
		\text{Subject to:}\qquad\qquad \;\;	\frac{\ud x_t}{\ud t} &= a(x_t) + \sigma u_t \label{eq:MLE-problem-b}
	\end{align}
\end{subequations}
%The resulting optimal trajectory $x_T$ is used to obtain ``maximum-likelihood nonlinear filter''.
In Mortensen's paper, an algorithm to solve the MLE problem~\eqref{eq:MLE-problem} is proposed based on an application of the maximum principle.
As in the linear Gaussian case, the algorithm requires a forward and backward recursion to obtain the maximum likelihood trajectory. 

Since Mortensen's early work, related optimization-type problem formulation and forward-backward solution approach have appeared for a plethora of filtering and smoothing problems. In different communities, these are referred by different names, e.g., maximum likelihood estimation (MLE), maximum a posteriori (MAP) estimation and minimum energy estimation (MEE).

\subsection{Minimum energy estimator (MEE)}

Given the enormous success of model predictive control (MPC), related algorithms have been developed to solve the state estimation problems~\cite[Chapter 4]{rawlings2017model}.
In continuous-time setting, the optimal control problem is precisely the problem~\eqref{eq:MLE-problem}. In the MPC community, it is referred to as the minimum energy estimation problem. Broadly, there are two classes of MEE algorithms: %algorithms have been developed for two classes of problems:
\begin{itemize}
	\item Full information estimator (FIE) where the entire history of observation is used.
	\item Moving horizon estimator (MHE) where only a most recent fixed window of observation is used.
\end{itemize}
We refer the reader to Section 4.7 of~\cite{rawlings2017model} where a discussion on history of these approaches is provided. In this section, the authors note that dual constructions are useful for stability analysis. The authors describe certain results, e.g.~\cite[Theorem 4.10]{rawlings2017model},
originally reported in~\cite{hu2015optimization}, based on certain i-IOSS (incremental input/output to state stability) properties of the model.

%TODO also look at fleming?

\subsection{Fleming-Mitter-Newton duality}\label{ssec:MN-duality}
%\subsection{Log-transformation of the Zakai equation as a value function}

For the non-Gaussian problem, one of the criticisms of the MLE and MEE is that these do \emph{not} provide the conditional expectation (as the filter does).
For the white noise observation model, the first hint that the filtering equations are also related to an optimal control problem appears in the 1982 paper of Fleming and Mitter~\cite{fleming1982optimal}. In this paper, it is shown that the Zakai equation can be transformed into the Hamilton-Jacobi-Bellman (HJB) equation of an optimal control problem.
The particular transformation is an example of the log transformation whereby the negative log of the posterior density is the value function for a certain optimal control problem.
The interpretation of the optimal control problem itself appears in the 2003 paper of Mitter and Newton~\cite{mitter2003}. 
In this paper, the authors consider a control-modified version of the Markov process $X$ denoted by $\tilde{X} :=
\{\tilde{X}_t:0\le t\le T\}$. 
The control problem is to pick (1) the initial distribution
$\pi_0$ and (2) the state transition, such that
the distribution of $\tilde{X}$ equals the conditional distribution.

The optimization problem is formulated on the space of probability laws. 
%over the space of c\^{a}dl\^{a}g trajectories on $\mathbb{S}$.
%with time horizon $[0,T]$
%(see generally,~\cite{billingsley1999convergence}).
Let $\sP_X$ denote the law for $X$, $\sQ$ denote the law for
$\tilde{X}$, and $\sP_{X\mid z}$ denote the law for $X$ given an observation path
$z=\{z_t:0\le t\le T\}$.  Assuming
$\sQ\ll\sP_X$, the objective function is the relative entropy between $\sQ $ and $\sP_{X\mid z}$:
\begin{equation*}
	\min_{\sQ} \quad \E_{\sQ}\Big(\log \frac{\ud \sQ}{\ud \sP_X}\Big) - \E_{\sQ}\Big(\log\frac{\ud \sP_{X\mid z}}{\ud \sP_X}\Big).
\end{equation*}
% Upon using the Kallianpur-Striebel formula
% (see~\cite[Lemma 1.1.5 and Prop. 1.4.2]{van2006filtering}), the
% optimization problem is equivalently expressed as follows:
% \begin{equation}\label{eq:cost-equiv-form}
	% \min_{\sQ} \quad {\sf D}(\sQ\|P) +\E\Big(\int_0^T z_t \ud h(\tilde{X}_t) + \half|h(\tilde{X}_t)|^2 \ud t- z_Th(\tilde{X}_T)\Big).
	% \end{equation}

\medskip

For the Euclidean state-space, this procedure yields the following
stochastic optimal control problem (see Appendix~\ref{sec:control-problem}):
\begin{subequations}\label{eq:opt-cont-sde-hjb-intro}
	\begin{align}
		\mathop{\text{Min }}_{\pi_0, \; U}: \quad \sJ(\pi_0,U\,;z) 
		& = \E\Big(\log \frac{\ud \pi_0}{\ud \nu_0}(\tilde{X}_0) - z_T h(\tilde{X}_T) + \int_0^T \ell(\tilde{X}_t,U_t\,;z_t)\ud t\Big)\\
		\text{Subj.} : \;\;\quad\qquad \ud \tilde{X}_t &= a(\tilde{X}_t)\ud t +
		\sigma(\tilde{X}_t)(U_t\ud t +
		\ud \tilde{B}_t), \quad
		\tilde{X}_0 \sim \pi_0
	\end{align}
\end{subequations}
where $
l(x,u\,;z_t) := \half |u|^2 + \half h^2(x) + z_t(\clA^u h)(x)$ where
$\clA^u$ is the generator of the controlled Markov process $\tilde{X}$.
It is shown in Appendix~\ref{apss:linear-Gaussian} that the problem reduces to the minimum energy duality for linear-Gaussian case.

The solution of the optimal control problem~\eqref{eq:opt-cont-sde-hjb-intro} is given in the following proposition which reveals the connection to the log transformation. 

\begin{proposition}%\label{thm:opt-ctrl-sde-hjb} 
	Consider the optimal control problem~\eqref{eq:opt-cont-sde-hjb-intro}.
	For this problem, the HJB equation for the value function $V$ is
	as follows:
	\begin{align*}
		-\frac{\partial V_t}{\partial t}(x) &= \big(A(V_t+z_th)\big)(x) + \half h^2(x) -\half|\sigma^\tp\nabla (V_t+z_th)(x)|^2\\
		V_T(x) &= - z_Th(x),\quad x\in \Re^d
	\end{align*}	
	The optimal control is of the state feedback form given by $U_t = -\sigma^\tp \nabla(V_t + z_th)(\tilde{X}_t)$.  
\end{proposition}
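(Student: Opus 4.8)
The plan is to derive the stated HJB equation from the dynamic programming principle and to obtain the optimal feedback by pointwise minimization of the Hamiltonian. First I would introduce the value function for the running and terminal costs (the initial-entropy term $\log\frac{\ud\pi_0}{\ud\nu_0}(\tilde{X}_0)$ enters only through the separate choice of $\pi_0$ and does not appear in the HJB):
\[
V_t(x) = \inf_{U} \E\Big[-z_T h(\tilde{X}_T) + \int_t^T \ell(\tilde{X}_s,U_s;z_s)\ud s \,\Big|\, \tilde{X}_t = x\Big],
\]
so that the terminal condition $V_T(x) = -z_T h(x)$ is immediate from the terminal cost. The dynamic programming principle then yields the HJB equation in the generic form
\[
-\frac{\partial V_t}{\partial t}(x) = \inf_{u\in\Re^p}\Big[(\clA^u V_t)(x) + \ell(x,u;z_t)\Big],
\]
where $\clA^u$ is the generator of the controlled diffusion. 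Writing $\clA$ for the uncontrolled generator, a direct computation gives $(\clA^u f)(x) = (\clA f)(x) + u^\tp \sigma^\tp(x)\nabla f(x)$, and in particular $(\clA^u h)(x) = (\clA h)(x) + u^\tp\sigma^\tp(x)\nabla h(x)$, which I would substitute into the running cost $\ell$.

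The key step is the pointwise minimization over $u$. Collecting the $u$-dependent terms inside the bracket gives the quadratic
\[
\half|u|^2 + u^\tp \sigma^\tp(x)\,\nabla(V_t + z_t h)(x),
\]
whose unique minimizer is $u^\star = -\sigma^\tp(x)\nabla(V_t+z_th)(x)$; this is exactly the claimed feedback law $U_t = -\sigma^\tp\nabla(V_t+z_th)(\tilde{X}_t)$. Substituting $u^\star$ back, the minimized quadratic contributes $-\half|\sigma^\tp\nabla(V_t+z_th)(x)|^2$, while the $u$-independent terms are $(\clA V_t)(x) + \half h^2(x) + z_t(\clA h)(x)$. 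Finally, since $z_t$ is a scalar depending only on $t$, linearity of $\clA$ lets me combine $(\clA V_t)(x) + z_t(\clA h)(x) = \big(\clA(V_t+z_t h)\big)(x)$, recovering the stated equation (with the proposition's $A$ denoting the generator $\clA$).

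The completing-the-square algebra is routine; the main obstacle is the rigorous justification of the dynamic programming step itself — namely that $V$ is smooth enough to apply the It\^o formula and that the candidate feedback is admissible, so that a verification theorem applies. I would handle this through the standard verification argument: given a sufficiently regular solution $V$ of the HJB equation, apply It\^o to $V_t(\tilde{X}_t)$ along an arbitrary admissible control and along the candidate feedback, and compare the resulting expressions to conclude that $u^\star$ is optimal and that $V$ is indeed the value function.
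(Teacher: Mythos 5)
Your proposal is correct and follows essentially the same route as the paper's proof in Appendix~\ref{apdx:proof-hjb}: write the dynamic programming equation $\min_u\{\partial_t V_t + \tilde{\clA}(u)V_t + \ell(x,u;z_t)\}=0$, isolate the $u$-dependent quadratic, complete the square to obtain $u^\star=-\sigma^\tp\nabla(V_t+z_th)$, and substitute back using linearity of the generator. The only addition is your closing remark on the verification argument, which the paper leaves implicit; the computation itself matches.
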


\medskip

% The HJB equation thus is exactly the Hamilton's
% equation~\eqref{eq:hamiltons-eqn-sde-b} and 
% \[
% V_t(x) = -\lambda_t(x) ,\quad \forall x\in\Re^d,\ \forall\, t\in[0,T].
% \]
% Noting $\lambda_t(x) = \log q_t(x) + h(x)z_t$, the HJB equation for the
% value function $V_t(x)$ is related to the backward Zakai equation for
% $q_t(x)$ through the log transformation
% (see also~\cite[Eqn. 1.4]{FlemingMitter82}):
Expressing $V_t(x) = -\log \big(\eta_t(x)e^{z_th(x)}\big)$ it is readily verified
that $\{\eta_t:0\leq t\leq T\}$ solves the backward Zakai equation.
The result also coincide with the result of Bene\v{s}~\cite{benevs1983relation} who considered the adjoint equation of pathwise Zakai PDE (see Remark~\ref{rm:backward-Zakai}).

\medskip

A tutorial style review of the log transformation, its link to the Zakai equation, specifically its path-wise robust representation, formulations of the optimal control problem and its link to the smoothing problem appears in Appendix~\ref{apdx:min-energy}. In addition to the Euclidean case, explicit formulae are also described for the finite state-space and linear Gaussian problem. The latter is used to recover the Mortensen's MLE problem~\eqref{eq:min-energy-optimal-control}.

\medskip

A recent focus on utilizing the optimal control formulation has been to develop numerical techniques, e.g., particle filters, to empirically approximate the conditional distribution;
cf.,~\cite{reich2019data,ruiz_kappen2017, kappen2016adaptive,chetrite2015,sutter2016variational,pathiraja2020mckean}.  

\subsection{Generalization of the minimum variance duality}

%Granting all the rich contributions on the optimal control formulation of the nonlinear filter, they do not directly generalize the Kalman's duality.
%\cite{simon1970duality} recognized that the concept of duality in mathematical programming can provide rigorous explanation of the Kalman's duality, and the case is extended to linear estimation problem with singular measurement noise.
%\cite{goodwin2005} also considered Lagrangian dual of an estimation problem for truncated measurement noise process.
In spite of decades of work in this area, there is no satisfactory counterparts of the minimum variance (Kalman-Bucy) duality to nonlinear stochastic systems.
Two notable contributions on this line are:~\cite{simon1970duality} where duality in mathematical programming is used to provide rigorous explanation of the Kalman's duality and where certain extension to linear estimation problem with singular measurement noise is described; and~\cite{goodwin2005} where the Lagrangian dual of an estimation problem for truncated measurement noise process is considered.

In~\cite{todorov2008general}, Todorov writes:
\begin{quote}
	``{\it Kalman's duality has been known for half a century
		and has attracted a lot of attention. If a straightforward
		generalization to non-LQG settings was possible it would
		have been discovered long ago. Indeed we will now show
		that Kalman's duality, although mathematically sound, is an
		artifact of the LQG setting and needs to be revised before
		generalizations become possible.}''
\end{quote}
It is noted by Todorov that: (1) the dual relationship between the DRE of the
LQ optimal control and the covariance update equation of the Kalman
filter is {\em not} consistent with the interpretation of the negative
log-posterior as a value function; and (2) some of the linear
algebraic operations, e.g., the use of matrix transpose to define the
dual system, are not applicable to nonlinear
systems~\cite{todorov2008general}.

%\section{Applications of duality}
%
%Kalman filter stability and Ramon's thesis. Will it be revisited in the next chapter?

\section{Proofs of the statements}

\subsection{Proof of Theorem~\ref{thm:dual-adjoint}}\label{ss:pf-thm31}

Let $y^\dagger\in \Nsp(\clL^\dagger)$, then for any $x\in \clX$,
\[
\langle \clL x, y^\dagger\rangle = \langle x, \clL^\dagger y^\dagger\rangle = 0
\]
and therefore $y^\dagger \in \Rsp(\clL)^\bot$. Therefore $\Nsp(\clL^\dagger)\subset \Rsp(\clL)^\bot$.

For the other direction, if $y^\dagger \in \Rsp(\clL)^bot$ then  
$\langle \clL x,y^\dagger \rangle = 0$, and therefore
\[
\langle x, \clL^\dagger y^\dagger \rangle = 0
\]
Since this is true for all $x\in \clX$, it follows $\clL^\dagger y^\dagger = 0$, so $y\dagger \in \Nsp(\clL^\dagger)$. This implies $\Rsp(\clL)^\bot \subset \Nsp(\clL^\dagger)$.
\qed

%\subsection{Proof of Proposition~\ref{prop:test-for-observability}}
%
%Recall that the controllable subspace is the range-space of
%\[
%\clL u = \int_0^T e^{At}Hu_t\ud t
%\]
%where
%\[
%e^{At} = I + tA + \frac{t^2}{2!}A^2 + \frac{t^3}{3!}A^3+\ldots
%\]
%By the Cayley-Hamilton theorem, we argue that $e^{At}H$ is a linear combination of $\{H, A H , \ldots, A^{d-1} H \}$, and therefore
%\begin{equation}\label{eq:obs_space}
	%	\Rsp(\clL) = \sp\big\{H, A H , \ldots, A^{d-1} H \big\}
	%\end{equation}
%\qed

\subsection{Proof of Proposition~\ref{prop:LTI-gramian}}\label{ss:pf-prop32}

The first property is because $\sW= \clL \clL^\dagger$, and therefore
\[
f = \sW \eta = \clL(\clL^\dagger \eta) = \clL u
\]
If another $v$ satisfies $\clL v = f$, then $\clL (u-v) = 0$, and therefore
\[
0 = \langle \clL (u-v), \eta \rangle = \langle u-v, \clL^\dagger \eta\rangle = \langle u-v,u\rangle
\]
The second claim follows because
\[
\|v\|^2 = \|u\|^2 + \|u-v\|^2 \ge \|u\|^2
\]
\qed

\subsection{Proof of Proposition~\ref{prop:duality-KalmanBucy}}\label{ss:pf-prop35}

Applying It\^o product formula on $y_t^\tp X_t$ yields:
\begin{align*}
	\ud (y_t^\tp X_t) &= -(y_t^\tp A^\tp + u_t^\tp H^\tp )X_t \ud t + y_t^\tp(A^\tp X_t \ud t +  \sigma\ud B_t)\\
	&=-u_t^\tp \ud Z_t + u_t^\tp \ud W_t + y_t^\tp  \sigma\ud B_t
\end{align*}
Integrating both sides from $0$ to $T$, 
\[
f^\tp X_T - \Big(\underbrace{y_0^\tp m_0 - \int_0^T u_t^\tp \ud Z_t}_{S_T}\Big) = \big(y_0^\tp X_0 - y_0^\tp m_0\big) + \int_0^T u_t^\tp \ud W_t + y_t^\tp \sigma\ud B_t
\]
Each of the terms on the right-hand side are mutually independent and have zero mean. Therefore, upon squaring and taking expectation,
\[
\E\big(|f^\tp X_T - S_T|^2\big) = y_0^\tp \Sigma_0 y_0 + \int_0^T |u_t|^2 + y_t^\tp Q y_t \ud t
\]
Therefore, the mean-squared error of the estimator~\eqref{eq:LG-estimator} becomes linear-quadratic optimal control objective on the dual system~\eqref{eq:LG-optimal-control}.
\qed

\subsection{Derivation of the minimum energy cost functional}\label{ssec:derivation-min-energy}

We consider the following nonlinear model considered by Mortensen~\cite{mortensen1968}:
\begin{align*}
	\ud X_t &= a(X_t)\ud t + \sigma \ud B_t,\quad X_0\sim N(m_0,\Sigma_0)\\
	\ud Z_t &= h(X_t)\ud t + \ud W_t
\end{align*}
In the linear Gaussian special case, $a(x) = A^\tp x$ and $h(x) = H^\tp x$. 
Consider a time discretization $0 = t_0 < t_1 < \ldots < t_N < t_{N+1} = T$ and denote $\Delta t_i := t_{i+1}-t_i$, $\Delta B_i := B_{t_{i+1}}-B_{t_i}$ and $\Delta W_i := W_{t_{i+1}}-W_{t_i}$. For a sample path $x = \{x_{t_i}\in\Re^d:i=0,1,\ldots,N+1\}$, observe that
\[
[X_{t_i}=x_{t_i}, X_{t_{i+1}} = x_{t_{i+1}}] = [X_{t_i}=x_{t_i}, \sigma \Delta B_i = x_{t_{i+1}} - x_{t_i} - a(x_{t_i})\Delta t_i] 
\]
The process $x$ is parameterized by~\eqref{eq:MLE-problem-b} using control input such that
\[
\sigma u_{t_i}\Delta t_i =  x_{t_{i+1}} - x_{t_i} - a(x_{t_i})\Delta t_i
\] 
Therefore we have
\[
[X_{t_i}=x_{t_i}, X_{t_{i+1}} = x_{t_{i+1}}] =  [X_{t_i}=x_{t_i}, \Delta B_i = u_{t_i}\Delta t_i] 
\]
Similarly, the event $[Z=z]$ is decomposed by 
\[
[Z_{t_i}=z_{t_i}, Z_{t_{i+1}} = z_{t_{i+1}}] = [Z_{t_i}=z_{t_i}, \Delta W_i = z_{t_{i+1}} - z_{t_i} - h(x_{t_i})\Delta t_i] 
\]
%Therefore,
%\begin{align*}
	%	[X_{t_0}=x_{t_0}, X_{t_1}=x_{t_1}, &\cdots, X_{t_{N+1}}=x_{t_{N+1}}, Z_{t_0}=z_{t_0}, Z_{t_1}=z_{t_1}, \cdots, Z_{t_{N+1}}=z_{t_{N+1}}]\\
	%	 &=[X_0=x_0, \Delta B_0 = u_{t_0}\Delta t_0, \Delta B_1 = u_{t_1}\Delta t_1,\cdots , \Delta B_N = u_{t_N}\Delta t_N,\\
	%	 &\qquad \Delta W_0 = z_{t_1}-z_{t_0}-H^\tp x_{t_0}\Delta t_0,\cdots, \Delta W_N = z_{t_{N+1}}-z_{t_N}-H^\tp x_{t_N}\Delta t_N]
	%\end{align*}
Since $X_0$, $W$ and $B$ are mutually independent, one obtains
\[
\rho_{X,Z}(x,z) = \rho_{X_0}(x_0)\prod_{i=1}^N \rho_{\Delta B_i}(u_{t_i}\Delta t_i)\prod_{i=1}^N \rho_{\Delta W_i}((\dot{z}_{t_i}-h(x_{t_i}))\Delta t_i) 
\]
where $\dot{z}_{t_i}\Delta t_i = z_{t_{i+1}}-z_{t_i}$. Take log to convert the product into the sum:
\begin{align*}
	\log \rho_{X,Z}(x,z) &= \log\big(\rho_{X_0}(x_0)\big) + \sum_{i=1}^N \log\big(\rho_{\Delta B_i}(u_{t_i}\Delta t_i)\big) + \log\big( \rho_{\Delta W_i}((\dot{z}_{t_i}-h(x_{t_i}))\Delta t_i) \big)\\
	&=-\half\Big[(x_0-m_0)^\tp \Sigma_0(x_0-m_0) + \sum_{i=1}^N |u_{t_i}|^2 \Delta t_i + |\dot{z}_{t_i}-h(x_{t_i})|^2\Delta t_i + o(\Delta t_i) + \text{(const.)}\Big]
\end{align*}
Letting $\Delta t_i\to 0$, the sum converges to the integral and therefore we obtain the cost functional~\eqref{eq:MLE-problem-a}.

\subsection{Proof of Proposition~\ref{prop:min-energy-KalmanBucy}}\label{ss:pf-prop36}

Let us parameterize the control by
\[
u_t = \sigma^\tp P_t(x_t-\hat{x}_t) + \tilde{u}_t
\]
where a symmetric matrix $P_t \in \Re^{d\times d}$ and $\hat{x}_t \in \Re^d$ are to be chosen. By using integration by parts formula, one obtains
\begin{align*}
	\bsJ_T(u,x_0;\dot{z}) &= (x_0-m_0)^\tp \Sigma_0^{-1}(x_0-m_0) + \int_0^T |\tilde{u}_t|^2 + |\dot{z}_t-H^\tp \hat{x}_t|^2 \ud t\\
	&\quad +\int_0^T (x_t-\hat{x}_t)^\tp \Big(-\frac{\ud}{\ud t}P_t - P_tA^\tp - AP_t - P_tQP_t + HH^\tp\Big)(x_t-\hat{x}_t) \ud t\\
	&\quad +2\int_0^T (x_t-\hat{x}_t)^\tp \Big(P_t\big(\frac{\ud \hat{x}_t}{\ud t} -A^\tp \hat{x}_t\big)-H(\dot{z}_t-H^\tp\hat{x}_t)\Big)\ud t\\
	&\quad + (x_T-\hat{x}_T)P_T(x_T-\hat{x}_T) - (x_0-\hat{x}_0)P_0(x_0-\hat{x}_0)
\end{align*}
Hence we set
\begin{align*}
	\frac{\ud}{\ud t}P_t &= -P_tA^\tp -AP_t -P_tQP_t + HH^\tp,\quad P_0 = \Sigma^{-1}\\
	\frac{\ud \hat{x}_t}{\ud t} &= A^\tp \hat{x}_t + P_t^{-1}H(\dot{z}_t - H^\tp \hat{x}_t),\quad \hat{x}_0 = m_0
\end{align*}
Note that the dynamics of $P_t$ is indeed the dynamics of inverse of $\Sigma_t$ defined by~\eqref{eq:Ricc-LG}. 
Under these choices of $P_t$ and $\hat{x}_t$, the cost functional becomes
\[
\bsJ_T(u,x_0;\dot{z}) = (x_T-\hat{x}_T)^\tp \Sigma_T^{-1}(x_T-\hat{x}_T) + \int_0^T |\tilde{u}_t|^2 + |\dot{z}_t-H^\tp \hat{x}_t|^2 \ud t
\]
Hence the claim follows by choosing $x_T = \hat{x}_T$ and $\tilde{u}_t = 0$ for all $t$.
\qed

\newpage

%%%%%%%%%%%%%%%%%%%%%%%%%%%%%%%%%%%%%%%%%%%%%%%%%%%%%%%%%%%%%%%%%%%%%%%%%%%%%%%%%%%%%%%%%%%%

\chapter{Duality for stochastic observability}\label{ch:observability}

%\chapter{Duality for stochastic observability}\label{ch:observability}

In this chapter, the first original contribution of this thesis is presented, namely, 
the dual control system for the model $(\clA,h)$. 
The dual control system is a linear \emph{backward stochastic differential equations} (BSDE). In the linear-Gaussian setting of the model, the BSDE reduces to the backward ODE~\eqref{eq:LTI-ctrl}.
%The goal of this chapter is formulating the dual system in stochastic nonlinear setting, such that the 

The solution operator of the dual control system is used to define a
linear operator whose range space is the controllable subspace.  The
system is controllable if the range space is dense in $C_b(\bS)$.  The
controllability of the dual system is shown to be equivalent to
stochastic observability of the HMM: The controllable subspace is the
space of observable functions described in van Handel's work.  
Several properties of the
controllable subspace are noted along with its explicit
characterization in the finite state-space case.  A formula for the
controllability gramian is also described.   
The upshot of our work is that we can establish parallels between
linear and nonlinear models (see Table~\ref{tb:comparison}).

The outline of the remainder of this chapter is as follows: In Section~\ref{sec:observability-Zakai-relation},
stochastic observability is related to the Zakai equation.  The dual
control system is described in Section~\ref{sec:dual-control-system} together with the definition of the controllability and related concepts.  The explicit formulae for the
finite state space case appear in Section~\ref{sec:finite-observability}.

\section{Function spaces induced by $Z$ and $I$}\label{sec:function-spaces}

%The original probability space is denoted by $(\Omega, \clF, \sP)$, and another measure $\tsP$ is defined by the change of measure~\eqref{eq:change-of-measure-t}.
%
%A \emph{random element} is a mapping from $(\Omega,\clF)$ to a metric space $(S,\clB(S))$, where $\clB(S)$ denotes the Borel $\sigma$-algebra~\cite{billingsley1986probability}.
%Examples of the metric space in this thesis are: the state space $\bS$, Euclidean space $\Re^m$, function space $C_b(\bS)$ and measure space $\clM(\bS)$.
%For sub-$\sigma$-algebra $\clZ_t$, a $\clZ_t$-measurable random object in $S$ is a mapping from $\Omega$ to $S$, Borel-measurable on $\clZ_t$.
%A stochastic process is a measurable mapping from $[0,T]\times \Omega$ to $S$. A stochastic process $U:\Omega \times [0,T]\to S$ is called $\clZ$-adapted if $U(\cdot,t)$ is $\clZ_t$-measurable for each $t$.

It is noted that $Z$ is a $\tsP$-B.M.~and $I$ is a $\sP$-B.M.~on a common measurable space $(\Omega,\clF_T)$. For a $\clZ_T$-measurable random variables, the following definition of Hilbert space is standard (see e.g.~\cite[Chapter 5.1.1]{le2016brownian})
\[
L^2_{\clZ_T}(\Omega;\Re^m) := L^2(\Omega;\clZ_T;\ud \tsP) =  \Big\{F:\Omega \to \Re^m: F\text{ is } \clZ_T\text{-measurable},\, \tE\big( |F|^2\big) < \infty\Big\}
\]
For a $\clZ$-adapted stochastic processes, the Hilbert space is
\begin{align*}
	L^2_{\clZ}\big(\Omega\times[0,T];\Re^m\big):=& L^2\big(\Omega\times[0,T];\clZ\otimes \clB([0,T]);\ud \tsP\ud t\big)\\
	=& \Big\{U:\Omega\times [0,T] \to \Re^m: U\text{ is }\clZ\text{-adapted},\,
	\tE\Big(\int_0^T |U_t|^2\ud t\Big) < \infty\Big\}
\end{align*}
where $\clB([0,T])$ is the Borel sigma algebra on $[0,T]$, $\clZ\otimes \clB([0,T])$ is the product sigma algebra and $\ud \tsP\ud t$ denotes the product measure on it.
The inner product for these spaces are
\[
\langle F,G\rangle_{L^2_{\clZ_T}} = \tE\big(F^\tp G\big),\quad \langle U,V\rangle_{L^2_\clZ} = \tE\Big(\int_0^T U_t^\tp V_t \ud t\Big)
\]
Suppose the state space admits a reference measure $\lambda$ on $(\bS,\clB(\bS))$. In this case, the space of random function
\[
L^2_{\clZ_T}\big(\Omega;L^2(\lambda)\big) = \Big\{F:\Omega \to L^2(\lambda):  F\text{ is }\clZ_T\text{-measurable},\, \tE\big( \|F\|_{L^2(\lambda)}^2\big) < \infty\Big\}
\]
is also a Hilbert space. For stochastic processes,
\[
L^2_{\clZ}\big(\Omega\times[0,T];L^2(\lambda)\big):=
\Big\{U:\Omega\times [0,T] \to L^2(\lambda):  U\text{ is }\clZ\text{-adapted},\,
\tE\Big(\int_0^T \|U_t\|_{L^2(\lambda)}^2\ud t\Big) < \infty\Big\}
\]
The inner product for these spaces are
\[
\langle F,G\rangle_{L^2_{\clZ_T}} = \tE\big(\langle F, G\rangle_{L^2(\lambda)}\big),\quad \langle U,V\rangle_{L^2_\clZ} = \tE\Big(\int_0^T\langle U_t, V_t\rangle_{L^2(\lambda)} \ud t\Big)
\]
%Apart from $\Re^m$-valued random variables and stochastic processes, we define the space of function-valued random elements by
The above Hilbert spaces suffice for finite or Euclidean case (where $\lambda$ is the Lebesgue measure). In general setting, the function space is $C_b(\bS)$ equipped with $\|\cdot\|_\infty$ norm. In these settings, we consider the Banach space:
\begin{align*}
	L^2_{\clZ_T}(\Omega;C_b(\bS)) &= \big\{F:\Omega\to C_b(\bS): F\text{ is }\clZ_T\text{-measurable},\; \tE\big(\|F\|_\infty^2\big) < \infty\big\}\\
	L^2_{\clZ}(\Omega\times[0,T];C_b(\bS)) &= \Big\{Y:\Omega\times [0,T]\to C_b(\bS)\;:\; Y\text{ is }\clZ\text{-adapted},\; \tE\Big(\int_0^T\|Y_t\|_\infty^2 \ud t\Big) < \infty\Big\}
\end{align*}

\medskip

%The associated Hilbert spaces for $I$ are
Similar definitions are also obtained for innovation process $I$. For example,
\[
L^2_{\clI_T}(\Omega;\Re^m) := L^2(\Omega;\clZ_T;\ud \sP)
\]
\[
L^2_{\clI}\big(\Omega\times[0,T];\Re^m\big):= L^2\big(\Omega\times[0,T];\clZ\otimes \clB([0,T]);\ud \sP\ud t\big)
\]
Note the different choice of probability measure---$\tsP$ for $\clZ$ and $\sP$ for $\clI$. 
%Under the original measure $\sP$, the innovation process~\eqref{eq:innovation-def} is a Brownian motion that generates the same sigma algebra. We denote the $L^2$ space with respect to $\sP$ by
%\[
%L^2_{\clI_T}(\Omega;\Re^m) = L^2_{\clI_T}(\Omega;\ud \sP ; \Re^m),\quad L^2_{\clI}(\Omega\times[0,T]; \Re^m) = L^2_{\clI}(\Omega\times[0,T];\ud \sP\otimes \ud t; \Re^m)
%\]
%The $L^2_\clI(\Omega\times [0,T];\Re^m)$ space is interpreted as the $L^2$ space induced by the innovation process ($\sP$-Brownian motion), while the $L^2_\clZ(\Omega\times [0,T];\Re^m)$ space is interpreted as the $L^2$ space induced by the observation process $Z$ under the reference measure $\tsP$.

\section{Stochastic observability and its relationship to Zakai equation}\label{sec:observability-Zakai-relation}

We begin by recalling van Handel's definition for stochastic observability (Def.~\ref{def:observability}). An HMM is observable if
\[
\sP^\mu|_{\clZ_T} = \sP^\nu|_{\clZ_T} \quad \Longrightarrow \quad \mu = \nu
\]
In words, an HMM is observable if the map from prior to the probability measure on $(\Omega,\clZ_T)$ is injective. Note however that this map is not linear. Also, the domain and co-domain of the map are the spaces of probability measures which are not vector spaces.

For the white noise observation model, a quantitative analysis is possible based on the Kullback–Leibler (KL) divergence as described in the following proposition. The calculation for the same appears in Section~\ref{ssec:pf-clark-result}.

\begin{proposition}[Theorem 3.1 in \cite{clark1999relative}] \label{prop:clark-result}
	Consider the nonlinear model $(\clA,h)$. Then %Suppose $\mu\ll\nu$. Then
	\[
	\kl\big(\sP^\mu|_{\clZ_T} \mid \sP^\nu|_{\clZ_T}\big) = \half\E^\mu\Big(\int_0^T |\pi_t^\mu(h)-\pi_t^\nu(h)|^2\ud t\Big)
	\]
	%	where $\pi_t^\mu(h) = \E^\mu(h(X_t)\mid\clZ_t)$ and $\pi_t^\nu(h) = \E^\nu(h(X_t)\mid\clZ_t)$.	
\end{proposition}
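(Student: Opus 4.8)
The plan is to compute the Radon--Nikodym derivative $\frac{\ud \sP^\mu|_{\clZ_T}}{\ud \sP^\nu|_{\clZ_T}}$ explicitly and then take its $\sP^\mu$-expectation, since by definition $\kl\big(\sP^\mu|_{\clZ_T}\mid\sP^\nu|_{\clZ_T}\big)=\E^\mu\big(\log\frac{\ud \sP^\mu|_{\clZ_T}}{\ud \sP^\nu|_{\clZ_T}}\big)$. The natural bridge is the reference measure $\tsP$ of \Proposition{prop:Girsanov}: under $\tsP$ (with either prior) the process $Z$ is a Brownian motion, so the restrictions $\tsP^\mu|_{\clZ_T}$ and $\tsP^\nu|_{\clZ_T}$ coincide with one and the same Wiener measure $\tsP|_{\clZ_T}$. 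Since $\frac{\ud\sP^\mu}{\ud\tsP}=D_T$ on $(\Omega,\clF_T)$, restricting to $\clZ_T$ and conditioning gives $\frac{\ud\sP^\mu|_{\clZ_T}}{\ud\tsP|_{\clZ_T}}=\tE\big(D_T\mid\clZ_T\big)=\sigma_T^\mu(\ones)$, and likewise for $\nu$. Dividing, the common Wiener factor cancels and I obtain
\[
\frac{\ud\sP^\mu|_{\clZ_T}}{\ud\sP^\nu|_{\clZ_T}}=\frac{\sigma_T^\mu(\ones)}{\sigma_T^\nu(\ones)}.
\]

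Next I would make $\log\sigma_T(\ones)$ explicit. Applying the Zakai equation~\eqref{eq:Zakai} with $f=\ones$ (so $\clA\ones=0$) yields $\ud\sigma_t(\ones)=\sigma_t(h)^\tp\,\ud Z_t$. An application of the It\^o formula to $\log\sigma_t(\ones)$, combined with the Kallianpur--Striebel identity $\pi_t(h)=\sigma_t(h)/\sigma_t(\ones)$ and the normalization $\sigma_0(\ones)=\mu(\ones)=1$, gives
\[
\log\sigma_T^\mu(\ones)=\int_0^T\pi_t^\mu(h)^\tp\,\ud Z_t-\half\int_0^T|\pi_t^\mu(h)|^2\,\ud t,
\]
and the analogous expression for $\nu$. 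Subtracting,
\[
\log\frac{\sigma_T^\mu(\ones)}{\sigma_T^\nu(\ones)}=\int_0^T\big(\pi_t^\mu(h)-\pi_t^\nu(h)\big)^\tp\,\ud Z_t-\half\int_0^T\big(|\pi_t^\mu(h)|^2-|\pi_t^\nu(h)|^2\big)\,\ud t.
\]

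Finally I would take $\E^\mu$. Under $\sP^\mu$ the innovation $I_t^\mu=Z_t-\int_0^t\pi_s^\mu(h)\,\ud s$ is a Brownian motion, so $\ud Z_t=\pi_t^\mu(h)\,\ud t+\ud I_t^\mu$; substituting and discarding the stochastic integral against $I^\mu$ (which has zero $\sP^\mu$-mean), the surviving drift combines with the $-\half(\cdots)$ term, and the pointwise algebraic identity $(a-b)^\tp a-\half(|a|^2-|b|^2)=\half|a-b|^2$ with $a=\pi_t^\mu(h)$, $b=\pi_t^\nu(h)$ collapses the integrand to exactly $\half|\pi_t^\mu(h)-\pi_t^\nu(h)|^2$, yielding the claim. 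The main obstacle is justifying that the $I^\mu$-integral is a genuine martingale rather than a local one, so that its expectation vanishes; this needs the square-integrability $\E^\mu\big(\int_0^T|\pi_t^\mu(h)-\pi_t^\nu(h)|^2\,\ud t\big)<\infty$, which I would deduce from Novikov's condition~\eqref{eq:Novikov} and Jensen's inequality applied to $\pi_t(h)=\E(h(X_t)\mid\clZ_t)$. Should this integrability fail, the right-hand side is $+\infty$ and the identity holds trivially, so a localization combined with a monotone-convergence argument covers the general case.
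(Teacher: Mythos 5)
Your proposal is correct and follows essentially the same route as the paper's proof: both pass through the common reference measure $\tsP|_{\clZ_T}$, obtain $\frac{\ud \sP^\mu|_{\clZ_T}}{\ud \tsP|_{\clZ_T}} = \exp\big(\int_0^T \pi_t^\mu(h)^\tp\ud Z_t - \half\int_0^T|\pi_t^\mu(h)|^2\ud t\big)$, divide the two densities, re-express the stochastic integral against the innovation $I^\mu$, and take $\E^\mu$ of the logarithm. The only difference is that you derive this intermediate Radon--Nikodym formula from scratch via $\sigma_T(\ones)$ and the Zakai equation, whereas the paper cites it directly as Girsanov applied to the innovation process (Corollary 1.1.15 of van Handel's thesis); your closing remarks on the martingale property of the $I^\mu$-integral and the localization fallback address an integrability point the paper leaves implicit.
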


\medskip

Based on the Proposition~\ref{prop:clark-result}, the proof of the following theorem appears in Section~\ref{pf-obs-Zakai}.

\medskip

\begin{theorem}\label{thm:observability-Zakai-relation}
	%	Consider the nonlinear model $(\clA,h)$ for finite horizon $[0,T]$ with $T<\infty$. 
	T.F.A.E.:
	\begin{enumerate}
		\item The model $(\clA, h)$ is observable.
		\item For $\mu,\nu\in\clP(\bS)$,
		\[
		\pi_t^\mu(h) = \pi_t^\nu(h),\quad t\text{-a.e.},\;\sP^\mu|_{\clZ_T}\text{-a.s.} \quad \Longrightarrow \quad \mu = \nu
		\]
		\item For $\mu,\nu\in\clP(\bS)$,
		\[
		\sigma_t^\mu(h) = \sigma_t^\nu(h),\quad t\text{-a.e.},\; \sP^\mu|_{\clZ_T}\text{-a.s.} \quad \Longrightarrow \quad \mu = \nu
		\]
	\end{enumerate}
\end{theorem}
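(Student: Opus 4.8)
The strategy is to fix $\mu,\nu\in\clP(\bS)$ and to show that the three hypotheses appearing in items (1)--(3) are equivalent \emph{as statements about the pair} $(\mu,\nu)$; the three injectivity assertions then coincide word for word. Write $P_1$ for ``$\sP^\mu|_{\clZ_T}=\sP^\nu|_{\clZ_T}$'', $P_2$ for ``$\pi_t^\mu(h)=\pi_t^\nu(h)$ for $t$-a.e.\ $t$, $\sP^\mu|_{\clZ_T}$-a.s.'', and $P_3$ for the same statement with $\sigma$ in place of $\pi$. I will prove $P_1\Leftrightarrow P_2$ and $P_2\Leftrightarrow P_3$.

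For $P_1\Leftrightarrow P_2$, I will use the relative-entropy identity of \Proposition{prop:clark-result}. By the Kallianpur--Striebel formula~\eqref{eq:normalize-Zakai} the density of $\sP^\mu|_{\clZ_T}$ with respect to the law of $Z$ under $\tsP$ is $\sigma_T^\mu(\ones)=\tE(D_T\mid\clZ_T)>0$, and likewise for $\nu$; hence $\sP^\mu|_{\clZ_T}$ and $\sP^\nu|_{\clZ_T}$ are mutually absolutely continuous. Since relative entropy vanishes exactly when the two (mutually absolutely continuous) measures agree, $\kl\big(\sP^\mu|_{\clZ_T}\mid\sP^\nu|_{\clZ_T}\big)=0$ is equivalent to $P_1$. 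Feeding this into the identity of \Proposition{prop:clark-result} shows that $P_1$ is equivalent to $\E^\mu\big(\int_0^T|\pi_t^\mu(h)-\pi_t^\nu(h)|^2\ud t\big)=0$, which is precisely $P_2$.

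For $P_2\Leftrightarrow P_3$, I will exploit the dynamics of the normalizer $\sigma_t(\ones)$. Setting $f=\ones$ in the Zakai equation~\eqref{eq:Zakai} and using $\clA\ones=0$, $\mu(\ones)=1$ gives $\ud\sigma_t(\ones)=\sigma_t(h)^\tp\ud Z_t$; substituting $\sigma_t(h)=\pi_t(h)\sigma_t(\ones)$ from~\eqref{eq:normalize-Zakai} turns this into the linear SDE $\ud\sigma_t(\ones)=\sigma_t(\ones)\,\pi_t(h)^\tp\ud Z_t$, whose solution is the Dol\'eans exponential
\[
\sigma_t(\ones)=\exp\Big(\int_0^t\pi_s(h)^\tp\ud Z_s-\half\int_0^t|\pi_s(h)|^2\ud s\Big).
\]
Thus the normalizer is a functional of the trajectory $\{\pi_s(h):s\le t\}$ alone, so under $P_2$ (which makes the $\pi^\mu(h)$- and $\pi^\nu(h)$-trajectories agree off a $t$-null set, hence leaves both integrals unchanged) one gets $\sigma_t^\mu(\ones)=\sigma_t^\nu(\ones)$ and therefore $\sigma_t^\mu(h)=\pi_t^\mu(h)\sigma_t^\mu(\ones)=\pi_t^\nu(h)\sigma_t^\nu(\ones)=\sigma_t^\nu(h)$, i.e.\ $P_3$. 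Conversely, if $P_3$ holds then $\ud\sigma_t^\mu(\ones)=\sigma_t^\mu(h)^\tp\ud Z_t=\sigma_t^\nu(h)^\tp\ud Z_t=\ud\sigma_t^\nu(\ones)$ with common initial value $1$ forces $\sigma^\mu(\ones)=\sigma^\nu(\ones)$, and dividing recovers $P_2$.

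Since $P_1\Leftrightarrow P_2\Leftrightarrow P_3$ for every $(\mu,\nu)$, the three implications ``$P_i\Rightarrow\mu=\nu$'' are the same statement, and item (1) is exactly ``$P_1\Rightarrow\mu=\nu$'' by Definition~\ref{def:observability}; this closes the chain. I expect the only genuinely delicate point to be the first equivalence: one must justify that the relative entropy is a faithful gap functional (via the mutual absolute continuity coming from strict positivity of $D_T$) and keep the ``$t$-a.e., $\sP^\mu|_{\clZ_T}$-a.s.'' qualifiers consistent across the equivalences. The $P_2\Leftrightarrow P_3$ step, by contrast, is essentially the algebraic observation that the Zakai normalizer is the Dol\'eans exponential of $\pi(h)$.
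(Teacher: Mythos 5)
Your proposal is correct and follows essentially the same route as the paper: the equivalence $(1)\Leftrightarrow(2)$ is obtained from the relative-entropy identity of \Proposition{prop:clark-result} (with the additional, welcome, justification that the KL divergence is a faithful gap functional because $D_T>0$ gives mutual absolute continuity of $\sP^\mu|_{\clZ_T}$ and $\sP^\nu|_{\clZ_T}$), and $(2)\Leftrightarrow(3)$ is obtained from the SDE for the normalizer $\sigma_t(\ones)$, driven by $\sigma_t(\ones)\pi_t(h)$ in one direction and by $\sigma_t(h)$ in the other, exactly as in Section~\ref{pf-obs-Zakai}. Writing $\sigma_t(\ones)$ explicitly as the Dol\'eans exponential of $\int_0^t\pi_s(h)^\tp\ud Z_s$ is a mild repackaging of the paper's uniqueness argument for the linear SDE, not a different method.
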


\medskip

A utility of Theorem~\ref{thm:observability-Zakai-relation} is that the un-normalized filter is the solution to the Zakai equation which is linear.
A linear operator $\clL^\dagger: \clM(\bS)\to L^2_\clZ\big(\Omega\times[0,T];\Re^m\big)\times \Re$ is defined as follows:
\begin{equation}\label{eq:observability-operator}
	\clL^\dagger \mu = \big(\{\sigma_t^\mu(h):0\le t\le T\}, \mu(\ones)\big)
\end{equation}
%where $\clL^\dagger$ is a slight abuse of notation as we will define $\clL$ and prove the dual relationship with this operator in the following section.
The notation is suggestive: In this chapter, we will define a linear operator $\clL$ such that the operator defined by~\eqref{eq:observability-operator} is its adjoint.

\medskip

\begin{corollary}\label{cor:linear-operator-observability}
	The nonlinear model $(\clA,h)$ is observable if and only if
	\[
	\Nsp(\clL^\dagger) = \{0\}
	\]
\end{corollary}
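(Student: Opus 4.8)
The plan is to reduce the corollary to the equivalence between observability and item (3) of Theorem~\ref{thm:observability-Zakai-relation}, exploiting the \emph{linearity} of the map $\mu\mapsto\sigma_t^\mu(h)$ supplied by the Zakai solution operator $\Psi_t$. The only genuinely new ingredient beyond that theorem is the passage between probability measures, in which observability is phrased, and the general signed measures $\eta\in\clM(\bS)$ that populate the domain of $\clL^\dagger$.

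First I would record two structural facts. (i) Since $\sigma_t^\mu=\Psi_t(\mu)$ and $\Psi_t$ is linear in its measure argument for each fixed observation path, one has $\sigma_t^{\alpha\mu+\beta\nu}(h)=\alpha\,\sigma_t^\mu(h)+\beta\,\sigma_t^\nu(h)$; in particular $\clL^\dagger$ really is linear, as its notation anticipates. (ii) On the sub-$\sigma$-algebra $\clZ_T$ all the relevant measures are mutually equivalent: under $\tsP$ the observation $Z$ is a Brownian motion (Proposition~\ref{prop:Girsanov}), so $\tsP|_{\clZ_T}$ is the Wiener measure, while $\sP^\mu\sim\tsP$ because $D_T>0$, and restricting to $\clZ_T$ preserves this equivalence. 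Consequently a relation holding ``$t$-a.e., $\tsP$-a.s.'' holds ``$t$-a.e., $\sP^\mu|_{\clZ_T}$-a.s.'' and conversely, so the almost-sure qualifiers appearing in the definition of $L^2_\clZ$ and in Theorem~\ref{thm:observability-Zakai-relation} coincide.

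For the direction ``$\Rightarrow$'', I take $\eta\in\Nsp(\clL^\dagger)$, so that $\eta(\ones)=0$ and $\sigma_t^\eta(h)=0$ for $t$-a.e., $\tsP$-a.s. Writing the Jordan decomposition $\eta=\eta_+-\eta_-$, the constraint $\eta(\ones)=0$ forces $\eta_+(\ones)=\eta_-(\ones)=:c$. If $c=0$ then $\eta=0$ and we are done; otherwise $\mu:=\eta_+/c$ and $\nu:=\eta_-/c$ are probability measures with $\eta=c(\mu-\nu)$, and linearity gives $0=\sigma_t^\eta(h)=c\big(\sigma_t^\mu(h)-\sigma_t^\nu(h)\big)$, whence $\sigma_t^\mu(h)=\sigma_t^\nu(h)$ ($t$-a.e., $\sP^\mu|_{\clZ_T}$-a.s.\ by fact (ii)). Since the model is observable, item (3) of Theorem~\ref{thm:observability-Zakai-relation} yields $\mu=\nu$, so $\eta=0$ and $\Nsp(\clL^\dagger)=\{0\}$.

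For the direction ``$\Leftarrow$'', I assume $\Nsp(\clL^\dagger)=\{0\}$ and verify item (3). Given $\mu,\nu\in\clP(\bS)$ with $\sigma_t^\mu(h)=\sigma_t^\nu(h)$ ($t$-a.e., a.s.), set $\eta:=\mu-\nu\in\clM(\bS)$; then $\eta(\ones)=0$ and, by linearity, $\sigma_t^\eta(h)=0$ ($t$-a.e., $\tsP$-a.s.\ by fact (ii)), so $\clL^\dagger\eta=0$ and therefore $\eta=0$, i.e.\ $\mu=\nu$; observability then follows from Theorem~\ref{thm:observability-Zakai-relation}. I expect the step requiring most care to be the bookkeeping of fact (ii)—matching the $\tsP$-a.s.\ statement implicit in membership of $L^2_\clZ$ with the $\sP^\mu|_{\clZ_T}$-a.s.\ statement of the theorem—rather than anything in the algebra; the reduction from signed measures via the Jordan decomposition is the only other nontrivial move, and it is exactly what bridges the vector-space setting of $\clL^\dagger$ with the probability-measure setting of Definition~\ref{def:observability}.
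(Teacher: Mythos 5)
Your proof is correct and follows essentially the same route the paper takes: the corollary is obtained from item (3) of Theorem~\ref{thm:observability-Zakai-relation} together with the linearity of the Zakai equation, passing between signed measures with $\eta(\ones)=0$ and pairs of probability measures (the paper's remark after the corollary perturbs a probability measure by $\epsilon\tilde\mu$, whereas you use the Jordan decomposition, but these are the same idea). Your explicit handling of the equivalence of $\tsP|_{\clZ_T}$ and $\sP^\mu|_{\clZ_T}$ is a detail the paper leaves implicit, and it is correctly resolved.
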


\medskip

\begin{remark}
	%			Denote the controllable space $\clC= \Rsp(\clL)$. The annihilator of $\clC$ is defined in the dual space:
	%			\[
	%			\clC^\bot := \{\tilde{\mu}\in \clM(\bS): \tilde{\mu}(f) = 0,\; \forall f\in\clC\}
	%			\]
	$\tilde{\mu}\in\Nsp(\clL^\dagger)$ has an interpretation of being the space of unobservable measures. Suppose $\Nsp(\clL^\dagger)$ is non-trivial. Then for $\mu\in\clP(\bS)$, choose $\epsilon \neq 0$ such that $\nu = \mu+\epsilon \tilde{\mu} \in \clP(\bS)$. Then owing to the linearity of~\eqref{eq:Zakai},
	\[
	\sigma_t^\mu(h) = \sigma_t^\nu(h)
	\] 
	From Theorem~\ref{thm:observability-Zakai-relation}, then $\sP^\mu|_{\clZ_T} = \sP^\nu|_{\clZ_T}$. Van Handel refers to $\tilde{\mu}$ as unobservable measure (see Def.~\ref{def:un-observable-measures-observable-functions}).
\end{remark}

\begin{remark}[Dual of the Zakai equation]  \label{rm:backward-Zakai}
	Because the Zakai equation is linear, its adjoint has previously been considered in literature.
	There are two types of equivalent constructions:
	\begin{enumerate}
		\item The most direct route is through a pathwise representation of the un-normalized filter obtained by using the log transformation $\sigma_t(x) = e^{\mu_t(x)+h(x)Z_t}$. As shown in Appendix~\ref{ssec:path-wise-Zakai}, $\{\mu_t: 0\le t\le T\}$ satisfies a deterministic linear PDE whose adjoint appears in~\cite[Eq.~4.17-4.18]{benevs1983relation}.
		\item The other type of adjoint is the \emph{backward Zakai equation}
		\begin{equation}\label{eq:backward-Zakai-eqn}
			-\ud \eta_t(x) = \big(\clA \eta\big)(x) \ud t + \big(h(x) \eta_t(x)\big) \cdot \overleftarrow{\ud Z_t},\quad \eta_T(x) = f(x),\quad x\in \bS
		\end{equation}
		where $\overleftarrow{\ud Z_t}$ denotes the backward It\^o integral, that is, the right-endpoints are chosen in the partial sum approximation of the stochastic integral (see~\cite[Remark 3.3]{pardoux1981non}).
		The forward and backward Zakai equation were first obtained by Pardoux~\cite{pardoux1979backward}. The two equation together yields the solution of the smoothing problem~\cite[Theorem 3.8]{pardoux1981non}.
	\end{enumerate}
	
	The two types of construction are equivalent because using the log transformation the backward Zakai equation is transformed to the pathwise adjoint.  These calculations are described in Appendix~\ref{ssec:path-wise-Zakai}.
	The backward and forward Zakai equation are adjoint because of the following:
	
	%	Pardoux~\cite{pardoux1979backward,pardoux1981non} introduced forward-backward Zakai equation; and Bene\v{s}~\cite{benevs1983relation} considered the adjoint of pathwise Zakai PDE. The dual representation is used to analyze various properties of the solution to the Zakai equation e.g.,~\cite[Section 4.7]{bensoussan1992stochastic} and \cite[Section 6.5]{xiong2008introduction}.

	%	The dual representation of the un-normalized filter is given by the following Proposition.
	
	\medskip
	
	\begin{proposition}[Theorem 4.7.5 in~\cite{bensoussan1992stochastic}] \label{prop:dual-Zakai} Consider the forward Zakai equation~\eqref{eq:Zakai} and backward Zakai equation~\eqref{eq:backward-Zakai-eqn}. Then
		\[
		\sigma_T(f) = \mu(\eta_0)
		\]
	\end{proposition}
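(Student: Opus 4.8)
The plan is to study the scalar process $t\mapsto \sigma_t(\eta_t)$ and to show that it is constant in $t$. Its two boundary values then give the claim directly: at $t=T$ the terminal condition $\eta_T=f$ gives $\sigma_T(\eta_T)=\sigma_T(f)$, while at $t=0$ the initial condition $\sigma_0=\mu$ gives $\sigma_0(\eta_0)=\mu(\eta_0)$. So it suffices to prove $\sigma_T(\eta_T)=\sigma_0(\eta_0)$, i.e.\ that the pairing of the forward (It\^o) measure-valued process $\sigma_t$ against the backward (It\^o) function-valued process $\eta_t$ has zero net change.

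First I would set up an integration-by-parts formula by fixing a partition $0=t_0<\cdots<t_n=T$ and using the exact telescoping identity
\begin{equation*}
	\sigma_{t_{i+1}}(\eta_{t_{i+1}})-\sigma_{t_i}(\eta_{t_i}) = \big(\sigma_{t_{i+1}}-\sigma_{t_i}\big)(\eta_{t_{i+1}}) + \sigma_{t_i}\big(\eta_{t_{i+1}}-\eta_{t_i}\big).
\end{equation*}
The point of evaluating the forward increment against the \emph{later} function $\eta_{t_{i+1}}$ and the backward increment against the \emph{earlier} measure $\sigma_{t_i}$ is that both terms carry the \emph{same} coefficient on the increment $\Delta Z_i := Z_{t_{i+1}}-Z_{t_i}$. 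Indeed, from the Zakai equation~\eqref{eq:Zakai} applied to the momentarily frozen test function $\eta_{t_{i+1}}$,
\begin{equation*}
	\big(\sigma_{t_{i+1}}-\sigma_{t_i}\big)(\eta_{t_{i+1}}) \approx \sigma_{t_i}(\clA \eta_{t_{i+1}})\,\Delta t_i + \sigma_{t_i}(h\eta_{t_{i+1}})^\tp \Delta Z_i,
\end{equation*}
while the backward Zakai equation~\eqref{eq:backward-Zakai-eqn}, whose stochastic integral uses right endpoints, gives $\eta_{t_{i+1}}-\eta_{t_i}\approx -\clA\eta_{t_{i+1}}\,\Delta t_i - (h\eta_{t_{i+1}})^\tp\Delta Z_i$, hence $\sigma_{t_i}(\eta_{t_{i+1}}-\eta_{t_i})\approx -\sigma_{t_i}(\clA\eta_{t_{i+1}})\,\Delta t_i - \sigma_{t_i}(h\eta_{t_{i+1}})^\tp\Delta Z_i$. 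Adding the two contributions, the $\Delta t_i$ drift terms and the $\Delta Z_i$ stochastic terms cancel \emph{term by term}; summing over $i$ and passing to the limit yields $\sigma_T(\eta_T)-\sigma_0(\eta_0)=0$, which is the assertion.

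The delicate part — and the main obstacle — is making this cancellation rigorous, i.e.\ controlling the remainders and, crucially, verifying that no It\^o cross-variation term survives. This hinges on the measurability structure built into the backward integral: $\eta_{t_{i+1}}$ is a functional of the increments of $Z$ on $[t_{i+1},T]$ and is therefore independent of $\Delta Z_i$, while $\sigma_{t_i}$ is $\clZ_{t_i}$-measurable and hence also independent of $\Delta Z_i$. Consequently the common coefficient $\sigma_{t_i}(h\eta_{t_{i+1}})$ is independent of $\Delta Z_i$, which justifies freezing $\eta_{t_{i+1}}$ before invoking~\eqref{eq:Zakai} and guarantees that the two stochastic sums are genuinely equal and opposite (no left-versus-right endpoint discrepancy generates an extra quadratic-variation correction). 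To close the argument one needs sufficient regularity — $\eta_t$ in the domain of $\clA$ together with the integrability bounds under which~\eqref{eq:Zakai} and~\eqref{eq:backward-Zakai-eqn} hold — so that the frozen-coefficient approximations above converge in $L^2(\tsP)$ as the mesh tends to zero and the quadratic remainders are $o(1)$.

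An alternative, less computational route would use the Feynman--Kac representation of the backward Zakai solution, $\eta_t(x)=\tE\big(D_{t,T}f(X_T)\mid X_t=x,\ \sigma(Z_s-Z_t:s\ge t)\big)$ with $D_{t,T}:=\exp\big(\int_t^T h^\tp(X_s)\ud Z_s-\half\int_t^T|h(X_s)|^2\ud s\big)$, and then invoke the Markov property and the tower rule to identify $\mu(\eta_0)=\tE\big(\eta_0(X_0)\big)$ conditionally with $\tE(D_Tf(X_T)\mid\clZ_T)=\sigma_T(f)$. This requires equally careful conditioning across the $\clZ_t$-filtration, and I expect the forward--backward It\^o computation above to be the cleaner path.
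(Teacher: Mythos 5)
The paper does not actually prove this proposition: it is imported verbatim from Bensoussan's book (Theorem 4.7.5 in~\cite{bensoussan1992stochastic}), so there is no in-text argument to compare against. Your forward--backward integration-by-parts proof is correct and is, in substance, the standard proof of this duality. The two points that make it work are exactly the ones you isolate: (i) the telescoping split pairs the forward increment with the \emph{later} function and the backward increment with the \emph{earlier} measure, so both stochastic terms carry the identical coefficient $\sigma_{t_i}(h\,\eta_{t_{i+1}})$; and (ii) this coefficient is independent of $\Delta Z_i$ under $\tsP$ (since $\sigma_{t_i}$ is $\clZ_{t_i}$-measurable and $\eta_{t_{i+1}}$ depends only on the increments of $Z$ after $t_{i+1}$), which is precisely why no quadratic-covariation correction survives and why the Zakai equation may legitimately be applied to the frozen random test function $\eta_{t_{i+1}}$. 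Your argument is also consistent with how the thesis handles the adapted analogue of this pairing elsewhere: in the proof of Theorem~\ref{thm:observability-definition} the process $\sigma_t(Y_t)$ is differentiated via the It\^o--Wentzell formula and the drift terms cancel in the same way (there the cancellation is engineered by the $V_t$ term of the BSDE rather than by the backward It\^o convention), and the proof of Prop.~\ref{prop:Zakai} uses the deterministic special case of your identity with $y$ solving the backward Kolmogorov equation. Your alternative Feynman--Kac route ($\eta_0(x)=\tE(D_T f(X_T)\mid X_0=x,\,\clZ_T)$ followed by the tower property and the $\tsP$-independence of $X_0$ and $Z$) is likewise valid and is arguably the shortest rigorous path, since it sidesteps the two-parameter Riemann-sum estimates entirely.
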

	
	\medskip
	
	In~\cite[Section 6.5]{xiong2008introduction}, Prop.~\ref{prop:dual-Zakai} is used to prove the uniqueness of the solution to the Zakai equation.
	%	The forward and backward Zakai equations are also appears in the solution of the smoothing problem~\cite[Theorem 3.8]{pardoux1981non}.
	%A detailed discussion appears in Appendix~\ref{apdx:min-energy}.
	
	Despite of the utility of the backward Zakai equation, it is distinct from the controllability--observability duality for linear systems theory in the following aspects:
	\begin{itemize}
		\item Equation~\eqref{eq:backward-Zakai-eqn} does not have a control input term.
		\item $\eta$ is not adapted to the forward-in-time filtration. In particular, $\eta_0$ is a $\clZ_T$-measurable random variable.
	\end{itemize}
	The dual control system described in the following section is original and distinct from these prior adjoint formulation.
	
\end{remark}

\section{Dual control system for an HMM}\label{sec:dual-control-system}

The objective is to define a linear operator $\clL$ whose adjoint is $\clL^\dagger$. Because of duality pairing between $C_b(\bS)$ and $\clM(\bS)$, the operator is defined for the function spaces as follows (see Figure~\ref{fig:nonlinear-duality}):
\[
\clL:L_\clZ^2\big(\Omega\times [0,T];\Re^m\big)\times \Re \to C_b(\bS)
\]
%An arbitrary element in the domain $L_\clZ^2\big(\Omega\times [0,T];\Re^m\big)$ is denoted $U = \{U_t\in\Re^m:0\le t \le T\}$. It is referred to as the control input.

%	Recall that the time axis is reversed for the dual system of a deterministic linear problem as we fix the terminal condition.	
%	It is noted that the time runs backward for the controllability problem as we fix the terminal point to be the zero vector.
%	In stochastic setting, reverting the time axis is not allowed in order to preserve causality.

The main result (Theorem~\ref{thm:observability-definition} below) is to show that the operator $\clL$ is defined by the solution operator of the linear backward stochastic differential equation (BSDE):
%	
%	dual control system is therefore defined by a BSDE on $C_b(\bS)$. The controllability of the BSDE is defined identical to the deterministic case, and it is proved that the system is observable if and only if the dual control system is controllable.
\begin{subequations}\label{eq:dual-bsde}
	\begin{align}
		-\ud Y_t(x) &= \big(\clA Y_t(x) + h^\tp(x)(U_t+V_t(x))\big)\ud t - V_t^\tp (x) \ud Z_t\label{eq:dual-bsde-a}\\
		Y_T(x) &= c,\quad\forall\,x\in\bS\label{eq:dual-bsde-b}
	\end{align}
\end{subequations}
%	where $\clA$ and $h$ are defined by the nonlinear model $(\clA, h)$.
where $U \in L_\clZ^2(\Omega\times [0,T];\Re^m)$ is referred to as the control input and $c\in \Re$ is a deterministic constant.
The solution of the BSDE $(Y,V):=\{(Y_t,V_t) \in C_b(\bS)\times C_b(\bS)^m\,:\, 0\le t \le T\} \in L^2_{\clZ}\big(\Omega\times[0,T];C_b(\bS)\times C_b(\bS)^m\big)$ is (forward) adapted to the filtration $\clZ$.
The BSDE is the nonlinear counterpart of the backward ODE~\eqref{eq:LTI-ctrl} in the LTI setting.

Additional details on existence uniqueness and regularity theory for BSDEs appears in the Appendix~\ref{apdx:bsde} (see also~\cite{el1997backward,pardoux2014stochastic}).
Throughout the thesis, we assume that the solution of BSPDE $(Y,V)$ is uniquely determined in $L^2_{\clZ}\big(\Omega\times[0,T];C_b(\bS)\times C_b(\bS)^m\big)$ for each given $Y_T\in L^2_{\clZ_T}(\Omega;C_b(\bS))$ and $U\in\clU$.
For finite state space, it is proved in the seminal paper~\cite{pardoux1990adapted}. For the Euclidean case, the existence and uniqueness results were first obtained in~\cite{ma1999linear}.

\begin{figure}
	\centering
	\includegraphics[width=0.68\linewidth]{./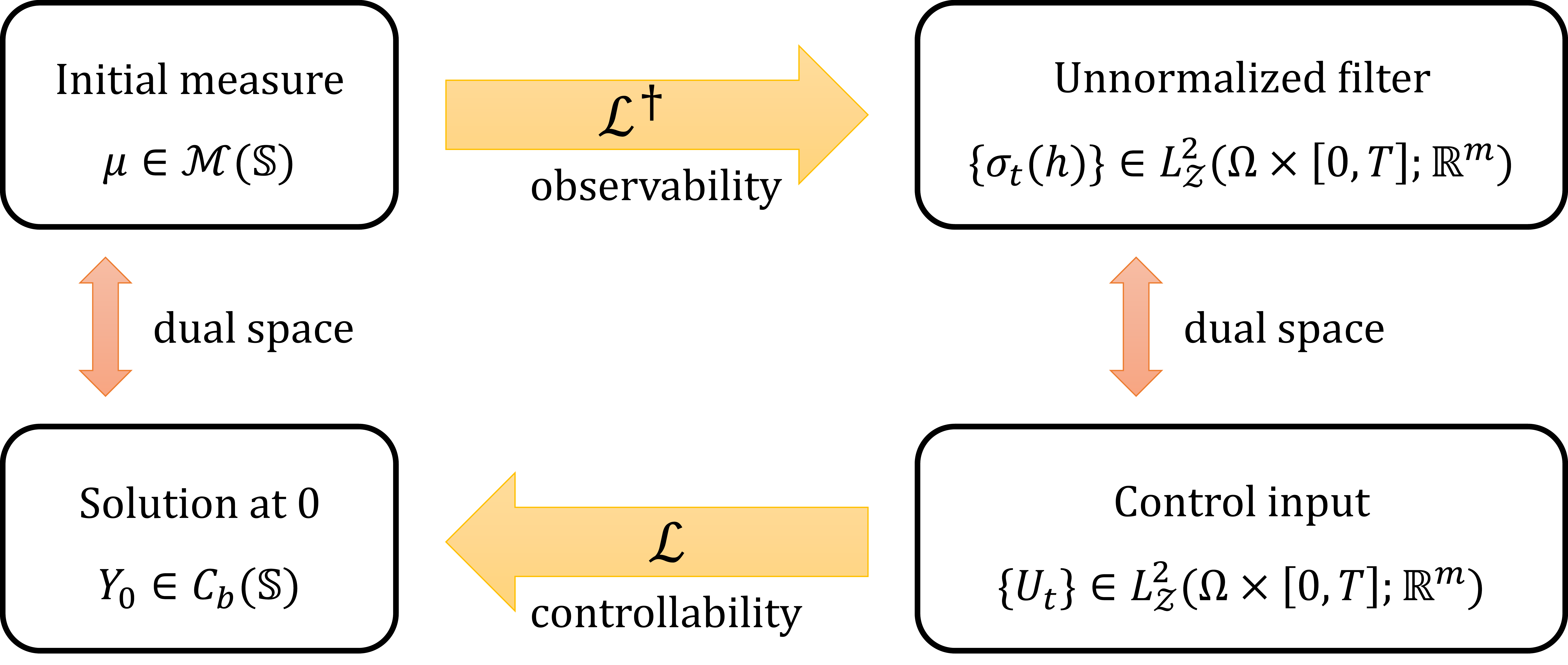}
	\caption{An illustration of the dual relationship for nonlinear filtering.}
	\label{fig:nonlinear-duality}
\end{figure}

The linear operator $\clL:L_\clZ^2\big(\Omega\times [0,T];\Re^m\big)\times \Re \to C_b(\bS)$ is defined as follows:
\begin{equation}\label{eq:ctrl-operator}
	\clL(U,c) = Y_0
\end{equation}
where $Y_0\in C_b(\bS)$ is the solution at time 0 to the BSDE~\eqref{eq:dual-bsde}.

The controllability is defined in the same way as linear systems theory. Note however that the target set (at time $T$) now is the space of constant functions (see also Remark~\ref{rm:terminal-condition}).

\begin{definition}
	For the BSDE~\eqref{eq:dual-bsde}, the \emph{controllable subspace}
	\begin{equation}\label{eq:ctrl-subspace}
		\clC := \Rsp(\clL) = \big\{y_0 \in C_b(\bS): \exists\, c\in \Re\text{ and } U \in L^2_\clZ\big(\Omega\times [0,T];\Re^m\big),  \text{ s.t. } Y_0 = y_0\text{ and } Y_T = c\ones\big\}
	\end{equation}
	The BSDE~\eqref{eq:dual-bsde} is said to be \emph{controllable} if $\clC$ is dense in $C_b(\bS)$.
\end{definition}
%	where $Y_0$ is obtained as the solution to the BSDE~\eqref{eq:dual-bsde}.
\medskip

The duality between observability of the model $(\clA, h)$ and the controllability of the BSDE~\eqref{eq:dual-bsde} is presented in the following theorem whose proof appears in Section~\ref{ssec:pf-observability-definition}:

\medskip

\begin{theorem}\label{thm:observability-definition}
	$\clL^\dagger$ is the adjoint operator of $\clL$. Consequently, the nonlinear model $(\clA, h)$ is observable if and only if the BSDE~\eqref{eq:dual-bsde} is controllable. 
\end{theorem}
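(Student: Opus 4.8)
The plan is to verify directly that the operator $\clL^\dagger$ defined in~\eqref{eq:observability-operator} satisfies the defining relation of the adjoint,
\[
\langle \clL(U,c),\mu\rangle = \langle (U,c),\clL^\dagger\mu\rangle, \quad \forall\, (U,c)\in L^2_\clZ\big(\Omega\times[0,T];\Re^m\big)\times\Re,\ \mu\in\clM(\bS),
\]
where the left pairing is the $C_b(\bS)$--$\clM(\bS)$ duality and the right pairing is the Hilbert inner product on $L^2_\clZ\times\Re$. Once this identity is in hand, the second assertion is immediate: by Corollary~\ref{cor:linear-operator-observability} the model $(\clA,h)$ is observable iff $\Nsp(\clL^\dagger)=\{0\}$, while Theorem~\ref{thm:dual-adjoint} gives $\Rsp(\clL)^\bot=\Nsp(\clL^\dagger)$, and $\Rsp(\clL)=\clC$ is dense in $C_b(\bS)$ iff its annihilator is trivial. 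Chaining these equivalences yields observability of $(\clA,h)$ iff controllability of the BSDE~\eqref{eq:dual-bsde}.

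The heart of the argument is a single It\^o computation pairing the forward Zakai dynamics with the backward BSDE dynamics. Fix $(U,c)$, let $(Y,V)$ be the BSDE solution with terminal condition $Y_T=c\ones$ and $Y_0=\clL(U,c)$, and let $\sigma^\mu=\{\sigma_t^\mu\}$ be the un-normalized filter (Zakai solution~\eqref{eq:Zakai}) started from $\mu\in\clM(\bS)$. First I would apply the It\^o product rule to the scalar process $t\mapsto \sigma_t^\mu(Y_t)$, viewed as a bilinear pairing of the measure-valued semimartingale $\sigma^\mu$ and the function-valued semimartingale $Y$ governed by~\eqref{eq:dual-bsde-a}. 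The three contributions are (i) $(\ud\sigma_t^\mu)(Y_t)=\sigma_t^\mu(\clA Y_t)\ud t+\sigma_t^\mu(hY_t)^\tp\ud Z_t$; (ii) $\sigma_t^\mu(\ud Y_t)=-\sigma_t^\mu(\clA Y_t)\ud t-\big(U_t^\tp\sigma_t^\mu(h)+\sigma_t^\mu(h^\tp V_t)\big)\ud t+\sigma_t^\mu(V_t)^\tp\ud Z_t$; and (iii) the cross-variation $\ud[\sigma^\mu,Y]_t=\sigma_t^\mu(h^\tp V_t)\ud t$, obtained by crossing the $\ud Z_t$-coefficients $h\sigma_t^\mu$ and $V_t$ and invoking $\ud Z^j_t\,\ud Z^k_t=\delta_{jk}\ud t$, valid since $Z$ is a $\tsP$-Brownian motion.

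The decisive cancellations are that the two $\sigma_t^\mu(\clA Y_t)$ terms cancel, as do the two $\sigma_t^\mu(h^\tp V_t)$ terms, leaving
\[
\ud\sigma_t^\mu(Y_t) = -U_t^\tp\sigma_t^\mu(h)\,\ud t+\big(\sigma_t^\mu(hY_t)+\sigma_t^\mu(V_t)\big)^\tp\ud Z_t.
\]
Integrating over $[0,T]$ and taking $\tE$ annihilates the stochastic integral, giving $\tE\big(\sigma_T^\mu(Y_T)\big)-\mu(Y_0)=-\tE\big(\int_0^T U_t^\tp\sigma_t^\mu(h)\,\ud t\big)$. I would then evaluate the endpoints: $\sigma_0^\mu(Y_0)=\mu(Y_0)$ since $\sigma_0^\mu=\mu$ and $Y_0$ is deterministic; and $Y_T=c\ones$ gives $\sigma_T^\mu(Y_T)=c\,\sigma_T^\mu(\ones)$, while the Zakai equation for $f=\ones$ (using $\clA\ones=0$) shows $\sigma_t^\mu(\ones)$ is a $\tsP$-martingale, so $\tE\big(\sigma_T^\mu(\ones)\big)=\mu(\ones)$. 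Rearranging produces precisely $\mu(Y_0)=c\,\mu(\ones)+\tE\big(\int_0^T U_t^\tp\sigma_t^\mu(h)\,\ud t\big)=\langle (U,c),\clL^\dagger\mu\rangle$, which is the desired adjoint identity.

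I expect the main obstacle to be making the infinite-dimensional It\^o product rule rigorous when $\bS$ is neither finite nor Euclidean, where $\sigma_t^\mu$ need not admit a density and $Y_t,V_t$ only live in $C_b(\bS)$. The formal cross-variation step (iii) is where genuine care is required: one must justify interchanging the duality pairing with the quadratic-variation limit, presumably by first establishing the identity in the finite or Euclidean setting (or for a dense class of test functions through the weak form of~\eqref{eq:Zakai}) and then passing to the limit using the BSDE regularity theory cited from~\cite{pardoux1990adapted,ma1999linear}. One also needs the true martingale (not merely local martingale) property of the $\ud Z_t$ integral so that its expectation vanishes; this should follow from the stated $L^2$-integrability of $(Y,V)$ together with the boundedness and continuity of $h$ and the finite total variation of $\sigma_t^\mu$.
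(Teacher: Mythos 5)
Your proposal is correct and follows essentially the same route as the paper: the paper also verifies the adjoint identity by applying the It\^o--Wentzell product rule to $\sigma_t(Y_t)$, obtaining exactly the cancellations and the drift $-U_t^\tp\sigma_t(h)$ you describe, and then invokes Theorem~\ref{thm:dual-adjoint} together with Corollary~\ref{cor:linear-operator-observability}. The only cosmetic difference is that the paper first splits $\clL(U,c)=\clL(U,0)+c\ones$ and runs the computation with $Y_T=0$, whereas you keep $Y_T=c\ones$ and use the $\tsP$-martingale property of $\sigma_t(\ones)$ to evaluate the terminal term --- these are equivalent.
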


\begin{table*}
	\centering
	\renewcommand{\arraystretch}{1.7}
	\small
	\begin{tabular}{m{0.14\textwidth}m{0.37\textwidth}m{0.4\textwidth}}
		& {\bf Linear deterministic case} & {\bf Nonlinear stochastic case} \\ \hline \hline
		Signal space & $\clU =  L^2([0,T];\Re^m)$\vspace{3pt} \newline  $\langle u,v\rangle = \displaystyle\int_0^T u_t^\tp v_t\ud t$ \vspace{2pt} & $\clU =  L^2_\clZ(\Omega\times [0,T];\Re^m)
		$\vspace{3pt} \newline $\langle U,V\rangle = \displaystyle\hE\Big(\int_0^T U_t^\tp V_t\ud t\Big)$\vspace{2pt} \\ \hline
		Function space & $\clY =\Re^d$ \vspace{2pt} \newline $\langle x,y \rangle = x^\tp y$ & $\clY = C_b(\bS)$, $\clY^\dagger = {\cal M}(\bS)$ \vspace{2pt} \newline $\langle \mu,y\rangle = \mu(y)$ \\ \hline
		%		Controllability $\clL : \clU \to \clY$ & $u\mapsto y_0$ via~\eqref{eq:LTI-ctrl} & $(U;c)\mapsto Y_0$ by~\eqref{eq:NL-ctrl} \\ \hline
		%		Observability $\clL^\dagger : \clY^\dagger \to \clU$ & $x_0\mapsto z_t$ via~\eqref{eq:LTI-obs} & $\bpi_0\mapsto (\bpi_t(h), \bpi_0(\ones))$ by~\eqref{eq:Zakai} \\ \hline
		Controllability & $\clL : \clU \to \clY$ \vspace{2pt} \newline
		\phantom{$\clL$} \hspace{0.2em} $u\mapsto y_0$ by ODE~\eqref{eq:LTI-ctrl} & $\clL:\clU\times\Re\to\clY$ \vspace{2pt} \newline
		\phantom{$\clL$} \hspace{0.2em}$(U,c)\mapsto Y_0$ by BSDE~\eqref{eq:dual-bsde} \\ \hline
		Observability & $\clL^\dagger : \clY \to \clU$ \newline
		\phantom{$\clL^\dagger$} \hspace{0.15em} $x_0\mapsto z_t$ by ODE~\eqref{eq:LTI-obs} & $\clL^\dagger:\clY^\dagger\to \clU\times\Re$, \vspace{2pt} \newline \phantom{$\clL^\dagger$} \hspace{0.15em} $\mu\mapsto (\sigma_t(h), \mu(\ones))$ by Zakai equation~\eqref{eq:Zakai} \\ \hline
		Duality & $\Rsp(\clL) = \clY \quad  \iff \quad \Nsp(\clL^\dagger) = \{0\}$ % \newline $\phantom{\Rsp(\clL) = \clY \quad}\iff$ \vspace{2pt} \newline $ z_t^{(1)} \equiv z_t^{(2)} \Rightarrow x_0^{(1)} = x_0^{(2)}$
		& $\overline{\Rsp(\clL)} = \clY \quad  \iff \quad \Nsp(\clL^\dagger) = \{0\}$ % \newline $\phantom{\Rsp(\clL) = \clY \quad}\iff$ \vspace{2pt} \newline $\E (h(X_t)|\clZ_t) = \E^\nu(h(X_t)|\clZ_t) \Rightarrow \mu = \nu$ 
		\\ \hline
	\end{tabular}
	\caption{Comparison of the controllability--observability duality for linear and nonlinear systems} \label{tb:comparison}
\end{table*}

\medskip

The BSDE~\eqref{eq:dual-bsde} is referred to as the \emph{dual control system} for the model $(\clA, h)$.
The correspondence between the linear and nonlinear cases appears as part of Table~\ref{tb:comparison}.
Before moving on, we make some remarks on function and measure spaces.

\medskip

\begin{remark}
	Note that the dual control system takes values in the infinite dimensional system $C_b(\bS)$. Generally speaking, the controllable subspace in infinite dimensional setting hardly satisfies $\clC = C_b(\bS)$ (see discussion on deterministic setting in~\cite[Chapter 4]{curtain2012introduction}). Rather, we defined the controllability by stating the closure $\overline{\Rsp(\clL)} = C_b(\bS)$. 
	%	\red{TODO} Controllability of deterministic, infinite dimensional system is discussed in~\cite[Chapter 4]{curtain2012introduction}.
\end{remark}

\medskip

\begin{remark}\label{rm:terminal-condition}
	%		The terminal condition $c\ones$ translates into the zero element in 
	%		filtering context. In fact, the quotient space 
	%		$C_b(\bS)/\{c\ones:c\in\Re\}$ is also a Banach space, and the dual 
	%		space is the 
	%		subspace of $\clM(\bS)$ with $\mu(\bS) = 0$. This modification may make
	%		the definition~\eqref{eq:ctrl-operator} and~\eqref{eq:ctrl-subspace} 
	%		more intuitive, but we will not use the idea through the thesis.
	In the definition of $\clL$, the co-domain space is $L^2_{\clZ}\big(\Omega\times[0,T];\Re^m\big)\times \Re$. For the analysis of the filtering problem, it suffices to consider restriction of $\clL$ on the subspace $\clM_0(\bS) =\{\mu\in\clM(\bS): \mu(\ones) = 0\}$. (For example, the null-space of $\clL$ is a subspace of $\clM_0(\bS)$.) The advantage of considering the restriction is that the co-domain space for $\clL$, and therefore the domain of its adjoint, now is $L^2_{\clZ}\big(\Omega\times[0,T];\Re^m\big)$. The dual space of $\clM_0(\bS)$ is the quotient space $C_b(\bS)/\{c\ones:c\in\Re\}$ and therefore
	$\clL^\dagger:L^2_{\clZ}\big(\Omega\times[0,T];\Re^m\big)\to C_b(\bS)/\{c\ones:c\in\Re\}$. Although such a change will make duality between controllability and observability somewhat terser, we prefer to keep the function space as $\clM(\bS)$ and $C_b(\bS)$. 
	This has the advantage of not having to deal with the quotient space.
\end{remark}

\medskip

\begin{remark}
	The choice of function space $C_b(\bS)$ is guided by duality pairing between $C_b(\bS)$ and measure space $\clM(\bS)$ (see Example~\ref{ex:dual-function-measure} in Section~\ref{ssec:dual-space}). An important reason to consider this choice is to relate with the work of van Handel~\cite{van2009observability} who defines observable functions as a subspace of $C_b(\bS)$. 
	
	Alternatively, one may consider linear operator entirely on Hilbert spaces.
	%	(An important example is the space of linear functions ${\sf L}$ in Remark~\ref{rm:duality-reduces-LG}). 
	A general setup is as follows:
	\begin{itemize}
		\item The state space $\bS$ admits a positive reference measure $\lambda$ (e.g., Lebesgue measure in Euclidean case or counting measure for finite / countable state space case).
		\item The space of functions is $L^2(\lambda)$
		\item The space of measures is the space of measures $\nu$ such that $\nu\ll\lambda$ and  $\dfrac{\ud \nu}{\ud \lambda} \in L^2(\lambda)$. % absolutely continuous with respect to a reference measure $\lambda$ and the RN derivative with respect to $\lambda$ is in $L^2(\lambda)$.
	\end{itemize}
	In this case, one defines the linear operator as follows:
	\[
	\clL\;:\;L^2_\clZ\big(\Omega\times [0,T];\Re^m\big)\times \Re \to L^2(\lambda)
	\]
	Since $L^2(\lambda)$ is a Hilbert space, its adjoint
	\[
	\clL^\dagger\;:\; L^2(\lambda) \to L^2_\clZ\big(\Omega\times [0,T];\Re^m\big)\times \Re
	\]
	is again given by the solution of the Zakai equation.

\end{remark}

\medskip

\begin{remark}[Linear-Gaussian case] \label{rm:duality-reduces-LG}
	
	Consider the linear-Gaussian model~\eqref{eq:linear-Gaussian-model}. 
	We impose the following restrictions:
	\begin{itemize}
		\item The control input $U=u$ is restricted to be a deterministic
		function of time. In particular, it does not depend upon the
		observations (See Section~\ref{ssec:Kalman-filter}).  Such a control is trivially $\clZ$-adapted. 
		For such a control input, the solution $Y=y$ of the BSDE is a deterministic function of time, and $V=0$.  The BSDE becomes a PDE:
		\begin{equation}\label{eq:det_pde}
			-\frac{\partial y_t}{\partial t}(x) = (\clA y_t)(x) + h^\tp(x)
			u_t,\quad y_T = c\ones %\equiv f^\tp x,\quad x\in \Re^d
		\end{equation}
		where the lower-case notation is used to stress the fact that $u$
		and $y$ are now deterministic functions of time.
		
		\item %Since the conditional variance does not depend on the observation, it suffices to consider only the linear functions
		Instead of $C_b(\bS)$, it suffices to consider a finite ($d$-)dimensional space of linear functions:
		\[
		{\sf L}:=\{f \;:\;  f(x)=f^\tp x,\;\text{where } f\in\Re^d\}
		\]  
		Then ${\sf L}$ is an invariant subspace for the
		dynamics~\eqref{eq:det_pde}.  On ${\sf L}$, the PDE reduces to an ODE:  
		\[
		-\frac{\ud y_t}{\ud t}
		= A^\tp y_t + H^\tp u_t,\quad y_T = 0
		\]
		where the terminal condition 0 is the only constant function which is linear.
	\end{itemize}
	Therefore, the dual control system~\eqref{eq:dual-bsde} reduces to the LTI system~\eqref{eq:LTI-ctrl}. It is as yet unclear why it suffices to consider only deterministic control inputs. An explanation for this is provided in Chapter~\ref{ch:duality-principle}.
\end{remark}

\subsubsection{Explicit characterization of the controllable subspace}
The following proposition provides explicit characterization of the controllable subspace. Its proof appears in Section~\ref{ss:pf-prop43}.

\begin{proposition}\label{thm:controllable-subspace}
	Consider the linear operator~\eqref{eq:ctrl-operator}. For any finite $T > 0$,  the range space
	${\Rsp(\clL)}$ is the smallest such subspace $\clC\subset C_b(\bS)$ that satisfies the
	following two properties:
	\begin{enumerate}
		\item The constant function $\ones\in \clC$;
		\item If $g\in\clC$ then $\clA g \in \clC$ and $g h
		\in\clC$. 
		%($g \cdot h$ is the Hadamard (element-wise)
		%			product of functions $g$ and $h$)\footnote{For a
			%				vector-valued function $h(x) = [h_1(x),\ldots,h_m(x)]$,
			%				$g\cdot h\in\clC$ means $g\cdot h_i \in \clC$ for each
			%				$i=1,\ldots,m$.  The Hadamard product is simply the product of functions, i.e., $(g\cdot h_i)(x)=g(x)h_i(x)$
			%				for all $x\in\bS$.}.
	\end{enumerate}
\end{proposition}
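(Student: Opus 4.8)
The plan is to prove the two set inclusions separately. Write $\clC^\ast$ for the smallest subspace of $C_b(\bS)$ satisfying properties 1 and 2; this is well defined as the intersection of all subspaces with these two properties, so it remains to show $\Rsp(\clL)=\clC^\ast$. I would get the inclusion $\clC^\ast\subseteq\Rsp(\clL)$ by checking that $\Rsp(\clL)$ is \emph{itself} a subspace satisfying 1 and 2, and then invoking minimality of $\clC^\ast$; the reverse inclusion $\Rsp(\clL)\subseteq\clC^\ast$ I would get from an explicit formula for $Y_0$. That $\ones\in\Rsp(\clL)$ is immediate: with $c=1$ and $U\equiv0$ the pair $(Y_t,V_t)=(\ones,0)$ solves~\eqref{eq:dual-bsde} since $\clA\ones=0$, so $\clL(0,1)=\ones$.

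For $\Rsp(\clL)\subseteq\clC^\ast$ the key is a representation obtained by integrating~\eqref{eq:dual-bsde} over $[0,T]$ and taking $\tsP$-expectation. Since $Y_0$ is $\clZ_0$-measurable hence deterministic and the $\ud Z$-term is a zero-mean $\tsP$-martingale, the averaged equation is a backward linear equation for $\bar Y_t:=\tE(Y_t)$ whose Duhamel solution at time $0$ reads
\[
Y_0=e^{T\clA}(c\ones)+\int_0^T e^{t\clA}\big(h^\tp(\bar U_t+\bar V_t)\big)\ud t,\qquad \bar U_t:=\tE(U_t),\ \ \bar V_t:=\tE(V_t),
\]
with $e^{t\clA}$ the Markov semigroup. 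Because $e^{T\clA}\ones=\ones$, the first term lies in $\clC^\ast$. In the integrand, $h^\tp\bar U_t=\sum_j h_j(\bar U_t)_j$ is a combination of the $h_j=(\ones\,h)_j\in\clC^\ast$, while $h^\tp\bar V_t=\sum_j h_j\bar V_t^j$ is a combination of products of $h_j$ with the deterministic functions $\bar V_t^j$; feeding the same representation back into the BSDE should exhibit each $\bar V_t^j$ as obtained from $\ones$ by finitely many applications of $\clA$ and of multiplication by $h$, whence $h^\tp(\bar U_t+\bar V_t)\in\clC^\ast$. Since $\clC^\ast$ is closed under $\clA$, it is stable under $e^{t\clA}$ and under the subsequent time-integral, giving $Y_0\in\clC^\ast$.

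For the opposite inclusion I would verify that $\Rsp(\clL)$ is invariant under $g\mapsto\clA g$ and $g\mapsto gh$, using the duality identity from the proof of Theorem~\ref{thm:observability-definition}, namely $\mu(Y_0)=c\,\mu(\ones)+\tE\big(\int_0^T U_t^\tp\sigma_t^\mu(h)\ud t\big)$ for all $\mu\in\clM(\bS)$, which pins $Y_0$ down through its pairings. Multiplication by $h$ is the genuinely nonlinear step: controls concentrating near $t=0$, e.g.\ $U_t=\tfrac1\epsilon\ones_{[0,\epsilon]}(t)e_j$ with $c=0$, drive $\mu(Y_0)$ towards $\sigma_0^\mu(h)_j=\mu(h_j)$, so the $\times h$ direction is produced through the stochastic integrand $V$ together with such localized controls; the application of $\clA$ I would obtain by varying the horizon $T$ and differentiating, since $e^{t\clA}$ already appears in the representation above. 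Granting invariance, $\Rsp(\clL)$ is a subspace satisfying 1 and 2, so minimality of $\clC^\ast$ yields $\clC^\ast\subseteq\Rsp(\clL)$, closing the argument.

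The main obstacle is the two semigroup steps: writing $e^{t\clA}=\sum_{k\ge0}t^k\clA^k/k!$, invariance under $\clA$ only guarantees that each partial sum, and hence each finite-order approximation, lies in $\clC^\ast$, so a priori $e^{t\clA}g$ and the time-integral land in the \emph{closure} of $\clC^\ast$ rather than in $\clC^\ast$ itself. Securing the inclusions at the level of algebraic subspaces is the delicate point, and it is the exact analogue of how the Cayley--Hamilton theorem truncates the corresponding series $\sp\{H,AH,\dots,A^{d-1}H\}$ in~\eqref{eq:ctrl-subspace-LTI}; consistently, the controllability application only needs the closure $\overline{\Rsp(\clL)}$. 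A secondary difficulty is that $\bar V_t$ is not free but is tied to $Y$ through the martingale representation built into the BSDE, so establishing $\bar V_t^j\in\clC^\ast$ requires the recursive/fixed-point analysis sketched above rather than a direct computation.
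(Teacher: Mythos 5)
Your plan (show that $\Rsp(\clL)$ contains $\ones$ and is invariant under $\clA$ and multiplication by $h$, and conversely that every $Y_0=\clL(U,c)$ lies in the smallest such subspace $\clC^\ast$) is a reasonable outline, but neither inclusion is actually closed, and the two obstacles you flag at the end are genuine, not cosmetic. For $\Rsp(\clL)\subseteq\clC^\ast$: your Duhamel formula correctly reduces the question to showing $h^\tp(\bar U_t+\bar V_t)\in\clC^\ast$, but $\bar V_t=\tE(V_t)$ is defined only implicitly through the martingale representation underlying the BSDE, and the step ``feeding the same representation back into the BSDE should exhibit each $\bar V_t^j$ as a finite word in $\clA$ and $h$ applied to $\ones$'' is asserted rather than performed --- there is no reason a priori that $\tE(V_t)$ belongs to any finitely generated subspace. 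On top of that, $e^{t\clA}$, the time integral over $[0,T]$, the $\epsilon\to 0$ limit of your localized controls, and differentiation in $T$ all produce elements of the \emph{closure} of the relevant algebraic subspace only, as you yourself note; so even granting the $\bar V$ step, both inclusions degrade to statements about closures, which is not what the proposition asserts.

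The missing idea, and the way the paper's proof (Section~\ref{ss:pf-prop43}) avoids every one of these difficulties, is to argue on the dual side through the annihilator. Since $\Nsp(\clL^\dagger)=\Rsp(\clL)^\bot$ (Theorem~\ref{thm:dual-adjoint}), one characterizes $\Nsp(\clL^\dagger)$ instead: $\mu\in\Nsp(\clL^\dagger)$ means $\mu(\ones)=0$ and $\sigma_t^\mu(h)\equiv 0$ on $[0,T]$. Writing the Zakai equation for $\sigma_t^\mu(g)$,
\[
\sigma_t^\mu(g)=\mu(g)+\int_0^t\sigma_s^\mu(\clA g)\ud s+\int_0^t\sigma_s^\mu(hg)\ud Z_s ,
\]
the uniqueness of the semimartingale decomposition shows that $\sigma_t^\mu(g)\equiv 0$ for all $t\in[0,T]$ forces \emph{simultaneously} $\mu(g)=0$, $\sigma_t^\mu(\clA g)\equiv 0$ and $\sigma_t^\mu(hg)\equiv 0$. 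Iterating from $g=h$ produces, in a single recursion, the invariance under both $\clA$ and multiplication by $h$, identifies $\Nsp(\clL^\dagger)$ with the annihilator of $\clC^\ast$, and never requires any control over $V$, over the semigroup $e^{t\clA}$, or over limits of controls. You should restructure the argument around this Zakai-equation recursion on measures rather than around the primal BSDE solution formula; the primal route founders precisely on the implicit definition of $V$ and on the closure issues you identified.
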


\subsection{Controllability gramian} \label{ssec:gramian}
The \emph{controllability gramian} $\sW :\clM(\bS)\to C_b(\bS)$ is a deterministic linear operator defined as follows:
\[
\sW := \clL \clL^\dagger
\]	
%	\begin{proposition}\label{prop:gramian-formula}
	%		Consider the controllability problem $\clL$ for the dual system~\eqref{eq:dual-bsde}. The controllability gramian is defined by $\sW = \clL\clL^\dagger$, and it is given by
	%		\[
	%		\sW = \tE\Big(\Psi_T^\tp \ones \mu(\ones) + \int_0^T \Psi_t^\tp h h\Psi_t \mu\Big)
	%		\]
	%	\end{proposition}
Explicitly, for $\mu \in \clM(\bS)$, 
\[
\sW \mu = Y_0
\]
where $Y_0$ is obtained for solving the BSDE
\[
-\ud Y_t(x) = \big(\clA Y_t(x) + h^\tp(x)(\sigma_t(h)+V_t(x))\big)\ud t - V_t^\tp(x)\ud Z_t\quad Y_T(x) = \mu(\ones),\quad x\in \bS
\]
As in the deterministic settings, the gramian yields an explicit control input to transfer initial condition $Y_0 = f$ to $Y_T = c\ones$. The following proposition is proved in Section~\ref{ss:pf-prop44}
%	The controllability gramian of the dual system provides another way to determine the observability of the nonlinear system $(\clA, h)$, as stated in the following proposition:

\begin{proposition}\label{prop:gramian-observability}
	Suppose $f\in \Rsp(\sW)$, i.e., there exists $\mu\in\clM(\bS)$ such that $f = \sW \mu$. Then the control
	\[
	U_t = \sigma_t^\mu(h),\quad 0\le t \le T
	\]
	transfers the system~\eqref{eq:dual-bsde} from $Y_T = \mu(\ones)\ones$ to $Y_0 = f$.
	Suppose $\tilde{U}$ is another control which also transfers $Y_T = c\ones$ to $Y_0 = f$ for some $c\in \Re$. Then
	\[
	%	\text{\rm(minimum norm)}\qquad 
	\tE\Big(\int_0^T|\tilde{U}_t|^2 \ud t \Big) + c^2 \ge \tE\Big(\int_0^T |U_t|^2\ud t \Big) + \big(\mu(\ones)\big)^2
	\] 
	%Therefore, $y \in \clC$. %In consequence, if $\overline{\Rsp(\sW)} = C_b(\bS)$, then the nonlinear system is observable. %for finite case, if $\sW$ is full rank, then the nonlinear system $\clC$ is observable.
\end{proposition}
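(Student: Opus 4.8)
The plan is to replicate the argument of \Proposition{prop:LTI-gramian} in the Hilbert-space geometry of the dual control problem, exploiting the fact (\Theorem{thm:observability-definition}) that $\clL^\dagger$ is the genuine adjoint of $\clL$ and not merely a formal transpose. The opening claim will be immediate from the definition $\sW = \clL\clL^\dagger$ together with the explicit form $\clL^\dagger\mu = \big(\{\sigma_t^\mu(h):0\le t\le T\},\,\mu(\ones)\big)$ recorded in~\eqref{eq:observability-operator}: if $f = \sW\mu$ then $f = \clL\big(\clL^\dagger\mu\big) = \clL\big(\{\sigma_t^\mu(h)\},\mu(\ones)\big)$, which by the very definition of $\clL$ in~\eqref{eq:ctrl-operator} says that the control $U_t = \sigma_t^\mu(h)$ steers the BSDE~\eqref{eq:dual-bsde} with terminal data $Y_T = \mu(\ones)\ones$ to $Y_0 = f$.

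For the minimum-energy statement I would set $a := \clL^\dagger\mu = (U,\mu(\ones))$ and $b := (\tilde U, c)$, viewing both as elements of the Hilbert space $L^2_\clZ\big(\Omega\times[0,T];\Re^m\big)\times\Re$ endowed with the inner product $\langle (U_1,c_1),(U_2,c_2)\rangle = \tE\big(\int_0^T U_{1,t}^\tp U_{2,t}\ud t\big) + c_1 c_2$. By hypothesis $\clL a = f = \clL b$, so linearity of the solution map $\clL$ (inherited from linearity of the BSDE) gives $\clL(b-a) = 0$. The key step is the orthogonality $b-a\perp a$, which I obtain by unwinding the adjoint pairing:
\[
\langle b-a,\,a\rangle = \langle b-a,\,\clL^\dagger\mu\rangle = \big\langle \clL(b-a),\,\mu\big\rangle = \mu(0) = 0,
\]
where the second equality is the defining relation of the adjoint and the pairing on the right is the duality pairing of $C_b(\bS)$ with $\clM(\bS)$.

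Finally, this orthogonality yields the Pythagorean identity $\|b\|^2 = \|a\|^2 + \|b-a\|^2 \ge \|a\|^2$, and writing out the two squared norms produces exactly the asserted inequality $\tE\big(\int_0^T|\tilde U_t|^2\ud t\big) + c^2 \ge \tE\big(\int_0^T|U_t|^2\ud t\big) + \big(\mu(\ones)\big)^2$. I do not expect a genuine analytic obstacle here; the only real care is bookkeeping. One must apply the adjoint identity with the $\Re$-component retained, so that the terminal constants $\mu(\ones)$ and $c$ enter the energies on equal footing with the control effort, and one must invoke joint linearity of $\clL$ in the pair $(U,c)$. Both are guaranteed by the linearity of~\eqref{eq:dual-bsde}, and it is precisely because \Theorem{thm:observability-definition} has already upgraded $\clL^\dagger$ to a bona fide adjoint on the relevant spaces that no new difficulty arises relative to the LTI computation.
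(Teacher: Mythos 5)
Your proposal is correct and follows essentially the same route as the paper's proof: identify the steering control as the $L^2_\clZ\times\Re$ component of $\clL^\dagger\mu$ so that $\clL\clL^\dagger\mu = \sW\mu = f$, then use $\clL(b-a)=0$ together with the adjoint pairing $\langle b-a,\clL^\dagger\mu\rangle = \langle\clL(b-a),\mu\rangle = 0$ to get orthogonality and the Pythagorean inequality. No gaps.
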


%	The claims in this section will become simple rank conditions in finite state-space case. The explicit formulations for finite case and linear-Gaussian case are discussed in the following section.

\subsection{Stabilizability and detectability}
%	
%	In linear systems theory, a natural extension of controllability is \emph{stabilizability}.
%	Heuristically, \eqref{eq:dual-bsde} may be called stabilizable if any function that is not decay into constant function under action of $\clA$ must be controllable. 

Analogous to the LTI case, the definitions for stabilizability and detectability begin with the definition of stable subspace. Consider the solution $\{\mu_t\in\clM(\bS): t\ge 0\}$ to the 
\emph{Forward Kolmogorov equation}:
\begin{equation}\label{eq:FKE}
	\mu_t(f) = \mu_0(f) + \int_0^t \mu_s\big(\clA f\big)\ud s,\quad t\ge 0
\end{equation}
The stable subspace of $\clA$ is defined by using the notation of weak convergence:
\[
S_s := \big\{\mu_0\in \clM(\bS): \mu_T(f)\,\to\,0 \text{ as }T\to \infty, \forall\, f\in C_b(\bS)\big\}
\]
Observe that a constant function is $\clA$-invariant and therefore 
$\mu_T(\ones) = \mu_0(\ones)$. Consequently, $S_s\subset \clM_0(\bS)$. 
%$\mu_0(\ones) = 0$ for any $\mu_0\in S_s$.
The stabilizability and detectability are defined as follows:
%	It is the span of the eigenvectors of $A$ with the negative real part. 
%	The unstable mode of the system is the orthogonal complement $S_s^\bot$. The stabilizability and detectability are defined as follows:
%

\medskip

\begin{definition}\label{def:stabilizability}
	The BSDE~\eqref{eq:dual-bsde} is \emph{stabilizable} if $\Rsp(\clL)^\bot \subset S_s$. %the unstable mode is controllable
\end{definition}

\medskip

\begin{definition}\label{def:detectability}
	The nonlinear model $(\clA, h)$ is \emph{detectable} if $\Nsp(\clL^\dagger) \subset S_s$. %the unobservable space is stable, that is, 
\end{definition}

\medskip

\begin{corollary}\label{cor:detect-stab}
	The nonlinear model $(\clA, h)$ is detectable if and only if the 
	BSDE~\eqref{eq:dual-bsde} is stabilizable.
\end{corollary}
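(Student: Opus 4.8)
The plan is to observe that the corollary is an immediate consequence of the annihilator identity in Theorem~\ref{thm:dual-adjoint}, once the two definitions are written side by side. First I would recall that detectability of $(\clA,h)$ (Definition~\ref{def:detectability}) is the containment $\Nsp(\clL^\dagger)\subset S_s$, while stabilizability of the BSDE~\eqref{eq:dual-bsde} (Definition~\ref{def:stabilizability}) is the containment $\Rsp(\clL)^\bot\subset S_s$. In both cases the subspace on the left is a subspace of $\clM(\bS)=C_b(\bS)^\dagger$, and $S_s\subset\clM(\bS)$ is the stable subspace of $\clA$ defined through the forward Kolmogorov equation~\eqref{eq:FKE}.

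Next I would invoke Theorem~\ref{thm:observability-definition}, which establishes that $\clL^\dagger$ (given explicitly by~\eqref{eq:observability-operator}) is indeed the adjoint of the bounded linear controllability operator $\clL$ of~\eqref{eq:ctrl-operator}, with $\clX=L^2_\clZ\big(\Omega\times[0,T];\Re^m\big)\times\Re$ and $\clY=C_b(\bS)$. Since $\clX$ and $\clY$ are Banach spaces and $\clL:\clX\to\clY$ is bounded, Theorem~\ref{thm:dual-adjoint} applies and yields
\[
\Rsp(\clL)^\bot=\Nsp(\clL^\dagger),
\]
an identity of subspaces of $\clY^\dagger=\clM(\bS)$. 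Substituting this identity into the two definitions shows that the defining containment for stabilizability and the defining containment for detectability are one and the same statement, so each holds if and only if the other does.

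There is essentially no analytic obstacle here: all of the content has already been absorbed into Theorem~\ref{thm:observability-definition} (identification of $\clL^\dagger$ as the adjoint) and Theorem~\ref{thm:dual-adjoint} (the closed-range/annihilator theorem). The only points worth checking are bookkeeping ones: that the two subspaces live in the common space $\clM(\bS)$, and that $\clL$ is bounded so that Theorem~\ref{thm:dual-adjoint} is applicable---both guaranteed by the standing existence, uniqueness and regularity assumptions on the BSDE~\eqref{eq:dual-bsde}. As a sanity check one may note that $\ones\in\Rsp(\clL)$ by property~1 of Proposition~\ref{thm:controllable-subspace}, so every element of $\Rsp(\clL)^\bot=\Nsp(\clL^\dagger)$ annihilates $\ones$ and hence lies in $\clM_0(\bS)$, consistent with the observation $S_s\subset\clM_0(\bS)$.
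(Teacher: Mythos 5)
Your proposal is correct and is exactly the argument the paper intends: the corollary is stated without proof precisely because it is the identity $\Rsp(\clL)^\bot=\Nsp(\clL^\dagger)$ from Theorem~\ref{thm:dual-adjoint} (applicable since Theorem~\ref{thm:observability-definition} identifies $\clL^\dagger$ as the adjoint of $\clL$) substituted into Definitions~\ref{def:stabilizability} and~\ref{def:detectability}. Your additional consistency check that both subspaces lie in $\clM_0(\bS)$ is a nice touch but not needed.
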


\medskip

\begin{remark}\label{rm:vh-detectabilty}
	Detectability of an HMM is also considered by van Handel~\cite[Definition V.1]{van2010nonlinear}.
	His statement is as follows: An HMM is detectable if for any $\mu,\nu\in\clP(\bS)$, either:
	\[
	\sP^\mu|_{\clZ_T} = \sP^\nu|_{\clZ_T}\quad \text{or} \quad \|\mu_T - \nu_T\|_\tv\; \longrightarrow \;0\quad \text{as }T\to \infty
	\]
	By Theorem~\ref{thm:observability-Zakai-relation}, this statement is identical to the Def.~\ref{def:detectability}.
	%	 For finite state-space case, the detectability is shown to be a necessary and sufficient condition for filter stability~\cite[Theorem V.2]{van2010nonlinear}.
\end{remark}

\medskip

\begin{remark}\label{rm:ergodicity}
	We say the state process is \emph{ergodic} if the it admits a unique 
	invariant measure $\bmu$ such that for all $\mu_0 \in \clP(\bS)$
	\[
	\|\mu_T - \bmu\|_\tv \; \longrightarrow \;0\quad \text{as }T\to 
	\infty
	\]
	Now, for any $\tilde{\mu}_0 \in 
	\clM_0(\bS)$, there exists $\mu_0^{(1)},\mu_0^{(2)}\in\clP(\bS)$ and 
	$c\in\Re$ such that $\tilde{\mu}_0 = c(\mu_0^{(1)}-\mu_0^{(2)})$. Therefore,
	\[
	\|\tilde{\mu}_T\|_\tv = c\|\mu_T^{(1)}-\mu_T^{(2)}\|_\tv 
	\le c\|\mu_T^{(1)}-\bmu\|_\tv +c\|\mu_T^{(2)}-\bmu\|_\tv 
	\;\longrightarrow\;0\quad \text{as }T\to \infty
	\]
	Therefore, if the state process is ergodic then $S_s = \clM_0(\bS)$, and the model $(\clA,h)$ is stabilizable irrespective of $h$.
\end{remark}

%Linear Gaussian example. Use Pinsker's inequality and KL divergence
%\[
%\|N(m_1,\Sigma_1) - N(m_2,\Sigma_2)\|_\tv \le 
%\half \sqrt{\tr(\Sigma_1^{-1}\Sigma_2-I)+(m_1-m_2)^\tp\Sigma^{-1}(m_1-m_2)-\log
	% \det(\Sigma_2\Sigma_1^{-1})}
%\]
%Now for $A=0$ case, $\Sigma_1 = \Sigma_0^\mu +QT$, $\Sigma_2 = 
%\Sigma_0^\nu+QT$, then the right hand side goes to 0 and therefore all 
%measures 
%in $\clM_0(\bS)$ is stable. 

%	
%	\medskip
%	
%	\begin{definition}\label{def:stabilizability}
	%		The dual BSDE is \emph{stabilizable} if for any $f \in C_b(\bS)$ either of the following is true:
	%		\begin{itemize}
		%			\item $f \in \clC$. That is, there exists a control $U$ such that $f$ can be driven to some $c\ones$ at time $T$.
		%			\item Let $f_t$ be the solution to the forward equation
		%			\[
		%			\frac{\partial f_t}{\partial t} = \clA f_t,\quad f_0 = f
		%			\]
		%			Then there exists some $c\in \Re$ such that
		%			\[
		%			\big\|f_t - c \big\| \; \longrightarrow \; 0
		%			\]
		%		\end{itemize}
	%%		\[
	%%		\|\sP^{\mu}(X_t\in \cdot\,) - \sP^{\nu}(X_t\in\cdot\,)\|_\tv \stackrel{t\to \infty}{\longrightarrow} 0
	%%		\]
	%%		where $\mu\in\clP(\bS)$ and $\nu = \mu+\epsilon\tilde{\mu} \in \clP(\bS)$.
	%	\end{definition}
%

\medskip

%	\section{Explicit formulations for guiding examples}
\section{Explicit formulae for the finite state space case} \label{sec:finite-observability}

For finite state space, both $C_b(\bS)$ and $\clM(\bS)$ are isomorphic to $\Re^d$ (equipped with suitable norms). Therefore, the dual control system~\eqref{eq:dual-bsde} is expressed as follows:
\begin{equation}\label{eq:dual-ctrl-finite}
	-\ud Y_t = \Big(AY_t +HU_t+ \sum_{j=1}^mH^j\cdot V_t^j\Big)\ud t - V_t \ud Z_t,\quad Y_T = c\ones
\end{equation}
where $H^j$, $V_t^j$ denote the $j^{\text{th}}$ column of $H$ and $V_t$, respectively, and the dot notation denotes the element-wise product.
%$\dv^\dagger(HV_t^\tp)$ is the vector of the diagonal elements of the matrix $HV^\tp$.
The solution pair is $(Y,V) \in L^2_\clZ([0,T];\Re^d)\times L^2_\clZ([0,T];\Re^{d\times m})$.

The controllable space $\clC$ is also a subspace of $\Re^d$. Directly by applying Prop.~\ref{thm:controllable-subspace}, it is computed as follows:
\begin{align}
	\clC = \sp\big\{\ones, &\,  H, \,  AH, \,  A^2H, \,  A^3H, \, \ldots, \label{eq:obs_gram_nl}\\
	&H\cdot H, \,  A(H \cdot H), \,  H\cdot (AH), \,  A^2(H\cdot H),\ldots, \nonumber\\
	&H\cdot (H\cdot H), \,  (AH)\cdot (H\cdot H), \,  H\cdot A(H \cdot H), \, \ldots \big\} \nonumber
\end{align}

%	The nonlinear model $(\clA,h)$ in finite case is observable if the vectors in the
%	right-hand side of~\eqref{eq:obs_gram_nl} span $\Re^d$.  This provides
%	a test for verifying observability of the nonlinear model.

One notes that the first line of~\eqref{eq:obs_gram_nl} is identical to the controllability matrix for the LTI system~\eqref{eq:LTI-ctrl}. Therefore, if the linear model~\eqref{eq:LTI-obs} is observable then the nonlinear model is also observable. However, the latter property is in general much weaker. 
For instance, the following proposition provides a sufficient condition for stochastic observability regardless of $A$. Its proof appears in Section~\ref{ss:pf-prop45}.

\begin{proposition}\label{prop:sufficient}
	Consider the nonlinear model $(\clA,h)$ for the finite state-space.
	The system is observable if $h(i)=H_i$ is an injective map from
	$\bS$ into $\Re^m$. (The map is injective if and only if $H_i \neq H_j$ for all $i\neq
	j$ where $H_i$ is the $i^\text{th}$ row of the $d\times m$ matrix $H$).    
	If $A=0$ then the injective property of the function $h$ is
	also necessary for observability.    
	%The finite-state system $(A,H)$ is observable regardless of $A$ if $H$ is injective. Also, it is necessary when $A=0$.
\end{proposition}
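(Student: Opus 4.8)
The plan is to translate observability into the algebraic description of the controllable subspace and then close the argument with an elementary interpolation. By Theorem~\ref{thm:observability-definition} the model $(\clA,h)$ is observable if and only if $\clC=\Rsp(\clL)$ is dense in $C_b(\bS)=\Re^d$; since $\clC$ is a finite-dimensional, hence closed, subspace of $\Re^d$, this is equivalent to $\clC=\Re^d$. By Proposition~\ref{thm:controllable-subspace} (equivalently, the explicit formula~\eqref{eq:obs_gram_nl}), $\clC$ is the smallest subspace of $\Re^d$ that contains $\ones$ and is closed under the two maps $g\mapsto\clA g=Ag$ and $g\mapsto gH^j$ for each column $H^j$ of $H$, where $gH^j$ is the element-wise product.

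For sufficiency I would ignore the generator altogether and use only the multiplicative closure. Starting from $\ones$ and iterating $g\mapsto gH^j$ shows that $\clC$ contains every monomial in the coordinate functions $H^1,\dots,H^m$, hence the whole subalgebra $\Pi:=\{\,p(H^1,\dots,H^m):p\in\Re[y_1,\dots,y_m]\,\}\subseteq\Re^d$, where $p(H^1,\dots,H^m)$ is the element of $\Re^d$ whose $i$-th entry is $p(H_i)$, with $H_i\in\Re^m$ the $i$-th row of $H$. The key step is to show $\Pi=\Re^d$ under the injectivity hypothesis. Injectivity of $h$ means the evaluation points $H_1,\dots,H_d$ are pairwise distinct, so for each pair $i\neq k$ there is a coordinate $\ell$ with $H(i,\ell)\neq H(k,\ell)$; setting $q_{ik}(y)=\dfrac{y_\ell-H(k,\ell)}{H(i,\ell)-H(k,\ell)}$ and $p_i:=\prod_{k\neq i}q_{ik}$ gives a polynomial with $p_i(H_k)=\delta_{ik}$, so that $p_i(H^1,\dots,H^m)=e_i\in\clC$. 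As $i$ ranges over $\bS$ this yields $\clC\supseteq\sp\{e_1,\dots,e_d\}=\Re^d$, so $(\clA,h)$ is observable.

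For necessity with $A=0$, the map $g\mapsto\clA g$ is identically zero, so the closure condition involving the generator is vacuous and $\clC$ equals \emph{exactly} the subalgebra $\Pi$ above (this is also visible from~\eqref{eq:obs_gram_nl}, where every term containing $A$ drops out). Suppose $h$ fails to be injective, so $H_i=H_j$ for some $i\neq j$. Then each monomial in $H^1,\dots,H^m$, and hence every element $g=p(H^1,\dots,H^m)$ of $\Pi=\clC$, satisfies $g(i)=p(H_i)=p(H_j)=g(j)$. Therefore $\clC\subseteq\{g\in\Re^d:g(i)=g(j)\}$, a proper subspace that does not contain $e_i$, so $\clC\neq\Re^d$ and the model is not observable. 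Contrapositively, observability with $A=0$ forces $h$ to be injective.

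The only computations are the interpolation construction of the $p_i$ and the verification that $\clC$ contains all monomials, both routine. I expect the one point needing care to be distinguishing the two regimes: for sufficiency only the inclusion $\clC\supseteq\Pi$ is used (so the extra generator terms are harmless), whereas for necessity the equality $\clC=\Pi$ is essential and relies on $A=0$; keeping the element-wise product bookkeeping in~\eqref{eq:obs_gram_nl} straight is the only real source of error. No deeper obstacle is anticipated, since finite-dimensionality reduces the density statement of Theorem~\ref{thm:observability-definition} to plain surjectivity of polynomial evaluation at the rows of $H$.
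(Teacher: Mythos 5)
Your proof is correct, and the overall strategy coincides with the paper's: reduce observability to $\clC=\Re^d$ via Theorem~\ref{thm:observability-definition}, observe that only the multiplicative closure of $\clC$ (the maps $g\mapsto gH^j$ starting from $\ones$) is needed for sufficiency, and for necessity with $A=0$ note that coincident rows of $H$ force every element of $\clC$ to take equal values at the two corresponding states. The one genuine divergence is in how the spanning claim is established. The paper first reduces to the scalar case $m=1$ by choosing a generic direction $a\in\Re^m$ with $(e_i-e_j)^\tp Ha\neq 0$ for all $i\neq j$ (possible because only finitely many hyperplanes must be avoided), and then invokes a Vandermonde/Gaussian-elimination argument on the powers of $\tilde H=Ha$. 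You instead work directly in $m$ variables and build the indicator vectors $e_i$ by multivariate Lagrange interpolation, taking $p_i=\prod_{k\neq i}q_{ik}$ with each $q_{ik}$ an affine function of a single separating coordinate. Your route avoids the scalarization step and the genericity argument entirely, at the cost of the slightly heavier bookkeeping that the separating coordinate $\ell=\ell(i,k)$ varies with the pair; both constructions produce spanning elements that are products of at most $d-1$ of the generators, so neither buys a sharper degree bound. The necessity argument is the same in both.
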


\begin{remark}\label{rm:finite-case-equivalency}
	In~\cite{van2009observability}, test for observability is provided 
	by defining the space of observable functions (see Def.~\ref{def:un-observable-measures-observable-functions}). For finite case, van Handel defines 
	$\{h_1,\ldots,h_r\}:=h(\bS)$ be the set of possible observations. Obviously 
	$r \le d$ with equality holds when every element in the state-space yields 
	a distinct outcome. He also defines projection matrices $P_{h_k}\in\Re^{d\times d}$ is 
	defined by $[P_{h_k}]_{ij} = 1$ if $i=j$ and $h(i)=H_i=h_k$, and zero 
	otherwise. The space of observable functions is then given by~\cite[Lemma 9]{van2009observability}
	\[
	\clO = \sp\big\{P_{n_0}AP_{n_0}AP_{n_2}\cdots AP_{n_k}\ones:k\ge 0,n_i\in h(\bS)\big\}
	\]
	It is shown in Section~\ref{ssec:justifiction-vanHandel-and-me-finite} that $\clO= \clC$ (formula in~\eqref{eq:obs_gram_nl}).
	%	Since we have already shown that the observability definition in~\cite{van2009observability} and ours are the same, one would expect the test is also equivalent. It is indeed the case as stated in the following proposition:
	%	
	%	\begin{proposition}\label{prop:vanHandel-observability}
		%		Consider the nonlinear model $(\clA, h)$ for finite case. Then
		%		\[
		%		\clO = \clC
		%		\]
		%	\end{proposition}
	
\end{remark}

\subsubsection{Controllability gramian}

The controllability gramian $\sW$ is a $d\times d$ deterministic matrix.
Recall that the solution operator of the Zakai equation $\Psi_t$ is now $d\times d$ matrix in finite case (see Example~\ref{ex:finite-Zakai-operator}).
It is shown in Section~\ref{ssec:derivation-ctrl-gramian} that
\[
\sW = \ones\ones^\tp + \tE\Big(\int_0^T \Psi_t^\tp HH^\tp \Psi_t \ud t\Big)
\]	

Since $\sW$ is a deterministic matrix in $\Re^{d\times d}$, the Prop.~\ref{prop:gramian-observability} becomes a simple rank condition:
\[
\sW \text{ is full rank}\quad \Longleftrightarrow \quad \text{Dual system~\eqref{eq:dual-ctrl-finite} is controllable}
\]
%	Conversely, if $\sW$ is not full rank, namely, there exists a measure $\eta$ such that $\sW \eta = 0$, then $\langle \eta, \sW \eta\rangle = |\clL \eta|^2 = 0$, so $\eta$ is in the null-space of $\clL$. Henceforth, $\sW$ is full rank if and only if the dual system~\eqref{eq:dual-ctrl-finite} is controllable.

\subsubsection{Stabilizability of the dual system}

The stabilizability Def.~\ref{def:stabilizability} reduces to simple inclusion property.
By Ger\v{s}gorin circle theorem~\cite[Theorem 6.1.1]{horn_johnson_1985},
% The 
%statement 
%of the theorem is as follows. For a complex square matrix $[a_{ij}]$, let 
%$r_i$ be defined by
%\[
%r_i = \sum_{j\neq i} |a_{ij}|
%\]
%A Ger\v{s}gorin disc is a closed ball centered at $a_{ii}$:
%\[
%B(a_{ii},r_i) = \{z\in\mathbb{C}: |z-a_{ii}|\le r_i\}
%\] 
%Then every eigenvalue of $[a_{ij}]$ lies within the disc $B(a_{ii}, r_i)$ 
%for some $i$.
%
%For the statement of the Lemma~\ref{lemma:Gershgorin}, note that 
%$A$ is a rate matrix, and therefore $A(i,j) >0$ and
%\[
%A(i,i) = -\sum_{j\neq i} A(i,j)
%\]
%Henceforth, all Ger\v{s}gorin discs touches the origin from the left. \qed
all eigenvalues of $A$ are in one of closed discs centered at $A(i,i)$ and radius $|A(i,i)|$ for $i\in\bS$. In consequence, all eigenvalues of $A$ are either in the open left half-plane or at zero. Therefore, the unstable mode of $A$ is %asymptotically stable except the zero subspace:
\[
S_s^\bot = S_0 := \{f \in \Re^d \mid \; Af = 0\} 
\]
This yields a simple characterization of stabilizability of the BSDE~\eqref{eq:dual-ctrl-finite} (and therefore also the detectability of model $(\clA,h)$ using duality).

\medskip

\begin{corollary}\label{prop:detectability-nullspace}
	The dual BSDE~\eqref{eq:dual-ctrl-finite} is stabilizable if and only if $S_0\subset\clC$.
\end{corollary}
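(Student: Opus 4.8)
The plan is to reduce the claim to an elementary fact of finite-dimensional linear algebra, namely the inclusion-reversing duality between subspaces and their annihilators, together with the identification $S_s^\bot = S_0$ that was already established in the preceding Ger\v{s}gorin argument. Since $\bS$ is finite, both $C_b(\bS)$ and $\clM(\bS)$ are copies of $\Re^d$, the duality pairing $\langle \mu, f\rangle = \mu^\tp f$ is nondegenerate, and every subspace is automatically closed. Consequently all the standard annihilator identities hold without any topological caveats, and the substantive work is really just the unwinding of definitions.

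First I would unwind the definition of stabilizability. By Definition~\ref{def:stabilizability}, the dual BSDE~\eqref{eq:dual-ctrl-finite} is stabilizable precisely when $\Rsp(\clL)^\bot \subset S_s$, that is, when $\clC^\bot \subset S_s$, where $\clC = \Rsp(\clL)$ is the (finite-dimensional, hence closed) controllable subspace inside $\Re^d \cong C_b(\bS)$ and its annihilator $\clC^\bot$ lives in $\clM(\bS)$. The second step is to take annihilators of both sides of this inclusion. For subspaces of a finite-dimensional space paired nondegenerately with its dual, inclusion reverses under the annihilator map and the double annihilator returns the original subspace. Applying this to $\clC^\bot \subset S_s$ yields the equivalent statement $S_s^\bot \subset (\clC^\bot)^\bot = \clC$. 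Finally, substituting the already-derived identity $S_s^\bot = S_0 = \{f \in \Re^d : Af = 0\}$ gives $S_0 \subset \clC$, which is exactly the asserted condition. Reading the chain of equivalences backwards—using the double-annihilator identity $(S_s^\bot)^\bot = S_s$ for the reverse implication—delivers the converse and completes the proof.

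The only points requiring care, and these are minor, are bookkeeping about which copy of $\Re^d$ each annihilator inhabits: $\clC^\bot$ sits in $\clM(\bS)$ while $S_s^\bot$ and $S_0$ sit in $C_b(\bS)$, so that $(\clC^\bot)^\bot$ correctly returns to $C_b(\bS)$ and may be compared with $\clC$ and $S_0$. One should also record that $S_s$ is a genuine (closed) subspace, being the stable subspace of the linear flow $\dot\mu = A^\tp\mu$ generated by the Forward Kolmogorov equation~\eqref{eq:FKE}, so that the double-annihilator identity applies to it as well. Neither of these poses a real obstacle once the finite-dimensional identifications are fixed; the genuine content of the argument has already been discharged in establishing $S_s^\bot = S_0$, and what remains is a purely formal duality computation.
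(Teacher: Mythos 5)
Your proof is correct and takes essentially the same route as the paper: the corollary is presented there as an immediate consequence of Definition~\ref{def:stabilizability} together with the identity $S_s^\bot = S_0$ obtained from the Ger\v{s}gorin argument, and your finite-dimensional annihilator computation ($\clC^\bot \subset S_s \iff S_s^\bot \subset (\clC^\bot)^\bot = \clC$) merely spells out the duality step the paper leaves implicit. No gaps.
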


\medskip

In Chapter~\ref{ch:filter-stability-2}, we will discuss the relationship of stabilizability to filter stability.

\section{Proofs of the statements}

\subsection{Proof of Proposition~\ref{prop:clark-result}}\label{ssec:pf-clark-result}

%
%\begin{lemma}[Corollary 1.1.15 in \cite{van2006filtering}] 
	%\label{cor:change-of-measure-ZT}
	%	Let $\sP|_{\clZ_T}$ be the restriction of $\sP$ on $\clZ_T$ and 
	%	$\tsP|_{\clZ_T}$ be the restriction of $\tsP$ on $\clZ_T$. Then
	%	\[
	%	\frac{\ud \sP|_{\clZ_T}}{\ud \tsP|_{\clZ_T}} = \sigma_T(\ones) = 
	%	\exp\Big(\int_0^T \pi_t(h) \ud Z_t - \half \int_0^T |\pi_t(h)|^2 \ud 
	%	t\Big)
	%	\]
	%\end{lemma}

By applying Girsanov theorem on the innovation process, one 
obtains~\cite[Corollary 
1.1.15]{van2006filtering}:
\begin{align*}
	\frac{\ud \sP^\mu|_{\clZ_T}}{\ud \tsP^\mu|_{\clZ_T}} &= \exp\Big(\int_0^T 
	\pi_t^\mu(h) \ud Z_t - \half \int_0^T |\pi_t^\mu(h)|^2 \ud t\Big)\\
	\frac{\ud \sP^\nu|_{\clZ_T}}{\ud \tsP^\nu|_{\clZ_T}} &= \exp\Big(\int_0^T 
	\pi_t^\nu(h) \ud Z_t - \half \int_0^T |\pi_t^\nu(h)|^2 \ud t\Big)
\end{align*}
Note that $\tsP^\mu|_{\clZ_T} = \tsP^\nu|_{\clZ_T}$ because $Z$ has the same probability law under either probability measure. Therefore we have
\begin{align*}
	\frac{\ud \sP^\mu|_{\clZ_T}}{\ud \sP^\nu|_{\clZ_T}} &= \exp\Big(\int_0^T \pi_t^\mu(h) \ud Z_t - \half \int_0^T |\pi_t^\mu(h)|^2 \ud t-\int_0^T \pi_t^\nu(h) \ud Z_t + \half \int_0^T |\pi_t^\nu(h)|^2 \ud t\Big)\\
	&=\exp\Big(\int_0^T\pi_t^\mu(h)-\pi_t^\nu(h)\ud Z_t + \half \int_0^T-|\pi_t^\mu(h)|^2+|\pi_t^\nu(h)|^2\ud t\Big)\\
	&=\exp\Big(\int_0^T\pi_t^\mu(h)-\pi_t^\nu(h)\ud I_t^\mu + \half \int_0^T|\pi_t^\mu(h)-\pi_t^\nu(h)|^2\ud t\Big)
\end{align*}
Therefore,
\[
\kl\big( \sP^\mu|_{\clZ_T}\mid  \sP^\nu|_{\clZ_T}\big) = \E^\mu\Big(\log\frac{\ud \sP^\mu|_{\clZ_T}}{\ud \sP^\nu|_{\clZ_T}}\Big) = \half \E^\mu\Big(\int_0^T|\pi_t^\mu(h)-\pi_t^\nu(h)|^2\ud t\Big)
\]
because $I^\mu$ is $\sP^\mu$-martingale.
\qed

\subsection{Proof of Theorem~\ref{thm:observability-Zakai-relation}}\label{pf-obs-Zakai}

\noindent(1 $\Longleftrightarrow$ 2) It is directly deduced from Prop.~\ref{prop:clark-result}.

\noindent(2 $\Longrightarrow$ 3) By~\eqref{eq:normalize-Zakai}, $\sigma_t(\ones)$ is expressed by 
\[
\sigma_t(\ones) = 1 + \int_0^t \sigma_s(\ones)\pi_s(h)\ud Z_s
\]
and therefore, $\pi_s^\mu(h) = \pi_s^\nu(h)$ for all $0\le s\le t$ implies $\sigma_t^\mu(\ones) = \sigma_t^\nu(\ones)$. Due to~\eqref{eq:normalize-Zakai}, it implies $\sigma^\mu(h) = \sigma^\nu(h)$.

\noindent(3 $\Longrightarrow$ 2) From the Zakai equation~\eqref{eq:Zakai},
\[
\sigma_t(\ones) = 1 + \int_0^t \sigma_s(h)\ud Z_s
\]
and therefore, $\sigma_t^\mu(h) = \sigma_t^\nu(h)$ implies $\sigma_t^\mu(\ones) = \sigma_t^\nu(\ones)$. Due to~\eqref{eq:normalize-Zakai}, it implies $\pi_t^\mu(h) = \pi_t^\nu(h)$. Therefore, 3 implies 2.
\qed

\subsection{Proof of Theorem~\ref{thm:observability-definition}}\label{ssec:pf-observability-definition}

Note that $L^2_\clZ\big(\Omega\times[0,T];\Re^m\big)\times \Re$ is a Hilbert space equipped with inner product
\[
\langle (U,c),(V,d)\rangle = \tE\Big(\int_0^T U_t^\tp V_t \ud t\Big) + cd
\]

By linearity, $\clL(U,c) = \clL(U,0)+c\ones$ for $U\in L_\clZ^2\big([0,T],\Re^m\big)$ and $c\in
\Re$.  Therefore, for any $\mu \in \clM(\bS)$,
\[
\langle \mu, \clL(U,c) \rangle =  \langle \mu,\clL(U,0) \rangle + c \mu(\ones)
\]
%Clearly, $c\bpi_0(\ones)$ is an inner product on $\Re$, and the second element of adjoint operator will be  $\bpi_0(\ones)$.
Thus, the main calculation is to transform $\langle \mu,\clL(U,0)\rangle = \mu(Y_0)$.
Now use the It\^o-Wentzell formula for measures~\cite[Theorem 1.1]{krylov2011ito} on $\sigma_t(Y_t)$,
\begin{align*}
	\ud \big(\sigma_t(Y_t)\big) &= \big(\sigma_t(\clA Y_t) \ud t + \sigma_t(h^\tp Y_t)\ud Z_t\big) + \sigma_t(h^\tp V_t)\ud t \\
	&\quad+ \big(\sigma_t(-\clA Y_t - h^\tp U_t - h^\tp V_t) \ud t + \sigma_t(V_t)\ud Z_t\big)\\
	&= -U_t^\tp \sigma_t(h)\ud t + \sigma_t(h^\tp Y_t + V_t^\tp) \ud Z_t
\end{align*}
Integrating both sides,
\[
\sigma_t(Y_T) - \mu(Y_0) = -\int_0^T U_t^\tp\sigma_t(h)\ud t + \int_0^T \sigma_t(h^\tp Y_t+V_t^\tp) \ud Z_t
\]
Since $Z$ is a $\tsP$-B.M.,
\begin{equation*}\label{eq:pf_thm1_1}
	\mu(Y_0) = \tE\Big(\int_0^T U_t^\tp \sigma_t(h)\ud t \Big) = \langle \sigma(h),U\rangle%_{L_\clZ^2([0,T],\Re^m)}
\end{equation*}
%Since the right-hand side equals to $\langle \bpi(h),U\rangle_\clU$, 
Therefore, %the duality relationship is obtained by:
\[
\langle \mu,\clL(U,c) \rangle = \langle \sigma(h), U \rangle + c\mu(\ones)
\] 
\qed

\subsection{Proof of Proposition~\ref{thm:controllable-subspace}}\label{ss:pf-prop43}

For notational ease, we assume $m=1$.  The idea of the proof is adapted from~\cite[Theorem 3.2]{peng1994backward}.
The definition of $\Nsp(\clL^\dagger)$ is:
\[
\mu \in \Nsp(\clL^\dagger) \Leftrightarrow \mu(\ones) = 0 \text{
	and } \sigma_t(h) \equiv 0 \quad \forall\;t\in[0,T]
\]
Since $\Nsp(\clL^\dagger)$ is the annihilator of $\Rsp(\clL)$, we have
$\ones,h \in \Rsp(\clL)$. Consider next the 
Zakai equation~\eqref{eq:Zakai} with the initial condition
$\mu\in\Nsp(\clL^\dagger)$ and $f=h$:
\[
\sigma_t(h) = \mu(h) + \int_0^t \sigma_s(\clA h) \ud s + \int_0^t \sigma_s(h^2) \ud Z_s
\]
Since $t$ is arbitrary, the left-hand side is identically zero for all $t\in[0,T]$ if and only if
\[
\mu(h) = 0,\quad \sigma_t(\clA h) \equiv 0,\quad \sigma_t(h^2)\equiv 0 \quad \forall\;t\in[0,T]
\]
and in particular, this implies $\clA h, h^2\in\Rsp(\clL)$. 

The subspace $\clC$ is obtained by continuing to repeat the steps
ad infinitum: If at the conclusion of the  $k^\text{th}$ step, we find
a function $g\in \clC$ such that $\sigma_t(g)\equiv 0$ for all
$t\in[0,T]$.  Then through the use of the Zakai equation,
\[
\mu(g) = 0,\quad \sigma_t(\clA g) \equiv 0,\quad \sigma_t(hg)\equiv 0
\quad \forall\;t\in[0,T]
\] 
so $\clA g, hg \in \clC$.  By construction, because $
\mu \in \Nsp(\clL^\dagger)$, $\clC
= \Rsp(\clL)$. 
\qed

\subsection{Proof of Proposition~\ref{prop:gramian-observability}}\label{ss:pf-prop44}

Note that $L^2_\clZ\big(\Omega\times[0,T];\Re^m\big)\times \Re$ is a Hilbert space equipped with inner product
\[
\langle (U,c),(V,d)\rangle = \tE\Big(\int_0^T U_t^\tp V_t \ud t\Big) + cd
\]

Suppose $f \in \operatorname{span}(\sW)$. Then there exists $\mu\in\clM(\bS)$ such that $\sW\mu = f$. Let $\big(U,\mu(\ones)\big) = \clL^\dagger\mu$, and apply the control $U$ to the BSDE with terminal condition $Y_T = c1$. Then 
\[
Y_0 = \clL(U,\mu(\ones)) = \clL \clL^\dagger \mu = \sW\mu = f
\]
If another $(\tilde{U},c)$ satisfies $\clL(\tilde{U},c) = f$. Then $\clL\big(U-\tilde{U},\mu(\ones)-c\big) = 0$, and therefore
\[
0 = \big\langle \clL\big(U-\tilde{U},\mu(\ones)-c\big) , \mu \big\rangle = \big\langle \big(U-\tilde{U},\mu(\ones)-c\big) , \clL^\dagger\mu \big\rangle = \big\langle \big(U-\tilde{U},\mu(\ones)-c\big),\big(U,\mu(\ones)\big) \big\rangle
\]
Therefore,
\[
\big\|(\tilde{U},c)\big\|^2 = \big\|(U,\mu(\ones))\big\|^2 + \big\|\big(U-\tilde{U},\mu(\ones)-c\big)\big\|^2 \ge \big\|(U,\mu(\ones))\big\|^2
\]
%By the definition of the controllable subspace, $f\in \clC$. The converse is trivial from the definition of the observability statement 1.
\qed

\subsection{Proof of Proposition~\ref{prop:sufficient}}\label{ss:pf-prop45}

\newP{Step 1} We first provide the proof for the case when $m=1$. In
this case, $H$ is a column vector and $H_i$ denotes its
$i^{\text{th}}$ element. 
We claim that if $H_i\neq H_j$ for all $i\neq j$, then 
\begin{equation}\label{eq:subset}
	\sp\{\ones, \, H, \, H\cdot H, \, \ldots,\, \underbrace{H\cdot H \cdots H}_{(d-1)\text{ times}}\} = \Re^d
\end{equation}
where (as before) the dot denotes the element-wise product.
Assuming that the claim is true, the result easily follows because the
vectors on left-hand side are contained in $\Rsp(\clL)$
(see~\eqref{eq:obs_gram_nl}). It remains to prove the claim. 
For this purpose, express the left-hand side of~\eqref{eq:subset} as
the column space of the following matrix:
\begin{align*}
	\begin{pmatrix}
		1 & H_1 & H_1^2 & \cdots & H_1^{d-1}\\
		1 & H_2 & H_2^2 &\cdots & H_2^{d-1}\\
		%0 & 0 & (H_3-H_1)(H_3-H_2) & \cdots & \cdots\\
		\vdots &\vdots  & \vdots &\cdots & \vdots\\
		1 & H_d & H_d^2 & \cdots & H_d^{d-1}
	\end{pmatrix}
\end{align*}
This matrix is easily seen to be full rank by using the Gaussian elimination:
\[
\begin{pmatrix}
	1 & H_1 & H_1^2 & \cdots & H_1^{d-1}\\
	0 & H_2-H_1 & H_2^2-H_1^2 &\cdots & H_2^{d-1}-H_1^{d-1}\\
	%0 & 0 & (H_3-H_1)(H_3-H_2) & \cdots & \cdots\\
	\vdots &\vdots  & \vdots &\cdots & \vdots\\
	0 & 0 & 0 & \cdots & \prod_{i=1}^{d-1} (H_d-H_i)\\
\end{pmatrix}
\]
The diagonal elements are non-zero because $H_i\neq H_j$.

\newP{Step 2} In the general case, $H$ is a $d \times m$ matrix and
$H_i$ denotes its $i^\text{th}$ row.  We claim that if $H_i\neq H_j$
for all $i\neq j$ then there exists a vector $\tilde{H}$ in the column
span of $H$ such that $\tilde{H}_i\neq \tilde{H}_j$ for all $i\neq
j$. Assuming that the claim is true, the result follows from the $m=1$
case by considering~\eqref{eq:subset} with $\tilde{H}$.
It remains to prove the claim.  Let $\{e_1,\ldots,e_d\}$ denote the
canonical basis in $\Re^d$.  The assumption means $(e_i-e_j)^\tp H$
is a non-zero row-vector in $\Re^m$ for all $i\neq j$. 
Therefore, the null-space of $(e_i-e_j)^\tp H$ is a
$(m-1)$-dimensional hyperplane in $\Re^m$. Since there are only  %$\displaystyle{m \choose 2}$
finite such hyperplanes, there must exist a vector $a\in\Re^m$
such that $(e_i-e_j)^\tp Ha \neq 0$ for all $i\neq j$. Pick such an
$a$ and define $\tilde{H} := Ha$.

\newP{Step 3} To show the necessity of the injective property when
$A=0$, assume $H_i=H_j$ for some $i\neq j$. Then the corresponding row
is identical, so it cannot be rank $d$.

\qed

\subsection{Justification of the claim in Remark~\ref{rm:finite-case-equivalency}} \label{ssec:justifiction-vanHandel-and-me-finite}

We start from $A=0$ case:
\[
\clC = \sp\{\ones, H, \dv(H)H, \dv(H)^2H,\ldots\}
\]
Since $\dv(H)^nH = [h^{n+1}(1),\ldots,h^{n+1}(d)]$, an element of $f\in \clC$ can be expressed by
\[
f=\sum_{j=0}^\infty a_j[h^j(1),\ldots,h^j(d)] =  \sum_{j=0}^\infty a_j h_1^j P_{h_1}\ones + \ldots + \sum_{j=0}^\infty a_j h_r^j P_{h_r}\ones \in \clO
\]
Therefore, it follows that $\clC \subset \clO$. To show $\clO\subset\clC$, let
\[
f=\sum_{k=1}^r b_kP_{h_k}\ones
\]
It suffices to show that there exists $\{a_j:j=0,1,\ldots\}$ such that $b_k = \sum_{j=0}^\infty a_j h_k^j$ for all $k=1,\ldots,r$. In fact, such $a_j$ can be found, by setting $a_j = 0$ for $j\geq r$ and invert the following matrix:
\[
\begin{pmatrix}
	1 & h_1 & h_1^2 &\cdots & h_1^{r-1}\\
	1 & h_2 & h_2^2 &\cdots & h_2^{r-1}\\
	%1 & h_3 & h_3^2 &\cdots & h_3^{r-1}\\
	\vdots &\vdots & \vdots &\cdots & \vdots\\
	1 & h_r & h_r^2 &\cdots & h_r^{r-1}\\
\end{pmatrix}
\]
It is invertible, since it transforms via Gaussian elimination:
\[
\begin{pmatrix}
	1 & h_1 & h_1^2 & \cdots & h_1^{r-1}\\
	0 & h_2-h_1 & h_2^2-h_1^2 &\cdots & h_2^{r-1}-h_1^{r-1}\\
	%0 & 0 & (h_3-h_1)(h_3-h_2) & \cdots & \cdots\\
	\vdots &\vdots  & \vdots &\cdots & \vdots\\
	0 & 0 & 0 & \cdots & \prod_{i=1}^{r-1} (h_r-h_i)\\
\end{pmatrix}
\]
which is full-rank by the fact that $h_i$ are distinct.
For general $A\neq 0$ case, we repeat the same procedure as above for arbitrary matrices $M_1$ and $M_2$ which are multiples of $A$ and $\dv(H)$, to claim that
\begin{align*}
	\sp\{M_1M_2H, M_1\dv(H)M_2H, M_1\dv(H)^2M_2H,\ldots\} = \sp\{M_1P_{h_k}M_2H:k=1,\ldots,r\}
\end{align*}
The proposition is proved by repeating this for countable times.
\qed

\subsection{Derivation of the controllability gramian in finite state space case} \label{ssec:derivation-ctrl-gramian}

For a given input $\mu\in\Re^d$, $\sW\mu$ is the solution $Y_0$ via the dual BSDE:
\[
-\ud Y_t = \big(AY_t + HH^\tp \sigma_t + \sum_{j=1}^m H^j\cdot V_t^j\big) \ud t - V_t \ud Z_t,\quad Y_T = \ones\ones^\tp \mu
\]
Recall the solution operator $\Psi_t$ of the Zakai equation from Example~\ref{ex:finite-Zakai-operator}.
Consider the process
\[
\Theta_t:= \Psi_t^\tp Y_t + \int_0^t \Psi_s^\tp HH^\tp \sigma_s \ud s,\quad 0\le t \le T
\]
Then by It\^o product formula,
\[
\ud \Theta_t = \Psi_t^\tp \big(\dv(Y_t)H + V_t\big)\ud Z_t
\]
Therefore, $\Theta_t$ is a $\tsP$-martingale. In particular, 
%\[
%\Psi_t^\tp Y_t = \tE\big(\Psi_T^\tp Y_T + \int_0^T \Psi_t^\tp HH^\tp \sigma_t \ud t\mid \clZ_t\big) - \int_0^t \Psi_s^\tp HH^\tp \sigma_s \ud s
%\]
%Therefore,
\[
Y_0 = \tE\Big(\Psi_T^\tp 11^\tp \mu + \int_0^T \Psi_t^\tp HH^\tp\sigma_t\ud t\Big) 
\]
Since the un-normalized filter is given by $\sigma_t = \Psi_t\mu$, 
\[
\sW \mu = \tE\Big(\Psi_T^\tp 11^\tp + \int_0^T \Psi_t^\tp H H^\tp\Psi_t\ud t \Big) \,\mu
\]
Finally, $\tE(\Psi_T^\tp \ones\ones^\tp) = \ones\ones^\tp$ because $\ud \tE(\Psi_t^\tp \ones) = 0$. 

\newpage

%%%%%%%%%%%%%%%%%%%%%%%%%%%%%%%%%%%%%%%%%%%%%%%%%%%%%%%%%%%%%%%%%%%%%%%%%%%%%%%%%%%%%%%%%%%%

\chapter{Duality for nonlinear filtering}\label{ch:duality-principle}

%\chapter{Duality for nonlinear filtering}\label{ch:duality-principle}

In this chapter, the second original contribution of this thesis, namely the dual optimal control formulation for the stochastic filtering problem, is presented.  
The mathematical statement of the dual relationship between optimal
filtering and optimal control is expressed in the form of a duality
principle (Theorem~\ref{thm:duality-principle}).  The principle relates the optimal value
function for the optimal control problem to the minimum variance of
the optimal filtering problem.  The proposed formulation is shown to be a generalization of the Kalman-Bucy duality principle. It is an exact extension
in the sense that the dual optimal control problem has the same
minimum variance structure for the linear and the nonlinear filtering
problems. In particular, Kalman and Bucy's linear-Gaussian result is shown to be a
special case.

The solution of the optimal control problem is obtained using the
stochastic maximum principle which is used to derive the Hamilton's equations.
Explicit form of the Hamiton's equations are obtained for the finite
state-space and the Euclidean cases.  In the usual manner, by relating
the co-state to the state through a linear transformation, a feedback
form of the optimal control input is also derived.  An alternative
approach to obtain the optimal control is through a martingale
characterization.  The formula for the optimal control is used to
obtain a novel derivation of the Kushner-Stratonovich equation~\eqref{eq:nonlinear-filter} of nonlinear filtering.  

The final section of this chapter includes an alternate derivation of these results using the innovation process instead of the observations.  

The outline of the remainder of this chapter is as follows: The dual
optimal control problem along the duality principle for the nonlinear
filter, and its relation to the linear-Gaussian case is described in
Section~\ref{sec:duality-principle}.
Its solution using the maximum principle and the martingale characterization appears in
Section~\ref{sec:standard-form} and Section~\ref{ssec:martingale},
respectively. 
A derivation of the equation of the nonlinear filter appears in
Section~\ref{sec:derivation-of-the-filter}.  The innovation based
approach appears in Section~\ref{sec:innovation-method}.

\section{The duality principle}\label{sec:duality-principle}

\subsection{Nonlinear filtering and its minimum variance interpretation}

Consider the nonlinear model $(\clA,h)$ over a fixed time horizon $[0,T]$ where $T<\infty$. For a function $F\in L^2_{\clZ_T}\big(\Omega;C_b(\bS)\big)$, the conditional mean %\footnote{It is sufficient to consider deterministic functions to represent the conditional distribution, but we keep the formulation as general as possible.}
%	\[
%	\pi_T(F) = \E\big(F(X_T)\mid \clZ_T\big)
%	\]
$\pi_T(F)$ is the minimum variance estimate of $F(X_T)$~\cite[Section~6.1.2]{bensoussan2018estimation}:
\[
\pi_T(F) = \mathop{\operatorname{argmin}}_{S_T\in L^2_{\clZ_T}(\Omega;\Re)} \E\big(|F(X_T)-S_T|^2\big)
\]
Our goal in this chapter is to express this minimum variance optimization problem as a dual optimal control problem.

\paragraph{Notation.}
For $F,G \in L^2_{\clZ_T}\big(\Omega;C_b(\bS)\big)$, the conditional variance and covariance are denoted by follows:
\begin{align*}
	\text{(cond.~variance):}\qquad&\clV_T(F) := \E\big(|F(X_T)-\pi_T(F)|^2\mid \clZ_T\big) = \pi_T(F^2) - \big(\pi_T(F)\big)^2\\
	\text{(cond.~covariance):}\qquad&\cov_T(F,G) := \E\big((F(X_T)-\pi_T(F))(G(X_T)-\pi_T(G))\mid \clZ_T\big) = \pi_T(FG)-\pi_T(F)\pi_T(G)
\end{align*}
In the remainder of the thesis, we often refer to $\E\big(\clV_T(F)\big)$ as ``variance'' instead of the more verbose  ``expectation of the conditional variance.''
%Note that
%\[
%\E\big(\clV_T(F)\big) \le \E\big(|F(X_T)-\E(F(X_T))|^2\big)
%\]
%where the right-hand side is the standard meaning of the variance of $F(X_T)$. 
%We denote the expectation of the conditional (co-)variance as follows:
%\[
%\dvar_T(F) := \E\big(\clV_T(F)\big),\quad \dcov_T(F,G):= \E\big(\cov_T(F,G)\big)
%\]

\subsection{Dual optimal control problem} 

The function space of admissible control is denoted by $\clU := L^2_{\clZ}\big(\Omega\times[0,T];\Re^m\big)$.
An element of $\clU$ is denoted $U = \{U_t \in \Re^m: 0\le t\le T\}$.
It is referred to as the control input.

\subsubsection{Dual optimal control problem}
\begin{subequations}\label{eq:dual-optimal-control}
	\begin{align}
		\mathop{\text{Minimize}}_{U\in\clU}\text{:}\quad\quad\sJ_T(U) &= \E\Big(|Y_0(X_0)-\mu(Y_0)|^2 + \int_0^T  (\Gamma Y_t)(X_t) + |U_t + V_t(X_t)|^2 \ud t
		\Big) \label{eq:dual-optimal-control-a}\\
		\text{Subject to:}\; -\ud Y_t(x) &= \big((\clA Y_t)(x) + h^\tp(x)(U_t+V_t(x))\big)\ud t - V_t^\tp(x)\ud Z_t,\quad Y_T(x) = F(x),\;x\in\bS \label{eq:dual-optimal-control-b}
	\end{align}
\end{subequations}
The constraint~\eqref{eq:dual-optimal-control-b} is the same as the dual control system~\eqref{eq:dual-bsde}, now with the terminal condition $Y_T = F$ where $F \in L^2_{\clZ_T}\big(\Omega;C_b(\bS)\big)$. 
%	For a given $U\in\clU$, the solution of BSPDE $(Y,V):=\{(Y_t,V_t): 0\le t \le T\}$ is  determined in $L^2_{\clZ}\big(\Omega\times[0,T];C_b(\bS)\times C_b(\bS)^m\big)$.
The relationship to the minimum variance objective is expressed through the following theorem whose proof appears in Section~\ref{ss:pf-thm51}. 

\begin{theorem}[Duality principle]\label{thm:duality-principle}
	For any admissible control $U\in \clU$, define the estimator
	\begin{equation}\label{eq:estimator}
		S_T = \mu(Y_0) - \int_0^T U_t^\tp \ud Z_t
	\end{equation}
	Then 
	\begin{equation}\label{eq:duality-principle}
		\sJ_T(U) = \E\big(|F(X_T)-S_T|^2\big)
	\end{equation}
	
\end{theorem}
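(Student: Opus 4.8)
The plan is to mirror the linear-Gaussian argument of Proposition~\ref{prop:duality-KalmanBucy}, where the crucial step was to apply the It\^o product rule to $y_t^\tp X_t$ and then recognize the resulting increments as mutually independent zero-mean terms. Here the scalar pairing $y_t^\tp X_t$ is replaced by the evaluation $Y_t(X_t)$ of the (random, $\clZ$-adapted) field $Y_t$ along the signal path, so the main tool becomes the It\^o--Wentzell formula, exactly as in the proof of Theorem~\ref{thm:observability-definition} (see~\cite{krylov2011ito}). First I would compute $\ud\big(Y_t(X_t)\big)$ by combining the backward dynamics~\eqref{eq:dual-optimal-control-b} of $Y_t$ (read as a forward semimartingale in $t$, with $x$ frozen) with the Markov evolution of $X_t$. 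The motion of $X_t$ contributes the generator term $(\clA Y_t)(X_t)\ud t$ together with the martingale increment $\ud N_t(Y_t)$ attached to~\eqref{eq:martingale-generator}; the cross-variation between the $Z$-driven part of $Y_t$ and the martingale part of $X_t$ vanishes because, under $\sP$, the martingale part of $Z$ is the observation noise $W$, which is independent of $X$.

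The decisive simplification is that the two generator terms cancel: the $-(\clA Y_t)(X_t)\ud t$ coming from~\eqref{eq:dual-optimal-control-b} is exactly offset by the $+(\clA Y_t)(X_t)\ud t$ produced by the Markov evolution of $X_t$. Substituting the observation model $\ud Z_t = h(X_t)\ud t + \ud W_t$ from~\eqref{eq:obs-model} into the remaining $V_t^\tp(X_t)\ud Z_t$ term and collecting the $h$-terms, I expect to obtain
\begin{equation*}
	\ud\big(Y_t(X_t)\big) = -U_t^\tp \ud Z_t + \big(U_t + V_t(X_t)\big)^\tp \ud W_t + \ud\mathcal{M}_t,
\end{equation*}
where $\mathcal{M}_t := \int_0^t \ud N_s(Y_s)$ is a martingale with $\mathcal{M}_0 = 0$ and quadratic variation $\ud[\mathcal{M}]_t = (\Gamma Y_t)(X_t)\ud t$. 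Integrating over $[0,T]$, using $Y_T = F$ together with the definition~\eqref{eq:estimator} of $S_T$, and noting that $Y_0$ is deterministic because $\clZ_0$ is trivial (so that $\mu(Y_0)$ is a constant and $\E(Y_0(X_0)) = \mu(Y_0)$ since $X_0\sim\mu$), this yields
\begin{equation*}
	F(X_T) - S_T = \big(Y_0(X_0) - \mu(Y_0)\big) + \int_0^T \big(U_t + V_t(X_t)\big)^\tp \ud W_t + \mathcal{M}_T.
\end{equation*}

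The final step is to square and take the $\sP$-expectation. The three summands are mutually orthogonal in $L^2(\sP)$: the first is $\clF_0$-measurable while the latter two are $\clF$-martingales started at $0$, so the two cross terms involving the first vanish upon conditioning on $\clF_0$; and the cross term between the $W$-integral and $\mathcal{M}$ vanishes because $W$ is independent of $X$, whence the two martingales have zero covariation. The three diagonal terms then reproduce $\sJ_T(U)$ exactly: the first gives $\E\big(|Y_0(X_0)-\mu(Y_0)|^2\big)$; the $W$-integral gives $\E\big(\int_0^T |U_t+V_t(X_t)|^2\ud t\big)$ by the It\^o isometry (valid since $U_t+V_t(X_t)$ is $\clF_t$-adapted and $W$ is a $\sP$-B.M.); and $\E(\mathcal{M}_T^2) = \E\big(\int_0^T (\Gamma Y_t)(X_t)\ud t\big)$ by the quadratic-variation identity for $N(\cdot)$ and the definition of the carr\'e du champ operator.

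The main obstacle is the rigorous justification of the It\^o--Wentzell step: one must verify that the random field $(Y,V)$ is regular enough for the formula to apply (this is where the standing $L^2_\clZ$ regularity of the BSDE solution is invoked), and, in the non-finite state-space case, that $(\Gamma Y_t)(X_t)$ and $|U_t + V_t(X_t)|^2$ are integrable so that $\mathcal{M}$ and the $W$-integral are genuine, not merely local, martingales — which is precisely what legitimizes the conditioning arguments used to kill the cross terms.
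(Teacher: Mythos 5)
Your proposal is correct and follows essentially the same route as the paper's proof: apply the It\^o--Wentzell formula to $Y_t(X_t)$, observe the cancellation of the generator terms, decompose $F(X_T)-S_T$ into the initial-condition term plus the $W$-integral plus the generator martingale $N(Y)$, and conclude by orthogonality, It\^o isometry, and the carr\'e du champ identity. If anything, your justification of the vanishing cross terms via zero covariation (rather than the paper's looser claim that the three terms are ``mutually independent'') and your explicit flagging of the regularity needed for the It\^o--Wentzell step are slightly more careful than the paper's write-up; the only cosmetic difference is that the paper carries an arbitrary constant $b$ and specializes to $b=\mu(Y_0)$ at the end.
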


\medskip

Thus, formally, the problem of obtaining the minimum variance estimate $S_T$
of $F(X_T)$ (minimizer of the right-hand side of the identity~\eqref{eq:duality-principle}) is 
converted into the problem of finding the optimal control $U$
(minimizer of the left-hand side of the identity~\eqref{eq:duality-principle}).
In order to conclude that the conditional mean is obtained from solving the dual optimal control problem~\eqref{eq:dual-optimal-control}, it is both necessary and sufficient to show that there exists a $U\in \clU$ such that $S_T = \pi_T(F)$.
Since $Z$ is a $\tsP$-B.M., the following lemma is a consequence of the It\^o representation theorem~\cite[Theorem 4.3.3]{oksendal2003stochastic}:

\medskip

\begin{lemma}\label{lem:representation}
	For any $F \in L_{\clZ_T}^2(\Omega;C_b(\bS))$, there exists a unique $U\in \clU$ such that
	\[
	\pi_T(F) = \tE\big(\pi_T(F)\big) - \int_0^T U_t^\tp \ud Z_t,\quad \tsP\text{-a.s.}
	\]
\end{lemma}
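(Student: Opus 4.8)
The plan is to obtain this representation as a direct application of the It\^o martingale representation theorem~\cite[Theorem 4.3.3]{oksendal2003stochastic}, exactly as was done for the un-normalized filter in the proof of Proposition~\ref{prop:Zakai} (see~\eqref{eq:estim-Zakai}). The crucial structural fact is that, by Proposition~\ref{prop:Girsanov}, $Z$ is a $\tsP$-Brownian motion, and the observation filtration $\clZ$ is precisely the natural filtration of $Z$; consequently the martingale representation theorem applies to any square-integrable $\clZ_T$-measurable random variable with respect to the integrator $Z$ under $\tsP$. Thus the entire content of the lemma is to (i) verify that $\pi_T(F)$ is such a random variable, and (ii) read off $U$ (with the appropriate sign) from the representation.

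The key check is the integrability of $\pi_T(F)$ under $\tsP$. This is where one must be a little careful, since $\pi_T(F)=\E\big(F(X_T)\mid\clZ_T\big)$ is defined as a $\sP$-conditional expectation whereas the representation theorem is applied under the (mutually equivalent) measure $\tsP$. Because $F$ is $\clZ_T$-measurable, one has the pointwise identity $\pi_T(F)(\omega)=\int_\bS F(\omega)(x)\,\pi_T(\omega)(\ud x)$, and since $\pi_T$ is a probability measure this yields the uniform bound
\[
\big|\pi_T(F)\big| \le \|F\|_\infty \cdot \pi_T(\ones) = \|F\|_\infty, \qquad \sP\text{-a.s. (hence }\tsP\text{-a.s.).}
\]
Taking $\tsP$-expectation of the square and using $F \in L^2_{\clZ_T}\big(\Omega;C_b(\bS)\big)$, that is $\tE\big(\|F\|_\infty^2\big)<\infty$, gives $\tE\big(|\pi_T(F)|^2\big) \le \tE\big(\|F\|_\infty^2\big)<\infty$, so $\pi_T(F)\in L^2_{\clZ_T}(\Omega;\Re)$ as required.

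With integrability established, I would apply the It\^o representation theorem to $G=\pi_T(F)$ to obtain a unique process $\varphi\in\clU = L^2_\clZ\big(\Omega\times[0,T];\Re^m\big)$ with $\pi_T(F) = \tE\big(\pi_T(F)\big) + \int_0^T \varphi_t^\tp \ud Z_t$ holding $\tsP$-a.s.; setting $U := -\varphi$ then gives the stated formula, and the uniqueness of $U$ is inherited from the uniqueness in the representation theorem (equivalently, from the It\^o isometry, which shows the map $\varphi \mapsto \int_0^T\varphi_t^\tp\ud Z_t$ is an isometry and hence injective). I do not expect a genuine obstacle here; the only point demanding attention is the one flagged above, namely that the square-integrability needed to invoke the representation theorem is with respect to $\tsP$ rather than $\sP$, which the uniform bound $|\pi_T(F)|\le\|F\|_\infty$ resolves cleanly.
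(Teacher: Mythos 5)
Your proof is correct and follows essentially the same route as the paper: bound $|\pi_T(F)|\le\|F\|_\infty$ to get $\pi_T(F)\in L^2_{\clZ_T}(\Omega;\Re)$ under $\tsP$, then invoke the Brownian-motion (It\^o) representation theorem for the $\tsP$-B.M.\ $Z$. Your additional remarks on the sign convention for $U$ and on uniqueness via the It\^o isometry are consistent with, and only slightly more explicit than, the paper's argument.
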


\medskip

\begin{remark}\label{rm:mean and the variance}
	Combined with the duality principle, Lemma~\ref{lem:representation} has two implications:
	\begin{itemize}
		\item The optimal control $U^\opt = \{U_t^\opt:0\le t \le T\}$ obtained from solving the dual optimal control problem yields the conditional mean:
		\[
		\pi_T(F) = \mu(Y_0) - \int_0^T \big(U_t^\opt\big)^\tp \ud Z_t,\quad \sP\text{-a.s.}
		\]
		\item The optimal value is the expected value of conditional variance
		\[
		\E\big(\clV_T(F)\big) = \E\Big(|Y_0(X_0)-\mu(Y_0)|^2 + \int_0^T (\Gamma Y_t)(X_t)+|U_t+V_t(X_t)|^2 \ud t\Big)
		\]
	\end{itemize}
	where $(Y,V)$ is the optimal trajectory obtained using $U=U^\opt$ in~\eqref{eq:dual-optimal-control-b}.
\end{remark}

\medskip

In fact, these two implications carry over to the entire trajectory. The proof of the following proposition is based on a dynamic programming argument given in Section~\ref{ssec:pf-optimal-solution}.

\medskip

\begin{proposition}[Dynamic programming]\label{prop:optimal-solution}
	Consider the dual optimal control problem. Suppose
	$U^\opt=\{U_t^\opt:0\le t \le T\}$ is the optimal control input and that $(Y,V)$ is the associated optimal trajectory
	obtained as a solution of the BSDE.  Then for almost every $0\le t \le T$,
	\begin{align}
		%		\text{\rm (cond.~mean):}\qquad\quad\;\;
		\pi_t(Y_t) &= \mu(Y_0) - \int_0^t \big(U_s^\opt \big)^\tp\ud Z_s,\quad \sP\text{-a.s.} \label{eq:estimator-t} \\
		%		\text{\rm (cond.~variance):}\qquad
		\E\big(\clV_t(Y_t)\big) &= \E\Big(\clV_0(Y_0) + \int_0^t \Gamma Y_s(X_s) + |U_s^\opt + V_s(X_s)|^2 \ud s\Big) \label{eq:estimator-t-variance}
	\end{align}
\end{proposition}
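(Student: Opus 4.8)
The plan is to run a dynamic programming (Bellman optimality) argument obtained by applying the duality principle of \Theorem{thm:duality-principle} on the shorter horizon $[0,t]$ in place of $[0,T]$. The enabling structural observation is that the constraint~\eqref{eq:dual-optimal-control-b} is a \emph{backward} equation: the optimal pair $(Y,V)$ restricted to $[0,t]$ is itself the solution of the same BSDE on $[0,t]$, now with terminal data $Y_t\in L^2_{\clZ_t}(\Omega;C_b(\bS))$ and control $U^\opt|_{[0,t]}$. Because the equation runs backward in time, $Y_t$ depends only on the terminal condition $F$ and on the tail $U^\opt|_{[t,T]}$, and is \emph{unaffected} by the choice of control on $[0,t]$. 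This reverse-time causality is exactly what makes a time-$t$ splitting of the problem meaningful. I also note that the Itô product computation underlying \Theorem{thm:duality-principle} is horizon-agnostic, so the duality identity holds verbatim on any subinterval $[0,t]$.

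First I would split the running cost additively as $\sJ_T(U^\opt)=\sJ_t\big(U^\opt|_{[0,t]}\big)+R_t$, where $\sJ_t(\cdot)$ is the cost functional~\eqref{eq:dual-optimal-control-a} with the integral taken over $[0,t]$ and terminal data $Y_t$, and $R_t:=\E\big(\int_t^T (\Gamma Y_s)(X_s)+|U_s^\opt+V_s(X_s)|^2\,\ud s\big)$. Applying \Theorem{thm:duality-principle} on $[0,t]$ to the terminal data $Y_t$ and the control $U^\opt|_{[0,t]}$ gives $\sJ_t\big(U^\opt|_{[0,t]}\big)=\E\big(|Y_t(X_t)-S_t|^2\big)$ with $S_t=\mu(Y_0)-\int_0^t (U_s^\opt)^\tp\,\ud Z_s$. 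Since $S_t$ is $\clZ_t$-measurable and $\pi_t(Y_t)=\E\big(Y_t(X_t)\mid\clZ_t\big)$, orthogonality of the conditional expectation yields the Pythagorean split $\E\big(|Y_t(X_t)-S_t|^2\big)=\E\big(\clV_t(Y_t)\big)+\E\big(|\pi_t(Y_t)-S_t|^2\big)$.

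The dynamic programming step comes next. Freezing the tail $U^\opt|_{[t,T]}$ fixes $Y_t$, and hence fixes both $R_t$ and $\E(\clV_t(Y_t))$; global optimality of $U^\opt$ therefore forces $U^\opt|_{[0,t]}$ to minimize $\sJ_t(\cdot)$ over all admissible controls on $[0,t]$, since otherwise replacing the head while keeping the tail would strictly lower $\sJ_T$, a contradiction. By the Pythagorean split the minimum of $\sJ_t$ equals $\E(\clV_t(Y_t))$, attained precisely when the estimator matches the conditional mean, $S_t=\pi_t(Y_t)$. Attainability by an admissible control is exactly \Lemma{lem:representation} together with Remark~\ref{rm:mean and the variance}, applied to the sub-problem on $[0,t]$ with terminal $Y_t$: the representation lemma furnishes a control whose estimator reproduces $\pi_t(Y_t)$, including the matching of the constant $\mu(Y_0)=\tE(\pi_t(Y_t))$. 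Hence $\E\big(|\pi_t(Y_t)-S_t|^2\big)=0$, which is~\eqref{eq:estimator-t}, and substituting back gives $\sJ_t\big(U^\opt|_{[0,t]}\big)=\E(\clV_t(Y_t))$. Using $\E\big(|Y_0(X_0)-\mu(Y_0)|^2\big)=\E(\clV_0(Y_0))$ (valid because $Y_0$ is deterministic, being $\clZ_0$-measurable, and $\pi_0=\mu$) then delivers~\eqref{eq:estimator-t-variance}. As each identity holds $\tsP$-a.s.\ (equivalently $\sP$-a.s.) at a fixed $t$, a Fubini argument promotes them to hold for almost every $t\in[0,T]$.

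I expect the main obstacle to be making the Bellman step airtight: one must confirm simultaneously that the restriction of the global optimum is optimal for the truncated problem (where the reverse-time decoupling of $Y_t$ from the head control is essential) and that the truncated minimum-variance value $\E(\clV_t(Y_t))$ is genuinely attained by an admissible control. The latter is delicate because the estimator $S_t$ couples the stochastic integral $\int_0^t (U_s)^\tp\,\ud Z_s$ with the constant $\mu(Y_0)$, which itself varies with the head control through the BSDE; the resolution is to invoke \Lemma{lem:representation} and Remark~\ref{rm:mean and the variance} for the sub-problem, which guarantee that both pieces can be matched at once.
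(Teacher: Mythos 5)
Your proposal is correct and follows essentially the same route as the paper's proof: apply the duality identity on the truncated horizon $[0,t]$ with terminal data $Y_t$, use the backward (reverse-time) decoupling of $Y_t$ from the head control together with uniqueness of the BSDE solution to justify the Bellman splitting, and invoke Lemma~\ref{lem:representation} to show the truncated minimum $\E\big(\clV_t(Y_t)\big)$ is attained, forcing $S_t=\pi_t(Y_t)$. The only cosmetic difference is that the paper phrases the Bellman step as a contradiction (splicing a strictly better head control) while you state it as a principle of optimality; the handling of the constant term via Remark~\ref{rm:general-form-of-estimator} is the same.
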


\medskip

Although DP reveals that the expected value of the conditional variance has an interpretation of the value function, we do not yet have a formula for the optimal control.
The difficulty arises because there is no HJB equation for BSDE-constrained optimal control problem. The literature on such problem utilizes the stochastic maximum principle for BSDE, which is the subject of the next section.

Before investigating the solution of the dual optimal control problem, we make several remarks.

\medskip

\begin{remark}
	The duality principle Theorem~\ref{thm:duality-principle} implies that the duality gap
	\[
	%	\text{(duality gap):}\qquad
	\sJ_T(U^\opt) - \E\big(\clV_T(F)\big) \ge 0
	\]
	That the duality gap is zero is on account of the It\^o representation formula which holds because $Z$ is a $\tsP$-B.M.
	It is important to note that the natural condition for the Lemma~\ref{lem:representation} to hold is
	\begin{equation}\label{eq:L2-condition-ito}
		\tE\big(|\pi_T(F)|^2\big) < \infty
	\end{equation}
	Clearly,~\eqref{eq:L2-condition-ito} holds if $F\in L^2_{\clZ_T}(\Omega;C_b(\bS))$. However,~\eqref{eq:L2-condition-ito} is more general, and provided that it holds and a unique solution $(Y,V)$ exists for the BSDE~\eqref{eq:dual-optimal-control-b}, the duality principle also applies.
	
	It is expected that similar dual optimal control construction may also apply to other type of filtering model where representation formula are available, e.g., measurement noise as a jump process~\cite{boel1975martingales}.
\end{remark}

\medskip

\begin{remark}\label{rm:general-form-of-estimator}
	The proof of the Theorem~\ref{thm:duality-principle} is presented in a slightly more general form where the estimator~\eqref{eq:estimator} is expressed as
	\[
	S_T = b-\int_0^TU_t^\tp \ud Z_t
	\]
	where $b\in\Re$ is an arbitrary deterministic constant. Then it is shown that
	\[
	\E\big(|F(X_T)-S_T|^2\big) = \sJ_T(U) + (\mu(Y_0)-b)^2
	\]
	This general form is useful if the measure $\mu$ is not known. This will be useful in Chapter~\ref{ch:filter-stability-2} for filter stability analysis.
\end{remark}

\medskip

\subsection{Linear-Gaussian case}

Recall the linear-Gaussian filtering problem~\eqref{eq:linear-Gaussian-model} introduced in Section~\ref{sec:problem-formulation}.
%	\begin{align*}
	%		\ud X_t &= A^\tp X_t \ud t + \sigma \ud B_t,\quad X_0\sim N(m_0,\Sigma_0)\\
	%		\ud Z_t &= H^\tp X_t \ud t + \ud W_t
	%	\end{align*} 
%	where $A\in\Re^{d\times d}$ and $H\in\Re^{d\times m}$. Define $Q = \sigma\sigma^\tp$.
As discussed in Remark~\ref{rm:duality-reduces-LG}, with a deterministic control $u\in L^2\big([0,T];\Re^m\big)$ the dual BSDE reduces to the deterministic LTI system~\eqref{eq:LTI-ctrl}.
On the space of linear functions, the solution $Y_t(x) = y_t^\tp x$ where $y_t\in\Re^d$ and the carr\'e du champ operator is
\[
(\Gamma Y_t)(x) = y_t^\tp Q y_t
\]
where $Q = \sigma\sigma^\tp$. Because $V_t = 0$, the control cost $|u_t+V_t(X_T)|^2 = |u_t|^2$.
In summary, the  optimal control problem~\eqref{eq:dual-optimal-control}
reduces to the deterministic LQ problem: % of classical duality~\eqref{eq:LG-optimal-control}:
\begin{align*}
	\mathop{\text{Minimize}}_{u\in L^2([0,T];\Re^m)}\!:\quad \sJ(u) &= y_0^\tp \Sigma_0 y_0 + \int_0^{T} y_t^\tp Q y_t + |u_t|^2 \ud t \\
	\text{Subject to}\;\;:\; -\frac{\ud y_t}{\ud t}
	&= A y_t + H u_t,\quad y_T = f %\quad \lim_{t \to \infty}\pr_t(x) = \post (x)
\end{align*}
The problem was first described in a seminal paper of Kalman and Bucy~\cite{kalman1961}. A review of the same also appears in Section~\ref{ssec:Kalman-filter} of this thesis.
The solution of the optimal control problem yields the optimal control input $u^\opt$, along with the vector $y_0$ that determines the minimum-variance
estimator:
\begin{align*}
	S_T &= \mu(y_0^\tp x) - \int_0^T \big(u_t^\opt \big)^\tp \ud Z_t
	= y_0^\tp m_0 - \int_0^T \big(u_t^\opt \big)^\tp \ud Z_t
\end{align*}
The Kalman filter is obtained by expressing $\{S_t(f) : t\ge 0,\ f\in\Re^d\}$ as the solution to a linear SDE as described in Section~\ref{ssec:Kalman-filter}.

\section{Solution of the dual optimal control problem}\label{sec:standard-form}

The optimal control problem~\eqref{eq:dual-optimal-control} is not a standard form of optimal control problem with BSDE constraints~\cite[Eq. 5.10]{pardoux2014stochastic}. There are two issues:
\begin{itemize}
	\item {\bf The probability space:} The driving martingale of the BSDE~\eqref{eq:dual-optimal-control-b} is $Z$, which is a $\tsP$-B.M. However, the expectation in defining the optimal control objective~\eqref{eq:dual-optimal-control-a} is with respect to the measure $\sP$. 
	\item {\bf The filtration:} The `state' of the optimal control problem $(Y,V)$ is adapted to the filtration $\clZ$. However, the cost function~\eqref{eq:dual-optimal-control-a} also depends upon the non-adapted exogenous process $X$.  
\end{itemize}
%	Therefore, the cost function is required to be transformed such that we can apply existing techniques to obtain the solution.

The second problem is easily fixed by using the tower property of conditional expectation.
To resolve the first problem, we have two choices:
\begin{enumerate}
	\item Use the change of measure to evaluate $\sJ_T(U)$ with respect to $\tsP$ measure, or
	\item Express the BSDE using a driving martingale that is a $\sP$-B.M. A convenient such process is the innovation process $I$. % defined by~\eqref{eq:innovation-def}.
\end{enumerate}
In this section, the standard form of the dual optimal control problem is presented based on the first choice. A discussion on the second choice is described in Section~\ref{sec:innovation-method}.

\medskip

In order to express the expectation for the control objective~\eqref{eq:dual-optimal-control-a} with respect to $\tsP$, we use the change of measure~\eqref{eq:D-t}:
\begin{align*}
	\sJ_T(U) &= \E\Big(|Y_0(X_0)-\mu(Y_0)|^2 + \int_0^T (\Gamma Y_t)(X_t) + |U_t + V_t(X_t)|^2 \ud t\big)\\
	%&=\tE\Big(D_T\Big(|Y_0(X_0)-\mu(Y_0)|^2 + \int_0^T (\Gamma Y_t)(X_t) + |U_t + V_t(X_t)|^2 \ud t\Big)\Big)\\
	%&=\tE\Big( D_T|Y_0(X_0)-\mu(Y_0)|^2 + \int_0^T D_T\big((\Gamma Y_t)(X_t) + |U_t + V_t(X_t)|^2 \big)\ud t\Big)\\
	&=\tE\Big( D_0|Y_0(X_0)-\mu(Y_0)|^2 + \int_0^T D_t\big((\Gamma Y_t)(X_t) + |U_t + V_t(X_t)|^2 \big)\ud t\Big)\\
	&=\tE\Big(|Y_0(X_0)-\mu(Y_0)|^2 + \int_0^T \tE\big(D_t(\Gamma Y_t)(X_t) \mid \clZ_t\big)+ \tE\big(D_t|U_t + V_t(X_t)|^2\mid\clZ_t \big)\ud t\Big)\\
	&=\tE\Big(|Y_0(X_0)-\mu(Y_0)|^2 + \int_0^T \sigma_t\big(\Gamma Y_t\big)+ \sigma_t\big(|U_t + V_t|^2 \big)\ud t\Big)\\
	&=\tE\Big(|Y_0(X_0)-\mu(Y_0)|^2 + \int_0^T \ell(Y_t,V_t,U_t;\sigma_t)\ud t\Big)
\end{align*}
%Since $D_T$ is an $\clZ$-exponential martingale under $\tsP$, one obtains
%\begin{align*}
	%2\sJ_T(U) &= \tE(D_T)|Y_0(X_0)-\mu(Y_0)|^2 + \tE\Big(\int_0^T \tE\big(D_T\big(\Gamma\, Y_t(X_t) + |U_t + V_t(X_t)|^2\big) \mid \clZ_t \big)\ud t\Big)\\
	%&=|Y_0(X_0)-\mu(Y_0)|^2 + \tE\Big(\int_0^T \tE\big(D_T\big(\Gamma\, Y_t(X_t) + |U_t + V_t(X_t)|^2\big) \mid \clZ_t \big)\ud t\Big)\\
	%&=|Y_0(X_0)-\mu(Y_0)|^2 + \tE\Big(\int_0^T \tE\big(D_t\Gamma\, Y_t(X_t)\mid \clZ_t\big) + \tE\big(D_t|U_t + V_t(X_t)|^2 \mid \clZ_t \big)\ud t\Big)\\
	%&=\tE\Big(|Y_0(X_0)-\mu(Y_0)|^2 + \int_0^T \sigma_t\big(\Gamma\, Y_t\big) + \sigma_t(\ones)|U_t|^2 + 2U_t^\tp\sigma_t(V_t) + \sigma_t\big(|V_t|^2\big)\ud t\Big)
	%\end{align*}
where the \emph{Lagrangian} $\ell:C_b(\bS)\times C_b(\bS)^m\times \Re^m \times \clM(\bS) \to \Re$ is defined by
\begin{equation*}
	%	\text{(Lagrangian):}\qquad 
	\ell(y,v,u;\rho) = \rho\big(\Gamma y \big) + \rho\big(|u+v|^2\big)
\end{equation*}

\subsubsection{Dual optimal control problem (standard form)}	
\begin{subequations}\label{eq:dual-optimal-control-std}
	\begin{align}
		\mathop{\text{Minimize}}_{U\in\clU}\text{:}\quad\quad\sJ_T(U) &= \tE\Big(|Y_0(X_0)-\mu(Y_0)|^2 + \int_0^T \ell(Y_t,V_t,U_t;\sigma_t)\ud t
		\Big) \label{eq:dual-optimal-control-std-a}\\
		\text{\rm Subject to:}\; -\ud Y_t(x) &= \big((\clA Y_t)(x) + h^\tp(x)(U_t+V_t(x))\big)\ud t - V_t^\tp(x)\ud Z_t,\quad Y_T(x) = F(x),\;x\in\bS \label{eq:dual-optimal-control-std-b}
	\end{align}
\end{subequations}

%	Recall Prop.~\ref{prop:optimal-solution}. The variance $\E\big(\clV_t(Y_t)\big)$ takes a path such that the Lagrangian $\ell$ is minimized.

%\medskip
%
%\begin{remark}
	%	The Lagrangian is a time-dependent random functional of the dual state $(y,v)$ and the control $u$. The randomness and time-dependency comes only from the last argument $\sigma_t$.
	%\end{remark}
%
%\medskip

\subsubsection{Solution using the maximum principle}

Define the \emph{Hamiltonian} $\clH: C_b(\bS)\times C_b(\bS)^m\times 
\Re^m\times \clM(\bS)\times \clM(\bS) \to \Re$ as follows:
\[
%\text{(Hamiltonian):}\qquad 
\clH(y,v,u,p;\rho) = -p\big(\clA y + h^\tp (u + v) \big)- \ell(y,v,u;\rho)
\]
In the following, Hamilton's equations for the optimal trajectory are derived by 
an application of the maximum principle for BSDEs~\cite[Theorem 
4.4]{peng1993backward}. The Hamilton's equations are obtained in terms of the 
derivatives of the Hamiltonian.
In order to take derivatives with respect to functions and measures, we 
adopt the notion of G\^ateaux differentiability. For instance, given a 
functional $F:\clY\to \Re$, the G\^ateaux derivative $F_y(y)\in \clY^\dagger$ is obtained from the defining relation~\cite[Section 
10.1.3]{bensoussan2018estimation}:
\[
\frac{\ud}{\ud \varepsilon} F(y + \varepsilon\tilde{y})\Big|_{\varepsilon=0} = 
\big\langle \tilde{y}, F_y(y)\big\rangle,\quad \forall\,\tilde{y} \in \clY
\]
%where $\langle \, \cdot\, ,\cdot\,\rangle$ denotes the natural duality pairing. 
The partial derivatives of the Hamiltonian are as follows:
\begin{align*}
	\clH_y(y,v,u,p;\rho) &= -\clA^\dagger p - \frac{\partial}{\partial 
		y}\rho\big(\Gamma y\big)\\
	\clH_v(y,v,u,p;\rho) &= -ph - 2(u+v)\rho\\
	\clH_u(y,v,u,p;\rho) &= -p(h) - 2\rho(\ones)u - 2\rho(v)\\
	\clH_p(y,v,u,p;\rho) &= -\clA y - h^\tp(u+v)
\end{align*}
Using this notation for the functional derivatives, the following theorem 
describes the Hamilton's equations. The proof appears in Section~\ref{ss:pf-thm52}.

\medskip

\begin{theorem}\label{thm:optimal-solution}
	Consider the optimal control problem~\eqref{eq:dual-optimal-control-std}.  Suppose
	$U^\opt$ is the optimal control input and the $(Y,V)$ is the associated optimal solution
	obtained by solving BSDE~\eqref{eq:dual-optimal-control-std-b}.
	Then there exists a $\clZ$-adapted measure-valued process $P=\{P_t \in\clM(\bS):0\le t \le T\}$ such
	that% $P,(Y,V)$ satisfy the following forward-backward SPDE:
	\begin{subequations}\label{eq:Hamilton_eqns}
		\begin{flalign}
			&\text{(forward)}  & \ud P_t &= -\clH_y(Y_t, V_t,U_t^\opt,P_t;\sigma_t)\ud t  - \clH_v^\tp(Y_t, V_t,U_t^\opt,P_t;\sigma_t) \ud Z_t
			& \label{eq:Hamilton_eqns2-a}\\
			&\text{(backward)}  &\ud Y_t &= \clH_p(Y_t,V_t,U_t^\opt,P_t;\sigma_t) \ud t + V_t\ud Z_t & \label{eq:Hamilton_eqns2-b}\\
			&\text{(boundary)} & \frac{\ud P_0}{\ud \mu}(x) &= 2\big(Y_0(x)-\mu(Y_0)\big),\quad Y_T(x) = F(x)\quad  x\in\bS  \label{eq:Hamilton_eqns2-c}
		\end{flalign}
	\end{subequations}
	where the optimal control is given by
	\begin{equation}\label{eq:opt-cont-soln}
		U_t^\opt = -\half \frac{P_t(h)}{\sigma_t(\ones)} - \pi_t(V_t),\quad \tsP\text{-a.s.},\; 0\le t \le T
	\end{equation}
	%See Appendix~\ref{apdx:L2-derivatives} for the explicit formulae for partial derivatives.
\end{theorem}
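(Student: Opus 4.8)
The plan is to obtain the stated Hamilton's equations as the first-order (Pontryagin) optimality conditions for the BSDE-constrained problem~\eqref{eq:dual-optimal-control-std}, by specializing the stochastic maximum principle for optimal control of BSDEs~\cite[Theorem 4.4]{peng1993backward}. Two structural facts make this clean: the constraint~\eqref{eq:dual-optimal-control-std-b} is \emph{linear} in $(Y,V,U)$, and the cost~\eqref{eq:dual-optimal-control-std-a} is \emph{convex}, being quadratic with convex terminal functional $\Phi(Y_0):=\mu(Y_0^2)-(\mu(Y_0))^2$ (this is the $\tsP$-expectation of $|Y_0(X_0)-\mu(Y_0)|^2$, since the law of $X$ is unchanged under $\tsP$ and $Y_0$ is $\clZ_0$-measurable hence deterministic) and convex running cost $\ell$. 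Consequently the first-order conditions are both necessary and sufficient, and it suffices to exhibit a stationary point. First I would fix a perturbation direction $\delta U\in\clU$; by linearity of the BSDE the induced variation $(\tilde Y,\tilde V)$ solves the homogeneous variational equation $-\ud\tilde Y_t=(\clA\tilde Y_t+h^\tp(\delta U_t+\tilde V_t))\ud t-\tilde V_t^\tp\ud Z_t$ with \emph{zero} terminal data $\tilde Y_T=0$, because $F$ is held fixed.

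The heart of the argument is to express the first variation $\frac{\ud}{\ud\varepsilon}\sJ_T(U^\opt+\varepsilon\,\delta U)\big|_{\varepsilon=0}$ in terms of $\delta U$ alone. Using G\^ateaux derivatives in the $C_b(\bS)$--$\clM(\bS)$ duality, this variation is $\tE\big(\langle\tilde Y_0,\Phi_y(Y_0)\rangle+\int_0^T\langle\tilde Y_t,\ell_y\rangle+\langle\tilde V_t,\ell_v\rangle+\ell_u^\tp\delta U_t\,\ud t\big)$, where $\Phi_y(Y_0)=2(Y_0-\mu(Y_0))\mu$ and $\ell_y,\ell_v,\ell_u$ are read off the partial derivatives already recorded before the statement. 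I would then posit the adjoint process as a \emph{forward} $\clM(\bS)$-valued It\^o process $\ud P_t=\alpha_t\,\ud t+\beta_t\,\ud Z_t$ and apply the It\^o product rule to the pairing $\langle\tilde Y_t,P_t\rangle$; because $\tilde Y_T=0$, integrating and taking $\tE$ yields $-\tE\langle\tilde Y_0,P_0\rangle=\tE\int_0^T\langle\tilde Y_t,\alpha_t-\clA^\dagger P_t\rangle-P_t(h)^\tp\delta U_t+\langle\tilde V_t,\beta_t-P_th\rangle\,\ud t$. Matching this against the cost variation (with all derivatives evaluated along the optimal trajectory at $\rho=\sigma_t$), the choices $P_0=\Phi_y(Y_0)$ --- the boundary condition~\eqref{eq:Hamilton_eqns2-c} --- together with $\alpha_t=\clA^\dagger P_t+\ell_y=-\clH_y$ and $\beta_t=P_th+\ell_v=-\clH_v$ annihilate every $\tilde Y$ and $\tilde V$ term and reproduce the forward equation~\eqref{eq:Hamilton_eqns2-a}; the backward equation~\eqref{eq:Hamilton_eqns2-b} is simply the constraint rewritten through $\clH_p=-\clA Y-h^\tp(U+V)$. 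What remains is $\delta\sJ_T=\tE\int_0^T(\ell_u+P_t(h))^\tp\delta U_t\,\ud t$, and requiring this to vanish for all $\delta U$ forces the stationarity condition $\clH_u=0$, i.e.\ $P_t(h)+2\sigma_t(\ones)U_t^\opt+2\sigma_t(V_t)=0$. Dividing by $2\sigma_t(\ones)$ and using the Kallianpur--Striebel identity $\sigma_t(V_t)/\sigma_t(\ones)=\pi_t(V_t)$ from~\eqref{eq:normalize-Zakai} gives the feedback form~\eqref{eq:opt-cont-soln}.

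I expect the main obstacle to be not the formal variational calculus but its rigorous justification in the infinite-dimensional setting, where the state $(Y,V)$ takes values in $C_b(\bS)$ and the co-state $P$ in its non-reflexive dual $\clM(\bS)$. Peng's theorem is stated for finite-dimensional BSDEs, so for general $\bS$ one must establish existence, uniqueness, and $\clZ$-adaptedness of the measure-valued adjoint $P$, and justify the It\^o product rule and the vanishing of the martingale term, using the linear BSDE theory cited in Appendix~\ref{apdx:bsde}; one must also interpret $\frac{\partial}{\partial y}\rho(\Gamma y)$ as a genuine element of $\clM(\bS)$, which is legitimate since $\Gamma$ is quadratic but hinges on the carr\'e du champ acting on the admissible test functions. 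For the finite state-space case all of these difficulties disappear, because $C_b(\bS)\cong\clM(\bS)\cong\Re^d$ and the result becomes a direct application of~\cite[Theorem 4.4]{peng1993backward}; I would therefore first prove the theorem there and then identify the functional-analytic hypotheses --- already assumed for well-posedness of~\eqref{eq:dual-bsde} --- under which the same computation carries over to the general model.
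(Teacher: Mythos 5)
Your proposal is correct and follows essentially the same route as the paper: the paper's own proof simply cites the stochastic maximum principle for BSDE-constrained problems~\cite[Theorem 4.4]{peng1993backward} for the Hamilton's equations and then sets $\clH_u(Y_t,V_t,u,P_t;\sigma_t)=2\sigma_t(\ones)u+2\sigma_t(V_t)+P_t(h)=0$ to get~\eqref{eq:opt-cont-soln}, and the variational derivation you spell out (homogeneous variational BSDE with zero terminal data, It\^o product rule on $\langle \tilde Y_t,P_t\rangle$, matching terms to fix the adjoint dynamics and boundary condition) is precisely the argument recorded in Appendix~\ref{sec:bsde-maximum-principle}. Your additional observations --- that convexity of the quadratic cost under the linear constraint makes the first-order conditions sufficient, and that the measure-valued adjoint requires care beyond Peng's finite-dimensional setting --- are accurate refinements of points the paper leaves implicit.
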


\medskip

\begin{remark}
	From linear optimal control theory, it is known that $P_t$ is related to $Y_t$ by a ($\clZ_t$-measurable) linear transformation~\cite[Section 6.6]{yong1999stochastic}. The boundary condition $\frac{\ud P_0}{\ud \mu}(x) = 2\big(Y_0(x)-\mu(Y_0)\big)$ suggests that
	\begin{equation}\label{eq:P-t-ansatz}
		\frac{\ud P_t}{\ud \sigma_t}(x) = 2\big(Y_t(x) - \pi_t(Y_t)\big),\quad 0\le t \le T
	\end{equation}
	This is indeed the case as we formally verify in Section~\ref{ssec:pf-ansatz} that $P_t$ thus defined solves the Hamilton's equation~\eqref{eq:Hamilton_eqns2-a}. 
	Combining this formula with~\eqref{eq:opt-cont-soln}, we have a formula for optimal control input as a feedback control law:
	\begin{equation*}
		U_t^\opt = -\big(\pi_t(hY_t) - \pi_t(h)\pi_t(Y_t)\big) - \pi_t(V_t),\quad 0\le t \le T
	\end{equation*}
\end{remark}
%	
%	\[
%	P_t(h) = \sigma_t(h Y_t)-\sigma_t(h)\pi_t(Y_t)
%	\]
%	From the initial condition, one can conjecture that $P_t$ is given by the change of measure:
%	Indeed, one can verify that this candidate solves the Hamilton's equation~\eqref{eq:Hamilton_eqns-a} and obtain the nonlinear filter~\eqref{eq:nonlinear-filter} from the optimal solution. 
\medskip

\subsubsection{Explicit formula for the finite case}

The dual system is~\eqref{eq:dual-ctrl-finite}. The Lagrangian $\ell:\Re^d\times \Re^{d\times m} \times \Re^m \times \Re^d \to \Re$ is given by:
\[
\ell(y,v,u;\rho) = y^\tp \rho(Q)y + \rho(\ones)|u|^2 + 2u^\tp v\rho +  \rho^\tp\dv^\dagger (vv^\tp)
\]
where %$\dv^\dagger(\cdot)$ is the matrix-to-vector diagonal operator and
\begin{equation*}
	Q(i) = \sum_{j \in \bS} A(i,j)(e_i-e_j)(e_i-e_j)^\tp,\quad	\rho(Q) = \sum_{i \in \bS}\rho(i)Q(i)
\end{equation*}
The Hamiltonian $\clH:\Re^d\times \Re^{d\times m} \times \Re^m \times \Re^d\times \Re^d \to \Re$ is given by:
\[
\clH(y,v,u,p;\rho) = -p^\tp Ay - p^\tp Hu - p^\tp \dv^\dagger(Hv^\tp) - l(y,v,u;\rho) 
\]
The partial derivatives of the Hamiltonian are as follows:
\begin{align*}
	\clH_y(y,v,u,p;\rho) &= -A^\tp p - 2\rho(Q)y\\
	\clH_v(y,v,u,p;\rho) &= -\dv(p)H - 2\rho u^\tp - 2\dv(\rho) v\\
	\clH_u(y,v,u,p;\rho) &= -H^\tp p - 2\rho(\ones)u -2v^\tp \rho\\
	\clH_p(y,v,u,p;\rho) &= -A y - Hu - \dv^\dagger(Hv^\tp)
\end{align*}
Therefore, the Hamilton's equations are given by	
\begin{flalign*}
	&\text{(forward)}  & \ud P_t &= \big(A^\tp P_t + 2\sigma_t(Q)Y_t\big) \ud t + \big(\dv(P_t)H + 2\sigma_tU_t^\tp +2\dv(\sigma_t)V_t\big) \ud Z_t
	& \\
	&\text{(backward)}  &\ud Y_t &= -\big(AY_t+HU_t+\dv^\dagger(HV_t^\tp)\big) \ud t + V_t \ud Z_t &\\
	&\text{(boundary)} & P_0 &= 2\Sigma_0 Y_0,\quad Y_T = F \in \Re^d
\end{flalign*}

\subsubsection{Explicit formula for the Euclidean case}

In the Euclidean case, $\rho$ is a probability density (with respect to Lebesgue measure) and the Lagrangian 
%The dual system is pretty much the same as the original formula. Since the function spaces are $L^2(\lambda)$ where $\lambda$ is the Lebesgue measure, the Lagrangian is
\[
\ell(y,v,u;\rho) = \int_{\Re^d} \rho(x)\big(|\sigma^\tp(x) \nabla y(x)|^2 + |u+v(x)|^2\big)\ud x
\]
The Hamiltonian
\[
\clH(y,v,u,p;\rho) = -\int_{\Re^d} p(x)\big(\clA y(x) + h^\tp(x)(u+v(x))\big)\ud x -\ell(y,v,u;\rho)
\]
where the momentum $p\in L^2(\lambda)$ is also a density.
The partial derivatives of the Hamiltonian are evaluated as follows:
\begin{align*}
	\clH_y(y,v,u,p;\rho) &= -\clA^\dagger p + 2\divg\big(\sigma\sigma^\tp(\nabla y) \rho\big)\\
	\clH_v(y,v,u,p;\rho) &= -ph-2(u+v)\rho\\
	\clH_u(y,v,u,p;\rho) &= -p(h) - 2\rho(\ones)u - 2\rho(v)\\
	\clH_p(y,v,u,p;\rho) &= -\clA y - h^\tp(u+v)
\end{align*}
Therefore, the Hamilton's equations are given by	
\begin{flalign*}
	&\text{(forward)}  & \ud P_t(x) &= \big(\clA^\dagger P_t - 2\divg \big(\sigma\sigma^\tp(x) (\nabla Y_t)(x) \sigma_t(x)\big)\big) \ud t + \big(P_t(x)h(x) + 2(U_t+V_t(x))\sigma_t(x)\big) \ud Z_t
	& \\
	&\text{(backward)}  &\ud Y_t(x) &= -\big(\clA Y_t+h^\tp(x)(U_t+V_t(x))\big) \ud t + V_t^\tp(x) \ud Z_t &\\
	&\text{(boundary)} & P_0(x) &= 2\mu(x)\big(Y_0(x)-\mu(Y_0)\big),\quad Y_T(x) = F(x),\quad x\in \Re^d
\end{flalign*}

\section{Martingale characterization of the optimal solution}\label{ssec:martingale}

%As discussed in Prop.~\ref{prop:optimal-solution}, the conditional variance 
Although we do not have an HJB equation, a martingale type characterization is possible as described in the following theorem whose proof appears in Section~\ref{ss:pf-thm53}.

%The Prop.~\ref{prop:optimal-solution} is related to the dynamic programming 
%type approach. It suggests that for the random function $Y_t \in 
%L_{\clZ_t}^2(C_b(\bS))$, the optimal cost-to-go is the expectation of the 
%conditional variance $\E\big(\clV_t(Y_t)\big)$.
%The following theorem obtain the explicit formula of the optimal control by a martingale DP approach.  A stand-alone discussion on dynamic programming of BSDE-constrained optimal control problem appears in~\cite{kim2021dynamicprogramming}.

\begin{theorem}\label{thm:martingale}
	Fix $U\in L^2_\clZ\big([0,T];\Re^m\big)$. Consider a $\clZ$-adapted process $M = \{M_t\in\Re:0\le t \le T\}$
	\begin{equation*}\label{eq:martingale-characterization}
		M_t := \clV_t(Y_t) - \int_0^t \ell(Y_s,V_s,U_s;\pi_s)\ud s,\quad 0\le t\le T
	\end{equation*}
	where $(Y,V)$ is the solution to the BSDE~\eqref{eq:dual-optimal-control-b}.
	Then $M$ is a $\sP$-supermartingale, and $M$ is a $\sP$-martingale if and only if
	\begin{equation}\label{eq:optimal-solution}
		U_t = -\big(\pi_t(hY_t) - \pi_t(h)\pi_t(Y_t)\big) - \pi_t(V_t),\quad 0\le t \le T
	\end{equation}
\end{theorem}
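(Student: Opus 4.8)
The plan is to compute the Itô differential of the process $M_t = \clV_t(Y_t) - \int_0^t \ell(Y_s,V_s,U_s;\pi_s)\ud s$ directly, identify its drift and martingale parts, and show that the drift is always non-positive (giving the supermartingale property) with equality precisely under the stated feedback law~\eqref{eq:optimal-solution}. The key ingredients are the BSDE dynamics~\eqref{eq:dual-optimal-control-b} for $(Y_t,V_t)$ driven by the $\sP$-Brownian innovation $I_t$, and the Kushner–Stratonovich dynamics~\eqref{eq:nonlinear-filter} for the nonlinear filter $\pi_t$. Since $M$ is claimed to be a $\sP$-(super)martingale, it is most natural to work under $\sP$ and to express the observation differential via the innovation $\ud Z_t = \pi_t(h)\ud t + \ud I_t$, so that $I$ is a genuine $\sP$-B.M.\ and the martingale terms are transparent.

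\textbf{Computing the differential of the conditional variance.} First I would write $\clV_t(Y_t) = \pi_t(Y_t^2) - (\pi_t(Y_t))^2$ and differentiate each piece. This requires care because $Y_t$ is itself a $\clZ$-adapted process, so I must apply an Itô product/chain rule that accounts for both the filter dynamics acting on the (time-varying) test functions $Y_t$ and $Y_t^2$, and the BSDE dynamics of $Y_t$ itself, including the cross-variation between $\ud \pi_t$ and $\ud Y_t$. Concretely, I would use~\eqref{eq:nonlinear-filter} applied to the functions $Y_t$ and $Y_t^2$, combined with~\eqref{eq:dual-optimal-control-b}, and collect the It\^o correction terms. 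The carr\'e du champ identity $(\Gamma Y_t)(x) = (\clA Y_t^2)(x) - 2 Y_t (\clA Y_t)(x)$ should be the mechanism by which the generator terms from $\pi_t(\clA Y_t^2)$ and $\pi_t(\clA Y_t)$ combine into $\pi_t(\Gamma Y_t)$, matching the first term of the Lagrangian $\ell(Y_t,V_t,U_t;\pi_t) = \pi_t(\Gamma Y_t) + \pi_t(|U_t+V_t|^2)$.

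\textbf{Isolating the drift and completing the square.} After subtracting the $\int_0^t \ell\,\ud s$ term, the drift of $M_t$ should reduce to an expression that is quadratic in the control $U_t$. I expect the drift to take the form of a negative-definite quadratic, something like $-\,\big|\,U_t + \pi_t(V_t) + (\pi_t(hY_t) - \pi_t(h)\pi_t(Y_t))\,\big|^2$ (up to the scalar weight $\pi_t(\ones)=1$), so that it is always $\le 0$ and vanishes exactly when $U_t$ equals the feedback law~\eqref{eq:optimal-solution}. The main obstacle will be bookkeeping the several cross-variation and $\pi_t(h)$-coupling terms correctly: the products of the diffusion coefficients of $\ud \pi_t(Y_t)$, of $\ud \pi_t(Y_t^2)$, and of the $V_t\,\ud Z_t$ term in the BSDE all contribute quadratic-variation pieces, and the $-2\pi_t(Y_t)\,\ud \pi_t(Y_t)$ term from differentiating $(\pi_t(Y_t))^2$ generates additional covariation. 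I would organize this by first recording the innovation-diffusion coefficient of each of $\pi_t(Y_t)$, $\pi_t(Y_t^2)$, and $Y_t$ separately, then assembling $\ud\clV_t(Y_t)$ and verifying that the non-control drift contributions collapse into $\pi_t(\Gamma Y_t) + \pi_t(|V_t|^2) - (\text{terms})$ so that only the completed square survives. Once the drift is shown to be this square, the supermartingale property is immediate, and the martingale (zero-drift) characterization gives~\eqref{eq:optimal-solution}.
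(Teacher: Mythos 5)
Your proposal follows essentially the same route as the paper's proof: differentiate $\clV_t(Y_t)=\pi_t(Y_t^2)-(\pi_t(Y_t))^2$ by combining the Kushner--Stratonovich dynamics with the BSDE for $(Y,V)$, use the carr\'e du champ identity to produce $\pi_t(\Gamma Y_t)$, and collapse the drift of $M$ into the completed square $-|U_t-U_t^\opt|^2$ with $U_t^\opt$ given by~\eqref{eq:optimal-solution}. The predicted form of the drift and the resulting supermartingale/martingale dichotomy match the paper's computation exactly.
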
 

\medskip

A direct consequence of the Theorem~\ref{thm:martingale} is the optimality of the control~\eqref{eq:optimal-solution}, because
\[
\E(M_T) \le \E(M_0)
\]
which means
\[
\E\big(\clV_T(F)\big) \le \E\Big(\clV_0(Y_0) + \int_0^T\ell(Y_t,V_t,U_t;\pi_t)\Big) = \sJ_T(U)
\]
with equality if and only if $U$ is given by~\eqref{eq:optimal-solution}.
%	\medskip
%	
%	In terms of $\tsP$ measure, the optimal cost-to-go becomes
%	\[
%	\E\big(|Y_t(X_t)-\pi_t(Y_t)|^2\big) = 
%	\tE\big(D_t|Y_t(X_t)-\pi_t(Y_t)|^2\big) = 
%	\tE\big(\sigma_t(\ones)\clV_t(f)\big)
%	\]
%	The following theorem allows to obtain the optimal solution using this line. 
%	
%	\begin{theorem}\label{thm:martingale2}
	%		Fix $U\in L^2_\clZ\big([0,T];\Re^m\big)$. Define a process $\tilde{M} = \{\tilde{M}_t\in\Re:0\le t \le T\}$
	%		\begin{equation*}\label{eq:martingale-characterization2}
		%			\tilde{M}_t := \sigma_t(\ones)\clV_t(Y_t) - \int_0^t 
		%			\ell(Y_s,V_s,U_s;\sigma_s)\ud s
		%		\end{equation*}
	%		Then $\tilde{M}$ is a $\tsP$-supermartingale, and $\tilde{M}$ is a $\tsP$-martingale if and only if
	%		\begin{equation*}\label{eq:optimal-solution2}
		%			U_t = U_t^\opt\quad 0\le t \le T
		%		\end{equation*}
	%	\end{theorem} 

\section{Derivation of the nonlinear filter}\label{sec:derivation-of-the-filter}

%In the previous section, we obtained the solution by using the nonlinear filter equations.
%	Compare~\eqref{eq:optimal-solution} with the solution formula~\eqref{eq:opt-cont-soln} from the maximum principle, we have
%From Proposition~\ref{prop:optimal-solution},
Using the formula~\eqref{eq:optimal-solution} for the optimal control in~\eqref{eq:estimator-t},
\begin{equation}\label{eq:estimator-with-optimal-ctrl}
	\pi_t(Y_t) = \mu(Y_0) + \int_0^t \big(\pi_t(hY_s) - \pi_s(h)\pi_s(Y_s)+\pi_s(V_s)\big)^\tp\ud Z_s,\quad 0\le t \le T,\;\; \sP\text{-a.s.}
\end{equation}

Using the fact that this equation holds for arbitrary choice of $F$ and $T$, the nonlinear filter is derived. The proof appears in Section~\ref{ssec:pf-nonlinear-filter-derivation}.

\medskip
\begin{theorem}\label{thm:derivation-nonlinear-filter}
	Consider the optimal estimator~\eqref{eq:estimator-with-optimal-ctrl} 
	where $(Y,V)$ is the optimal trajectory. Then for any $f\in C_b(\bS)$,
	\[
	\ud \pi_t(f) = \pi_t(\clA f) \ud t + \big(\pi_t(hf)-\pi_t(h)\pi_t(f)\big)\big(\ud Z_t- \pi_t(h) \ud t\big),\quad \pi_0(f) = \mu(f)
	\]
	%		Also,
	%		\begin{equation}\label{eq:P_t-ansatz}
		%		\frac{\ud P_t}{\ud \sigma_t}(x) = 2\big(Y_t(x) - \pi_t(Y_t)\big),\quad 0\le t \le T,\;\sP\text{-a.s.} %\pi_t(h Y_t)-\pi_t(h)\pi_t(Y_t)
		%		\end{equation}
	
\end{theorem}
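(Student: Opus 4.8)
The plan is to re-derive the Kushner--Stratonovich equation by a matching (verification) argument that exploits the freedom in choosing the terminal data $(F,T)$ of the dual problem. Since the filter $\{\pi_t(g):0\le t\le T\}$ is an intrinsic object (independent of any control problem), I would first record that for each $g\in C_b(\bS)$ it is a $\sP$-semimartingale and, by the innovations representation (\Proposition{prop:innov-representation}) together with $\ud Z_t=\ud I_t+\pi_t(h)\ud t$, admits a decomposition of the form
\[
\ud\pi_t(g)=\beta_t(g)\ud t+\Phi_t(g)^\tp\ud Z_t,
\]
where $\beta_t(g)\in\Re$ and $\Phi_t(g)\in\Re^m$ are $\clZ_t$-adapted and linear in $g$, with $\pi_0(g)=\mu(g)$. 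The goal is to identify $\beta_t$ and $\Phi_t$; uniqueness of the semimartingale decomposition then yields the stated equation.

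The heart of the argument is to compute $\ud\pi_t(Y_t)$ in two different ways, where $(Y,V)$ is the optimal trajectory of the dual problem with terminal data $Y_T=F$. On the one hand, the optimal-estimator identity~\eqref{eq:estimator-with-optimal-ctrl} gives directly
\[
\ud\pi_t(Y_t)=\big(\pi_t(hY_t)-\pi_t(h)\pi_t(Y_t)+\pi_t(V_t)\big)^\tp\ud Z_t.
\]
On the other hand, I would apply the It\^o--Wentzell product rule to the pairing $\pi_t(Y_t)=\langle\pi_t,Y_t\rangle$, using the posited filter dynamics for $\ud\pi_t$ and the BSDE~\eqref{eq:dual-optimal-control-std-b} for $\ud Y_t$. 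This produces three contributions: $\pi_t(\ud Y_t)$ (read off from the BSDE), $(\ud\pi_t)(Y_t)=\beta_t(Y_t)\ud t+\Phi_t(Y_t)^\tp\ud Z_t$, and a cross-variation term between the two martingale parts. Computing the cross term componentwise (using, e.g., the density or finite-state representation of $\pi_t$) should yield $\sum_{k=1}^m\Phi_t^k(V_t^k)\ud t$, and this is the step I expect to be the main obstacle, since the It\^o--Wentzell correction must be both justified under the standing regularity assumptions and tracked with the correct indices.

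Matching the $\ud Z_t$ coefficients of the two expressions, the $\pi_t(V_t)$ terms cancel and leave $\Phi_t(Y_t)=\pi_t(hY_t)-\pi_t(h)\pi_t(Y_t)$ for every $t$. This is where the arbitrariness enters: fixing $t$ and $g$ and solving the dual problem on $[0,t]$ with terminal condition $Y_t=g$ forces $Y_t=g$, so that
\[
\Phi_t(g)=\pi_t(hg)-\pi_t(h)\pi_t(g),\qquad\forall\,g\in C_b(\bS),
\]
which identifies the diffusion coefficient. Matching the $\ud t$ coefficients, substituting the optimal control $U_t^\opt=-(\pi_t(hY_t)-\pi_t(h)\pi_t(Y_t))-\pi_t(V_t)$ from~\eqref{eq:optimal-solution} and the just-found $\Phi$, I expect the $V$-dependent terms to cancel once more, giving $\beta_t(g)=\pi_t(\clA g)-\pi_t(h)^\tp(\pi_t(hg)-\pi_t(h)\pi_t(g))$. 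Assembling $\beta_t$ and $\Phi_t$ and regrouping the drift through $\ud Z_t-\pi_t(h)\ud t$ then reproduces the Kushner--Stratonovich equation, with $\pi_0(f)=\mu(f)$ inherited from the $t=0$ value of~\eqref{eq:estimator-with-optimal-ctrl}.
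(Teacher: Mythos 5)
Your proposal is correct and follows essentially the same route as the paper's proof: both compute $\ud\,\pi_t(Y_t)$ once from the optimal-estimator identity~\eqref{eq:estimator-with-optimal-ctrl} and once via the product rule applied to a posited decomposition $\ud \pi_t(f) = \alpha(f)\ud t + \beta(f)\ud Z_t$, pick up the cross-variation term $\beta(V_t)\ud t$, match the $\ud Z_t$ and $\ud t$ coefficients (with the $\pi_t(V_t)$ and $\pi_t(hV_t)$ terms cancelling exactly as you predict), and invoke the arbitrariness of the terminal data to identify the coefficients for all $f$. Your additional remarks --- grounding the ansatz in the innovations representation and making the ``arbitrariness'' step precise by solving the dual problem on $[0,t]$ with terminal condition $Y_t=g$ --- only make explicit what the paper leaves implicit.
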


\section{Innovation based approach}\label{sec:innovation-method}

In this section, we consider the innovation method to formulate the dual optimal control problem in a standard form.

The innovation process $I$ defined in~\eqref{eq:innovation-def} is a $\sP$-B.M., and therefore we can use $I$ instead of $Z$ as a driving martingale. The function space of admissible control now is
\[
\clU = L^2_\clI\big(\Omega \times [0,T];\Re^m\big)
\]
The BSDE is modified as
\begin{equation}\label{eq:dual-bsde-innov}
	-\ud Y_t(x) = \big(\clA Y_t(x) + (h(x) - \pi_t(h))^\tp(U_t+V_t(x))\big) \ud t - V_t^\tp(x)\ud I_t,\quad Y_T(x) = F(x)
\end{equation}
where $F\in L^2_{\clI_T}\big(\Omega;C_b(\bS)\big)$ and the solution $(Y,V)\in L^2_{\clI}\big(\Omega\times[0,T];C_b(\bS)\times C_b(\bS)^m\big)$.

\medskip

\begin{remark}
	One can transform~\eqref{eq:dual-bsde} to use $I$ as a driving martingale in more direct way, by substituting
	\[
	\ud Z_t = \ud I_t + \pi_t(h)\ud t
	\]
	The BSDE~\eqref{eq:dual-bsde-innov} is slightly different because of the presence of an extra $\pi_t(h)^\tp U_t \ud t$ term.
	This extra term is necessary to ensure the duality principle holds. 
\end{remark}

\medskip

The dual optimal control problem is as follows:

\subsubsection{Dual optimal control problem (innovation method)}
\begin{subequations}\label{eq:dual-optimal-control-innov}
	\begin{align}
		\text{\rm Minimize:}\quad\quad\sJ_T(U) &= \E\Big(|Y_0(X_0)-\mu(Y_0)|^2 + \int_0^T \ell(Y_t,V_t,U_t;\pi_t)\ud t
		\Big) \label{eq:dual-optimal-control-innov-a}\\
		\text{\rm Subject to:}\; 		-\ud Y_t(x) &= \big(\clA Y_t(x) + (h(x) - \pi_t(h))^\tp(U_t+V_t(x))\big) \ud t - V_t^\tp(x)\ud I_t,\quad Y_T(x) = F(x) \label{eq:dual-optimal-control-innov-b}
	\end{align}
\end{subequations}

Note that~\eqref{eq:dual-optimal-control-innov-a} is the identical to~\eqref{eq:dual-optimal-control-a} by tower property of the conditional expectation.  The following theorem describes the duality principle using innovation process as a driving martingale.

\begin{theorem}[Duality principle using innovation]\label{thm:duality-principle-innov}
	For any admissible control $U\in \clU$, define the estimator
	\begin{equation*}\label{eq:estimator-innov}
		S_T = \mu(Y_0) - \int_0^T U_t^\tp \ud I_t
	\end{equation*}
	Then 
	\begin{equation*}\label{eq:duality-principle-innov}
		\sJ_T(U) = \E\big(|F(X_T)-S_T|^2\big)
	\end{equation*}
	
\end{theorem}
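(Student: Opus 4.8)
The plan is to mirror the proof of the duality principle (Theorem~\ref{thm:duality-principle}), but now working entirely under $\sP$, for which the innovation process $I$ is a Brownian motion. The central computation is an application of the generalized It\^o (It\^o--Wentzell) formula to the scalar process $t\mapsto Y_t(X_t)$, where the random field $(Y_t,V_t)$ solves the innovation-based BSDE~\eqref{eq:dual-bsde-innov} and $X$ is the signal. First I would write the differential of $Y_t(X_t)$ by combining three contributions: the $t$-differential of the field from~\eqref{eq:dual-bsde-innov}; the action of the generator $\clA$ together with the associated martingale $N_t(\cdot)$ of~\eqref{eq:martingale-generator} coming from the evolution of $X_t$; and the quadratic covariation of the two martingale parts. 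The key structural facts are that (i) the two occurrences of $\clA Y_t(X_t)\,\ud t$ cancel, and (ii) the covariation term vanishes, because the martingale part of the field is driven by the observation noise $W$ (through $\ud I_t=(h(X_t)-\pi_t(h))\ud t+\ud W_t$) while the martingale part of $X_t$ is $\ud N_t$, and $W$ is independent of $X$ under $\sP$. After substituting this expression for $\ud I_t$, the $V_t$-dependent drift terms cancel as well, leaving
\[
\ud\big(Y_t(X_t)\big) = -\big(h(X_t)-\pi_t(h)\big)^\tp U_t\,\ud t + V_t^\tp(X_t)\,\ud W_t + \ud N_t(Y_t).
\]

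Next I would integrate from $0$ to $T$ (using $Y_T=F$ and the fact that $Y_0$ is deterministic, since $\clI_0$ is trivial) and form the error $F(X_T)-S_T$ with $S_T=\mu(Y_0)-\int_0^T U_t^\tp\ud I_t$. Expanding $\int_0^T U_t^\tp\ud I_t = \int_0^T U_t^\tp(h(X_t)-\pi_t(h))\ud t + \int_0^T U_t^\tp\ud W_t$, the drift term $-\int_0^T (h(X_t)-\pi_t(h))^\tp U_t\,\ud t$ cancels exactly---this is precisely where the extra $\pi_t(h)$ built into~\eqref{eq:dual-bsde-innov} (cf. the remark following it) is needed. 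What remains is the clean decomposition
\[
F(X_T)-S_T = \big(Y_0(X_0)-\mu(Y_0)\big) + \int_0^T\big(U_t+V_t(X_t)\big)^\tp\ud W_t + \int_0^T\ud N_t(Y_t),
\]
namely an $\clF_0$-measurable term plus two mean-zero $\sP$-martingales.

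Finally I would square and take expectation. Since $X_0\sim\mu$ under $\sP$, the first term contributes $\E(|Y_0(X_0)-\mu(Y_0)|^2)$. The cross terms vanish: the two martingales have zero mean conditional on $\clF_0$, and they are mutually orthogonal because $\langle W,N(Y)\rangle\equiv 0$ by the independence of $W$ and $X$. By the It\^o isometry the $W$-integral contributes $\E(\int_0^T|U_t+V_t(X_t)|^2\ud t)$, and by the defining property of the carr\'e du champ, $\ud\langle N(Y_t)\rangle_t=(\Gamma Y_t)(X_t)\ud t$, the $N$-integral contributes $\E(\int_0^T(\Gamma Y_t)(X_t)\ud t)$. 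Applying the tower property to replace $(\Gamma Y_t)(X_t)$ and $|U_t+V_t(X_t)|^2$ by their $\clZ_t$-conditional means $\pi_t(\Gamma Y_t)$ and $\pi_t(|U_t+V_t|^2)$ recovers exactly $\E(\int_0^T\ell(Y_t,V_t,U_t;\pi_t)\ud t)$, completing the identity $\sJ_T(U)=\E(|F(X_T)-S_T|^2)$.

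I expect the main obstacle to be conceptual rather than computational: unlike the $\tsP$-based proof of Theorem~\ref{thm:duality-principle}, where the independence $X\perp Z$ grants all cancellations for free, here $X$ and $I$ are genuinely correlated under $\sP$ because $I$ carries the channel $h(X)$. The crux is therefore to verify carefully that the residual Brownian component $W$ of $I$ remains independent of $X$ (so that the It\^o--Wentzell covariation and the cross expectations vanish), and that the added $\pi_t(h)$ drift in~\eqref{eq:dual-bsde-innov} produces precisely the cancellation that substitutes for this lost independence. A secondary technical point is justifying the It\^o--Wentzell application---measurability and regularity of the field $(Y_t,V_t)$---in the general state-space setting, which is covered by the standing BSDE well-posedness assumption.
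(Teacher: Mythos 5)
Your proposal is correct and is essentially the paper's own argument: it reproduces the proof of Theorem~\ref{thm:duality-principle} verbatim in the innovation setting, with the It\^o--Wentzell step yielding $\ud\big(Y_t(X_t)\big)=-(h(X_t)-\pi_t(h))^\tp U_t\,\ud t+V_t^\tp(X_t)\,\ud W_t+\ud N_t(Y_t)$, the drift cancelling against the finite-variation part of $\int_0^T U_t^\tp\ud I_t$, and the three residual terms being mean-zero and mutually orthogonal. You correctly identify that the extra $\pi_t(h)^\tp U_t$ term in~\eqref{eq:dual-bsde-innov} is exactly what makes this cancellation work, which is the point of the remark following that equation.
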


\subsubsection{Maximum principle}
Define the \emph{Hamiltonian} $\clH: C_b(\bS)\times C_b(\bS)^m\times 
\Re^m\times \clM(\bS)\times \clM(\bS) \to \Re$
\[
\clH(y,v,u,p;\rho) = -p\big(\clA y +(h-\rho(h))^\tp (u + v)\big) - \ell(y,v,u,\rho)
\]

\begin{theorem}\label{thm:optimal-solution-innov}
	Consider the optimal control problem~\eqref{eq:dual-optimal-control-innov}.  Suppose
	$U=\{U_t:0\le t \le T\}$ is the optimal control input and the $(Y,V) =
	\{(Y_t,V_t): 0\le t \le T\}$ is the associated optimal solution
	obtained by solving BSDE~\eqref{eq:dual-optimal-control-innov-b}.
	Then there exists a $\clZ$-adapted measure-valued process $P=\{P_t:0\le t \le T\}$ such
	that% $P,(Y,V)$ satisfy the following forward-backward SPDE:
	%	\begin{subequations}\label{eq:Hamilton_eqns-innov}
		\begin{flalign*}
			&\text{(forward)}  & \ud P_t &= -\clH_y(Y_t, V_t,U_t,P_t;\pi_t)\ud t  - \clH_v^\tp(Y_t, V_t,U_t,P_t;\pi_t) \ud I_t
			& \\
			&\text{(backward)}  &\ud Y_t &= \clH_p(Y_t,V_t,U_t,P_t;\pi_t) \ud t + V_t\ud I_t & \\
			&\text{(boundary)} & \frac{\ud P_0}{\ud \mu} &= 2\big(Y_0(x)-\mu(Y_0)\big),\quad Y_T(x) = f(x)\quad \forall \, x\in\Re^d 
		\end{flalign*}
		%	\end{subequations}
	where the optimal control is given by
	\begin{equation*}\label{eq:opt-cont-soln-innov}
		U_t^\opt = -\half P_t\big(h-\pi_t(h)\big)  - \pi_t(V_t)
		%\quad \text{a.e. } t
	\end{equation*}
	%See Appendix~\ref{apdx:L2-derivatives} for the explicit formulae for partial derivatives.
\end{theorem}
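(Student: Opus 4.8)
The plan is to treat~\eqref{eq:dual-optimal-control-innov} as a standard optimal control problem with a BSDE constraint and to apply the stochastic maximum principle for BSDEs (Theorem~4.4 in~\cite{peng1993backward}), exactly as in the proof of Theorem~\ref{thm:optimal-solution}. The decisive simplification of the innovation formulation is that the two obstacles flagged at the start of Section~\ref{sec:standard-form} disappear cheaply: since $I$ is a $\sP$-B.M.\ and the cost $\sJ_T$ is already an expectation under $\sP$, no Girsanov change of measure is required (so $\sigma_t$ never enters and $\pi_t$ appears directly); and the only non-adapted dependence, through $X$ in the initial term $|Y_0(X_0)-\mu(Y_0)|^2$ and implicitly in the running cost, is removed by the tower property. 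Conditioning on $\clZ_t=\clI_t$ (the Allinger--Mitter identity) turns the running cost into $\ell(Y_t,V_t,U_t;\pi_t)$ and the initial term into $\clV_0(Y_0)=\mu(Y_0^2)-(\mu(Y_0))^2$, giving a genuinely $\clI$-adapted problem on the state $(Y,V)$.

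First I would record the Hamiltonian $\clH$ as displayed before the theorem together with its four G\^ateaux derivatives $\clH_y,\clH_v,\clH_u,\clH_p$ computed against the measure/function pairings of Example~\ref{ex:dual-Hilbert}. Peng's theorem then supplies a $\clZ$-adapted adjoint $P=\{P_t\in\clM(\bS)\}$ solving the forward SDE $\ud P_t=-\clH_y\,\ud t-\clH_v^\tp\,\ud I_t$, while the controlled BSDE is recovered as the backward equation $\ud Y_t=\clH_p\,\ud t+V_t\,\ud I_t$. The transversality condition is obtained by differentiating the initial cost: since $\frac{\ud}{\ud\varepsilon}\big(\mu((Y_0+\varepsilon\tilde y)^2)-(\mu(Y_0+\varepsilon\tilde y))^2\big)\big|_{\varepsilon=0}=2\mu\big((Y_0-\mu(Y_0))\tilde y\big)$, the derivative of the initial cost is the measure with $\mu$-density $2(Y_0(x)-\mu(Y_0))$, which is exactly the boundary condition $\frac{\ud P_0}{\ud\mu}(x)=2(Y_0(x)-\mu(Y_0))$.

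The optimal control then comes from the stationarity condition $\clH_u(Y_t,V_t,U_t^\opt,P_t;\pi_t)=0$. Computing $\clH_u=-P_t(h-\pi_t(h))-2\pi_t(\ones)U_t-2\pi_t(V_t)$ and using $\pi_t(\ones)=1$ yields $U_t^\opt=-\half P_t(h-\pi_t(h))-\pi_t(V_t)$, the claimed formula. The whole argument is structurally parallel to Theorem~\ref{thm:optimal-solution}; the only bookkeeping difference is that the generator carries the extra $\pi_t(h)$ shift, so the $h$ appearing in $\clH_u$ is replaced throughout by the centered observation $h-\pi_t(h)$, and correspondingly the normalization $\sigma_t(\ones)$ of the $Z$-version collapses to $\pi_t(\ones)=1$.

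The hard part will be the rigorous justification of Peng's maximum principle in the present infinite-dimensional setting, where the state $(Y,V)$ takes values in $C_b(\bS)\times C_b(\bS)^m$ and the adjoint $P$ is measure-valued: one must establish existence, uniqueness and $\clZ$-adaptedness of $P\in\clM(\bS)$ (relying on the BSDE well-posedness assumed in Section~\ref{sec:dual-control-system}), confirm that the G\^ateaux derivatives above are the correct elements of the dual spaces, and verify the convexity/variational hypotheses so that stationarity is not merely necessary but characterizes the minimizer. Because the Lagrangian is convex quadratic in $(U,V)$ and the dynamics are affine, I expect these hypotheses to hold, and the duality principle (Theorem~\ref{thm:duality-principle-innov}) to guarantee that the resulting optimal estimator coincides with the conditional mean.
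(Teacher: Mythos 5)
Your proposal is correct and follows essentially the same route as the paper: the paper gives no separate proof for this theorem precisely because it is the verbatim analogue of Theorem~\ref{thm:optimal-solution}, obtained by citing Peng's maximum principle for the BSDE-constrained problem, reading off the adjoint (forward) equation and transversality condition from the G\^ateaux derivatives of the Hamiltonian and of the initial cost, and solving $\clH_u=0$ with $\pi_t(\ones)=1$ in place of $\sigma_t(\ones)$. Your observations that the innovation formulation removes the Girsanov step (so $\pi_t$ replaces $\sigma_t$ throughout) and that the tower property handles the non-adapted $X$-dependence are exactly the points the paper makes in Section~\ref{sec:innovation-method}.
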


%	\subsubsection{Martingale }
%	
%	\begin{theorem}\label{thm:martingale}
	%		Fix $U\in L^2_\clZ\big([0,T];\Re^m\big)$. Define a process $M = \{M_t\in\Re:0\le t \le T\}$
	%		\begin{equation}\label{eq:martingale-characterization}
		%			M_t := \clV_t(Y_t) - \int_0^t 
		%			\ell(Y_s,V_s,U_s;\pi_s)\ud s
		%		\end{equation}
	%		Then $M$ is a $\sP$-supermartingale, and $M$ is a $\sP$-martingale if and only if
	%		
	%	\end{theorem} 
\medskip

\begin{remark}
	The co-state process $P_t$ now becomes (cf.~\eqref{eq:P-t-ansatz})
	\[
	\frac{\ud P_t}{\ud \pi_t}(x) = 2\big(Y_t(x)-\pi_t(Y_t)\big),\quad 0\le t \le T
	\]
	and therefore the optimal control is the same as~\eqref{eq:optimal-solution}.
	
\end{remark}

\section{Proofs of the statements}

\subsection{Proof of Theorem~\ref{thm:duality-principle}}\label{ss:pf-thm51}

We provide the proof for a slightly more general estimator of the form
\begin{equation}\label{eq:estimator-general}
	S_T = b- \int_0^T U_t^\tp \ud Z_t
\end{equation}
where $b\in \Re$ is a deterministic constant.

Recall the martingale~\eqref{eq:martingale-generator} associated with the infinitesimal generator $\clA$. It is defined by
\begin{equation*}
	N_t(g) = g(X_t) - \int_0^t \clA g(X_s)\ud s
\end{equation*}
Apply It\^o-Wentzell theorem~\cite[Theorem 1.17]{rozovskiui2018stochastic} on $Y_t(X_t)$ and we obtain
\begin{align*}
	\ud Y_t(X_t) &= -U_t^\tp h(X_t) \ud t + V_t^\tp(X_t) \big(\ud Z_t-h(X_t)\ud t\big) + \ud N_t(Y_t)\\
	&= - U_t^\tp \ud Z_t + \big(U_t + V_t(X_t)\big) \ud W_t + \ud N_t(Y_t)
\end{align*}
Integrating both sides from $0$ to $T$,
\[
F(X_T) = Y_0(X_0) - \int_0^T U_t^\tp h(X_t)\ud t + \int_0^T V_t^\tp(X_t) \ud W_t + \int_0^T \ud N_t(Y_t)
\]
Then
\[
F(X_T) - \Big(b- \int_0^T U_t^\tp \ud Z_t\Big) = \big(Y_0(X_0)-b\big) + \int_0^T (U_t+V_t(X_t))^\tp \ud W_t + \int_0^T \ud N_t(Y_t)
\]
The left-hand side is the error of the estimator~\eqref{eq:estimator-general}. The three terms on the right-hand side are mutually independent. Therefore, squaring and taking expectation:
\[
\E\big(|F(X_T)-S_T|^2\big) = \E\big(|Y_0(X_0)-\mu(Y_0)|^2\big) + (\mu(Y_0)-b)^2+ \E\Big(\int_0^T |U_t+V_t(X_t)|^2 + (\Gamma Y_t)(X_t) \ud t \Big)
\]
The proof closes by setting $b = \mu(Y_0)$.
\qed

\subsection{Proof of Lemma~\ref{lem:representation}}\label{ss:pf-lem51}

Note that
\[
|\pi_T(F)|^2 \le \|F\|_\infty^2,\quad \tsP\text{-a.s.}
\]
If $F\in L^2_{\clZ_T}(\Omega;C_b(\bS))$ then $\pi_T(F)\in L^2_{\clZ_T}(\Omega;\Re)$. Therefore the conclusion follows from the Brownian motion representation theorem~\cite[Theorem 5.18]{le2016brownian}.\qed

\subsection{Proof of Proposition~\ref{prop:optimal-solution}}\label{ssec:pf-optimal-solution}

Fix $t \in [0,T]$ and let
\[
S_t = \mu(Y_0) - \int_0^t \big(U_s^\opt\big)^\tp \ud Z_s
\]
Then use the same procedure as the proof of Theorem~\ref{thm:duality-principle},
\[
\E\big(|Y_t(X_t)-S_t|^2\big) = \E\Big(|Y_0(X_0)-\mu(Y_0)|^2 + \int_0^t (\Gamma Y_s)(X_s) + |U_s^\opt+V_s(X_s)|^2 \ud s\Big)
\]
and therefore,
\[
\sJ_T(U^\opt) = \E\big(|Y_t(X_t)-S_t|^2\big) + \E\Big(\int_t^T 
(\Gamma Y_s)(X_s) + |U_s^\opt+V_s(X_s)|^2 \ud s\Big)
\]
Suppose~\eqref{eq:estimator-t-variance} is not true, namely,
\[
\E \big(|Y_t(X_t)-\pi_t(Y_t)|^2\big)< \E \big(|Y_t(X_t)-S_t|^2\big) 
\]
By the Lemma~\ref{lem:representation}, there exists $\hat{U}\in L_{\clZ}^2(\Omega\times [0,t];\Re^m)$ such that
\[
\pi_t(Y_t) = \tE(\pi_t(Y_t)) - \int_0^t \hat{U}_s^\tp \ud Z_s,\quad \tsP\text{-a.s.}
\]
Consider an admissible control $\tilde{U}$ defined by
\[
\tilde{U}_s = \begin{cases}
	\hat{U}_s \quad s \le t\\
	U_s^\opt \quad s > t
\end{cases}
\]
and let $(\tilde{Y},\tilde{V})$ be the solution to the control $\tilde{U}$.  By the uniqueness of the solution to BSDE, $\tilde{Y}_s = Y_s$ for all $s \ge t$ and therefore
\begin{align*}
	\sJ_T(\tilde{U}) &= \E \big(|Y_t(X_t)-\pi_t(Y_t)|^2\big) + \E\Big(\int_t^T \Gamma Y_s(X_s) + |U_s^\opt+V_s(X_s)|^2 \ud s\Big)\\
	&< \E \big(|Y_t(X_t)-S_t|^2\big)  + \E\Big(\int_t^T \Gamma Y_s(X_s) + |U_s^\opt+V_s(X_s)|^2 \ud s\Big) = \sJ_T(U^\opt)
\end{align*}
This violates the optimality of $U^\opt$, and therefore we have
\[
\E \big(|Y_t(X_t)-\pi_t(Y_t)|^2\big) = \E \big(|Y_t(X_t)-S_t|^2\big) 
\]
and~\eqref{eq:estimator-t} follows because the projection is unique.
\qed

\subsection{Proof of Theorem~\ref{thm:optimal-solution}}\label{ss:pf-thm52}

Equation~\eqref{eq:Hamilton_eqns} is 
the Hamilton's equation for optimal control of a BSDE~\cite[Theorem 4.4]{peng1993backward} (see also Appendix~\ref{sec:bsde-maximum-principle}).

The optimal control is obtained from the maximum principle:
\[
U_t = \mathop{\operatorname{argmax}}_{u\in\Re^m} \; \clH(Y_t,V_t,u,P_t;\sigma_t)%,\quad \tsP\text{-a.s.}
\]
Since $\clH$ is quadratic in the control input, the explicit
formula~\eqref{eq:opt-cont-soln} is obtained by evaluating the derivative and setting it to zero:
\[
\clH_u(Y_t,V_t,u,P_t;\sigma_t) = 2\sigma_t(\ones)u + 2\sigma_t(V_t) + P_t(h) = 0
\]

\subsection{Justification of the ansatz for $P_t$}\label{ssec:pf-ansatz}

For an arbitrary test function $f$, the ansatz~\eqref{eq:P-t-ansatz} is represented as
\[
\langle f, P_t\rangle = \big\langle 2f(Y_t-\pi_t(Y_t)), \sigma_t \big\rangle
\]
where $\langle \cdot,\cdot\rangle$ is the duality paring between functions and measures. 
From Prop.~\ref{prop:optimal-solution}, $\ud \big(\pi_t(Y_t)\big) = - U_t \ud Z_t$ with $U_t = U_t^\opt$, and therefore using the It\^o product formula on the right-hand side:
\begin{align*}
	\ud \langle f, P_t\rangle &= -2\big\langle f\big(\clA Y_t + h^\tp(U_t+V_t)\big),\sigma_t \big\rangle \ud t + 2\langle f(U_t+V_t),\sigma_t\rangle \ud Z_t\\
	&\quad + 2\langle f(Y_t-\pi_t(Y_t)),\clA^\dagger \sigma_t \rangle \ud t +  2\langle fh(Y_t-\pi_t(Y_t)),\sigma_t \rangle \ud Z_t\\
	&\quad +2\langle fh^\tp(U_t+V_t),\sigma_t \rangle \ud t\\
	&=2\big\langle \clA(fY_t)-f(\clA Y_t) - \pi_t(Y_t)(\clA f),\sigma_t\big\rangle \ud t + \Big[ \langle 2f(U_t+V_t),\sigma_t \rangle+ \langle fh,P_t\rangle\Big] \ud Z_t
\end{align*}
This agrees with Hamilton's equations because
\[
\langle \clA f, P_t \rangle = 2\langle Y_t\clA f - \pi_t(Y_t)\clA f,\sigma_t \rangle 
\]
and
\[
\frac{\ud}{\ud \epsilon}\Gamma (Y_t+\epsilon f)\Big|_{\epsilon = 0} = 2\big(\clA(Y_tf)-Y_t(\clA f) - f(\clA Y_t)\big)
\]
Therefore,
\begin{equation*}
	\ud \langle f, P_t\rangle = \Big[\langle \clA f, P_t \rangle + \frac{\ud}{\ud \epsilon}\sigma_t\big(\Gamma (Y_t+\epsilon f)\big)\Big|_{\epsilon = 0} \Big]\ud t + \Big[\langle fh, P_t \rangle + \langle 2f(U_t + V_t), \sigma_t \rangle\Big]\ud Z_t
\end{equation*}
\qed
%
%
%\[
%\ud P_t = -\clH_y(Y_t, V_t,U_t^\opt,P_t;\sigma_t)\ud t  - \clH_v^\tp(Y_t, V_t,U_t^\opt,P_t;\sigma_t) \ud Z_t
%\]
%\begin{align*}
	%	\clH_y(y,v,u,p;\rho) &= -\clA^\dagger p - \frac{\partial}{\partial 
		%		y}\rho\big(\Gamma y\big)\\
	%	\clH_v(y,v,u,p;\rho) &= -ph - 2(u+v)\rho
	%\end{align*}

\subsection{Proof of Theorem~\ref{thm:martingale}}\label{ss:pf-thm53}

We evaluate the derivative of $\clV_t(Y_t) = \pi_t(Y_t^2) - \big(\pi_t(Y_t)\big)^2$.
%Use~\eqref{eq:nonlinear-filter} to take differential form of each term:
\begin{align*}
	\ud \pi_t(Y_t^2) =& \pi_t(\clA Y_t^2) \ud t + \big(\pi_t(hY_t^2)-\pi_t(h)\pi_t(Y_t^2)\big)\ud I_t +\pi_t\big(-2Y_t\big(\clA Y_t + h (U_t+V_t)\big) + |V_t|^2\big)\ud t \\
	&+ 2\pi_t\big(Y_tV_t\big) \ud Z_t +2\big(\pi_t(h Y_tV_t)-\pi_t(h)\pi_t(Y_tV_t)\big)\ud t\\
	=&\pi_t\big(\Gamma Y_t\big) \ud t + \pi_t(|V_t|^2)\ud t - 2\pi_t(hY_t)U_t \ud t +\big(\pi_t(hY_t^2)-\pi_t(h)\pi_t(Y_t^2) + 2\pi_t(Y_tV_t)\big)\ud I_t
\end{align*}
Similarly, %$\ud \pi_t(Y_t)$ is computed as follows:
\begin{align}
	\ud \pi_t(Y_t) =& \pi_t(\clA Y_t) \ud t + \big(\pi_t(hY_t)-\pi_t(h)\pi_t(Y_t)\big)\big(\ud Z_t - \pi_t(h)\ud t\big) \nonumber\\
	&-\pi_t\big(\clA Y_t + h(U_t+V_t)\big)\ud t + \pi_t\big(V_t\big) \ud Z_t \nonumber\\
	&+\big(\pi_t(h V_t)-\pi_t(h) \pi_t(V_t)\big)\ud t \nonumber\\
	=& \big(\pi_t(hY_t)-\pi_t(h)\pi_t(Y_t)+ \pi_t(V_t)\big)\ud Z_t \nonumber\\
	&- \big(U_t+\pi_t(hY_t)-\pi_t(h)\pi_t(Y_t)+ \pi_t(V_t)\big) \pi_t(h)\ud t \nonumber\\
	=& U_t^\opt \ud Z_t - (U_t-U_t^\opt)\pi_t(h)\ud t \label{eq:pi-t-y-t}
\end{align}
where $U_t^\opt := -\pi_t(hY_t)+\pi_t(h)\pi_t(Y_t) - \pi_t(V_t)$. 
Therefore,
\begin{align*}
	\ud \big(\pi_t(Y_t)\big)^2 =& 2\pi_t(Y_t)U_t^\opt \ud Z_t - 2 \pi_t(Y_t)(U_t-U_t^\opt)\pi_t(h)\ud t + |U_t^\opt|^2 \ud t
\end{align*}
Collecting terms, we have
\begin{align*}
	\ud M_t =& \pi_t\big(\Gamma Y_t\big) \ud t + \pi_t(|V_t|^2)\ud t - 2\pi_t(hY_t)U_t \ud t +\big(\pi_t(hY_t^2)-\pi_t(h)\pi_t(Y_t^2) + 2\pi_t(Y_tV_t)\big)\ud I_t\\
	&-2\pi_t(Y_t)U_t^\opt \ud Z_t + 2 \pi_t(Y_t)\big(U_t-U_t^\opt\big)\pi_t(h)\ud t - |U_t^\opt|^2 \ud t - \ell(Y_t,V_t,U_t;\pi_t)\\
	&=-|U_t-U_t^\opt|^2 \ud t + \big(\pi_t(hY_t^2)-\pi_t(h)\pi_t(Y_t^2) + 2\pi_t(Y_tV_t)\big)\ud I_t
\end{align*}
Since $|U_t - U_t^\opt|^2 \ge 0$ and $I$ is a $\sP$-martingale, $M$ is a $\sP$-supermartingale. It is a martingale if and only if $U_t = U_t^\opt$ for all $t$. 
\qed

\subsection{Proof of Theorem~\ref{thm:derivation-nonlinear-filter}}\label{ssec:pf-nonlinear-filter-derivation}

Substituting the optimal solution~\eqref{eq:optimal-solution} into the 
Prop.~\ref{prop:optimal-solution} yields:
\begin{equation}\label{eq:nonlinear-filter-using-control}
	\pi_t(Y_t) = \mu(Y_0) + \int_0^t \pi_s(hY_s)-\pi_s(h)\pi_s(Y_s)+\pi_s(V_s)\ud Z_s 
\end{equation}
Use an ansatz $\ud \pi_t(f) = \alpha(f) \ud t + \beta(f) \ud Z_t$ and 
differentiate both sides of~\eqref{eq:nonlinear-filter-using-control} to obtain
\begin{align*}
	\ud \big(\pi_t (Y_t)\big) &=
	\alpha(Y_t) \ud t + \beta(Y_t)\ud Z_t - \pi_t\big(\clA Y_t + h^\tp (U_t+V_t)\big) \ud t + \pi_t(V_t)\ud Z_t + \beta(V_t)\ud t \\
	&= \big(\pi_t(hY_t)-\pi_t(h)\pi_t(Y_t)+\pi_t(V_t)\big)\ud Z_t
\end{align*}
%We can collect martingale terms and finite variation terms and equate separately.
Collecting martingale terms, we have
\[
\beta(Y_t)\ud Z_t = \big(\pi_t(hY_t)-\pi_t(h)\pi_t(Y_t)\big)\ud Z_t
\]
%This is true for arbitrarily initialized $Y$, so we set
Since $Y$ is arbitrary, we set
\[
\beta(f) = \pi_t(hf) - \pi_t(h)\pi_t(f)
\]
Now collect the finite variation terms
\begin{align*}
	\alpha(Y_t) &= \pi_t(\clA Y_t) - \pi_t(h)\big(\pi_t(hY_t)-\pi_t(h)\pi_t(Y_t)+\pi_t(V_t)\big) + \pi_t(hV_t) -  \pi_t(hV_t)+\pi_t(h)\pi_t(V_t)\\
	&=\pi_t(\clA Y_t) - \pi_t(h)\big(\pi_t(hY_t)-\pi_t(h)\pi_t(Y_t)\big)
\end{align*}
Therefore,
\[
\alpha(f) = \pi_t(\clA f) - \beta(f)\pi_t(h)
\]
and
\begin{align*}
	\ud \pi_t(f) %&= \alpha(f) \ud t + \beta(f) \ud Z_t \\
	&= \big(\pi_t(\clA f) - \beta(f)\pi_t(h)\big)\ud t + \beta(f)\ud Z_t\\
	&=\pi_t(\clA f) \ud t + \big(\pi_t(hf) - \pi_t(h)\pi_t(f)\big)(\ud Z_t - \pi_t(h)\ud t)
\end{align*}
This is the Kushner-Stratonovich equation of the nonlinear filter.
\qed

\newpage

%%%%%%%%%%%%%%%%%%%%%%%%%%%%%%%%%%%%%%%%%%%%%%%%%%%%%%%%%%%%%%%%%%%%%%%%%%%%%%%%%%%%%%%%%%%%

\chapter{Filter stability in literature}\label{ch:filter-stability-literature}

%\chapter{Filter stability in literature}\label{ch:filter-stability-literature}

In this chapter, we present a review of filter stability (asymptotic forgetting of initial condition). Let $\pi^\mu = \{\pi_t^\mu:0\le t \le T\}$ and $\pi^\nu = \{\pi_t^\nu:0\le t \le T\}$ denote the nonlinear filter (solution of~\eqref{eq:nonlinear-filter}) initialized from prior $\mu$ and $\nu$, respectively.
The filter is said to be \emph{asymptotically stable} if~\cite[Definition 10.1]{xiong2008introduction}
\[
d\big(\pi_T^\mu,\pi_T^\nu\big) \; \longrightarrow \; 0
\]
for a suitable metric $d:\clP(\bS)\times \clP(\bS)\to \Re^+$ and a suitable notion of convergence (e.g., almost sure, $L^2$, etc.).
In this chapter and the next, we will provide additional details on the choice of metric and the relationship between various choices.

%The literature on filter stability is broadly divided into three cases: (1) stability of the Kalman filter, (2) where the hidden Markov process $X$ is ergodic, and (3) where the observation provides sufficient information about the hidden process.
The stability analysis of the Kalman-Bucy filter is classical and appears in the original paper by Kalman and Bucy~\cite[Theorem 4]{kalman1961}.
%One of the earliest study on stability of the nonlinear filter is due to Deylon and Zeutouni~\cite{delyon1988lyapunov}.
A pioneering early contribution is the paper by Ocone and Pardoux~\cite{ocone1996asymptotic} which relied on certain earlier results of~\cite{kunita1971asymptotic} (which were later found
to contain a gap, as discussed in some detail in Section~\ref{sec:counter-example}).

There are two main cases in stability of the nonlinear filter: (1) The case where the state process forgets the initial measure and therefore the filter ``inherits'' the same property; (2) The case where the observation provides sufficient information about the hidden state, allowing the filter to correct its erroneous initialization. These two cases are referred to as the ergodic and non-ergodic signal cases, respectively.
Early work on the ergodic signal case is based on contraction analysis of the random matrix products arising from recursive application of the Bayes' formula~\cite{atar1997lyapunov} (see also~\cite[Ch. 4.3]{Moulines2006inference}).
For the model $(\clA,h)$, the counterpart in the analysis of the Zakai equation leads to useful formulae for the Lyapunov exponents~\cite{atar1997exponential,Atar1999}.
For non-ergodic signal case, a notable early contribution is~\cite{clark1999relative} where formulae for the relative entropy are derived and it is shown that the relative entropy is a Lyapunov function for the filter. 
Fundamental definitions of observability and detectability leading to useful filter stability conclusions first appears in~\cite{van2009observability,van2009uniform}.  
A comprehensive survey on filter stability appears in review papers~\cite{chigansky2006stability,chigansky2009intrinsic}. 

The outline of the remainder of this chapter is as follows: Ocone and Pardoux's paper is reviewed in Section~\ref{sec:61}.
In Section~\ref{sec:counter-example}, the famous counter-example of filtering theory is described. %that describes a model whose signal is ergodic but the filter is not stable.
%Various Lyapunov exponents formulas are reviewed in Section~\ref{sec:Lyapunov-analysis} and formulas for the relative entropy are reviewed in Section~\ref{sec:literature-clark}.
In Section~\ref{sec:Lyapunov-analysis}, Lyapunov exponent analysis of the Zakai equation is reviewed. In Section~\ref{sec:literature-clark}, formulas for relative entropy are described.
The so called intrinsic approach to filter stability analysis is described in Section~\ref{sec:65}.
Finally, duality-based analysis of filter stability, utilizing Mitter-Newton duality, is described in Section~\ref{sec:66}.

\section{Basic paper on the subject (Ocone-Pardoux 1996)}\label{sec:61}

Although there had been prior work from 1970s on ergodicity of the nonlinear filter~\cite{kunita1971asymptotic}, Ocone and Pardoux's 1996 paper~\cite{ocone1996asymptotic} is widely cited as a pioneering contribution on asymptotic stability of the nonlinear filter.
The paper is in two parts: the first part is on the stability of the Kalman-Bucy filter and the second part is on the stability of the nonlinear filter. For the second part of the paper, the authors assume that the Markov process $X$ is ergodic. (This is referred to as the ergodic signal case). In the following two subsections, we provide a summary of each of these two parts.

%Based on these prior works, Ocone and Pardoux formulated the problem of filter stability in their remarkable paper~\cite{ocone1996asymptotic}. The authors consider two cases: (1) stability of Kalman-Bucy filter, and (2) stability of the nonlinear filter where the state process is ergodic.

\subsection{Stability of the Kalman-Bucy filter}\label{ssec:Kalman-filter-stability}

Consider the linear Gaussian filtering model~\eqref{eq:linear-Gaussian-model} in Section~\ref{sec:problem-formulation}. For this model, the filter $\pi_t$ is Gaussian whose conditional mean and variance are denoted by $m_t$ and $\Sigma_t$, respectively. These are obtained as the solution to the Kalman-Bucy filter:
\begin{subequations}\label{eq:kalman-filter-ch6}
	\begin{align}
		\ud m_t &= A^\tp m_t + \Sigma_t H\big(\ud Z_t - H^\tp m_t \ud t\big) \label{eq:kalman-filter-ch6-a}\\
		\frac{\ud \Sigma_t }{\ud t} &= A^\tp \Sigma_t + \Sigma_t A + Q - \Sigma_t HH^\tp \Sigma_t \label{eq:kalman-filter-ch6-b}
	\end{align}
\end{subequations}
where these equations are initialized from the mean and variance $(m_0,\Sigma_0)$ of the (correct) Gaussian prior. With a different (incorrect) initialization $(\tilde{m}_0,\tilde{\Sigma}_0)$, the solution of~\eqref{eq:kalman-filter-ch6} is denoted by $\big\{(\tilde{m}_t,\tilde{\Sigma}_t): t\ge 0\big\}$.
%If the filter is initialized by the correct parameters $m_0$ and $\Sigma_0$, then the conditional distribution of $X_t$ given $\clZ_t$ is given by $N(\hat{X}_t, P_t)$. 

It is noted that $\{\Sigma_t: t\ge 0\}$ and $\{\tilde{\Sigma}_t: t\ge 0\}$ are deterministic processes. These are solution of the dynamic Riccati equation (DRE).
The following assumption is crucial to the stability of the Kalman filter.

\begin{assumption}\label{as:R}
	There exists a positive semi-definite matrix $\Sigma_\infty$ such that: 
	\begin{enumerate}
		\item $\Sigma_\infty$ solves the algebraic Riccati equation (ARE)
		\[
		A^\tp \Sigma_\infty + \Sigma_\infty A + Q - \Sigma_\infty HH^\tp 
		\Sigma_\infty = 0
		\]
		\item $A^\tp-\Sigma_\infty HH^\tp$ is Hurwitz.
		\item The solution to the DRE~\eqref{eq:kalman-filter-ch6-b} $\Sigma_t\to \Sigma_\infty$ exponentially fast for any initial condition $\Sigma_0$.
	\end{enumerate}
\end{assumption}

\medskip

\begin{lemma}[Lemma 2.2 in~\cite{ocone1996asymptotic}] \label{lm:KF-sufficiency} 
	%	Consider the linear-Gaussian case~\eqref{eq:linear-Gaussian-model}.
	Suppose $(A,H)$ is stabilizable and $(A,\sigma^\tp)$ is detectable. Then Assumption~\ref{as:R} holds.	
\end{lemma}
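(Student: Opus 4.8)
The plan is to reduce the statement to the classical convergence theory for the matrix Riccati differential equation, the only real subtlety being a careful bookkeeping of transposes between the ``control'' and ``filter'' sides of the duality reviewed in Chapter~\ref{ch:duality-background}. The three conclusions of Assumption~\ref{as:R} — existence of a stabilizing ARE solution, the Hurwitz property of the closed loop, and exponential convergence of the DRE — are precisely the output of the standard Riccati theorem once the hypotheses are cast into the form native to the filtering equation~\eqref{eq:kalman-filter-ch6-b}.

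First I would record the elementary duality between stabilizability and detectability: for any pair $(M,N)$ of compatible dimension, $(M,N)$ is stabilizable if and only if $(M^\tp,N^\tp)$ is detectable (both are equivalent to the Hautus condition on the unstable modes). Applying this to the two hypotheses translates them into filter-side conditions. The assumption that $(A,H)$ is stabilizable becomes detectability of $(A^\tp,H^\tp)$, and the assumption that $(A,\sigma^\tp)$ is detectable becomes stabilizability of $(A^\tp,\sigma)$.

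Next I would match the DRE~\eqref{eq:kalman-filter-ch6-b} to the canonical form $\dot\Sigma = F\Sigma + \Sigma F^\tp + Q - \Sigma C^\tp C\Sigma$ by reading off the system matrix $F=A^\tp$, the measurement matrix $C=H^\tp$ (so that $C^\tp C = HH^\tp$), and the noise intensity $Q=\sigma\sigma^\tp$. With the translated hypotheses $(F,\sigma)$ stabilizable and $(F,C)$ detectable in hand, the classical Riccati theorem (see \cite[Theorem 4]{kalman1961}) delivers all three items at once: a unique positive semi-definite $\Sigma_\infty$ solving the ARE, the Hurwitz property of the closed-loop matrix $F - \Sigma_\infty C^\tp C = A^\tp - \Sigma_\infty HH^\tp$, and exponential convergence $\Sigma_t \to \Sigma_\infty$ from every initial condition $\Sigma_0 \succeq 0$. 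I would double-check that the quadratic term $\Sigma HH^\tp\Sigma$ and the closed-loop matrix $A^\tp-\Sigma_\infty HH^\tp$ agree verbatim with Assumption~\ref{as:R}, confirming the transpose conventions.

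Finally, for orientation I would indicate where the analytic content of that classical theorem actually sits, since this is the main obstacle to a self-contained argument. Existence of the stabilizing solution follows from the associated Hamiltonian matrix having no purely imaginary eigenvalues, a dichotomy guaranteed exactly by stabilizability together with detectability; the Hurwitz property is then automatic for the stabilizing solution; and the exponential rate is obtained by linearizing the Riccati flow about $\Sigma_\infty$, which yields a Lyapunov equation that is stable precisely because the closed-loop matrix is Hurwitz. The one step that genuinely requires work — establishing \emph{global} monotone convergence $\Sigma_t\to\Sigma_\infty$ for arbitrary $\Sigma_0\succeq 0$, rather than merely local convergence — is the substantive classical result I would cite rather than reprove.
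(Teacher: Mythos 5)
Your proposal is correct, but it takes a different route from the one in the thesis. You reduce the lemma to the classical Riccati convergence theorem: translate the hypotheses via the transposition duality (stabilizability of $(A,H)$ $\Leftrightarrow$ detectability of $(A^\tp,H^\tp)$, detectability of $(A,\sigma^\tp)$ $\Leftrightarrow$ stabilizability of $(A^\tp,\sigma)$), match the DRE~\eqref{eq:kalman-filter-ch6-b} to the canonical form, and cite the standard theorem for all three conclusions of Assumption~\ref{as:R}. The bookkeeping of transposes is right, and the reduction is sound. The thesis instead gives a proof sketch \emph{through the dual minimum variance optimal control problem}~\eqref{eq:LG-optimal-control}, under the strengthened hypotheses that $(A,H)$ is controllable and $(A,\sigma^\tp)$ is observable: existence of $\Sigma_\infty$ follows from monotonicity and boundedness of the value function $f^\tp\Sigma_T f$ (boundedness being exactly where controllability of $(A,H)$ enters), positivity $\Sigma_\infty\succ 0$ follows from observability of $(A,\sigma^\tp)$ applied to a zero-cost trajectory, and the Hurwitz property is obtained by a Lyapunov-type summability argument along the stationary closed-loop trajectories; the general convergence step is, as in your write-up, deferred to the literature. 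What your approach buys is brevity and full generality of the hypotheses in one stroke; what the thesis's approach buys is a demonstration that the dual LQ problem itself carries the stability analysis — which is the methodological point the surrounding chapters are building toward, and which the thesis later mimics in the nonlinear setting (Chapter~\ref{ch:filter-stability-2}). Since both arguments ultimately cite the classical result for global convergence from arbitrary $\Sigma_0\succeq 0$, neither is more self-contained than the other; they differ in which part of the structure they choose to expose.
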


\medskip

The minimum variance optimal control problem (see Section~\ref{ssec:Kalman-filter}) is useful to prove Lemma~\ref{lm:KF-sufficiency}. We provide a sketch of a proof below under a stronger condition that $(A, H)$ is controllable and $(A,\sigma^\tp)$ is observable. For a complete proof, see~\cite[Section 9.4]{xiong2008introduction} or~\cite[Theorem 4.11]{kwakernaak1969linear}.

\begin{proof}
	As described in Section~\ref{ssec:Kalman-filter}, the DRE~\eqref{eq:kalman-filter-ch6-b} is the optimality equation for the minimum variance optimal control problem~\eqref{eq:LG-optimal-control} repeated below:
	\begin{align*}
		\min_{u\in\clU}\qquad \bsJ_T(u) &= y_0^\tp \Sigma_0 y_0 + \int_0^T |u_t|^2 + y_t^\tp Q y_t \ud t\\
		\text{s.t.}\quad\;\; -\frac{\ud y_t}{\ud t} &= A y_t + H u_t,\quad y_T = f
	\end{align*}
	In particular, the value function $f^\tp \Sigma_T f = \min_{u\in\clU}\bsJ_T(u)$.
	The proof is obtained in the following steps:
	
	\medskip
	
	{\bf Step 1. Controllability of $(A,H)$} is used to show existence of $\Sigma_\infty$. For this purpose, first set $\Sigma_0 = 0$. Then $\{f^\tp \Sigma_T f: T\ge 0\}$ is a non-decreasing sequence and moreover bounded from above because $(A,H)$ is controllable.
	Therefore, because $f\in\Re^d$ is arbitrary, there exists $\Sigma_\infty$ s.t.~$\Sigma_T \to \Sigma_\infty$ as $T \to \infty$.
	Since $\{\Sigma_t:t\ge 0\}$ is the solution of the DRE~\eqref{eq:kalman-filter-ch6-b}, the limit $\Sigma_\infty$ solves the ARE.
	
	\medskip
	
	{\bf Step 2. Observaility of $(A,\sigma^\tp)$} is used to show $\Sigma_\infty \succ 0$. Suppose $f^\tp \Sigma_\infty f = 0$ then
	\[
	0 = \int_0^T |\sigma^\tp y_t|^2 + |u_t|^2 \ud t 
	\]
	which imples $f = 0$ if $(A,\sigma^\tp)$ is observable. Therefore, $\Sigma_\infty \succ 0$.
	
	\medskip
	
	{\bf Step 3. Asymptotic stability.} 
	To show that $(A-HH^\tp \Sigma_\infty)$ is Hurwitz, we follow the proof in~\cite[Theorem 23.2]{brockett2015finite}. First set $\Sigma_0 = \Sigma_\infty$. Then $\Sigma_t = \Sigma_\infty$ for all $t \in [0,T]$ and the optimal control is given by $u_t = -H^\tp \Sigma_\infty y_t$. Let $\tilde{y}_t = y_{T-t}$ then the closed-loop system
	\[
	\frac{\ud \tilde{y}_t}{\ud t} = \big(A-HH^\tp\Sigma_\infty\big) \tilde{y}_t,\quad \tilde{y}_0 = f
	\]
	Since $f^\tp \Sigma_\infty f$ is the optimal value,
	\[
	\int_0^T \tilde{y}_t^\tp Q \tilde{y}_t +|H^\tp \Sigma_\infty \tilde{y}_t|^2\ud t + \tilde{y}_T^\tp \Sigma_\infty \tilde{y}_T = f^\tp \Sigma_\infty f,\quad \forall\, T>0
	\]
	Therefore,
	\[
	\tilde{y}_T^\tp\Sigma_\infty \tilde{y}_T - \tilde{y}_{T+1}^\tp \Sigma_\infty\tilde{y}_{T+1} = \int_T^{T+1} \tilde{y}_t^\tp Q \tilde{y}_t +|H^\tp \Sigma_\infty \tilde{y}_t|^2\ud t
	\]
	Minimize this quantity over $\tilde{y}_T =f \in \Re^d$ with $|f| = 1$. Since the set is compact in $\Re^d$, the minimum value is attained. Suppose the minimum value is zero. Then by choosing $\tilde{y}_T=f^*$, the minimizer, the right-hand side
	\[
	\int_T^{T+1} \tilde{y}_t^\tp Q \tilde{y}_t +|H^\tp \Sigma_\infty \tilde{y}_t|^2\ud t =0
	\]
	which implies $H^\tp \Sigma_\infty \tilde{y}_t = 0$ for all $t \in [T,T+1]$.
	Therefore, the dynamics becomes $\frac{\ud \tilde{y}_t}{\ud t} = A\tilde{y}_t$ for $T\le t\le T+1$. However, then by observability assumption, we have $f=0$ which contradicts $|f|=1$. Therefore, the minimum value must be strictly positive. Namely, there exists some constant $c>0$,
	\[
	\tilde{y}_T^\tp\Sigma_\infty \tilde{y}_T - \tilde{y}_{T+1}^\tp \Sigma_\infty\tilde{y}_{T+1} \ge c\, |\tilde{y}_T|^2
	\] 
	Hence
	\[
	\sum_{n=0}^\infty c\, |\tilde{y}_n|^2 \le \int_0^\infty \tilde{y}_t^\tp Q \tilde{y}_t +|H^\tp \Sigma_\infty \tilde{y}_t|^2\ud t \le f^\tp \Sigma_\infty f < \infty 
	\]
	This implies that $|\tilde{y}_n| \to 0$. Consequently, $(A-HH^\tp\Sigma_\infty)$ is Hurwitz.
	
	\medskip
	
	{\bf Step 4. Convergence of $\Sigma_t$.} The exponential convergence $\Sigma_t\to\Sigma_\infty$ for arbitrary $\Sigma_0$ can be shown through either a direct argument (e.g.,~\cite{park1997Riccati}) or by showing that the stationary control law is asymptotically optimal (see~\cite[Theorem 9.33]{xiong2008introduction}).
	%	The proof is completed by showing that the control law
	%	\[
	%	u_t = -H^\tp \Sigma_\infty y_t
	%	\]
	%	is asymptotically optimal as $T \to \infty$ (see~\cite[Theorem BLAH]{xiong2008introduction}).
	
\end{proof}

\medskip

It remains to study convergence of $m_t$ and $\tilde{m}_t$. In contrast to the covariance matrix, these are stochastic processes with a common forcing term $Z$.
%\begin{align*}
	%	\ud m_t &= A^\tp m_t + \Sigma_t H\big(\ud Z_t - H^\tp m_t \ud t\big)\\
	%	\ud \tilde{m}_t &= A^\tp \tilde{m}_t + \tilde{\Sigma}_t H\big(\ud Z_t - H^\tp \tilde{m}_t \ud t\big)
	%\end{align*}
Define the error process $e_t:= m_t-\tilde{m}_t$ for $t\ge 0$. Then
\[
\ud e_t = (A^\tp - \Sigma_\infty H H^\tp)e_t\ud t + (\Sigma_\infty - \Sigma_t)H H^\tp m_t \ud t + (\tilde{\Sigma}_t-\Sigma_\infty)H H^\tp\tilde{m}_t\ud t + \big(\Sigma_t-\tilde{\Sigma}_t\big)H\ud Z_t
\]
with initial condition $e_0 = m_0-\tilde{m}_0$.
Suppose $A^\tp - \Sigma_\infty H H^\tp$ is Hurwitz and define the constant
\[
\overline{\lambda} = \min\{-\operatorname{Re}(\lambda):\lambda\text{ is an eigenvalue of } A^\tp-\Sigma_\infty HH^\tp\}
\]
Then the following result can be easily shown by using the Burkholder-Davis-Gundy inequality~\cite[Theorem 3.12]{xiong2008introduction}.

\medskip 

\begin{proposition}[Theorem 2.3 in~\cite{ocone1996asymptotic}] 
	Suppose Assumption~\ref{as:R} holds. Then for any $0< \lambda < \overline{\lambda}$,
	\[
	\lim_{T\to \infty} (m_t-\tilde{m}_t)e^{\lambda T} = 0,\quad \sP^\mu\text{-a.s.}
	\]
\end{proposition}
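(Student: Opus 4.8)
The plan is to read the linear SDE for the error $e_T := m_T - \tilde m_T$ displayed just above the statement as an inhomogeneous linear system driven by the \emph{constant} Hurwitz matrix $F := A^\tp - \Sigma_\infty HH^\tp$, whose spectral abscissa is $-\overline{\lambda}$ by the definition of $\overline{\lambda}$, and to extract almost sure exponential decay from three ingredients: decay of the matrix exponential $e^{Ft}$, exponential convergence of the \emph{deterministic} Riccati flows $\Sigma_t,\tilde\Sigma_t\to\Sigma_\infty$ from Assumption~\ref{as:R}, and an It\^o/Burkholder--Davis--Gundy estimate for the stochastic term. First I would fix $\lambda'$ with $\lambda<\lambda'<\overline{\lambda}$ and record that, since $F$ is Hurwitz, there is $M\ge 1$ with $\|e^{Ft}\|\le Me^{-\lambda' t}$ for all $t\ge 0$ (the constant $M$ absorbs any polynomial factors coming from nontrivial Jordan blocks). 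By variation of constants,
\[
e_T = e^{FT} e_0 + \int_0^T e^{F(T-s)} g_s \,\ud s + \int_0^T e^{F(T-s)}\big(\Sigma_s-\tilde\Sigma_s\big)H\,\ud Z_s,
\]
with $g_s = (\Sigma_\infty-\Sigma_s)HH^\tp m_s + (\tilde\Sigma_s-\Sigma_\infty)HH^\tp\tilde m_s$.

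The delicate point, which I expect to be the main obstacle, is that the means $m_s,\tilde m_s$ and the term $\ud Z_s$ carry the signal $X_s$, and if $A^\tp$ is unstable $\E^\mu(|X_s|^2)$ grows exponentially and could overwhelm the decaying Riccati factors. The key device to neutralize this is to pass to the filter-error processes $\epsilon_s := m_s - X_s$ and $\tilde\epsilon_s := \tilde m_s - X_s$. Under $\sP^\mu$ one has $\ud Z_s = H^\tp X_s\,\ud s + \ud W_s$ with $W$ a $\sP^\mu$-Brownian motion; substituting this together with $m_s = X_s+\epsilon_s$ and $\tilde m_s = X_s+\tilde\epsilon_s$ into the SDE, the coefficients of $X_s$ are $(\Sigma_\infty-\Sigma_s)$, $(\tilde\Sigma_s-\Sigma_\infty)$ and $(\Sigma_s-\tilde\Sigma_s)$, which sum to zero, so \emph{every} $X_s$-contribution cancels. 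This reduces the dynamics to
\[
\ud e_t = F e_t\,\ud t + \big[(\Sigma_\infty-\Sigma_t)HH^\tp\epsilon_t + (\tilde\Sigma_t-\Sigma_\infty)HH^\tp\tilde\epsilon_t\big]\ud t + \big(\Sigma_t-\tilde\Sigma_t\big)H\,\ud W_t,
\]
whose forcing and diffusion involve only the \emph{deterministic} Riccati differences (decaying, by Assumption~\ref{as:R}(3), as $\|\Sigma_t-\Sigma_\infty\|+\|\tilde\Sigma_t-\Sigma_\infty\|\le Ce^{-\alpha t}$) multiplied by the error processes. A separate short step establishes the uniform moment bound $\sup_{t\ge 0}\E^\mu(|\epsilon_t|^2)+\sup_{t\ge 0}\E^\mu(|\tilde\epsilon_t|^2)<\infty$: each error obeys a linear SDE whose drift matrix $A^\tp-\Sigma_t HH^\tp$ (resp.\ $A^\tp-\tilde\Sigma_t HH^\tp$) converges to the Hurwitz $F$, so a standard Lyapunov argument gives uniform $L^2$-boundedness.

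With $X_s$ eliminated, the $L^2$ estimate is routine. Using $\|e^{Ft}\|\le Me^{-\lambda' t}$, the uniform $L^2$-bound on $\epsilon_t,\tilde\epsilon_t$, and the convolution inequality $\int_0^T e^{-\lambda'(T-s)}e^{-\alpha s}\,\ud s \lesssim e^{-\min(\lambda',\alpha)T}$, Minkowski's inequality controls the drift integral; and because the diffusion coefficient is deterministic, the It\^o isometry controls the stochastic integral by $\int_0^T \|e^{F(T-s)}(\Sigma_s-\tilde\Sigma_s)H\|^2\,\ud s\lesssim e^{-2\min(\lambda',\alpha)T}$. Linearizing the DRE about $\Sigma_\infty$ identifies the Riccati rate as $\alpha = 2\overline{\lambda}$, so $\alpha>\lambda'$ and the slowest rate is $\lambda'$; together with the homogeneous term $|e^{FT}e_0|\le Me^{-\lambda' T}|e_0|$ this yields $\E^\mu(|e_T|^2)\le C' e^{-2\lambda' T}$.

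Finally I would upgrade this $L^2$ decay to the almost sure statement. Repeating the computation on the unit window $[n,n+1]$ and applying the Burkholder--Davis--Gundy inequality to the martingale part over that window (after splitting $e^{F(T-s)}=e^{F(T-n)}e^{F(n-s)}$ to separate the fixed value at time $n$ from the fresh increment) gives $\E^\mu\big(\sup_{n\le t\le n+1}|e_t|^2\big)\le C'' e^{-2\lambda' n}$. Since $\lambda<\lambda'$, Chebyshev's inequality yields $\sum_{n\ge 0}\sP^\mu\big(\sup_{n\le t\le n+1}e^{\lambda t}|e_t|>\eta\big)<\infty$ for every $\eta>0$, so the Borel--Cantelli lemma forces $e^{\lambda t}|e_t|\to 0$ almost surely as $t\to\infty$, which is precisely the claim $\lim_{T\to\infty}(m_T-\tilde m_T)e^{\lambda T}=0$, $\sP^\mu$-a.s.
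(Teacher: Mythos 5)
Your proposal is correct and follows essentially the route the paper sketches: the displayed error SDE with Hurwitz drift $A^\tp-\Sigma_\infty HH^\tp$, exponential decay of the deterministic Riccati differences, and a Burkholder--Davis--Gundy plus Borel--Cantelli argument on unit windows to upgrade $L^2$ decay to almost sure convergence. The one detail the paper leaves implicit and you correctly supply is the cancellation of the potentially unbounded signal contributions --- the coefficients $(\Sigma_\infty-\Sigma_t)$, $(\tilde\Sigma_t-\Sigma_\infty)$ and $(\Sigma_t-\tilde\Sigma_t)$ multiplying $X_t$ sum to zero after substituting $\ud Z_t = H^\tp X_t\,\ud t + \ud W_t$ --- which reduces everything to the uniformly $L^2$-bounded filter errors and makes the convolution estimates go through.
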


\medskip

\begin{remark}\label{rm:stabilizability-and-kalman}
	It turns out that stabilizability of $(A^\tp,\sigma)$ is not necessary for asymptotic stability of the Kalman filter. In particular,
	it is proved in~\cite[Proposition 8]{van2009observability} that the Kalman filter is stable if $(A^\tp,H^\tp)$ is detectable, $\Sigma_0 \succ 0$, and $\tilde{\Sigma}_0\succ 0$,
	(without the assuming stabilizability of $(A^\tp, \sigma)$).  In this case, $(A^\tp -\Sigma_\infty HH^\tp)$ is not guaranteed to be Hurwitz, but the filter stability is still valid for a slightly more restricted class of initial conditions. %This will help us understand the distinct roles of detectability and stabilizability.
\end{remark}

\subsection{Stability of the nonlinear filter (ergodic signal case)}

We adopt the notation of~\cite{ocone1996asymptotic}: For the signal, $\clS := \{\clS_t:t\ge 0\}$ is the signal semigroup on $C_b(\bS)$. For each $t \ge 0$, $\clS_t:C_b(\bS)\to C_b(\bS)$ is defined by
\[
(\clS_t f)(x) = \E\big(f(X_t)\mid X_0 = x\big),\quad x\in\bS
\]
For a measure $\mu \in \clP(\bS)$, we denote $\mu_t = \mu\clS_t$---which means $\mu_t(f) = \mu(\clS_t f)$, $\forall f\in C_b(\bS)$. Note $\{\mu_t:t\ge 0\}$ is the solution of the forward Kolmogorov equation~\eqref{eq:FKE}.

\begin{definition}
	The semigroup $\clS$ is \emph{ergodic} if there exists an invariant measure $\bmu \in \clP(\bS)$ such that
	\begin{equation}\label{eq:ergodicity-f}
		\limsup_{t\to \infty}\int_\bS \big|(\clS_t f)(x)-\bmu(f)\big|\ud \bmu(x) = 0,\quad \forall\,f\in C_b(\bS)
	\end{equation}
	We write $\mu_t\weakto \bmu$ if $\mu_t(f) = \mu(\clS_t f)\to \bmu(f)$ for all $f\in C_b(\bS)$.
\end{definition}

For the nonlinear filter, $\Phi = \{\Phi_{t,\tau}:0\le \tau \le t\}$ is the semigroup on $C_b(\clP(\bS))$.
For each $0\le \tau \le t$, $\Phi_{t,\tau}: C_b(\clP(\bS))\to  C_b(\clP(\bS))$ is defined by
\[
(\Phi_{t\tau} F)(\rho) = \E\big(F(\pi_t)\mid \pi_\tau = \rho\big),\quad \rho \in \clP(\bS)
\]
If $\tau = 0$, the semigroup is denoted by $\Phi_t$.

In his early papers, Kunita~\cite{kunita1971asymptotic,kunita1991ergodic} claimed that if the signal itself is ergodic then the nonlinear filter inherits the ergodic property.

\medskip

\begin{lemma}[Lemma 3.1 in~\cite{ocone1996asymptotic}, also Theorem 3.1-3.3 in~\cite{kunita1971asymptotic}] \label{prop:filter-is-ergodic}
	Suppose the $\clS$ satisfies~\eqref{eq:ergodicity-f}.
	Then there exists a unique measure $M$ on $\clP(\bS)$ such that $M$ is $\Phi_t$-invariant and
	\[
	\int_{\clP(\bS)} \rho(f) \ud M(\rho) = \bmu(f),\quad \forall\,f\in C_b(\bS)
	\]
	If, additionally $\mu\weakto \bmu$ and $\nu\weakto \bmu$ then
	\[
	\lim_{t\to \infty}(\Phi_t F)(\mu) = \lim_{t\to \infty}(\Phi_t F)(\nu) = \int_{\clP(\bS)}F(\rho)\ud M(\rho) =: M(F),\quad \forall\,F\in C_b(\clP(\bS))
	\]
\end{lemma}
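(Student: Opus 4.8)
The plan is to realize the invariant measure $M$ as the law of a two-sided \emph{stationary filter}, and then to derive the convergence in the second part from a forgetting property of the filter. First I would extend the signal--observation pair to the whole real line. Since $\bmu$ is invariant for $\clS$, there is a stationary two-sided signal $\{X_t:t\in\Re\}$ with $X_t\sim\bmu$ for every $t$, driving an observation process with stationary increments. For $s\le 0$ set $\clZ_{s,0}:=\sigma(Z_u-Z_s:s\le u\le 0)$ and $\pi_{s,0}(f):=\E(f(X_0)\mid\clZ_{s,0})$, the filter at time $0$ started from the true stationary prior at time $s$. As $s\downarrow-\infty$ the filtrations increase, $\clZ_{s,0}\uparrow\clZ_{-\infty,0}$, so by the martingale convergence theorem $\pi_{s,0}(f)\to\pi_\infty(f):=\E(f(X_0)\mid\clZ_{-\infty,0})$ for each $f\in C_b(\bS)$, both $\sP$-a.s.\ and in $L^1$; a separability argument over a countable determining family upgrades this to convergence of the measures $\pi_{s,0}\to\pi_\infty$ in $\clP(\bS)$. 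I would then define $M$ as the law of $\pi_\infty$.

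Both asserted properties of $M$ follow from this construction. The barycenter identity is immediate, $\int_{\clP(\bS)}\rho(f)\,\mathrm{d}M(\rho)=\E(\pi_\infty(f))=\E(f(X_0))=\bmu(f)$. For $\Phi_t$-invariance, the time-shift $\theta_t$ on the stationary path space carries $\pi_\infty=\pi_0^{-\infty}$ to the limiting filter $\pi_t^{-\infty}$ at time $t$; stationarity gives $\mathrm{Law}(\pi_t^{-\infty})=M$, while the Markov property of the filter process yields $\E(F(\pi_t^{-\infty})\mid\pi_0^{-\infty})=(\Phi_tF)(\pi_0^{-\infty})$. Taking expectations, $M(F)=\E(F(\pi_t^{-\infty}))=M(\Phi_tF)$, which is exactly the invariance of $M$.

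For the convergence statement I would compare the filter $\pi_t^\mu$ started at time $0$ from the prior $\mu$ with the stationary filter $\pi_t^{-\infty}$, coupling the two so that they are driven by the same observations on $[0,t]$. Writing $(\Phi_tF)(\mu)=\E^\mu(F(\pi_t^\mu))$ and $M(F)=\E(F(\pi_t^{-\infty}))$, the difference is controlled once $d(\pi_t^\mu,\pi_t^{-\infty})\to 0$ in probability as $t\to\infty$, for a metric $d$ metrizing the weak topology on $\clP(\bS)$; continuity and boundedness of $F$ then close the estimate. The hypotheses $\mu\weakto\bmu$ and $\nu\weakto\bmu$ enter precisely here, guaranteeing that the marginal law of the signal relaxes to the stationary prior so that the comparison with $\pi_t^{-\infty}$ is legitimate.

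The hard part --- and the point at which Kunita's original argument is now known to be incomplete (see Section~\ref{sec:counter-example}) --- is exactly the forgetting property $d(\pi_t^\mu,\pi_t^{-\infty})\to 0$, equivalently the \emph{uniqueness} of an invariant measure with barycenter $\bmu$. Ergodicity of the signal does not by itself force the filter to forget its initialization: if the observations fail to separate distinct priors, two conditional laws driven by the same observations need never merge, and a second invariant measure appears. Making the merging rigorous therefore demands an additional input --- an observability/detectability hypothesis, or the conditional Poincar\'e inequality developed in Chapter~\ref{ch:filter-stability} --- and I expect this single step to absorb essentially all of the real work.
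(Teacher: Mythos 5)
You should first be aware that the thesis offers no proof of this lemma: it is imported verbatim from Kunita and Ocone--Pardoux, and the remark immediately following it records (citing Baxendale et al.) that Kunita's proof is known to contain an error. So there is no ``paper's proof'' to match; the only question is whether your argument actually closes the statement, and it does not. The first half of your proposal --- realizing $M$ as the law of the two-sided stationary filter $\pi_\infty=\E(f(X_0)\mid\clZ_{-\infty,0})$, obtaining the barycenter identity from the tower property, and the $\Phi_t$-invariance from stationarity of the shift --- is the standard and sound part, and it does establish \emph{existence} of an invariant measure with barycenter $\bmu$. But the uniqueness of $M$ and the convergence $(\Phi_tF)(\mu)\to M(F)$ are exactly what you defer to a ``forgetting property'' that you acknowledge you cannot prove, and that deferred step is the entire content of the lemma that is in doubt.

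Two refinements to your diagnosis. First, the precise point at which Kunita's argument fails is not a generic lack of merging but the specific tail $\sigma$-field identity~\eqref{eq:kunita-tail-sigmafield}, obtained by illegitimately exchanging a decreasing intersection with the join $\clZ_\infty\vee(\cdot)$; this is spelled out in Section~\ref{sec:65} of the thesis, and your stationary-filter construction runs into the same identity the moment you try to show $\pi_{s,0}\to\pi_\infty$ coincides with the limit of filters started from an arbitrary prior. Second, the additional hypotheses you suggest (observability, detectability, conditional Poincar\'e inequality) belong to the \emph{non-ergodic} program of Chapters~\ref{ch:filter-stability} and~\ref{ch:filter-stability-2}; the repair actually used in the literature for this ergodic statement is the nondegeneracy condition~\eqref{eq:assumption-ergodic} on the observation kernel, under which van Handel validates the tail identity. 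As written, your proposal proves existence but not uniqueness or convergence, so it does not prove the lemma --- which is consistent with the historical record, since no proof from ergodicity of the signal alone is possible without some such extra assumption.
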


\medskip

%Relying on this result, finite-memory property of the filter is established. %Ocone and Pardoux deduce the following filter stability claim.
Let $\pi_{t,\tau}^\mu$ be the solution to the nonlinear filter~\eqref{eq:nonlinear-filter} from initial condition $\pi_\tau^\mu = \mu_\tau$. For a test function $f\in C_b(\bS)$, define $F:\clP(\bS)\to \Re$ by
\[
F(\rho) = (\rho(f))^2
\]
Observe that
\begin{align*}
	\E^\mu\big(|\pi_t^\mu(f)-\pi_{t,\tau}^\mu(f)|^2\big) &= \E^\mu\big(|\pi_t^\mu(f)|^2\big)-2\E^\mu\big(\pi_t^\mu(f)\pi_{t,\tau}^\mu(f)\big) + \E^\mu\big(|\pi_{t,\tau}^\mu(f)|^2\big)\\ &=\E^\mu\big(|\pi_t^\mu(f)|^2\big)- \E^\mu\big(|\pi_{t,\tau}^\mu(f)|^2\big)\\
	&=(\Phi_tF)(\mu) - (\Phi_{t,\tau}F)(\mu_\tau)
\end{align*}
By the ergodic property of $\clS_t$ and $\Phi_t$, the following finite-memory property of the filter is established.

\medskip

\begin{lemma}[Lemma 3.4 in~\cite{ocone1996asymptotic}] \label{lm:63}
	Suppose $\clS$ satisfies~\eqref{eq:ergodicity-f}, $\mu_t\weakto\bmu$, and $\nu_t\weakto \bmu$. Then for every $\epsilon > 0$, there exists a $T_\epsilon$ and $t_\epsilon$ such that
	\begin{align*}
		\E^\mu\big(|\pi_t^\mu(f)-\pi_{t,t-T_\epsilon}^\mu(f)|^2\big) &<\epsilon,\quad \forall\, t\ge t_\epsilon \\
		\E^\mu\big(|\pi_t^\nu(f)-\pi_{t,t-T_\epsilon}^\nu(f)|^2\big) &<\epsilon,\quad \forall\, t\ge t_\epsilon
	\end{align*}
\end{lemma}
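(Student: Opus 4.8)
The plan is to reduce the whole statement to the identity derived just above the lemma,
\[
\E^\mu\big(|\pi_t^\mu(f) - \pi_{t,\tau}^\mu(f)|^2\big) = (\Phi_t F)(\mu) - (\Phi_{t,\tau}F)(\mu_\tau),\qquad F(\rho) := (\rho(f))^2,
\]
combined with the time-homogeneity of the model $(\clA,h)$, which gives $(\Phi_{t,\tau}F)(\rho) = (\Phi_{t-\tau}F)(\rho)$ (the two-parameter flow depends only on $t-\tau$, cf.\ the Atar--Zeitouni remark). Setting $\tau = t-T$, the error equals the nonnegative quantity $(\Phi_t F)(\mu) - (\Phi_T F)(\mu_{t-T})$, and the entire task becomes choosing $T_\epsilon$ and $t_\epsilon$ so that this is below $\epsilon$ for all $t \ge t_\epsilon$.

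For the $\mu$-filter I would insert the invariant value $M(F)$ twice and split
\[
(\Phi_t F)(\mu) - (\Phi_T F)(\mu_{t-T}) = \big[(\Phi_t F)(\mu) - M(F)\big] + \big[M(F) - (\Phi_T F)(\bmu)\big] + \big[(\Phi_T F)(\bmu) - (\Phi_T F)(\mu_{t-T})\big].
\]
The first bracket tends to $0$ as $t\to\infty$ by \Lemma{prop:filter-is-ergodic} applied to $\mu_t\weakto\bmu$; the second tends to $0$ as $T\to\infty$ by the same lemma applied to the stationary input $\bmu$, which fixes $T_\epsilon$ making it below $\epsilon/3$; and with $T_\epsilon$ fixed the third tends to $0$ as $t\to\infty$ since $\Phi_{T_\epsilon}F\in C_b(\clP(\bS))$ is weakly continuous and $\mu_{t-T_\epsilon}\weakto\bmu$. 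Taking $t_\epsilon$ to dominate the thresholds from the first and third brackets gives the first inequality.

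For the $\nu$-filter the clean orthogonality identity only holds when the filter is run under its own law, so I would first prove the analogue under $\sP^\nu$: repeating the identity and the ergodic splitting verbatim with $\nu$ in place of $\mu$ (using the hypothesis $\nu_t\weakto\bmu$) makes $\E^\nu(|\pi_t^\nu(f)-\pi_{t,t-T_\epsilon}^\nu(f)|^2)$ small for $t$ large. To pass to $\sP^\mu$, write via \Lemma{lm:change-of-Pmu-Pnu} that $\E^\mu(G) = \E^\nu(L\,G)$ with $L = \frac{\ud\mu}{\ud\nu}(X_0)\in L^1(\sP^\nu)$, where $G := |\pi_t^\nu(f)-\pi_{t,t-T_\epsilon}^\nu(f)|^2 \le 4\|f\|_\infty^2$ is uniformly bounded. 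Splitting $L = L\ones_{\{L\le K\}} + L\ones_{\{L>K\}}$ bounds $\E^\mu(G)$ by $K\,\E^\nu(G) + 4\|f\|_\infty^2\,\E^\nu(L\ones_{\{L>K\}})$; the second term is made below $\epsilon/2$ by integrability of $L$ (choosing $K$), and the first below $\epsilon/2$ by the $\sP^\nu$-bound (choosing $t$ large).

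The hard part will be this change-of-measure step together with the bookkeeping of the quantifiers: $T_\epsilon$, then $K$, then $t_\epsilon$ must be selected so that all error contributions are \emph{simultaneously} below their allotted fractions of $\epsilon$ for \emph{every} $t\ge t_\epsilon$, not merely in the limit. The remaining ingredients---time-homogeneity of $\Phi$, weak continuity of $\Phi_{T_\epsilon}F$ on $\clP(\bS)$, and the convergences $(\Phi_t F)(\cdot)\to M(F)$---are all delivered directly by \Lemma{prop:filter-is-ergodic}.
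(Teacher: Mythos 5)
Your proposal is correct and follows essentially the same route the paper sketches: the orthogonality identity $\E^\mu\big(|\pi_t^\mu(f)-\pi_{t,\tau}^\mu(f)|^2\big) = (\Phi_tF)(\mu)-(\Phi_{t,\tau}F)(\mu_\tau)$ displayed just before the lemma, combined with the ergodic convergence of $\Phi_t$ from Lemma~\ref{prop:filter-is-ergodic} and time-homogeneity of the semigroup. The one detail you supply beyond the paper's one-line justification is the change-of-measure/uniform-integrability step for the $\nu$-initialized filter (the identity only holds under $\sP^\nu$, so you truncate $L=\frac{\ud\mu}{\ud\nu}(X_0)$ and use boundedness of the integrand), and your ordering of the quantifiers ($T_\epsilon$, then $K$, then $t_\epsilon$) is sound.
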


\medskip

The finite memory property is used to deduce the filter stability. Observe that
\[
\E^\mu\big(|\pi_t^\mu(f)-\pi_t^\nu(f)|^2\big) = 3\Big(\E^\mu\big(|\pi_t^\mu(f)-\pi_{t,t-T_\epsilon}^\mu(f)|^2\big)+ \E^\mu\big(|\pi_t^\nu(f)-\pi_{t,t-T_\epsilon}^\nu(f)|^2\big)+ \E^\mu\big(|\pi_{t,t-T_\epsilon}^\mu(f)-\pi_{t,t-T_\epsilon}^\nu(f)|^2\big)\Big)
\]
The first two terms can be made arbitrarily small using Lemma~\ref{lm:63}. It remains to show for any fixed $T_\epsilon$,
\[
\lim_{t\to\infty} \E^\mu\big(|\pi_{t,t-T_\epsilon}^\mu(f)-\pi_{t,t-T_\epsilon}^\nu(f)|^2\big) = 0
\]
The idea of the proof is because $\mu_{t-T_\epsilon}\weakto \bmu$ and $\nu_{t-T_\epsilon}\weakto\bmu$, $\pi_{t,t-T_\epsilon}^\mu(f)$ and $\pi_{t,t-T_\epsilon}^\nu(f)$ converge to the same limit. For complete proof, see~\cite[Lemma 3.6]{ocone1996asymptotic}.
This leads to the following conclusion:

\begin{proposition}[Theorem 3.2 in~\cite{ocone1996asymptotic}]
	Consider for $\mu, \nu \in \clP(\bS)$ such that $\mu\ll\nu$.
	If $\clS$ satisfies~\eqref{eq:ergodicity-f}, $\mu\weakto\bmu$ and $\nu\weakto\bmu$ then 
	\[
	\lim_{T\to \infty} \E^\mu\big(|\pi_T^\mu(f)-\pi_T^\nu(f)|^2\big) = 0,\quad \forall\, f\in C_b(\bS)
	\]	
\end{proposition}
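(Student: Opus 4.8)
The plan is to follow the three-term decomposition already set up in the discussion preceding the statement: control two of the terms by the finite-memory property (Lemma~\ref{lm:63}) and the remaining ``coupling'' term by the ergodicity of the filtering semigroup (Lemma~\ref{prop:filter-is-ergodic}). Throughout I fix $f\in C_b(\bS)$ and $\epsilon>0$, and I use the hypotheses $\mu_t\weakto\bmu$, $\nu_t\weakto\bmu$ together with $\mu\ll\nu$, the latter ensuring (via Lemma~\ref{lm:change-of-Pmu-Pnu}) that $\pi^\nu$ is well defined and that $\E^\mu\big(|\pi_t^\nu(f)|^2\big)$ is finite, so that all the $L^2$ quantities below make sense.

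First I would apply the elementary bound $|a+b+c|^2\le 3(|a|^2+|b|^2+|c|^2)$ with
\[
a=\pi_t^\mu(f)-\pi_{t,t-T_\epsilon}^\mu(f),\quad b=\pi_t^\nu(f)-\pi_{t,t-T_\epsilon}^\nu(f),\quad c=\pi_{t,t-T_\epsilon}^\mu(f)-\pi_{t,t-T_\epsilon}^\nu(f),
\]
splitting $\E^\mu\big(|\pi_t^\mu(f)-\pi_t^\nu(f)|^2\big)$ into three pieces. By Lemma~\ref{lm:63} there exist a window length $T_\epsilon$ and a time $t_\epsilon$ such that $\E^\mu(|a|^2)<\epsilon$ and $\E^\mu(|b|^2)<\epsilon$ for all $t\ge t_\epsilon$; this is exactly where the two weak-convergence hypotheses feed the finite-memory estimate.

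The remaining and genuinely hard task is to show, for the now-\emph{fixed} $T_\epsilon$,
\[
\lim_{t\to\infty}\E^\mu\big(|\pi_{t,t-T_\epsilon}^\mu(f)-\pi_{t,t-T_\epsilon}^\nu(f)|^2\big)=0.
\]
Here both filters run over the same window $[t-T_\epsilon,t]$, driven by the same observations, but started from $\mu_{t-T_\epsilon}$ and $\nu_{t-T_\epsilon}$, both of which approach $\bmu$ weakly as $t\to\infty$. I would recast this in terms of the filter semigroup $\Phi$ acting on the functional $F(\rho)=(\rho(f))^2$, reducing the problem to showing that the filter outputs started from two measures converging weakly to $\bmu$ converge to a \emph{common} limit. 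The uniqueness of the $\Phi$-invariant measure $M$ and the ergodic convergence statement of Lemma~\ref{prop:filter-is-ergodic} supply that common limit $M(F)$, and the delicate point—passing from weak convergence of the initial measures to $L^2$ convergence of the outputs over the window—is precisely the continuity-plus-convergence argument of~\cite[Lemma 3.6]{ocone1996asymptotic}, which I would invoke for the details. This is the step I expect to be the main obstacle, since it is where the ergodicity assumption~\eqref{eq:ergodicity-f} does its real work.

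Finally, combining the three estimates gives $\limsup_{t\to\infty}\E^\mu\big(|\pi_t^\mu(f)-\pi_t^\nu(f)|^2\big)\le 3(\epsilon+\epsilon+0)=6\epsilon$, and since $\epsilon>0$ is arbitrary the limit is zero, which is the claim.
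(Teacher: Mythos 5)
Your proposal follows essentially the same route as the text preceding the statement in the paper: the same three-term split via $|a+b+c|^2\le 3(|a|^2+|b|^2+|c|^2)$, the same use of the finite-memory property (Lemma~\ref{lm:63}) for the first two terms, and the same deferral to~\cite[Lemma 3.6]{ocone1996asymptotic} (resting on the ergodicity of the filter semigroup from Lemma~\ref{prop:filter-is-ergodic}) for the third. The argument is correct as a sketch at the same level of detail the paper gives, and you even fix the paper's cosmetic slip of writing the decomposition with an equality rather than an inequality.
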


\medskip

\begin{remark}
	The proof utilizes the Kunita's result that the nonlinear filter admits a unique invariant measure.
	However, it is noted in~\cite[Section 3]{baxendale2004asymptotic} that the proof of Lemma~\ref{prop:filter-is-ergodic} contains an error. % Therefore the filter stability may not follow the ergodicity of the state process.
	Since the arguments in~\cite{ocone1996asymptotic} crucially relied on Kunita's work, their proof of filter stability inherited the same error. 
	%	The filter stability result for ergodic state process case is validated with an additional assumption on the observation provided in~\cite[Assumption III.2]{van2010nonlinear}.
\end{remark}

%In the following section, a famous counter-example on stochastic stability, first noted by~\cite{delyon1988lyapunov}, that the filter is \emph{not} stable even though the state process is ergodic is presented.

\section{The counter-example of filtering theory}\label{sec:counter-example}

The following counter-example first appeared in~\cite{delyon1988lyapunov}. It describes a model whose signal is ergodic but the filter is not so. %In section 3 of \cite{baxendale2004asymptotic}, it is pointed out that the filter is \emph{not} stable even though the state process is ergodic.

\begin{example}\label{ex:counter-example}
	Consider the state-space
	$\bS=\{1,2,3,4\}$ and the rate matrix 
	\[
	A = \begin{pmatrix}
		-1 & 1 & 0 & 0\\
		0 & -1 & 1 & 0\\
		0 & 0 & -1 & 1\\
		1 & 0 & 0 & -1
	\end{pmatrix}
	\]
	whose unique invariant measure $\bmu =
	[\frac{1}{4},\frac{1}{4},\frac{1}{4},\frac{1}{4}]$.
	The observation model is as follows:
	\[
	Z_t = \ones_{[X_t \in\{1,3\}]}
	\]
	The observation provides exact time instant $\{t_k \ge 0:k = 1,2,\ldots \}$ when the state jumps to another state. For $t_k \le t < t_{k+1}$, $\pi_t$ is constant. The table below illustrates a sample observation path and the corresponding optimal estimates.
	In this table, $p = \frac{\mu(1)}{\mu(1)+\mu(3)}$.
	
	\begin{center}
		\begin{tabular}{|c||c|c|c|c|c|c|}
			\hline
			$t$ & $[0,t_1)$ & $[t_1,t_2)$ & $[t_2,t_3)$ & $[t_3,t_4)$ & $[t_4,t_5)$ & $ \cdots$ \\
			\hline
			$Z_{t}$ & 1 & 0 & 1 & 0 & 1  & $ \cdots$\\
			\hline
			$\pi_{t}^\mu(1)$ & $p$ & 0 & $1-p$ & 0 & $p$  & $ \cdots$\\
			$\pi_{t}^\mu(2)$ & 0 & $p$ & 0 & $1-p$ & 0  & $ \cdots$\\
			$\pi_{t}^\mu(3)$ & $1-p$ & 0 & $p$ & 0 & $1-p$  & $ \cdots$\\
			$\pi_{t}^\mu(4)$ & 0 & $1-p$ & 0 & $p$ & 0  & $ \cdots$\\
			\hline
		\end{tabular}
	\end{center}
	For a different initialization $\nu$ of the prior, let $p' = \frac{\nu(1)}{\nu(1)+\nu(3)}$ and then 
	\[
	\|\pi_t^\mu-\pi_t^\nu\|_\tv = 2(p-p'),\quad \forall\,t > 0
	\]
	Therefore, the filter is not stable.
	
\end{example}

\section{Lyapunov exponent analysis of the Zakai equation}\label{sec:Lyapunov-analysis}

We begin by recalling the solution operator $\Psi=\{\Psi_t:t\ge 0\}$ of the Zakai equation defined by~\eqref{eq:Zakai-soln-operator}. 
The Lyapunov exponent analysis is based on the analysis of the contraction property of $\Psi$. One of earlier contribution on this theme appears in~\cite{delyon1988lyapunov}, which was expanded by Atar and Zeitouni in ~\cite{atar1997exponential,atar1997lyapunov,atar1998exponential}.
In these papers, the stability index is defined by
\[
\overline{\gamma}:=\limsup_{T\to\infty}\frac{1}{T}\log\|\pi_T^{\nu}-\pi_T^\mu\|_\tv, \quad \sP^\mu{-a.s.}
\]
If this value is negative, then the filter is asymptotically stable in total variation norm. Moreover, $\overline{\gamma}$ gives a bound on the rate of convergence.
For the analysis of $\overline{\gamma}$, the Hilbert projection metric is useful.

%	For general cases, the following two approaches are suggested: (1) a bound using Hilbert projection metric, and (2) direct computation on the Lyapunov exponent of the Zakai equation.

\begin{definition}[Eq.~9 in~\cite{atar1997exponential}]
	For non-negative finite equivalent measures $\mu$ and $\nu$  (i.e., $\mu\ll\nu$ and 
	$\nu\ll\mu$), the \emph{Hilbert projection metric} is
	\[
	H(\mu,\nu) := 
	\log\frac{\sup_{\{A:\nu(A)> 0\}}\mu(A)/\nu(A)}{\inf_{\{A:\nu(A)>0\}}\mu(A)/\nu(A)}
	\]
\end{definition}

This metric is useful to analyze the stability index because of the following lemma:

\begin{lemma}[Section 5.1 in~\cite{baxendale2004asymptotic}]
	The Hilbert projection metric satisfies the following:
	\begin{itemize}
		\item $H(c_1\mu,c_2\nu) = H(\mu,\nu)$ for any positive constants $c_1$ and $c_2$.
		\item For any $\mu,\nu\in\clP(\bS)$, $\|\mu-\nu\|_\tv \le \frac{2}{\log 3} H(\mu,\nu)$.
	\end{itemize}
\end{lemma}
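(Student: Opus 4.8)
The plan is to prove each of the two properties of the Hilbert projection metric $H(\mu,\nu) := \log\frac{\sup_{\{A:\nu(A)>0\}}\mu(A)/\nu(A)}{\inf_{\{A:\nu(A)>0\}}\mu(A)/\nu(A)}$ separately, with the scale-invariance being essentially immediate and the total-variation bound being the substantive part.

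\textbf{Step 1: Scale invariance.} First I would verify $H(c_1\mu,c_2\nu) = H(\mu,\nu)$ for positive constants $c_1,c_2$. This is direct from the definition: for every measurable $A$ with $\nu(A) > 0$ we have $\frac{(c_1\mu)(A)}{(c_2\nu)(A)} = \frac{c_1}{c_2}\cdot\frac{\mu(A)}{\nu(A)}$, so both the supremum and the infimum over such $A$ are multiplied by the common factor $\frac{c_1}{c_2}$. That factor then cancels inside the logarithm of the ratio, leaving $H(\mu,\nu)$ unchanged. No subtlety here beyond noting that the set $\{A:\nu(A)>0\} = \{A:(c_2\nu)(A)>0\}$ is unaffected by scaling.

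\textbf{Step 2: The total-variation bound.} For $\mu,\nu\in\clP(\bS)$ I must show $\|\mu-\nu\|_\tv \le \frac{2}{\log 3}H(\mu,\nu)$. Let me abbreviate $r(A) := \mu(A)/\nu(A)$ and set $a := \inf_{\nu(A)>0} r(A)$ and $b := \sup_{\nu(A)>0} r(A)$, so $H(\mu,\nu) = \log(b/a)$. The strategy is to bound the density $\frac{\ud\mu}{\ud\nu}$ (assuming $\mu\sim\nu$, as required for $H$ to be finite): the essential range of the Radon--Nikodym derivative lies in $[a,b]$. Since both $\mu$ and $\nu$ are probability measures, integrating the density against $\nu$ gives $\int \frac{\ud\mu}{\ud\nu}\ud\nu = 1$, which forces $a \le 1 \le b$. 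The total variation admits the representation $\|\mu-\nu\|_\tv = \int \bigl|\frac{\ud\mu}{\ud\nu} - 1\bigr|\ud\nu$ (up to the normalization convention in Example~\ref{ex:dual-function-measure}). I would then bound the integrand pointwise by its maximal excursion: $\bigl|\frac{\ud\mu}{\ud\nu}-1\bigr| \le \max\{1-a,\,b-1\} \le b-a$. The remaining task is the elementary analytic inequality relating $b-a$ to $\log(b/a)$ under the constraint $a \le 1 \le b$.

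\textbf{Step 3: The scalar inequality.} The heart of the matter is to show that $a\le 1\le b$ and $\log(b/a)\le L$ together imply $b - a$ (or the relevant one-sided excursion) is controlled by a constant multiple of $L$, yielding the sharp constant $\frac{2}{\log 3}$. The hard part will be getting the constant exactly right rather than merely some constant: I expect one sets $b/a = e^{H}$ and optimizes the ratio $\frac{b-a}{\log(b/a)}$ over the feasible region $\{a\le 1\le b\}$, and the value $\frac{2}{\log 3}$ should emerge from the worst-case configuration (the calibrating case where the density takes only two values symmetric about $1$ in the projective sense, e.g. the extremal two-point measure with $b/a = 3$). I would compute the extremum by a one-variable calculus argument on $g(t) = \frac{e^{t}-1}{t}$-type quotients subject to the normalization $a\le 1\le b$, and confirm the maximizing ratio is $3$. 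If the direct optimization proves awkward, an alternative is to invoke the known comparison between the Hilbert metric and total variation from the source~\cite{atar1997exponential,baxendale2004asymptotic} and simply reproduce their constant; but since the statement cites~\cite{baxendale2004asymptotic} directly, the cleanest route is to cite that the constant $\frac{2}{\log 3}$ is established there and present only the structural argument of Steps 1--2, treating the scalar optimization as the routine verification it is.
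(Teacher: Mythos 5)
Your Step 1 is fine, and a preliminary remark: the paper itself offers no proof of this lemma --- it is quoted verbatim from Section 5.1 of the cited reference --- so the only thing to assess is whether your proposed argument would actually close. It would not. The gap is in the passage from Step 2 to Step 3. Writing $\gamma=\frac{\ud\mu}{\ud\nu}$, $a=\operatorname{essinf}\gamma$, $b=\operatorname{esssup}\gamma$, your chain is $\int|\gamma-1|\,\ud\nu\le\max\{1-a,\,b-1\}\le b-a$ followed by the scalar inequality $b-a\le\frac{2}{\log 3}\log(b/a)$ on the region $a\le 1\le b$. That scalar inequality is false: take $a=0.99$ and $b=5$ (realizable by a two-valued density putting $\nu$-mass $p=0.01/4.01$ on the value $5$, so that $\int\gamma\,\ud\nu=1$). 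Then $b-a=4.01$ while $\frac{2}{\log 3}\log(b/a)\approx 2.95$. The actual total variation in this example is about $0.02$, so the lemma is in no danger --- but your pointwise bound $|\gamma-1|\le b-a$ has discarded exactly the information that saves it, namely that when $b$ is large and $a$ is near $1$ the set where $\gamma$ is large must carry very little $\nu$-mass. The constraint $\int\gamma\,\ud\nu=1$ must enter beyond the single consequence $a\le 1\le b$.

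The standard argument (the one in the cited source) avoids densities altogether. For the maximizing set $A$ one writes the identity $\mu(A)-\nu(A)=\mu(A)\nu(A^c)-\nu(A)\mu(A^c)$, and observes that the definition of $H$ applied to the two sets $A$ and $A^c$ gives $\frac{\mu(A)/\nu(A)}{\mu(A^c)/\nu(A^c)}\le e^{H(\mu,\nu)}$, hence $\mu(A)-\nu(A)\le \nu(A)\mu(A^c)\big(e^{H}-1\big)\le\frac{1}{4}\big(e^{H}-1\big)$. Combining this with the trivial bound $\|\mu-\nu\|_\tv\le 2$ (valid for all $H$) and checking that $\min\big\{\tfrac{1}{2}(e^{H}-1),\,2\big\}\le\frac{2}{\log 3}H$ for all $H\ge 0$ yields the constant: the linear majorant $\frac{2}{\log 3}H$ reaches the saturation value $2$ exactly at $H=\log 3$, which is where the two regimes are balanced. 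If you want to keep a density-based presentation, you must replace the pointwise bound by the extremal two-point computation $\int|\gamma-1|\,\ud\nu\le\frac{2(1-a)(b-1)}{b-a}$ and then prove the (true) scalar inequality for that quantity; as written, Step 3 cannot be completed.
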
 
For the filter,
\[
\|\pi_t^\mu-\pi_t^\nu\|_\tv \le \frac{2}{\log 3}H(\pi_t^\mu,\pi_t^\nu) = \frac{2}{\log 3} H(\sigma_t^\mu,\sigma_t^\nu) = \frac{2}{\log 3} H(\Psi_t \mu, \Psi_t \nu)
\]
The right-most term is bounded by Birkhoff's contraction coefficient~\cite[Ch. 3]{seneta2006non} of $\Psi_t$, which is defined by
\[
\tau(\Psi_t) := \sup_{0<H(\mu,\nu)<\infty} 
\frac{H(\Psi_t\mu,\Psi_t\nu)}{H(\mu,\nu)}
\]
The main result is Theorem 1 in~\cite{atar1997exponential} that if the state process is ergodic with a unique stationary measure $\bmu$ then 
\[
\overline{\gamma} \le 
\frac{1}{t} \E^\bmu\big(\log \tau(\Psi_t)\big),\quad \forall\, t > 0
\]
The behavior of $\tau(\Psi_t)$ is studied under different settings: 
discrete time case~\cite[Section 3]{atar1997exponential}, for diffusions on compact manifold~\cite[Section 4]{atar1997exponential} and finite state space case~\cite{atar1997lyapunov}.
A representative result is as follows:

\subsubsection{Ergodic case on finite state space}

%The stability index is analyzed for finite state case.
For ergodic Markov chain on finite state space, we follow discussion in~\cite{baxendale2004asymptotic}. Set $\mu = \bmu$ and observe that
\[
H(\Psi_t \bmu,\Psi_t \nu) \le \tau\big(\Psi_{t,\lfloor t \rfloor}\big) \Big(\prod_{n=1}^{\lfloor t \rfloor} \tau(\Psi_{n,n-1})\Big)H(\bmu,\nu) 
\]
Therefore,
\[
\overline{\gamma} \le \limsup_{t\to \infty} \frac{1}{\lfloor t\rfloor} \sum_{n=1}^{\lfloor t \rfloor} \tau(\Psi_{n,n-1}) \le \limsup_{t\to \infty} \frac{1}{\lfloor t\rfloor} \sum_{n=1}^{\lfloor t \rfloor} \big(\tau(\Psi_{n,n-1}) \vee -1 \big)
\]
Since $\bmu$ is the stationary measure, $\{Z_t: n-1\le t < n\}$ is i.i.d.~for each $n\in \mathbb{N}$. Therefore by law of large numbers,
\[
\overline{\gamma} \le \E^\bmu\big(\tau(\Psi_{1,0}) \vee -1 \big),\quad \sP^\bmu\text{-a.s.}
\]
It is shown in~\cite[Lemma 5.2]{baxendale2004asymptotic} that if the state process is ergodic then the right-hand side is strictly negative. Hence the following is obtained:

\begin{proposition}[Theorem 4.1 in~\cite{baxendale2004asymptotic}]
	Suppose $\bS$ is finite with a single communicating class. Then there exists $c>0$ such that for any $\mu,\nu \in \clP(\bS)$,
	\[
	\limsup_{T\to \infty}\frac{1}{T}\log \|\pi_T^\mu-\pi_T^\nu\|_\tv < -c,\quad \sP^\mu\text{-a.s.}
	\]
\end{proposition}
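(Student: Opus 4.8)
The plan is to control the stability index $\overline{\gamma} := \limsup_{T\to\infty}\frac{1}{T}\log\|\pi_T^\mu-\pi_T^\nu\|_\tv$ and to show it lies below a strictly negative deterministic constant $-c$, $\sP^\mu$-a.s. The entry point is the chain of inequalities already assembled in this section: since $\|\pi_T^\mu-\pi_T^\nu\|_\tv \le \frac{2}{\log 3}H(\Psi_T\mu,\Psi_T\nu)$, it suffices to show that the Hilbert metric $H(\Psi_T\mu,\Psi_T\nu)$ decays exponentially. First I would decompose the solution operator over unit time blocks, $\Psi_{T,0}=\Psi_{T,\lfloor T\rfloor}\circ\Psi_{\lfloor T\rfloor,\lfloor T\rfloor-1}\circ\cdots\circ\Psi_{1,0}$, and invoke the submultiplicativity of Birkhoff's contraction coefficient to obtain $\log H(\Psi_T\mu,\Psi_T\nu)\le \log H(\mu,\nu)+\sum_{n=1}^{\lfloor T\rfloor}\log\tau(\Psi_{n,n-1})$. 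Dividing by $T$ and passing to the limit reduces the whole problem to understanding the long-run average of the stationary sequence $\{\log\tau(\Psi_{n,n-1})\}_{n\ge 1}$.

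Next I would exploit stationarity. Initializing at the invariant measure $\bmu$ makes the pair $(X,Z)$ a stationary process under $\sP^\bmu$, so via the shift operator $\theta_\tau$ recalled in the Zakai-operator remark one has $\Psi_{n,n-1}=\Psi_{1,0}\circ\theta_{n-1}$, and hence $\{\log\tau(\Psi_{n,n-1})\}$ is a stationary sequence; the single communicating class assumption forces the shift to be ergodic. Applying Birkhoff's ergodic theorem, after the truncation $\log\tau\vee(-1)$ that secures integrability, yields $\overline{\gamma}\le \E^\bmu(\log\tau(\Psi_{1,0})\vee(-1))$, $\sP^\bmu$-a.s. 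To pass from $\mu=\bmu$ to arbitrary priors, I would use that $H$ is a genuine metric on the projective cone together with $H(\pi_T^\mu,\pi_T^\nu)\le H(\pi_T^\mu,\pi_T^\bmu)+H(\pi_T^\bmu,\pi_T^\nu)$; each summand is treated by the case just handled, restarting the clock at an arbitrarily small positive time so that the initial Hilbert distances are finite (the filter acquires full support instantly because $e^{tA^\tp}$ is strictly positive for $t>0$). Since $\bmu$ has full support, $\mu\ll\bmu$ and thus $\sP^\mu\ll\sP^\bmu$ by \Lemma{lm:change-of-Pmu-Pnu}, so the $\sP^\bmu$-a.s. bound transfers to a $\sP^\mu$-a.s. bound.

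The crux, and the step I expect to be the main obstacle, is showing that the averaged contraction is strictly negative, i.e. $c:=-\E^\bmu(\log\tau(\Psi_{1,0})\vee(-1))>0$. The key observation is that the single communicating class assumption makes the deterministic flow $e^{A^\tp}$ an entrywise strictly positive matrix, and the multiplicative-noise structure of the Zakai flow $\ud\Psi_t=A^\tp\Psi_t\,\ud t+\sum_j\dv(H^j)\Psi_t\,\ud Z_t^j$ only rescales by strictly positive diagonal factors, so $\Psi_{1,0}$ is itself entrywise strictly positive $\sP^\bmu$-a.s. By Birkhoff's theorem a strictly positive matrix $M$ has $\tau(M)=\frac{1-\sqrt{\phi(M)}}{1+\sqrt{\phi(M)}}<1$, where $\phi(M)$ is the minimal cross-ratio of its entries; hence $\log\tau(\Psi_{1,0})<0$ a.s., and after truncation its expectation is strictly negative because the integrand is nonpositive and negative on a positive-probability set. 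Making this quantitative is precisely the content of \cite[Lemma 5.2]{baxendale2004asymptotic}; the delicate point is ruling out that $\tau(\Psi_{1,0})$ concentrates near $1$, which is where irreducibility (rather than mere positivity of a sub-chain) is essential. Choosing any $c\in(0,-\E^\bmu(\log\tau(\Psi_{1,0})\vee(-1)))$ then completes the argument.
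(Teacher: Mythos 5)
Your proposal is correct and follows essentially the same route as the paper: bound total variation by the Hilbert metric, decompose $\Psi_{T,0}$ into unit-time blocks, apply submultiplicativity of the Birkhoff contraction coefficient, use stationarity under $\sP^{\bmu}$ to get an a.s.\ limit of the truncated averages, and invoke Lemma~5.2 of Baxendale et al.\ for strict negativity. You are in fact slightly more careful than the text in three places the paper glosses over --- using the ergodic theorem rather than an ``i.i.d.\ + LLN'' claim for the non-independent blocks, extending from $\mu=\bmu$ to general priors via the triangle inequality for $H$ after an $\epsilon$-warm-up, and transferring the $\sP^{\bmu}$-a.s.\ statement to $\sP^{\mu}$-a.s.\ via $\mu\ll\bmu$ --- but the argument is the same one.
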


\medskip

The following bounds have been established in literature for the finite state space case:
\begin{itemize}
	\item Square-root type bound~\cite[Theorem 5]{atar1997lyapunov}
	\[
	\overline{\gamma} \le -2 \min_{i\neq j} \sqrt{A(i,j)A(j,i)}
	\]
	\item Bound from minimum over row~\cite{baxendale2004asymptotic}
	\[
	\overline{\gamma} \le -\sum_{i\in\bS}
	\bmu(i) \min_{j\neq i}A(i,j)
	\]
	\item Another notable result is proved when the measurement noise is scaled by a factor $r > 0$. Explicitly,
	\[
	Z_t = \int_0^t h(X_s)\ud s + r W_t
	\]
	Theorem 7 in~\cite{atar1997lyapunov} establishes the following limits:
	\begin{align*}
		\limsup_{r\to 0}r^2 \overline{\gamma} &\le -\half  \sum_{i \in \bS}\bmu(i)\min_{j\neq i}|h(i)-h(j)|^2\\
		\liminf_{r\to 0}r^2 \overline{\gamma} &\le -\half  \sum_{i,j \in \bS}\bmu(i)|h(i)-h(j)|^2
	\end{align*}

\end{itemize}
%Therefore, if all non-diagonal entries of the rate matrix $A$ are positive, then the filter is stable. This is much stronger than ergodicity of the state process.

\medskip

\section{Analysis on KL divergence}\label{sec:literature-clark}

%Early contributions on nonlinear filter stability commonly assume the ergodicity of the signal process.
%Without assuming the ergodicity of the signal process, 
While prior discussion mainly assumed ergodicity of the state process, it is apparent that the observation model should also be taken into account.
The earliest work on the filter stability problem without the ergodic assumption is by Clark, Ocone and Coumarbatch~\cite{clark1999relative}.
In this paper, the authors consider the relative entropy metric to compare $\mu,\nu \in \clP(\bS)$:
%In~\cite{clark1999relative}, the authors investigated deeply on Kullback-Leibler (KL) divergence of $\pi_t^\mu$ from $\pi_t^\nu$, where the KL divergence of $\mu$ from $\nu$ is defined by:
\[
\kl(\mu\mid \nu) := \begin{cases}\displaystyle \int_\bS\frac{\ud \mu}{\ud \nu} \log \Big(\frac{\ud \mu}{\ud 
		\nu}\Big) \ud \nu\qquad &\text{if }\mu\ll\nu\\
	\infty &\text{if else}
\end{cases}
\]
The main result of~\cite{clark1999relative}, proved for a general class of HMMs, is as follows:

\medskip

\begin{proposition}[Theorem 2.2 in~\cite{clark1999relative}] \label{prop:clark-main-result}
	Assume $\mu\ll \nu$. Then
	\[
	\kl(\mu\mid\nu) = \E^\mu\big(\kl(\pi_t^\mu\mid\pi_t^\nu)\big) + 
	\kl\big(\sP^\mu|_{\clZ_t}\mid\sP^\nu|_{\clZ_t}\big) + 
	\E^\mu\big(\kl(\mu(\cdot|\clZ_t,X_t)\mid\nu(\cdot|\clZ_t,X_t))\big),\quad 
	\forall\,t > 0
	\]
	%	Moreover if $\kl(\mu\mid\nu) <\infty$ then $\big\{\kl(\pi_t^\mu\mid\pi_t^\nu): t \ge 0\big\}$ is a non-negative $\sP^\mu$-supermartingale.  
\end{proposition}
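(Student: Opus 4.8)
The plan is to recognize the claimed identity as an iterated chain-rule (tower) decomposition of relative entropy, applied to the joint law of the triple $(X_0, \clZ_t, X_t)$ under the two measures $\sP^\mu$ and $\sP^\nu$. The whole argument rests on a single structural fact from \Lemma{lm:change-of-Pmu-Pnu}: because the two measures share the same transition dynamics for $(X,Z)$ and differ only through the prior, the Radon--Nikodym derivative
\[
\frac{\ud \sP^\mu}{\ud \sP^\nu}(\omega) = \frac{\ud \mu}{\ud \nu}\big(X_0(\omega)\big)
\]
is $\sigma(X_0)$-measurable. Consequently, for any sub-$\sigma$-algebra $\clG \supseteq \sigma(X_0)$, conditioning this derivative on $\clG$ leaves it unchanged, so that
\[
\kl\big(\sP^\mu|_{\clG}\mid \sP^\nu|_{\clG}\big) = \E^\mu\Big(\log \tfrac{\ud\mu}{\ud\nu}(X_0)\Big) = \kl(\mu\mid\nu).
\]

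First I would fix $t>0$ and set $\clG_t := \sigma(X_0, \clZ_t, X_t)$, which contains $\sigma(X_0)$; by the display above the joint relative entropy on $\clG_t$ equals $\kl(\mu\mid\nu)$, the left-hand side of the claim. Next I would factor the joint law of $(X_0, \clZ_t, X_t)$ in the order ``observations, then current state, then initial state,''
\[
P(X_0, \clZ_t, X_t) = P(\clZ_t)\, P(X_t\mid \clZ_t)\, P(X_0\mid \clZ_t, X_t),
\]
and apply the chain rule for relative entropy, $\kl(P_{A,B}\mid Q_{A,B}) = \kl(P_A\mid Q_A) + \E_{P_A}\big(\kl(P_{B\mid A}\mid Q_{B\mid A})\big)$, twice. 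The three resulting pieces are then matched to the three terms on the right: the marginal over $\clZ_t$ gives $\kl(\sP^\mu|_{\clZ_t}\mid\sP^\nu|_{\clZ_t})$; the conditional law of $X_t$ given $\clZ_t$ is, by the very definition of the filter, $\pi_t^\mu$ under $\sP^\mu$ and $\pi_t^\nu$ under $\sP^\nu$, yielding $\E^\mu\big(\kl(\pi_t^\mu\mid\pi_t^\nu)\big)$; and the conditional law of $X_0$ given $(\clZ_t, X_t)$ is precisely $\mu(\cdot\mid\clZ_t, X_t)$, respectively $\nu(\cdot\mid\clZ_t, X_t)$, producing the last term. The nested conditional expectations collapse into a single $\E^\mu$ by the tower property, and summing reproduces the asserted decomposition.

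The only genuine obstacle is measure-theoretic bookkeeping rather than any hard estimate. Because $\clZ_t$ is generated by a continuous observation path, the conditioning in the second and third terms is on an infinite-dimensional object, so I must invoke existence of regular conditional distributions; this is legitimate since $\bS$ and $C([0,t];\Re^m)$ are Polish, hence standard Borel. I would also record that the chain rule is a valid identity in $[0,\infty]$ with all summands non-negative, so no a priori finiteness is needed, and when $\kl(\mu\mid\nu)<\infty$ each term is automatically finite. The subtlest point is to fix the ordering of the factorization so that the three conditional laws land exactly on $\sP^\mu|_{\clZ_t}$, the filter $\pi_t^\mu$, and the ``backward'' conditional $\mu(\cdot\mid\clZ_t, X_t)$; once that ordering is chosen, the identity is immediate.
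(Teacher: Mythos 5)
The thesis does not actually prove this proposition: it is imported verbatim from Clark, Ocone and Coumarbatch (cited as Theorem~2.2 of that paper) as part of the literature review, so there is no in-paper proof to compare yours against. On its own merits your argument is correct, and it is the standard route: the identity is the two-fold chain rule for relative entropy applied to the joint law of $(Z|_{[0,t]},X_t,X_0)$, combined with the observation from \Lemma{lm:change-of-Pmu-Pnu} that $\frac{\ud\sP^\mu}{\ud\sP^\nu}=\frac{\ud\mu}{\ud\nu}(X_0)$ is $\sigma(X_0)$-measurable, so that restricting to any $\sigma$-algebra containing $\sigma(X_0)$ does not change the divergence and the total equals $\kl(\mu\mid\nu)$. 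Your chosen factorization order (observations, then $X_t$ given $\clZ_t$, then $X_0$ given $(\clZ_t,X_t)$) is the right one to land exactly on the three stated terms, and your remarks about regular conditional distributions on Polish spaces and about the chain rule holding in $[0,\infty]$ dispose of the only real technicalities. Two small points worth making explicit if you write this up: (i) the middle term uses a version of the conditional law of $X_t$ given $\clZ_t$ under $\sP^\nu$ evaluated $\sP^\mu$-almost surely, which is legitimate precisely because $\sP^\mu\ll\sP^\nu$, and the absolute continuity $\pi_t^\mu\ll\pi_t^\nu$ ($\sP^\mu$-a.s.) needed for that term to be a genuine KL divergence is the second assertion of \Lemma{lm:change-of-Pmu-Pnu}; (ii) the first display of your proposal silently uses that $\frac{\ud\sP^\mu|_{\clG}}{\ud\sP^\nu|_{\clG}}=\E^\nu\big[\frac{\ud\sP^\mu}{\ud\sP^\nu}\mid\clG\big]$, which is the standard restriction formula and should be named.
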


\medskip

Consequently, if $\kl(\mu\mid\nu) < \infty$ then KL divergence of $\pi_t^\mu$ from $\pi_t^\nu$ is uniformly bounded because:
\begin{equation}\label{eq:KL-Lyapunov}
	\E^\mu\big(\kl(\pi_t^\mu\mid\pi_t^\nu)\big) \le \kl(\mu\mid\nu) < \infty
\end{equation}
This inequality also implies that $\big\{\kl(\pi_t^\mu\mid\pi_t^\nu):t\ge 0\big\}$ is a non-negative $\sP^\mu$-super-martingale. This is because upon conditioning on $\clZ_s$ for $s \le t$
%is to initialize the filters with $\pi_s^\mu$ and $\pi_s^\nu$ at time $s$, and run the filter for the rest of the time interval $[s,t]$.
and using~\eqref{eq:KL-Lyapunov} now with initializations $\pi_s^\mu$ and $\pi_s^\nu$ at time $s$
\[
\E^\mu\big(\kl(\pi_t^\mu\mid\pi_t^\nu)\mid \clZ_s\big)  \le \kl(\pi_s^\mu\mid\pi_s^\nu),\quad 0\le s\le t,\;\sP^\mu\text{-a.s.} 
\]
Therefore, the relative entropy is a Lyapunov function for the filter stability problem, in the sense that $\E^\mu\big(\kl(\pi_t^\mu\mid\pi_t^\nu)\big)$ is non-increasing~\cite[Section 4.1]{chigansky2009intrinsic}.
However, it has not been possible to show whether $\E^\mu\big(\kl(\pi_T^\mu\mid\pi_T^\nu)\big) \to 0$ as $T\to \infty$.

\medskip

\subsection{White noise observation case}

For the white noise observation model, the second term is explicitly obtained as follows~\cite[Theorem 3.1]{clark1999relative}
\[
\kl\big(\sP^\mu|_{\clZ_t}\mid\sP^\nu|_{\clZ_t}\big) = \half 
\E^\mu\Big(\int_0^t |\pi_s^\mu(h)-\pi_s^\nu(h)|^2 \ud s\Big)
\]
If $\kl(\mu\mid\nu)<\infty$, use Prop.~\ref{prop:clark-main-result} to show that
\begin{equation}\label{eq:finiteness-of-h}
	\half \E^\mu\Big(\int_0^\infty |\pi_t^\mu(h)-\pi_t^\nu(h)|^2\ud t\Big) \le \kl(\mu\mid\nu)<\infty
\end{equation}

\medskip

\begin{remark}\label{rm:observable-function-motivation}
	Equation~\eqref{eq:finiteness-of-h} implies that $|\pi_t^\mu(h)-\pi_t^\nu(h)| \to 0$ $\sP^\mu$-a.s.
	This shows that the filter is \emph{always} stable for the observation function $h(\cdot)$. A generalization of this is described in Chigansky and Lipster~\cite{chigansky2006role}.
	In their paper, it is proved that one-step predictive estimates of the observation process are stable. %, and the filter stability is proved for certain class of functions.
\end{remark}

\section{Intrinsic methods in filter stability}\label{sec:65}

This section surveys filter stability of general HMMs. 
The title of this section is the same as a book chapter by Chigansky et.al.~\cite{chigansky2009intrinsic}. %where ``intrinsic'' means that the filtering problems are viewed as a conditional expectation and the approaches are based on probabilistic foundations.
The authors explain the reason why they use the word `intrinsic'~\cite{chigansky2009intrinsic}:
\begin{quote}
	``{\it By `intrinsic' we mean methods which directly exploit the fundamental representation of the filter as a conditional expectation through classical probabilistic techniques [...] these methods allow one to establish stability of the filter under weaker conditions compared to other methods, e.g., to go beyond strong mixing signals, to reveal connections between filter stability and classical notions of observability, and to discover links to martingale convergence and information theory.}''
\end{quote}
The two sub-sections describe the results for the ergodic and non-ergodic cases, respectively.

\subsection{Ergodic signal case}

We begin with recalling the Bayes formula (Prop.~\ref{prop:Kallianpur-Striebel})
\[
\E^\mu\big(f(X_T)\mid \clZ_T\big) = \frac{\E^\nu\big(\frac{\ud \sP^\mu}{\ud \sP^\nu}f(X_T)\mid \clZ_T\big)}{\E^\nu\big(\frac{\ud \sP^\mu}{\ud \sP^\nu}\mid \clZ_T\big)}
\]
Since $\frac{\ud \sP^\mu}{\ud \sP^\nu} = \frac{\ud \mu}{\ud \nu}(X_0)$ (Lemma~\ref{lm:change-of-Pmu-Pnu}), the equation is expressed as follows:
\begin{align*}
	\int_\bS f(x)\ud \pi_T^\mu(x) &= \frac{\E^\nu\big(\frac{\ud \mu}{\ud \nu}(X_0)f(X_T)\mid \clZ_T\big)}{\E^\nu\big(\frac{\ud \mu}{\ud \nu}(X_0)\mid \clZ_T\big)}\\
	&= \E^\nu\Big(f(X_T) \frac{\E^\nu\big(\frac{\ud \mu}{\ud \nu}(X_0)\mid \clZ_T \vee \sigma\{X_T\}\big)}{\E^\nu\big(\frac{\ud \mu}{\ud \nu}(X_0)\mid \clZ_T\big)}\Big)\\
	&= \int_\bS f(x) \frac{\E^\nu\big(\frac{\ud \mu}{\ud \nu}(X_0)\mid \clZ_T, X_T = x\big)}{\E^\nu\big(\frac{\ud \mu}{\ud \nu}(X_0)\mid \clZ_T\big)} \ud \pi_T^\nu(x)
\end{align*}
Therefore, the Radon-Nikodym derivative is obtained as follows
\begin{equation}\label{eq:gamma_T-explicit}
	\gamma_T(x) := \frac{\ud \pi_T^\mu}{\ud \pi_T^\nu}(x) =  \frac{\E^\nu\big(\frac{\ud \mu}{\ud \nu}(X_0)\mid \clZ_T, X_T = x\big)}{\E^\nu\big(\frac{\ud \mu}{\ud \nu}(X_0)\mid \clZ_T\big)},\quad \sP^\mu\text{-a.s.}
\end{equation}
Therefore, 
\begin{align*}
	\|\pi_T^\mu-\pi_T^\nu\|_\tv &= \int_\bS \big|\gamma_T(x)-1\big| \ud \pi_T^\nu(x)\\
	&=\frac{\E^\nu\Big(\big|\E^\nu\big(\frac{\ud \mu}{\ud \nu}(X_0)\mid \clZ_T\vee \sigma\{X_T\}\big) - \E^\nu\big(\frac{\ud \mu}{\ud \nu}(X_0)\mid \clZ_T\big)\big| \mid \clZ_T\Big)}{\E^\nu\big(\frac{\ud \mu}{\ud \nu}(X_0)\mid \clZ_T\big)}
\end{align*}
Since $(X,Z)$ is a Markov process, the first term in the numerator becomes 
\[
\E^\nu\big(\frac{\ud \mu}{\ud \nu}(X_0)\mid \clZ_T\vee \sigma\{X_T\}\big) = \E^\nu\big(\frac{\ud \mu}{\ud \nu}(X_0)\mid \clZ_\infty\vee \clF^X_{[T,\infty)}\big) 
\]
where $\clF^X_{[T,\infty)} = \sigma\big(\{X_t-X_T: t \ge T\}\big)$ is the tail sigma algebra of the state process $X$. Hence
\[
\E^\nu\Big(\frac{\ud \sP^\mu}{\ud \sP^\nu}\,\big|\, \clZ_T\Big)\|\pi_T^\mu-\pi_T^\nu\|_\tv = \E^\nu\Big(\Big|\E^\nu\big(\frac{\ud \mu}{\ud \nu}(X_0)\mid \clZ_\infty\vee \clF^X_{[T,\infty)}\big) - \E^\nu\big(\frac{\ud \mu}{\ud \nu}(X_0)\mid \clZ_T\big)\Big|\,\big|\, \clZ_T\Big)
\]
Taking expectation $\E^\nu(\cdot)$ on both sides yields
\[
\E^\mu\big(\|\pi_T^\mu-\pi_T^\nu\|_\tv\big) = \E^\nu\Big(\,\Big|\E^\nu\big(\frac{\ud \mu}{\ud \nu}(X_0)\mid \clZ_\infty\vee \clF^X_{[T,\infty)}\big) - \E^\nu\big(\frac{\ud \mu}{\ud \nu}(X_0)\mid \clZ_T\big)\Big|\,\Big)
\]
Note that $\clZ_\infty\vee \clF^X_{[T,\infty)}$ is a decreasing filtration and $\clZ_T$ is an increasing filtration as $T$ increases. Therefore both terms on the right-hand side converges as $T\to \infty$, 
\[
\lim_{T\to \infty} \E^\mu\big(\|\pi_T^\mu-\pi_T^\nu\|_\tv\big) = \E^\nu\Big(\,\Big|\E^\nu\big(\frac{\ud \mu}{\ud \nu}(X_0)\mid \bigcap_{T\ge 0}\clZ_\infty\vee \clF^X_{[T,\infty)}\big) - \E^\nu\big(\frac{\ud \mu}{\ud \nu}(X_0)\mid \clZ_\infty\big)\Big|\, \Big)
\]
The right-hand side is zero if the following tail sigma field identity:
\begin{equation}\label{eq:kunita-tail-sigmafield}
	\bigcap_{T\ge 0} \clZ_\infty \vee \clF^X_{[T,\infty)} \stackrel{?}{=} \clZ_\infty %\vee \bigcap_{T\ge 0} \clF^X_{[T,\infty)}
\end{equation}
This identity is referred to as the central problem in the stability analysis of the nonlinear filter~\cite{van2010nonlinear}. The problem generated large consequent attention (see~\cite{budhiraja2003asymptotic} and references therein).

If the state process is ergodic---namely $\bigcap_{T\ge 0} \clF^X_{[T,\infty)}$ is $\sP$-almost surely empty---and therefore the identity was believed to be true because
\[
\bigcap_{T\ge 0} \clZ_\infty \vee \clF^X_{[T,\infty)} \stackrel{?}{=} \clZ_\infty \vee \bigcap_{T\ge 0} \clF^X_{[T,\infty)} = \clZ_\infty
\]
However, one cannot interchange the union and the intersection of sigma fields in general, and therefore the ergodicity does not always imply~\eqref{eq:kunita-tail-sigmafield}. This subtle flaw appears in classic paper of Kunita~\cite{kunita1971asymptotic}, as discussed in 
detail in~\cite{baxendale2004asymptotic}.

The identity is shown to be true later with an extra assumption on non-degeneracy of the observation process~\cite[Assumption III.2]{van2010nonlinear}. Namely, there exists a strictly positive function $g:\bS\times \Re^m\to (0,\infty)$ and a measure $\rho \in \clP(\Re^m)$ such that 
\begin{equation}\label{eq:assumption-ergodic}
	\sP^\mu\big(Z_t\in A \mid X_t\big) = \int_A g(X_t,z)\ud \rho(z),\quad A \in \clB(\Re^m)
\end{equation}
for every $\mu \in \clP(\bS)$. In words, the observation kernel possesses a positive density $g$ with respect to some reference measure. This assumption rules out the noiseless observation model of the counter-example~\ref{ex:counter-example}.
It is noted that addition of arbitrarily small noise to the observation will mean~\eqref{eq:assumption-ergodic} holds and the filter will be ergodic. %	``The filter is stable when the hidden process $X$ \emph{conditioned} on the observation $\clZ$ is ergodic.''
%Under this assumption, filter stability for ergodic signal case is achieved as follows:
The general result for the ergodic signal case is proved specifically for the discrete time system in~\cite{van2010nonlinear}:

\begin{proposition}[Theorem III.3 in~\cite{van2010nonlinear}]
	Suppose the state process is ergodic and the observation model satisfies~\eqref{eq:assumption-ergodic} for every $\mu \in \clP(\bS)$. Then 
	\[
	\|\pi_T^\mu-\pi_T^\nu\|_\tv \;\longrightarrow\; 0,\quad \text{as }T\to \infty,\;\sP^\lambda\text{-a.s.}
	\]
	for any $\mu,\nu,\lambda \in \clP(\bS)$.
\end{proposition}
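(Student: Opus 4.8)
The plan is to reduce the claim to the tail $\sigma$-field identity~\eqref{eq:kunita-tail-sigmafield} and then to establish that identity using the non-degeneracy hypothesis~\eqref{eq:assumption-ergodic}. First, by the triangle inequality and the choice $\bar{\mu}:=\half(\mu+\nu)$, for which $\mu\ll\bar{\mu}$ and $\nu\ll\bar{\mu}$, it suffices to treat the absolutely continuous case $\mu\ll\nu$: the general bound follows from $\|\pi_T^\mu-\pi_T^\nu\|_\tv\le\|\pi_T^\mu-\pi_T^{\bar{\mu}}\|_\tv+\|\pi_T^{\bar{\mu}}-\pi_T^\nu\|_\tv$. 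For $\mu\ll\nu$, the computation already carried out in Section~\ref{sec:65} reduces the problem to~\eqref{eq:kunita-tail-sigmafield}, since it produces
\[
\lim_{T\to\infty}\E^\mu\big(\|\pi_T^\mu-\pi_T^\nu\|_\tv\big)=\E^\nu\Big(\Big|\E^\nu\big(\tfrac{\ud\mu}{\ud\nu}(X_0)\mid\textstyle\bigcap_{T\ge 0}\clZ_\infty\vee\clF^X_{[T,\infty)}\big)-\E^\nu\big(\tfrac{\ud\mu}{\ud\nu}(X_0)\mid\clZ_\infty\big)\Big|\Big).
\]
Thus $L^1(\sP^\mu)$-convergence to $0$ is equivalent to the two conditional expectations agreeing $\sP^\nu$-a.s., which holds once $\bigcap_{T\ge 0}\clZ_\infty\vee\clF^X_{[T,\infty)}=\clZ_\infty$ modulo null sets.

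The heart of the argument is therefore the tail identity, and this is where the non-degeneracy enters. I would change to the reference measure $\tsP$ of Proposition~\ref{prop:Girsanov}, under which $Z$ is a Brownian motion independent of $X$ while the law of $X$ is unchanged and hence still ergodic. Under $\tsP$ the fields $\clZ_\infty$ and $\clF^X_{[T,\infty)}$ are generated by independent objects, and the signal tail $\bigcap_{T\ge 0}\clF^X_{[T,\infty)}$ is $\tsP$-trivial by ergodicity of $X$. The obstruction flagged in Section~\ref{sec:65} --- that one cannot in general interchange the join with the decreasing intersection --- is precisely what this conditional-independence structure removes: a von~Weizs\"acker-type exchange lemma yields $\bigcap_{T\ge 0}(\clZ_\infty\vee\clF^X_{[T,\infty)})=\clZ_\infty\vee\bigcap_{T\ge 0}\clF^X_{[T,\infty)}=\clZ_\infty$ up to $\tsP$-null sets. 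Assumption~\eqref{eq:assumption-ergodic} plays a double role: it supplies the conditional independence that licenses the exchange, and it guarantees that $\tfrac{\ud\sP^\gamma}{\ud\tsP}$ is a.s.\ strictly positive and finite for every prior $\gamma$, so the $\tsP$-a.s.\ identity transfers to a $\sP^\nu$-a.s.\ one. I expect this exchange step to be the main obstacle; it is exactly the point at which the naive ergodicity argument breaks down, and it genuinely requires the quantitative non-degeneracy rather than ergodicity alone.

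Granting the tail identity, the displayed limit vanishes, giving $\E^\mu(\|\pi_T^\mu-\pi_T^\nu\|_\tv)\to 0$. To upgrade this $L^1$-bound to the almost sure statement, I would argue pathwise. Writing $\clG_T:=\clZ_\infty\vee\clF^X_{[T,\infty)}$, the forward martingale $\E^\nu(\tfrac{\ud\mu}{\ud\nu}(X_0)\mid\clZ_T)$ converges $\sP^\nu$-a.s.\ (L\'evy's upward theorem) and the reverse martingale $\E^\nu(\tfrac{\ud\mu}{\ud\nu}(X_0)\mid\clG_T)$ converges $\sP^\nu$-a.s.\ (L\'evy's downward theorem), their limits coinciding by the tail identity. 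Hunt's lemma then passes these convergences through the conditioning $\E^\nu(\,\cdot\mid\clZ_T)$ that appears in the representation of $\|\pi_T^\mu-\pi_T^\nu\|_\tv$, with the uniform integrability furnished by the martingale structure, and yields $\|\pi_T^\mu-\pi_T^\nu\|_\tv\to 0$ almost surely. (Alternatively, since $T\mapsto\|\pi_T^\mu-\pi_T^\nu\|_\tv$ is a bounded supermartingale, it converges a.s.\ and the $L^1$-limit forces the a.s.-limit to be $0$; this is consistent with the Lyapunov/supermartingale viewpoint of Proposition~\ref{prop:clark-main-result}.)

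Finally, the convergence so far is phrased relative to the initialising priors, whereas the theorem asks for $\sP^\lambda$-a.s.\ convergence for an arbitrary $\lambda$. The event $\{\|\pi_T^\mu-\pi_T^\nu\|_\tv\to 0\}$ is $\clZ_\infty$-measurable and, by the previous steps together with Lemma~\ref{lm:change-of-Pmu-Pnu}, carries full $\sP^\mu$- and $\sP^\nu$-measure. Because~\eqref{eq:assumption-ergodic} renders the restrictions $\sP^\gamma|_{\clZ_\infty}$ mutually equivalent for all priors $\gamma$ --- each being equivalent to $\tsP|_{\clZ_\infty}$ through an a.s.\ positive, finite density --- this event also has full $\sP^\lambda$-measure for every $\lambda\in\clP(\bS)$, which gives $\|\pi_T^\mu-\pi_T^\nu\|_\tv\to 0$, $\sP^\lambda$-a.s., as claimed.
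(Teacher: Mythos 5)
First, a point of reference: the thesis does not actually prove this proposition --- it is stated in the literature-review chapter with a citation to van Handel, and Section~\ref{sec:65} only carries the argument as far as reducing filter stability to the tail-field identity~\eqref{eq:kunita-tail-sigmafield} and warning that the na\"ive exchange of intersection and join is exactly where Kunita's proof broke. Your architecture (midpoint-measure reduction, the Section~\ref{sec:65} computation, L\'evy upward/downward convergence plus Hunt's lemma for the a.s.\ upgrade) is the right skeleton and matches that discussion. The exchange lemma itself is also fine as far as it goes: when $\clZ_\infty$ is independent of $\clF^X_{[0,\infty)}$, a reverse-martingale/monotone-class argument does give $\bigcap_{T}(\clZ_\infty\vee\clF^X_{[T,\infty)})=\clZ_\infty\vee\bigcap_T\clF^X_{[T,\infty)}$ modulo $\tsP$-null sets.

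The genuine gap is the transfer of that identity from $\tsP$ back to $\sP^\nu$. You justify it by saying $\frac{\ud\sP^\gamma}{\ud\tsP}$ is a.s.\ strictly positive and finite, but that is only true on each finite-horizon field $\clF_T$; the objects in the tail identity ($\clZ_\infty$, $\clF^X_{[T,\infty)}$, and their decreasing intersection) live on $\clF_\infty$, where $\sP^\nu$ and $\tsP$ are in general \emph{mutually singular} (e.g.\ $Z_T/T$ has different a.s.\ limits under the two measures whenever $\bar\mu(h)\neq 0$), so no such density exists and a ``modulo null sets'' identity of $\sigma$-fields does not transfer. This is not a technicality one can wave through: it is precisely the trap identified in Section~\ref{sec:65} and illustrated by Example~\ref{ex:counter-example}, and closing it is the entire content of van Handel's theorem. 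The actual proof uses nondegeneracy~\eqref{eq:assumption-ergodic} to perform only \emph{finite-window} changes of measure (decoupling the observations on $[0,m]$, where the measures genuinely are equivalent), followed by a limiting argument in $m$; ergodicity and global independence under $\tsP$ alone are not enough. A secondary instance of the same error appears in your last paragraph: the restrictions $\sP^\gamma|_{\clZ_\infty}$ are \emph{not} mutually equivalent for all priors $\gamma$. The standard repair for the arbitrary-$\lambda$ claim is to run the argument with the dominating prior $\rho=\tfrac13(\mu+\nu+\lambda)$ and use $\sP^\lambda\ll\sP^\rho$ on all of $\clF_\infty$ (Lemma~\ref{lm:change-of-Pmu-Pnu}), rather than any equivalence through $\tsP$.
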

%	\begin{definition}
	%		The state process $X$ is \emph{ergodic} if $\clA$ admits a unique invariant measure $\bmu$, and 
	%		\[
	%		\limsup_{T\to \infty} \int_\bS |P_t(f) - \bmu(f)| \ud \bmu = 0,\quad \forall\, f\in C_b(\bS) 
	%		\]
	%		where $\{P_t:t\ge 0\}$ is the Markov semigroup on $C_b(\bS)$ associated with $\clA$.
	%	\end{definition}

\subsection{Stochastic observability and filter stability}\label{sec:VHobs-and-filter-stability}

Analysis of the relative entropy for the white noise observation model (see Eq.~\eqref{eq:finiteness-of-h}) shows that the filter is always stable for the observation function $h$, without any assumption on the state process.
It is appealing to consider, possibly under some assumption also on the state process, a class of ``observable'' functions $\clO$ such that if $\mu \ll \nu$ then
\[
|\pi_t^\mu(f)-\pi_t^\nu(f)|\;\longrightarrow\; 0,\quad \forall\, f\in \clO
\]
Certainly $\clO$ is non-trivial because $\ones \in \clO$ and $h\in\clO$.
%If $\clO = C_b(\bS)$, then one may call the model is observable, and filter stability is achieved.

This idea is investigated by van Handel~\cite{van2009observability}. Observability and space of observable functions are defined in Section~\ref{ssec:observability-hmm}.
The main result is stated as follows:

\begin{proposition}[Theorem 1 in~\cite{van2009observability}]
	Suppose $\bS$ is compact and $\mu\ll\nu$. Then for any $f \in \clO$,
	\[
	|\pi_T^\mu(f)-\pi_T^\nu(f)|\;\longrightarrow\; 0,\quad \text{as }T\to\infty, \;\sP^\mu\text{-a.s.}
	\]
	%	as $T\to \infty$. 
\end{proposition}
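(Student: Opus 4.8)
The engine of the proof is the relative-entropy machinery of Clark, Ocone and Coumarbatch, and the plan is to settle a base case for the observation function $h$ and then propagate stability to the whole observable space $\clO$. By the duality of Theorem~\ref{thm:observability-definition} (together with Corollary~\ref{cor:linear-operator-observability} and Theorem~\ref{thm:dual-adjoint}), $\clO$ coincides with the closure of the controllable subspace $\clC$, which by Proposition~\ref{thm:controllable-subspace} is the smallest subspace of $C_b(\bS)$ containing $\ones$ and stable under the two operations $g\mapsto\clA g$ and $g\mapsto hg$. I would first reduce to the case $\kl(\mu\mid\nu)<\infty$: for general $\mu\ll\nu$ one truncates the Radon--Nikodym density $\frac{\ud\mu}{\ud\nu}$ (Lemma~\ref{lm:change-of-Pmu-Pnu}) to obtain priors $\mu_n$ with $\kl(\mu_n\mid\nu)<\infty$ and $\mu_n\to\mu$, and passes to the limit using the monotonicity \eqref{eq:KL-Lyapunov}.

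\emph{Base case.} Granting finite entropy, the white-noise specialization \eqref{eq:finiteness-of-h} of the Clark identity (Proposition~\ref{prop:clark-main-result}) yields $\frac12\E^\mu\big(\int_0^\infty|\pi_t^\mu(h)-\pi_t^\nu(h)|^2\ud t\big)\le\kl(\mu\mid\nu)<\infty$, so $\int_0^\infty|\pi_t^\mu(h)-\pi_t^\nu(h)|^2\ud t<\infty$ holds $\sP^\mu$-a.s. To upgrade this integrability to the pointwise limit $\pi_t^\mu(h)-\pi_t^\nu(h)\to0$, I would invoke the Kushner--Stratonovich equation \eqref{eq:nonlinear-filter}: since $h$ is bounded and $\bS$ is compact, the drift and diffusion coefficients of $t\mapsto\pi_t^\mu(h)-\pi_t^\nu(h)$ are bounded, so its paths are regular enough that a vanishing $L^2(\ud t)$-tail forces the integrand itself to vanish. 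Thus $h,\ones\in\clG$, where $\clG:=\{f\in C_b(\bS):\pi_t^\mu(f)-\pi_t^\nu(f)\to0\ \sP^\mu\text{-a.s.}\}$.

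\emph{Propagation.} The heart of the argument is to show $\clG\supseteq\clO$. Here I would exploit that, under $\sP^\mu$, the pair $\Pi_t:=(\pi_t^\mu,\pi_t^\nu)$ is a time-homogeneous Markov process on the compact space $\clP(\bS)\times\clP(\bS)$, driven by the $\mu$-innovation Brownian motion. The relative entropy $\kl(\pi_t^\mu\mid\pi_t^\nu)$ is a bounded nonnegative supermartingale by \eqref{eq:KL-Lyapunov} and hence converges, while its expected dissipation is precisely the information term. A LaSalle-type invariance argument then applies: any weak limit point of the empirical occupation measures of $\Pi_t$ is $\Pi$-invariant and, because the time-averaged dissipation vanishes, is supported on $\{(\rho_1,\rho_2):\rho_1(h)=\rho_2(h)\}$. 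Invoking invariance repeatedly propagates the constraint through the generating operations $g\mapsto\clA g$ and $g\mapsto hg$, so that on the support of the limiting measure $\rho_1$ and $\rho_2$ agree on all of $\clO$; observability ($\clO$ dense) then forces $\rho_1=\rho_2$. Feeding this back identifies the supermartingale limit as $0$ and hence gives $\pi_T^\mu(f)-\pi_T^\nu(f)\to0$ for every $f\in\clO$.

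The main obstacle I anticipate is exactly this propagation step. Turning the finiteness of a time integral into a genuine invariance statement, and in particular justifying that the constraint $\rho_1(h)=\rho_2(h)$ closes up under the unbounded operator $\clA$ and under multiplication by $h$ to force agreement on the entire generated subspace $\clO$, is where observability must enter and where the analytic care is needed. The passage from almost-sure statements along the trajectory to statements about the limiting invariant measure, and the interchange of the long-time limit with these generating operations, is the delicate point — far more so than the entropy bookkeeping of the base case.
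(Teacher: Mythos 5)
The thesis itself offers no proof of this proposition---it is quoted from van Handel's paper as a literature result---so your proposal must stand on its own, and it does not. The decisive gap is in your propagation step. First, a LaSalle/occupation-measure argument for the pair process $(\pi_t^\mu,\pi_t^\nu)$ can only deliver statements about weak limit points of time averages, i.e.\ Ces\`aro or subsequential convergence of $\pi_t^\mu(f)-\pi_t^\nu(f)$, whereas the proposition claims almost-sure convergence of the trajectory itself; nothing in your sketch upgrades one to the other, and for general $f\in\clO$ the quantity $\pi_t^\mu(f)-\pi_t^\nu(f)$ is not a supermartingale, so you cannot borrow pathwise convergence from martingale theory the way you can for $\kl(\pi_t^\mu\mid\pi_t^\nu)$. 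Second, your endgame invokes observability (``$\clO$ dense then forces $\rho_1=\rho_2$'') to identify the supermartingale limit as zero. But observability is not a hypothesis of the proposition: the claim is that the filter is stable for every $f\in\clO$ even when $\clO$ is a proper subspace. In the counter-example of Section~\ref{sec:counter-example} the filter is genuinely unstable in total variation, yet the conclusion still holds for the observable functions (there $\pi_t^\mu(\ones_{\{1,3\}})=\pi_t^\nu(\ones_{\{1,3\}})$ exactly); an argument whose last step needs $\clO=C_b(\bS)$ proves a strictly weaker statement. Third, the identification of $\clO$ with the closure of the algebra generated from $\ones$ by $g\mapsto\clA g$ and $g\mapsto hg$ is established in the thesis only for the finite state space (Remark~\ref{rm:finite-case-equivalency}); in the general compact case the $C_b(\bS)$--$\clM(\bS)$ pairing is non-reflexive, and $\Rsp(\clL)^{\bot}=\Nsp(\clL^\dagger)=\clO^{\bot}$ does not by itself yield equality of norm closures.

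For comparison, van Handel's actual proof is structurally different and avoids all of these issues: one shows that an observable function $f$ is a uniform limit of functions of the form $x\mapsto\E\big(\Phi(Z)\mid X_0=x\big)$ for bounded functionals $\Phi$ of the observation path, so that $\pi_T^{\mu}(f)$ is uniformly approximated by predictions of functionals of the \emph{future} observations given $\clZ_T$; since $\sP^\mu\ll\sP^\nu$, a Blackwell--Dubins merging argument (martingale convergence applied to $\E^\nu(\tfrac{\ud\mu}{\ud\nu}(X_0)\mid\clZ_T)$) shows that such predictions computed under the two priors merge almost surely, giving the conclusion for every $f\in\clO$ with no observability or ergodicity assumption. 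Your base case for $h$ itself is consistent with Remark~\ref{rm:observable-function-motivation}, though even there the passage from $\int_0^\infty|\pi_t^\mu(h)-\pi_t^\nu(h)|^2\ud t<\infty$ a.s.\ to pointwise decay is a nontrivial Barbalat-type lemma for It\^o processes, not an immediate consequence of bounded coefficients.
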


Recall that the model was said to be observable if $\clO = C_b(\bS)$. Therefore observability implies filter stability
. 
For non-compact state spaces, a counter-example is given in~\cite[Example 1.2]{van2009uniform} that the observability is not sufficient to achieve the filter stability. A stronger notion of \emph{uniform observability} introduced for this purpose~\cite{van2009uniform}.
%	The observability notion is later extended to various types of convergence criteria by~\cite{mcdonald2019cdc}.

The observability naturally extends to detectability (see Def.~\ref{def:detectability} and Remark~\ref{rm:vh-detectabilty}). The following is proved for discrete time system in~\cite{van2010nonlinear}:

\medskip

\begin{proposition}[Theorem V.2 in~\cite{van2010nonlinear}] \label{prop:vanhandel-detectability}
	Suppose $\bS$ is finite. Then for all $\mu,\nu \in\clP(\bS)$ such that $\mu\ll \nu$,
	\[
	\|\pi_T^\mu - \pi_T^\nu\|_\tv \; \longrightarrow \; 0,\quad \sP^\mu\text{-a.s.}
	\]
	if and only if the HMM is detectable.
\end{proposition}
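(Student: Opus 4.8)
The plan is to prove the two implications separately, using the identifications from Theorem~\ref{thm:observability-definition}, namely $\clO=\clC=\Rsp(\clL)$ and (in finite dimensions, where $\Rsp(\clL)$ is closed) $\clO^\bot=\Nsp(\clL^\dagger)$. With these, detectability (Def.~\ref{def:detectability}), equivalently stabilizability (Cor.~\ref{cor:detect-stab}), reads $\Nsp(\clL^\dagger)\subset S_s$. Throughout write $\Delta_T:=\pi_T^\mu-\pi_T^\nu\in\clM(\bS)$, and recall that for finite $\bS$ one has $\|\Delta_T\|_\tv\simeq\sum_{i\in\bS}|\pi_T^\mu(i)-\pi_T^\nu(i)|$, so total-variation convergence is equivalent to convergence of $\Delta_T(f)$ for every $f\in C_b(\bS)=\Re^d$.

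Necessity I would prove by contraposition. Assume the model is \emph{not} detectable, so there is $\tilde{\mu}\in\Nsp(\clL^\dagger)$ with $\tilde{\mu}\notin S_s$; note $\tilde{\mu}(\ones)=0$, which is part of the characterization of $\Nsp(\clL^\dagger)$ used in the proof of Prop.~\ref{thm:controllable-subspace}. Let $\bmu$ be the uniform (full-support) measure, and set $\nu=\bmu$ and $\mu=\bmu+\varepsilon\tilde{\mu}$ with $\varepsilon>0$ small enough that $\mu\in\clP(\bS)$; then $\mu\ll\nu$. By linearity of the Zakai equation and $\tilde{\mu}\in\Nsp(\clL^\dagger)$ we get $\sigma_t^\mu(h)=\sigma_t^\nu(h)$ for all $t$, hence $\sP^\mu|_{\clZ_T}=\sP^\nu|_{\clZ_T}$ (Theorem~\ref{thm:observability-Zakai-relation}, or directly $\kl=0$ in Prop.~\ref{prop:clark-result}). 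Since $\pi_T^\nu(f)$ is a $\clZ_T$-measurable functional of the observation path and the two observation laws coincide, $\E^\mu\big(\pi_T^\nu(f)\big)=\nu_T(f)$, while $\E^\mu\big(\pi_T^\mu(f)\big)=\mu_T(f)$; therefore
\[
\E^\mu\big(\Delta_T(f)\big) = \mu_T(f)-\nu_T(f) = \varepsilon\,\tilde{\mu}_T(f).
\]
Because $\tilde{\mu}\notin S_s$ there is an $f$ with $\tilde{\mu}_T(f)\not\to0$, so $\E^\mu\big(|\Delta_T(f)|\big)\ge|\E^\mu(\Delta_T(f))|\not\to0$; as the filters are uniformly bounded, a.s.\ convergence would force $L^1$ convergence by bounded convergence, a contradiction. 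This is precisely the mechanism behind Example~\ref{ex:counter-example}.

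For sufficiency I would first dispose of the observable directions. For $f=h$ the relative-entropy identity (Prop.~\ref{prop:clark-result}), together with $\kl(\mu\mid\nu)<\infty$ (automatic for finite $\bS$ with $\mu\ll\nu$), gives $\int_0^\infty|\pi_t^\mu(h)-\pi_t^\nu(h)|^2\ud t<\infty$ and hence $\Delta_t(h)\to0$; the observability-based stability theorem (Theorem~1 of~\cite{van2009observability}, reviewed in Section~\ref{sec:VHobs-and-filter-stability}) upgrades this to $\Delta_T(f)\to0$ $\sP^\mu$-a.s.\ for \emph{every} $f\in\clO$. Since $\bS$ is finite, $\clP(\bS)$ is compact, so along any $T_n\to\infty$ one may pass ($\sP^\mu$-a.s.) to a subsequence with $\pi_{T_n}^\mu\to\rho^\mu$ and $\pi_{T_n}^\nu\to\rho^\nu$. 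Observable forgetting then yields $\rho^\mu(f)=\rho^\nu(f)$ for all $f\in\clO$, that is
\[
\rho^\mu-\rho^\nu \in \clO^\bot = \Nsp(\clL^\dagger) \subset S_s,
\]
the last inclusion by detectability. It remains to show $\rho^\mu=\rho^\nu$, i.e.\ that the $\Nsp(\clL^\dagger)$-component of $\Delta_T$ vanishes.

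This last point is the crux and the main obstacle. Membership of $\rho^\mu-\rho^\nu$ in $S_s$ is a statement about the \emph{forward} (prediction) dynamics, not about the current value, so it does not by itself give $\rho^\mu=\rho^\nu$. To close the gap I would view the error $\Delta_t$ (or the pair $(\pi_t^\mu,\pi_t^\nu)$) as a measure-valued Markov process and argue that any limit point must be invariant in law under the coupled filter dynamics; detectability, i.e.\ contraction of $S_s$ under the signal semigroup $\clS$, would then force the only such invariant difference supported in $\Nsp(\clL^\dagger)$ to be $0$. Concretely, one shows that in the unobservable directions the filter recursion reduces asymptotically to the prediction step, since the observations carry no information there, and prediction contracts $S_s$. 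Making this decoupling of observable and unobservable directions rigorous in the presence of the filter's nonlinearity is the delicate part; the clean self-contained alternative, carried out in Chapter~\ref{ch:filter-stability-2}, replaces this intrinsic argument by the dual optimal-control formulation, where stabilizability of the BSDE~\eqref{eq:dual-bsde} makes the dual value function decay and yields $\E^\mu\big(\|\Delta_T\|_\tv^2\big)\to0$ directly.
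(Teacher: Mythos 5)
Your necessity argument is sound, and it is essentially the contrapositive construction the thesis itself uses in Theorem~\ref{thm:necessary-condition-filter-stability}: a direction $\tilde{\mu}\in\Nsp(\clL^\dagger)\setminus S_s$ produces two priors whose observation laws coincide, so the expected filter difference equals $\varepsilon\,\tilde{\mu}_T(f)$, which does not vanish, and bounded convergence rules out almost-sure convergence. (The paper does not prove Prop.~\ref{prop:vanhandel-detectability} at all --- it is quoted from van Handel --- but Chapter~\ref{ch:filter-stability-2} contains the duality-based analogue of both directions, and your necessity step matches its Theorem~\ref{thm:necessary-condition-filter-stability}.)

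The sufficiency direction has a genuine gap, which you flag yourself: after extracting subsequential limits you only know $\rho^\mu-\rho^\nu\in\Nsp(\clL^\dagger)\subset S_s$, and membership in $S_s$ constrains the \emph{forward evolution} of that difference, not its value, so nothing forces $\rho^\mu=\rho^\nu$. The sketched repair (treat $\Delta_t$ as a measure-valued Markov process and argue limit points are invariant) is not carried out and is not obviously salvageable: the filter is a random dynamical system driven by the observations, the limits $\rho^\mu,\rho^\nu$ are random and subsequence-dependent, and no invariance principle is available at that level of generality. The missing ingredient is the ergodic decomposition on which Chapter~\ref{ch:filter-stability-2} is built. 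For finite $\bS$, $S_s^\bot=S_0=\{f:Af=0\}$ is spanned by the indicators $\ones_{\bS_k}$ of the ergodic classes, so detectability is precisely the statement $\ones_{\bS_k}\in\clC=\clO$ for every $k$ (Corollary~\ref{prop:detectability-nullspace}, Prop.~\ref{prop:detectability-implications}). One then uses the decomposition $\pi_T^\nu(f)=\sum_k\pi_T^\nu(\ones_{\bS_k})\,\pi_T^{\nu_k}(f)$ of Lemma~\ref{lemma:ergodic-class-decomposition}, shows $\pi_T^\nu(\ones_{\bS_k})\to\ones_{\bS_k}(X_0)$ (the bounded martingale $\pi_T^\nu(\ones_{\bS_k})$ converges a.s., and $\ones_{\bS_k}\in\clO$ identifies the limit --- this is Theorem~\ref{thm:main-result}), and finally invokes stability of the filter restricted to a single ergodic class, which is the ergodic-signal theory of Section~\ref{sec:65} and uses the non-degeneracy of the white-noise observation model. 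Without that last chain your argument yields only $\pi_T^\mu(f)-\pi_T^\nu(f)\to 0$ for $f\in\clO$, not total-variation convergence.
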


\medskip

\begin{remark}
	It is noted that while (minimum variance) duality considerations are central to the proof of 
	filter stability for linear Gaussian settings (see Section~\ref{ssec:Kalman-filter-stability}), the nonlinear filter stability proofs mainly rely on probabilistic arguments.  This is noted by van Handel in the 
	introduction of his tutorial paper~\cite{van2010nonlinear}:
	\begin{quote}
		``{\it The proofs of the Kalman filter results are of essentially
			no use here, so we must start from scratch.}''
	\end{quote}
	A notable contribution using duality-based method for filter stability analysis is the subject of van Handel's PhD thesis~\cite{van2006filtering}. In the following section, the approach is reviewed.
\end{remark}

\section{Mitter-Newton duality for filter stability}\label{sec:66}

The discussion in this section is adapted from van Handel's PhD thesis~\cite[Section 4.4]{van2006filtering}.
In chapter 4 of his thesis, the following SDE model is considered:
\begin{align*}
	\ud X_t &= a(X_t) \ud t + \ud B_t,\quad X_0 \sim \nu_0(x)  \\
	\ud Z_t &= H^\tp X_t + \ud W_t
\end{align*}
where $a(x) = A x - \nabla \phi(x)$ and the function $\phi$
satisfies suitable technical conditions, $\nu_0$ is a bounded and positive density (with respect to Lebesgue measure), and $B,W$ are mutually independent B.M.

\medskip

\subsubsection{Mitter-Newton duality}
%The following stochastic optimal control problem is derived and is in fully described in Appendix~\ref{apdx:min-energy}. 
%The control-modified dynamics is
%\[
%\ud \tilde{X}_t = a(\tilde{X}_t)\ud t + U_t\ud t + \ud \tilde{B}_t
%\]
%where $\tilde{B}$ is a copy of the process noise $B$. Define the cost function
%\[
%\ell(x,u\,;z) := \half |u|^2 + |H^\tp x|^2 + (a(x) + u)^\tp Hz
%\]
We refer the reader to Appendix~\ref{apdx:min-energy} where Mitter-Newton duality is fully described.
The dual optimal control problem (Section~\ref{ssec:mitter-problem}) arising in Mitter-Newton duality is as follows:
\begin{subequations} \label{eq:mitter-problem}
	\begin{align}
		\mathop{\text{Min }}_{\pi_0, U} &:\sJ(\pi_0,U\,;z) = \E\Big(\log \frac{\ud \pi_0}{\ud \nu_0}(\tilde{X}_0) - z_T^\tp H^\tp \tilde{X}_T + \int_0^T \ell(\tilde{X}_t,U_t\,;z_t)\ud t \Big) \label{eq:mitter-problem-a}\\
		\text{Subj.} &:\ud \tilde{X}_t = a(\tilde{X}_t)\ud t + U_t\ud t + \ud \tilde{B}_t,\quad X_0\sim \pi_0 \label{eq:mitter-problem-b}
	\end{align}
\end{subequations}
For the particular model, the cost function is given by
\[
\ell(x,u\,;z) = \half |u|^2 + |H^\tp x|^2 + (a(x) + u)^\tp Hz
\]
The optimal control problem is solved by defining the value function 
\[
V(t,x) := \min_{U\in \clU} \E\Big(\int_t^T\ell(\tilde{X}_s,U_s\,;z_s)\ud s - z_T^\tp H\tilde{X}_T\mid \tilde{X}_t = x \Big)
\]
In terms of the value function, the optimal control is given by (see Prop.~\ref{thm:opt-ctrl-sde-hjb})
\[
U_t = -\nabla V(t,\tilde{X}_t) - H z_t 
\]
Therefore the optimal controlled process is
\[
\ud \tilde{X}_t = a(\tilde{X}_t) \ud t - \nabla V(t,X_t)\ud t  - H z_t \ud t +\ud {B}_t,\quad X_0\sim \pi_0
\]
It is shown in Appendix~\ref{apdx:min-energy} (Prop.~\ref{thm:forward-backward}) that if $\pi_0(x)\ud x = \sP(X_0\in \ud x \mid \clZ_T)$ then the probability law of $\{\tilde{X}_t:0\le t \le T\}$ is the same as the probability law of $X$ conditioned on $Z =z$.
Since smoothing and the filtering laws match at the terminal time $T$, 
\[
\pi_T(f) = \E\big(f(\tilde{X}_T)\mid \clZ_T\big)
\]

\medskip

\subsubsection{Filter stability analysis}

%To discuss its use for stability analysis, it is useful to first
%consider the simple case when $a(x)=(A-A^\tp)x$ where note
%$(A-A^\tp)$ is a skew-symmetric matrix. 

Noting that the optimal control law is given by the gradient of a function, it is useful to define
\[
\tilde{V}(t,x) = V(t,x) +\phi(x) - \half x^\tp A x
\]
Using this function, the optimal controlled process is
\[
\ud \tilde{X}_t = \half (A-A^\tp)\tilde{X}_t \ud t - \nabla \tilde{V}(t,\tilde{X}_t) \ud t - H z_t \ud t + \ud \tilde{B}_t
\]
Consider the optimal controlled process from two initial conditions
\begin{align*}
	\ud \tilde{X}_t^x = \half(A-A^\tp)\tilde{X}_t^x \ud t - \nabla \tilde{V}(t,\tilde{X}_t^x) \ud t+\ud
	\tilde{B}_t, \quad X_0^x=x\\
	\ud \tilde{X}_t^y = \half(A-A^\tp)\tilde{X}_t^y \ud t - \nabla \tilde{V}(t,\tilde{X}_t^y) \ud t+\ud
	\tilde{B}_t, \quad X_0^y=y
\end{align*}
Define the error process $e_t = \tilde{X}_t^x - \tilde{X}_t^y$.
Then it solves the ODE
\[
\frac{\ud e_t}{\ud t} = \half(A-A^\tp)(\tilde{X}_t^x - \tilde{X}_t^y)\ud t  - (\nabla \tilde{V}(t,\tilde{X}_t^x)  - \nabla \tilde{V}(t,\tilde{X}_t^y)),
\quad e_0=x-y 
\]
Therefore,
\[
\frac{\ud }{\ud t} |e_t|^2 = -2 \langle (\tilde{X}_t^x - \tilde{X}_t^y),
(\nabla \tilde{V}(t,\tilde{X}_t^x)  - \nabla \tilde{V}(t,\tilde{X}_t^y)) \rangle, \quad |e_0|^2 = |x-y|^2
\]
where we used the fact that $\langle x, (A-A^\tp) x \rangle=0$. In order to conclude that $|e_t|^2\to 0$, we recall a definition of uniform convexity:

\begin{definition}[Definition 4.3.4 in~\cite{van2006filtering}] A function $f(x)$ is called {\em $\kappa$-uniformly convex} if $f(x) - \half  \kappa |x|^2$ is convex.
\end{definition}

\begin{lemma}[Lemma 4.3.5 in~\cite{van2006filtering}] A differentiable function $f(x)$ is $\kappa$-uniformly convex if and only if
	\[
	\langle x-y, \nabla f(x) - \nabla f(y) \rangle \ge \kappa |x-y|^2
	\quad \forall x, y \in \Re^d
	\]
\end{lemma}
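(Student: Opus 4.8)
The plan is to reduce the statement to the classical characterization of convexity of a differentiable function in terms of monotonicity of its gradient. First I would introduce the auxiliary function $g(x) := f(x) - \half \kappa |x|^2$, so that by Definition 4.3.4 the function $f$ is $\kappa$-uniformly convex precisely when $g$ is convex. Since $f$ is differentiable, so is $g$, with gradient $\nabla g(x) = \nabla f(x) - \kappa x$. A direct computation then shows
\[
\langle x - y, \nabla g(x) - \nabla g(y)\rangle = \langle x-y, \nabla f(x) - \nabla f(y)\rangle - \kappa |x-y|^2,
\]
so the asserted inequality $\langle x-y, \nabla f(x)-\nabla f(y)\rangle \ge \kappa|x-y|^2$ is exactly the statement that $\nabla g$ is a monotone operator, namely $\langle x-y, \nabla g(x)-\nabla g(y)\rangle \ge 0$ for all $x,y \in \Re^d$.

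It therefore suffices to prove the standard lemma that a differentiable $g:\Re^d \to \Re$ is convex if and only if its gradient is monotone. For the forward direction I would invoke the first-order characterization of convexity, $g(y) \ge g(x) + \langle \nabla g(x), y-x\rangle$, together with the symmetric inequality obtained by interchanging $x$ and $y$; adding the two immediately yields $\langle x-y, \nabla g(x)-\nabla g(y)\rangle \ge 0$.

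For the converse, assuming monotonicity, I would restrict attention to a line segment by setting $\varphi(t) := g(x + t(y-x))$ for $t\in[0,1]$, so that $\varphi'(t) = \langle \nabla g(x+t(y-x)), y-x\rangle$. For $s < t$ the difference $\varphi'(t) - \varphi'(s)$ can be written, after putting $u = x+t(y-x)$ and $v = x+s(y-x)$ so that $u-v = (t-s)(y-x)$, in the form $\frac{1}{t-s}\langle \nabla g(u) - \nabla g(v), u-v\rangle \ge 0$ by monotonicity. Hence $\varphi'$ is nondecreasing, $\varphi$ is convex on $[0,1]$, and since $x,y$ are arbitrary this gives convexity of $g$. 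The main obstacle is precisely this converse implication, where monotonicity of the gradient must be converted back into convexity; the device of reducing to the one-dimensional function $\varphi$ along segments and exploiting the monotonicity of $\varphi'$ is what carries the argument, and some care is needed to apply the chain rule correctly and to keep track of the sign of $t-s$ when dividing.
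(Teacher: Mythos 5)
Your proof is correct and complete. The paper itself does not prove this lemma --- it is imported verbatim from van Handel's thesis (Lemma 4.3.5 in~\cite{van2006filtering}) and used as a black box in the filter stability argument of Section~\ref{sec:66} --- so there is no in-paper proof to compare against. Your reduction of $\kappa$-uniform convexity of $f$ to monotonicity of $\nabla g$ for $g(x)=f(x)-\half\kappa|x|^2$, followed by the standard two-way equivalence between convexity of a differentiable function and monotonicity of its gradient (first-order condition plus symmetrization for one direction, restriction to the line segment $\varphi(t)=g(x+t(y-x))$ and monotonicity of $\varphi'$ for the other), is exactly the expected argument and contains no gaps.
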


Therefore, if we can establish that $\tilde{V}(t,\cdot)$ is $\kappa$-uniformly convex then it follows that
\[
\frac{\ud }{\ud t} |e_t|^2 \leq -2\kappa |e_t|^2\quad \Longrightarrow \quad 
\big|\tilde{X}_T^x - \tilde{X}_T^y\big| \le e^{-\kappa T}|x-y|
\]
For any test function $f$ and for each observation sample path,
\[
|\pi_T^{\delta_x}(f) - \pi_T^{\delta_y}(f)| \le \operatorname{Lip}(f)\,\big|\tilde{X}_T^x - \tilde{X}_T^y\big| \le  \operatorname{Lip}(f) \,e^{-\kappa T} |x-y|
\]
Hence the filter is stable in bounded Lipschitz metric for a class of initial conditions comprising of Dirac-delta measures. It is easily extended to compactly supported measures~\cite[p.~100]{van2006filtering}.

\medskip

It remains to show that the $\tilde{V}$ is $\kappa$-uniformly convex.
%We begin by recalling the optimal process
%\[
%\ud \tilde{X}_t = \half(A - A^\tp)\tilde{X}_t\ud t - \nabla \tilde{V}(t,\tilde{X}_t) \ud t - Hz_t\ud t +\ud \tilde{B}_t
%\]
Our aim is to write an optimal control problem such that $\tilde{V}(t,x)$ is the value function and $\tilde{U}_t=-\nabla \tilde{V}(t,\tilde{X}_t)$ is the optimal control.

Recall that
\begin{align*}
	\tilde{V}(t,x) &= V(t,x) + \phi(x) - \half x^\tp A x \\
	& = \min_{U\in \clU} \E\Big(\int_t^T\ell(\tilde{X}_s,U_s\,;z_s)\ud s - z_T^\tp H\tilde{X}_T\mid \tilde{X}_t = x \Big) + \phi(x) - \half x^\tp A x
\end{align*}
This is transformed into a standard form through a simple application of Dynkin's formula. The calculation for the same appears in Section~\ref{ssec:justification-vanHandel} where the following is shown:
\begin{equation} \label{eq:modified-value-function}
	\tilde{V}(t,x) = \min_{\tilde{U}\in \clU} \E\Big(\half \int_t^T |\tilde{U}_s|^2 + C(\tilde{X}_s) +G(\tilde{X}_s;z_s) \ud s +R(\tilde{X}_T) - z_T^\tp H^\tp \tilde{X}_T \mid \tilde{X}_t = x \Big) 
\end{equation}
where
\begin{align*}
	C(x)&:= |H^\tp x|^2 + |a(x)|^2 + \divg\big(a(x)\big) - \frac{1}{4}|(A-A^\tp)x|^2\\
	G(x;z) &:= z^\tp H^\tp (A-A^\tp)x - |Hz|^2\\
	R(x)&:= \phi(x) - \half x^\tp A x
\end{align*}
%and the constraint is now a affine control system:
This shows that $\tilde{V}(t,x)$ is the value function for the following optimal control problem:
\begin{align*}
	\mathop{\text{Min }}_{U} &:\tilde{\sf J}(U\,;z) = \E\Big(\half \int_0^T |\tilde{U}_t|^2 + C(\tilde{X}_t) +G(\tilde{X}_t;z_t) \ud s +R(\tilde{X}_T) - z_T^\tp H^\tp \tilde{X}_T \Big)\\
	\text{Subj.} &:\ud \tilde{X}_t = \half(A-A^\tp)\tilde{X}_t \ud t -Hz_t\ud t + \tilde{U}_t \ud t+\ud \tilde{B}_t,\quad X_0\sim \pi_0 
\end{align*}
%It is a property of optimal control that value function is convex provided that the running cost and the terminal cost are convex. Ignoring all affine terms, we conclude the following proposition:
For an optimal control problem whose constraint is a linear system and the cost functions (both the running cost and the terminal cost) are convex, it is known that the value function is convex.
Specifically, the following result is deduced.

\medskip

\begin{proposition}[Proposition 4.4.1 in~\cite{van2006filtering}] \label{prop:vanHandel-filter-stability}
	Suppose $R(x)$ is $\kappa$-uniformly convex, and $C(x)$ is $\kappa'$-uniformly convex with $\kappa' \ge 2\kappa^2$. Then $\tilde{V}(t,x)$ is $\kappa$-uniformly convex. 
\end{proposition}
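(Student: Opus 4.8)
The plan is to prove $\kappa$-uniform convexity directly from the definition: fixing $t$, I will show that for all $x_0,x_1\in\Re^d$ and $\lambda\in[0,1]$, writing $x_\lambda := \lambda x_1 + (1-\lambda)x_0$,
\[
\tilde V(t,x_\lambda) \le \lambda \tilde V(t,x_1) + (1-\lambda)\tilde V(t,x_0) - \tfrac{\kappa}{2}\lambda(1-\lambda)|x_1-x_0|^2,
\]
which is equivalent to convexity of $x\mapsto \tilde V(t,x) - \tfrac{\kappa}{2}|x|^2$. The main device is the linearity of the controlled dynamics underlying~\eqref{eq:modified-value-function}. Let $\tilde U^0,\tilde U^1$ be the optimal controls for initial data $x_0,x_1$ and let $\tilde X^0,\tilde X^1$ be the corresponding trajectories, all driven by the common Brownian motion $\tilde B$. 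Because the drift $\tfrac12(A-A^\tp)x - Hz_t + u$ is affine in $(x,u)$ and the noise enters additively, the convex combination $\tilde X^\lambda := \lambda\tilde X^1 + (1-\lambda)\tilde X^0$ is exactly the trajectory generated by the admissible control $\tilde U^\lambda := \lambda\tilde U^1 + (1-\lambda)\tilde U^0$ from the initial condition $x_\lambda$.

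Next I would insert $\tilde U^\lambda$ as a suboptimal control and estimate each term of the cost by its convexity defect. The control penalty obeys the parallelogram identity $|\tilde U^\lambda_s|^2 = \lambda|\tilde U^1_s|^2+(1-\lambda)|\tilde U^0_s|^2 - \lambda(1-\lambda)|\delta_s|^2$ with $\delta_s := \tilde U^1_s-\tilde U^0_s$; the terms $G(\cdot\,;z_s)$ and $-z_T^\tp H^\tp(\cdot)$ are affine and so pass through the convex combination with no defect; the $\kappa'$-uniform convexity of $C$ and the $\kappa$-uniform convexity of $R$ produce defects $\tfrac{\kappa'}{2}\lambda(1-\lambda)|e_s|^2$ and $\tfrac{\kappa}{2}\lambda(1-\lambda)|e_T|^2$, where $e_s := \tilde X^1_s-\tilde X^0_s$. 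Collecting these, and keeping track of the factor $\tfrac12$ multiplying the running cost, yields
\[
\tilde V(t,x_\lambda) \le \lambda\tilde V(t,x_1)+(1-\lambda)\tilde V(t,x_0) - \lambda(1-\lambda)\,D ,
\]
\[
D := \E\Big(\int_t^T \big[\tfrac12|\delta_s|^2 + \tfrac{\kappa'}{4}|e_s|^2\big]\ud s + \tfrac{\kappa}{2}|e_T|^2\Big),
\]
so it remains to prove $D\ge \tfrac{\kappa}{2}|x_1-x_0|^2$.

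The final and most delicate step is this lower bound on $D$, and it is where the hypothesis $\kappa'\ge 2\kappa^2$ is consumed. The error process solves the \emph{noise-free} linear ODE $\dot e_s = \tfrac12(A-A^\tp)e_s + \delta_s$ with $e_t = x_1-x_0$, since the $Hz_t$ drift and the Brownian increments cancel in the difference. Because $A-A^\tp$ is skew-symmetric, $\langle e_s,(A-A^\tp)e_s\rangle=0$, so $\tfrac{\ud}{\ud s}\big(\tfrac{\kappa}{2}|e_s|^2\big)=\kappa\langle e_s,\delta_s\rangle$ pathwise; integrating and taking expectations gives $\E\big(\tfrac{\kappa}{2}|e_T|^2\big) = \tfrac{\kappa}{2}|x_1-x_0|^2 + \E\big(\int_t^T\kappa\langle e_s,\delta_s\rangle\ud s\big)$. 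Substituting into $D$ and completing the square,
\[
\tfrac12|\delta_s|^2+\kappa\langle e_s,\delta_s\rangle+\tfrac{\kappa'}{4}|e_s|^2 = \tfrac12|\delta_s+\kappa e_s|^2 + \big(\tfrac{\kappa'}{4}-\tfrac{\kappa^2}{2}\big)|e_s|^2 \ge 0,
\]
where the last inequality is precisely $\kappa'\ge 2\kappa^2$. Hence $D\ge\tfrac{\kappa}{2}|x_1-x_0|^2$ and the claimed convexity follows. The only routine points needing care are the existence of genuine optimal controls (otherwise one argues with $\varepsilon$-optimal controls and lets $\varepsilon\downarrow 0$) and the $L^2$-integrability of $\tilde U^0,\tilde U^1$, hence of $\delta$ and $e$, so that every expectation is finite. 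The hardest conceptual point is recognizing that it is the skew-symmetry of $A-A^\tp$ that removes the drift from the energy balance of $e_s$, which is exactly what makes the clean completion-of-squares — and therefore the sharp constant $2\kappa^2$ — possible.
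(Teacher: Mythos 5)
Your proof is correct and is exactly the argument the paper is invoking: the paper itself gives no proof of this proposition (it only cites van Handel's thesis and appeals to the general fact that value functions of linear-convex control problems are convex), and your write-up is a complete instantiation of that principle — convex combination of (near-)optimal open-loop controls through the affine dynamics, parallelogram defect for the control penalty, uniform-convexity defects for $C$ and $R$, and the energy identity for the error ODE (via skew-symmetry of $A-A^\tp$) followed by completion of squares, which is precisely where $\kappa'\ge 2\kappa^2$ is used. The constants all check out against the $\half$ in front of the running cost, so nothing further is needed.
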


%It is a property of optimal control that value function is convex provided the cost functions (running cost and the terminal cost) are convex.

\medskip

\begin{remark}
	For this class of models, the first such filter stability results were obtained by Stannat~\cite{stannat2005stability,stannat2006} using PDE-based techniques.
	In his paper, Stannat supposes $A=0$ and defines~\cite[Remark 23.1]{stannat2006}
	\[
	\tilde{C}(x) := |Hx|^2 + \frac{\Delta \varphi}{\varphi}(x)
	\]
	where $\varphi = e^{-\phi}$. It is easily checked that $\tilde{C}(x) = C(x)$ when $A=0$. Then the filter stability for the model is deduced from the convexity of $\tilde{C}$. He remarks that:
	\begin{quote}
		``{\it Note that $\tilde{C}$ consists of two parts: the second part $\frac{\Delta \varphi}{\varphi}$ depends on the signal whereas the first part $|Hx|^2$ depends on our choice $H$ how to observe the signal. Basically, the more precise our observation is, the more convex $|Hx|^2$. Conversely, our criterion provides a priori lower bounds on our choice $H$ to reach a certain exponential rate $\kappa$. Also note that ergodic and nonergodic directions of the signal process can be ``separated'' in the criterion.}''
	\end{quote}
	
	The result in Prop.~\ref{prop:vanHandel-filter-stability} is not as strong because the assumption $\phi(x)-\half x^\tp A x$ is $\kappa$-uniformly necessarily requires the signal process to be stable. 		
	In his thesis, van Handel was able to re-derive the Stannat's result---convexity of observation compensates the instability of the signal---by specifying a suitable Mitter-Newton type optimal control problem for a time-reversed conditional signal.  This is done by fixing the terminal time $T>0$ and considering the time reversed signal $\bar{X}_t = X_{T-t}$ for $0\le t\le T$.  Then the time reversed signal is again a diffusion with
	\[
	\ud \bar{X}_t = - a(\bar{X}_t) \ud t + \nabla \log p_{T-t}(\bar{X}_t) \ud t + \ud \tilde{B}_t
	\] 
	where $p_t(x)$ is the unconditional density of $X_t$~\cite[Section 4.2]{van2006filtering}.  The time reversed controlled process is defined by introducing a control input as follows:
	\[
	\ud \tilde{X}_t = - a(\tilde{X}_t) \ud t + \nabla \log p_{T-t}(\tilde{X}_t)\ud t + u_t \ud t + \ud \tilde{B}_t
	\]
	By employing Mitter-Newton duality for such a controlled process, stronger results are obtained in~\cite{van2006filtering}.  
\end{remark}

\subsection{Justification of Eq.~\eqref{eq:modified-value-function}} \label{ssec:justification-vanHandel}

Dynkin's formula is applied to $\phi(x)$ and $\half x^\tp A x$.
\begin{align*}
	\E(\phi(\tilde{X}_T)\mid \tilde{X}_t=x) &= \phi(x) + \E\Big(\int_t^T (a(\tilde{X}_s)+U_s)^\tp \nabla \phi + \half \Delta \phi  \ud s\mid \tilde{X}_t=x \Big)\\
	\E(\tilde{X}_T^\tp A \tilde{X}_T\mid \tilde{X}_t = x) &= x^\tp A x + \E\Big(\int_t^T (a(\tilde{X}_s)+U_s)^\tp (A+A^\tp)x + \half \tr(A) \ud s \mid \tilde{X}_t = x\Big)
\end{align*}
Therefore,
\begin{align*}
	\tilde{V}(t,x) &= \min_{U\in\clU} \E\Big(\int_t^T \half|U_s|^2 + \half|H^\tp \tilde{X}_s|^2 + (a(\tilde{X}_s)+U_s)^\tp Hz_s-(a(\tilde{X}_s)+U_s)^\tp \nabla \phi(\tilde{X}_s) - \half \Delta \phi(\tilde{X}_s)\\
	&\qquad\qquad\qquad+\half (a(\tilde{X}_s)+U_s)^\tp(A+A^\tp)\tilde{X}_s + \frac{1}{4}\tr(A) \ud s +\phi(\tilde{X}_T) - \half \tilde{X}_T^\tp A \tilde{X}_T- z_T^\tp H\tilde{X}_T  \mid \tilde{X}_t = x \Big)\\
	&=\min_{U\in\clU} \E\Big(\half \int_t^T\tilde{\ell}(\tilde{X}_s,U_s;z_s) \ud s + R(\tilde{X}_T) - z_T^\tp H^\tp \tilde{X}_T\mid \tilde{X}_t = x\Big) 
\end{align*}
where the running cost is expressed by
\[
\tilde{\ell}(x,u;z) := |u|^2 + |H^\tp x|^2 + (a(x)+u)^\tp Hz - (a(x)+u)^\tp \nabla\phi(x) - \half \Delta \phi(x) + \half(a(x)+u)^\tp(A+A^\tp)x + \frac{1}{4}\tr(A)
\]
Use $a(x) = Ax-\nabla \phi$ and set
\[
\tilde{u} = -\nabla \tilde{V}(t,x) = u -\nabla \phi(x) +  \half(A+A^\tp)x + H z
\]
to transform this into
\begin{align*}
	\tilde{\ell}(x,u;z) = &|\tilde{u}|^2 - |\nabla \phi|^2 - \frac{1}{4}|(A+A^\tp)x|^2-|Hz|^2 + 2u^\tp \nabla \phi - u^\tp (A+A^\tp)x -2u^\tp Hz + \nabla\phi^\tp (A+A^\tp)x \\
	&+2 \nabla \phi^\tp Hz - x^\tp(A+A^\tp)Hz+|Hx|^2 + 2x^\tp A^\tp Hz - 2\nabla\phi^\tp Hz + 2u^\tp Hz- 2x^\tp A^\tp \nabla \phi \\
	&  + 2|\nabla\phi|^2 - 2u^\tp \nabla \phi -\Delta \phi+x^\tp A^\tp(A+A^\tp)x -\nabla \phi^\tp (A+A^\tp)x+u^\tp (A+A^\tp)x +\tr(A)\\
	=&|\tilde{u}|^2- \frac{1}{4}|(A+A^\tp)x|^2-|Hz|^2+z^\tp H^\tp (A-A^\tp)x 
	+|Hx|^2 -2x^\tp A^\tp \nabla \phi + |\nabla\phi|^2  -\Delta \phi \\
	&+x^\tp A^\tp(A+A^\tp)x+\tr(A)\\
	=&|\tilde{u}|^2+|Hx|^2 +|a(x)|^2+ \divg(a(x)) -\frac{1}{4}|(A-A^\tp)x|^2-|Hz|^2+z^\tp H^\tp (A-A^\tp)x \\
	=&|\tilde{u}|^2 + C(x)+G(x;z)
\end{align*}
%Observe that
%\[
%|a(x)|^2 = |Ax|^2 - 2\nabla \phi Ax  + |\nabla\phi|^2, \quad \divg(a(x)) = \tr(A) - \Delta \phi
%\]
%Also $- \frac{1}{4}|(A+A^\tp)x|^2 + X^\tp A^\tp A^\tp x = -\frac{1}{4}|(A-A^\tp)x|^2$.
%So this is
%\begin{align*}
	%\end{align*}
\qed

\newpage

%%%%%%%%%%%%%%%%%%%%%%%%%%%%%%%%%%%%%%%%%%%%%%%%%%%%%%%%%%%%%%%%%%%%%%%%%%%%%%%%%%%%%%%%%%%%

\chapter{Filter stability via duality}\label{ch:filter-stability}

Filter stability is investigated in this and the next chapter through the analysis of the dual optimal control problem.  As noted in Chapter~\ref{ch:filter-stability-literature}, an important first consideration is to choose a metric to compare $\pi_T^\mu$ and
$\pi_T^\nu$.  The most natural metric compatible with
the dual formulation is the $\chisq$-divergence.  This is because
$\chisq$-divergence equals the conditional variance of the RN derivative
$\frac{\ud \pi_T^\mu}{\ud \pi_T^\nu}=:\gamma_T$. (Recall that the expected value of the conditional variance is the optimal value function for the dual optimal control problem.)

From the Kalman filter theory, one expects that filter
stability is related to the stability of the dual optimal
control system.  This is indeed the case as formalized through a
simple and elegant expression
\begin{equation}\label{eq:filter-stab-intro}
	\E^\mu\big(\chisq(\pi_T^\mu\mid\pi_T^\nu)\big) = \cov_0^\nu(\gamma_0,Y_0)
\end{equation}
where $\gamma_0 = \frac{\ud \mu}{\ud \nu}$ and $Y_0$ is the optimal solution with $Y_T=\gamma_T$.  We assume $\clV_0^\nu(\gamma_0)<\infty$. The calculations to obtain~\eqref{eq:filter-stab-intro}
are straightforward and described in Section~\ref{sec:duality-and-filter-stability}.  The significance of~\eqref{eq:filter-stab-intro} is that filter stability will follow if $\clV_0^\nu(Y_0)\to 0$ as $T\to \infty$.  The latter may be interpreted as asymptotic
stability of the dual optimal control system.

%	In particular, the following equality is obtained by combining It\^o representation theorem and analysis on the 
%	The optimal cost is the variance of a given function, and therefore the the duality formulation can be related to a certain type of information divergence. 

Based on~\eqref{eq:filter-stab-intro}, the filter stability program
becomes to obtain necessary and sufficient condition for the model
such that $\clV_0^\nu(Y_0)\to 0$.  The interpretation of conditional variance as the value
function is useful for this purpose.  In this chapter, conditional
Poincar\'e inequality (PI) is introduced as a sufficient condition to
conclude $\clV_0^\nu(Y_0)\to 0$. Although the condition is strong, it
a counterpart of the Poincar\'e inequality which is central to the
subject of stochastic stability of Markov
processes~\cite{bakry2013analysis}. 
Using conditional PI, we are able to derive many prior results where
explicit convergence rate are available.       

The outline of the remainder of this chapter is as follows: Several
definitions of filter stability are reviewed in Section~\ref{sec:filter-stability-problem} based on
f-divergence to compare probability measures.  In Section~\ref{sec:duality-and-filter-stability}, the
formula~\eqref{eq:filter-stab-intro} is derived starting from the dual
optimal control problem.  The formula is used to derive the main
result on filter stability in Section~\ref{sec:bvi}.  In Section~\ref{sec:cPI}, the definition of conditional PI is introduced together with a number of examples where
conditional PI can be used to obtain explicit formula for convergence rates.

\section{Filter stability problem}\label{sec:filter-stability-problem}

We begin by recalling the definition of $f$-divergence to compare two 
probability measures.

%To precisely define filter stability beyond heuristic 
%intuition~\eqref{eq:filter-stab-heuristic-def}, we consider the following 
%$f$-divergences.

\begin{definition}[$f$-divergence] \label{def:f-divergences}
	Suppose $\mu,\nu\in\clP(\bS)$ and $\mu\ll\nu$. Let $\gamma = \frac{\ud 
		\mu}{\ud \nu}$. Then
	\begin{align*}
		\text{(KL divergence)}\qquad& \kl(\mu\mid \nu) := \int_\bS \gamma \log(\gamma)\ud \nu\\
		\text{($\chi^2$ divergence)}\qquad& \chisq(\mu\mid \nu) := \int_\bS (\gamma - 1)^2\ud \nu\\
		\text{(Total variation)}\qquad& \|\mu-\nu\|_\tv = \int_\bS \half |\gamma-1|\ud \nu
	\end{align*}
	%		\[
	%		\kl(\mu\mid \nu) := \int_\bS \gamma \log(\gamma)\ud \nu
	%		\]
	%		$\chi^2$-divergence of $\mu$ from $\nu$ is defined as:
	%		\[
	%		\chisq(\mu\mid \nu) := \int_\bS (\gamma - 1)^2\ud \nu
	%		\]
	%		Total variation distance between $\mu$ from $\nu$ equals to:
	%		\[
	%		\|\mu-\nu\|_\tv = \int_\bS \half |\gamma-1|\ud \nu
	%		\]
\end{definition}

It is noted that $\chisq(\mu\mid\nu) = \clV_0^\nu(\gamma)$. The relationship 
between these is given in the following Lemma:

\medskip

\begin{lemma}[Lemma 2.5 and 2.7 
	in~\cite{Tsybakov2009estimation}]\label{lm:f-divergence-inequality}
	For $\mu\ll\nu$, the following inequalities hold:
	\[
	2\|\mu-\nu\|_\tv^2 \le \kl(\mu\mid \nu)\le \chisq(\mu\mid\nu)
	\]
	The first inequality is called the Pinsker's inequality.
\end{lemma}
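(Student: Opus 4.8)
The plan is to prove the two inequalities separately, disposing of the right-hand bound $\kl \le \chisq$ first (it is essentially immediate) and reserving the real work for the Pinsker inequality on the left. Throughout write $\gamma = \frac{\ud\mu}{\ud\nu}$ and record the two normalizations $\int_\bS \gamma\,\ud\nu = \mu(\bS) = 1$ and $\int_\bS \ud\nu = \nu(\bS) = 1$; these are exactly what make the integral identities below close.

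For $\kl \le \chisq$ I would start from the elementary pointwise bound $\log t \le t-1$ for $t>0$, apply it at $t = \gamma$, and multiply through by $\gamma \ge 0$ to obtain $\gamma\log\gamma \le \gamma(\gamma-1)$ pointwise. Integrating against $\nu$ and using $\int_\bS \gamma\,\ud\nu = 1$ gives
\[
\kl(\mu\mid\nu) = \int_\bS \gamma\log\gamma\,\ud\nu \le \int_\bS \gamma(\gamma-1)\,\ud\nu = \int_\bS \gamma^2\,\ud\nu - 1 .
\]
The same normalizations give $\chisq(\mu\mid\nu) = \int_\bS(\gamma-1)^2\,\ud\nu = \int_\bS\gamma^2\,\ud\nu - 2 + 1 = \int_\bS\gamma^2\,\ud\nu - 1$, so the two right-hand sides coincide and the inequality follows.

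For Pinsker I would use a direct route through Cauchy--Schwarz, whose engine is the auxiliary scalar inequality
\[
3(t-1)^2 \le (2t+4)\,(t\log t - t + 1), \qquad t \ge 0,
\]
which I would establish by a one-variable calculus argument: setting $g(t)$ equal to the difference of the two sides, one has $g(1) = g'(1) = 0$ with $g''(1) > 0$, and a derivative comparison shows the right side dominates on all of $(0,\infty)$ (the bound is tight near $t\in[2,3]$, consistent with the sharp constant $2$). Granting this, apply Cauchy--Schwarz with the splitting $|\gamma-1| = \tfrac{|\gamma-1|}{\sqrt{2\gamma+4}}\cdot\sqrt{2\gamma+4}$:
\[
\Big(\int_\bS |\gamma-1|\,\ud\nu\Big)^2 \le \Big(\int_\bS \frac{(\gamma-1)^2}{2\gamma+4}\,\ud\nu\Big)\Big(\int_\bS (2\gamma+4)\,\ud\nu\Big).
\]
The second factor equals $6$ by the normalizations, while the scalar inequality bounds the integrand of the first factor by $\tfrac13(\gamma\log\gamma - \gamma + 1)$; since $\int_\bS(\gamma-1)\,\ud\nu = 0$, that integral is $\tfrac13\kl(\mu\mid\nu)$. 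Hence $\big(\int_\bS|\gamma-1|\,\ud\nu\big)^2 \le 2\,\kl(\mu\mid\nu)$, and recalling $\|\mu-\nu\|_\tv = \half\int_\bS|\gamma-1|\,\ud\nu$ this reads $2\|\mu-\nu\|_\tv^2 \le \kl(\mu\mid\nu)$.

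The main obstacle is the scalar inequality underpinning Pinsker; everything else is bookkeeping with the two normalizations. Should the calculus prove delicate, an alternative is to reduce to the two-point case by coarsening through the distinguishing set $A = \{\gamma \ge 1\}$: monotonicity of $\kl$ under this pushforward reduces the claim to Pinsker for $\mathrm{Ber}(\mu(A))$ versus $\mathrm{Ber}(\nu(A))$, a single-variable inequality proved by fixing $p = \mu(A)$ and minimizing the binary divergence over $q = \nu(A)$.
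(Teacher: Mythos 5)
The paper does not prove this lemma; it is quoted directly from Tsybakov, and your argument is in substance the proof given there (the bound $\log t\le t-1$ for the right-hand inequality, and the quadratic--times--linear majorization of $(t-1)^2$ combined with Cauchy--Schwarz for Pinsker), so your route matches the cited source. Both halves of your argument are correct; the only slip is in the justification of the scalar inequality $3(t-1)^2\le(2t+4)(t\log t-t+1)$: with $g(t)$ denoting the difference of the two sides one finds $g(1)=g'(1)=0$ but also $g''(1)=0$ (indeed $g(1+\epsilon)=O(\epsilon^4)$), so the claim $g''(1)>0$ is false and a local second-order argument cannot close the proof. The fix is immediate: $g''(t)=4\bigl(\log t+\tfrac1t-1\bigr)\ge 0$ for all $t>0$ (the function $\log t+\tfrac1t-1$ has its minimum value $0$ at $t=1$), so $g$ is convex on $(0,\infty)$ with $g(1)=g'(1)=0$, hence $g\ge 0$ everywhere, and $g(0)=1\ge 0$ handles the endpoint. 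With that correction the Cauchy--Schwarz step, the evaluation $\int_\bS(2\gamma+4)\,\ud\nu=6$, and the identity $\|\mu-\nu\|_\tv=\tfrac12\int_\bS|\gamma-1|\,\ud\nu$ give exactly $2\|\mu-\nu\|_\tv^2\le\kl(\mu\mid\nu)$ as you state.
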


%	\subsection{Notion of Filter stability}

%	Recall the filter stability discussions in Section~\ref{sec:filter-stability-problem}. 
%	Let $\mu$ be the true prior, and $\nu$ is an incorrect initialization. Assume $\mu\ll \nu$.
%Suppose $\pi^\mu=\{\pi_t^\mu\in\clP(\bS):t\ge 0\}$, and 
%$\pi^\nu=\{\pi_t^\nu\in\clP(\bS):t\ge 0\}$ are obtained from solving the 
%nonlinear filtering equation.
For $\mu\ll\nu$, we define a $\clZ_T$-measurable function $\gamma_T:\bS\to \Re$ 
by
\begin{equation}\label{eq:gamma_t}
	\gamma_T(x) := \frac{\ud \pi_T^\mu}{\ud \pi_T^\nu}(x),\quad x\in \bS
\end{equation}
The RN derivative is well-defined because $\pi_T^\mu\ll\pi_T^\nu$ (see 
Lemma~\ref{lm:change-of-Pmu-Pnu}).
%The $f$-divergences of $\pi_t^\mu$ from $\pi_t^\nu$ are also well-defined as 
%Def.~\ref{def:f-divergences}.  Note that the divergence becomes a 
%$\clZ_t$-measurable random variable.  The definition for $f$-divergence 
%naturally motivates the following definition of filter stability:
The following definition of filter stability is based on $f$-divergence:

\medskip

\begin{definition}
	The nonlinear filter is \emph{stable} in the sense of 
	\begin{align*}
		\text{(KL divergence)}\qquad& \E^\mu\big(\kl(\pi_T^\mu\mid \pi_T^\nu)\big) \; \longrightarrow\; 0\\
		\text{($\chi^2$ divergence)}\qquad& \E^\mu\big(\chisq(\pi_T^\mu\mid \pi_T^\nu)\big) \; \longrightarrow\; 0\\
		\text{(Total variation)}\qquad& \E^\mu\big( \|\mu-\nu\|_\tv\big) \; \longrightarrow\; 0
	\end{align*}
	as $T\to \infty$ for every $\mu, \nu\in\clP(\bS)$ such that $\mu\ll \nu$.
	
\end{definition}

\medskip

In the above, $\nu$ has the meaning of (incorrect) prior used in computing the filter. The absolutely continuous measure $\mu$ is the true (possibly unknown) prior.
Because $\mu$ is the correct prior, the filter performance is evaluated with respect to $\sP^\mu$.
Apart from $f$-divergence based definitions, the following definitions of 
filter stability are also of historical interest.

\medskip

\begin{definition}\label{def:FS}
	The nonlinear filter is stable in the sense of
	\begin{align*}
		\text{($L^2$)}\qquad & \E^\mu\big(|\pi_T^\mu(f)-\pi_T^\nu(f)|^2\big) \;\longrightarrow\; 0\\
		\text{(a.s.)}\qquad & |\pi_T^\mu(f) - \pi_T^\nu(f)|\;\longrightarrow\; 0\quad \sP^\mu\text{-a.s.}
	\end{align*}
	as $T\to \infty$, for every $f\in C_b(\bS)$ and $\mu,\nu\in\clP(\bS)$ such that $\mu \ll \nu$.
	
\end{definition}

\medskip

It is shown in this chapter that the dual optimal control formulation most 
directly yields filter stability in the sense of $\chisq$ divergence. This is because of the connection between the $\chisq$-divergence and the conditional variance whereby $\chisq(\pi_T^\mu\mid\pi_T^\nu) = \clV_T^\nu(\gamma_T)$. Based on 
Lemma~\ref{lm:f-divergence-inequality}, this also implies other types of 
stability. The proof of the following theorem is in Section~\ref{ss:pf-prop71}.

\medskip

\begin{proposition}\label{prop:chisq-stability-implication}
	If the filter is stable in the sense of $\chisq$ then it is also stable in the sense of KL divergence, total variation, and $L^2$.
\end{proposition}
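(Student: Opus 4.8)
The plan is to reduce all three implications to the pointwise $f$-divergence inequalities of \Lemma{lm:f-divergence-inequality}, applied to the pair $(\pi_T^\mu,\pi_T^\nu)$ for each fixed realization of the observations, and then to integrate against $\E^\mu(\cdot)$. Throughout I use the identity $\chisq(\pi_T^\mu\mid\pi_T^\nu)=\clV_T^\nu(\gamma_T)$ already recorded before the statement, together with the fact (\Lemma{lm:change-of-Pmu-Pnu}) that $\pi_T^\mu\ll\pi_T^\nu$ holds $\sP^\mu$-a.s., so that $\gamma_T$ and all of the divergences are well defined. The hypothesis is $\E^\mu\big(\chisq(\pi_T^\mu\mid\pi_T^\nu)\big)\to 0$, and each conclusion will follow by dominating the relevant quantity by $\chisq(\pi_T^\mu\mid\pi_T^\nu)$ (possibly after a square root) and passing the limit through the expectation.

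First I would dispatch the KL case, which is immediate. \Lemma{lm:f-divergence-inequality} gives $\kl(\pi_T^\mu\mid\pi_T^\nu)\le\chisq(\pi_T^\mu\mid\pi_T^\nu)$ $\sP^\mu$-a.s.; taking $\E^\mu(\cdot)$ and invoking the hypothesis yields $\E^\mu\big(\kl(\pi_T^\mu\mid\pi_T^\nu)\big)\to 0$. Next, for total variation, Pinsker's inequality (the first bound in \Lemma{lm:f-divergence-inequality}) gives, $\sP^\mu$-a.s.,
\[
\|\pi_T^\mu-\pi_T^\nu\|_\tv \le \sqrt{\tfrac{1}{2}\kl(\pi_T^\mu\mid\pi_T^\nu)} \le \sqrt{\tfrac{1}{2}\chisq(\pi_T^\mu\mid\pi_T^\nu)}.
\]
Here is the one step that is not a purely pointwise domination: because the target is $\E^\mu\big(\|\pi_T^\mu-\pi_T^\nu\|_\tv\big)$, I must take the expectation of a square root, so I invoke Jensen's inequality (concavity of $t\mapsto\sqrt{t}$) to conclude
\[
\E^\mu\big(\|\pi_T^\mu-\pi_T^\nu\|_\tv\big) \le \sqrt{\tfrac{1}{2}\,\E^\mu\big(\chisq(\pi_T^\mu\mid\pi_T^\nu)\big)} \longrightarrow 0.
\]

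Finally, for the $L^2$ case, fix $f\in C_b(\bS)$. Since $\pi_T^\nu(\gamma_T)=1$ and $\pi_T^\nu(f\gamma_T)=\pi_T^\mu(f)$, I would write the estimation error as a conditional covariance, $\pi_T^\mu(f)-\pi_T^\nu(f)=\cov_T^\nu(f,\gamma_T)$, and apply the Cauchy--Schwarz inequality for $\cov_T^\nu$ together with $\clV_T^\nu(f)\le\|f\|_\infty^2$:
\[
|\pi_T^\mu(f)-\pi_T^\nu(f)|^2 \le \clV_T^\nu(f)\,\clV_T^\nu(\gamma_T) \le \|f\|_\infty^2\,\chisq(\pi_T^\mu\mid\pi_T^\nu).
\]
Taking $\E^\mu(\cdot)$ and using the hypothesis closes the proof. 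The argument is essentially mechanical; the only point requiring genuine care is the total-variation case, where the expectation of a square root forces the Jensen step rather than a direct pointwise bound, and the small observation in the $L^2$ case that the \emph{centered} form $\cov_T^\nu(f,\gamma_T)$ is what makes the conditional variance $\clV_T^\nu(f)$ (hence the harmless bound $\|f\|_\infty^2$) appear, instead of an uncontrolled second moment of $f$.
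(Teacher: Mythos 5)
Your proof is correct and follows essentially the same route as the paper's: pointwise domination by $\chisq$ via Lemma~\ref{lm:f-divergence-inequality} for the KL and total-variation cases (the paper leaves the Jensen/square-root step implicit, which you rightly make explicit), and the covariance identity $\pi_T^\mu(f)-\pi_T^\nu(f)=\cov_T^\nu(f,\gamma_T)$ with Cauchy--Schwarz for the $L^2$ case. The only cosmetic difference is the constant in the last bound, where the paper uses the sharper variance bound $\clV_T^\nu(f)\le \tfrac{1}{4}\operatorname{osc}(f)^2$ in place of your $\|f\|_\infty^2$; both suffice.
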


\medskip

%The following result is proved in Section~\ref{ssec:pf-super-mg}.
%
%\medskip

\begin{proposition}[Theorem 2.3 in~\cite{clark1999relative}] \label{prop:kl-is-supermg}
	The process $\big\{\kl(\pi_t^\mu\mid\pi_t^\nu): 0\le t \le t\big\}$ is a $\sP^\mu$-supermartingale. Consequently, if the filter is stable in the sense of KL divergence then it is also stable almost surely.
\end{proposition}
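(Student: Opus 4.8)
The plan is to prove the supermartingale property first, and then deduce almost sure stability by combining the almost sure convergence of a nonnegative supermartingale with the KL stability hypothesis. The whole argument is built on top of the decomposition in Proposition~\ref{prop:clark-main-result}.

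First I would establish the supermartingale inequality
\[
\E^\mu\big(\kl(\pi_t^\mu\mid\pi_t^\nu)\mid \clZ_s\big) \le \kl(\pi_s^\mu\mid\pi_s^\nu),\quad 0\le s\le t,\;\sP^\mu\text{-a.s.}
\]
The essential ingredient is the restart (flow) property of the nonlinear filter: because the model $(\clA,h)$ is time-homogeneous and $(X,Z)$ is Markov, conditioned on $\clZ_s$ the trajectory $\{\pi_r^\mu : r\ge s\}$ is itself a nonlinear filter over $[s,\infty)$ whose true prior is the $\clZ_s$-measurable random measure $\pi_s^\mu$, driven by the increments $\{Z_r-Z_s : r\ge s\}$; likewise $\{\pi_r^\nu : r\ge s\}$ is the filter started from the incorrect prior $\pi_s^\nu$. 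By Lemma~\ref{lm:change-of-Pmu-Pnu} we have $\pi_s^\mu\ll\pi_s^\nu$ $\sP^\mu|_{\clZ_s}$-a.s., so Proposition~\ref{prop:clark-main-result} applied to this restarted problem and conditioned on $\clZ_s$ reads
\[
\kl(\pi_s^\mu\mid\pi_s^\nu) = \E^\mu\big(\kl(\pi_t^\mu\mid\pi_t^\nu)\mid \clZ_s\big) + R_1 + R_2,
\]
where $R_1,R_2\ge 0$ are the conditional analogues of the two remaining terms. Dropping the nonnegative remainder yields the claim; this is exactly the Lyapunov bound~\eqref{eq:KL-Lyapunov} applied to the sub-problem on $[s,t]$.

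Next I would record that $\{\kl(\pi_t^\mu\mid\pi_t^\nu)\}$ is $\clZ$-adapted, nonnegative, and—assuming $\kl(\mu\mid\nu)<\infty$, the natural hypothesis under which the statement is of interest—integrable, since $\E^\mu(\kl(\pi_t^\mu\mid\pi_t^\nu))\le\kl(\mu\mid\nu)<\infty$ by~\eqref{eq:KL-Lyapunov}. Together with the inequality above, this shows it is a nonnegative $\sP^\mu$-supermartingale, the first assertion. By Doob's supermartingale convergence theorem it then converges $\sP^\mu$-a.s. to some limit $L_\infty\ge 0$. For the ``consequently'' part, suppose the filter is stable in the sense of KL divergence, so $\E^\mu(\kl(\pi_T^\mu\mid\pi_T^\nu))\to 0$. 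Fatou's lemma gives $\E^\mu(L_\infty)\le\liminf_T \E^\mu(\kl(\pi_T^\mu\mid\pi_T^\nu))=0$, and since $L_\infty\ge 0$ this forces $L_\infty=0$ $\sP^\mu$-a.s.; hence $\kl(\pi_T^\mu\mid\pi_T^\nu)\to 0$ $\sP^\mu$-a.s. Pinsker's inequality (Lemma~\ref{lm:f-divergence-inequality}) then yields $\|\pi_T^\mu-\pi_T^\nu\|_\tv\to 0$ $\sP^\mu$-a.s., and since $|\pi_T^\mu(f)-\pi_T^\nu(f)|\le 2\|f\|_\infty\,\|\pi_T^\mu-\pi_T^\nu\|_\tv$ for $f\in C_b(\bS)$, almost sure stability in the sense of Definition~\ref{def:FS} follows.

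The main obstacle I anticipate is the rigorous justification of the restart step: one must verify that, under $\sP^\mu$ and conditioned on $\clZ_s$, the law of the future filter trajectory coincides with the law of a nonlinear filter initialized at the random prior $\pi_s^\mu$, so that the conditional version of Proposition~\ref{prop:clark-main-result} is legitimate. This hinges on the Markov property of $(X,Z)$, the $\clZ_s$-measurability of $\pi_s^\mu$ and $\pi_s^\nu$, and careful bookkeeping of the $\sP^\mu|_{\clZ_s}$-a.s. absolute continuity; once this is in place, the remaining steps (integrability, Doob, Fatou, Pinsker) are routine.
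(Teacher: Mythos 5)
Your proposal is correct and follows essentially the same route the paper takes: the supermartingale property is obtained exactly as in Section~\ref{sec:literature-clark}, by conditioning on $\clZ_s$ and applying the Lyapunov bound~\eqref{eq:KL-Lyapunov} from Proposition~\ref{prop:clark-main-result} to the filter restarted at the priors $\pi_s^\mu,\pi_s^\nu$, and your completion of the ``consequently'' part (Doob convergence for a nonnegative supermartingale, Fatou to identify the limit as zero, then Pinsker and the bound $|\pi_T^\mu(f)-\pi_T^\nu(f)|\le 2\|f\|_\infty\|\pi_T^\mu-\pi_T^\nu\|_\tv$) is the standard argument the paper leaves to the cited reference. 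The restart step you flag as the main obstacle is indeed the only nontrivial point, and the paper handles it at the same level of rigor you do.
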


%\begin{remark}
	%	Because the KL divergence $\kl(\pi_T^\mu\mid\pi_T^\nu)$ is a $\sP^\mu$-supermartingale~\cite{clark1999relative}, almost sure stability also 
	%	follows.	
	%\end{remark}
\medskip

In the remainder of this chapter, we assume the following:

\begin{assumption}\label{ass:gamma-0}
	Two initial measures $\mu,\nu\in\clP(\bS)$ satisfy $\mu\ll\nu$ and
	\begin{align*}
		0< \underline{a} &:= \mathop{\operatorname{essinf}}_{x\in\bS} \frac{\ud \mu}{\ud \nu}(x) > 0\\
		\overline{a} &:= \mathop{\operatorname{esssup}}_{x\in\bS} \frac{\ud \mu}{\ud \nu}(x) <\infty		
	\end{align*}
\end{assumption}

Using the formula~\eqref{eq:gamma_T-explicit}, we then also have
\[
\gamma_T(x) \le \frac{\overline{a}}{\underline{a}},\quad \forall\, x\in\bS,\; \sP^\nu\text{-a.s.}
\]
In particular, $\gamma_T \in L^2_{\clZ_T}\big(\Omega;C_b(\bS)\big)$.

\section{Formula for $\chisq$ divergence}\label{sec:duality-and-filter-stability}

In the setting of this thesis, a filter is obtained by solving the dual optimal 
control problem. A user who (incorrectly) believes the prior to be $\nu$ solves 
the optimal control problem under the measure $\sP^\nu$:
%
%	Recall the dual optimal control problem introduced in Ch.~\ref{ch:duality-principle}.
%	We call the dual optimal control problem~\eqref{eq:dual-optimal-control} $\sJ_T^\mu$ problem to indicate the problem is defined under $\sP^\mu$.	
%	Let $\sJ_T^\nu$ problem be the dual optimal control problem under $\sP^\nu$ and $\tsP^\nu$. Explicitly,
\begin{align*}
	\mathop{\text{Minimize}}_{U\in\clU}\text{:}\quad\quad\sJ_T^\nu(U) &= \E^\nu\Big(|Y_0(X_0)-\nu(Y_0)|^2 + \int_0^T  (\Gamma Y_t)(X_t) + |U_t + V_t(X_t)|^2 \ud t
	\Big) \\
	\text{Subject to:}\; -\ud Y_t(x) &= \big((\clA Y_t)(x) + h^\tp(x)(U_t+V_t(x))\big)\ud t - V_t^\tp(x)\ud Z_t,\quad Y_T(x) = F(x),\; x\in\bS
\end{align*}
Note the two changes from the correctly initialized problem: (1) the 
expectation is now with respect to $\sP^\nu$, and (2) $\nu(Y_0)$ appears in the 
terminal cost. The optimal control for this problem is denoted $U^\nu$. From 
the duality principle (see Remark~\ref{rm:mean and the variance})
\[
\pi_T^\nu(F) = \nu(Y_0) - \int_0^T \big(U_t^\nu\big)^\tp \ud Z_t,\quad \tsP^\nu\text{-a.s.}
\]
To obtain the formula for the divergence, we consider the 
$\sJ_T^\nu(\cdot)$ problem with $F = \gamma_T$.
%With this choice, we have
%\[
%\chisq(\pi_T^\mu\mid \pi_T^\nu) = \clV_T^\nu(\gamma_T)
%\]
Because
\[
\pi_T^\nu(\gamma_T) = \int_\bS \frac{\ud \pi_T^\mu}{\ud \pi_T^\nu}(x) \ud \pi_T^\nu(x) = \int_\bS \ud \pi_T^\mu(x) = 1
\]
we have
\[
1 = \nu(Y_0) - \int_0^T \big(U_t^\nu\big)^\tp \ud Z_t,\quad \tsP^\nu\text{-a.s.}
\]
and therefore by the uniqueness of the It\^o representation,
\begin{align*}
	U_t^\nu &= 0,\quad \tsP^\nu\text{-a.s.}\\
	\nu(Y_0) &= 1
\end{align*}
The optimal trajectory $(Y,V)$ is then the solution of the BSDE
\begin{equation}\label{eq:backward-filter-stability}
	-\ud Y_t(x) = \big((\clA Y_t)(x) + h^\tp(x)V_t(x)\big) - V_t^\tp(x) \ud Z_t,\quad Y_T(x) = \gamma_T(x)
\end{equation}
Now because $\chisq(\pi_T^\mu\mid \pi_T^\nu) = \clV_T^\nu(\gamma_T)$, a simple 
calculation reveals
%This choice of the terminal condition is related to the $\chisq$ divergence by 
%noting that
\[
\chisq(\pi_T^\mu\mid \pi_T^\nu) = \clV_T^\nu(\gamma_T) = \pi_T^\nu(\gamma_T^2) 
- 1 = \pi_T^\mu(\gamma_T) - 1
\]
Therefore, useful insight may be obtained by considering the process 
$\{\pi_t^\mu(Y_t):0\le t\le T\}$. The calculation for the same, based on using  
It\^o-Wentzell formula for $\pi_t^\mu(Y_t)$, is contained in  
Section~\ref{ssec:pf-gamma-martingale}. It is shown that the process is 
$\sP^\mu$-martingale and consequently, 
\[
\E^\mu\big(\pi_T^\mu(\gamma_T)\big) = \mu(Y_0)
\]
It follows that
\begin{align*}\label{eq:chisq-main-result}
	\E^\mu\big(\chisq(\pi_T^\mu\mid\pi_T^\nu)\big) &= \mu(Y_0) - 1 \\ %= \mu(Y_0) - \nu(Y_0)\\
	& =\nu\big(\gamma_0 Y_0\big)-\nu(\gamma_0)\nu(Y_0) \\
	&= \cov_0^\nu(\gamma_0,Y_0)
\end{align*}
This proves~\eqref{eq:filter-stab-intro}.
%Since $\mu(Y_0) = \nu(\gamma_0 Y_0)$ and $\nu(\gamma_0) = \mu(\ones) = 1$, the right-hand side
%\[
%\mu(Y_0)-\nu(Y_0) = \nu\big(\gamma_0 Y_0\big)-\nu(\gamma_0)\nu(Y_0) = \cov_0^\nu(\gamma_0,Y_0)
%\]	
By the Cauchy-Schwarz inequality, 
\begin{equation}\label{eq:CS-bound}
	\E^\mu\big(\chisq(\pi_T^\mu\mid\pi_T^\nu)\big)  \le \sqrt{\clV_0^\nu(\gamma_0)\clV_0^\nu(Y_0)}
\end{equation}
Therefore, for any $\mu\in\clP(\bS)$ with $\clV_0^\nu(\gamma_0) <\infty$, 
the filter stability in $\chisq$ divergence is obtained if it can be shown that
\begin{equation}\label{eq:necessary-variance-decay}
	\clV_0^\nu(Y_0)\;\longrightarrow\; 0\quad \text{as }T\to \infty
\end{equation}
Note that we have not yet used the equation for the conditional variance. This is the 
subject 
of the remainder of this chapter where~\eqref{eq:necessary-variance-decay} is 
shown to hold under certain additional assumptions on the model. Before doing 
so, we make some remarks.

\medskip

\begin{remark}[Forward equation for divergence]
	A reader may wonder whether the formula for $\chisq$-divergence can also be 
	derived more directly without the use of duality. Indeed, because the 
	equations for $\pi^\mu$ and $\pi^\nu$ are known, a direct application of 
	the It\^o formula (see formal calculation in 
	Section~\ref{ssec:pf-gamma-martingale}) is used to 
	derive the following forward equation:
	\begin{equation}\label{eq:forward-equation-chisq}
		\ud \chisq(\pi_t^\mu\mid \pi_t^\nu) = -\pi_t^\nu\big(\Gamma \gamma_t\big)\ud t
		%			 -\big(\pi_t^\mu(h\gamma_t)-\pi_t^\mu(h)\pi_t^\mu(\gamma_t)\big)\big(\pi_t^\nu(h\gamma_t)-\pi_t^\nu(h)\pi_t^\nu(\gamma_t)\big)
		-\cov_t^\mu(\gamma_t,h)\,\cov_t^\nu(\gamma_t,h)
		\ud t  + (\cdots)\ud I_t^\mu
	\end{equation}
	The first term $-\pi_t^\nu(\Gamma \gamma_t)$ is non-positive. It 
	has not been possible to determine the sign of the product term
	$\cov_t^\mu(\gamma_t,h)\,\cov_t^\nu(\gamma_t,h)$.
	It may be possible to express the equation in a more amenable form through 
	a clever choice of integrating factor. However, we have not been successful 
	in this endeavor.
	%		The first term is strictly negative, and it is known that the second term vanishes as $t$ increases~\cite{clark1999relative}, but we cannot draw a direct conclusion from the forward equation. %In the following section, we use duality formulation to obtain stability property in $\chisq$ sense.
\end{remark}

\medskip

\begin{remark}[Stochastic stability]
	
	%		Stochastic stability of a Markov process concerns the convergence (to the steady-state distribution) of the distribution of the state at time $T$.
	A special case of the filter stability is when the observation function 
	$h = c\ones$ (a constant function). In this case, $\pi^\mu$ and $\pi^\nu$ are 
	deterministic processes obtained as solutions of the forward Kolmogorov equation starting from prior $\mu$ and $\nu$, respectively. 
	Equation~\eqref{eq:forward-equation-chisq} is now an ODE
	\[
	\frac{\ud }{\ud t} \,\chisq(\pi_t^\mu\mid\pi_t^\nu) = -\pi_t^\nu\big(\Gamma 
	\gamma_t\big)
	\]
	In the study of Markov processes, a standard  
	assumption is that there exists an invariant measure $\bmu$ and 
	a certain \emph{Poincar\'e inequality} (PI) holds. For this purpose, we 
	define the following: 
	%	Suppose $\bmu$ be an invariant measure and set $\nu = \bmu$. Then 
	%$\pi_t^\nu = \bmu$ for all $t\ge 0$. 		
	%	Define the (deterministic) energy and variance of a function $f$ by
	\begin{align*}
		\text{(energy):}\qquad\den^\bmu(f) &:= \bmu\big(\Gamma f)\\
		\text{(variance):}\qquad\dvar^\bmu(f) &:= \bmu\big(|f - \bmu(f)|^2\big) 
	\end{align*}
	The PI relates the two as follows:
	\begin{equation}\label{eq:standard-PI}
		\text{(PI)}\qquad\qquad	\den^\bmu(f) \geq c \;
		\dvar^\bmu(f)\quad \forall\, f \in \clD
	\end{equation}
	where $\clD$ is a suitable set of test functions.
	Setting $\nu = \bmu$, the equation for $\chisq$-divergence becomes 
	%and~\eqref{eq:forward-equation-chisq} becomes
	\[
	\frac{\ud}{\ud t} \dvar^\bmu(\gamma_t) = -\den^\bmu(\gamma_t)
	\]
	and using the PI,
	\[
	\dvar^\bmu(\gamma_T)\le e^{-cT}\dvar^\bmu(\gamma_0)
	\]
	It is also entirely straightforward to obtain this formula by introducing a dual 
	deterministic process. This calculation is included Section~\ref{ssec:pf-gamma-martingale}, mainly for the purpose of comparing stochastic stability and filter stability. However, because the forward calculations are also easy and standard, the utility for doing so is questionable.
	
	A stability condition weaker than PI is as follows:
	\[
	\den^\bmu(f) = 0\quad \Longrightarrow\quad \dvar^\bmu(f)=0
	\]
	In finite state-space settings, this implies a Poincar\'e constant $c > 0$. 
	In more general settings, under certain additional compactness assumptions, this condition is used to describe the ergodicity of the Markov process~\cite[Remark 4.2.2]{bakry2013analysis}.
	
	%		and the dual BSDE becomes Kolmogorov's backward equation, provided that $Y_T=f$ is deterministic. We denote the solution to the Fokker-Plank equation at time $t$ by $\mu_t$, and the solution to the backward equation at time $t$ by $y_t$: 
	%		\begin{align*}
		%			\frac{\partial \mu_t}{\partial t} = \clA^\dagger \mu_t,\quad \mu_0 = \mu \\
		%			-\frac{\partial y_t}{\partial t} = \clA y_t,\quad y_T = f
		%		\end{align*}
	%		Therefore, $\gamma_t$ as well as $\clV_T^\nu(\gamma_T)$ becomes deterministic, and Corollary~\ref{cor:cPI-main-result} applies with $R_T = 1$. % as $\clV_T^\nu(\gamma_T)$ becomes deterministic.
	%		
	%		Let $\bmu$ be an invariant measure and set $\nu = \bmu$. The (standard) 
	%		where the energy and variance are defined as
	%		
	%		We want to show that $\mu_t$ converges to $\bmu$ as $t$ goes to infinity. An implication of standard PI onto stochastic stability is stated in the following corollary of Corollary~\ref{cor:cPI-main-result}, which is essentially~\cite[Theorem 4.2.5]{bakry2013analysis}.
	%		
	%		\begin{corollary}\label{prop:Markov-chain-stability}
		%			Suppose $\mu \ll \bmu$ and define $\gamma_t = \dfrac{\ud \mu_t}{\ud \bmu}$. If Poincar\'e inequality holds, then for all $T\ge 0$,
		%			\begin{equation}\label{eq:Markov-chain-inequality}
			%				\dvar(\gamma_T) \le e^{-cT}\dvar(\gamma_0)
			%			\end{equation}
		%			In consequence, $\mu_t$ approaches to $\bmu$ exponentially fast in the sense of $\chisq$-divergence.
		%		\end{corollary}
	
\end{remark}

%
%	It is shown in~\cite[Theorem 5.2]{van2010nonlinear} that the filter is stable in almost sure sense if and only if the HMM is detectable. It strongly suggests that the stabilizability of the dual system is also sufficient to the filter stability in the $\chisq$ sense.  However, a proof which is purely based on optimal control formalism is not found yet.
%	In the following sections, we discuss two cases where the dual system is stabilizable:
%	\begin{itemize}
	%		\item The case where the state process is ergodic.
	%		\item Finite state space with multiple ergodic classes~\cite{baxendale2004asymptotic}.
	%	\end{itemize}

\section{Filter stability using dual formulation}\label{sec:bvi}

The goal is to establish a sufficient conditions such 
that~\eqref{eq:necessary-variance-decay} holds.  For this purpose, the 
interpretation of the conditional variance as the optimal value of the dual 
optimal control problem (see Prop.~\ref{prop:optimal-solution}) is useful. In particular,
\begin{equation}\label{eq:variance-equality}
	\clV_0^\nu(Y_0) + \E^\nu\Big(\int_0^t\pi_s^\nu(\Gamma Y_s) + 
	\pi_s^\nu\big(|U_s^\nu + V_s(\cdot)|^2\big)\ud s\Big) = 
	\E^\nu\big(\clV_t^\nu(Y_t)\big),\quad 0\le t\le T
\end{equation}
%For the minimum variance optimal control problem of the Kalman filter, it is 
%fundamental property that the closed loop system is exponentially stable under 
%certain assumptions (see Section~\ref{}). Analogously, we assert that suitable 
%assumptions on the model imply the following:
The above shows that the deterministic process $\big\{\E^\nu\big(\clV_t^\nu(Y_t)\big):0\le t\le T\big\}$ is non-decreasing as a function of $t$. Under suitable assumptions on the model $(\clA,h)$, we assert that the following  \emph{backward variance inequality} holds:
\begin{equation}\label{eq:variance-inequality-2}
	\clV_0^\nu(Y_0) \le e^{-cT}\E^\nu\big(\clV_T^\nu(Y_T)\big)
\end{equation}
%Note that the inequality holds with a constant $c=0$. The significance of the inequality is because of the following theorem on filter stability:
Note that the inequality trivially holds with a constant $c=0$.
%A simple sufficient condition for~\eqref{eq:variance-inequality-2} to hold is
%\begin{equation}
	%	\E^\nu\big(\pi_t^\nu(\Gamma Y_t)\big) \ge c\, \E^\nu\big(\clV_t^\nu(Y_t)\big),\quad 0\le t \le T
	%\end{equation}

The backward variance inequality~\eqref{eq:variance-inequality-2} is important because combined with~\eqref{eq:CS-bound}, it yields the following result on filter stability whose proof appears in Section~\ref{ss:pf-thm71}:

\medskip

\begin{theorem}\label{thm:filter-stability-chisq-RT}
	Suppose~\eqref{eq:variance-inequality-2} holds. Then
	\begin{equation*}
		\underline{a}\,\E^\mu\big(\chisq(\pi_T^\mu\mid\pi_T^\nu)\big) \le 
		e^{-cT}\clV^\nu_0(\gamma_0)
	\end{equation*}
	%	where	
	%	\[
	%	R_T = 
	%	\frac{\E^\mu\big(\clV^\nu_T(\gamma_T)\big)}{\E^\nu\big(\clV^\nu_T(\gamma_T)\big)}
	%	\]
	%	In addition, if Assumption~\ref{ass:gamma-0} holds then  
	%	\begin{equation*}\label{eq:filter-stability-chisq}
		%	\E^\mu\big(\chisq(\pi_T^\mu\mid\pi_T^\nu)\big) \le \frac{1}{\underline{a}}
		%		e^{-cT}\clV^\nu_0(\gamma_0)
		%	\end{equation*}
	i.e.~the filter is stable in $\chisq$ divergence.
\end{theorem}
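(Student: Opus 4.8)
The plan is to chain together the three ingredients already assembled in this chapter — the Cauchy--Schwarz bound~\eqref{eq:CS-bound}, the backward variance inequality~\eqref{eq:variance-inequality-2}, and a change-of-measure comparison — and then to close a self-referential inequality. First I would start from~\eqref{eq:CS-bound}, namely $\E^\mu\big(\chisq(\pi_T^\mu\mid\pi_T^\nu)\big) \le \sqrt{\clV_0^\nu(\gamma_0)\,\clV_0^\nu(Y_0)}$, and substitute the hypothesis~\eqref{eq:variance-inequality-2} to replace $\clV_0^\nu(Y_0)$ by $e^{-cT}\E^\nu\big(\clV_T^\nu(Y_T)\big)$. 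Since the terminal condition of the BSDE~\eqref{eq:backward-filter-stability} is $Y_T = \gamma_T$ and $\clV_T^\nu(\gamma_T) = \chisq(\pi_T^\mu\mid\pi_T^\nu)$, this identifies $\E^\nu\big(\clV_T^\nu(Y_T)\big) = \E^\nu\big(\chisq(\pi_T^\mu\mid\pi_T^\nu)\big)$.

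The crucial intermediate step is to pass from the $\sP^\nu$-expectation back to the $\sP^\mu$-expectation appearing in the claim. For this I would observe that, by Lemma~\ref{lm:change-of-Pmu-Pnu}, $\frac{\ud\sP^\mu}{\ud\sP^\nu} = \gamma_0(X_0) \ge \underline{a}$ on $\clF_T$, so restricting to $\clZ_T$ and using that conditional expectation preserves a lower bound gives $\frac{\ud\sP^\mu|_{\clZ_T}}{\ud\sP^\nu|_{\clZ_T}} = \E^\nu\big(\gamma_0(X_0)\mid\clZ_T\big) \ge \underline{a}$. Consequently, for the non-negative $\clZ_T$-measurable random variable $\chisq(\pi_T^\mu\mid\pi_T^\nu)$ one has $\E^\mu\big(\chisq(\pi_T^\mu\mid\pi_T^\nu)\big) \ge \underline{a}\,\E^\nu\big(\chisq(\pi_T^\mu\mid\pi_T^\nu)\big)$, equivalently $\E^\nu\big(\chisq(\pi_T^\mu\mid\pi_T^\nu)\big) \le \underline{a}^{-1}\,\E^\mu\big(\chisq(\pi_T^\mu\mid\pi_T^\nu)\big)$. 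Assumption~\ref{ass:gamma-0} guarantees that everything in sight is finite, since $\gamma_T \le \overline{a}/\underline{a}$ forces $\chisq(\pi_T^\mu\mid\pi_T^\nu) \le (\overline{a}/\underline{a})^2$.

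Writing $D := \E^\mu\big(\chisq(\pi_T^\mu\mid\pi_T^\nu)\big)$ and combining the three bounds yields the self-referential inequality $D \le \sqrt{\clV_0^\nu(\gamma_0)\,e^{-cT}\,\underline{a}^{-1}\,D}$. Squaring and cancelling one factor of $D$ (the case $D=0$ being trivial) gives $D \le \underline{a}^{-1}e^{-cT}\clV_0^\nu(\gamma_0)$, which is exactly $\underline{a}\,\E^\mu\big(\chisq(\pi_T^\mu\mid\pi_T^\nu)\big) \le e^{-cT}\clV_0^\nu(\gamma_0)$. Stability in $\chisq$ then follows because $\clV_0^\nu(\gamma_0) = \chisq(\mu\mid\nu)$ is a fixed finite constant, so the right-hand side tends to $0$ as $T\to\infty$ whenever $c>0$.

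The step I expect to require the most care is the change-of-measure comparison, specifically justifying $\frac{\ud\sP^\mu|_{\clZ_T}}{\ud\sP^\nu|_{\clZ_T}}\ge\underline{a}$ cleanly; the remainder is routine algebra. One should also verify that it is precisely the appearance of $\E^\nu$ (rather than $\E^\mu$) on the right of~\eqref{eq:variance-inequality-2} that injects a \emph{single} factor of $\underline{a}$ into the final bound: after squaring and resolving the square root, only one power of the lower bound survives, which is why the constant in the conclusion is $\underline{a}$ and not $\underline{a}^2$.
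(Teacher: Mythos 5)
Your proof is correct and follows essentially the same route as the paper's: square the Cauchy--Schwarz bound~\eqref{eq:CS-bound}, insert~\eqref{eq:variance-inequality-2} with $Y_T=\gamma_T$, and convert $\E^\nu\big(\clV_T^\nu(\gamma_T)\big)$ back to $\E^\mu\big(\clV_T^\nu(\gamma_T)\big)$ using $\operatorname{essinf}\frac{\ud\sP^\mu}{\ud\sP^\nu}=\underline{a}$, then cancel one factor of the divergence. The paper packages the change-of-measure step as the ratio $R_T=\E^\mu\big(\clV_T^\nu(\gamma_T)\big)/\E^\nu\big(\clV_T^\nu(\gamma_T)\big)\ge\underline{a}$ (cf.\ Remark~\ref{rm:better-RT-formula}), which is exactly your bound written in a different order.
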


\medskip

\begin{remark}\label{rm:better-RT-formula}
	The proof of Theorem~\ref{thm:filter-stability-chisq-RT} is presented in a slightly more general form where we conclude	
	\begin{equation*}
		R_T\,\E^\mu\big(\chisq(\pi_T^\mu\mid\pi_T^\nu)\big) \le 
		e^{-cT}\clV^\nu_0(\gamma_0)
	\end{equation*}
	where
	\[
	R_T = 
	\frac{\E^\mu\big(\clV^\nu_T(\gamma_T)\big)}{\E^\nu\big(\clV^\nu_T(\gamma_T)\big)}
	\]
	A conservative lower bound $R_T\ge \underline{a}$ is then used.
	An alternative formula for the ratio $R_T$ is obtained by consider the following change of measure
	%$\clV_T^\nu(\gamma_T)$ is a $\clZ_T$-measurable random variable, the 
	%change of measure restricted on $\clZ_T$ can also be considered
	(see Section~\ref{ssec:pf-clark-result} for the derivation):
	\[
	A_T:=\frac{\ud \sP^\mu|_{\clZ_T} }{\ud \sP^\nu|_{\clZ_T}}= 
	\exp\Big(\int_0^T \big(\pi_t^\mu(h)-\pi_t^\nu(h)\big)\ud I_t^\mu - 
	\half\int_0^T \big|\pi_t^\mu(h)-\pi_t^\nu(h)\big|^2\ud t\Big)
	\]
	Since $\clV_T^\nu(\gamma_T)$ is $\clZ_T$-measurable random variable, the change of measure formula gives $\E^\mu\big(\clV_T^\nu(\gamma_T)\big) = \E^\nu\big(A_T\clV_T^\nu(\gamma_T)\big)$, and therefore
	%	It is used to transform the expectation of any $\clZ_T$-measurable random variable as $\clV_T^\nu(\gamma_T)$ and therefore the ratio
	\[
	R_T 
	=\frac{\E^\nu\big(A_T\clV_T^\nu(\gamma_T)\big)}{\E^\nu\big(\clV_T^\nu(\gamma_T)\big)}
	\]
	is now expressed as a ratio where both the numerator and denominator are expectations  with respect to the same measure $\sP^\nu$.
	
	It is easily seen that $\{A_T:T\geq 0\}$ is a positive $\sP^\nu$-martingale with $\E^\nu(A_T)=\E^\nu(A_0)=1$. By martingale convergence theorem~\cite[Theorem 3.21]{le2016brownian}, there exists a random variable $A_\infty$ such that
	\[
	A_T\;\longrightarrow\; A_\infty \quad \text{as }T\to \infty,\; \sP^\nu\text{-a.s.}
	\]
	It is possible that the resulting formula for the ratio can be manipulated to obtain a better asymptotic bound for the constant $a$. %This formula may provide improved bound on $R_T$. %For instance, if $h=0$, 
	%then $A_T =1$ and therefore $R_T = 1$. For general case,
	%	However, it is not clear how we can improve the lower bound~\eqref{eq:RT}.
\end{remark}

\medskip

Based on the result of Theorem~\ref{thm:filter-stability-chisq-RT}, a quantitative analysis of the filter stability is possible by establishing backward variance 
inequality~\eqref{eq:variance-inequality-2} where the constant $c$ depends upon the model parameters.
%We conjecture that this is possible in general under the assumption that the dual BSDE is stabilizable.
Some partial results along this line of inquiry appear as part of the following section.

\section{Conditional Poincar\'e inequality}\label{sec:cPI}

In this section, we provide a sufficient condition for the backward variance inequality~\eqref{eq:variance-inequality-2}. 
Starting from~\eqref{eq:variance-equality}, note that irrespective of $U_t$ and $V_t$, we have
\[
\E^\nu\big(\clV_t^\nu(Y_t)\big) + \int_t^T\E^\nu\big(\pi_s^\nu(\Gamma Y_s)\big)\ud s \le 
\E^\nu\big(\clV_T^\nu(Y_T)\big),\quad 0\le t \le T
\]
Therefore a natural condition to obtain~\eqref{eq:variance-inequality-2} is
\begin{equation}\label{eq:cPI-for-Y}
	\E^\nu\big(\pi_t^\nu(\Gamma Y_t)\big) \ge c\, \E^\nu\big(\clV_t^\nu(Y_t)\big),\quad 0\le t \le T
\end{equation}
Indeed, suppose~\eqref{eq:cPI-for-Y} holds. Then
\[
\E^\nu\big(\clV_t^\nu(Y_t)\big) + c\,\int_t^T\E^\nu\big(\clV_s^\nu(Y_s)\big)\ud s \le 
\E^\nu\big(\clV_T^\nu(Y_T)\big),\quad 0\le t \le T
\]
which~\eqref{eq:variance-inequality-2} follows from an application of Gronwall. This motivates the following definition:

\medskip

\begin{definition}\label{def:cPI}
	Suppose $\nu\in\clP(\bS)$ and $\clZ = \{\clZ_t:t\ge 0\}$ is a filtration. The model satisfies the \emph{conditional Poincar\'e inequality} (cPI) if there exists a constant $c > 0$ such that
	\[
	\E^\nu\big(\pi_t^\nu(\Gamma F)\big) \ge c\, \E^\nu\big(\clV_t^\nu(F)\big)
	\]
	for all test functions $F(x,\omega) = f(x)\ones_A(\omega)$ where $f\in\clD$ and $A \in \clZ_t$.
\end{definition}

\medskip

Clearly, if the system satisfies the conditional PI then~\eqref{eq:cPI-for-Y} and thus the backward variance inequality~\eqref{eq:variance-inequality-2} follows.
A more general result, described in the following proposition, is obtained by considering the martingale for the dual optimal control problem (see Theorem~\ref{thm:martingale}). 
The proof of the following proposition appears in Section~\ref{ss:pf-prop73}.

\medskip

\begin{proposition}\label{prop:bvi-pathwise}
	Suppose $\beta = \{\beta_t \ge 0:t\ge 0\}$ is a $\clZ$-adapted process such that 
	\begin{equation}\label{eq:beta-process}
		\pi_t^\nu \big(\Gamma f\big) \ge \beta_t \clV_t^\nu(f)\qquad \forall\, f\in \clD,\;\sP^\nu\text{-a.s.},\;0\le t\le T
	\end{equation}
	Then the backward inequality of the form
	\begin{equation*}\label{eq:bvi}
		\clV_0^\nu(Y_0) \le  \E^\nu\Big[\exp\Big(-\int_0^T\beta_t \ud 
		t\Big)\clV_T^\nu(Y_T)\Big]
	\end{equation*}
\end{proposition}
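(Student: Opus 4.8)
The plan is to upgrade the expectation identity \eqref{eq:variance-equality} into a pathwise integrating-factor statement by running the martingale of Theorem~\ref{thm:martingale} under $\sP^\nu$. Recall from Section~\ref{sec:duality-and-filter-stability} that for the filter-stability problem (terminal data $Y_T=\gamma_T$) the optimal control is $U^\nu\equiv 0$, so the optimal trajectory $(Y,V)$ solves the BSDE \eqref{eq:backward-filter-stability}. Since $U^\nu$ is optimal, Theorem~\ref{thm:martingale} shows that
\[
M_t := \clV_t^\nu(Y_t) - \int_0^t \ell(Y_s,V_s,U_s^\nu;\pi_s^\nu)\ud s, \qquad \ell(Y_s,V_s,0;\pi_s^\nu) = \pi_s^\nu(\Gamma Y_s) + \pi_s^\nu\big(|V_s|^2\big),
\]
is a genuine $\sP^\nu$-martingale (not merely a supermartingale, because the $-|U^\nu-U^\opt|^2$ defect vanishes at the optimum). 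Equivalently, in differential form,
\[
\ud \clV_t^\nu(Y_t) = \big[\pi_t^\nu(\Gamma Y_t) + \pi_t^\nu\big(|V_t|^2\big)\big]\ud t + \ud M_t.
\]

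Next I would introduce the finite-variation, $\clZ$-adapted integrating factor $\Lambda_t := \exp\big(-\int_0^t \beta_s\ud s\big)$, which satisfies $\ud \Lambda_t = -\beta_t \Lambda_t \ud t$ and $\Lambda_0=1$, and apply the It\^o product rule to $N_t := \Lambda_t\,\clV_t^\nu(Y_t)$. Because $\Lambda$ carries no martingale part there is no covariation term, and one obtains
\[
\ud N_t = \Lambda_t\Big[\pi_t^\nu(\Gamma Y_t) + \pi_t^\nu\big(|V_t|^2\big) - \beta_t\,\clV_t^\nu(Y_t)\Big]\ud t + \Lambda_t\,\ud M_t.
\]
The decisive step is then to apply the hypothesis \eqref{eq:beta-process} with the random test function $f = Y_t$, giving $\pi_t^\nu(\Gamma Y_t) \ge \beta_t\,\clV_t^\nu(Y_t)$; since in addition $\pi_t^\nu(|V_t|^2)\ge 0$ and $\Lambda_t>0$, the drift of $N_t$ is nonnegative, so $N$ is a $\sP^\nu$-submartingale. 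Finally, the submartingale property gives $\E^\nu(N_0)\le\E^\nu(N_T)$; using that $N_0 = \clV_0^\nu(Y_0)$ is deterministic (as $\clZ_0$ is trivial and $\Lambda_0=1$) while $N_T = \exp\big(-\int_0^T\beta_t\ud t\big)\clV_T^\nu(Y_T)$, this yields exactly
\[
\clV_0^\nu(Y_0) \le \E^\nu\Big[\exp\Big(-\int_0^T\beta_t\ud t\Big)\clV_T^\nu(Y_T)\Big].
\]

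I expect two points to require care rather than the algebra. The main obstacle is the substitution $f=Y_t$ into \eqref{eq:beta-process}: the inequality is postulated for each deterministic $f\in\clD$, $\sP^\nu$-a.s., whereas $Y_t$ is a $\clZ_t$-measurable random element. Making this rigorous requires that the exceptional null set be chosen uniformly in $f$ (for instance via a countable dense subclass of $\clD$ together with continuity of both sides in $f$), along with the regularity $Y_t(\cdot,\omega)\in\clD$ a.s., which I would record as a standing hypothesis on $\clD$. The second, routine, point is confirming that $\int_0^\cdot \Lambda_s\,\ud M_s$ is a true martingale (and not merely a local one), so that the expectation step is legitimate; this follows from $0<\Lambda_s\le 1$ together with the $L^2$ integrability of $(Y,V)$ guaranteed by Assumption~\ref{ass:gamma-0}, which bounds $\gamma_T$ and hence controls the BSDE solution.
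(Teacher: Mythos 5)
Your proposal is correct and follows essentially the same route as the paper: invoke the martingale $M$ of Theorem~\ref{thm:martingale} at the optimal control so that $\ud \clV_t^\nu(Y_t) = \big[\pi_t^\nu(\Gamma Y_t)+\pi_t^\nu(|U_t^\opt+V_t|^2)\big]\ud t + \ud M_t \ge \beta_t\,\clV_t^\nu(Y_t)\ud t + \ud M_t$ via \eqref{eq:beta-process} applied to $f=Y_t$, then multiply by the integrating factor $\exp\big(-\int_0^t\beta_s\ud s\big)$ and take expectations. The two care points you flag (choosing the null set uniformly over a dense subclass of $\clD$ so the inequality may be applied to the random function $Y_t$, and verifying the stochastic integral is a true martingale) are legitimate refinements that the paper's proof passes over silently; they do not alter the argument.
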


%It naturally motivates the following definition to obtain~\eqref{eq:variance-inequality-2}.
Consequently, if 
\[
\frac{1}{T} \int_0^T \beta_t \ud t \; \longrightarrow\; c > 0
\]
then the backward variance inequality~\eqref{eq:variance-inequality-2} for the conditional variance is obtained asymptotically. The example~\ref{ex:min_row} in Section~\ref{sec:examples} considers such a case.

\medskip

The inequality~\eqref{eq:beta-process} is the pathwise version of the conditional PI. Note the pathwise inequality needs to be specified only for deterministic functions. The following proposition provides an alternative description for the conditional PI. The proof appears in Section~\ref{ss:pf-prop74}.

\medskip

\begin{proposition}\label{prop:cpi-equivalent-def}
	The nonlinear model $(\clA,h)$ satisfies cPI with constant $c$ if and only if 
	\[
	\beta_t \ge c > 0,\quad \sP^\nu\text{-a.s.},\quad \forall \, t \ge 0
	\]
\end{proposition}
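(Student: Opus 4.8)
The plan is to prove the two implications separately, the crux being the reduction of the expectation-form inequality in Definition~\ref{def:cPI} to a pathwise inequality by exploiting the product structure $F(x,\omega)=f(x)\ones_A(\omega)$ of the admissible test functions. First I would record two elementary algebraic facts that make these product functions special. Since the carr\'e du champ operator $\Gamma$ is a quadratic form in its argument and $\ones_A(\omega)$ does not depend on $x$, one has $\Gamma F=\ones_A\,\Gamma f$ (using $\ones_A^2=\ones_A$), whence $\pi_t^\nu(\Gamma F)=\ones_A\,\pi_t^\nu(\Gamma f)$. Likewise $\clV_t^\nu(F)=\ones_A\,\clV_t^\nu(f)$, because $F^2=f^2\ones_A$ and $(\pi_t^\nu(F))^2=\ones_A(\pi_t^\nu(f))^2$. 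Substituting these into the cPI, and noting that $\ones_A$ factors out of $\pi_t^\nu$ since it is constant in $x$, the inequality becomes
\[
\E^\nu\big(\ones_A\,[\pi_t^\nu(\Gamma f)-c\,\clV_t^\nu(f)]\big)\ge 0,\qquad \forall\,A\in\clZ_t,\ f\in\clD .
\]

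For the direction $\beta_t\ge c\Rightarrow$ cPI, I would simply invoke the defining property~\eqref{eq:beta-process} of $\beta$, which gives $\pi_t^\nu(\Gamma f)\ge\beta_t\clV_t^\nu(f)\ge c\,\clV_t^\nu(f)$ a.s.\ (using $\clV_t^\nu(f)\ge 0$). Thus the bracketed quantity in the display is nonnegative, and the inequality follows after multiplying by $\ones_A\ge 0$ and taking expectation. This direction is routine.

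For the converse, the key observation is that for each fixed deterministic $f$ the bracketed random variable $\pi_t^\nu(\Gamma f)-c\,\clV_t^\nu(f)$ is $\clZ_t$-measurable. Hence the requirement that its $\ones_A$-weighted expectation be nonnegative for \emph{every} $A\in\clZ_t$ forces the variable itself to be nonnegative $\sP^\nu$-a.s.: otherwise, choosing $A=\{\pi_t^\nu(\Gamma f)-c\,\clV_t^\nu(f)<0\}\in\clZ_t$ would yield a strictly negative expectation, a contradiction. This gives $\pi_t^\nu(\Gamma f)\ge c\,\clV_t^\nu(f)$ a.s.\ for each $f$, which is precisely the pathwise inequality~\eqref{eq:beta-process} with $c$ in place of $\beta_t$; by the characterization of $\beta_t$ as the largest admissible process, equivalently $\beta_t=\operatorname{ess\,inf}_{f\in\clD}\pi_t^\nu(\Gamma f)/\clV_t^\nu(f)$, I conclude $\beta_t\ge c$ a.s.

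The hard part will be the exchange of quantifiers in the converse: Definition~\ref{def:cPI} only delivers, for each individual $f\in\clD$, an almost-sure statement, whereas the identification with $\beta_t$ requires the inequality to hold simultaneously for all $f$ on a single full-measure event. I would close this gap by invoking separability of the test-function class $\clD$: restricting to a countable dense subset and using continuity of the quadratic maps $f\mapsto\pi_t^\nu(\Gamma f)$ and $f\mapsto\clV_t^\nu(f)$ (inherited from the boundedness assumptions on $\clD$) lets me intersect countably many full-measure events and pass to the essential infimum, giving $\beta_t\ge c$ a.s.\ for every $t$. The only minor technical caveat, easily dispatched, is the degenerate case $\clV_t^\nu(f)=0$, where the inequality holds trivially and so does not affect the infimum.
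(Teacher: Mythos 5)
Your proof is correct and follows essentially the same route as the paper: both directions hinge on the factorizations $\pi_t^\nu(\Gamma F)=\ones_A\,\pi_t^\nu(\Gamma f)$ and $\clV_t^\nu(F)=\ones_A\,\clV_t^\nu(f)$ for product test functions, and the converse is closed by taking $A=[\pi_t^\nu(\Gamma f)<c\,\clV_t^\nu(f)]\in\clZ_t$ and noting it must be null. Your closing separability argument handles a quantifier exchange (from ``for each $f$, a.s.''\ to ``a.s., for all $f$'') that the paper leaves implicit; it is a harmless refinement rather than a different method.
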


\medskip

%Consequently, if the system satisfies the conditional PI, then the backward variance inequality~\eqref{eq:variance-inequality-2} holds, and therefore Theorem~\ref{thm:filter-stability-chisq-RT} applies.
%If the conditional PI holds asymptotically, then the backward variance inequality holds asymptotically. 

%\medskip

\begin{remark}
	The conditional PI trivially holds if standard PI holds for all $\rho \in \clP(\bS)$ with uniform constant $c>0$, that is,
	\[
	\rho\big(\Gamma f\big) \ge c\, \dvar^\rho(f),\quad \forall\,f \in \clD,\; \rho \in \clP(\bS)
	\]
	This appears to be a very strong requirement, but certain mixing conditions 
	in finite case indeed satisfies uniform PI. 
	Some examples of Markov processes that satisfies uniform PI are presented in 
	the following section.
	
\end{remark}

\subsection{Examples of {Poincar\'e} inequality}\label{sec:examples}

%We begin with the famous counter-example  of nonlinear filtering and show that the conditional PI fails to hold in this case:
We begin by noting that Def.~\ref{def:cPI} is stated for a general class of filtrations ($\clZ_t$ not necessarily defined according to the model~\eqref{eq:obs-model}). 
One may conjecture that the filter ``inherits'' the standard PI~\eqref{eq:standard-PI} from the underlying Markov process. (Note the standard PI is for $\nu=\bmu$, the invariant measure).

In the general settings, the conditional PI holds for deterministic functions $f\in \clD$. This is because
\begin{align*}
	\E^\bmu\big(\pi_t^\bmu\big(\Gamma f\big)\big) = \bmu\big(\Gamma f\big) \ge  c \, \E^\bmu \big(|f(X_T) - \bmu(f)|^2\big)  \geq c \, \E^\bmu\big(\clV_T^\bmu(f) \big)
\end{align*}
This shows that the PI and also the constant $c$ is inherited on the subset of deterministic functions. 
However, with general types of filtration, it may not hold for random 
functions. This is shown with the aid of the famous counter-example of filtering theory reviewed in Section~\ref{sec:counter-example}:

\medskip

\begin{example}\label{ex:counter-example-revisit}  
	%This is the same as the 
	%famous counter-example of nonlinear filter theory that we have already 
	%introduced in Section~\ref{sec:counter-example}.
	The state-space	$\bS=\{1,2,3,4\}$ and the rate matrix 
	\[
	A = \begin{pmatrix}
		-1 & 1 & 0 & 0\\
		0 & -1 & 1 & 0\\
		0 & 0 & -1 & 1\\
		1 & 0 & 0 & -1
	\end{pmatrix}
	\]
	whose unique invariant measure $\bmu =
	[\frac{1}{4},\frac{1}{4},\frac{1}{4},\frac{1}{4}]$.
	It is readily verified that the standard PI holds with a constant $c= 2$. 
	Consider a sigma-algebra $\clZ_T = \sigma([X_T \in\{1,3\}])$ along with
	a $\clZ_T$-measurable function:
	\[
	F(\cdot) = \begin{cases}
		\begin{pmatrix} 1 & 1 & -1 & -1\end{pmatrix} & \text{if}  \; \; X_T \in\{1,3\}  \\[4pt]
		\begin{pmatrix} -1 & 1 & 1 & -1 \end{pmatrix} & \text{if}  \; \;  X_T
		\in\{2,4\} 
	\end{cases} 
	\]
	Then the conditional distribution
	\[
	\pi_T^{\bmu}(\cdot)
	= \begin{cases}
		\begin{pmatrix} \half  & 0 & \half  & 0\end{pmatrix} & \text{if}  \; \; X_T \in\{1,3\}  \\[4pt]
		\begin{pmatrix} 0 & \half  & 0 &
			\half  \end{pmatrix} & \text{if}  \; \;  X_T
		\in\{2,4\} 
	\end{cases} 
	\]
	The conditional mean $\pi_T^{\bmu}(F)=0$, the conditional
	variance $\pi_T^{\bmu}(F^2)=1$, and therefore
	$\E^\bmu\big(\clV_T^\bmu(F)\big)=1$.  On the other hand, the energy
	$
	\E^\bmu\big(\clE_T^\bmu(F)\big) = 0
	$.  
	Therefore, the conditional PI does not hold for this example.  
\end{example}

The following examples are the cases where the conditional PI holds with certain constants. We use the following formulae for the energy and variance in finite case:
\begin{align*}
	\text{(energy):}\qquad\rho(\Gamma f) &= \sum_{i,j\in \bS} \rho(i)A(i,j)(f(i)-f(j))^2\\
	\text{(variance):}\quad\dvar^\rho(f) &= \half \sum_{i,j\in\bS} \rho(i)\rho(j) (f(i)-f(j))^2
\end{align*}

\begin{example}[uniform PI for 2-state case] \label{ex:2-states} 
	Consider the simplest case with $\bS= \{1,2\}$ and
	$
	A = \begin{bmatrix} -a_1 & a_1 \\ a_2 & - a_2 \end{bmatrix}
	$
	is irreducible.  Then $a_1>0$ and $a_2> 0 $. Observe that
	\begin{align*}
		\rho(\Gamma f) &= \big(a_1 \rho(1) + a_2 \rho(2)\big)(f(1)-f(2))^2\\
		\dvar^\rho(f) &= \rho(1)\rho(2)(f(1)-f(2))^2
	\end{align*}
	Let $\rho = (p,1-p)$ then uniform PI holds
	\[
	c = \min_{p\in(0,1)} \frac{a_1 p + a_2 (1-p)}{p(1-p)}
	\]
	It attains its minimum at $p = \frac{-a_2+\sqrt{a_1a_2}}{a_1-a_2}$, and the minimum value is $c = a_1+a_2 + 2\sqrt{a_1a_2}$. Hence, the uniform PI holds for every irreducible 2-state Markov chain.  
	Note that the best constant for standard PI is
	$2(a_1+a_2)$ which is strictly greater than  $c$ unless $a_1 = a_2$.
	%	Note that the standard PI constant is $c_0 = 2(a_1+a_2)$ and $c_0 \ge c$ by the arithmetic-geometric mean inequality.
	%	\begin{align*}
		%	& a_1 \pi_t^\nu(1) (f(1) - f(2))^2 + 
		%	a_2 \pi_t^\nu(2) (f(1) - f(2))^2 \\
		%	& \geq
		%	(a_1+a_2)  
		%	\big(
		%	\pi_t^\nu(1) (f(1) - \pi_t^\nu(f))^2  + \pi_t^\nu(2) (f(2) - \pi_t^\nu(f))^2 
		%	\big)
		%	\end{align*}
	%	Therefore for any $f$,
	%	\[
	%	\clE_t^\nu(f) \geq (a_1+a_2)  \cvar_t(f)
	%	\]
	%	Hence, the conditional PI holds for every irreducible 2-state Markov
	%	chain with a constant $c=(a_1+a_2)$.  
	%	Note that the best constant for standard PI is
	%	$2(a_1+a_2)$. 
	
\end{example}

\medskip

\begin{example}[uniform PI for Doeblin case] \label{ex:exdoeblin} 
	A Markov chain is {\em Doeblin} if there exist a state $j^*\in \bS$
	such that $A(i,j^*)$ is bounded away from 0 for all $i\in \bS \setminus \{j^*\}$. It is related to the strong mixing condition of Markov chain~\cite[Assumption 4.3.24]{Moulines2006inference}. 
	
	In this case,
	%	\begin{align*}
		%	\E^\nu\big(\clE_T^\nu(F)\big) & = 
		%	\E^{\nu} \Big( \sum_{j \in \mathbb{S}} A(X_T,j) (F(X_T) - F(j))^2 \Big)\\
		%	&\geq c\,
		%	\E^{\nu} \big( (F(X_T) - F(j^*))^2 \big) 
		%	\\ 
		%	%& \geq \epsilon \; \E^{\nu} ( (F(X_T) - \pi_T^{\nu}(F) )^2 )  
		%	&\geq c \,  \E^\nu\big(\cvar_T(F)\big)
		%	\end{align*}
	%	Therefore, the conditional PI holds for Doeblin chains. A straightforward reformulation is as following: 
	\begin{align*}
		\rho(\Gamma f) & = \sum_{i,j \in \bS} \rho(i)A(i,j) (f(i) - f(j))^2 \\
		&\geq  \sum_{j \in \bS} \min_{i:\;i\neq j} A(i,j) \sum_{i\in\bS} \rho(i)
		(f(i) - f(j))^2 \\
		&\geq \Big( \sum_{j \in \mathbb{S}} \min_{i:\;i\neq j} A(i,j) \Big) \dvar^\rho(f)
		%\\ 
		%& \geq \epsilon \; \E^{\nu} ( (F(X_T) - \pi_T^{\nu}(F) )^2 )  
		%\geq c \,  \dvar_T(F)
	\end{align*}
	Therefore, uniform PI holds with 
	\[
	c = \sum_{j} \min_{i\in\bS:\;\; i\neq j} A(i,j)
	\]
	and $c>0$ for Doeblin case.
\end{example}

\medskip

\begin{example}[uniform PI with square-root constant] \label{ex:minsqrt} 
	Because algebraic mean dominates the geometric mean,
	\begin{align*}
		\rho(\Gamma f)&=\sum_{i,j\in \bS} \rho(i)A(i,j)(f(i)-f(j))^2\\
		&=\sum_{i,j\in \bS} \frac{\rho(i)A(i,j)+\rho(j)A(j,i)}{2}(f(i)-f(j))^2\\
		&\ge\sum_{i,j\in \bS} \sqrt{\rho(i)\rho(j)}\sqrt{A(i,j)A(j,i)}(f(i)-f(j))^2\\
		&\ge \big(\min_{i\neq j} \sqrt{A(i,j)\,A(j,i)}\big) \sum_{i,j\in\bS} \rho(i)\rho(j) (f(i)-f(j))^2\\
		&= c\;\dvar^\rho(f) 
	\end{align*}
	where we used the fact that $\sqrt{x} \geq x$ for $0\leq x\leq 1$.
	Therefore, conditional PI holds with 
	\[
	c = \min_{i\neq j} 2\sqrt{A(i,j)\,A(j,i)}
	\]
	provided this is positive. This rate of convergence appears in literature:~\cite[Theorem 6]{atar1997lyapunov},~\cite[Theorem 4.3]{baxendale2004asymptotic} and~\cite[Corollary 2.3.2]{van2006filtering}.  	
\end{example}

\medskip

The following example illustrates the bound in Prop.~\ref{prop:bvi-pathwise}.

\medskip

\begin{example}[Asymptotic cPI] \label{ex:min_row} 
	Set $\nu = \bmu$. It is a straightforward calculation to verify
	\[
	\pi_T^\bmu\big(\Gamma F\big) \geq \Big(\sum_{i\in\bS} \pi_T^{\bmu}(i) \min_{j\in\bS:\;\; i\neq j}
	A(i,j) \Big) \; \clV_T^\bmu(F),\quad \forall\;T\geq 0
	\]
	Set $\beta_t = \sum_{i} \pi_t^{\bmu}(i) \min_{j:i\neq
		j}A(i,j) $.  Using~\cite[Eq.~(5.15)]{baxendale2004asymptotic}, it is
	known that
	\[
	\lim_{T\to \infty} \frac{1}{T}\int_0^T \beta_t \ud t =  \sum_{i\in\bS}
	\bmu(i) \min_{j\in\bS:\;i\neq j}A(i,j) \quad \text{a.s.}
	\]
	The asymptotic convergence rate also appears
	in~\cite[Theorem 4.2]{baxendale2004asymptotic}.   
\end{example}

\section{Proofs of the statements}

\subsection{Proof of Proposition~\ref{prop:chisq-stability-implication}} \label{ss:pf-prop71}

KL divergence and total variation result follows the Lemma~\ref{lm:f-divergence-inequality}.

%	Suppose the filter is stable in $\chisq$ sense. Due to the second 
%	inequality of the Lemma~\ref{lm:f-divergence-inequality}, it follows that
%	\[
%	\E^\mu\big(\kl(\pi_T^\mu\mid\pi_T^\nu)\big)\; 
%	\longrightarrow \; 0\quad 
%	\text{as }T \to \infty
%	\]
%	In~\cite{clark1999relative}, it is proved that 
%	$\kl(\pi_T^\mu\mid\pi_T^\nu)$ is a non-negative $\sP^\mu$-super-martingale 
%	(see also Prop.~\ref{prop:kl-direct-computation}). Therefore, 
%	$\kl(\pi_T^\mu\mid\pi_T^\nu)$ converges to 0 $\sP^\mu$-almost surely. The 
%	claim follows from the Pinsker's inequality.

\medskip

For $L^2$ stability, observe that for any $f \in C_b(\bS)$,
\[
\pi_T^\mu(f) - \pi_T^\nu(f) = \pi_T^\nu(f\gamma_T) - 
\pi_T^\nu(f)\pi_T^\nu(\gamma_T)
\]
Therefore by Cauchy-Schwarz inequality,
\[
|\pi_T^\mu(f)-\pi_T^\nu(f)|^2 \le \clV_T^\nu(\gamma_T)\clV_T^\nu(f) \le 
\frac{\operatorname{osc}(f)}{4} \clV_T^\nu(\gamma_T)
\]
where $\operatorname{osc}(f) = \sup f - \inf f$ denotes the oscillation of 
$f$. Taking $\E^\mu(\cdot)$ on both sides yields the conclusion. \qed

%\subsection{Proof of Proposition~\ref{prop:kl-is-supermg}}\label{ssec:pf-super-mg}
%
%\red{TODO: direct computation?}
%
%\qed

\subsection{Justification of the computations in Section~\ref{sec:duality-and-filter-stability}}\label{ssec:pf-gamma-martingale}

\subsubsection{$\pi_t^\mu(Y_t)$ is a $\sP^\mu$-martingale}
Apply It\^o-Wentzell theorem for measures~\cite[Theorem 1.1]{krylov2011ito} on $\pi_t^\mu(Y_t)$ where $\pi^\mu$ is the solution of the nonlinear filter~\eqref{eq:nonlinear-filter} and $Y$ a solution to~\eqref{eq:backward-filter-stability}:
\begin{align*}
	\ud \pi_t^\mu(Y_t) 
	% &= \frac{\ud\sigma_t^\mu(Y_t)}{\sigma_t^\mu(1)} - \frac{\sigma_t^\mu(Y_t)}{(\sigma_t^\mu(1))^2} \ud \sigma_t^\mu(1) \\
	% &\quad+ \frac{\sigma_t^\mu(Y_t)}{(\sigma_t^\mu(1))^3} \ud \sigma_t^\mu(1)\ud \sigma_t^\mu(1) - \frac{\ud \sigma_t^\mu(Y_t)\ud \sigma_t^\mu(1)}{(\sigma_t^\mu(1))^2}\\
	&= \pi_t^\mu\big(h Y_t + V_t\big)\ud Z_t - \pi_t^\mu(Y_t)\pi_t^\mu(h) \ud Z_t \\
	&\quad + \pi_t^\mu(Y_t)\pi_t^\mu(h) \pi_t^\mu(h)\ud t -
	\pi_t^\mu\big(h Y_t + V_t\big)\pi_t^\mu(h)\ud t\\
	&= \big(\pi_t^\mu(h Y_t)-\pi_t^\mu(Y_t)\pi_t^\mu(h) + \pi_t^\mu(V_t)\big)(\ud Z_t - \pi_t^\mu(h)\ud t)
\end{align*}
The claim follows because $\ud Z_t - \pi_t^\mu(h)\ud t = \ud I_t^\mu$ is the innovation increment and $I^\mu$ is a $\sP^\mu$-martingale.
%    Recall that $\ud D_t = D_th(X_t)\ud Z_t$, we have
%    \[
%    \ud \big(D_t\pi_t^\nu(Y_t)\big) = D_t\big(C_t+h(X_t)\pi_t^\nu(Y_t)\big)\ud Z_t + D_tC_t\big(h(X_t)-\pi_t^\nu(h)\big)\ud t
%    \]
%    The last term vanishes upon taking conditional expectation, and therefore the claim follows.
\qed

\subsubsection{Justification of Eq.~\eqref{eq:forward-equation-chisq}}

Here we consider the Euclidean case where $\pi_t^\mu$ and $\pi_t^\nu$ have probability densities. In this case,
\[
\chisq\big(\pi_t^\mu\mid\pi_t^\nu\big) = \clV_t^\nu(\gamma_t) = \int \Big(\frac{\pi_t^\mu(x)}{\pi_t^\nu(x)}\Big)^2 \pi_t^\nu(x)\ud x -1 = \int \frac{(\pi_t^\mu(x))^2}{\pi_t^\nu(x)}\ud x-1
\]
%	We use It\^o rule:
%	\[
%	\ud \frac{a^2}{b} = \frac{2a}{b}\ud a - \frac{a^2}{b^2}\ud b + \frac{1}{b}\ud a \ud a -\frac{2a}{b^2}\ud a \ud b +\frac{a^2}{b^3}\ud b\ud b
%	\]
The Kushner's equation is %for Euclidean case is given by
\[
\ud \pi_t = \clA^\dagger \pi_t \ud t + (h-\pi_t(h))\pi_t \ud I_t
\]
Note that $\ud I_t^\nu = \ud I_t^\mu + \big(\pi_t^\mu(h)-\pi_t^\nu(h)\big)\ud t$.
By It\^o's formula,
\begin{align*}
	\ud \Big(\frac{(\pi_t^\mu)^2}{\pi_t^\nu}\Big) &= \frac{2\pi_t^\mu}{\pi_t^\nu} \big(\clA^\dagger \pi_t^\mu \ud t + (h-\pi_t^\mu(h))\pi_t^\mu \ud I_t^\mu\big)
	- \frac{(\pi_t^\mu)^2}{(\pi_t^\nu)^2}\big(\clA^\dagger \pi_t^\nu \ud t + (h-\pi_t^\nu(h))\pi_t^\nu \ud I_t^\nu\big)\\
	&\quad + \frac{1}{\pi_t^\nu} (h-\pi_t^\mu(h))^2 (\pi_t^\mu)^2 \ud t
	- \frac{2\pi_t^\mu}{(\pi_t^\nu)^2}(h-\pi_t^\mu(h)) \pi_t^\mu(h-\pi_t^\nu(h))\pi_t^\nu \ud t\\
	&\quad + \frac{(\pi_t^\mu)^2}{(\pi_t^\nu)^3} (h-\pi_t^\nu(h))^2 (\pi_t^\nu)^2 \ud t\\
	&=\frac{2\pi_t^\mu}{\pi_t^\nu}\clA^\dagger \pi_t^\mu \ud t -  \frac{(\pi_t^\mu)^2}{(\pi_t^\nu)^2}\clA^\dagger \pi_t^\nu \ud t
	+ \frac{(\pi_t^\mu)^2}{\pi_t^\nu}\Big[2(h-\pi_t^\mu(h))\ud I_t^\mu - (h-\pi_t^\nu(h))\ud I_t^\mu\Big]\\
	&\quad + \frac{(\pi_t^\mu)^2}{\pi_t^\nu}\Big[-(h-\pi_t^\nu(h))(\pi_t^\mu(h)-\pi_t^\nu(h))\\
	&\qquad +(h-\pi_t^\mu(h))^2 - 2(h-\pi_t^\mu(h))(h-\pi_t^\nu(h))+(h-\pi_t^\nu(h))^2\Big]\ud t\\
	&= \Big[\frac{2\pi_t^\mu}{\pi_t^\nu}\clA^\dagger \pi_t^\mu -  \frac{(\pi_t^\mu)^2}{(\pi_t^\nu)^2}\clA^\dagger \pi_t^\nu\Big] \ud t
	+ \frac{(\pi_t^\mu)^2}{\pi_t^\nu}\big(h -2\pi_t^\mu(h) +\pi_t^\nu(h)\big)\ud I_t^\mu\\
	&\quad + \frac{(\pi_t^\mu)^2}{\pi_t^\nu}(\pi_t^\nu(h)-\pi_t^\mu(h))(h-\pi_t^\mu(h))\ud t
\end{align*}
Note that
\[
\int \frac{2\pi_t^\mu}{\pi_t^\nu}\clA^\dagger \pi_t^\mu -  \frac{(\pi_t^\mu)^2}{(\pi_t^\nu)^2}\clA^\dagger \pi_t^\nu \ud x = -\int \big(\clA \gamma_t^2 - 2\gamma_t\clA \gamma_t\big)\pi_t^\nu \ud x = - \pi_t^\nu\big(\Gamma \gamma_t\big)
\]
and the last term
\begin{align*}
	\int \frac{(\pi_t^\mu)^2}{\pi_t^\nu}(\pi_t^\nu(h)-\pi_t^\mu(h))(h-\pi_t^\mu(h))\ud t \ud x &= \pi_t^\nu\big(\gamma_t^2(h-\pi_t^\mu(h))\big)(\pi_t^\nu(h)-\pi_t^\mu(h)) \\
	&=\pi_t^\mu\big(\gamma_t (h-\pi_t^\mu(h))\big)(\pi_t^\nu(h)-\pi_t^\nu(\gamma_t h))\\
	&=-\cov_t^\mu(\gamma_t,h)\,\cov_t^\nu(\gamma_t,h)
\end{align*}
Therefore
\[
\ud \clV_t^\nu(\gamma_t) = - \pi_t^\nu\big(\Gamma \gamma_t\big)\ud t -\cov_t^\mu(\gamma_t,h)\,\cov_t^\nu(\gamma_t,h) \ud t + (\cdots)\ud I_t^\mu
\]
\qed

\subsubsection{Deterministic dual formulation and stochastic stability}

Consider the backward Kolmogorov equation:
\[
-\frac{\partial y_t}{\partial t} = \clA y_t,\quad y_T \text{ is given}
\]
Then a standard application of It\^o rule gives
\[
y_T(X_T) = y_0(X_0) + \int_0^T \ud N_t(y_t) 
\]
where $N_t(g)$ is the martingale associated with the infinitesimal generator $\clA$ defined as~\eqref{eq:martingale-generator}. Therefore the dual representation
\[
\E^\bmu\big(y_T(X_T)\big) = \bmu(y_0)
\]
and the equation for the variance is
\[
\dvar^\bmu(y_0) + \int_0^t \den^\bmu(y_s)\ud s = \dvar^\bmu(y_t),\quad 0\le t \le T
\]
Using the standard PI, it follows from the Gronwall inequality:
\[
\dvar^\bmu(y_0) \le e^{-cT}\dvar^\bmu(y_T)
\]
This is a counterpart of~\eqref{eq:variance-inequality-2}.

\subsection{Proof of Theorem~\ref{thm:filter-stability-chisq-RT}}\label{ss:pf-thm71}

Combining~\eqref{eq:CS-bound} from 
Section~\ref{sec:duality-and-filter-stability} 
with~\eqref{eq:variance-inequality-2} yields
\[
\big(\E^\mu\big(\clV_T^\nu(\gamma_T)\big)\big)^2 \le 
\clV_0^\nu(\gamma_0)\clV_0^\nu(Y_0) \le e^{-cT} 
\clV_0^\nu(\gamma_0)\E^\nu\big(\clV_T^\nu(Y_T)\big)
\]
Therefore
\[
R_T \E^\mu\big(\clV_T^\nu(\gamma_T)\big) \le e^{-cT} \clV_0^\nu(\gamma_0)
\]
where
\[
R_T = \frac{\E^\mu\big(\clV^\nu_T(\gamma_T)\big)}{\E^\nu\big(\clV^\nu_T(\gamma_T)\big)}
\]
Since $R_T$ is the ratio of expectations of the same random variable $\clV_T^\nu(\gamma_T)$ under different measures $\sP^\mu$ and $\sP^\nu$, 
\[
\frac{\E^\mu\big(\clV^\nu_T(\gamma_T)\big)}{\E^\nu\big(\clV^\nu_T(\gamma_T)\big)} \ge \mathop{\operatorname{essinf}}\frac{\ud \sP^\mu}{\ud \sP^\nu} = \mathop{\operatorname{essinf}}\frac{\ud \mu}{\ud \nu} = \underline{a}
\]
\qed

\subsection{Proof of Proposition~\ref{prop:bvi-pathwise}}\label{ss:pf-prop73}
%		
%	Recall that
%	\[
%	U_t^\nu = -\pi_t^\nu(hY_t) + \pi_t^\nu(h)\pi_t^\nu(Y_t) - \pi_t^\nu(V_t)
%	\]
%	Therefore the control cost in the dual optimal control problem becomes:
%	\[
%	\pi_t^\nu\big(|U_t^\nu + V_t(\cdot)|^2\big) = |\pi_t^\nu(hY_t) - 
%	\pi_t^\nu(h)\pi_t^\nu(Y_t)|^2 + \clV_t^\nu(V_t) - 2\big(\pi_t^\nu(hY_t) 
%	- 
%	\pi_t^\nu(h)\pi_t^\nu(Y_t)\big)
%	\underbrace{\pi_t^\nu\big(V_t(\cdot)-\pi_t^\nu(V_t)\big)}_{=0}
%	\]
%	and 
%	\[
%	\ell(Y_t,V_t,U_t^\nu;\pi_t^\nu) = \pi_t^\nu\big(\Gamma Y_t(\cdot)\big) +  |\pi_t^\nu(hY_t) - \pi_t^\nu(h)|^2 + \clV_t^\nu(V_t) \ge \beta_t \clV_t^\nu(Y_t)
%	\]

Recall that the process
%
%In this section, we introduce a notion of conditional Poincar\'e inequality by 
%considering the martingale. In Theorem~\ref{thm:martingale}, we obtained a 
%($\sP^\nu$-)martingale from the dual optimal control problem:
\[
M_t = \clV_t^\nu(Y_t) - \int_0^t \pi_s^\nu(\Gamma Y_s) + 
\pi_s^\nu\big(|U_s^\opt + V_s(\cdot)|^2\big)\ud s,\quad 0\le t \le T
\]
is a $\sP^\nu$-martingale.
%Since $\ell(Y_t,V_t,U_t^\nu;\pi_t^\nu) \ge \pi_t^\nu(\Gamma Y_t) \ge \beta_t \clV_t^\nu(Y_t)$, we have
Since $\beta_t$ satisfies~\eqref{eq:beta-process},
\[
\ud\big(\clV_t^\nu(Y_t)\big) \ge \beta_t \clV_t^\nu(Y_t)\ud t + \ud M_t
\]
Consider an integrating factor $\exp\big(-\int_0^t \beta_s \ud s\big)$ to obtain
\[
\exp\Big(-\int_0^T \beta_t \ud t\Big) \clV_T^\nu(Y_T) - \clV_0^\nu(Y_0) \ge M_T-M_0
\]
and therefore the claim follows by taking expectation.
%	\begin{equation*}
	%			\clV_0^\nu(Y_0) \le \E^\nu \Big(\exp\Big(-\int_0^T\beta_t \ud 
	%	t\Big)\clV_T^\nu(Y_T)\Big)
	%		\end{equation*}
%The claim follows by the H\"older's inequality.
\qed

\subsection{Proof of Proposition~\ref{prop:cpi-equivalent-def}}\label{ss:pf-prop74}

\noindent$(\Longrightarrow)$ $F(\omega) \in \clD$, so the inequality holds point-wise.

\noindent$(\Longleftarrow)$ Assume conditional PI and consider $F = \ones_A f$ for $A \in\clZ_T$. Observe that $\pi_T^\nu(\Gamma F) = \ones_A \pi_T^\nu(\Gamma f)$ and $\clV_T^\nu(F)= \ones_A\clV_T^\nu(f)$, and therefore
\[
\E^{\nu}\big(\ones_A\pi_T^\nu(\Gamma f)\big) \ge c\,\E^{\nu}\big(\ones_A\clV_T^\nu(f)\big)
\]
Set $A = [\pi_T^{\nu} (\Gamma f) < c\, \clV_T^\nu(f)]$, then $\sP(A) = 0$ or the assumption is violated. \qed

\newpage

%%%%%%%%%%%%%%%%%%%%%%%%%%%%%%%%%%%%%%%%%%%%%%%%%%%%%%%%%%%%%%%%%%%%%%%%%%%%%%%%%%%%%%%%%%%%

\chapter{Stabilizability of the dual control system}\label{ch:filter-stability-2}

%\chapter{Stabilizability of the dual control system}\label{ch:filter-stability-2}

The dual control problem plays a key role to prove stability of the Kalman-Bucy filter (Section~\ref{ssec:Kalman-filter-stability}). The stabilizability of the dual system is a necessary and sufficient condition for the stability of the Kalman filter (Remark~\ref{rm:stabilizability-and-kalman}).
%However, it has been commonly accepted that the approach is only applicable in linear Gaussian settings and does not generalize to nonlinear filter stability problem.
In this chapter, the relationship between the nonlinear filter stability and the stabilizability of the dual BSDE control system is investigated.

%
%Based on the dual optimal control formulation, we prove that a necessary condition for the $L^2$ filter stability is given by
%\[
%\mu(Y_0) - \nu(Y_0) \; \longrightarrow \; 0
%\]
%for arbitrary terminal condition $Y_T = f$. The stabilizability of the dual system is a necessary condition for the 

Out study is motivated by~\cite{baxendale2004asymptotic} who
formulated certain ``identifying conditions'' that
are shown to be sufficient for the stability of Wonham filter.  These
conditions are formulated in terms of the model parameters	(transition matrix and the observation function). They showed that these conditions are sufficient to asymptotically detect the correct ergodic class.

The main result of this chapter is to show that the stabilizability of the dual system is both necessary and sufficient to asymptotically detec the correct ergodic class where the state lies in. 

The outline of the chapter is as follows: In Section~\ref{sec:FS}, stabilizability of the dual BSDE is shown to be a necessary condition for filter stability in $L^2$. In the following Section~\ref{sec:82}, the main result is presented.

\section{Filter stability in $L^2$}\label{sec:FS}

We begin by relating the dual optimal contgrol formation to filter stability in $L^2$. (The definition is given in Def.~\ref{def:FS}). Because $\pi_T^\mu(f)$ is the orthogonal projection of $f(X_T)$ onto $L^2_{\clZ_T}(\Omega;\Re)$, by the Pythagoras theorem,
\begin{equation}\label{eq:pythagoras}
	\E^\mu\big(|\pi_T^\mu(f)-\pi_T^\nu(f)|^2\big) = \E^\mu\big(|f(X_T)-\pi_T^\nu(f)|^2\big) - \E^\mu\big(|f(X_T)-\pi_T^\mu(f)|^2\big)
\end{equation}
The second term in the right-hand side is the expectation of the conditional variance, and therefore it is the optimal value of $\sJ_T^\mu(\cdot)$ problem, denoted by $\sJ_T^\mu$. For the first term, recall that $\pi_T^\nu(f)$ can be obtained using $U^\nu$, which is the optimal control for the $\sJ_T^\nu(\cdot)$ problem (see Section~\ref{sec:duality-and-filter-stability}).
\[
\pi_T^\nu(f) = \nu(Y_0) - \int_0^T \big(U_t^\nu\big)^\tp \ud Z_t,\quad \sP^\nu\text{-a.s.}
\]
This is similar to the estimator~\eqref{eq:estimator} except that the constant term is now $\nu(Y_0)$ instead of $\mu(Y_0)$.
By the general form of the duality principle introduced in Remark~\ref{rm:general-form-of-estimator},
\[
\E^\mu\big(|f(X_T)-\pi_T^\nu(f)|^2\big) = \sJ_T^\mu(U^\nu) + |\mu(Y_0) - \nu(Y_0)|^2
\]
%This is obtained by the similar procedure in Section~\ref{pf:thm4.1},
Substituting this to~\eqref{eq:pythagoras}, % the following theorem is obtained:
\[
\E^\mu\big(|\pi_T^\mu(f)-\pi_T^\nu(f)|^2\big) = \big(\sJ_T^\mu(U^\nu) -\sJ_T^\mu\big)+ |\mu(Y_0) - \nu(Y_0)|^2
\]
Because both the terms on the right-hand side are non-negative, the following proposition is obtained.

\medskip

\begin{proposition}\label{prop:asymptotic-optimality}
	%	Consider the filter stability problem with $\mu\ll\nu$. Let  $U^\nu$ be the optimal solution of the $\sJ_T^\nu$ problem and $(Y,V)$ is the corresponding optimal trajectory. $\sJ_T^\mu(U^\mu)$ is the optimal cost of the $\sJ_T^\mu$ problem. 
	The filter is stable in the sense of $L^2$ if and only if 
	\begin{subequations}
		\begin{align}
			\mu(Y_0) - \nu(Y_0) \;\;
			&\stackrel{(T\to\infty)}{\longrightarrow} \;\; 0 \label{eq:a1_st}\\
			\sJ_T^\mu(U^\nu)-\sJ_T^\mu
			\;\;&\stackrel{(T\to\infty)}{\longrightarrow} \;\; 0 \label{eq:a2_st}
		\end{align}
	\end{subequations}
	for all $f \in C_b(\bS)$ and $\mu,\nu\in\clP(\bS)$ such that $\mu\ll\nu$.
\end{proposition}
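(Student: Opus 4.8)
The plan is to exploit the decomposition established immediately before the statement, namely
\[
\E^\mu\big(|\pi_T^\mu(f)-\pi_T^\nu(f)|^2\big) = \big(\sJ_T^\mu(U^\nu)-\sJ_T^\mu\big) + |\mu(Y_0)-\nu(Y_0)|^2,
\]
and then to argue that a sum of two non-negative quantities tends to zero if and only if each summand does. Concretely, I would first fix a triple $(f,\mu,\nu)$ with $f\in C_b(\bS)$ and $\mu\ll\nu$, and recall that, by Def.~\ref{def:FS}, $L^2$ filter stability means precisely that the left-hand side above tends to $0$ as $T\to\infty$ for every such triple. Thus the entire claim reduces to a per-triple equivalence about the two summands, after which the matching universal quantifiers in Def.~\ref{def:FS} and in the statement line up automatically.

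The second step is to verify the non-negativity of both summands. The term $|\mu(Y_0)-\nu(Y_0)|^2$ is trivially non-negative. For $\sJ_T^\mu(U^\nu)-\sJ_T^\mu$, I would invoke the optimality characterization: $\sJ_T^\mu$ denotes the minimum of $\sJ_T^\mu(\cdot)$ over the admissible class $\clU$, and $U^\nu$ — the optimal control for the $\sJ_T^\nu(\cdot)$ problem — is itself $\clZ$-adapted and square-integrable, hence admissible for the $\mu$-problem. Consequently $\sJ_T^\mu(U^\nu)\ge \sJ_T^\mu$, so this summand is non-negative as well. This is the one point deserving genuine care: one must confirm that the admissible control set is common to the $\mu$- and $\nu$-problems. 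This holds because admissibility depends only on the filtration $\clZ$ and square-integrability, and the restriction of the reference measure to $\clZ_T$ (under which $Z$ is a Brownian motion) is the same regardless of the prior, so the $L^2_\clZ$ constraint is identical in both problems.

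With both summands non-negative, the final step is purely elementary: since the left-hand side equals their sum, it converges to $0$ if and only if each summand converges to $0$. Here $|\mu(Y_0)-\nu(Y_0)|^2\to 0$ is equivalent to $\mu(Y_0)-\nu(Y_0)\to 0$, which is condition~\eqref{eq:a1_st}, while $\sJ_T^\mu(U^\nu)-\sJ_T^\mu\to 0$ is condition~\eqref{eq:a2_st}. Carrying out this equivalence for every admissible triple $(f,\mu,\nu)$ completes the proof. I do not anticipate a real obstacle, because the substantive content — the Pythagorean decomposition~\eqref{eq:pythagoras} together with the application of the general form of the duality principle from Remark~\ref{rm:general-form-of-estimator} — has already been carried out before the statement; the remaining work is the observation about non-negative summands and the bookkeeping of quantifiers. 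The only thing I would be careful to state explicitly is the admissibility of $U^\nu$ in the $\mu$-problem, since the strict positivity $\sJ_T^\mu(U^\nu)\ge\sJ_T^\mu$ is what makes the two conditions \emph{separately} necessary rather than merely jointly.
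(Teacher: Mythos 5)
Your proposal is correct and follows essentially the same route as the paper: the paper derives the identity $\E^\mu\big(|\pi_T^\mu(f)-\pi_T^\nu(f)|^2\big) = \big(\sJ_T^\mu(U^\nu)-\sJ_T^\mu\big)+|\mu(Y_0)-\nu(Y_0)|^2$ via the Pythagoras decomposition and the general duality principle of Remark~\ref{rm:general-form-of-estimator}, and then concludes exactly as you do from the non-negativity of both summands. Your explicit check that $U^\nu$ is admissible for the $\sJ_T^\mu(\cdot)$ problem (so that $\sJ_T^\mu(U^\nu)\ge\sJ_T^\mu$) is a worthwhile detail that the paper leaves implicit, and it is justified since $\tsP^\mu|_{\clZ_T}=\tsP^\nu|_{\clZ_T}$.
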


\medskip

\begin{remark}\label{rm:L2-stability-remark}
	These conditions are the nonlinear counterparts of the Assumption~\ref{as:R} for the stability of the Kalman filter in Section~\ref{ssec:Kalman-filter-stability}.
	\begin{enumerate}
		\item Equation~\eqref{eq:a1_st} means that the closed-loop
		system is asymptotically stable.  That is, $Y_0\to c\ones$ as $T\to\infty$.  This is also the reason why the stabilizability
		condition is important to the problem of filter stability.  The
		condition plays the same role in linear and nonlinear settings. 
		
		\item Equation~\eqref{eq:a2_st} means that the $U^\nu$ is asymptotically optimal for $\sJ_T^\mu(\cdot)$ problem.
		Since the optimal value $\sJ_T^\mu$ has the interpretation of the conditional variance, its convergence is analogous to the convergence of the solution of the DRE in the Kalman filter.  In linear settings, the latter is deduced by establishing
		an asymptotic optimality of the stationary control law (see Section~\ref{ssec:Kalman-filter-stability}).
	\end{enumerate}
	
\end{remark}

In the enumerated list above, the first point is related to the stabilizability of the dual system (Def.~\ref{def:stabilizability}). In particular, we have the following theorem whose proof appears in Section~\ref{ss:pf-thm81}.
%The first observation of Remark~\ref{rm:L2-stability-remark} motivates the following Theorem on the necessary condition of the filter stability.

\medskip

\begin{theorem}\label{thm:necessary-condition-filter-stability}
	If the filter is stable in $L^2$ then the dual control system~\eqref{eq:dual-bsde} is stabilizable (see Def.~\ref{def:stabilizability}).
\end{theorem}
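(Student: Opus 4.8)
The plan is to verify the stabilizability condition of Definition~\ref{def:stabilizability} directly, namely $\Rsp(\clL)^\bot \subset S_s$. By Theorem~\ref{thm:dual-adjoint} together with Theorem~\ref{thm:observability-definition} (which identifies $\clL^\dagger$ as the adjoint of $\clL$), we have $\Rsp(\clL)^\bot = \Nsp(\clL^\dagger)$, and by~\eqref{eq:observability-operator} a measure $\tilde\mu \in \clM(\bS)$ lies in $\Nsp(\clL^\dagger)$ precisely when $\tilde\mu(\ones) = 0$ and $\sigma_t^{\tilde\mu}(h) = 0$ for a.e.\ $t\in[0,T]$, where $\sigma^{\tilde\mu} = \Psi\tilde\mu$ is the (signed) solution of the Zakai equation started from $\tilde\mu$. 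Fixing such a $\tilde\mu$ (the case $\tilde\mu = 0$ being trivial), the goal reduces to showing $\tilde\mu \in S_s$, i.e.\ $\tilde\mu_T(f)\to 0$ as $T\to\infty$ for every $f\in C_b(\bS)$, where $\tilde\mu_t$ solves the forward Kolmogorov equation~\eqref{eq:FKE}.

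First I would build a pair of priors whose difference is $\tilde\mu$, so the $L^2$ filter-stability hypothesis can be applied. Pick $\mu\in\clP(\bS)$ with full support and $\epsilon\neq 0$ small enough that $\nu := \mu + \epsilon\tilde\mu\in\clP(\bS)$ is equivalent to $\mu$; this is immediate in the finite state-space case (take $\mu$ uniform). Since $\tilde\mu(\ones)=0$ we have $\nu(\ones)=1$, and $\mu\ll\nu$, so $L^2$ filter stability applies to the pair $(\mu,\nu)$. Exploiting the linearity of the Zakai equation, $\sigma_t^\nu = \sigma_t^\mu + \epsilon\sigma_t^{\tilde\mu}$. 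Feeding $\tilde\mu(\ones)=0$ and $\sigma_t^{\tilde\mu}(h)\equiv 0$ into the scalar relation $\sigma_t(\ones) = \tilde\mu(\ones) + \int_0^t \sigma_s(h)\ud Z_s$ gives $\sigma_t^{\tilde\mu}(\ones)\equiv 0$, hence $\sigma_t^\nu(\ones)=\sigma_t^\mu(\ones)$. Normalizing then yields the clean identity
\[
\pi_T^\nu(f) - \pi_T^\mu(f) = \epsilon\,\frac{\sigma_T^{\tilde\mu}(f)}{\sigma_T^\mu(\ones)},
\]
so that $L^2$-stability gives $\E^\mu\big(\big(\sigma_T^{\tilde\mu}(f)/\sigma_T^\mu(\ones)\big)^2\big) = \epsilon^{-2}\,\E^\mu\big(|\pi_T^\mu(f)-\pi_T^\nu(f)|^2\big)\to 0$.

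The final step is a change of measure connecting this quantity back to $\tilde\mu_T(f)$. Because $\sigma^{\tilde\mu}(f)$ is a functional of the $Z$-path, one has $\tE\big(\sigma_T^{\tilde\mu}(f)\big) = \tilde\mu_T(f)$: the unnormalized filter averages (under the law of $Z$) to the forward Kolmogorov evolution, which for a probability prior follows from $\tE(D_T\mid X)=1$ and the Bayes change $\ud\sP^\mu/\ud\tsP^\mu = D_T$, and extends to the signed $\tilde\mu$ by linearity. Using $\ud\sP^\mu|_{\clZ_T}/\ud\tsP^\mu|_{\clZ_T} = \sigma_T^\mu(\ones)$ I can rewrite
\[
\tilde\mu_T(f) = \tE\big(\sigma_T^{\tilde\mu}(f)\big) = \E^\mu\Big(\frac{\sigma_T^{\tilde\mu}(f)}{\sigma_T^\mu(\ones)}\Big),
\]
and then Jensen's inequality yields $|\tilde\mu_T(f)|^2 \le \E^\mu\big(\big(\sigma_T^{\tilde\mu}(f)/\sigma_T^\mu(\ones)\big)^2\big)\to 0$. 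Hence $\tilde\mu\in S_s$, establishing $\Nsp(\clL^\dagger)=\Rsp(\clL)^\bot\subset S_s$, which is stabilizability.

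I expect the main obstacles to be the two soft points rather than the algebra: (i) choosing the reference prior $\mu$ so that $\nu=\mu+\epsilon\tilde\mu$ is a genuine probability measure with $\mu\ll\nu$ — painless for finite $\bS$ but requiring control on the density $\ud\tilde\mu/\ud\mu$ in general state spaces; and (ii) rigorously justifying the averaging identity $\tE(\sigma_T^{\tilde\mu}(f)) = \tilde\mu_T(f)$ together with the measurability and integrability needed to treat $\sigma^{\tilde\mu}(f)$ as a genuine $L^2(\tsP)$ functional of $Z$, so that the change of measure and the Jensen step are valid.
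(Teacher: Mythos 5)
Your proof is correct, and it takes a genuinely different route from the paper's. The paper argues by contraposition through the dual optimal control problem: Proposition~\ref{prop:asymptotic-optimality} shows that $L^2$-stability forces $\mu(Y_0)-\nu(Y_0)\to 0$ along the optimal trajectory, and the decomposition $Y_0=\clS_T f+\clL(U,0)$ together with $\mu-\nu\in\Rsp(\clL)^\bot$ gives $\mu(Y_0)-\nu(Y_0)=\mu(\clS_T f)-\nu(\clS_T f)$, so a direction $\tilde\mu\in\Rsp(\clL)^\bot\setminus S_s$ keeps this gap bounded away from zero and contradicts stability. You instead work forward and directly: linearity of the Zakai equation plus $\sigma^{\tilde\mu}(h)\equiv 0$ and $\tilde\mu(\ones)=0$ yield $\pi_T^\nu(f)-\pi_T^\mu(f)=\epsilon\,\sigma_T^{\tilde\mu}(f)/\sigma_T^\mu(\ones)$, and the change of measure $\ud\sP^\mu|_{\clZ_T}/\ud\tsP|_{\clZ_T}=\sigma_T^\mu(\ones)$ combined with Jensen turns $L^2$-decay of this difference into decay of $\tilde\mu_T(f)=\tE\big(\sigma_T^{\tilde\mu}(f)\big)$. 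Your route bypasses the BSDE solution and Proposition~\ref{prop:asymptotic-optimality} entirely and yields the quantitative bound $|\tilde\mu_T(f)|^2\le\epsilon^{-2}\,\E^\mu\big(|\pi_T^\mu(f)-\pi_T^\nu(f)|^2\big)$, while the paper's route reuses machinery it needs anyway for the sufficiency analysis in the rest of the chapter. The two soft points you flag are shared with (and glossed over in) the paper's own proof and are not genuine gaps: the pair $(\mu,\nu)$ is always constructible by taking $\mu=|\tilde\mu|/\|\tilde\mu\|_\tv$ and $\epsilon$ small, which keeps $\ud\mu/\ud\nu$ bounded so that $\mu\ll\nu$; and running your identity for every horizon $T$ uses that $\Nsp(\clL^\dagger)=\Rsp(\clL)^\bot$ does not depend on $T$, which is exactly the content of Proposition~\ref{thm:controllable-subspace}.
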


\medskip

%The idea of the proof of the Theorem~\ref{thm:necessary-condition-filter-stability} is that if the dual control system is not stabilizable, then one can find $f \in C_b(\bS)$ such that no control exists with a property that $Y_0 \to \text{(const.)}\ones$ as $T\to \infty$. Therefore~\eqref{eq:a1_st} is not satisfied.
%It is because of the following lemma:

%\medskip
%
%\begin{remark}	
	%	In finite state space case, it is claimed in~\cite[Theorem V.2]{van2010nonlinear} that the filter is stable in almost sure sense if and only if the model $(\clA,h)$ is detectable.
	%	By Corollary~\ref{cor:detect-stab}, it is equivalent to state that the filter is stable almost surely if and only if the dual BSDE is stabilizable.
	%\end{remark}

\section{Stabilizability and asymptotic detection of the ergodic class}\label{sec:82}

\subsection{Finite case with multiple ergodic classes}

In this section, we consider the finite state case as in to~\cite{baxendale2004asymptotic}.	
We partition the state space $\bS$ in $M$ \emph{ergodic classes} $\{\bS_k:k=1,2,\ldots,M\}$ such that: 
\begin{enumerate}
	\item $\bS = 
	\bigcup_{k=1}^M \bS_k$ where $\sP([X_t\in \bS_l] \mid [X_0\in \bS_k]) = 0$ for all $t\ge 0$ 
	and $l \neq k$.
	\item By choosing an appropriate coordinate, the rate matrix
	\[
	A = \begin{pmatrix}
		A_1 & 0 & \cdots & 0\\
		0 & A_2 & \cdots & 0\\
		\vdots & \vdots & \ddots & \vdots\\
		0 & 0 & \cdots & A_M
	\end{pmatrix}
	\]
	where $A_k$ is a rate matrix on $\bS_k$ for $k = 1,2,\ldots,M$.
	\item Each $A_k$ admits a unique invariant measure. Equivalently, 0 is a 
	simple eigenvalue of $A_k$.
\end{enumerate} 
The system is \emph{ergodic} if $M = 1$.  Under this setting, the nonlinear filter is decomposed as described in the following lemma:

\medskip

\begin{lemma}\label{lemma:ergodic-class-decomposition}
	Suppose $\mathbb{S} = \cup_{k=1}^M \mathbb{S}_k$ is an ergodic partition.   
	For each such ergodic class with $\sP^\nu([X_0\in\bS_k]) > 0$, define 
	\[
	\nu_k(x) := \left\{ \begin{array}{cc}
		\dfrac{\nu(i)}{\sP^\nu([X_0\in\bS_k])} &
		\text{if}\;\; x\in \bS_k \\ 0 &
		\text{if}\;\; x\notin \bS_k \end{array} \right.
	\]
	Then
	\[
	\pi_T^\nu(f) = \sum_{k=1}^M \pi_T^\nu(\ones_{\bS_k})\pi_T^{\nu_k}(f) 
	\]
	%If $\sP^\nu([X_0\in\bS_k]) = 0$ then take $\nu_k$ to be any 
	%probability measure with support on $\bS_k$ (e.g., the invariant measure for the restriction of the Markov process on $\bS_k$).  
\end{lemma}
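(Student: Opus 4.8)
The plan is to work with the \emph{un-normalized} filter and to exploit two structural facts: the linearity of the Zakai equation in its initial condition, and the block-diagonal structure of the rate matrix, which forces each un-normalized filter to stay supported on its initial ergodic class. Throughout I fix the horizon and prove the identity for every $t$ (the stated case is $t=T$), working $\sP^\nu$-a.s.; all divisions are legitimate because $\sigma_t^\nu(\ones)>0$ a.s.

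First I would set $w_k := \nu(\ones_{\bS_k}) = \sP^\nu([X_0\in\bS_k])$ and write the prior as the convex combination $\nu = \sum_{k} w_k \nu_k$, the sum taken over the classes with $w_k>0$ (for classes with $w_k=0$ the state never enters $\bS_k$, so $\pi_t^\nu(\ones_{\bS_k})=0$ and that term is absent from the right-hand side; hence summing over all $k=1,\dots,M$ is harmless). Since the Zakai equation~\eqref{eq:Zakai} is linear in its initial measure, the un-normalized filter decomposes as $\sigma_t^\nu(f) = \sum_{k} w_k\,\sigma_t^{\nu_k}(f)$ for every $f$.

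The crux — and the main obstacle — is to show that $\sigma_t^{\nu_k}$ remains supported on $\bS_k$ for all $t$. I would argue this through the solution operator $\Psi_t$ of Example~\ref{ex:finite-Zakai-operator}, which satisfies $\ud \Psi_t = A^\tp \Psi_t \ud t + \sum_{j=1}^m \dv(H^j)\Psi_t\,\ud Z_t^j$ with $\Psi_0=I$ and gives $\sigma_t^{\nu_k}=\Psi_t\nu_k$. With respect to the partition $\{\bS_k\}$, the matrix $A^\tp$ is block diagonal (because $A$ is) and each $\dv(H^j)$ is diagonal, hence block diagonal. The block-diagonal matrices form an algebra that is closed under addition and multiplication and contains $I$; since $\Psi_t$ is the strong limit of iterates built from exactly these operations, $\Psi_t$ is block diagonal a.s. Consequently $\Psi_t\nu_k$ is supported on $\bS_k$ whenever $\nu_k$ is. (A direct alternative is to check from the Zakai SDE that $t\mapsto \sigma_t^{\nu_k}(\ones_{\bS_l})$ solves a linear equation with zero initial data for $l\neq k$ and therefore vanishes, but the operator argument is cleaner.)

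Given support preservation, disjointness of the classes yields $\sigma_t^{\nu_j}(\ones_{\bS_k}) = \delta_{jk}\,\sigma_t^{\nu_k}(\ones)$, so that $\sigma_t^\nu(\ones_{\bS_k}) = w_k\,\sigma_t^{\nu_k}(\ones)$ and therefore $\pi_t^\nu(\ones_{\bS_k}) = w_k\,\sigma_t^{\nu_k}(\ones)/\sigma_t^\nu(\ones)$ by~\eqref{eq:normalize-Zakai}. Substituting this and the Kallianpur–Striebel identity $\pi_t^{\nu_k}(f)=\sigma_t^{\nu_k}(f)/\sigma_t^{\nu_k}(\ones)$ into the right-hand side, the factors $\sigma_t^{\nu_k}(\ones)$ cancel and the sum collapses:
\[
\sum_{k} \pi_t^\nu(\ones_{\bS_k})\,\pi_t^{\nu_k}(f) = \frac{1}{\sigma_t^\nu(\ones)}\sum_{k} w_k\,\sigma_t^{\nu_k}(f) = \frac{\sigma_t^\nu(f)}{\sigma_t^\nu(\ones)} = \pi_t^\nu(f),
\]
which is the claim at $t=T$. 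I would close by remarking that the only nontrivial input is the block-diagonality of $\Psi_t$; the rest is linearity and the Kallianpur–Striebel formula.
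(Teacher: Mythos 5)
Your proof is correct, but it takes a genuinely different route from the one in the thesis. The thesis argues purely probabilistically: it computes the Radon--Nikodym derivative $\frac{\ud \sP^{\nu_k}}{\ud \sP^\nu} = \ones_{[X_0\in\bS_k]}/\sP^\nu([X_0\in\bS_k])$, applies the Bayes formula for a change of prior to get $\E^\nu\big(f(X_T)\ones_{[X_0\in\bS_k]}\mid\clZ_T\big) = \pi_T^\nu(\ones_{\bS_k})\,\pi_T^{\nu_k}(f)$, invokes the fact that $\ones_{[X_0\in\bS_k]} = \ones_{[X_T\in\bS_k]}$ $\sP^\nu$-a.s., and sums over $k$. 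You instead decompose $\nu=\sum_k w_k\nu_k$, use linearity of the Zakai equation, and establish that $\Psi_t$ is block diagonal so that $\sigma_t^{\nu_k}$ stays supported on $\bS_k$; your block-diagonality step is exactly the analytic counterpart of the thesis's identity $\ones_{[X_0\in\bS_k]}=\ones_{[X_T\in\bS_k]}$. Both arguments are sound. The thesis's version is shorter and does not depend on the finite-state matrix representation of the solution operator, so it transfers verbatim to more general HMMs where the Bayes formula of Lemma~\ref{lm:change-of-Pmu-Pnu} holds; yours exploits the linear structure that Chapter~\ref{ch:observability} develops and makes the mechanism (a direct-sum decomposition of the Zakai dynamics) more transparent in the finite-state setting of Chapter~\ref{ch:filter-stability-2}. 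One small point worth making explicit in your write-up: the quantities $\pi_t^{\nu_k}(f)$ are a priori defined only up to $\sP^{\nu_k}|_{\clZ_t}$-null sets, so to assert the final identity $\sP^\nu$-a.s.\ you should note that all the measures $\sP^\mu|_{\clZ_t}$ are mutually equivalent to $\tsP|_{\clZ_t}$ (the Girsanov density is strictly positive), which also guarantees $\sigma_t^{\nu_k}(\ones)>0$ a.s.\ and legitimizes the Kallianpur--Striebel ratios you divide by.
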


\medskip

\subsection{Main result}\label{ssec:detectable}

%For linear deterministic systems, observability is an important property that a model must satisfy to construct asymptotically stable observers~\cite{kailath2000linear}. 
%For a partially observed linear Gaussian problem, the detectability property of its deterministic counterpart is necessary and also sufficient (under mild additional conditions) to deduce results on asymptotic stability of the optimal (Kalman) filter~\cite{ocone1996asymptotic}. The proof technique relies
%on the analysis of the dual LQ optimal control problem~\cite[Ch.~9]{xiong2008introduction}
%
%In this section, we extend the idea of detectability into the finite-state Markov chains with multiple ergodic partitions ($M>1$).
%The detectability is obtained by considering the stabilizability of the dual control problem, and it implies the key property~\cite{baxendale2004asymptotic} that the filter can detect the correct ergodic partition with probability 1.

%Recall the stabilizability property in Cor.~\ref{prop:detectability-nullspace}: The dual system is stabilizable if and only if $S_0 \in \clC$. The following proposition summarizes implications of stabilizability under assumption that $\bS$ has $M$ ergodic classes.

We begin by recalling the definition of stabilizability and its characterization described in Section~\ref{sec:finite-observability} for finite state space case.
In these settings, 
\[
S_0 := \{f \in \Re^d \mid \; Af = 0\} 
\]
It was shown in Corollary~\ref{prop:detectability-nullspace} that the BSDE~\eqref{eq:dual-bsde} is stabilizable if and only if $S_0\subset \clC$ where $\clC$ is the controllable subspace.
The following proposition provides another characterization using the notation introduced in this section.

\medskip

\begin{proposition}\label{prop:detectability-implications}
	Consider the BSDE~\eqref{eq:dual-bsde}. Then
	\begin{enumerate}
		\item If $\bS$ has a single ergodic class then BSDE is stabilizable.  
		\item If $\bS=\cup_{k=1}^M \bS_k$ is partitioned into $M$ ergodic
		classes then the BSDE is stabilizable 
		if and only if the indicator functions $\ones_{\mathbb{S}_k} \in \clC$ for
		$k=1,2,\ldots,M$.  
	\end{enumerate}
\end{proposition}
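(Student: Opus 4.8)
The plan is to reduce both parts to the characterization of stabilizability recorded in Corollary~\ref{prop:detectability-nullspace}: the dual system~\eqref{eq:dual-ctrl-finite} is stabilizable if and only if $S_0 \subset \clC$, where $S_0 = \{f \in \Re^d : Af = 0\}$ is the right null space of the rate matrix and $\clC = \Rsp(\clL)$ is the controllable subspace. The entire argument then hinges on identifying $S_0$ explicitly in terms of the ergodic partition, after which both statements follow by inspection.

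First I would compute $S_0$ from the block structure of $A$. By the second property of the ergodic partition, $A$ is block diagonal with diagonal blocks $A_1, \ldots, A_M$, so $Af = 0$ holds if and only if $A_k f_k = 0$ for every $k$, where $f_k$ denotes the restriction of $f$ to $\bS_k$. Each $A_k$ is itself a rate matrix (its rows sum to zero), hence it annihilates the constant function $\ones_{\bS_k}$; and by the third property $0$ is a simple eigenvalue of $A_k$, so its right null space is one-dimensional and therefore coincides with $\sp\{\ones_{\bS_k}\}$. Assembling the blocks yields
\[
S_0 = \sp\{\ones_{\bS_1}, \ldots, \ones_{\bS_M}\},
\]
i.e.\ $S_0$ consists of exactly those functions that are constant on each ergodic class.

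Given this identification, Part 2 is immediate: since $\clC$ is a linear subspace, $S_0 \subset \clC$ if and only if each spanning vector $\ones_{\bS_k}$ belongs to $\clC$, and combining with Corollary~\ref{prop:detectability-nullspace} gives the claimed equivalence. Part 1 is then the special case $M = 1$, for which $S_0 = \sp\{\ones\}$ is the space of constant functions; since $\ones \in \clC$ always holds by property~(1) of Prop.~\ref{thm:controllable-subspace}, the inclusion $S_0 \subset \clC$ is automatic and stabilizability follows without further hypotheses.

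The argument is essentially structural rather than computational, so I anticipate no serious obstacle. The one step deserving care is the claim that the right null space of each block $A_k$ is precisely $\sp\{\ones_{\bS_k}\}$: this requires pairing the simplicity of the zero eigenvalue (a one-dimensional null space) with the elementary fact that a rate matrix kills constants, and I would state both carefully so as not to conflate the left null space (uniqueness of the invariant measure) with the right null space that actually enters $S_0$.
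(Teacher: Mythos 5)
Your proof is correct and follows exactly the route the paper intends (the paper states this proposition without a written proof, but it is presented as an immediate consequence of Corollary~\ref{prop:detectability-nullspace}): identify $S_0=\sp\{\ones_{\bS_1},\ldots,\ones_{\bS_M}\}$ from the block-diagonal structure and the simplicity of the zero eigenvalue of each $A_k$, then apply the characterization that stabilizability is equivalent to $S_0\subset\clC$. Your explicit care in distinguishing the right null space of $A$ from the left null space (invariant measures) is exactly the point worth making, and Part~1 correctly reduces to $\ones\in\clC$, which holds by Prop.~\ref{thm:controllable-subspace}.
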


\medskip

\begin{remark}
	A direct corollary of Prop.~\ref{prop:detectability-implications} is 
	that two typical assumptions for filter stability---(1) the state 
	process is ergodic; (2) the HMM is observable---both imply the stabilizability of the dual 
	system. 
\end{remark}

The above decomposition relates the filter stability problem with the convergence of $\pi_T^\nu(\ones_{\bS_k})$. The following theorem states the main result of this section:

\medskip

\begin{theorem}\label{thm:main-result}
	If the dual BSDE is stabilizable, then 
	\[
	\pi_T^{\nu}(\ones_{\bS_k}) \stackrel{(T\to\infty)}{\longrightarrow}
	\ones_{\bS_k}(X_0) \quad \sP^{\mu}\text{-a.s.}
	\] 
	whenever $\mu \ll \nu$. (That is, the filter asymptotically
	detects the correct ergodic class.)
\end{theorem}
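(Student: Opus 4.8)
The plan is to reduce the claim---a statement about the misspecified filter $\pi^\nu$ that must hold $\sP^\mu$-almost surely---to a family of statements conditioned on the true initial ergodic class, and then to import filter stability for observable functions. First I would record the consequence of the hypothesis: by Proposition~\ref{prop:detectability-implications}, stabilizability of the dual BSDE is equivalent to $\ones_{\bS_k}\in\clC$ for every $k$, and by the finite-state identification $\clC=\clO$ established in Remark~\ref{rm:finite-case-equivalency}, each class indicator $\ones_{\bS_k}$ is an \emph{observable function}. This is the only place stabilizability enters, and it is precisely what makes the subsequent comparison argument applicable to the specific test functions $\ones_{\bS_k}$.

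The key structural device is a disintegration over the initial class. Since the map $\rho\mapsto\sP^\rho$ is affine in the prior and $\mu=\sum_k\mu(\bS_k)\,\mu_k$ with each $\mu_k$ supported on $\bS_k$ (the notation of Lemma~\ref{lemma:ergodic-class-decomposition}), we have $\sP^\mu=\sum_k\mu(\bS_k)\,\sP^{\mu_k}$, a finite convex combination. Hence a property holds $\sP^\mu$-a.s.\ if and only if it holds $\sP^{\mu_k}$-a.s.\ for every $k$ with $\mu(\bS_k)>0$, so it suffices to establish the convergence under each $\sP^{\mu_{k^*}}$. Under $\sP^{\mu_{k^*}}$ two simplifications occur: the initial state lies in $\bS_{k^*}$ almost surely, so $\ones_{\bS_k}(X_0)=\delta_{k,k^*}$; and because the rate matrix is block diagonal and each $\dv(H^j)$ is diagonal, the Zakai solution operator of Example~\ref{ex:finite-Zakai-operator} preserves the coordinate subspace carried by $\bS_{k^*}$. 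Thus the correctly initialized filter $\sigma_t^{\mu_{k^*}}$, and therefore $\pi_t^{\mu_{k^*}}$, stays supported on $\bS_{k^*}$ for all $t$, giving the trivial identity $\pi_T^{\mu_{k^*}}(\ones_{\bS_k})=\delta_{k,k^*}$ for every $T$.

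It then remains to transfer this identity from the correct filter to $\pi^\nu$. Since $\mu_{k^*}\ll\mu\ll\nu$, since $\bS$ is finite (hence compact), and since $\ones_{\bS_k}\in\clO$, van Handel's filter stability theorem for observable functions (Theorem~1 in~\cite{van2009observability}, reviewed in Section~\ref{sec:VHobs-and-filter-stability}) yields $|\pi_T^{\mu_{k^*}}(\ones_{\bS_k})-\pi_T^\nu(\ones_{\bS_k})|\to 0$ as $T\to\infty$, $\sP^{\mu_{k^*}}$-a.s. Combining with the trivial identity via the triangle inequality gives $\pi_T^\nu(\ones_{\bS_k})\to\delta_{k,k^*}=\ones_{\bS_k}(X_0)$ under $\sP^{\mu_{k^*}}$, and the disintegration reassembles these into the asserted $\sP^\mu$-a.s.\ convergence.

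I would emphasize that the genuinely delicate point is one the plan deliberately circumvents. A direct attack would study $\pi_T^\mu(\ones_{\bS_k})=\E^\mu(\ones_{\bS_k}(X_0)\mid\clZ_T)$, a bounded $\sP^\mu$-martingale whose a.s.\ limit is $\E^\mu(\ones_{\bS_k}(X_0)\mid\clZ_\infty)$; identifying this limit with $\ones_{\bS_k}(X_0)$ amounts to showing that the observations asymptotically determine the initial class, i.e.\ $\clZ_\infty$-measurability of $\ones_{\bS_k}(X_0)$. This asymptotic separation does not follow from finite-horizon observability alone, and is exactly the tail-$\sigma$-field difficulty flagged in Section~\ref{sec:65}. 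Conditioning on the true class replaces this consistency question with the triviality that a filter started inside one ergodic class never leaves it, so the only asymptotic input required is the already-established two-filter comparison theorem. The remaining work is bookkeeping: verifying the affine decomposition of $\sP^\mu$, the support invariance of the block-diagonal Zakai flow, and the absolute-continuity chain $\mu_{k^*}\ll\nu$ needed to invoke the stability theorem.
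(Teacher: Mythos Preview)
Your argument is correct, but it takes a genuinely different route from the paper's own proof. You translate stabilizability into $\ones_{\bS_k}\in\clO$ via the identification $\clC=\clO$ (Remark~\ref{rm:finite-case-equivalency}), disintegrate $\sP^\mu$ over the initial ergodic class, observe that the correctly initialized filter $\pi^{\mu_{k^*}}$ trivially evaluates $\ones_{\bS_k}$, and then invoke van~Handel's comparison theorem for observable functions as a black box. Each step checks out: the disintegration is valid by affinity of $\rho\mapsto\sP^\rho$, the support invariance follows from the block-diagonal Zakai dynamics, the chain $\mu_{k^*}\ll\mu\ll\nu$ is clear, and the finite state space is compact.

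The paper instead gives a self-contained proof entirely within the dual optimal control framework, avoiding any appeal to van~Handel's result. It first notes that $\{\pi_T^\nu(\ones_{\bS_k}):T\ge 0\}$ is a bounded $\sP^\nu$-martingale (since $A\ones_{\bS_k}=0$) and hence converges almost surely. It then shows directly that the optimal cost $\sJ_T^\nu(\ones_{\bS_k})=\E^\nu(|\ones_{\bS_k}(X_0)-\pi_T^\nu(\ones_{\bS_k})|^2)\to 0$: for an invariant prior $\bmu$, it constructs an explicit sequence of admissible controls---by concatenating $n$ rescaled copies of a control that steers $\ones_{\bS_k}$ to a constant (available because $\ones_{\bS_k}\in\clC$, Lemma~\ref{lm:controllable-reachable})---yielding $\sJ_n^{\bmu}\le n^{-1}\sJ_1^{\bmu}(U^{(1)})$; a continuity-and-shifting argument (Lemmas~\ref{lemma:continuity}--\ref{lemma:stationary}) then passes from $\bmu$ to general $\nu$. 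Fatou's lemma plus the already-established a.s.\ limit identifies the limit as $\ones_{\bS_k}(X_0)$.

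What each buys: your route is shorter and conceptually clean, but its asymptotic content is outsourced to van~Handel's theorem, whose proof uses probabilistic tail-$\sigma$-field/martingale techniques rather than duality. The paper's route is longer but demonstrates that the dual control framework \emph{itself} can deliver the asymptotic detection---which is precisely the point of the chapter. Your approach would be appropriate as a corollary once one is willing to cite external filter-stability results; the paper's approach is what is needed to substantiate the thesis's claim that duality provides an alternative, self-contained toolkit for filter stability.
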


%\medskip
%
%\begin{remark}
	%	Using Theorem~\ref{thm:main-result}, the problem of filter
	%	stability reduces to the problem of filter stability for each
	%	ergodic class.
	%	This is also the justification for treating the ergodic and non-ergodic signal cases separately.
	%	%	It was the topic of Section~\ref{ssec:ergodic1}.
	%	Meanwhile, it is known that, for the type of observations considered here, the filter ``inherits'' the stability property of the Markov process~\cite[Theorem 4.2]{baxendale2004asymptotic} under the white noise observation model.
	%%	Assuming the filter stability of each ergodic partition, using Theorem~\ref{thm:main-result}, the
	%%	stabilizability is a necessary and sufficient condition for filter stability.  
	%\end{remark}

\medskip

\begin{remark}
	The sufficient condition stated in~\cite[Theorem
	4.4]{baxendale2004asymptotic} stress the importance of the
	``identifying'' property of the filter to identify the correct ergodic
	class~\cite[Lemma 6.3]{baxendale2004asymptotic}.  Subsequently, the
	definition of detectability is introduced
	in~\cite{van2009observability,van2010nonlinear}.  For the Wonham
	filter, the detectability property is shown to be equivalent to
	filter stability~\cite[Theorem 2]{van2009observability}.
\end{remark}

\section{Proofs of the statements}

\subsection{Proof of Theorem~\ref{thm:necessary-condition-filter-stability}}\label{ss:pf-thm81}

Consider the dual system~\eqref{eq:dual-bsde} with terminal condition $Y_T = f$ is deterministic and $U = 0$. Then the problem reduces to a deterministic PDE
\[
-\ud Y_t(x) = -(\clA Y_t)(x),\quad Y_T(x) = f(x),\; x\in \bS
\]
is given by $Y_t = \clS_{T-t} f$ for all $0\le t\le T$. where $\{\clS_t:t\ge 0\}$ is the signal semigroup defined by
\[
(\clS_t f)(x) = \E\big(f(X_t)\mid X_0 = x\big),\quad x\in\bS
\]
Now for any $U\in \clU$, the solution to the BSDE~\eqref{eq:dual-bsde} with $Y_T = f$ is given by using linearity:
\[
Y_0 = \clS_T f + \clL(U,0)
\]

Assume that the dual BSDE is not stabilizable. That is, there exists $\tilde{\mu} \in \clC^\bot$ such that $\tilde{\mu} \notin S_s$. Choose $\mu, \nu \in \clP(\bS)$ and $\alpha >0$ such that $\nu = \mu + \alpha\tilde{\mu}$ and $\mu\ll\nu$.
Since $(\mu-\nu)\notin S_s$, there exists $f \in C_b(\bS)$ and $\epsilon >0$ such that for all $T$,
\[
|\mu(\clS_T f) - \nu(\clS_T f)| > \epsilon
\]
Since $\mu-\nu \in \Rsp(\clL)^\bot$,
\begin{align*}
|\mu(Y_0) -\nu(Y_0)| &= \big|\mu(\clS_T f) - \nu(\clS_T f) + (\mu-\nu)(\clL(U,0))\big|\\
&= |\mu(\clS_T f) - \nu(\clS_T f)| > \epsilon
\end{align*}
Therefore the filter is not stable due to Prop.~\ref{prop:asymptotic-optimality}.
\qed

\subsection{Proof of Lemma~\ref{lemma:ergodic-class-decomposition}}

Clearly $\nu_k\ll\nu$ and 
\[
\frac{\ud \sP^{\nu_k}}{\ud \sP^\nu} (\omega) = \sum_{x\in\bS} \frac{\nu_k(x)}{\nu(x)}\ones_{[X_0=x]}(\omega) = \frac{\ones_{[X_0(\omega)\in\bS_k]}(\omega)}{\sP^\nu([X_0\in\bS_k])}
\]
An application of the Bayes' formula gives
\[
\E^{\nu_k}\big(f(X_T)|\clZ_T\big) = \frac{\E^\nu\big(f(X_T)\ones_{[X_0\in\bS_k]}|\clZ_T\big)}{\E^\nu\big(\ones_{[X_0\in\bS_k]}|\clZ_T\big)}
\]
and therefore
\begin{equation}\label{eq:Bayes_Identity}
\E^\nu\big(f(X_T)\ones_{[X_0\in\bS_k]}|\clZ_T\big) = \pi_T^\nu(\ones_{\bS_k})\pi_T^{\nu_k}(f) 
\end{equation}
where we have used the fact that
\[
\ones_{[X_0(\omega) \in\bS_k]} (\omega) =
\ones_{[X_T(\omega) \in\bS_k]}(\omega) \quad {\sf P}^\nu-a.s.
\]
Note that the identity~\eqref{eq:Bayes_Identity} is true for all
$k=1,2,\ldots,M$.  (If $\sP^\nu([X_0\in\bS_k]) = 0$ then both sides
are zero.)  Summing the identity over the index $k$ yields the conclusion. \qed

\subsection{Technical lemmas to prove the main theorem}

\begin{lemma}[Reachability of deterministic functions]\label{lm:controllable-reachable}
If $f\in \clC$ then there exists a control $U \in \clU$ and $c\in \Re$ such that the solution to the dual BSDE~\eqref{eq:dual-bsde} satisfies $Y_T = f$ and $Y_0 = c\ones$.
\end{lemma}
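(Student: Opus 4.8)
The plan is to exploit the linearity of the dual control system \eqref{eq:dual-bsde} together with the invariance of the controllable subspace $\clC$ recorded in \Proposition{thm:controllable-subspace}. First I would observe that every $f \in \clC = \Rsp(\clL)$ is a \emph{deterministic} element of $C_b(\bS)$: it is the time-$0$ value $Y_0$ of some solution of \eqref{eq:dual-bsde}, and $Y_0$ is $\clZ_0$-measurable, hence a.s.~constant. Feeding the deterministic datum $Y_T = f$ into \eqref{eq:dual-bsde} with the zero control $U = 0$, the pair $(Y_t, V_t) = (\clS_{T-t} f, 0)$ solves the system — the equation collapses to the deterministic backward Kolmogorov equation $-\ud Y_t = \clA Y_t\,\ud t$, exactly as in the proof of \Theorem{thm:necessary-condition-filter-stability} — so by the assumed uniqueness, $Y_0 = \clS_T f$.

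Next, by linearity of the solution map $(U, Y_T) \mapsto Y_0$, for an arbitrary control $U \in \clU$ with terminal datum $Y_T = f$ the time-$0$ value is
\begin{equation*}
	Y_0 = \clS_T f + \clL(U, 0).
\end{equation*}
Achieving $Y_0 = c\ones$ thus amounts to solving $\clL(U, 0) = c\ones - \clS_T f$, i.e.~to placing $c\ones - \clS_T f$ in $\clR_0 := \Rsp(\clL(\cdot, 0))$. Since constants are $\clA$-invariant we have $\clL(0, c) = c\ones$, whence $\clC = \clR_0 + \sp\{\ones\}$. It follows that a suitable pair $(U, c)$ exists precisely when $\clS_T f \in \clC$: writing $\clS_T f = r + a\ones$ with $r \in \clR_0$, the choice $c = a$ gives $c\ones - \clS_T f = -r \in \clR_0$, and any $U$ with $\clL(U, 0) = -r$ then yields $Y_0 = a\ones = c\ones$ while keeping $Y_T = f$.

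It remains to establish the containment $\clS_T f \in \clC$, which is the crux of the argument. By \Proposition{thm:controllable-subspace} the subspace $\clC$ is invariant under $\clA$, so $\clA^k f \in \clC$ for all $k \ge 0$. In the finite state-space setting of this chapter $\clC$ is a finite-dimensional (hence closed) subspace of $\Re^d$ and $\clS_T = e^{T\clA}$; expanding $e^{T\clA} f = \sum_{k \ge 0} \tfrac{T^k}{k!}\,\clA^k f$ as a convergent series of elements of the closed subspace $\clC$ gives $\clS_T f \in \clC$, completing the proof. I expect this semigroup-invariance step to be the main obstacle: the $\clA$-invariance furnished by \Proposition{thm:controllable-subspace} must be promoted to $\clS_T$-invariance, which is immediate in finite dimensions but would, for a general state space, additionally demand closedness of $\clC$ and attention to the domain of the (unbounded) generator.
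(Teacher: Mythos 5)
Your proof is correct and follows essentially the same route as the paper's: both reduce the problem to showing $\clS_T f \in \clC$ via the $\clA$-invariance from \Proposition{thm:controllable-subspace}, and then use linearity of the solution map to superpose the zero-control solution with terminal datum $f$ against a control steering $\clS_T f$ back to a constant. The only difference is cosmetic bookkeeping (you decompose $\clC = \Rsp(\clL(\cdot,0)) + \sp\{\ones\}$ where the paper subtracts two explicit BSDE solutions), and you additionally spell out the promotion of $\clA$-invariance to $\clS_T$-invariance via the exponential series in finite dimensions, a step the paper asserts without justification.
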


\begin{proof}	
Suppose $f \in \clC$. Since $\clC$ is $\clA$-invariant (by Prop.~\ref{thm:controllable-subspace}), $\clS_T f \in
\clC$. Therefore, from definition of $\clC$, there is a constant $c\in\Re$ and $U\in\clU$ such that $\clL(U,c) = \clS_T f$.  Now consider a second solution of the BSDE~\eqref{eq:dual-bsde} with $Y_T = c\ones+f$ and zero control input. Since $\clA \ones = 0$, this second solution is $(Y_t,V_t) = (c\ones + \clS_{T-t}f, 0)$ for $t \in [0,T]$. By linearity, we subtract the two solutions to show that with $Y_T = f$
and control $-U$, one obtains $Y_0 = c\ones$. 
\end{proof}

\begin{lemma}\label{lemma:continuity}
Consider a family of measures $\{\nu_n \in \clP(\bS):n=1,2,\ldots\}$ such that $\nu_n \ll \nu$ and $\nu_n \to \nu$ as $n$ increases. Then
\begin{equation}\label{eq:continuity-property}
	|\sJ_T^{\nu_n}(U^\nu) - \sJ_T^\nu| \longrightarrow 0
\end{equation}
where the convergence is uniform in $T$.
\end{lemma}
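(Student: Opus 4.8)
The plan is to exploit the fact that the backward trajectory $(Y,V)$ generated by the control $U^\nu$ with terminal condition $Y_T=f$ solves the BSDE~\eqref{eq:dual-bsde} driven by $Z$, and therefore does \emph{not} depend on the initial measure; only the reference measure in the expectation and the constant $\rho(Y_0)$ in the initial cost distinguish $\sJ_T^{\nu_n}(U^\nu)$ from $\sJ_T^\nu=\sJ_T^\nu(U^\nu)$. Writing $g_n:=\frac{\ud\nu_n}{\ud\nu}$ and using Lemma~\ref{lm:change-of-Pmu-Pnu}, namely $\frac{\ud\sP^{\nu_n}}{\ud\sP^\nu}(\omega)=g_n(X_0(\omega))$, I would express
\[
\sJ_T^{\nu_n}(U^\nu)=\E^\nu\Big(g_n(X_0)\big[\,|Y_0(X_0)-\nu_n(Y_0)|^2+R_T\,\big]\Big),\qquad R_T:=\int_0^T(\Gamma Y_t)(X_t)+|U_t^\nu+V_t(X_t)|^2\ud t,
\]
and compare this with $\sJ_T^\nu=\E^\nu\big(|Y_0(X_0)-\nu(Y_0)|^2+R_T\big)$, so that the common trajectory cancels structurally and the difference is organized around the factor $g_n-1$.

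The difference then splits into a running-cost contribution $\E^\nu\big((g_n(X_0)-1)R_T\big)$ and an initial-cost contribution. For the first I would bound $|\E^\nu((g_n(X_0)-1)R_T)|\le\|g_n-1\|_\infty\,\E^\nu(R_T)$; the decisive uniform-in-$T$ estimate is $\E^\nu(R_T)\le\sJ_T^\nu=\E^\nu\big(\clV_T^\nu(f)\big)\le\tfrac14\operatorname{osc}(f)^2$, valid because the optimal value is the expected conditional variance (Remark~\ref{rm:mean and the variance}) and $\clV_T^\nu(f)\le\tfrac14\operatorname{osc}(f)^2$ pointwise. For the initial-cost contribution I would add and subtract $|Y_0(X_0)-\nu(Y_0)|^2$ to isolate one piece carrying $(g_n(X_0)-1)$ and, on the remainder, use $a^2-b^2=(a-b)(a+b)$ with $a-b=\nu(Y_0)-\nu_n(Y_0)$. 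The key observations are that $\nu(Y_0)=\tE^\nu\big(\pi_T^\nu(f)\big)$ is a \emph{deterministic} constant with $|\nu(Y_0)|\le\|f\|_\infty$ (from the It\^o representation of $\pi_T^\nu(f)$), that $|\nu(Y_0)-\nu_n(Y_0)|=|\nu((1-g_n)Y_0)|\le\|1-g_n\|_\infty\,\nu(|Y_0|)$, and that $\E^\nu\big(\nu(Y_0^2)\big)\le\E^\nu\big(\clV_0^\nu(Y_0)\big)+\|f\|_\infty^2\le\tfrac14\operatorname{osc}(f)^2+\|f\|_\infty^2$ by monotonicity of $t\mapsto\E^\nu(\clV_t^\nu(Y_t))$ (Prop.~\ref{prop:optimal-solution}). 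Bounding the remaining products through Cauchy--Schwarz against these second moments, every term is seen to carry a factor $\|g_n-1\|_\infty$ multiplying a quantity bounded uniformly in $T$.

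Finally I would invoke the finite state-space hypothesis. Since $\bS$ is finite, $\nu_n\to\nu$ with $\nu_n\ll\nu$ forces $g_n(i)=\nu_n(i)/\nu(i)\to1$ for each of the finitely many states $i$ in the support of $\nu$, so $\|g_n-1\|_\infty\to0$ and the family $\{g_n\}$ is eventually uniformly bounded. Combining this with the uniform-in-$T$ bounds assembled above gives $\sup_T|\sJ_T^{\nu_n}(U^\nu)-\sJ_T^\nu|\to0$, which is precisely~\eqref{eq:continuity-property}.

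I expect the main obstacle to be the uniform-in-$T$ control of the terminal-constant terms involving $\nu_n(Y_0)$: because $Y_0$ is a $\clZ_T$-measurable random function whose sup-norm need not remain bounded as $T\to\infty$, the estimates must be routed entirely through second moments pinned down by the value-function (conditional-variance) interpretation and its monotonicity, rather than through any pathwise bound on $Y_0$. The finite state-space assumption is essential here, since it is what upgrades the convergence $\nu_n\to\nu$ into the uniform bound $\|g_n-1\|_\infty\to0$; on a general state space one would instead have to supply a uniform-integrability argument for $\{g_n\}$ together with an $L^p$-in-$T$ bound on $Y_0$.
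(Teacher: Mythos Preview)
Your proposal is correct and follows essentially the same route as the paper: both observe that the BSDE trajectory $(Y,V)$ does not depend on the initial measure, write the difference via the change of measure (the paper uses the state-by-state decomposition $\tE^{\nu_n}(\xi_T)=\sum_{i\in\bS}\nu_n(i)\,\tE^{\delta_i}(\xi_T)$, you use the Radon--Nikodym factor $g_n(X_0)$), and bound the running cost uniformly in $T$ by the optimal value $\sJ_T^\nu$. Your argument is in fact more carefully fleshed out than the paper's terse sketch, particularly in making the uniform-in-$T$ bound $\sJ_T^\nu=\E^\nu(\clV_T^\nu(f))\le\tfrac14\operatorname{osc}(f)^2$ explicit via the value-function interpretation and in routing the initial-cost estimate through second moments rather than pointwise control of $Y_0$.
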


\begin{proof}
We want to show that $\sJ_T^{\nu_n}(U^\nu) \longrightarrow \sJ_T^\nu(U^\nu)$ where 
\[
\sJ_T^{\nu_n}(U^\nu) = \tE^{\nu_n}\Big(|Y_0(X_0)-\nu_n(Y_0)|^2 + \int_0^T l(Y_t,V_t,U_t^\nu,t)\ud t \Big)
\]
First of all,
\[
\nu_n\big(|Y_0-\nu_n(Y_0)|^2\big) \longrightarrow \nu\big(|Y_0-\nu(Y_0)|^2\big) 
\]
%	\red{TODO: why $Y_0$ is uniformly bounded}
Let $\xi_T := \int_0^T l(Y_t,V_t,U_t^\nu,t) \ud t$ then 
\[
\tE^{\nu}(\xi_T) = \sum_{i\in\bS}\nu(i) \tE^{\delta_i}(\xi_T),\quad \tE^{\nu_n}(\xi_T) = \sum_{i\in\bS}\nu_n(i) \tE^{\delta_i}(\xi_T)
\]
Because $\tE^\nu(\xi_T)\le \sJ_T^\nu <\infty$.
\end{proof}

\medskip

\begin{lemma}\label{lemma:stationary}
Suppose $\bmu$ is an invariant measure of $A$ (i.e., $A^\tp \bmu  =
0$).  Then for each fixed $f\in\Re^d$,
\begin{enumerate}
	\item The sequence $\{\sJ_T^{\bmu}(f):T\geq 0\}$ is bounded,
	non-negative, and non-increasing in $T$.  Therefore,
	$\sJ_T^{\bmu}(f)$ converges as $T\rightarrow \infty$.  Denote the
	limit as $\sJ_\infty^{\bmu}(f)$.
	\item For a given $\mu\in{\cal P}(\mathbb{S})$, denote $\mu_T :=
	e^{A^\tp T} \mu$.  Suppose $\mu_T\to \bmu$ as $T\to\infty$.  Then
	\[
	\limsup_{T\to\infty} \sJ_{T}^\mu(f) \le \sJ_{\infty}^{\bmu}(f)
	\]
\end{enumerate}
\end{lemma}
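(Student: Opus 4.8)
The plan is to base everything on the identification, furnished by the duality principle (Theorem~\ref{thm:duality-principle}) together with the dynamic programming interpretation (Prop.~\ref{prop:optimal-solution} and the subsequent remark on the mean and the variance), of the optimal value of the $\sJ_T^\nu(\cdot)$ problem with terminal condition $f$ as an expected conditional variance: $\sJ_T^\nu(f) = \E^\nu\big(\clV_T^\nu(f)\big)$, where $\clV_T^\nu(f)$ is the conditional variance of $f(X_T)$ given $\clZ_T$ for the filter initialized at $\nu$. With this reformulation, both claims become statements about how the expected conditional variance of $f(X_T)$ evolves as the observation horizon grows, and the two Lyapunov-type properties follow from information-monotonicity and stationarity arguments.

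For part~1, non-negativity is immediate. For boundedness I would use that $\clV_T^\bmu(f)$ is the minimum over $\clZ_T$-measurable estimators of $\E^\bmu(|f(X_T)-\,\cdot\,|^2\mid\clZ_T)$; choosing the constant estimator $\bmu(f)$ and using that $X_T\sim\bmu$ under $\sP^\bmu$ by invariance gives $\sJ_T^\bmu(f)\le \dvar^\bmu(f)<\infty$ uniformly in $T$. For monotonicity I would extend $X$ under $\sP^\bmu$ to a two-sided stationary process and observe that, by shift-invariance, estimating $f(X_{T+s})$ from the observations on $[0,T+s]$ has the same expected conditional variance as estimating $f(X_T)$ from the observations on $[-s,T]$; since the latter conditioning $\sigma$-algebra contains the earlier window $[0,T]$, the law of total variance (expected conditional variance is non-increasing in the conditioning $\sigma$-algebra) yields $\sJ_{T+s}^\bmu(f)\le \sJ_T^\bmu(f)$. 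A bounded, non-negative, non-increasing sequence converges, defining $\sJ_\infty^\bmu(f)$.

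For part~2, fix a memory length $s>0$ and restrict the filter to the sub-$\sigma$-algebra $\clZ_{[T-s,T]}$ generated by the observation increments on $[T-s,T]$. Since $\clZ_{[T-s,T]}\subset\clZ_T$, the same total-variance monotonicity shows that $\sJ_T^\mu(f)$ is bounded above by the expected conditional variance of $f(X_T)$ given $\clZ_{[T-s,T]}$. Because the model is time-homogeneous and the unconditional law of $X_{T-s}$ under $\sP^\mu$ is $\mu_{T-s}=e^{A^\tp(T-s)}\mu$, a shift by $T-s$ identifies this upper bound with the horizon-$s$ optimal value started from the prior $\mu_{T-s}$, i.e.\ $\sJ_T^\mu(f)\le \sJ_s^{\mu_{T-s}}(f)$. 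Letting $T\to\infty$ with $s$ fixed, $\mu_{T-s}\to\bmu$, and upper semicontinuity of $\rho\mapsto \sJ_s^\rho(f)$ at $\bmu$ follows from Lemma~\ref{lemma:continuity}: comparing against the control $U^\bmu$ optimal for $\sJ_s^\bmu(f)$ and using optimality gives $\sJ_s^{\mu_{T-s}}(f)\le \sJ_s^{\mu_{T-s}}(U^\bmu)\to \sJ_s^\bmu(f)$, whence $\limsup_{T\to\infty}\sJ_T^\mu(f)\le \sJ_s^\bmu(f)$. Finally let $s\to\infty$ and invoke part~1, $\sJ_s^\bmu(f)\to\sJ_\infty^\bmu(f)$, to obtain $\limsup_{T\to\infty}\sJ_T^\mu(f)\le \sJ_\infty^\bmu(f)$.

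The main obstacle I anticipate is making the two stationarity/time-shift reductions fully rigorous: constructing the two-sided stationary extension of $(X,Z)$ under $\sP^\bmu$ so that the window-shift identities hold exactly, and verifying that conditioning on only the window $[T-s,T]$ really corresponds to the filtering problem with prior $\mu_{T-s}$ on $X_{T-s}$ — this uses the Markov property to discard the pre-$(T-s)$ past while retaining the correct unconditional marginal. The measure-theoretic care in applying the law-of-total-variance monotonicity across these shifted filtrations, together with the uniform-in-$s$ control of the continuity estimate, is where the bookkeeping concentrates; the remaining steps are routine.
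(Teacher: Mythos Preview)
Your proposal is correct and structurally parallel to the paper's proof: both hinge on the inequality $\sJ^{\mu}_{T_1+T_2}(f)\le \sJ^{\,\mu_{T_1}}_{T_2}(f)$ and then invoke Lemma~\ref{lemma:continuity} exactly as you do. The difference is in how that inequality is obtained. The paper stays inside the dual control formulation: it constructs an explicit suboptimal control for the $(T_1+T_2)$-horizon problem by taking $U\equiv 0$ on $[0,T_1]$ and the horizon-$T_2$ optimal control $U^{\bmu}$ (shifted) on $[T_1,T_1+T_2]$, then verifies by direct computation with the BSDE solution that the cost of this padded control equals $\sJ^{\,\mu_{T_1}}_{T_2}(U^{\bmu})$ (their claim~\eqref{eq:claim1}). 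You instead exploit the identification $\sJ_T^\nu(f)=\E^\nu(\clV_T^\nu(f))$ and use the law of total variance together with a time shift. Your route is more conceptual and avoids tracking the BSDE trajectory; the paper's route is more self-contained within the thesis's control-theoretic framework and makes the ``zero control on the early segment'' idea explicit. One simplification worth noting: for part~1 you do not need the two-sided stationary extension. Under $\sP^{\bmu}$ one already has $X_{T_1}\sim\bmu$, so a forward shift gives $\sJ_{T_1+T_2}^{\bmu}(f)=\E^{\bmu}\big(\clV(f(X_{T_1+T_2})\mid\clZ_{T_1+T_2})\big)\le \E^{\bmu}\big(\clV(f(X_{T_1+T_2})\mid\sigma(Z_t-Z_{T_1}:T_1\le t\le T_1+T_2))\big)=\sJ_{T_2}^{\bmu}(f)$ directly; this is precisely the probabilistic translation of the paper's zero-padding argument and removes the obstacle you flagged.
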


\begin{proof}

The proof of the lemma requires a technical construction. Consider the time horizon $[0,T_1+T_2]$.  If $X_0\sim \mu$ then
$X_{T_1}\sim e^{A^\tp T_1}\mu=:\mu_{T_1}$.  This is useful to relate
the properties of $\sJ^\mu_{T_1+T_2}(\cdot)$ and
$\sJ^{\mu_{T_1}}_{T_2}(\cdot)$.  For this purpose, consider first the
time horizon $[T_1,T_1+T_2]$.  Over this time horizon, introduce the filtration 
\[
\tilde\clZ_{t-T_1}:=\{Z_t - Z_{T_1}\;:\; T_1 \le  t \le T_1+T_2\}
\]
For a control $\tilde{U} \in L^2_{\tilde{\clZ}}([0,T_2])$, let
$\{(\tilde{Y}_t, \tilde{V}_t): t\in[0,T_2]\}$ denote the solution of the
BSDE~\eqref{eq:dual-optimal-control-b} with $\tilde{Y}_{T_2}=f$.  The control $\tilde{U}$ is extended to the
time-horizon $[0,T_1+T_2]$ as follows:
\begin{equation}\label{eq:zero-U-control}
	U_t=\begin{cases}
		0 & 0 \le t < T_1\\
		\tilde{U}_{t-T_1} & T_1 \le t \le T_1+T_2
	\end{cases}
\end{equation}
%The control input is depicted in Figure~\ref{fig:zero-bar-control}.
%\begin{figure}[t]
	%	\includegraphics[width=\linewidth]{figures/off-on-ctrl.pdf} 
	%	\caption{Schematic of control introduced for the proof of Lemma~\ref{prop:stationary}.}
	%	\label{fig:zero-bar-control}
	%\end{figure}
The control $U\in{\cal U}$ and yields the following solution of
the BSDE~\eqref{eq:dual-optimal-control-b}:
\[ \label{eq:soln-barU}
(Y_t,V_t)=\begin{cases}
	(e^{A(T_1-t)}\tilde{Y}_{0},\;0) & 0 \le t < T_1\\
	(\tilde{Y}_{t-T_1},\;\tilde{V}_{t-T_1}) & T_1 \le t \le T_1+T_2
\end{cases}
\]
Under this definition, we claim that  
\begin{equation}\label{eq:claim1}
	\sJ^\mu_{T_1+T_2}(U) = \sJ^{\mu_{T_1}}_{T_2}(\tilde{U})
\end{equation}
and then the two results in the lemma are direct consequences of the claim:

\newP{Step 1} Take $\mu=\bmu$
and  $\tilde{U} = U^{\bmu}$.  Then
\[
\sJ_{T_1+T_2}^{\bmu} \le \sJ_{T_1+T_2}^{\bmu}(U)
\overset{\text{Eq.~\eqref{eq:claim1}}}{=\joinrel=} \sJ_{T_2}^{\bmu}(U^{\bmu}) = \sJ_{T_2}^{\bmu}
\] 
where we used the facts that (1) $\mu_{T_1}=e^{A^\tp T_1}\bmu =
\bmu$ because $\bmu$ is the invariant measure; and (2)
$U^{\bmu}$ is the optimal control for the $\sJ_{T_2}^{\bmu}(\cdot)$
problem.  Therefore, $\sJ_{T}^{\bmu}$ is monotone in $T$ 
%, while it is non-negative.
%It is non-negative and bounded from above by $f^\tp \Sigma_0^{\bmu}f$. 
% Therefore,  $\sJ_T^{\bar{\mu}}$
and converges as $T\rightarrow \infty$.  Denote the
limit as $\sJ_\infty^{\bar{\mu}}$.

\medskip

\newP{Step 2} Let $T=T_1+T_2$.  For $\mu\in{\cal P}(\mathbb{S})$,
with $\tilde{U} = U^{\bmu}$
\begin{equation}\label{eq:cor1_part2}
	\sJ_T^\mu \le \sJ_{T_1+T_2}^{\mu}(U)
	\overset{\text{Eq.~\eqref{eq:claim1}}}{=\joinrel=} \sJ_{T_2}^{\mu_{T_1}}(U^\bmu) 
\end{equation}
% We prove the result by showing that
% \[
% \sJ_{T_2}^{\mu_{T_1}}(U^{\bmu}) \to \sJ_\infty^{\bar{\mu}} \quad \text{as}
% \;\; T_1,T_2\to\infty
% \]  
We have
\[
|\sJ_{T_2}^{\mu_{T_1}}(U^{\bmu}) - \sJ_\infty^{\bar{\mu}}| \leq
|\sJ_{T_2}^{\mu_{T_1}}(U^{\bmu}) -
\sJ_{T_2}^{\bmu}(U^{\bmu})| + |\sJ_{T_2}^{\bmu}(U^{\bmu}) - \sJ_\infty^{\bar{\mu}}|
\]
The second term on the righthand-side does not depend upon
$T_1$. Because $U^{\bmu}$ is the optimal control input, this term
goes to zero as $T_2\to \infty$:  That is, given $\epsilon>0$, there
exist an $n_2$ such that 
\[
|\sJ_{T_2}^{\bmu}(U^{\bmu}) -
\sJ_\infty^{\bar{\mu}}|\leq \epsilon\quad \forall T_2 \ge n_2
\]
Now fix $T_2 = n_2$ and apply continuity property~\eqref{eq:continuity-property} to the first
term on the righthand-side: There exists $n_1=n_1(n_2)$ such that 
\[
|\sJ_{n_2}^{\mu_{T_1}}(U^{\bmu}) -
\sJ_{n_2}^{\bmu}(U^{\bmu})| \leq \epsilon \quad \forall T_1 \ge n_1
\]
Combine these inequalities concludes for all $T_1 \ge n_1$,
\[
\sJ_{n_2}^{\mu_{T_1}}(U^\bmu) \leq
\sJ_\infty^{\bar{\mu}} + 2\epsilon
\]    
From~\eqref{eq:cor1_part2}, $
\sJ_T^\mu \le \sJ_{n_2}^{\mu_{T-n_2}}(U^\bmu)$. Therefore, for all $T \ge n_1+n_2$, 
%If $\{T_k:k=1,2,\hdots\}$ is any sequence such that $T_k\to\infty$ then 
\[
\sJ_{T}^\mu \le \sJ_{n_2}^{\mu_{T-n_2}}(U^\bmu) \leq \sJ_\infty^{\bar{\mu}} + 2\epsilon
\]
Since $\epsilon$ is arbitrary, the result follows.  

\medskip

It remains to prove the claim~\eqref{eq:claim1}.
We have
\begin{align*}
	\sJ_{T}^\mu(U) &= \E^{\mu}\Big(|Y_0(X_0)-\mu(Y_0)|^2 +\int_0^{T_1} \Gamma (Y_t)(X_t) \ud
	t\Big) \\
	&\quad+\quad \E^{\mu} \Big(\int_{T_1}^{T_1+T_2} \Gamma (Y_t)(X_t)
	+ |U_t+V_t(X_t)|_R^2\ud t \Big)
\end{align*}
Each of the two terms is simplified separately. 

Consider the control input $U$ defined according
to~\eqref{eq:zero-U-control}.  
Since $Y_{T_1} = \tilde{Y}_{0}$ is a deterministic function and the control is set to be zero, $V_t = 0$ on $0\le t < T_1$ and the BSDE becomes ODE:
\[
-\frac{\ud}{\ud t} Y_t = AY_t,\quad Y_{T_1} = \tilde{Y}_0
\]
% $(Y_t,V_t)=
%(e^{A(T_1-t)}\tilde{Y}_{0},\;0)$ solves the BSDE for $0\le t < T_1$.
%Now because $X_t\sim e^{A^\tp t}\mu$, a straightforward calculation shows that
%$\[
%\frac{\ud}{\ud t} Y_t^{(2)\tp}\big(\dv(e^{A^\tp t}\mu)-e^{A^\tp t}\mu\mu^\tp e^{At}\big)Y_t^{(2)} = \E^\mu\big(Y_t^{(2)\tp} Q(X_t) Y_t^{(2)}\big)
%\]
%\[
%\frac{\ud}{\ud t} Y_t^{\tp}\big(\dv(e^{A^\tp t}\mu)-e^{A^\tp t}\mu\mu^\tp e^{At}\big)Y_t = Y_t^{\tp} \E^\mu \big(Q(X_t)\big)Y_t
%\]
%which upon integrating (and noting $\mu^{T_1}=e^{A^\tp T_1}\mu$)
%\begin{align*}
	%\E^{\mu}\Big(&Y_0^\tp \Sigma_0 Y_0+\int_0^{T_1} Y_t^\tp Q(X_t)Y_t \ud
	%t\Big) % \\
	%% &= Y_{T_1}^\tp \big(\dv(e^{A^\tp T_1 }\mu)-e^{A^\tp
		%% 	{T_1} }\mu\mu^\tp e^{A T_1 }\big)Y_{T_1} = 
	% =   Y_{T_1}^\tp\Sigma^{\mu_{T_1}} Y_{T_1}
	%\end{align*}
A straightforward calculation shows that
\begin{align*}
	&\E^{\mu}\Big(|Y_0(X_0)-\mu(Y_0)|^2 +\int_0^{T_1} \Gamma (Y_t)(X_t) \ud
	t\Big) \\
	&= \E^\mu\Big(|Y_{T_1}(X_{T_1})-\mu_{T_1}(Y_{T_1})|^2\Big)= \E^{\mu_{T_1}}\Big(|\tilde{Y}_{0}(X_0)-\mu_{T_1}(\tilde{Y}_{0})|^2\Big)
\end{align*}

The second term
\begin{align*}
	& \E^{\mu} \Big(\int_{T_1}^{T_1+T_2}  \Gamma (Y_t)(X_t)  +
	|U_t+V_t(X_t)|_R^2\ud t \Big) \\
	& = \E^{\mu} \Big(\int_{T_1}^{T_1+T_2}  \Gamma (\tilde{Y}_{t-T_1})(X_t) +  |\tilde{U}_{t-T_1}+\tilde{V}_{t-T_1}(
	X_t)|_R^2\ud t \Big)\\
	%& = \E^{\mu} \Big(\int_{0}^{T_2}  \Gamma (\tilde{Y}_{t})(X_{t+T_1}) +  |\tilde{U}_{t}+\tilde{V}_{t}( X_{t+T_1})|_R^2\ud t \Big)\\
	& = \E^{\mu_{T_1}} \Big(\int_{0}^{T_2}  \Gamma(\tilde{Y}_{t})({X}_{t})+  |\tilde{U}_{t}+\tilde{V}_{t}( {X}_{t})|_R^2\ud t \Big)
\end{align*}
Combining the results of the two calculations yields:
\begin{align*}
	\sJ_{T}^\mu(U)
	&= \E^{\mu_{T_1}}\Big(|\tilde{Y}_{0}(X_0)-\mu_{T_1}(\tilde{Y}_{0})|^2\Big)\\
	&\quad\quad+ \E^{\mu_{T_1}}\Big(\int_{0}^{T_2}  \Gamma(\tilde{Y}_{t})(\tilde{X}_{t}) +  |\tilde{U}_{t}+\tilde{V}_{t}( \tilde{X}_{t})|_R^2\ud t \Big)\\
	&= \sJ_{T_2}^{\mu_{T_1}}(\tilde{U}) 
\end{align*}

\end{proof}

\subsection{Proof of Theorem~\ref{thm:main-result}}

The proof is in the following three steps:
\begin{enumerate}
\item In step~1, we show that 
$\pi_T^\nu(\ones_{\bS_k})$ converges ${\sf P}^\nu$-a.s.  
\item In step~2, we show that if 
$\ones_{\bS_k}\in\clC$ then $\sJ_T^\bmu(\ones_{\bS_k}) \to 0$ as $T\to\infty$ where
$\bmu$ is any invariant measure of $A$.  We use part (i) of
Lemma~\ref{lemma:stationary} to prove this result.  
\item In step~3, we combine the conclusions of steps~1 and~2 to prove
the result.  We use part (ii) of
Lemma~\ref{lemma:stationary} to prove this result.  
\end{enumerate}

\newP{Step 1} Consider the Wonham filter~\eqref{eq:nonlinear-filter} with $\pi_0=\nu$.  Since
$A \ones_{\bS_k} = 0$, $\{\pi_T^\nu(\ones_{\bS_k}):T\geq 0\}$ is a
bounded ${\sf
P}^\nu$-martingale and therefore converges $\sP^\nu$-a.s.
(Therefore, the a.s. convergence does not require stabilizability of the
model.)

\newP{Step 2} Suppose $\bmu$ is any invariant measure.  Then $\sJ_T^\bmu$
is monotone (part~(i) of Lemma~\ref{lemma:stationary}).  In the
following, we construct a
sequence of admissible control input $\{U^{(T)}:T=1,2,\ldots\}$ such that 
$\sJ_T^\bmu(U^{(T)};\ones_{\bS_k}) \to 0$ as $T\to\infty$.  Since
$\sJ_T^\bmu(\ones_{\bS_k})$ is the minimum value this implies
$\sJ_T^\bmu(\ones_{\bS_k}) \to 0$ as $T\to\infty$ (for this
particular sub-sequence).  Since $\sJ_T^\bmu$
is monotone, the limit exists and equals this sub-sequential
limit.

Suppose $\ones_{\bS_k} \in \clC$. By Lemma~\ref{lm:controllable-reachable}, there exists an admissible
control $U^{(1)}:=\{U_t^{(1)}:0\leq t \leq 1\}$ and a constant
$c\in\Re$ such that $Y_T^{(1)} =
\ones_{\bS_k}$ and $Y_0^{(1)} = c \ones$.  
% \footnote{Note that Def.~\ref{def:controllable-subspace} states the
%   path connecting $Y_T = c\ones$ and $Y_0 \in \clC$. The assertion is
%   proved after the final step.}.  
Assuming the claim to be true for now, denote the associated
solution of the BSDE~\eqref{eq:dual-bsde} as
$(Y^{(1)},V^{(1)}):=\{(Y_t^{(1)},V_t^{(1)}): 0\leq t \leq 1\}$.  

Since $Z$ is a Brownian motion under the Girsanov change of measure, there exists
a functional $\phi:[0,1]\times C\big([0,1];\Re\big)\to\Re$ such
that 
\[
U_t^{(1)} = \phi(t\,,\,\{Z_s:0\leq s \leq t\}) \quad \sP^{\bmu}-\text{a.s.}
\]

Now consider the following control over the time-horizon $[0,n]$:  For $l=0,1,2,\ldots,n-1$
\[
U_t^{(n)} := \frac{1}{n}\phi\big(t-l\,,\, \{Z_s:l\leq s \leq t\}\big),\quad t\in(l,(l+1)]
\]  
Such a control input is clearly admissible.  
%JINKIM.  Potential confusion with 1sk.  
With $Y_n = \ones_{\bS_k}$, one obtains the following solution
$(Y^{(n)},V^{(n)}):=\{(Y_t^{(n)},V_t^{(n)}): 0\leq t \leq n\}$ of the
BSDE~\eqref{eq:dual-bsde}:  
\[
V_t^{(n)} \stackrel{\text{(d)}}{=} \frac{1}{n}V_{t-l}^{(1)},\quad t\in(l,(l+1)]
\]
for $l=0,1,2,\ldots,n-1$, $V_0^{(n)} = V_0^{(1)}$, and
\begin{align*}
Y_t^{(n)} \stackrel{\text{(d)}}{=} \begin{cases} \frac{1}{n}Y_t^{(1)}
	+ \frac{n-1}{n} \ones_{\bS_k} &\mbox{if } t \in ((n-1),n] \\
	\frac{1}{n}Y_t^{(1)} + \frac{n-2}{n}\ones_{\bS_k} + \frac{c}{n}\ones &
	\mbox{if }t \in ((n-2),(n-1)] \\
	\frac{1}{n}Y_t^{(1)} + \frac{n-3}{n}\ones_{\bS_k} + \frac{2c}{n}\ones &
	\mbox{if }t \in ((n-3),(n-2)] \\
	\vdots & \vdots \\
	\frac{1}{n}Y_t^{(1)} +\frac{1}{n}\ones_{\bS_k} + \frac{(n-2)c}{n}\ones & \mbox{if }t \in (1,2] \\
	\frac{1}{n}Y_t^{(1)} +
	\frac{(n-1)c}{n}\ones & \mbox{if }t \in (0,1] 
\end{cases}
\end{align*}
and $Y_0^{(n)} = c \ones$.  

Since $Y_0^{(n)} = c\ones$, the terminal cost 
$|Y_0^{(n)} (X_0)-\bmu(Y_0^{(n)})|^2 = 0$. And since $X_t\sim \bmu$, for $l=0,1,2,\ldots,n-1$:
\begin{align*}
\E^{\bmu}\Big(&\int_{l}^{(l+1)}  \Gamma(Y_t^{(n)})(X_t)  + 
|U_t^{(n)} + V_t^{(n)} (X_t)|_R^2 \ud t \Big) \\
&=\frac{1}{n^2}\E^{\bmu}\Big(\int_{0}^{1} \Gamma(Y_t^{(1)})(X_t) +
|U_t^{(1)}+ V_t^{(1)} (X_t)|_R^2 \ud t \Big)
\end{align*}
Therefore,
\[
\sJ_{n}^{\bmu}(U^{(n)}) =\frac{1}{n}\sJ_1^{\bmu}(U^{(1)})
\]
and thus the optimal value 
\begin{equation}\label{eq:l2-conv-indicator}
\sJ_n^\bmu \le\sJ_{n}^{\bmu}(U^{(n)}) =\frac{1}{n}\sJ_1^{\bmu}(U^{(1)}) \longrightarrow 0 \quad \text{as}\quad n\to\infty
\end{equation}
% For general non-integer $T$, note that $\sJ_T^\bmu$ is also monotone by considering the control described in Section~\ref{ssec:off-on-control}. %Therefore, for the stationary distribution $\bmu$, the limit $\sJ_T^\bmu$ exists and equals zero.
% In particular, let $T_1 = T-\lfloor T\rfloor$, $T_2 = \lfloor T\rfloor$, and $\tilde{U} = U^\bmu$ in Lemma~\ref{lemma:turn-off-on} then
% \[
% \sJ_T^{\bmu}\le \sJ_T^\bmu(U) = \sJ_{\lfloor T\rfloor}^\bmu \le \frac{1}{\lfloor T\rfloor}\sJ_1^{\bmu}(U^{(1)}) \longrightarrow 0 \quad \text{as } T\to\infty
% \]

\newP{Step 3} Suppose $\nu \in\clP(\bS)$ and $\ones_{\bS_k} \in
\clC$.  In this final step, we show that $\sJ_T^\nu \to 0$ and 
\[
\pi_T^{\nu}(\ones_{\bS_k}) \stackrel{(T\to\infty)}{\longrightarrow}
\ones_{\bS_k}(X_0) \quad \sP^{\nu}\text{-a.s.}
\] 
%and therefore also $\sP^{\mu}\text{-a.s.}$ whenever $\mu\ll \nu$.  

Let $\bmu_l\in\clP(\bS)$ be the invariant measure for the
$l^{\text{th}}$-ergodic class and $a_l := \nu (\ones_{\bS_l})$ for
$l=1,2,\ldots,m$.  Choose the invariant measure as follows:
\[
\bmu = a_1\bmu_1 + a_2\bmu_2 + \ldots + a_m\bmu_m 
\]
From step~2, we know that $\sJ_\infty^\bmu = 0$.  Also, $\nu_T :=
e^{A^\tp T}\nu\to \bmu$ as $T\to\infty$. 
Therefore, using part~(ii) of Lemma~\ref{lemma:stationary}, 
\[
\limsup_{T\to \infty} \sJ_T^\nu \leq \sJ_\infty^\bmu  = 0
\]
which shows that $\sJ_T^\nu \to 0$ as $T\to\infty$.   

Since $\bS_k$ is an ergodic class,
\[
\sJ_T^\nu = \E^\nu\big(|\ones_{\bS_k}(X_T)  -
\pi_T^\nu(\ones_{\bS_k})|^2\big) = \E^\nu\big(|\ones_{\bS_k}(X_0)  - \pi_T^\nu(\ones_{\bS_k})|^2\big)
\]
By Fatou's lemma,
\[
\E^\nu\big(\liminf_{T\to\infty}|\pi_T^\nu(\ones_{\bS_k})-\ones_{\bS_k}(X_0)|^2 \big) \le \lim_{T\to \infty}\sJ_T^\nu = 0
\]
In step 1, we showed that $\pi_T^\nu(\ones_{\bS_k})$ has an
a.s. limit.  So, $\liminf$ is replaced as
\[
\lim_{T\to \infty} |\pi_T^\nu(\ones_{\bS_k})-\ones_{[X_0\in\bS_k]}|^2 = 0\quad \sP^\nu\text{-a.s.}
\]
and therefore also $\sP^\mu$-a.s. whenever $\mu\ll \nu$.
\qed

\newpage

%%%%%%%%%%%%%%%%%%%%%%%%%%%%%%%%%%%%%%%%%%%%%%%%%%%%%%%%%%%%%%%%%%%%%%%%%%%%%%%%%%%%%%%%%%%%

\chapter{Future works}\label{ch:future}

%\chapter{Future works}\label{ch:future}

The three main contributions of this thesis are as follows: 
\begin{enumerate}
	\item Dual controllability characterization of stochastic observability.
	\item Dual minimum variance optimal control formulation of the stochastic filtering problem.
	\item Filter stability analysis using the dual optimal control formulation.
\end{enumerate}
We conclude the thesis by listing some of the remaining questions and possible extensions for each of the three main contributions.

\section{Observability gramian and model reduction}

In linear systems theory, controllability and observability are important properties to obtain minimal state-space realization~\cite{kalman1965irreducible}.
In a nutshell, an input-state-output system is controllable and observable if and only if the dimension of the state space is minimal.
The idea also extends to nonlinear systems~\cite[Section IV]{krener1985nonlinear}.

Model reduction problem considers a reduced order model that has an approximately the same impulse response as the original system~\cite{moore1981principal}.
%Fundamental definitions and results are presented by Moore~\cite{moore1981principal}.
Controllability and observability gramians are used for balanced model reduction of linear systems~\cite[Chapter 4]{dullerud2013course}.
In literature, the idea is extended to more general setting, e.g.,~\cite{vaidya2007observability} discussed finite dimensional approximation of observability gramian for discrete time nonlinear systems and~\cite{kawamura2020nonlinear} used Fleming-Mitter duality to define stochastic observability.

In Section~\ref{ssec:gramian}, the controllability gramian $\sW$ of the dual control system is presented. Because of the dual relationship, one may refer to it as the observability gramian of the HMM.
%It is an interesting question that $\sW$ admits a lower dimensional approximation that can be used to obtain novel numerical algorithm to approximate the solution to the Zakai equation or the dual optimal control problem.
It is an important question to consider model reduction based on analysis of the gramian.

\section{Sub-optimal solutions}

The duality principle provides the equality between the cost 
functional~\eqref{eq:dual-optimal-control-a} and the error variance. The 
optimal solution yields the conditional expectation if the control $U$ is 
minimized over the entire $\clU$. 
Therefore, if the control is chosen from a restricted set, then we can obtain 
a sub-optimal solution.

It is a subject of the future research to design an efficient approximation of 
the nonlinear filter by considering sub-optimal solution of the dual optimal control 
problem.
The simplest choice of restricted set is the space of deterministic control input $U \in L^2\big([0,T];\Re^m\big)$.
Another choice is finite order truncation of Wiener chaos expansion. Such method already utilized to numerically simulate BSDEs~\cite{briand2014simulation} and obtain approximation of stochastic PDEs~\cite{luo2006wiener}.
It has also been used for filter approximation~\cite{ocone1986application}.

\subsection{Kalman filter for Markov chains}

Consider the deterministic control input $U \in L^2\big([0,T];\Re^m\big)$. In this case, $Y$ is a deterministic function of time and $V=0$.

Consider the finite state-space case. the objective function~\eqref{eq:dual-optimal-control-a} is simplified as
\begin{equation*}\label{eq:DLQ-finite}
	\sJ(U) = Y_0^\tp \Sigma_0 Y_0 + \int_0^T  |U_t|^2 +
	Y_t^\tp \E ( Q(X_t) ) Y_t \ud t
\end{equation*}
where $\Sigma_0 := \E\big((X_0-\pi_0)(X_0-\pi_0)^\tp\big)$ and $Q$ is 
defined as~\eqref{eq:Q-matrix}.
The resulting problem is a deterministic LQ problem whose optimal solution
$\{U_t:0\le t \le T\}$ will (in general) yield a sub-optimal estimate $S_T$
using~\eqref{eq:estimator}. The derivation is identical to the procedure in 
Section~\ref{ssec:Kalman-filter}.

The optimal solution to the deterministic LQ problem is obtained by:
\begin{align*}
	\frac{\ud}{\ud t}\bSig_t 
	&= \bSig_t A + A^\tp \bSig_t+ \E(Q(X_t)) -\bSig_t H H^\tp 
	\bSig_t,\quad \bSig_0 =\Sigma_0\\
	U_t &= -H^\tp \bSig_t Y_t
\end{align*} 
Upon substituting the optimal control
solution into the estimator~\eqref{eq:estimator}
\[
S_T = Y_0^\tp\mu + \int_0^T Y_t^\tp \bSig_t H \ud Z_t
\]
where, given the state-feedback form of the optimal control, $Y$ solves
\[
\frac{\ud Y_t}{\ud t} = (-A + HH^\tp \bSig_t) Y_t,\quad Y_T = f
\]
Let $\Phi_{t,T}$ denote the state transition matrix and express the
solution as $Y_t= \Phi_{t,T} f$.  Thus,
\[
S_T = f^\tp \underbrace{(\Phi_{0,T}^\tp \pi_0 + \int_0^T 
	\Phi_{t,T}^\tp\bSig_t H \ud Z_t)}_{=:\bar{X}_T}
\]
Noting that time $T$ is arbitrary, upon differentiating with respect
to $T$, one obtains the Kalman filter
\begin{align}
	\ud \barX_t &= (A^\tp-\bSig_tHH^\tp )\barX_t \ud t + \bSig_tH\ud Z_t 
	\nonumber\\
	&= A^\tp \barX_t \ud t - \bSig_tH (\ud Z_t - H^\tp \barX_t\ud t) 
	\label{eq:Kalman-filter-MC}
\end{align}
where we have replaced $T$ by $t$. 
This is the Kalman filter algorithm for Markov chains. Such sub-optimal 
filters for Markov chains have been applied 
in~\cite{lipkrirub84,chebusmey17a}.

\medskip

\begin{remark}
	For the Euclidean case, the optimal solution over the deterministic 
	admissible control is considered in~\cite{jan2021master}.
	Although a recursive formulation for the sub-optimal filter does not appear, the solution can be used to provide approximation of $\pi_t(f)$ for a given function $f\in \clD$.
\end{remark}

\section{Remaining questions in filter stability}

Although we provided number of examples in Section~\ref{sec:examples}, it is an open question under which condition on the HMM implies conditional PI.
As noted in Example~\ref{ex:counter-example-revisit}, the standard PI of the state process alone does not implies conditional PI.
It is possible with additional assumptions on observation process such as~\eqref{eq:assumption-ergodic}---which is satisfied in our white noise observation model---standard PI implies conditional PI.

\medskip

The conditional Poincar\'e inequality only considers the energy term $\pi_t^\nu(\Gamma Y_t)$ from the Lagrangian.  In fact, a weaker condition for Prop.~\ref{prop:bvi-pathwise} is
\[
\pi_t^\nu(\Gamma Y_t) + |\cov_t^\nu(h,Y_t)|^2+\clV_t^\nu(V_t) \ge \alpha_t\clV_t^\nu(Y_t),\quad \sP^\nu\text{-a.s.},\; 0\le t\le T
\]
Then a version of backward variance inequality is obtained by:
\[
\clV_0^\nu(Y_0) \le \E^\nu\Big[\exp\Big(-\int_0^T \alpha_t \ud t\Big)\clV_T^\nu(Y_T)\Big]
\]
It is conjectured that if the dual BSDE is stabilizable, along with suitable technical conditions, there exists a constant $c>0$ that
\[
\frac{1}{T}\int_0^T \alpha_t \ud t \;\longrightarrow\; c,\quad \sP^\nu\text{-a.s.}
\]
%there exists a constant $c>0$ that~\eqref{eq:alpha-approach} is satisfied (or asymptotically).
The conditional PI is a special case when the state process is sufficiently ergodic.

\newpage

%%%%%%%%%%%%%%%%%%%%%%%%%%%%%%%%%%%%%%%%%%%%%%%%%%%%%%%%%%%%%%%%%%%%%%%%%%%%%%%%%%%%%%%%%%%%
	
\appendix

\chapter{Backward stochastic differential equations}\label{apdx:bsde}

\section{Backward Stochastic Differential Equation}

Backward stochastic differential equation (BSDE) is first introduced and analyzed by Bismut~\cite{bismut1978introductory}, as the adjoint equation for linear-quadratic stochastic control. Later, Pardoux and Peng~\cite{pardoux1990adapted} proved the existence and uniqueness of the solution for general Lipschitz cases. BSDE is also closely related to the finance problem~\cite{el1997backward} and certain types of PDE~\cite{pardoux2014stochastic}. %(Pardoux, 1998).
For function spaces, backward stochastic partial differential equation (BSPDE) is studied in~\cite{ma1999linear} for elliptic operators.

\subsection{Problem definition} Let $\{\Omega, \clF, \sP\}$ be a probability space and $Z=\{Z_t\in\Re^m:0\le t\le T\}$ be a $m$-dimensional standard Brownian motion. Define canonical filtration $\clZ_t = \sigma\{Z_s:0\le s \le t\}$. The BSDE on Euclidean pace $\Re^d$ seeks an $\clZ$-adapted square-integrable processes on $\Re^d\times \Re^{d\times m}$: $\{(Y_t,V_t):0\le t \le T\}$ such that for all $t\in[0,T]$,
\begin{equation}\label{eq:bsde_int}
	Y_t + \int_t^T f(Y_s, V_s, s)\ud s + \int_t^T V_s\ud Z_s = \xi
\end{equation}
It may be written in a differential form:
\begin{equation}\label{eq:bsde_diff}
	\ud Y_t = f(Y_s,V_t,t) \ud t + V_t\ud W_t, \quad Y_T = \xi
\end{equation}
Technical assumptions are introduced to obtain existence and uniqueness of the solution: We assume
\begin{enumerate}
	\item[(i)] For each $(y,v)\in\Re^d\times \Re^{d\times m}$, $f(y,v,\cdot) \in L^2_\clZ\big(\Omega\times [0,T];\Re^d\big)$.
	\item[(ii)] $f$ is Lipschitz with both $y$ and $v$ for almost every $t$ almost surely.
	\item[(iii)] $\xi \in L^2_{\clZ_T}(\Omega;\Re^d)$ is a $\clZ_T$-measurable random vector.
\end{enumerate}
Under these assumption, the following theorem is proved in~\cite{pardoux1990adapted}.

\medskip

\begin{theorem}[Proposition 2.2 in~\cite{pardoux1990adapted}] There exists a unique pair $\{(Y_t,V_t):0\le t \le T\} \in L^2_{\clZ}\big(\Omega\times[0,T];\Re^d\times\Re^{d\times m}\big)$ that satisfy~\eqref{eq:bsde_int} for all $t\in[0,T]$.
\end{theorem}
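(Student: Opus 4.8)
The plan is to recast the BSDE as a fixed-point problem for a map on the Hilbert space $\clH := L^2_{\clZ}\big(\Omega\times[0,T];\Re^d\times\Re^{d\times m}\big)$ and to solve it by the Banach contraction principle, with the martingale representation theorem supplying the solution of an auxiliary affine equation. First I would treat the \emph{affine} case, in which the driver is a prescribed process $g\in L^2_{\clZ}\big(\Omega\times[0,T];\Re^d\big)$ not depending on $(Y,V)$, i.e.
\begin{equation*}
	Y_t + \int_t^T g_s\ud s + \int_t^T V_s\ud Z_s = \xi
\end{equation*}
Taking $\clZ_t$-conditional expectations and using that $Y_t$ is $\clZ_t$-measurable forces $Y_t = \E\big(\xi - \int_t^T g_s\ud s\mid \clZ_t\big)$, so the candidate $Y$ is dictated by the data. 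To produce the integrand $V$, I would apply the martingale representation theorem to the square-integrable martingale $M_t := \E\big(\xi - \int_0^T g_s\ud s\mid\clZ_t\big)$, obtaining $V\in L^2_\clZ$ valued in $\Re^{d\times m}$ with $M_t = M_0 + \int_0^t V_s\ud Z_s$; setting $Y_t := M_t + \int_0^t g_s\ud s$ then verifies both the integral identity and the terminal condition $Y_T=\xi$. Uniqueness in this affine case is immediate from the $L^2$-isometry of the stochastic integral.

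Second, I would define the solution map $\Phi:\clH\to\clH$ by $\Phi(y,v) = (Y,V)$, where $(Y,V)$ solves the affine equation with driver $g_s := f(y_s,v_s,s)$. Assumptions (i) and (ii) guarantee $g\in L^2_\clZ$ whenever $(y,v)\in\clH$ (the Lipschitz bound controls $|f(y_s,v_s,s)-f(0,0,s)|$ and (i) controls the base point), so $\Phi$ is well defined, and a fixed point of $\Phi$ is precisely a solution of \eqref{eq:bsde_int}.

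The crux is to show $\Phi$ is a contraction. I would equip $\clH$ with the equivalent weighted norm
\begin{equation*}
	\|(Y,V)\|_\beta^2 := \E\Big(\int_0^T e^{\beta t}\big(|Y_t|^2 + |V_t|^2\big)\ud t\Big)
\end{equation*}
for a parameter $\beta>0$ to be chosen. Given two inputs $(y,v)$ and $(y',v')$, write $(\bar Y,\bar V)$ for the difference of the corresponding outputs and $(\bar y,\bar v)$ for the difference of the inputs, so that $\ud \bar Y_t = \big(f(y_t,v_t,t)-f(y_t',v_t',t)\big)\ud t + \bar V_t\ud Z_t$ with $\bar Y_T=0$. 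Applying the It\^o formula to $e^{\beta t}|\bar Y_t|^2$ on $[0,T]$, taking expectations so the stochastic integral vanishes, and discarding the nonnegative boundary term $|\bar Y_0|^2$ gives an estimate of the form
\begin{equation*}
	\E\Big(\int_0^T e^{\beta t}\big(\beta|\bar Y_t|^2 + |\bar V_t|^2\big)\ud t\Big) \le 2\,\E\Big(\int_0^T e^{\beta t}\,|\bar Y_t|\,\big|f(y_t,v_t,t)-f(y_t',v_t',t)\big|\ud t\Big)
\end{equation*}
Bounding the right-hand side with the Lipschitz constant $K$ and Young's inequality, then choosing $\beta$ large relative to $K$, absorbs the $|\bar Y_t|^2$ term on the left and yields $\|\Phi(y,v)-\Phi(y',v')\|_\beta^2 \le \kappa\,\|(\bar y,\bar v)\|_\beta^2$ with $\kappa<1$. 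The contraction principle then delivers a unique fixed point, completing existence and uniqueness.

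The main obstacle I anticipate is the weighted-norm estimate: arranging the coefficients so that \emph{both} components $|\bar Y_t|^2$ and $|\bar V_t|^2$ are simultaneously dominated by the input norm, and confirming that a single $\beta$ makes the contraction factor strictly less than one independently of $T$. If that balancing proves delicate, the fallback is to establish contraction on a short subinterval $[T-\delta,T]$ with $\delta$ depending only on $K$, and then patch finitely many such intervals backward in time using the terminal value of each block as the data of the next; this trades the sharp-constant computation for a routine gluing argument.
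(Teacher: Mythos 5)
Your proposal is correct and is essentially the argument of the cited reference: the thesis gives no proof of this statement, simply quoting Pardoux--Peng, whose proof is precisely your two-step scheme (solve the affine equation via conditional expectation plus the martingale representation theorem, then obtain the general case as a fixed point of a contraction in the $e^{\beta t}$-weighted norm, with the small-interval/patching argument as the standard alternative). The only points worth making explicit are that the martingale representation theorem applies because $\clZ$ is the filtration generated by the Brownian motion $Z$, and that the stochastic integral appearing in the It\^o computation for $e^{\beta t}|\bar Y_t|^2$ is a true martingale (a routine localization), after which your choice of Young parameter and $\beta$ does yield a contraction factor strictly less than one uniformly in $T$.
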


\medskip

\section{Linear BSDE and its explicit solution} \label{apdx:linear-bsde}

If $f$ is linear function of its arguments, the solution to BSDE can be obtained in an explicit form. We follow~\cite[Theorem 7.2.2]{yong1999stochastic}. Consider linear BSDE
\begin{equation}\label{eq:bsde_linear}
	\ud Y_t = \Big(A_t Y_t  + \sum_{j=1}^m B_t^j V_t^j + C_t \Big)\ud t + V_t \ud Z_t
\end{equation}
where $A_t$, $B_t^j$ for $j=1,\ldots,m$ and $C_t$ are $\clZ$-adapted processes with proper dimension, and $V_t^j$ is the $j^{\text{th}}$ column of $V_t$. We define a $\Re^{n\times n}$-valued stochastic process $\Psi_t$ according to
\begin{equation}
	\ud \Psi_t = -\Psi_t A_t\ud t - \sum_{j=1}^m \Psi_t B_t^j \ud Z_t^j,\quad \Psi_0 = I
\end{equation}
where $Z_t^j$ is the $j^{\text{th}}$ component of $Z_t$ and $I$ is the $n$-dimensional identity matrix. By applying It\^{o}'s rule on $\Psi_t Y_t$, we have
\begin{equation}\label{eq:lbsde_martingale}
	\ud \big(\Psi_t Y_t\big) = \Psi_t C_t\ud t + \sum_{j=1}^m \Psi_t(V_t^j-B_t^jY_t) \ud Z_t^j
\end{equation}
Therefore it is concluded that
\[
\Psi_t Y_t - \int_0^t \Psi_s C_s \ud s =: \Theta_t
\]
is a $\tsP$-martingale. Therefore, rearranging $\E(\Theta_T|\clZ_t) = \Theta_t$ yields the solution
\begin{equation}\label{eq:lbsde_soln_Y}
	Y_t = \Psi_t^{-1}\Big(\E(\Theta_T|\clF_t) + \int_0^t \Psi_sC_s \ud s\Big)
\end{equation}
Also, by martingale representation theorem, there exists a unique $\clF_t$-adapted process $\Lambda_t^j$ for $j=1,\ldots,m$ such that
\[
\E(\Theta_T|\clF_t) = \E\Theta_T + \sum_{j=1}^m\int_0^t \Lambda_t^j \ud Z_s^j
\]
Combining this with~\eqref{eq:lbsde_martingale} yields the solution
\begin{equation}\label{eq:lbsde_soln_Z}
	V_t^j = \Psi_t^{-1}\Lambda_t^j + B_t^jY_t
\end{equation}
Since both~\eqref{eq:lbsde_soln_Y} and~\eqref{eq:lbsde_soln_Z} have $\Psi_t^{-1}$, we need the existence of the inverse. Indeed, $\Psi_t^{-1}$ is well defined by the following linear SDE:
\[
\ud \Psi_t^{-1} = \Big(A_t \Psi_t^{-1} + \sum_{j=1}^m B_t^jB_t^j\Psi_t^{-1} \Big)\ud t + \sum_{j=1}^m B_t^j \Psi_t^{-1} \ud Z_t^j,\quad \Psi_0^{-1} = 0
\]

\section{Optimal control on BSDE}

Since BSDE is a well-defined dynamical system, it is naturally asked to consider a control problem on it~\cite{peng1993backward}. %, Xu(1995), Dokuchaev and Zhou(1999), Lim and Zhou(2001), Huang, Wang and Xiong(2009).
%	It is noted that stochastic maximum principle, and to my best knowledge, no theoretical result has been done on dynamic programming approach.	
%	We work with a probability space $(\Omega, \clF, \{\clZ_t\}_{t\geq 0}, \sP)$, and associated $m$-dimensional Brownian motion $Z = \{Z_t\}_{t\geq 0}$. $L_{\clZ_T}^2(\Omega;S)$ is $S$-valued $L^2$ random variable which is measurable on $\clZ_T$. $L_\clZ^2(\Omega\times [0,T]\,;S)$ is a $\{\clZ_t\}$-adapted random process taking values in $S$, with finite $L^2$ norm.

\subsubsection{Optimal control problem} In this note, we consider the following optimal control problem, where the dynamics constraint is given by a BSDE:
\begin{align}
	&\mathop{\mathrm{Minimize}}_{U\in{\cal U}}: \sJ(U):= \E\Big(h(Y_0) + \int_0^T l(Y_\tau, V_\tau, U_\tau) \ud \tau \Big) \label{eq:cost}\\
	&\text{Subject to}: \ud Y_t = f(Y_t,V_t,U_t) \ud t + V_t \ud Z_t,\quad Y_T = \xi \label{eq:dynamics}
\end{align}
where $\xi \in L_{\clZ_T}^2(\Omega; \Re^d)$. The set of admissible control is $\clU = L^2_\clZ(\Omega\times [0,T]\,;D)$ where $D \subset \Re^p$ is a convex set. Functions $h:\Re^d\to \Re$, $l:\Re^d\times \Re^{d\times m} \times \Re^p \to \Re$ and $f:\Re^d\times \Re^{d\times m} \times \Re^p \to \Re^n$ are assumed to have~\eqref{eq:dynamics} to admit the unique solution pair $(Y,V) \in L_\clZ^2\big(\Omega\times[0,T];\Re^d\times \Re^{d\times m}\big)$. % The detailed assumptions are provided in each section.

\subsection{Maximum principle}\label{sec:bsde-maximum-principle}

In this section, we follow the proof in~\cite{peng1993backward}.
%	 for a special case that does not have forward dynamics.
%	For the fully coupled forward-backward problem, see~\cite{peng1993backward}.
It is assumed that $f, h, \sigma, l$ are continuously differentiable with bounded derivatives.

\subsubsection{Variational processes} Suppose $U$ is the optimal control, and consider a perturbed control $U_t^\varepsilon = U_t + \varepsilon V_t\in{\cal U}$. Given fixed $V$, we define variational processes by:
\begin{align*}
	\ud \eta_t &= \big( f_y(Y_t, V_t, U_t) \eta_t+ f_z(Y_t, V_t, U_t)\zeta_t  + f_u(Y_t, V_t, U_t)V_t\big)\ud t + \zeta_t \ud Z_t\\
	\eta_T &= 0
\end{align*}
Then one can show that the first order variation of $\sJ(U)$ with respect to $\tilde{U}$ at $U$ is given by
\begin{align*}
	\delta_{\tilde{U}} \sJ(U) &:= \lim_{\varepsilon\downarrow 0} \frac{1}{\varepsilon}\Big(J(U+\varepsilon \tilde{U}) - \sJ(U)\Big) \\
	&=\E\Big[\int_0^T \big(\langle l_y(Y_t,V_t,U_t),\eta_t\rangle+\langle l_v(Y_t,V_t,U_t),\zeta_t\rangle + l_u(Y_t,V_t,U_t)\tilde{U}_t\big)\ud t + \langle h_y(Y_0), \eta_0\rangle\Big]
\end{align*}
is well defined since ${\cal U}$ is convex, and the optimality of $U$ implies $\delta_{\tilde{U}} \sJ(U) \geq 0$. In order to transform this into more explicit operator on $\tilde{U}$, we define the following adjoint equations:
\begin{align*}
	-\ud P_t &= \big( f_y^\dagger(Y_t, V_t, U_t) P_t - l_y(Y_t, V_t, U_t)\big)\ud t + \big( f_v^\dagger( Y_t, V_t, U_t)P_t - l_v(Y_t, V_t, U_t)\big) \ud Z_t \nonumber \\
	P_0 &= h_y(Y_0)
\end{align*}
where the asterisk denotes formal adjoint. The existence of the solution $\{P_t\}_{t\in[0,T]}$ is guaranteed from the assumption. Apply It\^{o}'s rule on $\langle\eta_t,P_t\rangle$. After some rearranging and cancellation, we have
\[
\langle h_y(Y_0),\eta_0 \rangle= \int_0^T \big(-\langle P_t,f_u\tilde{U}_t\rangle -\langle \eta_t,l_y\rangle-\langle\zeta_t,l_v\rangle\big) \ud t + \int_0^T \big(\;\cdots\;\big) \ud Z_t
\]
This goes back to the optimality condition and it arrives:
\begin{equation}\label{eq:optimality_condition}
	\delta_{\tilde{U}} \sJ(U) = \E\Big(\int_0^T -\big(\langle P_t,f_u\rangle -l_u\big)\tilde{U}_t \ud t\Big) \geq 0
\end{equation}

\subsubsection{Hamiltonian and Maximum principle} The Hamiltonian is thus defined by:
\begin{equation}
	\clH(y,z,u,p) = \langle f(y,z,u),p\rangle-l(y,z,u)
\end{equation}
Then the adjoint equation for $P$ is simplified to
\begin{equation}\label{eq:adjoint}
	-\ud P_t = \clH_y(Y_t,V_t,U_t,P_t) \ud t + \clH_v(Y_t,V_t,U_t,P_t) \ud Z_t,\quad P_0 = h_y(Y_0)
\end{equation}
and the optimality of the first variation~\eqref{eq:optimality_condition} is
\[
\E\Big[\int_0^T \clH_u(Y_t,V_t,U_t,P_t)\tilde{U}_t \ud t\Big] \leq 0
\]
and this implies the following statement of maximum principle.

\begin{theorem}[Maximum principle on BSDE, Theorem 4.4 in~\cite{peng1993backward}] Consider an optimal control problem on BSDE defined by~\eqref{eq:dynamics} and \eqref{eq:cost}. For the optimal control $U$ and corresponding trajectory $\{Y_t,V_t\}_{t\in[0,T]}$, there exist adjoint process $\{P_t\}_{t\in[0,T]}$ according to~\eqref{eq:adjoint} and
	\begin{equation}
		\big\langle \clH_u(Y_t,V_t,U_t,P_t), u-U_t\big\rangle \leq 0,\quad \forall u \in D,\; \text{a.e. } t,\;\text{a.s.}
	\end{equation}
\end{theorem}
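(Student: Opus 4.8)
The plan is to follow the standard Pontryagin-type argument adapted to the backward dynamics, paralleling the variational calculation sketched above. Since the admissible set $D$ is convex, for any fixed $u\in D$ the perturbation $U^\varepsilon := U + \varepsilon(u-U)$ remains in $\clU$ for $\varepsilon\in[0,1]$. First I would linearize the state BSDE~\eqref{eq:dynamics} about the optimal pair $(Y,V)$ along the direction $\tilde U := u-U$, obtaining the variational processes $(\eta,\zeta)$ as the unique solution of the linear BSDE
\begin{align*}
  \ud\eta_t &= \big(f_y\,\eta_t + f_v\,\zeta_t + f_u\,\tilde U_t\big)\ud t + \zeta_t\ud Z_t,\\
  \eta_T &= 0,
\end{align*}
with all derivatives of $f$ evaluated at $(Y_t,V_t,U_t)$. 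A Gronwall-type estimate on this linear BSDE gives $\varepsilon^{-1}\big(Y^\varepsilon-Y,\,V^\varepsilon-V\big)\to(\eta,\zeta)$ in $L^2_\clZ$, which is the technical input needed to differentiate the cost.

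Next I would compute the directional derivative $\delta_{\tilde U}\sJ(U)$. Using differentiability of $h$ and $l$ together with the $L^2$-convergence of the difference quotients, I expect
\[
  \delta_{\tilde U}\sJ(U) = \E\Big(\langle h_y(Y_0),\eta_0\rangle + \int_0^T \langle l_y,\eta_t\rangle + \langle l_v,\zeta_t\rangle + l_u\,\tilde U_t\,\ud t\Big),
\]
which, by optimality of $U$, must satisfy $\delta_{\tilde U}\sJ(U)\ge 0$. To eliminate the implicit dependence on $(\eta,\zeta)$ I would introduce the adjoint process $P$ as the solution of the forward-in-time linear equation~\eqref{eq:adjoint} with initial datum $P_0=h_y(Y_0)$, whose well-posedness follows from linear SDE theory (cf.\ the explicit formula of Section~\ref{apdx:linear-bsde}). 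Applying the It\^o product rule to $\langle\eta_t,P_t\rangle$ and integrating from $0$ to $T$, the coupling terms in $\eta$ and $\zeta$ cancel against the adjoint drift and diffusion, the terminal contribution vanishes because $\eta_T=0$, and the boundary term at $t=0$ reproduces $\langle h_y(Y_0),\eta_0\rangle$. In terms of the Hamiltonian $\clH(y,v,u,p)=\langle f(y,v,u),p\rangle - l(y,v,u)$ this collapses the derivative to
\[
  \delta_{\tilde U}\sJ(U) = -\,\E\Big(\int_0^T \big\langle\clH_u(Y_t,V_t,U_t,P_t),\,\tilde U_t\big\rangle\,\ud t\Big)\ge 0,
\]
so that $\E\int_0^T \langle \clH_u(Y_t,V_t,U_t,P_t),\,u-U_t\rangle\,\ud t \le 0$ for every $u\in D$.

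The final step is to pass from this integrated inequality to the pointwise maximum principle. For fixed $u\in D$ I would specialize $\tilde U_t = (u-U_t)\ones_A(t,\omega)$ for an arbitrary set $A$ in the product $\sigma$-algebra $\clZ\otimes\clB([0,T])$; since the integrated inequality holds for all such $A$, a standard Lebesgue-point / measurable-selection argument yields $\langle\clH_u(Y_t,V_t,U_t,P_t),\,u-U_t\rangle\le 0$ for a.e.\ $t$, $\sP$-a.s. Running this over a countable dense subset of $D$ and using continuity of $\clH_u$ in its control argument upgrades it to all $u\in D$ simultaneously. The main obstacle I anticipate is the first technical input---establishing the $L^2$-convergence of the variational difference quotients and the attendant dominated-convergence justification for differentiating $\sJ$ under the expectation---since this is precisely where the Lipschitz and bounded-derivative hypotheses on $f,h,l$ must be used carefully; the cancellation via It\^o's product rule and the concluding localization are comparatively routine.
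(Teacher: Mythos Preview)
Your proposal is correct and follows essentially the same approach as the paper's own sketch (which in turn follows Peng): introduce the variational BSDE for $(\eta,\zeta)$, compute the first variation of $\sJ$, define the forward adjoint $P$ with $P_0=h_y(Y_0)$, and eliminate $(\eta,\zeta)$ via the It\^o product rule on $\langle\eta_t,P_t\rangle$ to reduce to the integrated inequality in $\clH_u$. You are in fact more explicit than the paper on two points it leaves implicit---the $L^2$ convergence of the difference quotients and the localization argument passing from the integrated inequality to the pointwise statement---so there is nothing to correct.
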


\subsection{Dynamic programming}

While stochastic maximum principle is mainly discussed in literature, dynamic programming approach has been less noticed. The results below are original study by the author from~\cite{kim2021dynamicprogramming}.

To formulate the dynamic programming principle, consider a partial problem up to time $t \le T$ from  $\zeta\in L^2_{\clZ_t}(\Omega;\Re^d)$ defined by:
\[
\sJ_t(U;\zeta) = \E\Big(h(Y_0) + \int_0^t l(Y_s^{\zeta,t},V_s^{\zeta,t},U_s,s) \ud s \Big)
\]
where $\big\{\big(Y_s^{\zeta,t}, V_s^{\zeta,t}\big) : \tau \in [0,t]\big\}$ is the solution to the BSDE:
\[
\ud Y_s^{\zeta,t} = f\big(Y_s^{\zeta,t},V_s^{\zeta,t},U_s,\tau\big)\ud s + V_s^{\zeta,t} \ud Z_s,\quad Y_t^{\zeta,t} = \zeta
\]
\begin{definition}\label{def:value-function}
	Consider the optimal control problem~\eqref{eq:cost}-\eqref{eq:dynamics}. The \emph{value function} is a sequence of functions $\clV:=\{\clV_t:0\le t\le T\}$ where $\clV_t:L^2_{\clZ_t}(\Omega\times \Re^d) \to \Re$ is defined by:
	\begin{equation}\label{eq:value-function}
		\clV_t(\zeta) = \inf_{U\in{\cal U}} \sJ_t(U;\zeta)
	\end{equation}
\end{definition}

\medskip

Analogously with the forward-in-time stochastic DP principle, the following theorem is proposed:

\medskip

\begin{theorem}\label{thm:DP-principle}
	Let $\clV$ be the value function of the optimal control problem~\eqref{eq:cost}-\eqref{eq:dynamics}. Then it satisfies the following:
	\begin{enumerate}
		\item[(i)] $\clV_0(\cdot) = h$
		\item[(ii)] For any $0\le s < t \le T$ and any $\zeta\in L^2_{\clZ_t}(\Omega;\Re^d)$,
		\begin{equation}\label{eq:DP-principle}
			\clV_t(\zeta) = \inf_{U\in{\cal U}} \E\Big(\clV_s(Y_s^{\zeta,t}) + \int_s^t l(Y_s^{\zeta,t},V_s^{\zeta,t},U_s,s)\ud s\Big)
		\end{equation}
	\end{enumerate}
\end{theorem}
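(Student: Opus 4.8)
The plan is to prove the Dynamic Programming Principle for the BSDE-constrained optimal control problem by mimicking the classical forward-in-time DP argument, adapted to the fact that the BSDE runs backward and the terminal data is fixed. Part (i) is immediate: when $t=0$, there is no control interval to optimize over, the integral $\int_0^0 l\,\mathrm{d}s$ vanishes, and $\clV_0(\zeta) = \E(h(Y_0)) = \E(h(\zeta))$, so $\clV_0(\cdot) = h$ in the appropriate sense. The substance is in part (ii), the semigroup (tower) property over $[0,s]$ and $[s,t]$.

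First I would establish the $\le$ direction. Fix $0\le s < t \le T$ and $\zeta \in L^2_{\clZ_t}(\Omega;\Re^d)$. For any admissible control $U \in \clU$ on $[0,t]$, consider the solution $(Y^{\zeta,t}, V^{\zeta,t})$ of the BSDE with terminal data $Y_t^{\zeta,t}=\zeta$. By the additivity of the running cost and the tower property of conditional expectation, I would decompose
\begin{equation*}
	\sJ_t(U;\zeta) = \E\Big(h(Y_0) + \int_0^s l\,\mathrm{d}r + \int_s^t l\,\mathrm{d}r\Big).
\end{equation*}
The key observation is that, because the BSDE is propagated backward, the value $Y_s^{\zeta,t}$ is a well-defined $\clZ_s$-measurable (square-integrable) random variable, and the uniqueness of the BSDE solution means that on $[0,s]$ the pair $(Y^{\zeta,t},V^{\zeta,t})$ coincides (in law and pathwise) with the solution of the sub-problem started from terminal data $Y_s^{\zeta,t}$. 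Hence $\E(h(Y_0)+\int_0^s l\,\mathrm{d}r)\ge \clV_s(Y_s^{\zeta,t})$ by definition of the value function, giving $\sJ_t(U;\zeta)\ge \E(\clV_s(Y_s^{\zeta,t}) + \int_s^t l\,\mathrm{d}r)$, and taking infima yields one inequality.

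For the reverse ($\ge$) direction I would use a concatenation (pasting) argument: given any control $U^{(2)}$ on $[s,t]$ driving the BSDE from $Y_t=\zeta$ down to some $\clZ_s$-measurable $\eta := Y_s$, and any near-optimal control $U^{(1)}$ on $[0,s]$ for the sub-problem with terminal data $\eta$, I splice them into a single admissible $U\in\clU$ on $[0,t]$. The crucial point is that this spliced control is genuinely $\clZ$-adapted and that the BSDE solution it generates agrees with $U^{(2)}$ on $[s,t]$ and with $U^{(1)}$ on $[0,s]$ by uniqueness of solutions; this is exactly the style of concatenation already used in the proof of Lemma~\ref{lemma:stationary} (see~\eqref{eq:zero-U-control} and~\eqref{eq:claim1}). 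Passing to the infimum over $U^{(2)}$ and letting the near-optimality gap in $U^{(1)}$ tend to zero gives the matching inequality.

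The main obstacle I anticipate is the measurability/adaptedness bookkeeping for the spliced control and the associated BSDE solution. Unlike the forward SDE case, here one must verify that conditioning the $[0,s]$ sub-problem on the random terminal datum $\eta = Y_s^{\zeta,t}$ is legitimate, i.e.\ that $\clV_s$ evaluated at a $\clZ_s$-measurable random variable is itself the correct conditional object and that a measurable selection of near-optimal controls $U^{(1)}$ exists as $\eta$ varies. This requires a regular-conditional-probability or measurable-selection argument together with the continuity/stability of the BSDE solution map in its terminal data (the existence-uniqueness and linear-solution theory of Appendix~\ref{apdx:bsde}, in particular the explicit representation~\eqref{eq:lbsde_soln_Y}). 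Once adaptedness of the pasted control and well-posedness of the resulting solution are secured, the two inequalities close and~\eqref{eq:DP-principle} follows.
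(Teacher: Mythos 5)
Your proposal is correct and takes essentially the same route as the paper: split the cost over $[0,s]$ and $[s,t]$ and use uniqueness of the BSDE solution to get the flow property $\big(Y_u^{\zeta,t},V_u^{\zeta,t}\big)=\big(Y_u^{Y_s^{\zeta,t},s},V_u^{Y_s^{\zeta,t},s}\big)$ for $u\le s$, which identifies the inner infimum over controls on $[0,s]$ with $\clV_s(Y_s^{\zeta,t})$; the paper's own proof is just a terser version of this that leaves the two-inequality/concatenation step implicit. One simplification you are entitled to: the measurable-selection and regular-conditional-probability machinery you anticipate is not needed, because $\clV_s$ is defined directly as a map $L^2_{\clZ_s}(\Omega;\Re^d)\to\Re$, so $\clV_s(Y_s^{\zeta,t})$ is a single scalar and you only need one near-optimal control for the one random terminal datum $Y_s^{\zeta,t}$, not a measurable family indexed by states. (Minor point: your two inequalities are labeled backwards---the first argument yields $\clV_t(\zeta)\ge\inf_U\E(\cdots)$ and the concatenation yields $\le$---but the content is right.)
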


\begin{proof}
	We start from the definition of the value function:
	\begin{align*}
		\clV_t(\zeta) &= \inf_{U\in\clU}  \E\Big(h(Y_0^{\zeta,t}) + \int_0^t l(Y_\tau^{\zeta,t},V_\tau^{\zeta,t},U_\tau,\tau) \ud \tau \Big)\\
		&= \inf_{U\in{\cal U}}  \E\Big(h(Y_0^{\zeta,t}) + \int_0^s l(Y_\tau^{\zeta,t},V_\tau^{\zeta,t},U_\tau,\tau) \ud \tau + \int_\tau ^t l(Y_\tau^{\zeta,t},V_\tau^{\zeta,t},U_\tau,\tau) \ud \tau \Big)
	\end{align*}
	The claim is that the first two terms are precisely $\clV_s(Y_s^{\zeta,t})$. Observe that
	\[
	Y_s^{\zeta,t} = \zeta - \int_s^t f(Y_\tau,V_\tau,U_\tau,\tau) \ud \tau - \int_s^t V_\tau \ud Z_\tau
	\] 
	Note that the right-hand side depends only on $U_\tau: \tau\in[s,t]$. Meanwhile,
	\[
	Y_u^{\zeta,t} = Y_s^{\zeta,t} - \int_u^t f(Y_\tau,V_\tau,U_\tau,\tau) \ud \tau - \int_u^s V_\tau \ud Z_\tau,\quad u\le s
	\]
	depends only on $U_\tau:\tau\in[0,s]$ given $Y_s^{\zeta,t}$, and therefore \[
	\big(Y_u^{\zeta,t},V_u^{\zeta,t}\big) = \Big(Y_u^{Y_s^{\zeta,t},s},V_u^{Y_s^{\zeta,t},s}\Big)
	\]
\end{proof}

\medskip

For stochastic optimal control problems, an appealing formulation for the value function is to construct a martingale associated with it. The martingale version of DP principle is as follows.

\medskip

\begin{proposition}\label{thm:martingale-DP}
	Let $\clV$ be the value function of the optimal control problem~\eqref{eq:cost}-\eqref{eq:dynamics}. Then
	\begin{enumerate}
		\item $\clV_0(\cdot) = h$
		
		\item Define $M^U = \{M_t^U:0\leq t\leq T\}$ for any admissible control $U\in{\cal U}$ by:
		\begin{equation}\label{eq:martingale-def}
			M_t^U = \clV_t(Y_t) - \int_0^t l(Y_s,V_s,U_s,\tau) \ud \tau
		\end{equation}
		where $(Y,V)$ is the solution to~\eqref{eq:dynamics}. $M^U$ is a super-martingale for any admissible control $U$; and it is a margingale if and only if $U$ is the optimal solution.
	\end{enumerate}
\end{proposition}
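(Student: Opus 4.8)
The plan is to treat the two claims separately, leaning throughout on the dynamic programming principle already established in \Theorem{thm:DP-principle}. Claim~(i), $\clV_0(\cdot) = h$, is nothing more than part~(i) of \Theorem{thm:DP-principle}, so no work is required there. The substance is in Claim~(ii), which I would organize as: first the supermartingale inequality for an arbitrary admissible $U$, then a characterization of optimality through the endpoints of the horizon, and finally the upgrade of ``supermartingale with constant expectation'' to ``martingale''.

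For the supermartingale property, fix $0\le s < t\le T$ and let $(Y,V)$ be the solution of the BSDE~\eqref{eq:dynamics} on $[0,T]$ driven by $U$. The first step is the \emph{flow property} of the BSDE: by uniqueness of adapted solutions, the restriction of $(Y,V)$ to $[0,t]$ coincides with the solution of the truncated BSDE carrying terminal value $Y_t$ at time $t$, so $Y_u^{Y_t,t} = Y_u$ for $u\in[0,t]$. This is exactly the identity $(Y_u^{\zeta,t},V_u^{\zeta,t}) = (Y_u^{Y_s^{\zeta,t},s},V_u^{Y_s^{\zeta,t},s})$ exploited at the end of the proof of \Theorem{thm:DP-principle}. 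The second step is a \emph{conditional} version of the relation~\eqref{eq:DP-principle}: conditioning on $\clZ_s$ instead of taking a full expectation, I would establish
\[
\clV_t(Y_t) \le \E\Big(\clV_s(Y_s) + \int_s^t l(Y_u,V_u,U_u,u)\ud u \;\Big|\; \clZ_s\Big),
\]
with equality precisely when $U$ is optimal on $[s,t]$. Subtracting $\int_0^s l\,\ud u$ from both sides and recognizing the two sides as $\E(M_t^U\mid\clZ_s)$ and $M_s^U$ then yields $\E(M_t^U\mid\clZ_s)\le M_s^U$, i.e. the supermartingale property, with equality under the optimal control.

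For the equivalence ``martingale $\iff$ optimal'', I would work at the two endpoints. Since $\clZ_0$ is trivial, $Y_0$ is deterministic and $M_0^U = \clV_0(Y_0) = h(Y_0)$, whence $\E(M_0^U) = \sJ(U) - \E\big(\int_0^T l\,\ud u\big)$; at the other end $Y_T = \xi$ gives $\E(M_T^U) = \E\big(\clV_T(\xi)\big) - \E\big(\int_0^T l\,\ud u\big)$, and $\E\big(\clV_T(\xi)\big)$ equals the optimal value $\inf_{U}\sJ(U)$ of the full problem. Hence $\E(M_T^U) = \E(M_0^U)$ is equivalent to $\sJ(U) = \inf_{U}\sJ(U)$, i.e. to optimality of $U$. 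Combined with the supermartingale property this closes both directions: if $U$ is optimal then $t\mapsto\E(M_t^U)$ is non-increasing yet has equal endpoints, so it is constant, and a supermartingale with constant expectation is a martingale; conversely a martingale has $\E(M_T^U) = \E(M_0^U)$, which by the endpoint identities forces $U$ to be optimal.

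The main obstacle is the conditional dynamic programming relation in the second step. \Theorem{thm:DP-principle} is stated with a deterministic infimum under a full expectation, whereas the martingale characterization genuinely requires $\clV_t(Y_t)$ to be read as the optimal \emph{conditional} cost given $\clZ_t$ (a $\clZ_t$-measurable random variable); under the literal deterministic reading the process $M^U$ would carry a deterministic drift and could not be a supermartingale in general. Upgrading~\eqref{eq:DP-principle} to its conditional form requires an essential-infimum formulation of the value function together with a measurable-selection argument to justify interchanging the conditional expectation with the infimum over adapted controls on $[s,t]$, and care that optimizing the cost on $[0,s]$ given the realized $Y_s$ reproduces $\clV_s(Y_s)$ pathwise. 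Everything else --- the flow property, the endpoint identities, and the constant-expectation lemma --- is routine once this conditional principle is in hand.
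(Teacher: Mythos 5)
Your proposal is correct and, for the supermartingale half, follows the same route as the paper: condition the dynamic programming relation~\eqref{eq:DP-principle} on $\clZ_s$, use the flow/uniqueness property of the BSDE to identify $(Y_u^{Y_t,t},V_u^{Y_t,t})$ with $(Y_u,V_u)$, subtract the accumulated running cost, and read off $\E(M_t^U\mid\clZ_s)\le M_s^U$. Where you genuinely diverge is the equivalence ``martingale $\iff$ optimal'': the paper disposes of it with the single sentence that the inequality becomes an equality for the optimal control, which covers only one direction and leaves implicit why the optimal control attains the infimum in every sub-interval recursion. Your endpoint argument --- $\E(M_T^U)-\E(M_0^U)=\clV_T(\xi)-\sJ(U)=\inf_U\sJ(U)-\sJ(U)$, combined with the standard fact that a supermartingale with equal endpoint expectations is a martingale --- is cleaner, gives both implications, and matches the structure the paper itself uses in the verification result (\Theorem{thm:converse-DP}). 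Finally, the obstacle you flag is real and is present, unacknowledged, in the paper's own proof: \Theorem{thm:DP-principle} states the value function and the DP recursion with an outer (unconditional) expectation and a deterministic infimum, yet the proof of the supermartingale property silently passes to the conditional inequality $\E\big(\clV_t(Y_t)\mid\clZ_s\big)\le\E\big(\clV_s(Y_s)+\int_s^t l\,\ud\tau\mid\clZ_s\big)$, which does not follow from the unconditional statement without the essential-infimum/measurable-selection upgrade you describe. So your write-up is, if anything, more honest about where the technical work lies; neither your sketch nor the paper actually carries out that upgrade.
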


\begin{proof}
	We begin with recalling~\eqref{eq:DP-principle}:
	\[
	\clV_t(\zeta) \leq \E\Big(\clV_s(Y_s^{\zeta,t}) + \int_s^t l(Y_\tau^{\zeta,t},V_\tau^{\zeta,t},U_\tau,\tau)\ud \tau \Big)
	\]
	Note that both sides are map a random variable $\zeta$ to a scalar. For $\zeta = Y_t$, 
	\[
	(Y_s,V_s) = \big(Y_s^{\zeta,t},V_s^{\zeta,t}\big)
	\]
	and therefore
	\[
	\E\big(\clV_t(Y_t)\mid \clZ_s \big) \le \E\Big(\clV_s(Y_s) + \int_s^t l(Y_\tau,V_\tau,U_\tau,\tau)\ud \tau \mid \clZ_s \Big)
	\]
	Upon subtracting $\E\big(\int_0^t l(Y_\tau,V_\tau,U_\tau,\tau)\ud \tau \mid \clZ_s \big)$ on both sides, we have
	\begin{align*}
		\E\Big(\clV_t(Y_t) - \int_0^t &(Y_\tau,V_\tau,U_\tau,\tau)\ud \tau \mid \clZ_s \Big) \\
		&\leq \E\Big(\clV_s(Y_s) - \int_0^s l(Y_\tau,V_\tau,U_\tau,\tau)\ud \tau  \mid \clZ_s \Big)
	\end{align*}
	Since the right-hand side is $\clZ_s$-measurable, we may drop conditional expectation, and hence
	$$
	\E\big(M_t^U\mid \clZ_s\big)\leq M_s^U
	$$
	Therefore, $M^U$ is a super-martingale. The inequality becomes equality upon choosing the optimal control.
\end{proof}

\medskip

\subsubsection{Optimal control obtained via the martingale DP principle}

The converse of the DP principle is often called verification theorem.

\medskip

\begin{theorem}\label{thm:converse-DP}
	Suppose there exists $\clV$ and $U^\opt \in {\cal U}$ such that:
	\begin{enumerate}
		\item $\clV_0(\cdot) = h(\cdot)$.
		
		\item The process $M^U$ defined by~\eqref{eq:martingale-def} is a super-martingale for each admissible control $U\in \clU$, and a martingale for $U=U^\opt$.
	\end{enumerate}
	Then $U = U^\opt$ is an optimal control with cost $\E\big(\clV_T(\xi)\big)$.
\end{theorem}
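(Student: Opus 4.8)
The plan is to obtain the optimality of $U^\opt$ directly from the super/martingale structure of $M^U$, evaluated at the two deterministic endpoints $t=0$ and $t=T$. This is the standard verification (converse-to-DP) argument, now read off in the backward direction appropriate to the BSDE: the substantive work already lives in Proposition~\ref{thm:martingale-DP}, and the present statement is its packaging.

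First I would fix an arbitrary admissible control $U\in\clU$ with associated solution $(Y,V)$ of the BSDE~\eqref{eq:dynamics}, and record the values of $M^U$ at the two endpoints. At $t=0$, hypothesis~(1) gives $M_0^U = \clV_0(Y_0) = h(Y_0)$; at $t=T$ the terminal condition $Y_T=\xi$ gives $M_T^U = \clV_T(\xi) - \int_0^T l(Y_s,V_s,U_s,s)\ud s$. Since $M^U$ is a supermartingale by hypothesis~(2), sampling at the deterministic times $0\le T$ yields $\E(M_T^U)\le \E(M_0^U)$, which after substituting the endpoint values and rearranging reads
\[
\E\big(\clV_T(\xi)\big) \;\le\; \E\big(h(Y_0)\big) + \E\Big(\int_0^T l(Y_s,V_s,U_s,s)\ud s\Big) \;=\; \sJ(U).
\]
Because $U$ was arbitrary, this establishes $\E(\clV_T(\xi))$ as a lower bound for the cost over all admissible controls.

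Next I would specialize to $U=U^\opt$. By hypothesis~(2) the process $M^{U^\opt}$ is a genuine martingale, so the inequality above becomes the equality $\E(M_T^{U^\opt}) = \E(M_0^{U^\opt})$, hence $\E(\clV_T(\xi)) = \sJ(U^\opt)$. Combining the two relations gives $\sJ(U^\opt) = \E(\clV_T(\xi)) \le \sJ(U)$ for every $U\in\clU$, which is exactly the assertion that $U^\opt$ is optimal with optimal value $\E(\clV_T(\xi))$.

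The argument is essentially a one-line consequence of the (super)martingale hypotheses, so the only points requiring care are bookkeeping rather than analysis. I would check that $M_0^U$ and $M_T^U$ are integrable so that the expectations in the sampling step are well defined (this follows from $(Y,V)\in L^2_\clZ$, the running-cost hypotheses of the appendix, and $\xi\in L^2_{\clZ_T}$), and I would confirm that the endpoint evaluations $\clV_0(Y_0)$ and $\clV_T(\xi)$ are read consistently with Definition~\ref{def:value-function}. The main conceptual obstacle, such as it is, is simply recognizing that the BSDE places the \emph{fixed} terminal data $\xi$ at the upper endpoint of the martingale time index, so that the supermartingale inequality runs in the direction that bounds $\E(\clV_T(\xi))$ from above by $\sJ(U)$, rather than in the reverse direction one might expect from the forward-time intuition.
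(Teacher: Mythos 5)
Your proposal is correct and follows essentially the same route as the paper's own proof: evaluate $M^U$ at $t=0$ and $t=T$, apply the supermartingale inequality $\E(M_T^U)\le \E(M_0^U)$ to get $\E(\clV_T(\xi))\le \sJ(U)$ for all $U$, and use the martingale property of $M^{U^\opt}$ to turn this into an equality. Your added remarks on integrability and on the direction of the inequality are sensible bookkeeping but do not change the argument.
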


\begin{proof}
	Since $M^U$ is a super-martingale, 
	\[
	\E M_T^U \leq M_0^U = \clV(Y_0,0) = h(Y_0)
	\]
	Take expectation on the right-hand side and expand the left-hand side as
	\[
	\E\Big(\clV(\xi,T) - \int_0^T l(Y_t,V_t,U_t)\ud t\Big) \leq \E\big(h(Y_0)\big)
	\]
	Therefore we have
	\[
	\E\big(\clV(\xi,T)\big) \leq \sJ(U),\quad \forall U
	\]
	where equality holds for $U=U^\opt$.
\end{proof}

\medskip

\begin{remark}
	From the definition of super-martingale, the second condition is equivalent to write for any $0\le s\le t\le T$,
	\begin{equation}\label{eq:Martingale-DP-conclusion}
		\clV_s(Y_s) \geq \E\Big(\clV_t(Y_t) - \int_s^t l(Y_s,V_s,U_s,\tau) \ud \tau \mid \clZ_s \Big)
	\end{equation}
	For forward-in-time Markovian stochastic control problems, the counterpart of~\eqref{eq:Martingale-DP-conclusion} is exactly the dynamic programming principle and $\clV$ is the value function.
	However for BSDE problems, conditioning on $\clZ_s$ is not the
	same as fixing on $Y_s$. Therefore, the conditions in
	Theorem~\ref{thm:converse-DP} do not yield an interpretation of $\clV_t$ as the value function at time $t$ in this case but only concludes that $U^\opt$ is optimal.
\end{remark}

\newpage

%%%%%%%%%%%%%%%%%%%%%%%%%%%%%%%%%%%%%%%%%%%%%%%%%%%%%%%%%%%%%%%%%%%%%%%%%%%%%%%%%%%%%%%%%%%%

\chapter{Minimum energy duality and optimal smoothing}\label{apdx:min-energy}

\def\normfactor{C}

%This chapter is a stand-alone documentation on the minimum energy duality formulation for nonlinear smoothing equations. 
In this chapter, we provide a self-contained
exposition of the equations of nonlinear smoothing as well as connections and
interpretations to some of the more recent developments in
mean-field-type optimal control theory.
This chapter is entirely based on~\cite{kim2020smoothing}.

\section{Preliminaries and Background}
\label{sec:prelim}

\subsection{The smoothing problem}

Consider a pair of continuous-time stochastic processes $(X,Z)$.  The
state $X=\{X_t:t\in[0,T]\}$ is a Markov process taking values in the
state space $\bS$. The observation process $Z =
\{Z_t:t\in[0,T]\}$ is defined according to the model:
\begin{equation}\label{eq:obs_model}
	Z_t = \int_0^t h(X_s) \ud s + W_t
\end{equation}
where $h:\bS\rightarrow \Re$ is the observation function
and $W=\{W_t: t\ge 0\}$ is a standard Wiener process.
%The extension to vector-valued and colored noise process is straightforward.
%The initial distribution for $X_0$ is denoted by $\nu_0\in\probsp$, where $\probsp$ is the space of probability distributions on $\bS$. It is assumed that $W$ and $X$ are mutually independent.

The smoothing problem is to compute the posterior distribution ${\sf
	P}(X_t\in \;\cdot\;|\clZ_T)$ for arbitrary $t\in [0,T]$, where
$\clZ_T := \sigma(Z_s : 0\le s \le T)$ is the sigma-field generated by
the observation up to the terminal time $T$.
%Throughout the paper, it is shown that the smoothed distribution is a solution to the certain optimal control problem on $\probsp$ 

\subsection{Solution of the smoothing problem}

The smoothing problem requires a model of the Markov process $X$.  In
applications involving nonlinear smoothing, a common model is
the It\^o-diffusion in Euclidean settings:

% that evolves on the state space comprising of the standard bases in $\Re^d$: $\bS = \{e_1,e_2,\hdots,e_d\}$. $\probsp$ is the probability simplex in $\Re^d$. $A$ denotes the $\Re^{d \times d}$ row-stochastic rate matrix. The linear observation model is chosen without loss of generality: for any function $h:\bS\to\Re$, we have $h(x)=h^\tp x$ where $h\in\Re^{d}$ is defined by $[h]_i = h(e_i)$.

\subsubsection{Euclidean state space} The state space $\bS=\Re^d$. The
state process $X$ is modeled as an It\^o diffusion:
\begin{equation*}%\label{eq:dyn_sde}
	\ud X_t = a (X_t) \ud t + \sigma(X_t) \ud B_t, \quad X_0\sim \nu_0
\end{equation*}
where $a\in C^1(\Re^d; \Re^d)$, $\sigma\in C^2(\Re^d; \Re^{d\times
	p})$ and $B=\{B_t:t\ge 0\}$ is a standard Wiener process.  The initial
distribution of $X_0$ is denoted as $\nu_0(x) \ud x$ where $\nu_0(x)$
is the probability density with respect to the Lebesgue measure.
For~\eqref{eq:obs_model}, the observation function $h\in C^2(\Re^d;\Re)$.  
It is assumed that $X_0, B, W$ are mutually independent.

The infinitesimal generator of $X$, denoted as ${\cal A}$,
acts on $C^2$ functions in its domain according to 
%\cite[Thm. 7.3.3]{oksendal2003stochastic}
%$$
%({\cal A} f)(x):= \sum_{i=1}^d [a(x)]_i \frac{\partial f}{\partial x_i}(x) + \half \sum_{i,j}^d [\sigma\sigma^\tp(x)]_{ij} \frac{\partial^2 f}{\partial x_i \partial x_j}(x)
%$$
\[
({\cal A} f)(x):= a^\tp(x) \nabla f(x) + \half \tr\big(\sigma\sigma^\tp(x)(D^2f)(x)\big)
\]
The adjoint operator is denoted by ${\cal A}^\dagger$.  It acts on
$C^2$ functions in its domain according to 
%\begin{align*}
	%({\cal A}^\dagger f)(x) &= -\sum_{i=1}^d \frac{\partial }{\partial x_i}\big([a(x)]_if(x)\big) \\
	%&+ \half \sum_{i,j}^d \frac{\partial^2}{\partial x_i \partial x_j}\big([\sigma\sigma^\tp(x)]_{ij} f(x)\big)
	%\end{align*}
\[
({\cal A}^\dagger f)(x) = -\divg(af)(x) + \half \sum_{i,j=1}^d \frac{\partial^2}{\partial x_i \partial x_j}\big([\sigma\sigma^\tp]_{ij} f\big)(x) 
\]

% \subsubsection{The generator of Markov process} ${\cal A}$ is used to denote the infinitesimal generator of the Markov process
% ~\cite{karatzas1998brownian}.
% For a proper function $f:\bS\to \Re$,
% $$
% {\cal A}^*f(x) = \lim_{t\downarrow 0} \frac{\E(f(X_t)|X_0=x)-f(x)}{t}
% $$
% In finite setting, it is simply the rate matrix. It acts on a function (vector in $\Re^d$) via right multiples: $f\mapsto Af$. In euclidean setting, it acts on a $f\in C^2(\Re^d;\Re)$, and it is defined according to:
% $$
% {\cal A}f(x) = a^\tp \nabla f(x) + \half \operatorname{tr}\big(\sigma\sigma^\tp\nabla^2f(x)\big)
% $$
% The adjoint operator of the generator is denoted by ${\cal A}^\dagger$. In finite state space, it is the transpose of the rate matrix, and in Euclidean setting, ${\cal A}^*$ also act on a $f\in C^2(\Re^d;\Re)$ by
% $$
% {\cal A}^*f(x) = -\nabla \cdot\; (af)(x) + \half\nabla \cdot \nabla\big(\operatorname{tr}(\sigma\sigma^\tp) f(x)\big)
% $$
% \subsection{Solution of the smoothing problem}

The solution of the smoothing problem is described by a
forward-backward system of stochastic partial differential equations
(SPDE) (see~\cite[Thm. 3.8]{pardoux1981non}):
\begin{subequations}\label{eq:forward-backward-Zakai}
	\begin{flalign}
		&\text{(forward)}:&\ud p_t (x) &= ({\cal A}^\dagger p_t)(x) \ud t +
		h(x) p_t(x) \ud Z_t,\quad p_0(x) = \nu_0(x), \quad \forall x\in\Re^d & \label{eq:Zakai-a}\\
		&\text{(backward)}:&-\ud q_t(x) &= {(\cal A} q_t)(x) \ud t + h(x)
		q_t(x) \overleftarrow{\ud Z}_t,\quad q_T (x) \equiv 1 &\label{eq:Zakai-b}
	\end{flalign}
\end{subequations}
where $\overleftarrow{\ud Z}_t$ denotes a backward It\^{o} integral (see \cite[Remark 3.3]{pardoux1981non}). The smoothed distribution is then obtained as follows:
\[
{\sf P}(X_t \in \ud x\; |\clZ_T) \propto p_t(x)q_t(x) \ud x.
\]
Each of~\eqref{eq:forward-backward-Zakai} is referred to as the Zakai equation of nonlinear
filtering.  

\subsection{Path-wise representation of the Zakai equations}\label{ssec:path-wise-Zakai}

There is a representation of the forward-backward SPDEs where the only
appearance of randomness is in the coefficients.  This is referred to
as the pathwise (or robust) form of the
filter~\cite[Sec. VI.11]{rogers2000diffusions}.

Using It\^o's formula for $\log p_t$,
\begin{align*}
	\ud (\log p_t) (x)&=\frac{1}{p_t(x)} ({\cal A}^\dagger p_t)(x) \ud t + h(x)\ud Z_t - \half h^2(x) \ud t
\end{align*}
Therefore, upon defining $\mu_t(x) := \log p_t(x) - h(x)Z_t$, the forward
Zakai equation~\eqref{eq:Zakai-a} is transformed into a parabolic partial differential
equation (pde):
\begin{equation}
	\frac{\partial \mu_t}{\partial t} (x) =
	e^{-(\mu_t(x)+Z_th(x))}\big({\cal A}^\dagger
	e^{(\mu_t(\cdot) +Z_th(\cdot) )}\big)(x) - \half h^2(x),\quad \mu_0(x) = \log \nu_0(x),\quad \forall x\in\Re^d \label{eq:pw-Zakai-a}
\end{equation}
Similarly, upon defining $\lambda_t(x) = \log q_t(x) + h(x) Z_t$, the
backward Zakai equation~\eqref{eq:Zakai-b} is transformed into the
parabolic pde:
\begin{equation}
	-\frac{\partial \lambda_t}{\partial t} (x) = e^{-(\lambda_t(x) - Z_th(x))}\big({\cal A}e^{\lambda_t(\cdot) - Z_th(\cdot)}\big)(x) - \half h^2(x), \quad \lambda_T(x) = Z_Th(x),\quad \forall x\in\Re^d \label{eq:pw-Zakai-b}
\end{equation}
The pde~\eqref{eq:pw-Zakai-a}-\eqref{eq:pw-Zakai-b} are referred to as
pathwise equations of nonlinear smoothing.  

\subsection{The finite state-space case}

Apart from It\^{o}-diffusion, another common model is a Markov chain in
finite state-space settings: 

\subsubsection{Finite state space} Let the state-space be $\bS =
\{e_1,e_2,\ldots,e_d\}$, the canonical basis in $\Re^d$.  
For~\eqref{eq:obs_model}, the linear observation model is chosen
without loss of generality: for any function $h:\bS\to\Re$, we
have $h(x)=\tilde{h}^\tp x$ where $\tilde{h}\in\Re^{d}$ is defined by
$\tilde{h}_i = h(e_i)$.  
Thus, the function space on $\bS$ is
identified with $\Re^d$.  With a slight abuse of notation, we will
drop the tilde and simply write $h(x)=h^\tp x$. 

The state process $X$ is a continuous-time Markov chain evolving in $\bS$. 
The initial distribution for
$X_0$ is denoted as $\nu_0$.  It is an element of the probability simplex
in $\Re^d$. 
%while each element represents the probability mass of corresponding coordinate.  
The generator of the chain is denoted as $A$.  It is a
$d\times d$ row-stochastic matrix.  It acts on a function $f\in\Re^d$
through right multiplication: $f\mapsto Af$.  The adjoint operator is the
matrix transpose $A^\tp$.  
It is assumed that $X$ and $W$ are mutually independent.

The solution of the smoothing problem for the finite state-space
settings is entirely analogous: Simply replace the generator ${\cal
	A}$ in~\eqref{eq:forward-backward-Zakai} by the matrix $A$, and the probability density by the probability mass function.  The Zakai pde is now the
Zakai sde.  The formula for the pathwise representation are also
entirely analogous:
\begin{align}
	\Big[\frac{\ud \mu_t}{\ud t}\Big]_i &= [e^{-(\mu_t+Z_th)}]_i[A^\tp e^{\mu_t+Z_th}]_i - \half [h^2]_i \label{eq:pw-Zakai-finite-a}\\
	-\Big[\frac{\ud \lambda_t}{\ud t}\Big]_i &= [e^{-(\lambda_t-Z_th)}]_i[Ae^{\lambda_t-Z_th}]_i - \half [h^2]_i \label{eq:pw-Zakai-finite-b}
\end{align}
with boundary condition $[\mu_0]_i = \log [\nu_0]_i$ and $[\lambda_0]_i = Z_T[h]_i$, for $i=1, \ldots,d$.

\section{Optimal Control Problem}\label{sec:control-problem}

\subsection{Variational formulation}

For the smoothing problem, an optimal control formulation is derived in the following two steps:
%(i) Consider a ``control-modified'' version of the original process; (ii) Optimal control problem is established to make the distribution of the modified process at each time be close to the conditional distribution; (iii) We take the stochastic optimal control problem into deterministic optimal control problem on probability distributions.

\subsubsection{Step 1} A control-modified version of the Markov process $X$ is introduced.  The controlled process is denoted as $\tilde{X} :=
\{\tilde{X}_t:0\le t\le T\}$.  
The control problem is to pick (i) the initial distribution
$\pi_0\in{\cal P}(\bS)$ and (ii) the state transition, such that
the distribution of $\tilde{X}$ equals the conditional distribution.
For this purpose, an optimization problem is formulated in the next step.

\subsubsection{Step 2} The optimization problem is formulated on the space of
probability laws. 
%over the space of c\^{a}dl\^{a}g trajectories on $\bS$.
%with time horizon $[0,T]$
%(see generally,~\cite{billingsley1999convergence}).
Let $\sP_{X}$ denote the law for $X$, $\sQ$ denote the law for
$\tilde{X}$, and $\sP_{X\mid z}$ denote the law for $X$ given an observation path
$z=\{z_t:0\le t\le T\}$.  Assuming
these are equivalent, the objective function is the relative entropy between $\sQ$ and $\sP_{X\mid z}$:
\begin{equation*}\label{eq:rel-entropy-cost}
	\min_{\sQ} \quad \E_{\sQ}\Big(\log \frac{\ud \sQ}{\ud \sP_{X}}\Big) - \E_{\sQ}\Big(\log\frac{\ud \sP_{X\mid z}}{\ud \sP_{X}}\Big)
\end{equation*}

Upon using the Kallianpur-Striebel formula
(see~\cite[Lemma 1.1.5 and Prop. 1.4.2]{van2006filtering}), the
optimization problem is equivalently expressed as follows:
\begin{equation}\label{eq:cost-equiv-form}
	\min_{\sQ} \quad \kl(\sQ\mid \sP_{X}) +\E\Big(\int_0^T z_t \ud h(\tilde{X}_t) + \half|h(\tilde{X}_t)|^2 \ud t- z_Th(\tilde{X}_T)\Big)
\end{equation}
The first of these terms depends upon the details of the model used to
parametrize the controlled Markov process $\tilde{X}$.  For the two
types of Markov processes, this is discussed in the following sections.

\begin{remark}
	The Schr\"{o}dinger bridge problem is a closely related problem of
	recent research interest where one picks $\sQ$ to minimize $\kl(\sQ\mid \sP_{X})$
	subject to the constraints on marginals at time $t=0$ and $T$; cf.,~\cite{chen2016relation} where connections to
	stochastic optimal control theory are also described.  Applications of
	such models to the filtering and smoothing problems is discussed
	in~\cite{reich2019data}.  There are two differences between the
	Schr\"{o}dinger bridge problem and the smoothing problem considered
	here:
	\begin{enumerate}
		\item The objective function for the smoothing problem also includes
		an additional integral term in~\eqref{eq:cost-equiv-form} to account for
		conditioning due to observations $z$ made over time $t\in[0,T]$;
		\item The constraints on the marginals at time $t=0$ and $t=T$ are not
		present in the smoothing problem.
		Rather, one is allowed to pick the initial distribution $\pi_0$ for
		the controlled process and there is no constraint present on the 
		distribution at the terminal time $t=T$.  
	\end{enumerate}
\end{remark}

\subsection{Optimal control: Euclidean state-space}
\label{sec:euclidean_ss}

The modified process $\tilde{X}$ evolves on
the state space $\Re^d$.  It is modeled as a controlled
It\^{o}-diffusion
% In Euclidean case, control $u_t = u(t,x):[0,T]\times\Re^d \to \Re^p$ is a function on $\Re^d$ at each time. 
% Consider the controlled diffusion process $\tilde{X}$ according to
\[
\ud \tilde{X}_t = a(\tilde{X}_t) \ud t + \sigma(\tilde{X}_t)\big(u_t(\tilde{X}_t)\ud t + \ud \tilde{B}_t\big),\quad \tilde{X}_0\sim \pi_0
\]
where $\tilde{B} = \{\tilde{B}_t:0\leq t\leq T\}$ is a copy of the process noise $B$. The controlled process is parametrized by:
\begin{enumerate}
	\item The initial density $\pi_0(x)$.
	\item The control function $u\in C^1([0,T]\times\Re^d;\Re^p)$.  The
	function of two arguments is denoted as $u_t(x)$. %  For each fixed
	% time $t\in[0,T]$, $u_t(x)$ is a vector-valued function on $\Re^d$.  
\end{enumerate}
The parameter $\pi_0$ and the function $u$ are chosen as a solution of an optimal control problem.

For a given function $v\in C^1(\Re^d;\Re^p)$, the generator of the controlled
Markov process is denoted by $\tilde{\cal A}(v)$.  It acts on a $C^2$
function $f$ in its domain according to
\begin{align*}
	(\tilde{\cal A}(v)f)(x)
	&= ({\cal A}f)(x) + (\sigma v)^\tp(x)\nabla f(x)
\end{align*}
The adjoint operator is denoted by ${\cal A}^\dagger(v)$.  It acts on
$C^2$ functions in its domain according to 
\begin{align*}
	(\tilde{\cal A}^\dagger(v) f)(x)
	&=({\cal A}^\dagger f)(x) - \divg(\sigma v f)(x)
\end{align*}

For a density $\rho$ and a function $g$, define $\langle \rho, g
\rangle := \int_{\Re^d} g(x) \rho(x) \ud x$. With this notation, define the controlled
Lagrangian ${\cal L}:C^2(\Re^d;\Re^{+})\times C^1(\Re^d;\Re^p) \times \Re \to \Re$ as follows:
\[
{\cal L}(\rho,v\,;y) := \half \langle \rho,  |\,v\;|^2 + h^2\rangle + y\,\langle \rho,\tilde{\cal A}(v)h \rangle
\]
The justification of this form of the
Lagrangian starting from the relative entropy cost appears in
Appendix~\ref{apdx:lagrangian-sde}.

% The relative entropy formula between the original process and control-modified process becomes:
% $$
% \kl(P\mid Q) = \kl(\pi_0\mid \nu_0) + \int_0^T \pi_t\big(|u_t|^2\big)\ud t
% $$
% where we denote the evaluation of some function $g$ over a measure $\pi$ by
% $$
% \pi(g) = \int_{\Re^d} g(x)\pi(x)\ud x
% $$

For a given fixed observation path $z = \{z_t:0\leq t\leq T\}$, the
optimal control problem is as follows:
\begin{subequations}\label{eq:opt-cont-sde}
	\begin{align}
		&\mathop{\text{Min}}_{\pi_0, u} : \sJ(\pi_0, u\,;z) =\kl(\pi_0\mid \nu_0) - z_T \langle \pi_T,h\rangle + \int_0^T {\cal
			L}(\pi_t,u_t;z_t) \ud t \label{eq:cost-sde}\\
		&\text{Subj.} :\frac{\partial\pi_t}{\partial t}(x) = (\tilde{\cal A}^\dagger(u_t)\pi_t)(x)
	\end{align}
\end{subequations}

\medskip

\begin{remark}
	This optimal control problem is a mean-field-type problem on account
	of the presence of the entropy term $\kl(\pi_0\mid \nu_0)$ in the
	objective function.  The Lagrangian is in a standard
	stochastic control form and the problem can be solved as a stochastic
	control problem as well~\cite{mitter2003}.  In this paper, the
	mean-field-type optimal control formulation is stressed as a
	straightforward way to
	derive the equations of the nonlinear smoothing.  
\end{remark}

The solution to this problem is given in the following proposition, whose proof appears in the Appendix~\ref{apdx:opt-ctrl-sde}. 
\begin{proposition}\label{thm:opt-ctrl-sde}
	Consider the optimal control problem~\eqref{eq:opt-cont-sde}.
	For this problem, the Hamilton's equations are as follows:
	\begin{subequations}\label{eq:hamiltons-eqn-sde}
		\begin{flalign}
			&\text{(forward)}&\frac{\partial \pi_t}{\partial t}(x) &= (\tilde{\cal
				A}^\dagger(u_t) \pi_t)(x) \label{eq:hamiltons-eqn-sde-a}&\\
			%		&\text{(backward)}&-\frac{\partial\lambda_t}{\partial t} (x) &= e^{-(\lambda_t(x) - z_th(x))}({\cal A}e^{\lambda_t(\cdot) - z_th(\cdot)})(x) & \nonumber\\
			%		& & &\quad - \half h^2(x), \label{eq:hamiltons-eqn-sde-b}&\\
			&\text{(backward)}&-\frac{\partial\lambda_t}{\partial t} (x) &= e^{-(\lambda_t(x) - z_th(x))}({\cal A}e^{\lambda_t(\cdot) - z_th(\cdot)})(x) - \half h^2(x) \label{eq:hamiltons-eqn-sde-b}&\\
			&\text{(boundary)}& \lambda_T(x) &= z_Th(x) \nonumber &
		\end{flalign}
	\end{subequations}
	The optimal choice of the other boundary condition is as follows:
	\[
	\pi_0(x) = \frac{1}{\normfactor} \nu_0(x) e^{\lambda_0(x)}
	\]
	where $\normfactor = \int_{\Re^d}
	\nu_0(x)e^{\lambda_0(x)}\ud x$ is the normalization factor.
	The optimal control is as follows:
	\[
	u_t(x) = \sigma^\tp(x)\, \nabla (\lambda_t-z_th)(x)
	\]
\end{proposition}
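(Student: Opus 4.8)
The plan is to treat the problem~\eqref{eq:opt-cont-sde} as a mean-field-type, PDE-constrained variational problem and to obtain the stated relations as its first-order (Hamilton's) optimality conditions. First I would adjoin the Fokker--Planck constraint $\partial_t\pi_t = \tilde{\clA}^\dagger(u_t)\pi_t$ through a costate (Lagrange multiplier) process $\{\lambda_t : 0\le t\le T\}$ and the normalization $\langle\pi_0,\ones\rangle=1$ through a scalar multiplier $\kappa$, forming the augmented functional
\[
\tilde{\sJ} = \sJ(\pi_0,u\,;z) + \int_0^T\big\langle\lambda_t,\;\partial_t\pi_t - \tilde{\clA}^\dagger(u_t)\pi_t\big\rangle\ud t + \kappa\big(\langle\pi_0,\ones\rangle - 1\big).
\]
Integrating the time-derivative term by parts and using the duality relation $\langle\lambda_t,\tilde{\clA}^\dagger(u_t)\pi_t\rangle = \langle\tilde{\clA}(u_t)\lambda_t,\pi_t\rangle$, the dependence of $\tilde{\sJ}$ on the interior trajectory $\{\pi_t:0<t<T\}$, on the endpoints $\pi_0,\pi_T$, and on the control $u$ is made explicit. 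The sign chosen in the adjoining term is exactly what makes the terminal transversality condition come out as $\lambda_T = z_T h$ rather than its negative.

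Next I would set the first variation with respect to each free argument to zero. Varying $\lambda_t$ returns the forward equation~\eqref{eq:hamiltons-eqn-sde-a}. Varying $u_t$ pointwise, the control enters only through $\half\langle\pi_t,|u_t|^2\rangle$ together with the linear terms $z_t\langle\pi_t,(\sigma u_t)^\tp\nabla h\rangle$ and $-\langle\pi_t,(\sigma u_t)^\tp\nabla\lambda_t\rangle$; since $\pi_t>0$, completing the square gives $u_t = \sigma^\tp\nabla(\lambda_t - z_t h)$. Varying the terminal $\pi_T$ against $-z_T\langle\pi_T,h\rangle + \langle\lambda_T,\pi_T\rangle$ gives $\lambda_T = z_T h$; varying the interior $\pi_t$ against $\clL$ and the adjoint term gives the costate equation
\[
-\partial_t\lambda_t = -\tfrac12\big(|u_t|^2 + h^2\big) - z_t\,\tilde{\clA}(u_t)h + \tilde{\clA}(u_t)\lambda_t;
\]
and varying $\pi_0$ against $\kl(\pi_0\mid\nu_0) - \langle\lambda_0,\pi_0\rangle$ together with the normalization multiplier, using $\delta\kl(\,\cdot\mid\nu_0)/\delta\pi_0 = \log(\pi_0/\nu_0)+1$, gives $\pi_0 = \tfrac{1}{\normfactor}\,\nu_0\,e^{\lambda_0}$ with $\normfactor = \int_{\Re^d}\nu_0(x)e^{\lambda_0(x)}\ud x$.

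It remains to convert the costate equation into the pathwise form~\eqref{eq:hamiltons-eqn-sde-b}. Substituting the optimal control and writing $\psi_t := \lambda_t - z_t h$, so that $u_t = \sigma^\tp\nabla\psi_t$, the quadratic and cross terms collapse to $\tfrac12|\sigma^\tp\nabla\psi_t|^2$ and the $\clA$-terms combine as $\clA\psi_t$, leaving
\[
-\partial_t\lambda_t = \clA\psi_t + \tfrac12|\sigma^\tp\nabla\psi_t|^2 - \tfrac12 h^2.
\]
Finally I would invoke the elementary log-transform (Cole--Hopf) identity $e^{-\psi}\clA e^{\psi} = \clA\psi + \tfrac12|\sigma^\tp\nabla\psi|^2$, the same computation that underlies the passage between~\eqref{eq:Zakai-b} and~\eqref{eq:pw-Zakai-b} in Section~\ref{ssec:path-wise-Zakai}, to recognize the right-hand side as $e^{-(\lambda_t - z_t h)}\big(\clA e^{\lambda_t - z_t h}\big) - \tfrac12 h^2$, which is precisely~\eqref{eq:hamiltons-eqn-sde-b}.

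The main obstacle is the variational bookkeeping rather than any single deep estimate: one must track the sign convention in adjoining the constraint (it is what distinguishes $\lambda_T = +z_T h$ from $-z_T h$ and, correspondingly, fixes the sign inside $u_t = \sigma^\tp\nabla(\lambda_t - z_t h)$), and one must justify the G\^ateaux derivatives on the space of densities, including the interchange of variation with the time integral and the strict positivity $\pi_t>0$ needed to cancel $\pi_t$ in the control step. Once these are in place, the concluding reduction via the log-transform identity is routine.
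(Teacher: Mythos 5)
Your proposal is correct and follows essentially the same route as the paper's proof in Appendix~\ref{apdx:opt-ctrl-sde}: adjoin the Fokker--Planck constraint with a costate $\lambda$, integrate by parts, extract the optimal control by completing the square in $u$, choose $\lambda$ to satisfy the backward PDE (which, after substituting $u_t=\sigma^\tp\nabla(\lambda_t-z_th)$, reduces via the log-transform identity $e^{-\psi}\clA e^{\psi}=\clA\psi+\half|\sigma^\tp\nabla\psi|^2$ to~\eqref{eq:hamiltons-eqn-sde-b}), and minimize the leftover $\kl(\pi_0\mid\nu_0)-\langle\pi_0,\lambda_0\rangle$ to get the Gibbs-form initial condition. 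The only cosmetic difference is that you phrase the argument as first-order stationarity of all variables whereas the paper completes the square first and then selects $\lambda$ to annihilate the remaining integrand, but the computations and conclusions coincide.
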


%\subsubsection{The optimal solutions} The optimal control problems are viewed as a constrained optimization problem, where the dynamics is applied as a constraint. Therefore the optimal control is expressed in terms of the Lagrange multiplier. Following theorems of which proofs appear in the Appendix present the optimal solution.

\subsection{Optimal control: finite state-space}
\label{sec:finite_ss}

The modified process $\tilde{X}$ is a
Markov chain that also evolves in $\bS =
\{e_1,e_2,\ldots,e_d\}$.  The control problem is parametrized by the
following:
\begin{enumerate}
	\item The initial distribution denoted as $\pi_0\in\Re^d$.  
	\item The state transition matrix denoted as $\tilde{A}(v)$ where
	$v\in (\Re^+)^{d\times d}$ is the control input.  After~\cite[Sec. 2.1.1.]{van2006filtering}, it is
	defined as follows:
	\[
	[\tilde{A}(v)]_{ij} = \begin{cases}
		[A]_{ij}[v]_{ij}\quad & i\neq j\\
		-\sum_{j\neq i} [\tilde{A}(v)]_{ij} & i=j
	\end{cases}
	\]
	and we set $[v]_{ij} = 1$ if $i=j$ or if $[A]_{ij} = 0$.
\end{enumerate}
%These are chosen as a solution of an optimal control problem.

To set up the optimal control problem, define a function
$C:(\Re^+)^{d\times d}\to\Re^d$ as follows
\[
[C(v)]_i = \sum_{j=1}^d [A]_{ij}[v]_{ij}(\log [v]_{ij}-1),\quad i = 1,\ldots,d 
\]
The Lagrangian for the optimal control problem is as follows:
\[
{\cal L}(\rho,v;y) := \rho^\tp(C(v)+\half h^2) + y \;\rho^\tp(\tilde{A}(v)h)
\]
The justification of this form of the
Lagrangian starting from the relative entropy cost appears in
Appendix~\ref{apdx:lagrangian-finite}.

% Consider a control $u_t \in (\Re^+)^{d\times d}$, a time dependent $d \times d$ matrix with positive entities, and the modified generator as a row-stochastic matrix is defined by
% $$
% [\tilde{A}_t]_{ij} = \begin{cases}
	% [A]_{ij}[u_t]_{ij}\quad & i\neq j\\
	% -\sum_{j\neq i} [\tilde{A}]_{ij} & i=j
	% \end{cases}
% $$
% where $[\,\cdot\,]_{ij}$ denotes the $(i,j)$ element of a matrix. Also define a function $C:(\Re^+)^{d\times d}\to\Re^d$ by
% $$
% [C(v)]_i = \sum_{j=1}^d [A]_{ij}[v]_{ij}(\log[v]_{ij}-1)
% $$
% Using these definitions, the relative entropy formula between the original process $X\sim Markov(A,\nu_0)$ and control-modified process $\tilde{X} \sim Markov(\tilde{A},\pi_0)$ is given by:
% $$
% \kl(P\mid Q) = \kl(\pi_0\mid \nu_0) + \int_0^T \pi_t^\tp C(u_t)\ud t
% $$

For given observation path $z=\{z_t:0\le t\le T\}$, the optimal
control problem is as follows:
\begin{subequations}\label{eq:opt-cont-finite}
	\begin{align}
		\mathop{\text{Min }}_{\pi_0, u} &:\sJ(\pi_0, u\,;z) 
		= \kl(\pi_0\mid \nu_0) - z_T \pi_T^\tp h + \int_0^T {\cal L}(\pi_t,u_t;z_t) \ud t \label{eq:cost-finite}\\
		\text{Subj.} &:\frac{\ud \pi_t}{\ud t} = \tilde{A}^\tp(u_t) \pi_t \label{eq:dyn_finite}
	\end{align}
\end{subequations}

% \begin{subequations}\label{eq:opt-cont-finite}
	% \begin{align}
		% \mathop{\text{Min}}_{\pi_0, u} &:\sJ(\pi_0, u\,;z) = \kl(\pi_0\mid \nu_0) + \int_0^T\Big\{ \pi_t^\tp \big(C(u_t)+\half h^2\big) \nonumber \\
		% & \quad\quad\quad\quad\quad+ z_t h^\tp\tilde{A}_t^\tp \pi_t \Big\}\ud t - z_T h^\tp \pi_T\label{eq:cost-finite}\\
		% \text{Subject to} &:\frac{\ud \pi_t}{\ud t} = \tilde{A}_t^\tp \pi_t\label{eq:dyn_finite}
		% \end{align}
	% \end{subequations}
% where $h^2$ is element-wise square of vector $h$.
The solution to this problem is given in the following proposition, whose proof appears in the Appendix.

\begin{proposition}\label{thm:opt-ctrl-finite}
	Consider the optimal control problem~\eqref{eq:opt-cont-finite}. For this problem, the Hamilton's equations are as follows:
	\begin{subequations}\label{eq:hamiltons-eqn-finite}
		\begin{flalign}
			&\text{(forward)}&\frac{\ud \pi_t}{\ud t} &= \tilde{A}^\tp(u_t) \pi_t \label{eq:hamiltons-eqn-finite-a}&\\
			&\text{(backward)}&-\frac{\ud \lambda_t}{\ud t} &=
			\dv(e^{-(\lambda_t-z_th)}) \; A\, e^{\lambda_t-z_th} - \half h^2 \label{eq:hamiltons-eqn-finite-b}&\\
			&\text{(boundary)}& \lambda_T &= z_Th. \nonumber &
		\end{flalign}
	\end{subequations}
	The optimal boundary condition for $\pi_0$ is given by:
	\[
	[\pi_0]_i = \frac{1}{\normfactor} [\nu_0]_i[e^{\lambda_0}]_i,\quad i=1,\ldots,d
	\]
	where $\normfactor = \nu_0^\tp e^{\lambda_0}$. The optimal control is
	\[
	[u_t]_{ij} = e^{([\lambda_t - z_th]_j-[\lambda_t - z_th]_i)}
	\]
	% $$
	% U_t = \exp\big(-(\lambda_t - Z_th)\big)\exp\big(\lambda_t - Z_th\big)^\tp
	% $$
	% or equivalently, $(i,j)$ element of $u_t$ is $[u_t]_{ij} =
	% \exp([\lambda_t - Z_th]_j-[\lambda_t - Z_th]_i)$. 

	%where the normalization factor is $\normfactor = \nu_0^\tp\exp(\lambda_0)$.
\end{proposition}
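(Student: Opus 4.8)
The plan is to solve the optimal control problem~\eqref{eq:opt-cont-finite} by a variational (Pontryagin-type) argument, paralleling the Euclidean treatment of \Proposition{thm:opt-ctrl-sde}. First I would form the Lagrangian functional by adjoining the dynamics constraint~\eqref{eq:dyn_finite} with a costate process $\lambda=\{\lambda_t\in\Re^d:0\le t\le T\}$, writing
\[
\mathcal{J}(\pi_0,u;z) = \kl(\pi_0\mid \nu_0) - z_T\pi_T^\tp h + \int_0^T \Big( {\cal L}(\pi_t,u_t;z_t) + \lambda_t^\tp\big(\tilde{A}^\tp(u_t)\pi_t - \tfrac{\ud \pi_t}{\ud t}\big)\Big)\ud t.
\]
Integrating the $\lambda_t^\tp \frac{\ud\pi_t}{\ud t}$ term by parts transfers the time derivative onto $\lambda$, producing boundary contributions at $t=0$ and $t=T$. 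Setting the first variation with respect to $\pi_t$ (on the interior) to zero yields the backward costate equation, the variation with respect to $u_t$ yields the optimal control, and the variations with respect to the endpoint values $\pi_0$ and $\pi_T$ yield the two boundary conditions.

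The key steps, in order: (i) compute the stationarity condition in $u$. Since ${\cal L}$ depends on $u$ through $C(u)$ and through $\tilde{A}(u)h$, differentiating the entrywise expressions $[C(v)]_i = \sum_j [A]_{ij}[v]_{ij}(\log[v]_{ij}-1)$ and $[\tilde A(v)h]_i$ with respect to $[v]_{ij}$ and combining with the costate term should give a pointwise minimization whose solution is $[u_t]_{ij} = e^{([\lambda_t-z_th]_j - [\lambda_t-z_th]_i)}$; the logarithmic structure of $C$ is precisely what makes the log and exponential cancel to leave this clean closed form. (ii) Substitute this optimal $u$ back into the costate equation obtained from $\partial_{\pi}{\cal L} + \tilde A(u)\lambda$, and verify that the resulting backward ODE collapses to~\eqref{eq:hamiltons-eqn-finite-b}, namely $-\frac{\ud\lambda_t}{\ud t} = \dv(e^{-(\lambda_t-z_th)})\,A\,e^{\lambda_t-z_th} - \tfrac12 h^2$. (iii) Read off the terminal condition $\lambda_T = z_T h$ from the boundary term $-z_T\pi_T^\tp h$, and (iv) obtain the optimal initial distribution by minimizing the $t=0$ boundary contribution $\kl(\pi_0\mid\nu_0) - \lambda_0^\tp\pi_0$ over probability vectors $\pi_0$; a standard Gibbs-variational (Lagrange-multiplier for the simplex constraint) computation gives $[\pi_0]_i \propto [\nu_0]_i [e^{\lambda_0}]_i$ with normalization $\normfactor = \nu_0^\tp e^{\lambda_0}$.

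I expect the main obstacle to be step (ii): correctly carrying out the entrywise differentiation of the control-dependent generator and showing that after substituting the optimal $u$ the costate dynamics simplify exactly to the stated diagonal-matrix form. The bookkeeping is delicate because $\tilde A(v)$ has off-diagonal entries $[A]_{ij}[v]_{ij}$ coupled to diagonal entries through the row-sum-zero constraint, so the derivative $\partial_{\pi_i}{\cal L}$ picks up contributions from both $C(u)$ and the bilinear term $\pi^\tp \tilde A(u)h$, and these must be reorganized into the compact form involving $\dv(e^{-(\lambda_t-z_th)})$ and $A e^{\lambda_t-z_th}$. A useful consistency check, which I would carry out at the end, is to confirm that the backward equation~\eqref{eq:hamiltons-eqn-finite-b} is exactly the pathwise backward Zakai equation~\eqref{eq:pw-Zakai-finite-b} with $z_t$ in place of $Z_t$, since the log transformation $\lambda_t = \log q_t + h Z_t$ identifies the optimal costate with the backward smoothing density. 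This identification both validates the derivation and supplies the promised interpretation connecting the optimal control solution to the nonlinear smoother.
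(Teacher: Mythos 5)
Your proposal is correct and follows essentially the same route as the paper's proof in Appendix~\ref{apdx:opt-ctrl-finite}: adjoin the constraint with a costate $\lambda$, integrate by parts, minimize entrywise over $[u_t]_{ij}$ (where the logarithmic form of $C$ gives the closed-form exponential control), choose $\lambda$ to annihilate the remaining $\pi_t$-linear integrand (yielding~\eqref{eq:hamiltons-eqn-finite-b}), and minimize the leftover boundary term $\kl(\pi_0\mid\nu_0)-\pi_0^\tp\lambda_0$ to get the Gibbs form of $\pi_0$. The only cosmetic difference is that the paper phrases the costate equation as an exact cancellation of the $\pi_t$-dependent terms rather than as a first-variation stationarity condition, which amounts to the same computation.
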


\medskip

\subsection{Derivation of the smoothing equations}

%Note that the solution is determined for each realization of $Z$. For the solution to the Hamilton's equation---\eqref{eq:hamiltons-eqn-sde} for Euclidean case  (respectively, \eqref{eq:hamiltons-eqn-finite} for finite state-space case)---we define maps from observation path $z$ to time series of function (respectively, vector):
%\begin{align*}
	%\Pi(z) &:= z \mapsto \{\pi_t:0\leq t\leq T\}\\
	%\Lambda(z) &:= z \mapsto  \{\lambda_t:0\leq t\leq T\}
	%\end{align*}
%Then define
%\begin{subequations}\label{eq:pi-lambda-Z}
	%\begin{align}
		%\pi = \{\pi_t:0\leq t \leq T\} = \Pi(Z) \\
		%\lambda = \{\lambda_t:0\leq t \leq T\} = \Lambda(Z)
		%\end{align}
	%\end{subequations}
%which are $\clZ_T$-measurable random objects.

The pathwise equations of nonlinear filtering are
obtained through a coordinate transformation.  The proof for the following proposition is contained in the Appendix \ref{apdx:forward-backward}.  

\begin{proposition}\label{thm:forward-backward}
	Suppose $(\pi_t(x),\lambda_t(x))$ is the solution to the Hamilton's equation \eqref{eq:hamiltons-eqn-sde}.  Consider the following transformation:
	\[
	\mu_t(x) = \log(\pi_t(x)) -\lambda_t(x) + \log(\normfactor)
	\]   
	The pair $(\mu_t(x), \lambda_t(x))$ satisfy path-wise smoothing equations~\eqref{eq:pw-Zakai-a}-\eqref{eq:pw-Zakai-b}.
	Also,
	\[
	{\sf P}(X_t\in\ud x\;|\clZ_T) = {\pi}_t(x)\ud x \quad\forall t\in[0,T]
	\]
	For the finite state-space case~\eqref{eq:hamiltons-eqn-finite}, the analogous formulae are as follows:
	\[
	[\mu_t]_i = \log( [\pi_t]_i) -[\lambda_t]_i +  \log(\normfactor)
	\]
	and 
	\[
	{\sf P}(X_t = e_i\;|\clZ_T) = [{\pi}_t]_i \quad\forall t\in[0,T]
	\]
	for $i = 1,\ldots,d$.
\end{proposition}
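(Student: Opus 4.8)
The plan is to verify the claimed coordinate transformation converts the Hamilton's equations~\eqref{eq:hamiltons-eqn-sde} into the pathwise smoothing equations~\eqref{eq:pw-Zakai-a}--\eqref{eq:pw-Zakai-b}, and then to identify the product $\pi_t$ with the true smoothing density. I would carry this out for the Euclidean case in detail; the finite state-space case is entirely analogous with $\clA^\dagger$ replaced by $A^\tp$ and densities replaced by mass functions. First I would observe that the backward equation~\eqref{eq:hamiltons-eqn-sde-b} for $\lambda_t$ is \emph{already identical} to the pathwise backward Zakai equation~\eqref{eq:pw-Zakai-b}, including the terminal condition $\lambda_T(x) = z_Th(x)$. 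So the backward half of the claim is immediate, and the work is entirely on the forward side.

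For the forward equation, I would substitute the optimal control $u_t(x) = \sigma^\tp(x)\nabla(\lambda_t - z_th)(x)$ into the controlled adjoint generator $\tilde{\clA}^\dagger(u_t)$, using the definition $(\tilde{\clA}^\dagger(v)f)(x) = (\clA^\dagger f)(x) - \divg(\sigma v f)(x)$. This gives a closed evolution equation~\eqref{eq:hamiltons-eqn-sde-a} for $\pi_t$ driven by $\lambda_t$. Then, writing $\mu_t(x) = \log \pi_t(x) - \lambda_t(x) + \log \normfactor$, I would compute $\frac{\partial \mu_t}{\partial t} = \frac{1}{\pi_t}\frac{\partial \pi_t}{\partial t} - \frac{\partial \lambda_t}{\partial t}$ (the constant $\log\normfactor$ drops out), and substitute both Hamilton's equations. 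The goal is to show the result matches~\eqref{eq:pw-Zakai-a}, i.e.\ $\frac{\partial \mu_t}{\partial t}(x) = e^{-(\mu_t(x)+z_th(x))}\big(\clA^\dagger e^{\mu_t(\cdot)+z_th(\cdot)}\big)(x) - \half h^2(x)$. The key algebraic identity I expect to need is a Leibniz/conjugation relation expressing $\frac{1}{\pi_t}\tilde{\clA}^\dagger(u_t)\pi_t$ in terms of $e^{-(\mu_t+z_th)}\clA^\dagger e^{\mu_t+z_th}$; this is where the specific form of the optimal control $u_t$ must cancel exactly against the drift term produced by differentiating $\log\pi_t$. I would also check the boundary condition: at $t=0$, $\pi_0(x) = \frac{1}{\normfactor}\nu_0(x)e^{\lambda_0(x)}$ gives $\mu_0(x) = \log\pi_0(x) - \lambda_0(x) + \log\normfactor = \log\nu_0(x)$, matching~\eqref{eq:pw-Zakai-a}.

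The main obstacle will be the forward computation showing that the nonlinear conjugated generator appears correctly after substituting $u_t$. Concretely, the term $-\frac{1}{\pi_t}\divg(\sigma u_t \pi_t)$ together with the Itô second-order terms coming from $\clA^\dagger$ acting through $\log\pi_t$ must reassemble into the clean exponential-conjugation form; this requires tracking the $|\sigma^\tp\nabla(\lambda_t - z_th)|^2$ quadratic contribution and verifying it is precisely the term needed to complete the square in the Hopf--Cole--type transformation. The final step is to invoke the relationship $p_t q_t \propto \sP(X_t\in\ud x\mid \clZ_T)$ from~\eqref{eq:forward-backward-Zakai}: since $\mu_t = \log p_t - h Z_t$ and $\lambda_t = \log q_t + h Z_t$ are the pathwise forms, one has $\log \pi_t = \mu_t + \lambda_t - \log\normfactor = \log(p_t q_t) - \log\normfactor$, so $\pi_t(x) = \frac{1}{\normfactor}p_t(x)q_t(x)$ is exactly the normalized smoothing density, and $\normfactor = \int \nu_0 e^{\lambda_0}\,\ud x$ is verified to be the correct normalizing constant. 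I would close by remarking that the finite-dimensional case follows by the identical manipulation with~\eqref{eq:pw-Zakai-finite-a}--\eqref{eq:pw-Zakai-finite-b}.
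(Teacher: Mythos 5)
Your proposal follows essentially the same route as the paper's own proof: note that the backward Hamilton's equation is already the pathwise backward Zakai equation, compute $\partial_t\mu_t = \pi_t^{-1}\partial_t\pi_t - \partial_t\lambda_t$ with the optimal control substituted into $\tilde{\clA}^\dagger(u_t)$, verify the conjugated-generator form and the boundary condition $\mu_0 = \log\nu_0$, and conclude via $\pi_t \propto e^{\mu_t+\lambda_t} = p_tq_t/\normfactor$. The steps you flag as the main obstacles (the cancellation of the quadratic term $|\sigma^\tp\nabla(\lambda_t - z_th)|^2$ and the reassembly into the exponential-conjugation form) are exactly the computations the paper carries out explicitly, so the plan is sound and complete.
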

% The proof appears in \Appendix{apdx:forward-backward}. This proposition also shows that the relative entropy minimization~\eqref{eq:rel-entropy-cost} attains its minimum at $\kl(P\mid \sP_{X\mid z}) = 0$ under these setups.
%\textit{(Proof sketch)} It is known that the smoothed distribution is computed in terms of $p_t$, $q_t$ defined by forward-backward Zakai equations:
%\begin{align*}
	%\ud \sP_{X}_t &= {\cal A}^* p_t + p_t h \ud Z_t,\quad p_0 = \nu_0\\
	%-\ud q_t &= {\cal A} q_t + q_t h \overleftarrow{\ud Z}_t,\quad\; q_T \equiv 1
	%\end{align*}
%Then the smoothed distribution is
%$$
%{\sf P}(X_t = x|\clZ_T) = \frac{1}{N_t}p_t(x)q_t(x),\quad \forall x\in\bS
%$$
%where the normalization factor $N_t = \int_{\bS} p_t(x)q_t(x)\ud x$. Upon applying It\^{o}'s rule, the Lagrange multiplier $\lambda_t$ that we defined in \Theorem{thm:opt-ctrl-finite} and \ref{thm:opt-ctrl-sde} satisfies
%$$
%q_t(x) = \exp(\lambda_t(x) - Z_th(x)),\quad \forall x\in\bS\quad  a.s.
%$$
%and it can be shown that
%$$
%\sQ_t = \frac{\normfactor\tpi_t}{q_t},\quad t\in[0,T]
%$$
%obeys the forward Zakai equation. The detailed computation appears in the Appendix.

% \subsection{Log transformation}

% Note that the deterministic optimal control problem
% \eqref{eq:opt-cont-sde} of the Liouville equation is in fact
% stochastic optimal control problem of $\tilde{X}$ process. Let
% $V(x,t)$ be the value function. The corresponding
% Hamilton-Jacobi-Bellman equation is:

%==============================================================
\subsection{Relationship to the log transformation}\label{ssec:mitter-problem}

In this paper, we have stressed the density control viewpoint.
Alternatively, one can express the problem as a stochastic control
problem for the $\tilde{X}$ process.  For this purpose, define the
cost function $l:\Re^d \times \Re^p \times \Re \rightarrow \Re $ as follows:
\[
\ell(x,v\,;y) := \half |v|^2 + \half h^2(x) + y(\tilde{\cal A}(v)h)(x)
\]
The stochastic optimal control problem for the Euclidean case then is
as follows:
\begin{subequations}\label{eq:opt-cont-sde-hjb}
	\begin{align}
		\mathop{\text{Min }}_{\pi_0, U} &:\sJ(\pi_0,U\,;z) = \E\Big(\log \frac{\ud \pi_0}{\ud \nu_0}(\tilde{X}_0) - z_T h(\tilde{X}_T) + \int_0^T \ell(\tilde{X}_t,U_t\,;z_t)\ud t\Big)\\
		\text{Subj.} &:\ud \tilde{X}_t = a(\tilde{X}_t)\ud t + \sigma(\tilde{X}_t)(U_t\ud t + \ud \tilde{B}_t),\quad X_0\sim \pi_0
	\end{align}
\end{subequations}
Its solution is given in the following proposition whose proof appears
in the Appendix~\ref{apdx:proof-hjb}.  

\begin{proposition}\label{thm:opt-ctrl-sde-hjb}
	Consider the optimal control problem~\eqref{eq:opt-cont-sde-hjb}.
	For this problem, the HJB equation for the value function $V$ is
	as follows:
	\begin{align*}
		-\frac{\partial V_t}{\partial t}(x) &= \big({\cal A}(V_t+z_th)\big)(x) + \half h^2(x) -\half|\sigma^\tp\nabla (V_t+z_th)(x)|^2\\
		V_T(x) &= - z_Th(x)
	\end{align*}
	The optimal control is of the state feedback form as follows:
	\[
	U_t = u_t(\tilde{X}_t) 
	%\sigma^\tp(\tilde{X}_t)\, \nabla (\lambda_t-z_th)( \tilde{X}_t)
	\]
	where $u_t(x) = -\sigma^\tp \nabla(V_t + z_th)(x)$.  
\end{proposition}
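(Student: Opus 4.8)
The plan is to derive the HJB equation and the optimal feedback law for the problem \eqref{eq:opt-cont-sde-hjb} by the classical dynamic programming / verification route, and then to reconcile the resulting equations with the density-control formulation of Proposition~\ref{thm:opt-ctrl-sde}. First I would write the value function
\[
V_t(x) = \min_{U} \E\Big(\log\frac{\ud \pi_0}{\ud \nu_0}(\tilde{X}_0) - z_T h(\tilde{X}_T) + \int_t^T \ell(\tilde{X}_s,U_s\,;z_s)\ud s \;\Big|\; \tilde{X}_t = x\Big),
\]
noting that the initial-condition (mean-field) term $\log\frac{\ud\pi_0}{\ud\nu_0}$ does not enter the running dynamic programming recursion on $[t,T]$ and only fixes the boundary optimization over $\pi_0$. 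For fixed observation path $z$ the controlled diffusion is a standard It\^o diffusion with generator $\tilde{\cal A}(U)$, so the dynamic programming principle yields the HJB equation
\[
-\frac{\partial V_t}{\partial t}(x) = \min_{u\in\Re^p}\Big\{(\clA V_t)(x) + (\sigma u)^\tp(x)\nabla V_t(x) + \ell(x,u\,;z_t)\Big\},
\]
with terminal condition $V_T(x) = -z_T h(x)$.

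Next I would carry out the pointwise minimization over $u$. Recalling $\ell(x,u\,;z_t) = \half|u|^2 + \half h^2(x) + z_t(\tilde{\cal A}(u)h)(x)$ and expanding $\tilde{\cal A}(u)h = \clA h + (\sigma u)^\tp\nabla h$, the expression to minimize is quadratic in $u$ with linear term $(\sigma^\tp\nabla V_t + z_t\,\sigma^\tp\nabla h)(x)$. Completing the square gives the minimizer
\[
u_t(x) = -\sigma^\tp(x)\,\nabla\big(V_t + z_t h\big)(x),
\]
which is exactly the claimed feedback form. Substituting this optimizer back, the quadratic term contributes $-\half|\sigma^\tp\nabla(V_t+z_th)|^2$ and the remaining terms regroup, after using $\clA V_t + z_t\clA h = \clA(V_t + z_t h)$, into
\[
-\frac{\partial V_t}{\partial t}(x) = \big(\clA(V_t+z_th)\big)(x) + \half h^2(x) - \half\big|\sigma^\tp\nabla(V_t+z_th)(x)\big|^2,
\]
matching the stated HJB equation. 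The boundary optimization over $\pi_0$ (with the entropy penalty) is handled separately: minimizing $\kl(\pi_0\mid\nu_0) + \langle \pi_0, V_0\rangle$ over probability densities $\pi_0$ recovers the Gibbs form $\pi_0 \propto \nu_0 e^{-V_0}$, consistent with Proposition~\ref{thm:opt-ctrl-sde}.

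The step I expect to be the main obstacle is establishing the equivalence between this stochastic-control HJB formulation and the Hamilton's equations \eqref{eq:hamiltons-eqn-sde} of the density-control formulation, rather than the HJB derivation itself. Concretely, I would verify that setting $\lambda_t := -V_t$ turns the HJB equation into the backward pathwise equation \eqref{eq:hamiltons-eqn-sde-b}: one checks that $e^{-(\lambda_t-z_th)}\clA e^{\lambda_t-z_th} = \clA(\lambda_t - z_t h) + \half|\sigma^\tp\nabla(\lambda_t-z_th)|^2$ by the log-transformation identity already used to pass between \eqref{eq:Zakai-b} and \eqref{eq:pw-Zakai-b}, so that $\clA(V_t+z_th) - \half|\sigma^\tp\nabla(V_t+z_th)|^2$ indeed coincides (up to sign) with the right-hand side of \eqref{eq:hamiltons-eqn-sde-b}. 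I would also confirm that the feedback $u_t(x) = -\sigma^\tp\nabla(V_t+z_th) = \sigma^\tp\nabla(\lambda_t - z_t h)$ agrees with the optimal control reported in Proposition~\ref{thm:opt-ctrl-sde}. The remaining care is purely technical: justifying the verification theorem (sufficient regularity and a uniform integrability / martingale argument to certify that the candidate feedback is genuinely optimal), which I would invoke in the standard form without grinding through the estimates.
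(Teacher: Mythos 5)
Your proposal is correct and follows essentially the same route as the paper: write the dynamic programming equation for the fixed observation path, minimize the quadratic-in-$u$ Hamiltonian by completing the square to get $u_t(x) = -\sigma^\tp\nabla(V_t+z_th)(x)$, substitute back to obtain the stated HJB equation with $V_T = -z_Th$, and identify $V_t = -\lambda_t$ with the backward pathwise equation. The extra remarks on the boundary optimization over $\pi_0$ and the log-transformation identity are consistent with what the paper does in Propositions~\ref{thm:opt-ctrl-sde} and~\ref{thm:forward-backward}.
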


\medskip

The HJB equation thus is exactly the Hamilton's
equation~\eqref{eq:hamiltons-eqn-sde-b} and 
\[
V_t(x) = -\lambda_t(x) ,\quad \forall x\in\Re^d,\ \forall\, t\in[0,T]
\]
Noting $\lambda_t(x) = \log q_t(x) + h(x)z_t$, the HJB equation for the
value function $V_t(x)$ is related to the backward Zakai equation for
$q_t(x)$ through the log transformation
(cf.~\cite[Eqn. 1.4]{fleming1982optimal}):
\[
V_t(x) = -\log \big(q_t(x)e^{z_th(x)}\big)
\]

\subsection{Linear Gaussian case}\label{apss:linear-Gaussian}

The linear-Gaussian case is a special case in the Euclidean setting
with the following assumptions on the model:
% for
% the general Euclidean problem
\begin{enumerate}
	\item The drift is linear in $x$.  That is, 
	\[ 
	a(x)=A^\tp x \;\; \text{and}\;\; h(x) = H^\tp x
	\] 
	where $A\in\Re^{d\times d}$ and $H\in\Re^{d}$.
	\item The coefficient of the process noise 
	\[
	\sigma(x) = \sigma
	\] 
	is a constant matrix.  We denote $Q:=\sigma\sigma^\tp\in\Re^{d\times
		d}$.
	\item The prior $\nu_0$ is a Gaussian distribution with mean
	$\bar{m}_0\in\Re^{d}$ and variance $\Sigma_0\succ 0$.
	% .  That is,
	% $X_0\sim {\cal N}(m_0,\Sigma_0)$ with the covariance
	% matrix $\Sigma_0\succ 0$.  
\end{enumerate}

For this problem, we make the following restriction: The control input
$u_t(x)$ is restricted to be constant over $\Re^d$.  That is, the
control input is allowed to depend only upon time.  With such a
restriction, the controlled state evolves according to the sde:
\[
\ud \tilde{X}_t = A^\tp \tilde{X}_t \ud t + \sigma u_t \ud t + \sigma \ud \tilde{B}_t,\quad \tilde{X}_0\sim {\cal N}(m_0,V_0).
\]  
With a Gaussian prior, the distribution $\pi_t$ is also Gaussian whose
mean $m_t$ and variance $V_t$ evolve as follow:
\begin{align*}
	\frac{\ud m_t}{\ud t} &= A^\tp m_t + \sigma u_t\\
	\frac{\ud V_t}{\ud t} &= A^\tp V_t + V_t A + \sigma\sigma^\tp
\end{align*}
Since the variance is not affected by control, the only constraint for
the optimal control problem is due to the equation for the mean.  

It is an easy calculation to see that for the linear model,
\[
(\tilde{\cal A}(v) h)(x) = H^\tp (A^\tp x + \sigma v)
\]
Therefore, the Lagrangian becomes 
\begin{align*}
	{\cal L}(\rho,v;y) & = |v|^2 + |H^\tp m|^2 + \tr(HH^\tp V) + y H^\tp (A^\tp m + \sigma v)
\end{align*}
provided that $\rho \sim {\cal N}(m,V)$. 

For Gaussian distributions $\pi_0 = {\cal N}(m_0,V_0)$ and $\nu_0={\cal N}(\bar{m}_0,\Sigma_0)$, the divergence is given by the well known formula
\[
\kl(\pi_0\mid \nu_0) = \half\log\frac{|V_0|}{|\Sigma_0|} - \frac{d}{2} + \half \tr(V_0\Sigma_0^{-1}) + \half(m_0-\bar{m}_0)^\tp \Sigma_0^{-1}(m_0-\bar{m}_0)
\]
and the term due to the terminal condition is easily evaluated as
\[
\langle \pi_T, h\rangle = H^\tp m_T
\]
Because the control input does not affect the variance process,
we retain only the terms with mean and the control and express the
optimal control problem as follows: 
\begin{subequations}\label{eq:opt-cont-linear}
	\begin{align}
		&\mathop{\text{Minimize}}_{m_0, u}: \sJ(m_0,u\,;z) =  \half(m_0-\bar{m}_0)^\tp {\Sigma}_0^{-1}(m_0-\bar{m}_0) \label{eq:opt-cont-linear-a} \\
		&\quad\quad\quad+ \int_0^T\half |u_t|^2 + \half|H^\tp m_t|^2 + z_t^\tp H^\tp \dot{m}_t \ud t-z_T^\tp H^\tp m_T \nonumber \\
		&\text{Subject to} : \frac{\ud m_t}{\ud t} = A^\tp m_t + \sigma u_t \label{eq:opt-cont-linear-b}
	\end{align}
\end{subequations}
By a formal integration by parts, 
\begin{align*}
	\sJ(m_0,u\,;z) &= \half(m_0-\bar{m}_0)^\tp \bar{\Sigma}_0^{-1}(m_0-\bar{m}_0) \\
	&+ \int_0^T \half |u_t|^2 + \half|\dot{z} - H^\tp m_t|^2\ud t - \int_0^T\half|\dot{z}_t|^2\ud t
\end{align*}
This form appears in the construction of the minimum energy
estimator~\cite[Ch. 7.3]{bensoussan2018estimation}.

\section{Proofs of the statements}

\subsection{Derivation of Lagrangian: Euclidean case}\label{apdx:lagrangian-sde}

By Girsanov's theorem, the Radon-Nikodym derivative is obtained
(see~\cite[Eqn. 35]{reich2019data}) as follows:
\[
\frac{\ud \sQ}{\ud \sP_{X}}(\tilde{X}) = \frac{\ud \pi_0}{\ud \nu_0}(\tilde{X}_0) \; \exp\Big(\int_0^T \half|u_t(\tilde{X}_t)|^2 \ud t + u_t(\tilde{X}_t) \ud \tilde{B}_t \Big)
\]
Thus, we obtain the relative entropy formula:
\begin{align*}
	\kl(\sQ\mid \sP) &= \E\Big(\log\dfrac{\ud \pi_0}{\ud \nu_0}(\tilde{X}_0) + \int_0^T \half|u_t(\tilde{X}_t)|^2 \ud t + u_t(\tilde{X}_t) \ud \tilde{B}_t \Big)\\
	&=\kl(\pi_0\mid \nu_0) + \int_0^T \half\langle \pi_t,|u_t|^2\rangle \ud t
\end{align*}
% The stochastic integral term of \eqref{eq:cost-equiv-form} is obtained by the Dynkin's formula.

\subsection{Derivation of Lagrangian: finite state-space  case}\label{apdx:lagrangian-finite}

The derivation of the Lagrangian is entirely analogous to the Euclidean case except the R-N derivative is given according to~\cite[Prop. 2.1.1]{van2006filtering}:
\begin{align*}
	\frac{\ud \sQ}{\ud \sP_{X}}(\tilde{X}) &= \frac{\ud \pi_0}{\ud \nu_0}(\tilde{X}_0)\exp\Big(-\sum_{i,j} \int_0^T [A]_{ij}[u_t]_{ij}1_{\tilde{X}_t = e_i} \Big)\\
	&\quad\quad\quad \prod_{0<t\leq T} \sum_{i\neq j}[u_{t-}]_{ij} 1_{\tilde{X}_{t-} = e_i}1_{\tilde{X}_{t} = e_j}
\end{align*}
Upon taking log and expectation of both sides,  
% . Note that the summation of the last term can take at most 1 non-zero value, and a jump to $e_j$ occurs at the rate $[\tilde{A}(u_t)]_{ij}$ given $\tilde{X}_{t-} = e_i$ under $\sQ$. Therefore 
we arrive at the relative entropy formula:
\begin{align*}
	\kl(\sQ\mid \sP_{X}) &= \E\Big(\log\dfrac{\ud \pi_0}{\ud \nu_0}(\tilde{X}_0) + \int_0^T -\sum_{i,j}[A]_{ij}[u]_{ij} 1_{\tilde{X}_t = e_i}\Big)\\
	& \quad + \E\Big(\sum_{0<t\leq T} \sum_{i\neq j} \log [u_{t-}]_{ij} 1_{\tilde{X}_{t-} = e_i}1_{\tilde{X}_{t} = e_j}\Big)\\
	&=\kl(\pi_0\mid \nu_0) + \int_0^T \pi_t^\tp C(u_t) \ud t
\end{align*}
%and the rest of the justification is identical to the Euclidean case.

\subsection{Proof of Proposition~\ref{thm:opt-ctrl-sde}}
\label{apdx:opt-ctrl-sde}

The standard approach is to incorporate the constraint into the objective function by introducing the Lagrange multiplier $\lambda = \{\lambda_t:0\leq t\leq T\}$ as follows:
\begin{align*}
	\tilde{J}(u,\lambda\,;\pi_0,z)	&= \kl(\pi_0 \mid  \nu_0) + \int_0^T \half \langle \pi_t, |u_t|^2 + h^2\rangle + z_t\langle \pi_t, \tilde{\cal A}(u_t)h\rangle \ud t\\
	&\quad +\int_0^T \langle \lambda_t,  \frac{\partial\pi_t}{\partial t} - \tilde{\cal A}^\dagger(u_t) \pi_t \rangle   \ud t - z_T\langle \pi_T,h\rangle
\end{align*}
Upon using integration by parts and the definition of the adjoint operator, after some manipulation involving completion of squares, we arrive at
\begin{align*}
	\tilde{\sf J}(u,&\lambda\,;\pi_0,z)=\kl(\pi_0 \mid  \nu_0) + \int_0^T \half \langle \pi_t, |u_t - \sigma^\tp\nabla(\lambda_t - z_th)|^2\rangle \ud t\\
	&-\int_0^T \langle \pi_t, \frac{\partial}{\partial t}\lambda_t + {\cal A}(\lambda_t - z_th)-\half h^2+\half|\sigma^\tp\nabla(\lambda_t - z_th)|^2\rangle \ud t\\
	&+ \langle \pi_T,\lambda_T-z_Th\rangle - \langle \pi_0,\lambda_0\rangle
\end{align*}
Therefore, it is natural to pick $\lambda$ to satisfy the following partial differential equation:
\begin{align}
	-\frac{\partial\lambda_t}{\partial t}(x) &= \big({\cal A} (\lambda_t(\cdot) - z_th(\cdot))\big) - \half h^2(x)+\half \big|\sigma^\tp\nabla (\lambda_t - z_th)(x)\big|^2 \label{eq:lambda-basic-form}\\
	&= e^{-(\lambda_t(x) - z_th(x))}({\cal A}e^{\lambda_t(\cdot) - z_th(\cdot)})(x) - \half h^2(x) \nonumber
\end{align}
with the boundary condition $\lambda_T(x) = z_Th(x)$. With this choice, the objective function becomes
\begin{align*}
	\tilde{\sf J}(u\,;\lambda,\pi_0,z) &= \kl(\pi_0 \mid  \nu_0) - \langle \pi_0, \lambda_0\rangle \\
	&+ \int_0^T \half\pi_t\big( \big|u_t -  \sigma^\tp \nabla(\lambda_t-z_th)\big|^2\big) \ud t 
\end{align*}
which suggest the optimal choice of control is:
\begin{equation*}
	u_t(x) = \sigma^\tp(x) \nabla(\lambda_t-z_th)(x)
\end{equation*}
With this choice, the objective function becomes 
%it remains to choose $\pi_0$ to minimize
\begin{align*}
	\kl(\pi_0\mid \nu_0) - \langle \pi_0, \lambda_0\rangle &= \int_{\bS}\pi_0(x) \log\frac{\pi_0(x)}{\nu_0(x)}- \lambda_0(x)\pi_0(x)\ud x\\
	&=\int_{\bS} \pi_0(x)\log\frac{\pi_0(x)}{\nu_0\exp(\lambda_0(x))}\ud x
\end{align*}
which is minimized by choosing 
\[
\pi_0(x) = \frac{1}{\normfactor} \nu_0(x)\exp(\lambda_0(x))
\]
where $C$ is the normalization constant.  
\qed

\subsection{Proof of Proposition~\ref{thm:opt-ctrl-finite}}
\label{apdx:opt-ctrl-finite}

The proof for the finite state-space case is entirely analogous to the proof for the Euclidean case. The Lagrange multiplier $\lambda = \{\lambda_t\in\Re^d: 0\leq t\leq T\}$ is introduced to transform the optimization problem into an unconstrained problem:
\begin{align*}
	\tilde{\sf J}(u,\lambda\,;\pi_0,z) &= \kl(\pi_0\mid \nu_0)+ \int_0^T\pi_t^\tp \big(C(u_t)+\half h^2 + z_t \tilde{A}(u_t)h \big)\ud t \\
	&\quad + \int_0^T\lambda_t^\tp\big(\frac{\ud \pi_t}{\ud t} - \tilde{A}^\tp(u_t) \pi_t\big)\ud t - z_T h^\tp \pi_T
\end{align*}
Upon using integral by parts,
\begin{align*}
	\tilde{\sf J}(u,\lambda\,;\pi_0,z)&= \kl(\pi_0\mid \nu_0) + \int_0^T \pi_t^\tp \big(C(u_t) -\tilde{A}(u_t)(\lambda_t - z_th)\big) \ud t\\
	&\quad+\int_0^T\pi_t^\tp (-\dot{\lambda}_t+ \half h^2)\ud t +\pi_T^\tp(\lambda_T-z_Th) - \pi_0^\tp \lambda_0
\end{align*}
The first integrand is
\begin{align*}
	[C(u_t) -\tilde{A}(u_t)&(\lambda_t - Z_th)]_i = \sum_{j\neq i} A_{ij}\big([u]_{ij}(\log[u_t]_{ij}-1) -[u_t]_{ij}([\lambda_t - Z_th]_j-[\lambda_t - Z_th]_i)\big)- A_{ii}
\end{align*}
The minimizer is obtained, element by element, as
\[
[u_t]_{ij} = e^{([\lambda_t - z_th]_j-[\lambda_t - z_th]_i)}
\]
and the corresponding minimum value is obtained by:
\[
[C(u_t^*) -\tilde{A}_t(\lambda_t - Z_th)]_i = -[Ae^{\lambda_t-z_th}]_i [e^{-(\lambda_t-z_th)}]_i
\]
Therefore with the minimum choice of $u_t$ above,
\begin{align*}
	\tilde{\sf J}(u,\lambda\,;\pi_0,z)&= \kl(\pi_0\mid \nu_0) + \int_0^T \pi_t^\tp \big(-(Ae^{\lambda_t-z_th}) \cdot  e^{-(\lambda_t-z_th)}\big) \ud t\\
	&\quad+\int_0^T\pi_t^\tp (-\dot{\lambda}_t+ \half h^2)\ud t +\pi_T^\tp(\lambda_T-z_Th) - \pi_0^\tp \lambda_0
\end{align*}
Upon choosing $\lambda$ according to:
\[
-[\dot{\lambda}_t]_i = [Ae^{\lambda_t-z_th}]_i [e^{-(\lambda_t-z_th)}]_i - \half h_i^2,\quad \lambda_T = z_Th
\]
The objective function simplifies to 
\[
\kl(\pi_0\mid \nu_0) - \pi_0^\tp \lambda_0 = \sum_{i=1}^d[\pi_0]_i \log\frac{[\pi_0]_i}{[\nu_0]_ie^{[\lambda_0]_i}}
\]
where the minimum value is obtained by choosing
\[
[\pi_0]_i =\frac{1}{\normfactor} [\nu_0]_ie^{[\lambda_0]_i}
\]
where $C$ is the normalization constant.  \qed

\subsection{Proof of Proposition~\ref{thm:forward-backward}}
\label{apdx:forward-backward}

\subsubsection{Euclidean case} Equation~\eqref{eq:hamiltons-eqn-sde-b} is
identical to the backward path-wise 
equation~\eqref{eq:pw-Zakai-b}.  So, we need to only derive the
equation for $\mu_t$.  
% For $\mu_t$, the governing equation is obtained by product rule.\footnote{To ease the notational burden, we drop $(x)$ in this section} 
% We utilize the following identities: For $f \in C^2(\Re^d\,;\Re)$,
% $$
% ({\cal A} \log f) = \frac{1}{f}({\cal A}f) - \half \big|\sigma^\tp \nabla \log f\big|^2
% $$
% $$
% ({\cal A} e^f) = e^{f}\Big(({\cal A}f) + \half \big|\sigma^\tp \nabla f\big|^2\Big)
% $$
% and for $f,g \in C^2(\Re^d\,;\Re)$,
% \begin{align*}
	% ({\cal A}^\dagger e^f) = e^{f}\Big(&-({\cal A}f) +\divg(\sigma \sigma^\tp \nabla f)+ \half \big|\sigma^\tp \nabla f\big|^2 \\
	% &+ \frac{1}{g}\big(({\cal A}^\dagger g)+({\cal A}g) -\divg(\sigma\sigma^\tp\nabla g)\big)\Big)
	% \end{align*}
Using the regular form of the product formula,
\begin{align*}
	\frac{\partial \mu_t}{\partial t} &= \frac{1}{\pi_t}\frac{\partial \pi_t}{\partial t} -\frac{\partial \lambda_t}{\partial t} \\
	&=\frac{1}{\pi_t}(\tilde{\cal A}^\dagger(u_t)\pi_t) + e^{-(\lambda_t - z_th)}({\cal A}e^{\lambda_t(\cdot) - z_th(\cdot)}) - \half h^2
\end{align*}
With optimal control $u_t = \sigma^\tp\nabla(\lambda_t-z_th)$,
\begin{equation*}
	(\tilde{\cal A}^\dagger(u_t)\pi_t)
	% &= ({\cal A}^\dagger\pi_t)-\divg\big(\sigma\sigma^\tp \nabla(\lambda_t-z_t h)\pi_t\big)\\
	% &=({\cal A}^\dagger\pi_t)-\divg\big(\sigma\sigma^\tp \nabla(\log \pi_t -(\mu_t+z_th) )\pi_t\big)\\
	% &=({\cal A}^\dagger\pi_t)-\divg\big(\sigma\sigma^\tp \nabla \pi_t\big) +\divg\big(\sigma\sigma^\tp \nabla(\mu_t+z_th)\pi_t\big)\\
	=({\cal A}^\dagger\pi_t)-\divg\big(\sigma\sigma^\tp \nabla
	\pi_t\big) +\pi_t \divg\big(\sigma\sigma^\tp \nabla(\mu_t+z_th)\big) +(\nabla \pi_t)^\tp(\sigma\sigma^\tp \nabla(\mu_t+z_th))
\end{equation*}
and
\begin{equation*}
	e^{-(\lambda_t - z_th)}({\cal A}e^{\lambda_t(\cdot) - z_th(\cdot)})
	% &= \big({\cal A} (\lambda_t(\cdot) - z_th(\cdot))\big)+\half \big|\sigma^\tp\nabla (\lambda_t - z_th)\big|^2\\
	% &=\big({\cal A} (\log(\pi_t) - \mu_t - z_th)\big)\\
	% &\quad +\half \big|\sigma^\tp\nabla (\log(\pi_t)-\mu_t - z_th)\big|^2\\
	=\frac{1}{\pi_t}({\cal A}\pi_t) - \half|\sigma^\tp \nabla \log \pi_t|^2 - ({\cal A}(\mu_t + z_th)) + \half \big|\sigma^\tp\nabla \log(\pi_t) - \sigma^\tp \nabla(\mu_t +z_th)\big|^2
\end{equation*}
Therefore,
\begin{align*}
	\frac{\partial \mu_t}{\partial t} 
	% &= \frac{1}{\pi_t}({\cal A}^\dagger\pi_t)-\frac{1}{\pi_t}\divg\big(\sigma\sigma^\tp \nabla \pi_t\big)+ \divg\big(\sigma\sigma^\tp \nabla(\mu_t+z_th)\big)\\
	% &\quad +(\sigma^\tp \nabla \log \pi_t)^\tp(\sigma^\tp \nabla(\mu_t+z_th))\\
	% &\quad + \frac{1}{\pi_t}({\cal A}\pi_t) - \half|\sigma^\tp \nabla \log \pi_t|^2 - ({\cal A}(\mu_t + z_th)) \\
	% &\quad + \half \big|\sigma^\tp\nabla \log(\pi_t) - \sigma^\tp \nabla(\mu_t +z_th)\big|^2 - \half h^2\\
	&=\frac{1}{\pi_t}\big(({\cal A}^\dagger\pi_t) + ({\cal A}\pi_t)-\divg(\sigma\sigma^\tp \nabla \pi_t)\big) -({\cal A}(\mu_t + z_th)) + \divg\big(\sigma\sigma^\tp \nabla(\mu_t+z_th)\big) + \half\big|\sigma^\tp \nabla(\mu_t+z_th)\big|^2 - \half h^2\\
	&=e^{-(\mu_t(x)+z_th(x))}\big({\cal A}^\dagger
	e^{(\mu_t(\cdot) +z_th(\cdot) )}\big)(x) - \half h^2(x)
\end{align*}
with the boundary condition $\mu_0 = \log \nu_0$.  

\subsubsection{Finite state-space case}
Equation~\eqref{eq:hamiltons-eqn-finite-b} is identical to the
backward path-wise equation~\eqref{eq:pw-Zakai-finite-b}.
To derive the equation for $\mu_t$, use the product formula
\begin{align*}
	\Big[\frac{\ud \mu_t}{\ud t}\Big]_i &= \frac{1}{[\pi_t]_i}\Big[\frac{\ud \pi_t}{\ud t}\Big]_i - \Big[\frac{\ud \lambda_t}{\ud t}\Big]_i\\
	&=\frac{1}{[\pi_t]_i}\big[\tilde{A}^\tp(u_t)\pi_t\big]_i +  [e^{-(\lambda_t-z_th)}]_i[Ae^{\lambda_t+z_th}]_i - \half [h^2]_i
\end{align*}
The first term is:
\begin{align*}
	\big[\tilde{A}^\tp(u_t)\pi_t\big]_i % &= \sum_{j=1}^d [\tilde{A}^\tp(u_t)]_{ij}[\pi_t]_j\\
	% &= \sum_{j\neq i}[A]_{ji}[u_t]_{ji}[\pi_t]_j+[\tilde{A}(u_t)]_{ii}[\pi_t]_i\\
	&= \sum_{j=1}^d \Big([A]_{ji}[u_t]_{ji}[\pi_t]_j-[A]_{ij}[u_t]_{ij}[\pi_t]_i\Big)
\end{align*}
and the second term is:
\begin{align*}
	[e^{-(\lambda_t-z_th)}]_i[Ae^{\lambda_t+z_th}]_i &= \frac{1}{[\pi_t]_i}[e^{\mu_t+z_th}]_i\sum_{j=1}^d [A]_{ij} [\pi_t]_j [e^{-(\mu_t +z_th)}]_j
\end{align*}
The formula for the optimal control gives
\begin{align*}
	[u_t]_{ij} 
	%&= e^{([\lambda_t - z_th]_j-[\lambda_t - z_th]_i)}\\
	&=\frac{[\pi_t]_j}{[\pi_t]_i}[e^{-(\mu_t + z_th)}]_j[e^{\mu_t + z_th}]_i
\end{align*}
Combining these expressions,
\begin{align*}
	\Big[\frac{\ud \mu_t}{\ud t}\Big]_i &= \sum_{j=1}^d[A]_{ji}[e^{-(\mu_t + z_th)}]_i[e^{\mu_t + z_th}]_j - \half [h^2]_i\\
	&=[e^{-(\mu_t + z_th)}]_i [A^\tp e^{\mu_t + z_th}]_i - \half [h^2]_i
\end{align*}
which is precisely the path-wise form of the equation~\eqref{eq:pw-Zakai-finite-a}. 
At time $t=0$, $\mu_0 = \log(\normfactor[\pi_0]_i) - [\lambda_0]_i = \log[\nu_0]_i
$.

\subsubsection{Smoothing distribution}
Since $(\lambda_t, \mu_t)$ is the solution to the path-wise form of
the Zakai equations, the optimal trajectory
\[
\pi_t = \frac{1}{\normfactor}e^{\mu_t+\lambda_t}
\]
represents the smoothing distribution.
\qed

\subsection{Proof of Proposition~\ref{thm:opt-ctrl-sde-hjb}}\label{apdx:proof-hjb}

The dynamic programming equation for the optimal control problem is given by
(see~\cite[Ch. 11.2]{bensoussan2018estimation}):
\begin{equation}\label{eq:HJB-V}
	\min_{u\in\Re^p} \Big\{\frac{\partial V_t}{\partial t}(x) + (\tilde{\cal A}(u) V_t)(x) + \ell(x,u\,;z_t)\Big\} = 0
\end{equation}
%Note that $u$ is a constant instead of a function but $\tilde{\cal A}(u)$ is still well-defined. We pull out terms which are independent of $u$,
Therefore,
\begin{align*}
	-\frac{\partial V_t}{\partial t}(x) &= ({\cal A}V_t)(x) + \half h^2(x) + z_t({\cal A}h)(x) + \min_{u}\Big\{\half |u|^2 + u^\tp\big(\sigma^\tp \nabla V_t(x) + z_t\sigma^\tp \nabla h(x)\big)\Big\}
\end{align*}
Upon using the completion-of-square trick, the minimum is attained by a feedback form:
\[
u^* = -\sigma^\tp \nabla (V_t +z_th)(x)
\]
%Therefore the optimal $U_t$ is obtained by the feedback form. Now the HJB equation becomes
The resulting HJB equation is given by
\begin{align*}
	-\frac{\partial V_t}{\partial t}(x) &= \big({\cal A}(V_t + z_th)\big)(x) + \half h^2(x) -\half|\sigma^\tp\nabla (V_t+z_th)|^2
\end{align*}
with boundary condition $V_T(x) = - z_Th(x)$. 
Compare the HJB equation with the equation~\eqref{eq:lambda-basic-form} for $\lambda$, and it follows
\[
V_t(x) = -\lambda_t(x)
\]
\qed

\newpage
\backmatter

\printbibliography[heading=bibintoc,title={References}]

\end{document}